\newtheorem{theorem}{Theorem}[section]
\newtheorem{maintheorem}{Theorem}
\newtheorem{proposition}[theorem]{Proposition}
\newtheorem{lemma}[theorem]{Lemma}
\newtheorem{corollary}[theorem]{Corollary}
\theoremstyle{definition}
\newtheorem*{claim*}{Claim}
\newtheorem{definition}[theorem]{Definition}
\begin{document}
\title{Projective length, phantom extensions, and the structure of flat
modules}
\author{Matteo Casarosa}
\address{Institut de Math\'{e}matiques de Jussieu - Paris Rive Gauche
(IMJ-PRG), Universit\'{e} Paris Cit\'{e}, B\^{a}timent Sophie Germain, 8
Place Aur\'{e}lie Nemours, 75013 Paris, France, and Dipartimento di
Matematica, Universit\`{a} di Bologna, Piazza di Porta S.\ Donato, 5, 40126
Bologna, Italy}
\email{matteo.casarosa@unibo.it}
\email{matteo.casarosa@imj-prg.fr}
\urladdr{https://webusers.imj-prg.fr/~matteo.casarosa/}
\urladdr{https://www.unibo.it/sitoweb/matteo.casarosa/en}
\author{Martino Lupini}
\address{Dipartimento di Matematica, Universit\`{a} di Bologna, Piazza di
Porta S. Donato, 5, 40126 Bologna,\ Italy}
\email{martino.lupini@unibo.it}
\urladdr{http://www.lupini.org/}
\thanks{The authors were partially supported by the Marsden Fund Fast-Start
Grant VUW1816 and the Rutherford Discovery Fellowship VUW2002
\textquotedblleft Computing the Shape of Chaos\textquotedblright\ from the
Royal Society of New Zealand, the Starting Grant 101077154 \textquotedblleft
Definable Algebraic Topology\textquotedblright\ from the European Research
Council, the Universit\'{e} Paris Cit\'{e}, the Gruppo Nazionale per le
Strutture Algebriche, Geometriche e le loro Applicazioni (GNSAGA) of the
Istituto Nazionale di Alta Matematica (INDAM), and the University of Bologna.%
}
\subjclass[2000]{Primary 03E15, 13D07; Secondary 13D09, 13F05}
\keywords{Pr\"{u}fer domain, Dedekind domain, flat module, torsion-free
abelian group, $\mathrm{Ext}$, $\mathrm{lim}^1$, derived functor,
descriptive set theory, Polish group, Borel complexity, phantom morphism,
phantom extension}
\date{\today }

\begin{abstract}
We consider the natural generalization of the notion of the order of a
phantom map from the topological setting to triangulated categories. When
applied to the derived category of the category of countable flat (i.e.,
torsion-free) modules over a countable Dedekind domain $R$, this yields a
notion of\emph{\ phantom extension} of order $\alpha <\omega _{1}$. We
provide a complexity-theoretic characterization of the module $\mathrm{Ph}%
^{\alpha }\mathrm{Ext}\left( C,A\right) $ of phantom extensions of order $%
\alpha $, as the smallest submodule of $\mathrm{Ext}\left( C,A\right) $ of
complexity $\boldsymbol{\Pi }_{1+\alpha +1}^{0}$ with respect to the
structure of \emph{phantom Polish module} on $\mathrm{Ext}\left( C,A\right) $
obtained by considering it as an object of the left heart of the
quasi-abelian category of Polish modules. We use this characterization to
prove the following \emph{Dichotomy Theorem}: either all the extensions of a
countable flat module $A$ are trivial (which happens precisely when $A$ is
divisible) or $A$ has phantom extensions of arbitrarily high order.

By producing canonical phantom projective resolutions of order $\alpha $, we
prove that phantom extensions of order $\alpha $ define on the category of
countable flat modules an exact structure $\mathcal{E}_{\alpha }$ that is
hereditary with enough projectives, and the functor $\mathrm{Ph}^{\alpha }%
\mathrm{Ext}$ is the derived functor of $\mathrm{Hom}$ with respect to $%
\mathcal{E}_{\alpha }$. Furthermore, $\left( \mathcal{E}_{\alpha }\right)
_{\alpha <\omega _{1}}$ is a strictly decreasing chain of nontrivial exact
structures on the category of countable flat modules. We prove a Structure Theorem
characterizing the objects of the class $\mathcal{P}_{\alpha }$ of countable
flat modules that have \emph{projective length at most }$\alpha $ (i.e., are 
$\mathcal{E}_{\alpha }$-projective) as the direct summands of colimits of
presheaves of finite flat modules over well-founded forests of rank $%
1+\alpha $ regarded as ordered sets. In a companion paper, we obtain an
analogous characterization of the \emph{torsion} modules of projective
length at most $\alpha $, which are precisely the reduced torsion modules of
Ulm length at most $\alpha $. Thus, the Structure Theorem for $\mathcal{P}%
_{\alpha }$ can be seen as the first analogue in the flat case of the
classical Ulm Classification Theorem for torsion modules. We also show that
each class $\mathcal{P}_{\alpha }$ is a Borel subset of the Polish space of
countable flat modules, while their union $\mathcal{P}_{\infty }$ is
coanalytic and not Borel.

We also prove a Structure Theorem applicable to an arbitrary countable fat
module $C$, expressing it as a colimit of a chain $\left( \sigma _{\alpha
}C\right) _{\alpha <\omega _{1}}$ of recursively defined characteristic pure
submodules. The submodule $\sigma _{\alpha }C$ is the largest submodule of $%
C $ of $R$-projective length at most $\alpha $, in the sense that $\mathrm{Ph%
}^{\alpha }\mathrm{Ext}\left( \sigma _{\alpha }C,R\right) =0$. The
extensions of the quotient $\partial _{\alpha }C:=C/\sigma _{\alpha }C$ by $%
R $ parametrize phantom extensions of order $\alpha $ of $C$ by $R$ via the
canonical isomorphism%
\begin{equation*}
\mathrm{Ph}^{\alpha }\mathrm{Ext}\left( C,R\right) \cong \mathrm{Ext}\left(
\partial _{\alpha }C,R\right)
\end{equation*}%
induced by the quotient map.
\end{abstract}

\maketitle


\renewcommand{\themaintheorem}{\Alph{maintheorem}}

\section{Introduction\label{Section:introduction}}

The subject of homological algebra can be traced back to the seminal paper
\textquotedblleft Group Extensions and Homology\textquotedblright\ \cite%
{eilenberg_group_1942} where Eilenberg and MacLane introduced the group $%
\mathrm{Ext}\left( C,A\right) $ parametrizing abelian group extensions of an
abelian group $C$ by $A$. In this work, they were interested in the natural 
\emph{topology }this group is endowed with, coming from the canonical Polish
topology on the cochain complex that has $\mathrm{Ext}$ as cohomology group.
In terms of this topology, they characterized the submodule $\mathrm{PExt}%
\left( C,A\right) $ parametrizing pure extensions of $C$ by $A$ as the
closure of the trivial submodule, a manifestation of the fact that the
topology on $\mathrm{Ext}$ fails in general to be Hausdorff. This failure is
drastic when $C$ is torsion-free, in which case all extensions are pure, and
the topology is trivial.

In this work, we refine the original analysis of Eilenberg and MacLane by
considering additional structure on $\mathrm{Ext}$ obtained by regarding it
as an object of the \emph{left heart }of the category of abelian Polish
groups. The objects of this category can be regarded as abelian \emph{groups
with a Polish cover}, which are groups explicitly presented as a quotient $%
G/N$ where $G$ is an abelian Polish group and $N$ is a \emph{Polishable
subgroup }of $G$, i.e., a (not necessarily closed) subgroup that is also
Polish, such that the inclusion map $N\rightarrow G$ is continuous. This
structure allows one to define the \emph{Borel complexity }of a subgroup $%
H/N\subseteq G/N$ as the Borel complexity of $H$ within the Polish group $G$%
. It is proved by Solecki in \cite{solecki_polish_1999}---see also \cite%
{lupini_complexity_2024,lupini_looking_2024}---that a group with a Polish
cover $G/N$ has a canonical chain of subgroups $s_{\alpha }\left( G/N\right) 
$ indexed by countable ordinals, called \emph{Solecki subgroups}, where $%
s_{\alpha }\left( G/N\right) $ is characterized as the smallest subgroup of $%
G/N$ of Borel complexity $\boldsymbol{\Pi }_{1+\alpha +1}^{0}$. The least $%
\alpha $ for which $s_{\alpha }\left( G/N\right) $ is trivial (the \emph{%
Solecki length }of $G$) corresponds to the Borel rank of $N$ in $G$. As it
happens, for $\alpha =0$ one recovers the closure of the trivial subgroup
with respect to the quotient topology. We say that $G/N$ is a \emph{phantom
Polish group }when $N$ is dense in $G$, as in this case $G/N$ has
\textquotedblleft no classical trace\textquotedblright .

We generalize the Eilenberg--MacLane Theorem, working in the more general
context of modules over a Pr\"{u}fer domain. Pr\"{u}fer domains are a very
well-studied class of rings, which admits several equivalent
characterizations \cite{bazzoni_prufer_2006}. In particular, a domain is Pr%
\"{u}fer if and only if each of its finitely-generated ideals is projective
(as a module). This entails that a module is flat if and only if it is
torsion-free, and that every finitely-generated flat module is projective. Pr%
\"{u}fer domains generalize Dedekind domains, which are precisely the
Noetherian Pr\"{u}fer domains. In particular, every PID is a Pr\"{u}fer
domain.

We consider the modules $\mathrm{Ext}\left( C,A\right) $ for countable flat
modules $C$ and $A$ over a Pr\"{u}fer domain as phantom Polish modules,
characterizing the Solecki submodule $s_{\alpha }\mathrm{Ext}\left(
C,A\right) $ as the submodule $\mathrm{Ph}^{\alpha }\mathrm{Ext}\left(
C,A\right) $ parametrizing the \emph{phantom }(or pure)\emph{\ extensions of
order }$\alpha $. (Pure extensions correspond to the case $\alpha =0$.) This
concept originates from topology, as it was initially defined for phantom
maps between CW complexes \cite{ha_higher_2003}. The notion of phantom map
has been generalized to an arbitrary triangulated category in \cite%
{christensen_ideals_1998}. The \emph{order} of a phantom map also admits a
natural analogue in a triangulated context. In particular, by regarding $%
\mathrm{Ext}\left( C,A\right) $ as the module of morphisms $C\rightarrow
A[1] $ in the derived category of countable flat modules, one obtains the
notion of phantom extension of order $\alpha $. More generally, we define
the $\alpha $-th \emph{phantom subfunctor }of a cohomological functor on a
triangulated category, which yields the submodule of phantom morphisms of
order $\alpha $ when applied to $\mathrm{Hom}$ with a fixed target. The
characterization of pure extensions in terms of existence of lifts of
elements of the same order admits a natural generalization in terms of
\textquotedblleft higher order lifts\textquotedblright , characterizing
phantom\ (or pure) extensions of order $\alpha $.

We identify $\mathrm{Ph}^{\alpha }\mathrm{Ext}$ for countable flat modules
as the derived functor $\mathrm{Ext}_{\mathcal{E}_{\alpha }}$ of $\mathrm{Hom%
}$ with respect to a canonical exact structure $\mathcal{E}_{\alpha }$ on
the category $\mathbf{Flat}\left( R\right) $ of flat modules. This exact
structure $\mathcal{E}_{\alpha }$ can be identified as the exact structure 
\emph{projectively generated} by a class $\mathcal{S}_{\alpha }$ of
countable flat modules. Recall that for a category $\mathcal{C}$, a presheaf
of finite flat modules over $\mathcal{C}$ is a contravariant functor from $%
\mathcal{C}$ to the category $\mathbf{FinFlat}\left( R\right) $ of finite
flat modules. Particularly, this applies when $\mathcal{C}$ is an ordered
set, such as a rooted tree with the order defined for distinct nodes $x,y$, $%
x\prec y$ if and only if $y$ belongs to the unique path from the root to $x$%
. By a \emph{forest}, we mean a disjoint union of rooted trees, with the
induced order.

The classes $\mathcal{S}_{\alpha }$ for $\alpha <\omega _{1}$ are explicitly
described as the colimits of presheaves of finite flat modules over \emph{%
well-founded tree of rank }$\alpha $, when $\alpha $ is a successor, or less
than $\alpha $, when $\alpha $ is a limit. We prove that the exact category $%
\left( \mathbf{Flat}\left( R\right) ,\mathcal{E}_{\alpha }\right) $ is
hereditary with enough projectives. Furthermore, when $R$ is a Dedekind
domain, $\left( \mathcal{E}_{\alpha }\right) _{\alpha <\omega _{1}}$ is a
strictly decreasing chain of nontrivial exact structures on $\mathbf{Flat}%
\left( R\right) $.

We say that a countable flat module $C$ has \emph{projective length }at most 
$\alpha $ if it is $\mathcal{E}_{\alpha }$-projective, i.e., $\mathrm{Ph}%
^{\alpha }\mathrm{Ext}\left( C,A\right) =0$ for every countable flat module $%
A$, and we let $\mathcal{P}_{\alpha }$ be the class of $\mathcal{E}_{\alpha
} $-projective objects. We prove the following \emph{Structure Theorem }for $%
\mathcal{P}_{\alpha }$:

\begin{maintheorem}
\label{Theorem:Structure}Let $R$ be a countable Dedekind domain. Suppose
that $C$ is a countable flat module. Then $C$ has projective length at most $%
\alpha $ if and only if it is a direct summand of a colimit of a presheaf of
finite flat modules over a countable well-founded forest of rank $\alpha $.
\end{maintheorem}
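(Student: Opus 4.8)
The plan is to derive both implications from the structural facts already established: $\mathcal{E}_{\alpha}$ is the exact structure on $\mathbf{Flat}(R)$ projectively generated by the class $\mathcal{S}_{\alpha}$; $\mathcal{S}_{\alpha}$ consists precisely of the colimits of presheaves of finite flat modules over countable well-founded trees of rank $\alpha$ (of rank $<\alpha$ when $\alpha$ is a limit); $(\mathbf{Flat}(R),\mathcal{E}_{\alpha})$ has enough projectives with $\mathrm{Ph}^{\alpha}\mathrm{Ext}=\mathrm{Ext}_{\mathcal{E}_{\alpha}}$; and $(\mathcal{E}_{\alpha})_{\alpha<\omega_{1}}$ is decreasing, so $(\mathcal{P}_{\alpha})_{\alpha<\omega_{1}}$ is increasing. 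I record three elementary facts used throughout. (i) $\mathcal{P}_{\alpha}$ is closed under direct summands (a retract argument, using that the direct sum of an $\mathcal{E}_{\alpha}$-admissible epimorphism with an identity morphism is an $\mathcal{E}_{\alpha}$-admissible epimorphism) and under arbitrary direct sums of countable flat modules (since $\mathrm{Hom}(\bigoplus_{i}C_{i},-)\cong\prod_{i}\mathrm{Hom}(C_{i},-)$ takes $\mathcal{E}_{\alpha}$-admissible epimorphisms to surjections as soon as every factor does). (ii) The colimit of a presheaf of finite flat modules over a disjoint union of trees is the direct sum of the colimits over the individual trees. (iii) Every finite flat module is finitely presented, hence pure-projective, hence in $\mathcal{P}_{0}\subseteq\mathcal{P}_{\alpha}$.

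For the ``only if'' direction, suppose $C$ is $\mathcal{E}_{\alpha}$-projective. Because $\mathcal{E}_{\alpha}$ is projectively generated by $\mathcal{S}_{\alpha}$, the canonical morphism $\bigoplus_{S\in\mathcal{S}_{\alpha}}S^{(\mathrm{Hom}(S,C))}\to C$ is an $\mathcal{E}_{\alpha}$-admissible epimorphism (it is surjective with flat kernel, and every morphism from an object of $\mathcal{S}_{\alpha}$ to $C$ factors through it); since $C$ is $\mathcal{E}_{\alpha}$-projective, $\mathrm{id}_{C}$ lifts through it, so $C$ is a direct summand of a direct sum $\bigoplus_{j\in J}S_{j}$ of objects of $\mathcal{S}_{\alpha}$. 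Fixing a section and a countable generating set of $C$, the finite supports of the images of the generators lie in a countable $J_{0}\subseteq J$; restricting the section and retraction to $\bigoplus_{j\in J_{0}}S_{j}$ shows $C$ is already a summand of this countable sub-sum. Writing $S_{j}=\mathrm{colim}_{T_{j}}P_{j}$ for a countable well-founded tree $T_{j}$ of rank $\alpha$ (or $<\alpha$), the sum $\bigoplus_{j\in J_{0}}S_{j}$ is the colimit of the evident presheaf of finite flat modules over the countable well-founded forest $\bigsqcup_{j\in J_{0}}T_{j}$, whose rank is at most $\alpha$; adjoining a single tree of rank $\alpha$ carrying the zero presheaf makes the rank exactly $\alpha$ without changing the colimit. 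Hence $C$ is a direct summand of such a colimit.

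For the ``if'' direction, let $M$ be the colimit of a presheaf $P$ of finite flat modules over a countable well-founded forest of rank $\alpha$; by (i) it suffices to show $M\in\mathcal{P}_{\alpha}$, and by (ii) it suffices to treat a single countable well-founded tree $T$ of rank $\gamma\le\alpha$. If $\gamma$ is a successor then $\mathrm{colim}_{T}P\in\mathcal{S}_{\gamma}\subseteq\mathcal{P}_{\gamma}\subseteq\mathcal{P}_{\alpha}$. If $\gamma<\alpha$ is a limit, adjoining a new root above the root of $T$ with value $0$ leaves the colimit unchanged and produces a tree of rank $\gamma+1\le\alpha$, so $\mathrm{colim}_{T}P\in\mathcal{S}_{\gamma+1}\subseteq\mathcal{P}_{\alpha}$. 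The remaining case $\gamma=\alpha$ with $\alpha$ a limit is the heart of the matter: writing $T$ as its root $r$ together with the subtrees $T_{j}$ of rank $\beta_{j}<\alpha$ hanging from the children of $r$, the module $\mathrm{colim}_{T}P$ is the colimit of the star-shaped diagram with centre $P(r)$ (a finite flat module) and outer vertices $\mathrm{colim}_{\{r\}\cup T_{j}}P$, colimits over trees of rank $\beta_{j}+1\le\alpha$ and hence objects of $\mathcal{P}_{\alpha}$ by the previous cases; presenting this colimit as the cokernel of an explicit morphism between direct sums of these $\mathcal{P}_{\alpha}$-objects and checking that it again lies in $\mathcal{P}_{\alpha}$ (using closure of $\mathcal{P}_{\alpha}$ under extensions and summands) is the one step requiring genuine work. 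I expect this verification, together with the precise bookkeeping of ranks of forests against the tree-based description of $\mathcal{S}_{\alpha}$ for limit $\alpha$, to be the main obstacle; the rest is a formal consequence of the projective generation of $\mathcal{E}_{\alpha}$ by $\mathcal{S}_{\alpha}$ and the existence of enough $\mathcal{E}_{\alpha}$-projectives.
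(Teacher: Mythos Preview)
Your ``if'' direction is easier than you fear, and your ``only if'' direction harder than you allow.

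For the ``if'' direction: the rank of a well-founded rooted tree is always a \emph{successor} ordinal (it equals $\rho_{\prec_T}(\varnothing)+1$; see the paper's discussion of trees). Hence your ``remaining case $\gamma=\alpha$ with $\alpha$ limit'' is vacuous---no tree has limit rank. A forest of rank $\alpha$ is a disjoint union of trees each of successor rank $\gamma\le\alpha$; the colimit over each such tree lies in the class projectively generating $\mathcal{E}_{\gamma}$, hence is $\mathcal{E}_{\gamma}$-projective, hence in $\mathcal{P}_{\gamma}\subseteq\mathcal{P}_{\alpha}$. Your closure facts (i) and (ii) then finish the argument. No star-diagram cokernel computation and no closure of $\mathcal{P}_{\alpha}$ under extensions is needed.

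For the ``only if'' direction there is a genuine gap. The source $\bigoplus_{S\in\mathcal{S}_{\alpha}}S^{(\mathrm{Hom}(S,C))}$ is not countable, hence not an object of $\mathbf{Flat}(R)$, and you cannot invoke $\mathcal{E}_{\alpha}$-projectivity of $C$ (a property formulated relative to short exact sequences \emph{inside} $\mathbf{Flat}(R)$) to split a map whose source lies outside the category. Restricting to a countable sub-sum first does not help: you would then need the restricted map to remain $\mathcal{E}_{\alpha}$-admissible, i.e., every morphism from an object of $\mathcal{S}_{\alpha_n}$ into $C$ must factor through your chosen countable sum---and that is precisely the non-formal content. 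The paper handles this by an explicit recursive construction: $P^{\alpha}C=\bigoplus_{n}P_{n}^{\alpha_{n}}C$, where each $P_{n}^{\alpha_{n}}C$ is an iterated wedge sum built over the finite-submodule filtration of $C$, together with an inductive proof of the factorization property (every $B\to C$ with $B$ of plain tree length at most $\alpha_{n}$ factors through some $P_{d}^{\alpha_{n}}C$); see Theorem~\ref{Theorem:phantom-projective}(5)--(6). The facts you cite as established---enough $\mathcal{E}_{\alpha}$-projectives and $\mathrm{Ph}^{\alpha}\mathrm{Ext}=\mathrm{Ext}_{\mathcal{E}_{\alpha}}$---are themselves proved in the paper \emph{via} this construction, so invoking them while bypassing it is close to circular.
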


In the companion paper \cite{lupini_phantom_2025-1}, an analogous result is
obtained in the torsion case, showing that a countable \emph{torsion} module
has projective length at most $\alpha $ if and only it is a colimit presheaf
of finite \emph{torsion }modules over a well-founded forest of rank $\alpha $%
. In this case, this is also equivalent to being a reduced countable torsion
module of Ulm length at most $\alpha $. Thus, the notion of projective
length can be seen as an extension to the flat case of the Ulm length of
torsion modules. (The usual definition of the Ulm length in terms of the Ulm
submodules is not applicable in the flat case since the Ulm submodules of a
reduced flat module are all trivial.) Furthermore, Theorem \ref%
{Theorem:Structure} can be seen as the analogue in the flat case of the
classical Ulm Classification Theorem of reduced torsion modules of Ulm
length at most $\alpha $.

We also obtain another\ Structure Theorem applicable to arbitrary countable
flat modules. Let us say that a countable flat module $C$ has $A$-projective
length at most $\alpha $ for some other countable flat module $A$ if and
only if $\mathrm{Ph}^{\alpha }\mathrm{Ext}\left( C,A\right) =0$. For each $%
\alpha <\omega _{1}$, a countable flat module $C$ admits a canonical largest
submodule $\sigma _{\alpha }C$ of $R$-projective length at most $\alpha $.
Such submodules can be explicitly defined by recursion on $\alpha $, and are
characterized by the isomorphism%
\begin{equation*}
\mathrm{Ext}\left( \partial _{\alpha }C,R\right) \cong \mathrm{Ph}^{\alpha }%
\mathrm{Ext}\left( C,R\right)
\end{equation*}%
induced by the quotient map $C\rightarrow \partial _{\alpha }C=C/\sigma
_{\alpha }C$. The $R$-projective length of $C$ is then the least $\alpha $
for which $\partial _{\alpha }C$ is trivial. This provides a description of $%
C$ as colimit of the (eventually constant, continuous at limits) chain of
pure submodules $\left( \sigma _{\alpha }C\right) _{\alpha <\omega _{1}}$
such that $\sigma _{\alpha +1}C/\sigma _{\alpha }C$ has $R$-projective
length at most $1$ for every $\alpha <\omega _{1}$.

We also isolate a natural class of countable flat modules---here named \emph{%
extractable}---which are recursively constructed starting from the trivial
one by taking countable direct sums, direct summands, and extensions of
finite-rank modules. Letting the \emph{extractable length} of such a
countable flat module $C$ to be the index of the stage at which it is
obtained, one has that the extractable length coincides with the projective
length and the $R$-projective length of $C$, and is simply called the \emph{%
length} of $C$.

For a countable flat module $A$, we consider those extractable modules that
are obtained as above using building blocks $F$ that are not only
finite-rank, but also satisfy $\mathrm{Hom}\left( F,A\right) =0$. We call
these extractable modules $A$-\emph{independent}. For such modules, the
length is also equal to the $A$-projective length. Using modules of this
form, we prove the following Dichotomy Theorem concerning extensions of a
given countable flat module.

\begin{maintheorem}
Let $R$ be a countable Dedekind domain. Suppose that $A$ is a countable flat
module. Then either $A$ is injective, and hence $\mathrm{Ext}\left(
-,A\right) =0$, which happens precisely when $A$ is divisible, or for every $%
\alpha <\omega _{1}$ there exists an $A$-independent extractable countable
flat module of length $\alpha $. In particular, in this second case $A$
admits phantom extensions of arbitrarily high order.
\end{maintheorem}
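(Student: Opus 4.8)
The plan is to treat the two alternatives in turn. Since $R$ is Dedekind, a module is injective exactly when it is divisible, and an injective $A$ trivially satisfies $\mathrm{Ext}(-,A)=0$; so either $A$ is divisible, or $A$ is a nondivisible countable flat module and we must produce, for every $\alpha<\omega_{1}$, an $A$-independent extractable countable flat module of length $\alpha$. This in particular forces phantom extensions of arbitrarily high order: such a module $C$ of length $\alpha$ has $A$-projective length $\alpha$, so $\mathrm{Ph}^{\beta}\mathrm{Ext}(C,A)\neq 0$ for every $\beta<\alpha$, and $\alpha$ may be taken arbitrarily large. The first move in the nondivisible case is to reduce to $A$ reduced: writing $A=d(A)\oplus B$ for the maximal divisible submodule $d(A)$ (injective, hence split off) and $B:=A/d(A)$, which is reduced and nonzero precisely because $A$ is not divisible, the vanishing $\mathrm{Ext}(-,d(A))=0$ gives an isomorphism of phantom Polish modules $\mathrm{Ph}^{\alpha}\mathrm{Ext}(-,B)\cong\mathrm{Ph}^{\alpha}\mathrm{Ext}(-,A)$, and it suffices to build the asserted modules relative to the reduced module $B$.

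So assume $A$ is reduced and nonzero. I would then fix a maximal ideal $\mathfrak{p}$ at which $A$ is not divisible and use reducedness to extract a non-injective finite-rank building block together with a companion module $L$ through which $\mathrm{Ext}(-,A)$ can be probed. With $\pi$ a uniformizer of $R_{\mathfrak{p}}$, the rank-one module $F_{0}:=\bigcup_{n}\mathfrak{p}^{-n}$ is $\mathfrak{q}$-reduced for every $\mathfrak{q}\neq\mathfrak{p}$ --- hence non-injective --- and every homomorphism out of $F_{0}$ has $\mathfrak{p}$-divisible image. Killing the submodule $A^{(\mathfrak{p})}=\bigcap_{n}\mathfrak{p}^{n}A$ of $\mathfrak{p}$-divisible elements (and, if need be, passing to a purification and to a direct summand --- operations under which the Solecki filtration of $\mathrm{Ext}$ behaves well) produces a nonzero countable flat module $L$ obtained from $A$ with $L^{(\mathfrak{p})}=0$; it stays nonzero exactly because $A$ is not $\mathfrak{p}$-divisible. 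Then $\mathrm{Hom}(F_{0},L)=0$, since any such map has $\mathfrak{p}$-divisible image contained in $L^{(\mathfrak{p})}=0$. Using $F_{0}$ as basic building block I would construct, by transfinite recursion on $\alpha$ mirroring the construction of the classes $\mathcal{S}_{\alpha}$, an extractable countable flat module $C_{\alpha}$ of length $\alpha$ using only finite-rank pieces $F$ with $\mathrm{Hom}(F,L)=0$ (finite direct sums of copies of $F_{0}$, their summands, and extensions of these glued along a well-founded forest of rank $\alpha$), so that $C_{\alpha}$ is $L$-independent --- and hence, in the sense relevant to $A$ via the reduction above, $A$-independent.

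Since $C_{\alpha}$ is $L$-independent, its length equals its $L$-projective length, which is therefore $\alpha$, so $\mathrm{Ph}^{\beta}\mathrm{Ext}(C_{\alpha},L)\neq 0$ for every $\beta<\alpha$. Because $R$ is Dedekind one has $\mathrm{Ext}^{2}=0$, so each step of the passage from $L$ back to $A$ induces a map on $\mathrm{Ext}(C_{\alpha},-)$ along which nonvanishing of $\mathrm{Ph}^{\beta}$ is inherited --- a continuous open surjection carrying $\mathrm{Ph}^{\beta}$ onto $\mathrm{Ph}^{\beta}$ in the quotient steps, a retract in the summand steps --- and together with $\mathrm{Ph}^{\beta}\mathrm{Ext}(C_{\alpha},B)\cong\mathrm{Ph}^{\beta}\mathrm{Ext}(C_{\alpha},A)$ this yields $\mathrm{Ph}^{\beta}\mathrm{Ext}(C_{\alpha},A)\neq 0$ for all $\beta<\alpha$; taking $\alpha=\gamma+1$ produces a phantom extension of $A$ of order $\gamma$ for each $\gamma<\omega_{1}$. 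I expect the crux to be the lower bound on the length in the recursion: one must show that inserting the non-injective blocks $F_{0}$ does not collapse complexity, so that $C_{\alpha}$ genuinely has length $\alpha$ and not smaller. This should adapt the argument already used to establish that $(\mathcal{E}_{\alpha})_{\alpha<\omega_{1}}$ is strictly decreasing --- i.e. that modules of projective length exactly $\alpha$ exist --- to the relative setting of $L$-projective length, the delicate point being to propagate simultaneously the length lower bound and the $L$-independence through the forest-and-extension construction.
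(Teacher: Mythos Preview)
Your proposal has a genuine gap. The theorem asks for an \emph{$A$-independent} extractable module of each length $\alpha$, meaning the finite-rank building blocks $F$ entering the recursive construction must satisfy $\mathrm{Hom}(F,A)=0$. Your block $F_{0}=R[\mathfrak{p}^{-1}]$ does not: homomorphisms $F_{0}\to A$ correspond to elements of $\mathfrak{p}^{\infty}A$, and a reduced $A$ can have plenty of these (e.g.\ $A=\bigoplus_{\mathfrak{q}}R[\mathfrak{q}^{-1}]$). You pass to the auxiliary quotient $L=A/\mathfrak{p}^{\infty}A$ to force $\mathrm{Hom}(F_{0},L)=0$, but $L$-independence is not $A$-independence, so this does not prove the statement. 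The transfer back to $A$ for the ``in particular'' clause is likewise unjustified: the surjection $\mathrm{Ext}(C_{\alpha},A)\to\mathrm{Ext}(C_{\alpha},L)$ need not carry $\mathrm{Ph}^{\beta}$ onto $\mathrm{Ph}^{\beta}$, since Solecki length can strictly \emph{increase} under quotients of modules with a Polish cover (think of $\mathbb{R}\to\mathbb{R}/\mathbb{Q}$). And there is a more basic obstruction: a single rank-one block cannot drive the recursion past length $1$. Since $F_{0}$ is $\mathfrak{p}$-divisible one has $\mathrm{Ext}(F_{0},F_{0}^{(\omega)})=0$, so any fishbone with spine $F_{0}$ and ribs built from $F_{0}$ splits; the orthogonality-of-types condition in Definition~\ref{Definition:fishbone-length} that forces the length to grow simply cannot be met with one block.

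The paper's route supplies exactly the missing ingredient. It constructs an \emph{uncountable} family of rank-two modules $\Xi(\tau)$ indexed by transcendental-like elements $\tau$ of the completion (Section~\ref{Subsection:XI}), and shows that for any fixed countable $A$ the set of bad parameters---those with $\mathrm{Hom}(\Xi(\tau),A)\neq 0$---is only countable (Lemma~\ref{Lemma:Omega} and Lemma~\ref{Lemma:XI}(8)). This leaves uncountably many $\tau\in\Omega(A)$, and since inequivalent $\tau$'s give pairwise orthogonal rigid modules, there are enough of them to run the fishbone recursion with injective $\Xi$-invariant landing entirely in $\Omega(A)$ (Proposition~\ref{Proposition:existence-Xi}); the resulting module is then genuinely $A$-independent of the prescribed length (Proposition~\ref{Proposition:XI-independent}). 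The key idea you are missing is this cardinality argument: uncountably many candidate building blocks against only countably many obstructions coming from $A$.
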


Considering the complexity-theoretic characterization of phantom extensions
provides a reformulation of the Dichotomy Theorem in terms of the complexity
of classifying extensions of countable flat modules over a Dedekind domain
in the sense of\ Borel complexity theory \cite{gao_invariant_2009}. Thus, if 
$A$ is a countable flat module, then either for all countable flat modules $%
C $ the relation of isomorphism of extensions of $C$ by $A$ is trivial
(which happens precisely when $A$ is divisible), or there exist countable
flat modules $C$ such that the relation of isomorphism of extensions of $C$
by $A$ has arbitrarily high (potential) Borel complexity. In fact, in the
latter case, for each possible potential complexity class $\Gamma $ of a
Borel orbit equivalence relation that is classifiable by countable
structures, there exists a countable flat module $C_{\Gamma }$ such that the
relation of isomorphism of extensions of $C_{\Gamma }$ by $A$ has potential
complexity class $\Gamma $.

We also prove that for every $\alpha <\omega _{1}$, the class $\mathcal{P}%
_{\alpha }$ of countable flat modules of projective length at most $\alpha $
is a Borel subset of the Polish space of all countable flat modules; see
Theorem \ref{Therem:complexity-classes}(2). However, the union $\mathcal{P}%
_{\infty }$ of the classes $\mathcal{P}_{\alpha }$ for $\alpha <\omega _{1}$
is a co-analytic set that is not Borel; Theorem \ref%
{Therem:complexity-classes}(3).

The proofs of our main results hinge on the canonical contravariant
equivalence 
\begin{equation*}
\mathrm{colim}_{n\in \omega }C_{n}\mapsto \left( \mathrm{Hom}\left(
C_{n},R\right) \right) _{n\in \omega }
\end{equation*}%
between the category of countable flat modules and the category of \emph{%
towers of finite} (i.e., finitely-generated) \emph{flat modules}. Via this
equivalence, the functor \textrm{Ext}$\left( -,R\right) $ corresponds to the
functor $\mathrm{lim}^{1}$. We analyze the latter regarded as a functor to
the category of phantom Polish modules, characterizing the $\alpha $-th
Solecki submodule of $\mathrm{lim}^{1}\boldsymbol{A}$ as $\mathrm{lim}^{1}%
\boldsymbol{A}_{\alpha }$.\ Here, $\boldsymbol{A}_{\alpha }$ is the $\alpha $%
-th \emph{derived tower} of $\boldsymbol{A}$, obtained via the operation of
derivative applied to a combinatorial tree canonically associated with $%
\boldsymbol{A}$. We also observe that the functor $\mathrm{Ext}\left(
-,R\right) $ provides an equivalence between the category of countable flat
modules that are \emph{coreduced }(have no nonzero homomorphisms to $R$) and
the category of phantom pro-finiteflat Polish modules---which are phantom
Polish modules of the form $G/N$ where both $G$ and $N$ are inverse limit of
towers of finite flat modules. Whence, $\mathrm{lim}^{1}$ provides an
equivalence between the category of towers of countable finite flat modules
that are \emph{reduced }(have trivial inverse limit) and the category of
phantom pro-finiteflat modules.

The main results of this work have a host of implications for homological
invariants from algebraic, complex, and coarse geometry, topology, and
operator algebras, such as \v{C}ech cohomology \cite%
{eilenberg_group_1942,bergfalk_definable_2024-1}, coarse cohomology \cite%
{roe_coarse_1993}, Lie group and Lie algebra cohomology \cite%
{vogan_unitary_2008,januszewski_hausdorffness_2025,kyed_topologizing_2016},
discrete, measurable, and continuous cohomology of topological groups and
groupoids \cite%
{austin_euclidean-valued_2018,austin_continuity_2013,moore_group_1976,moore_group_1976-1,castellano_rational_2016,castellano_finiteness_2020}%
, cohomology of Banach algebras \cite%
{kamowitz_cohomology_1962,johnson_cohomology_1972}, $\mathrm{Ext}$ of Banach
spaces \cite{cabello_sanchez_homological_2023}, cohomology of group actions
on standard probability spaces \cite%
{kechris_global_2010,schmidt_cocycles_1977,schmidt_asymptotically_1980,schmidt_amenability_1981}
von Neumann algebras \cite{popa_some_2006,kawahigashi_cohomology_1991}
C*-algebras \cite{herman_models_1983,izumi_finite_2004,izumi_finite_2004-1}
and Banach spaces \cite%
{shalom_harmonic_2004,martin_free_2010,martin_first_2007}, Dolbeault
cohomology \cite%
{coltoiu_separation_2005,kazama_cohomology_1990,gunning_analytic_2009,chakrabarti_some_2015,silva_rungescher_1978}%
, and KK-theory \cite%
{matui_ext_2001,kishimoto_ext_1998,schochet_fine_2001,schochet_fine_2002,schochet_fine_2005}%
. All these invariants can be described as groups with a Polish cover. The
study of the corresponding quotient topology, as well as the problem of
determining when it is Hausdorff, or more generally to compute the closure
of the trivial subgroup, has attracted in each of these cases a lot of
attention, as the references above show. A group with a Polish cover is
Hausdorff precisely when it is in fact a Polish group or, equivalently, it
has Solecki length $0$. More generally, the closure of the trivial subgroup
is precisely the Solecki subgroup of index $0$. Thus, the problem of
determining the Solecki length, which can be an arbitrary countable ordinal,
can be seen as a wide-reaching generalization of the Hausdorffness problem.
In turn, the problem of describing the Solecki subgroup of index $\alpha $
for an arbitrary countable ordinal $\alpha $ subsumes the problem of
describing the closure of the trivial subgroup as the particular case when $%
\alpha =0$.

In many cases the invariants mentioned above can be expressed in terms of $%
\mathrm{Ext}$ via so-called Universal Coefficient Theorems, thus allowing
one to characterize their \emph{phantom }cohomological subfunctors as those
corresponding to $\mathrm{Ph}^{\alpha }\mathrm{Ext}$. Alternatively, one can
consider Milnor exact sequences expressing the closure of $\left\{ 0\right\} 
$ in these invariants as a $\mathrm{lim}^{1}$ of a tower of countable
modules, whose derived towers correspond via $\mathrm{lim}^{1}$ to their
phantom subfunctors. This perspective allows one to refine the classical
approach by providing for these invariants a canonical family of subfunctors
indexed by countable ordinals.

This article is divided into 13 sections, including this introduction. In
Section \ref{Section:categories} we recall some necessary background from
category theory, including the notion of \emph{(quasi-)abelian category} and
the construction of\emph{\ derived categories} and \emph{derived functors}.
In Section \ref{Section:countable} we present some results from commutative
algebra concerning \emph{Pr\"{u}fer and Dedekind domains} and their \emph{%
modules}. Fundamental properties of the class of inverse limits of towers
countable modules (\emph{pro-countable Polish modules}) and its notable
subcategories are established in Section \ref{Section:pro-countable}.
Building on this analysis, \emph{modules with a Polish cover}, phantom
Polish modules, and their \emph{Solecki submodules}, as previously defined
in \cite{bergfalk_definable_2024,lupini_looking_2024}, are introduced in
Section \ref{Section:modules-polish-cover}.

This work considers various notions of \textquotedblleft
length\textquotedblright\ and \textquotedblleft rank\textquotedblright .
Ordinal-valued ranks are a well-studied topic in descriptive set theory,
particularly the notion of \emph{co-analytic rank} on a co-analytic set. We
recall in Section \ref{Section:ranks} these notions and fundamental results
concerning them. The definition and fundamental properties of the first
derived functor $\mathrm{lim}^{1}$ of the limit functor for towers of
countable modules are recalled in Section \ref{Section:lim1}. The definition
of \emph{derived tower} and the characterization of the Solecki subfunctors
of $\mathrm{lim}^{1}$ in terms of derived towers are also included in this
section.

\emph{Phantom morphisms} as previously defined by Christensen \cite%
{christensen_ideals_1998} are introduced in Section \ref{Section:phantom}.
Here, we define the analogue for morphisms in a triangulated category of the 
\emph{order} of a phantom map in the topological setting. More generally, we
consider the \emph{phantom subfunctors} of a given cohomological functor on
a triangulated category---phantom morphisms being recovered in the
particular case of corepresentable functors.

The derived category of the category of countable (flat) modules, and the
derived functor $\mathrm{Ext}$ of $\mathrm{Hom}$ are introduced in Section %
\ref{Section:Ext}. Here the \emph{order of a phantom extension} is
introduced as a particular instance of the order of a phantom morphism in
the triangulated category. The \emph{projective length} and $A$-projective
length of a countable module are defined as relaxations of projectivity in
terms of the phantom subfunctors $\mathrm{Ph}^{\alpha }\mathrm{Ext}$ of $%
\mathrm{Ext}$. A characterization of modules of projective length at most
one and $R$-projective length at most one is also obtained.

In Section \ref{Section:phantom-resolutions} we produce phantom projective
resolutions of order $\alpha $. We realize $\mathrm{Ph}^{\alpha }\mathrm{Ext}
$ as a derived functor of $\mathrm{Hom}$ with respect to the exact structure 
$\mathcal{E}_{\alpha }$ projectively generated by the class $\mathcal{S}%
_{\alpha }$ of colimits of presheaves of finite flat modules over
well-founded rooted trees of rank $\alpha $, when $\alpha $ is a successor,
or less than $\alpha $, when $\alpha $ is limit. We also completely
characterize the countable flat modules that have projective length at most $%
\alpha $, i.e., are $\mathcal{E}_{\alpha }$-projective, as the direct
summands of colimits of presheaves of finite flat modules over well-founded
forests of rank $\alpha $.

In Section \ref{Section:higher}, higher order phantom extensions are further
analyzed. In particular, we construct for every countable flat module a
canonical chain of submodules characterized by their projective length,
providing the first \emph{Structure Theorem} for arbitrary countable flat
modules over a Pr\"{u}fer domain. \emph{Extractable modules} and the
corresponding length are also introduced here, and their main homological
properties are established.

Section \ref{Section:constructions-Dedekind} provides explicit constructions
of phantom extensions in the case of modules over countable Dedekind
domains. For every countable flat module $A$ that admits nontrivial
extensions (i.e., it is not divisible) we produce extensions of arbitrarily
high order. We conclude in Section \ref{Section:complexity} by isolating the
complexity-theoretic content of phantom extensions, in terms of the \emph{%
Borel complexity} of sets and equivalence relations as studied in
descriptive set theory and invariant complexity theory. In particular, we
notice how the main results of the paper can be seen as offering a \emph{%
Dichotomy Theorem} for the complexity of isomorphism of extensions of a
given countable flat module $A$, a classification problem that is either
trivial or has arbitrarily high complexity.

Notationally, we denote by $\mathbb{N}$ the set of positive integers \emph{%
excluding zero}. The set of nonnegative integers is denoted by $\omega $ and
identified with the first infinite ordinal. A nonnegative integer $n\in
\omega $ is identified with the set $\left\{ 0,1,\ldots ,n-1\right\} $ of
its predecessors. We let $\omega _{1}$ be the set of countable ordinals,
which is the first uncountable ordinal. Thus, we write $\alpha <\omega _{1}$
to mean that $\alpha $ is a countable ordinal.

\subsubsection*{Acknowledgments}

We are grateful to Jeffrey Bergfalk, Luigi Caputi, Nicola Carissimi, Dan
Christensen, Alessandro Codenotti, Ivan Di Liberti, Luisa Fiorot, Pietro
Freni, Eusebio Gardella, Fosco Loregian, Nicholas Meadows, Andr\'{e} Nies,
Aristotelis Panagiotopoulos, Luca Reggio, Claude Schochet, and Joseph
Zielinski for several helpful conversations and useful remarks.

\section{Category theory background\label{Section:categories}}

In this section we recall some notions from category theory that will be
used in the rest of the paper.

\subsection{Quasi-abelian categories}

A (locally small) \emph{preadditive category} or $\mathbf{Ab}$-category is a
category $\mathcal{A}$ enriched over the category $\mathbf{Ab}$ of abelian
groups. Thus, for all objects $x,y$ of $\mathcal{A}$, the corresponding
hom-set $\mathrm{Hom}_{\mathcal{A}}\left( x,y\right) $ is an abelian group,
such that composition of arrows is \emph{bilinear}. An \emph{additive
category }\cite[Section VIII.2]{mac_lane_categories_1998} is a preadditive
category that has a terminal object, which is automatically also an initial
object and called a \emph{zero object}, and all binary products, which are
also binary coproducts and called \emph{biproducts}. A functor between
preadditive categories is additive if it preserves biproducts or,
equivalently, it induces group homomorphisms at the level of hom-sets.

An additive category $\mathcal{A}$ is \emph{quasi-abelian }\cite[Section 1.1]%
{schneiders_quasi-abelian_1999}\emph{\ }if it satisfies the following
axioms, together with their duals obtained by reversing all the arrows:

\begin{itemize}
\item $\mathcal{A}$ has all kernels (and hence all finite limits);

\item the class of kernels is closed under pushout along arbitrary arrows.
\end{itemize}

In a quasi-abelian category, an arrow $f:A\rightarrow B$ is called \emph{%
strict }or \emph{admissible }if the canonical arrow $\mathrm{Coim}\left(
f\right) \rightarrow \mathrm{Im}\left( f\right) $ that it induces is an
isomorphism, where $\mathrm{Coim}\left( f\right) =\mathrm{Coker}\left( 
\mathrm{\mathrm{Ker}}\left( f\right) \right) $ and $\mathrm{Im}\left(
f\right) =\mathrm{K\mathrm{er}}\left( \mathrm{Coker}\left( f\right) \right) $%
. This is equivalent to the assertion that one can write $f=m\circ e$ where $%
e$ is a cokernel and $m$ is a kernel \cite[Section 1.1]%
{schneiders_quasi-abelian_1999}. A monic arrow is strict if and only if it
is a kernel, while an epic arrow is strict if and only if it is a cokernel.

An \emph{abelian category} is a quasi-abelian category where every arrow is
strict \cite[Section 1.1]{schneiders_quasi-abelian_1999}. Equivalently, an
additive category is abelian if and only if it satisfies the following
axioms, together with their duals:

\begin{itemize}
\item $\mathcal{A}$ has all kernels (and hence all finite limits);

\item every monic arrow is a kernel.
\end{itemize}

An additive category is called \emph{idempotent-complete }if every
idempotent arrow $p$ in $\mathcal{A}$ (satisfying $p\circ p=p$) has a kernel 
\cite[Definition 6.1]{buhler_exact_2010}. This implies that every idempotent
arrow in $\mathcal{A}$ also has an image \cite[Remark 6.2]{buhler_exact_2010}%
.

\subsection{Exact categories}

Exact categories provide the natural context to develop \emph{relative
homological algebra }and other generalizations of notions concerning
homological algebra and derived functors.

Let $\mathcal{A}$ be an additive category. A kernel-cokernel pair in $%
\mathcal{A}$ is a pair $\left( f,g\right) $ of arrows of $\mathcal{A}$ such
that $f$ is a kernel of $g$ and $g$ is a cokernel of $f$. An \emph{exact
structure }on $\mathcal{A}$ is a class $\mathcal{E}$ of kernel-cokernel
pairs $\left( f,g\right) $, where $f$ is called an \emph{admissible monic }%
of $\mathcal{E}$ or $\mathcal{E}$-inflation and $g$ is called an \emph{%
admissible epic} of $\mathcal{E}$ or $\mathcal{E}$-deflation, satisfying the
following axioms, as well as their duals:

\begin{enumerate}
\item For every object of $\mathcal{A}$, the corresponding identity morphism
is an admissible monic;

\item The class of admissible monics is closed under composition;

\item The pushout of an admissible monic along an arbitrary morphism exists
and yields an admissible monic.
\end{enumerate}

An \emph{exact category }is a pair $\left( \mathcal{A},\mathcal{E}\right) $
where $\mathcal{A}$ is an additive category and $\mathcal{E}$ is an exact
structure on $\mathcal{A}$ \cite[Section 2]{buhler_exact_2010}. The elements
of $\mathcal{E}$ are called the\emph{\ short exact sequences} for the given
exact category. A morphism $f$ of $\mathcal{A}$ is called \emph{admissible}
if it admits a factorization $f=m\circ e$ where $m$ is an admissible monic
and $e$ is an admissible epic \cite[Section 2]{buhler_exact_2010}. In this
case, such a factorization is essentially unique \cite[Lemma 8.4]%
{buhler_exact_2010}.

A sequence 
\begin{equation*}
A^{\prime }\overset{f}{\rightarrow }A\overset{g}{\rightarrow }A^{\prime
\prime }
\end{equation*}%
of admissible arrows with admissible factorizations $f=i\circ p$ and $%
g=j\circ q$ is \emph{exact }or \emph{acyclic} if the sequence%
\begin{equation*}
\bullet \overset{i}{\rightarrow }A\overset{q}{\rightarrow }\bullet
\end{equation*}%
is short-exact. A sequence%
\begin{equation*}
A^{0}\rightarrow A^{1}\rightarrow \cdots \rightarrow A^{n}
\end{equation*}%
for $n\geq 2$ is exact if each of the sequences%
\begin{equation*}
A^{i}\rightarrow A^{i+1}\rightarrow A^{i+2}
\end{equation*}%
for $0\leq i\leq n-2$ are exact.

If $\mathcal{A}$ is a quasi-abelian category, then the collection $\mathcal{E%
}$ of all kernel-cokernel pairs in $\mathcal{A}$ is an exact structure on $%
\mathcal{A}$ \cite[Proposition 4.4]{buhler_exact_2010}. Unless specified
otherwise, we will regard a quasi-abelian category as an exact category with
respect to this (maximal) exact structure.

The natural notion of morphism between exact categories is provided by exact
functors; see \cite[Definition 5.1]{buhler_exact_2010}.

\begin{definition}
An additive functor $F:\mathcal{A}\rightarrow \mathcal{B}$ between exact
categories is \emph{exact }if it maps short exact sequences in $\mathcal{A}$
to short exact sequences in $\mathcal{B}$.
\end{definition}

An additive functor is exact if and only if it preserves kernels and
cokernels of admissible morphisms. An exact functor between exact categories
preserves pushouts along admissible monics, and pullbacks along admissible
epics \cite[Definition 5.1]{buhler_exact_2010}.

\begin{definition}
An additive functor $F:\mathcal{A}\rightarrow \mathcal{B}$ between exact
categories is \emph{left exact }if for every exact sequence%
\begin{equation*}
0\rightarrow A\rightarrow B\rightarrow C\rightarrow 0
\end{equation*}%
in $\mathcal{A}$, the sequence%
\begin{equation*}
0\rightarrow FA\rightarrow FB\rightarrow FC
\end{equation*}%
in $\mathcal{B}$ is exact.
\end{definition}

An additive functor is left exact if and only if it preserves kernels of
admissible arrows. The notion of \emph{right exact }functor is obtained by
duality.

A\emph{\ fully exact subcategory }$\mathcal{B}$ of an exact category $%
\mathcal{A}$ is a full subcategory whose collection of objects is closed
under extensions, in the sense that if 
\begin{equation*}
0\rightarrow B^{\prime }\rightarrow A\rightarrow B^{\prime \prime
}\rightarrow 0
\end{equation*}%
is an exact sequence with $B^{\prime }$ and $B^{\prime \prime }$ in $%
\mathcal{B}$, then $A$ is isomorphic to an object of $\mathcal{B}$ \cite[%
Definition 10.21]{buhler_exact_2010}. We then have that $\mathcal{B}$ is
itself an exact category, where a short exact sequence in $\mathcal{B}$ is a
short exact sequence in $\mathcal{A}$ whose objects are in $\mathcal{B}$.

A \emph{thick }(\emph{quasi-})\emph{abelian }subcategory $\mathcal{B}$ of a
(quasi-)abelian category $\mathcal{A}$ is a fully exact subcategory of $%
\mathcal{A}$ that is also (quasi-)abelian and such that the inclusion
functor $\mathcal{B}\rightarrow \mathcal{A}$ is exact, finitely continuous,
and finitely cocontinuous.

\subsection{Triangulated categories}

A \emph{triangulated category }graded by the translation functor $T$\emph{\ }%
is an additive category $\mathcal{A}$ endowed with an additive automorphism $%
T$ together with a collection of \emph{distinguished triangles }$\left(
X,Y,Z,u,v,w\right) $ where $u\in \mathrm{Hom}^{0}\left( X,Y\right) $, $v\in 
\mathrm{Hom}^{0}\left( Y,Z\right) $, and $w\in \mathrm{Hom}^{1}\left(
Z,X\right) $, satisfying the axioms of triangulated categories \cite[Section
1.1]{verdier_categories_1977}; see also \cite[Definition 10.1.6]%
{kashiwara_categories_2006}. In this definition, we let $\mathrm{Hom}%
^{k}\left( X,Y\right) $ be the group $\mathrm{Hom}_{\mathcal{A}}\left(
X,T^{k}Y\right) $. The composition of $f\in \mathrm{Hom}^{n}\left(
X,Y\right) $ and $g\in \mathrm{Hom}^{m}\left( Y,Z\right) $ is defined to be $%
T^{n}g\circ f\in \mathrm{Hom}^{n+m}\left( X,Z\right) $. This defines a new
category with graded groups as hom-sets.

A functor $F:\mathcal{A}\rightarrow \mathcal{B}$ between triangulated
categories is \emph{triangulated }if it is additive, graded (which means
that $TF$ and $FT$ are naturally isomorphic), and it maps distinguished
triangles to distinguished triangles. A \emph{morphism} $\mu :F\Rightarrow G$
between triangulated functors $\mathcal{A}\rightarrow \mathcal{B}$ is a
natural transformation $\mu $ such that $\mu T\circ \alpha _{F}\cong \alpha
_{G}\circ Tu$ where $\alpha _{F}$ is the isomorphism $TF\cong FT$ and $%
\alpha _{G}$ is the isomorphism $TG\cong GT$ as in the definition of
triangulated functor.

An additive functor $F:\mathcal{A}\rightarrow \mathcal{E}$ from a
triangulated category $\mathcal{A}$ to an abelian category $\mathcal{E}$ is 
\emph{cohomological} if for every distinguished triangle $\left(
X,Y,Z,u,v,w\right) $ of $\mathcal{A}$, the sequence%
\begin{equation*}
FX\overset{Fu}{\rightarrow }FY\overset{Fv}{\rightarrow }FZ
\end{equation*}%
is exact. This yields long exact sequence%
\begin{equation*}
\cdots \rightarrow F^{d}X\overset{F^{d}u}{\rightarrow }F^{d}Y\overset{F^{d}v}%
{\rightarrow }F^{d}Z\overset{F^{d+1}w}{\rightarrow }F^{d+1}X\rightarrow
\cdots
\end{equation*}%
where we set $F^{d}:=F\circ T^{d}$ for $d\in \mathbb{Z}$. Dually, a functor
is \emph{homological }if it is a cohomological functor on the opposite
category.

For example, for objects $X,Y$ of $\mathcal{A}$, the functors $\mathrm{Hom}%
\left( X,-\right) $ and, respectively, $\mathrm{Hom}\left( -,Y\right) $ from 
$\mathcal{A}$ to the category of abelian groups are homological and
cohomological, respectively.

Suppose now that $\mathcal{A}$, $\mathcal{B}$, $\mathcal{C}$ are
triangulated categories, and $F:\mathcal{A}\times \mathcal{B}\rightarrow 
\mathcal{C}$ is a functor. One says that $F$ is \emph{graded} if there exist
natural isomorphisms%
\begin{equation*}
F\left( TX,Y\right) \cong _{\ell _{X,Y}}TF\left( X,Y\right)
\end{equation*}%
and%
\begin{equation*}
F\left( X,TY\right) \cong _{r_{X,Y}}TF\left( X,Y\right)
\end{equation*}%
for $X\in \mathcal{A}$ and $Y\in \mathcal{B}$ such that%
\begin{equation*}
T\ell _{X,Y}\circ r_{TX,Y}=Tr_{X,Y}\circ \ell _{X,TY}\text{;}
\end{equation*}%
see \cite[Definition 10.1.1]{kashiwara_categories_2006}. The functor $F$ is 
\emph{triangulated }if it is graded, as witnessed by natural transformations 
$\ell $ and $r$ as above, and for every distinguished triangle%
\begin{equation*}
X\overset{u}{\rightarrow }Y\overset{v}{\rightarrow }Z\overset{w}{\rightarrow 
}TX
\end{equation*}%
in $\mathcal{A}$ and object $B$ of $\mathcal{B}$,%
\begin{equation*}
F\left( X,B\right) \rightarrow F\left( Y,B\right) \rightarrow F\left(
Z,B\right) \rightarrow TF\left( X,B\right)
\end{equation*}%
is a distinguished triangle in $\mathcal{C}$, where the rightmost arrow is
the composition%
\begin{equation*}
F\left( Z,B\right) \overset{F\left( w,1_{B}\right) }{\rightarrow }F\left(
TX,B\right) \overset{\ell _{X,B}}{\rightarrow }TF\left( X,B\right) \text{,}
\end{equation*}%
and for every distinguished triangle as above in $\mathcal{B}$ and object $A$
of $\mathcal{A}$, 
\begin{equation*}
F\left( A,X\right) \rightarrow F\left( A,Y\right) \rightarrow F\left(
A,z\right) \rightarrow TF\left( A,X\right)
\end{equation*}%
is a distinguished triangle in $\mathcal{C}$, where the rightmost arrow is
the composition%
\begin{equation*}
F\left( A,Z\right) \overset{F\left( 1_{A},w\right) }{\rightarrow }F\left(
A,TX\right) \overset{r_{A,X}}{\rightarrow }TF\left( A,X\right) \text{;}
\end{equation*}%
see \cite[Definition 10.3.6]{kashiwara_categories_2006}.

\subsection{Derived categories}

Suppose that $\left( \mathcal{A},\mathcal{E}\right) $ is an
idempotent-complete exact category. We let \textrm{Ch}$^{b}\left( \mathcal{A}%
\right) $ be the category of bounded complexes over $\mathcal{A}$, and 
\textrm{K}$^{b}\left( \mathcal{A}\right) $ be the category of bounded
complexes of $\mathcal{A}$ whose morphisms are the\emph{\ homotopy classes}
of morphisms of complexes \cite[Section 10]{buhler_exact_2010}. Then we have
that \textrm{K}$^{b}\left( \mathcal{A}\right) $ is a triangulated category
with translation functor $T$ defined by $\left( TA\right) ^{n}=A^{n+1}$ for $%
n\in \mathbb{Z}$. A \emph{strict triangle }in \textrm{K}$^{b}\left( \mathcal{%
A}\right) $ over a morphism $f:A\rightarrow B$ in \textrm{Ch}$^{b}\left( 
\mathcal{A}\right) $ is $\left( A,B,\mathrm{cone}\left( f\right)
,f,i,j\right) $ where $i:B\rightarrow \mathrm{cone}\left( f\right) $ and $j:%
\mathrm{cone}\left( f\right) \rightarrow TA$ are the canonical morphisms of
complexes \cite[Definition 9.2]{buhler_exact_2010}. A distinguished triangle
in \textrm{K}$^{b}\left( \mathcal{A}\right) $ is one that is isomorphic to a
strict triangle. Similarly one defines the categories $\mathrm{K}^{+}\left( 
\mathcal{A}\right) $ and $\mathrm{K}^{-}\left( \mathcal{A}\right) $ of
left-bounded complexes and right-bounded complexes, respectively.

A complex $A$ over $\mathcal{A}$ is \emph{acyclic }if for every $n\in 
\mathbb{Z}$,%
\begin{equation*}
A^{n}\rightarrow A^{n+1}\rightarrow A^{n+2}
\end{equation*}%
is an exact sequence of admissible morphisms. The subcategory $\mathrm{N}%
^{b}\left( \mathcal{A}\right) $ of \textrm{K}$^{b}\left( \mathcal{A}\right) $
spanned by bounded acyclic complexes is \emph{thick }\cite[Corollary 10.11]%
{buhler_exact_2010}, namely it is strictly full and it is closed under
taking direct summands of its objects \cite[Remark 10.10]{buhler_exact_2010}%
. One can thus define the Verdier quotient 
\begin{equation*}
\mathrm{D}^{b}\left( \mathcal{A}\right) :=\frac{\mathrm{K}^{b}\left( 
\mathcal{A}\right) }{\mathrm{N}^{b}\left( \mathcal{A}\right) }\text{.}
\end{equation*}%
By definition, this is obtained as the \emph{category of fractions }\textrm{K%
}$^{b}\left( \mathcal{A}\right) [\Sigma ^{-1}]$. Here, $\Sigma $ is the 
\emph{multiplicative system} of \emph{quasi-isomorphisms} in \textrm{K}$%
^{b}\left( \mathcal{A}\right) $. By definition, a morphism $%
u=[f]:X\rightarrow Y$ is a quasi-isomorphism if and only if it fits into a
distinguished triangle $\left( X,Y,Z,u,v,w\right) $. This is equivalent to
the assertion that $\mathrm{cone}\left( f\right) $ is acyclic; see \cite[%
Definition 10.16 and Remark 10.17]{buhler_exact_2010}. We let $Q_{\mathcal{A}%
}:\mathrm{K}^{b}\left( \mathcal{A}\right) \rightarrow \mathrm{D}^{b}\left( 
\mathcal{A}\right) $ be the canonical quotient functor.

One similarly define the categories $\mathrm{D}^{+}\left( \mathcal{A}\right) 
$ and $\mathrm{D}^{-}\left( \mathcal{A}\right) $ as quotients of $\mathrm{K}%
^{+}\left( \mathcal{A}\right) $ and $\mathrm{K}^{-}\left( \mathcal{A}\right) 
$, respectively.

\subsection{Left hearts of quasi-abelian categories}

Let $\mathcal{A}$ be a quasi-abelian category. A canonical \textquotedblleft
completion\textquotedblright\ of $\mathcal{A}$ to an abelian category is
described in \cite{beilinson_faisceaux_1982,schneiders_quasi-abelian_1999}.
Let us consider $\mathcal{A}$ as an exact category, endowed with its maximal
exact structure. Let $\mathrm{LH}\left( \mathcal{A}\right) $ be the heart of 
\textrm{D}$^{b}\left( \mathcal{A}\right) $ with respect to its canonical
left truncation structure. Explicitly, $\mathrm{LH}\left( \mathcal{A}\right) 
$ is the full subcategory of $\mathrm{D}^{b}\left( \mathcal{A}\right) $
spanned by complexes $A$ with $A^{n}=0$ for $n\in \mathbb{Z}\setminus
\left\{ -1,0\right\} $ and such that the coboundary morphism $%
A^{-1}\rightarrow A^{0}$ is a monomorphism. For brevity, we call $\mathrm{LH}%
\left( \mathcal{A}\right) $ the \emph{left heart }of $\mathcal{A}$.\ The
definition of $\mathrm{LH}\left( \mathcal{A}\right) $ as the heart of 
\textrm{D}$^{b}\left( \mathcal{A}\right) $ with respect to its canonical
left truncation structure yields a canonical \emph{cohomological} functor $%
\mathrm{H}^{0}:\mathrm{D}^{b}\left( \mathcal{A}\right) \rightarrow \mathrm{LH%
}\left( \mathcal{A}\right) $.

It is proved in \cite{schneiders_quasi-abelian_1999} that the left heart of $%
\mathcal{A}$ is an abelian category satisfying the following universal
property, which characterizes it up to equivalence: the inclusion $\mathcal{A%
}\rightarrow \mathrm{LH}\left( \mathcal{A}\right) $ is exact and finitely
continuous, and for any abelian category $\mathcal{M}$ and exact and
finitely continuous functor $F:\mathcal{A}\rightarrow \mathcal{M}$ there
exists an essentially unique exact (and finitely continuous) functor $%
\mathrm{LH}\left( \mathcal{A}\right) \rightarrow \mathcal{M}$ whose
restriction to $\mathcal{A}$ is isomorphic to $F$.

Recall that a \emph{torsion pair }\cite[Definition 2.4]{tattar_torsion_2021}%
---see also \cite{fiorot_quasi-abelian_2021,happel_tilting_1996}---in a
quasi-abelian category $\mathcal{M}$ with torsion class $\mathcal{T}$ and
torsion-free class $\mathcal{F}$ is a pair $\left( \mathcal{T},\mathcal{F}%
\right) $ of full subcategories of $\mathcal{M}$ such that:

\begin{enumerate}
\item for all objects $T$ of $\mathcal{T}$ and $F$ of $\mathcal{F}$, $%
\mathrm{Hom}\left( T,F\right) =0$;

\item for all objects $M$ of $\mathcal{M}$ there exists a short exact
sequence%
\begin{equation*}
0\rightarrow {}_{\mathcal{T}}M\rightarrow M\rightarrow M_{\mathcal{F}%
}\rightarrow 0
\end{equation*}%
where $_{\mathcal{T}}M$ is in $\mathcal{T}$ and $M_{\mathcal{F}}$ is in $%
\mathcal{F}$.
\end{enumerate}

In this case, we say that $\mathcal{A}$ is an extension of $\mathcal{F}$ by $%
\mathcal{T}$ and write%
\begin{equation*}
0\rightarrow \mathcal{T}\rightarrow \mathcal{A}\rightarrow \mathcal{F}%
\rightarrow 0\text{.}
\end{equation*}%
We have that an object $X$ of $\mathcal{M}$ is in $\mathcal{T}$ if and only
if $\mathrm{Hom}\left( X,F\right) =0$ for all $F$ in $\mathcal{F}$, and it
is in $\mathcal{F}$ if and only if $\mathrm{Hom}\left( T,X\right) =0$ for
all $T$ in $\mathcal{T}$. Furthermore, $\mathcal{T}$ is closed under
quotients and $\mathcal{F}$ under subobjects. The torsion pair $\left( 
\mathcal{T},\mathcal{F}\right) $ is \emph{hereditary} when $\mathcal{T}$ is
also closed under subobjects, in which case $\mathcal{T}$ is a thick
quasi-abelian subcategory of $\mathcal{A}$.

If $\left( \mathcal{T},\mathcal{F}\right) $ is a torsion pair for $\mathcal{M%
}$, then $\mathcal{T}$ and $\mathcal{F}$ are fully exact quasi-abelian
subcategories of $\mathcal{M}$ \cite[Theorem 2]{rump_almost_2001}.
Conversely, if $\mathcal{A}$ is a quasi-abelian category, let $\kappa :%
\mathrm{LH}\left( \mathcal{A}\right) \rightarrow \mathcal{A}$ be the left
adjoint of the inclusion $\mathcal{A}\rightarrow \mathrm{LH}\left( \mathcal{A%
}\right) $; see \cite[Defintion 1.2.26 and Proposition 1.2.27]%
{schneiders_quasi-abelian_1999}. Thus, we have that%
\begin{equation*}
\mathrm{Hom}_{\mathcal{A}}\left( \kappa \left( X\right) ,B\right) \cong 
\mathrm{Hom}_{\mathrm{LH}\left( \mathcal{A}\right) }\left( X,B\right)
\end{equation*}%
for all objects $B$ of $\mathcal{A}$. Define $\mathrm{Ph}\left( \mathcal{A}%
\right) $ to be the full subcategory of $\mathrm{LH}\left( \mathcal{A}%
\right) $ spanned by the objects $X$ such that $\kappa \left( X\right) =0$.
Then we have that $\left( \mathrm{Ph}\left( \mathcal{A}\right) ,\mathcal{A}%
\right) $ is a torsion pair in $\mathrm{LH}\left( \mathcal{A}\right) $ \cite[%
Theorem 2]{rump_almost_2001}.

\begin{definition}
\label{Definition:phantom-category}Let $\mathcal{A}$ be a quasi-abelian
category. The full subcategory $\mathrm{Ph}\left( \mathcal{A}\right) $ of $%
\mathrm{LH}\left( \mathcal{A}\right) $ defined as above is the \emph{phantom
category }of $\mathcal{A}$, and its object are called phantom objects of $%
\mathcal{A}$.
\end{definition}

\begin{lemma}
\label{Lemma:fully-qa}If $\mathcal{B}$ is a fully exact quasi-abelian
subcategory of $\mathcal{A}$, then the inclusion $\mathcal{B}\rightarrow 
\mathcal{A}$ induces a fully faithful exact functor $\mathrm{LH}\left( 
\mathcal{B}\right) \rightarrow \mathrm{LH}\left( \mathcal{A}\right) $.
\end{lemma}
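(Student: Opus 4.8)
The plan is to leverage the universal property of the left heart together with the description of $\mathrm{LH}(-)$ as a heart inside the bounded derived category.

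First I would recall that, since $\mathcal{B}$ is a fully exact subcategory of $\mathcal{A}$, a sequence of admissible morphisms in $\mathcal{B}$ is short exact in $\mathcal{B}$ precisely when it is short exact in $\mathcal{A}$. Consequently, the inclusion $\iota\colon\mathcal{B}\to\mathcal{A}$ is an exact functor of exact categories, and it induces a triangulated functor $\mathrm{D}^b(\mathcal{B})\to\mathrm{D}^b(\mathcal{A})$ by passing to bounded complexes, homotopy classes, and then inverting quasi-isomorphisms. (One must check that $\iota$ sends acyclic complexes to acyclic complexes, which is immediate from the characterization of acyclicity in terms of exact sequences of admissible morphisms, and that it sends quasi-isomorphisms to quasi-isomorphisms, since a quasi-isomorphism is a morphism whose cone is acyclic.) The key input here is the result of \cite[Theorem 2]{rump_almost_2001}, already invoked in the excerpt, that $\mathcal{B}$ is itself quasi-abelian, so $\mathrm{LH}(\mathcal{B})$ and the cohomological functor $\mathrm{H}^0\colon\mathrm{D}^b(\mathcal{B})\to\mathrm{LH}(\mathcal{B})$ are defined.

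Next I would construct the functor $\mathrm{LH}(\mathcal{B})\to\mathrm{LH}(\mathcal{A})$. The composite $\mathcal{B}\xrightarrow{\iota}\mathcal{A}\hookrightarrow\mathrm{LH}(\mathcal{A})$ is exact and finitely continuous (being a composite of exact, finitely continuous functors, the second by the universal property stated in the excerpt). Applying the universal property of $\mathrm{LH}(\mathcal{B})$ to this composite yields an essentially unique exact functor $\bar\iota\colon\mathrm{LH}(\mathcal{B})\to\mathrm{LH}(\mathcal{A})$ extending it. Alternatively, and more concretely, $\bar\iota$ is the restriction to the left hearts of the triangulated functor $\mathrm{D}^b(\mathcal{B})\to\mathrm{D}^b(\mathcal{A})$; an object of $\mathrm{LH}(\mathcal{B})$ is (up to iso) a two-term complex $B^{-1}\rightarrowtail B^0$ in $\mathcal{B}$ with monic coboundary, and its image is the same complex viewed in $\mathcal{A}$, which lies in $\mathrm{LH}(\mathcal{A})$ since $\iota$ preserves monomorphisms (it preserves kernels of admissible morphisms as a left exact functor, and a monomorphism between objects has zero kernel).

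Finally, the main point — and the step I expect to be the principal obstacle — is full faithfulness. The cleanest route is to compute $\mathrm{Hom}$ groups in the left hearts via the derived categories: for $X,Y\in\mathrm{LH}(\mathcal{B})$ one has $\mathrm{Hom}_{\mathrm{LH}(\mathcal{B})}(X,Y)\cong\mathrm{Hom}_{\mathrm{D}^b(\mathcal{B})}(X,Y)$ (morphisms between objects of the heart agree with morphisms in the ambient derived category, and even $\mathrm{Hom}_{\mathrm{D}^b(\mathcal{B})}(X,Y[k])=0$ for $k<0$), and likewise in $\mathcal{A}$. So full faithfulness of $\bar\iota$ reduces to the assertion that $\mathrm{D}^b(\mathcal{B})\to\mathrm{D}^b(\mathcal{A})$ is fully faithful on objects of $\mathrm{LH}(\mathcal{B})$. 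This, in turn, follows once one knows that $\mathrm{D}^b(\mathcal{B})\to\mathrm{D}^b(\mathcal{A})$ is fully faithful, or at least fully faithful on the relevant subcategory; the standard criterion is that $\mathcal{B}$ be closed under extensions in $\mathcal{A}$ (which is exactly the fully-exact hypothesis) and that every admissible epic in $\mathcal{A}$ with target in $\mathcal{B}$ be dominated by one from $\mathcal{B}$ — here, because every object of $\mathrm{LH}(\mathcal{B})$ is represented by a bona fide two-term complex already in $\mathcal{B}$, one can represent any morphism and any homotopy between representatives by data in $\mathcal{B}$, using that a roof $X\leftarrow X'\to Y$ in $\mathrm{D}^b(\mathcal{A})$ with $X,Y$ two-term complexes in $\mathcal{B}$ can be replaced by one with $X'$ in $\mathrm{Ch}^b(\mathcal{B})$ via the truncation/resolution available inside $\mathcal{B}$. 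I would carry this out by a direct diagram chase with two-term complexes, checking surjectivity (every $\mathcal{A}$-morphism $X\to Y$ in the derived category comes from a $\mathcal{B}$-roof) and injectivity (a $\mathcal{B}$-roof that is null in $\mathrm{D}^b(\mathcal{A})$ is already null in $\mathrm{D}^b(\mathcal{B})$) separately, each reducing to the closure of $\mathcal{B}$ under the kernels and cokernels of admissible morphisms that appear — which holds because $\mathcal{B}$ is a fully exact quasi-abelian subcategory.
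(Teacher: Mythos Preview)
Your high-level strategy matches the paper's: both argue that a roof $X \leftarrow Z \to Y$ representing a morphism in $\mathrm{LH}(\mathcal{A})$, with $X, Y$ in $\mathrm{LK}(\mathcal{B})$, can be replaced by one with $Z$ in $\mathrm{LK}(\mathcal{B})$. The construction of the functor and the reduction to this cofinality statement are fine.

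The gap is in your last paragraph. You write that the roof-replacement ``reduc[es] to the closure of $\mathcal{B}$ under the kernels and cokernels of admissible morphisms that appear --- which holds because $\mathcal{B}$ is a fully exact quasi-abelian subcategory.'' But fully exact means closed under \emph{extensions}, not under kernels or cokernels taken in $\mathcal{A}$. For instance, torsion-free abelian groups form a fully exact quasi-abelian subcategory of $\mathbf{Ab}$, yet the cokernel in $\mathbf{Ab}$ of $2\colon\mathbb{Z}\to\mathbb{Z}$ is $\mathbb{Z}/2\mathbb{Z}$. You also invoke a ``standard criterion'' requiring that every admissible epic in $\mathcal{A}$ with target in $\mathcal{B}$ be dominated by one from $\mathcal{B}$; this is a genuine extra hypothesis (it is exactly the sort of condition appearing in the paper's Lemma~\ref{Lemma:fully-exact2}) and is not part of what you are given here.

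The paper does real work to close this gap. Given a roof $(Z,\sigma,g)$, it first enlarges $Z$ so that $(\sigma_1,g_1)$ and $\sigma_0$ become epimorphisms, then passes to a coimage to make $(\sigma_1,g_1)\colon Z_1 \to X_1\oplus Y_1$ an isomorphism. (This manipulation is carried out after embedding a small ambient subcategory into a module category via Freyd--Mitchell, so that one can argue with elements.) After these reductions one reads off a short exact sequence $0\to Y_1\to Z_0\to X_0\to 0$ in $\mathcal{A}$ with outer terms in $\mathcal{B}$, and \emph{extension}-closure then forces $Z_0\in\mathcal{B}$, hence $Z\in\mathrm{LK}(\mathcal{B})$. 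The entire point of the manipulation is to arrange that only extension-closure is needed, never kernel- or cokernel-closure; this is precisely the step your proposal does not supply.
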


\begin{proof}
Since the inclusion $\mathcal{B}\rightarrow \mathcal{A}$ is exact and
finitely continuous, the functor $\mathrm{LH}\left( \mathcal{B}\right)
\rightarrow \mathrm{LH}\left( \mathcal{A}\right) $ that it induces is exact.
It remains to prove that the functor $\mathrm{LH}\left( \mathcal{B}\right)
\rightarrow \mathrm{LH}\left( \mathcal{A}\right) $ is fully faithful. We
will use the description of $\mathrm{LH}\left( \mathcal{A}\right) $ as a
localization as in \cite[Defintion 1.2.26 and Proposition 1.2.27]%
{schneiders_quasi-abelian_1999}. Let $\mathrm{LK}\left( \mathcal{A}\right) $
be the full subcategory of $\mathrm{K}\left( \mathcal{A}\right) $ spanned by
complexes $A$ such that $A^{n}=0$ for $n\in \mathbb{Z}\setminus \left\{
-1,0\right\} $ and such that the coboundary morphism $A^{-1}\rightarrow
A^{0} $ is a monomorphism. Then quasi-isomorphisms between such complexes
are precisely the chain maps that induce a square commutative diagram that
is both a pullback and a pushout. The corresponding localization of \textrm{%
LK}$\left( \mathcal{A}\right) $ is equivalent to $\mathrm{LH}\left( \mathcal{%
A}\right) $.

Suppose that $X,Y$ are objects of $\mathrm{LH}\left( \mathcal{B}\right) $.
Let $f:X\rightarrow Y$ be morphism in $\mathrm{LH}\left( \mathcal{A}\right) $%
. Let $\mathcal{R}\left( f\right) $ be the collection of triples $\left(
Z,\sigma ,g\right) $ that \emph{represent }$f$, where $Z$ is an object of $%
\mathrm{LK}\left( \mathcal{A}\right) $, $\sigma :Z\rightarrow X$ is a
morphism in $\mathrm{N}\left( \mathcal{A}\right) $, and $g:Z\rightarrow Y$
is a morphism in $\mathrm{K}\left( \mathcal{A}\right) $. We endow $\mathcal{R%
}\left( f\right) $ with the preorder relation defined by setting $\left(
Z,\sigma ,g\right) \leq \left( Z^{\prime },\sigma ^{\prime },g^{\prime
}\right) $ if and only if there exists a morphism $\eta :Z\rightarrow
Z^{\prime }$ in $\mathrm{K}\left( \mathcal{A}\right) $ such that $\sigma
^{\prime }\eta =\sigma $ and $g^{\prime }\eta =g$. It suffices to prove that
the collection $\mathcal{R}_{\mathcal{B}}\left( f\right) $ triples $\left(
Z,\sigma ,g\right) $ in $\mathcal{R}\left( f\right) $ with $Z$ in $\mathrm{LK%
}\left( \mathcal{B}\right) $ is cofinal.

Suppose that $\left( Z,\sigma ,g\right) $ is in $\mathcal{R}\left( f\right) $%
. After replacing $\mathcal{A}$ with a small quasi-abelian subcategory
containing $\mathcal{A}$, we can assume without loss of generality that $%
\mathcal{A}$ is small. In this case, we have that $\mathrm{LH}\left( 
\mathcal{A}\right) $ is also small. By the Freyd--Mitchell Embedding Theorem 
\cite[Definition 8.3.21]{kashiwara_sheaves_1994}, we can assume that $%
\mathrm{LH}\left( \mathcal{A}\right) $ is a fully exact abelian subcategory
of the category $\mathrm{Mod}_{R}$ of $R$-modules for some ring $R$. Whence, 
$\mathcal{A}$ is a fully exact quasi-abelian subcategory of $\mathrm{Mod}%
_{R} $.

Consider the morphism%
\begin{equation*}
\tau =\sigma \oplus g:Z\rightarrow X\oplus Y\text{.}
\end{equation*}%
After replacing $Z_{1}$ with $Z_{1}\oplus X_{1}\oplus Y_{1}\oplus X_{0}$ and 
$Z_{0}$ with $Z_{0}\oplus X_{1}\oplus Y_{1}\oplus X_{0}$, we can assume that 
$\tau _{1}$ and $\sigma _{0}$ are epimorphisms in $\mathrm{LH}\left( 
\mathcal{A}\right) $ (and hence admissible epimorphisms in $\mathcal{A}$).

After replacing $Z_{1}$ with $\mathrm{Coker}\left( \mathrm{\mathrm{Ker}}%
\left( \tau _{1}\right) \right) $ and $Z_{0}$ with $\mathrm{Coker}\left( 
\mathrm{ker}\left( \tau _{0}\right) \right) $, we can assume that $\tau
_{1}:Z_{1}\rightarrow X_{1}\oplus Y_{1}$ is an isomorphism. Identifying $%
Z_{1}$ with $X_{1}\oplus Y_{1}$ via this isomorphism, we have that 
\begin{equation*}
\mathrm{\mathrm{\mathrm{Ker}}}\left( \sigma _{1}\right) =0\oplus Y_{1}\text{.%
}
\end{equation*}%
Thus, we have a short exact sequence%
\begin{equation*}
0\rightarrow 0\oplus Y_{1}\rightarrow Z_{0}\overset{\sigma _{0}}{\rightarrow 
}X_{0}\rightarrow 0
\end{equation*}%
in $\mathcal{A}$. Since $\mathcal{B}$ is a fully exact subcategory of $%
\mathcal{A}$, this implies that $Z_{0}$ is in $\mathcal{B}$. Thus, we have
that $Z$ is in \textrm{K}$\left( \mathcal{B}\right) $, concluding the proof.
\end{proof}

\subsection{Relative homological algebra}

Suppose that $\mathcal{A}$ is a category. Recall that an arrow $%
r:A\rightarrow B$ is a \emph{retraction} if it has a right inverse, which is
an arrow $g:B\rightarrow A$ satisfying $r\circ g=1_{B}$. A class $\mathbb{E}$
of cokernels in $\mathcal{A}$ is called a proper class of cokernels \cite[%
Section 1]{generalov_relative_1996} if it satisfies the following:

\begin{enumerate}
\item Every retraction is in $\mathbb{E}$;

\item The class $\mathbb{E}$ is closed under composition;

\item The class $\mathbb{E}$ is closed under pullback along arbitrary arrows;

\item If $\sigma ,\tau $ are arrows such that the composition $\sigma \tau $
exists, and $\sigma \tau $ and $\tau $ belong to $\mathbb{E}$, then $\sigma $
belongs to $\mathbb{E}$.
\end{enumerate}

A class $\mathbb{M}$ of kernels is called a proper class of kernels if it is
a proper class of cokernels in the opposite category $\mathcal{A}^{\mathrm{op%
}}$.

\begin{lemma}
Suppose that $\mathcal{E}$ is a class of short exact sequences in a finitely
complete and finitely cocomplete additive category $\mathcal{A}$. The
following assertions are equivalent:

\begin{enumerate}
\item $\left( \mathcal{A},\mathcal{E}\right) $ is an exact category;

\item The class $\mathbb{E}$ of cokernels that appear in $\mathcal{E}$ is a
proper class of cokernels, and the class $\mathbb{M}$ of kernels that appear
in $\mathcal{E}$ is a proper class of kernels.
\end{enumerate}
\end{lemma}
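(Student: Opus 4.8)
The plan is to show that, over a finitely complete and cocomplete additive $\mathcal{A}$, the datum of a class $\mathcal{E}$ of short exact sequences, the datum of its class $\mathbb{E}$ of deflations, and the datum of its class $\mathbb{M}$ of inflations are interchangeable, and that under this dictionary the three axioms (and their duals) defining an exact structure match the four axioms defining a proper class of cokernels (resp.\ kernels). The preliminary points are: since $\mathcal{A}$ is finitely complete, idempotents split, so $\mathcal{A}$ is idempotent-complete; a kernel--cokernel pair $(i,p)$ is recovered from either component via $i=\mathrm{Ker}(p)$, $p=\mathrm{Coker}(i)$; and $\mathcal{E}$ is closed under isomorphism of short exact sequences --- automatic in case (1) by \cite[Section 2]{buhler_exact_2010}, and in case (2) a consequence of the proper-class axioms, since isomorphisms, being both retractions and coretractions, lie in $\mathbb{E}\cap\mathbb{M}$, which together with closure of $\mathbb{E}$ and $\mathbb{M}$ under composition forces $\mathcal{E}$ to be isomorphism-closed. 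Given this, $\mathcal{E}=\{(\mathrm{Ker}(p),p):p\in\mathbb{E}\}=\{(i,\mathrm{Coker}(i)):i\in\mathbb{M}\}$, and the admissible monics and admissible epics of $(\mathcal{A},\mathcal{E})$ are by definition exactly the members of $\mathbb{M}$ and of $\mathbb{E}$.

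For $(1)\Rightarrow(2)$ I would fix an exact structure and check the four proper-class axioms for $\mathbb{E}$, the case of $\mathbb{M}$ being formally dual. Axiom (1), that retractions lie in $\mathbb{E}$: splitting the idempotent $sr$ attached to a retraction $r$ with section $s$ presents $r$, up to isomorphism, as a biproduct projection, and split short exact sequences are admissible \cite[Section 2]{buhler_exact_2010}. Axiom (2), closure under composition, is dual to the exact-structure axiom that admissible monics compose. Axiom (3), closure under pullback, is dual to the pushout axiom, the relevant pullbacks existing by finite completeness. Axiom (4) is subsumed by the ``obscure axiom'' of exact categories --- if $\sigma\tau$ is a deflation then $\sigma$ is a deflation, since $\sigma$ has a kernel --- so even the hypothesis $\tau\in\mathbb{E}$ is not needed; see \cite[Section 2 and the appendix]{buhler_exact_2010}.

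For $(2)\Rightarrow(1)$ I would run the dictionary backwards. That $1_A$ is an admissible monic: $A\to 0$ is a retraction, hence in $\mathbb{E}$, with kernel $1_A$, so $(1_A,\ A\to 0)\in\mathcal{E}$; dually $1_A$ is an admissible epic because $1_A$ itself is a retraction. Admissible monics compose because $\mathbb{M}$ is closed under composition, and dually for admissible epics. For pullbacks of admissible epics: given a deflation $p\colon B\to D$ with kernel $i$ and an arbitrary $f\colon C\to D$, the pullback $P=B\times_D C$ exists by finite completeness, the induced $p'\colon P\to C$ lies in $\mathbb{E}$ by closure under pullback, and $\mathrm{Ker}(p')\cong i$ canonically, so the pulled-back square extends to a member of $\mathcal{E}$; dually, closure of $\mathbb{M}$ under pullback in $\mathcal{A}^{\mathrm{op}}$, i.e.\ under pushout in $\mathcal{A}$, together with finite cocompleteness, yields that pushouts of admissible monics are admissible monics. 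This verifies all the axioms, so $(\mathcal{A},\mathcal{E})$ is exact.

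The main thing to get right is the dictionary itself: that membership of an arrow in $\mathbb{E}$ or $\mathbb{M}$ reconstructs a genuine short exact sequence of $\mathcal{E}$ (this is where isomorphism-closure of $\mathcal{E}$ is used) and the appeal to the obscure axiom for axiom (4) (this is where idempotent-completeness, here supplied for free by finite (co)completeness, is needed). None of the individual verifications is deep; the statement is essentially classical, cf.\ \cite[Section 1]{generalov_relative_1996}.
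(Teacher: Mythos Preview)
Your proposal is correct and follows essentially the same approach as the paper: both directions rest on the dictionary between $\mathcal{E}$, $\mathbb{E}$, and $\mathbb{M}$, with the only nontrivial verification being axiom~(4), handled via the Obscure Axiom \cite[Proposition 7.6]{buhler_exact_2010} using idempotent-completeness. The paper carries out $(1)\Rightarrow(2)$ for $\mathbb{M}$ rather than $\mathbb{E}$ (a cosmetic duality) and dismisses $(2)\Rightarrow(1)$ as ``obvious'', whereas you spell it out; your added detail is accurate and your observation that axiom~(4) is not needed for $(2)\Rightarrow(1)$ is correct.
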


\begin{proof}
(1)$\Rightarrow $(2) It suffices to prove that$\mathbb{\ M}$ is a proper
class of kernels, as the other assertion follows by duality. Since $\mathcal{%
A}$ is in particular idempotent complete, every coretraction is an
admissible monic by \cite[Corollary 7.5]{buhler_exact_2010}. By definition,
we also have that the class $\mathbb{M}$ is closed under composition and
pushout by arbitrary arrows. Suppose now that $\sigma ,\tau $ are arrows
such that the composition $\sigma \tau $ exists, and $\sigma \tau $ and $%
\sigma $ are admissible monics. We need to prove that $\tau $ is an
admissible monic. This follows from the \textquotedblleft Obscure
Axiom\textquotedblright\ of exact categories \cite[Proposition 7.6]%
{buhler_exact_2010}, asserting that if $f,g$ are morphisms in an
idempotent-complete exact category such that $gf$ is defined and is an
admissible monic, then $f$ is an admissible monic.

(2)$\Rightarrow $(1) This is obvious.
\end{proof}

Suppose that $\mathfrak{I}$ is a class of objects in a quasi-abelian
category $\mathcal{A}$ . Let $\mathcal{E}_{\mathfrak{I}}$ be the collection
of kernel-cokernel pairs $A\rightarrow B\rightarrow C$ in $\mathcal{A}$ such
that for every object $X$ of $\mathfrak{I}$ the induced morphism 
\begin{equation*}
\mathrm{Hom}\left( X,B\right) \rightarrow \mathrm{Hom}\left( X,C\right)
\end{equation*}%
is surjective. We say that $\mathcal{E}_{\mathfrak{I}}$ is the exact
structure \emph{projectively generated} by $\mathfrak{I}$.

\begin{proposition}
The exact structure $\mathcal{E}_{\mathfrak{I}}$ projectively generated by a
class of objects $\mathfrak{I}$ in a quasi-abelian category $\mathcal{A}$ is
indeed an exact structure on $\mathcal{A}$.
\end{proposition}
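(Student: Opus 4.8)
The plan is to verify the exact-structure axioms directly. Every pair in $\mathcal{E}_{\mathfrak{I}}$ is by definition a kernel-cokernel pair, hence a short exact sequence for the maximal exact structure $\mathcal{E}_{\mathrm{max}}$ on the quasi-abelian category $\mathcal{A}$, and such a pair is determined up to isomorphism by its deflation (equivalently, by its inflation), since kernels and cokernels are unique. Since a quasi-abelian category is finitely complete and finitely cocomplete, the previous Lemma reduces the claim to showing that the class $\mathbb{E}$ of deflations appearing in $\mathcal{E}_{\mathfrak{I}}$ is a proper class of cokernels, and that the class $\mathbb{M}$ of inflations appearing in $\mathcal{E}_{\mathfrak{I}}$ is a proper class of kernels. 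Two standing observations will be used repeatedly: for each object $X$ the functor $\mathrm{Hom}\left( X,-\right) $ preserves kernels, hence is left exact on $\left( \mathcal{A},\mathcal{E}_{\mathrm{max}}\right) $, so that surjectivity of $\mathrm{Hom}\left( X,g\right) $ for the deflation $g$ of a pair $A\rightarrow B\rightarrow C$ is equivalent to $\mathrm{Hom}\left( X,-\right) $ carrying that pair to a short exact sequence of abelian groups; and the ``obscure axiom'' of exact categories already invoked in the proof of the previous Lemma, to the effect that if a composite $g\circ f$ is an admissible monic (resp.\ epic) then $f$ (resp.\ $g$) is an admissible monic (resp.\ epic).

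Checking that $\mathbb{E}$ is a proper class of cokernels is straightforward. A retraction splits, so $\mathrm{Hom}\left( X,-\right) $ sends it to a split epimorphism. The composite of two $\mathcal{E}_{\mathrm{max}}$-deflations is again one, and a composite of surjections is surjective, so $\mathbb{E}$ is closed under composition. For closure under pullback, recall that the pullback of an $\mathcal{E}_{\mathrm{max}}$-deflation $g:B\rightarrow C$ along any $f:C'\rightarrow C$ is again an $\mathcal{E}_{\mathrm{max}}$-deflation with the same kernel; given $\phi:X\rightarrow C'$, a lift of $f\phi$ along $g$ together with $\phi$ induces, by the universal property of the pullback, a lift of $\phi$. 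Finally, if $\sigma\tau$ and $\tau$ lie in $\mathbb{E}$, the obscure axiom makes $\sigma$ an $\mathcal{E}_{\mathrm{max}}$-deflation, and lifting an arbitrary morphism along $\sigma\tau$ and postcomposing with $\tau$ shows $\mathrm{Hom}\left( X,\sigma\right) $ is surjective, so $\sigma\in\mathbb{E}$.

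The verification that $\mathbb{M}$ is a proper class of kernels is \emph{not} merely dual, since the defining condition of $\mathcal{E}_{\mathfrak{I}}$ is always imposed on the cokernel side. A split monomorphism has a split epimorphism as cokernel, hence lies in $\mathbb{M}$. The pushout of an inflation along an arbitrary morphism is again an inflation with the same cokernel map, and a lift along the original cokernel transported along the pushout comparison morphism gives closure under pushout (the hypothesis on $\mathfrak{I}$ is not needed here). The two remaining axioms---closure of $\mathbb{M}$ under composition, and the cancellation property that $\sigma\tau\in\mathbb{M}$ and $\sigma\in\mathbb{M}$ imply $\tau\in\mathbb{M}$---are where the argument has content. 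In each case one begins with composable admissible monics $A\rightarrow B\rightarrow D$ and invokes the Noether isomorphism for exact categories, which supplies a short exact sequence $0\rightarrow B/A\rightarrow D/A\rightarrow D/B\rightarrow 0$ compatible with the three cokernel maps; the obscure axiom provides, where needed, that the arrow in question is already an $\mathcal{E}_{\mathrm{max}}$-inflation, so only surjectivity of the relevant $\mathrm{Hom}\left( X,-\right) $ remains. Combining left exactness of $\mathrm{Hom}\left( X,-\right) $ on this Noether sequence with the surjectivity hypotheses coming from the two given pairs, a short diagram chase in $\mathbf{Ab}$---lift along the outer pair, correct the resulting error by an element pulled back along $B/A$, then lift that along the inner pair---produces the required lift along the cokernel of $A\rightarrow D$ (for composition), respectively along the cokernel of $A\rightarrow B$ (for cancellation).

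I expect this last diagram chase to be the only point demanding care, and the care is bookkeeping rather than ingenuity: one must keep track of the fact that for inflations the surjectivity condition lives on the cokernel side, so it is the Noether sequence---and not a long exact sequence of some non-exact functor---that propagates the hypotheses. All remaining axioms are obtained by transferring the corresponding property from $\mathcal{E}_{\mathrm{max}}$ and noting the evident stability of surjectivity of $\mathrm{Hom}\left( X,-\right) $ under the universal constructions involved.
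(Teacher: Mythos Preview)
Your proposal is correct and closely related to the paper's proof, but organized differently. The paper verifies the Keller--Quillen axioms for $\mathcal{E}_{\mathfrak{I}}$ directly, whereas you invoke the immediately preceding Lemma to reduce to checking that the deflations form a proper class of cokernels and the inflations a proper class of kernels. The substantive step in both proofs is identical: the Noether-type sequence $B/A\rightarrow D/A\rightarrow D/B$ together with the two-stage lift-and-correct argument handles closure of $\mathbb{M}$ under composition (and, in your version, the cancellation axiom as well). What you gain from the reduction is a much shorter pushout verification---you simply use that in the ambient $\mathcal{E}_{\mathrm{max}}$ a pushout of an admissible monic has isomorphic cokernel, so any lift along the original deflation can be transported---whereas the paper reproves this isomorphism from scratch inside the argument. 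The price you pay is two extra cancellation checks, one on each side, but these are routine (the one for $\mathbb{M}$ follows from the same Noether sequence: include $X\rightarrow B/A$ into $D/A$, lift along $D\rightarrow D/A$, observe the result dies in $D/B$ and hence factors through $B$). Both routes are valid; yours is arguably more systematic given that the Lemma is already available, while the paper's is more self-contained.
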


\begin{proof}
We verify the axioms of exact structures. It is clear that the identity
morphisms are $\mathcal{E}_{\mathfrak{I}}$-inflations and $\mathcal{E}_{%
\mathfrak{I}}$-deflations, and that composition of $\mathcal{E}_{\mathfrak{I}%
}$-deflations is an $\mathcal{E}_{\mathfrak{I}}$-deflation.\ Let us prove
that composition of $\mathcal{E}_{\mathfrak{I}}$-inflations is an $\mathcal{E%
}_{\mathfrak{I}}$-inflation. Suppose that $A\rightarrow B\rightarrow B/A$
and $B\rightarrow C\rightarrow C/B$ are kernel-cokernel pairs that belong to 
$\mathcal{E}_{\mathfrak{I}}$. We let $\pi _{B/A}^{B}:B\rightarrow B/A$ and $%
\pi _{C/B}^{C}:C\rightarrow C/B$ be the corresponding $\mathcal{E}_{%
\mathfrak{I}}$-deflations. We claim that $A\rightarrow C\rightarrow C/A$
belongs to $\mathcal{E}_{\mathfrak{I}}$, where $C\rightarrow C/A$ is the
cokernel of $A\rightarrow C$. Let $X$ be an object of $\mathfrak{I}$ and let 
$f:X\rightarrow C/A$ be a morphism. Let $B/A\rightarrow C/A\rightarrow C/B$
be the kernel-cokernel pair induced by $B\rightarrow C\rightarrow C/B$ and
the strict monic $A\rightarrow B$. Let $\pi _{C/B}^{C/A}:C/A\rightarrow C/B$
be the corresponding strict epic and $\iota _{C/A}^{B/A}:B/A\rightarrow C/A$
be the corresponding strict monic. Consider $\pi _{C/B}^{C/A}f:X\rightarrow
C/B$. Since $B\rightarrow C\rightarrow C/B$ is $\mathcal{E}_{\mathfrak{I}}$%
-exact, there exists a morphism $\varphi :X\rightarrow C$ such that $\pi
_{C/B}^{C}\varphi =\pi _{C/B}^{C/A}f$. Consider $\eta :=f-\pi
_{C/A}^{C}\varphi :X\rightarrow C/A$ and observe that $\pi _{C/B}^{C/A}\eta
=\pi _{C/B}^{C/A}f-\pi _{C/B}^{C}\varphi =0$. Therefore we have that $\eta
=\iota _{C/A}^{B/A}\eta _{0}$ for some $\eta _{0}:X\rightarrow B/A$. Since $%
A\rightarrow B\rightarrow B/A$ is $\mathcal{E}_{\mathfrak{J}}$-exact, there
exists a morphism $\tau :X\rightarrow B$ such that $\eta _{0}=\pi
_{B/A}^{B}\tau $.

Define now $g:=\iota _{C}^{B}\tau +\varphi :X\rightarrow C$. Then we have
that 
\begin{eqnarray*}
\pi _{C/A}^{C}g &=&\pi _{C/A}^{C}\iota _{C}^{B}\tau +\pi _{C/A}^{C}\varphi
=\iota _{C/A}^{B/A}\pi _{B/A}^{B}\tau +\pi _{C/A}^{C}\varphi \\
&=&\iota _{C/A}^{B/A}\eta _{0}+\pi _{C/A}^{C}\varphi =\eta +\pi
_{C/A}^{C}\varphi =f\text{.}
\end{eqnarray*}%
This concludes the proof that $A\rightarrow C\rightarrow C/A$ is $\mathcal{E}%
_{\mathfrak{I}}$-exact.

We now show that the pullback of an $\mathcal{E}_{\mathfrak{I}}$-deflation
along an arbitrary morphism is an $\mathcal{E}_{\mathfrak{I}}$-deflation.
Suppose that $A\rightarrow B\rightarrow B/A$ is $\mathcal{E}_{\mathfrak{I}}$%
-exact and that $t:L\rightarrow B/A$ is a morphism. Let $P$ be the pullback
of $\pi _{B/A}^{B}:B\rightarrow B/A$ and $t$ with canonical arrows $%
a:P\rightarrow B$ and $b:P\rightarrow L$. We claim that $b$ is an $\mathcal{E%
}_{\mathfrak{I}}$-deflation. We have that $b$ is a cokernel by the axioms of
quasi-abelian categories. Suppose that $f:X\rightarrow L$ is a morphism, and
let $f_{0}:=tf:X\rightarrow B/A$. Since $A\rightarrow B\rightarrow B/A$ is $%
\mathcal{E}_{\mathfrak{I}}$-exact, we have that there exists an arrow $%
g_{0}:X\rightarrow B$ such that $\pi _{B/A}^{B}g_{0}=f_{0}=tf$. By the
universal property of the pullback, there exists a unique arrow $\xi
:X\rightarrow P$ such that $b\xi =f$ and $a\xi =g_{0}$. This shows that $b$
is an $\mathcal{E}_{\mathfrak{I}}$-deflation.

We now prove that the pushout of an $\mathcal{E}_{\mathfrak{I}}$-inflation
along an arbitrary morphism is an $\mathcal{E}_{\mathfrak{I}}$-inflation.
Suppose that $A\rightarrow B\rightarrow B/A$ is $\mathcal{E}_{\mathfrak{I}}$%
-exact. Let $t:A\rightarrow L$ be a morphism, and $P$ be the pushout of $t$
and $\iota _{B}^{A}:A\rightarrow B$, with canonical arrows $a:L\rightarrow P$
and $b:B\rightarrow P$. By the axioms of quasi-abelian categories, $a$ is a
kernel. Let $c=\mathrm{Coker}\left( a\right) :P\rightarrow Q$. By the
universal property of the pushout, there exists a unique arrow $\sigma
_{0}:P\rightarrow B/A$ such that $\sigma _{0}a=0$ and $\sigma _{0}b=\pi
_{B/A}^{B}$. Since $\sigma _{0}a=0$, by the universal property of the
cokernel there exists a unique arrow $\sigma :Q\rightarrow B/A$ such that $%
\sigma c=\sigma _{0}$. Suppose that $X$ is an object of $\mathfrak{I}$ and $%
f:X\rightarrow Q$ is a morphism. We need to show that there exists a
morphism $g:X\rightarrow P$ such that $cg=f$. Consider the morphism $\sigma
f:X\rightarrow B/A$. Since $A\rightarrow B\rightarrow B/A$ is $\mathcal{E}_{%
\mathfrak{I}}$-exact, there exists a morphism $g:X\rightarrow B$ such that $%
\pi _{B/A}^{B}g=\sigma f$. Thus, we have that $bg:X\rightarrow P$ is a
morphism such that $\sigma cbg=\sigma _{0}bg=\pi _{B/A}^{B}g=\sigma f$. It
remains to prove that $\sigma :Q\rightarrow B/A$ is an isomorphism. To this
purpose, it suffices to prove that the arrow $cb:B\rightarrow Q$ is a
cokernel of $\iota _{B}^{A}:A\rightarrow B$. Suppose that $Z$ is an object
and $\xi :Q\rightarrow Z$ is a morphism such that $\xi cb=0$. Since $c$ is
the cokernel of $a$, we have $\xi ca=0$. Hence, by the universal property of
the pushout $P$, $\xi c=0$. Hence we have that $\xi =0$ since $c$ is an
epimorphism. Suppose now that $\eta :B\rightarrow Z$ is an arrow such that $%
\eta \iota _{B}^{A}=0$. Then considering the arrow $0:L\rightarrow Z$ and
the universal property of the pushout $P$, there exists an arrow $\tau
:P\rightarrow Z$ such that $\tau b=\eta $ and $\tau a=0$. By the universal
property of $c=\mathrm{Coker}\left( a\right) $ there exists an arrow $\xi
:Q\rightarrow Z$ such that $\xi c=\tau $ and hence $\xi cb=\tau b=\eta $.
This concludes the proof.
\end{proof}

\subsection{Injective and projective objects}

Let $\mathcal{A}$ be an exact category. An object $I$ of $\mathcal{A}$ is 
\emph{injective} if the functor $\mathrm{Hom}_{\mathcal{A}}\left( -,I\right) 
$ is exact. The category $\mathcal{A}$ has\emph{\ enough injectives }if for
every object $A$ of $\mathcal{A}$ there exists an admissible monic $%
A\rightarrow I$, where $I$ is injective. Dually, one defines the notion of 
\emph{projective }object, and the notion of category with\emph{\ enough
projectives}. The following lemma is the content of \cite[Proposition 1.3.24]%
{schneiders_quasi-abelian_1999}.

\begin{lemma}[Schneiders]
\label{Lemma:projectives-left-heart}Suppose that $\mathcal{A}$ is a
quasi-abelian category. Then $\mathcal{A}$ has enough projectives if and
only if $\mathrm{LH}\left( \mathcal{A}\right) $ has enough projectives.
Furthermore, in this case every projective object of $\mathcal{A}$ is
projective in $\mathrm{LH}\left( \mathcal{A}\right) $, and every projective
object of $\mathrm{LH}\left( \mathcal{A}\right) $ is isomorphic to one of
this form.
\end{lemma}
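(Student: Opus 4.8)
The plan is to exploit the canonical fully faithful, exact, finitely continuous embedding $I\colon\mathcal{A}\to\mathrm{LH}(\mathcal{A})$—which in particular preserves kernels—together with the cohomological functor $\mathrm{H}^{0}\colon\mathrm{D}^{b}(\mathcal{A})\to\mathrm{LH}(\mathcal{A})$, and to isolate two bridging facts. First, every object $X$ of $\mathrm{LH}(\mathcal{A})$ admits an epimorphism $I(B_{0})\twoheadrightarrow X$ with $B_{0}$ in $\mathcal{A}$: since $X$ is, by construction, a complex $[B_{1}\to B_{0}]$ concentrated in degrees $-1,0$ whose differential $f$ is a monomorphism of $\mathcal{A}$, one has $X\cong\mathrm{cone}(I(f))$, and applying $\mathrm{H}^{0}$ to the distinguished triangle $I(B_{1})\overset{I(f)}{\rightarrow}I(B_{0})\rightarrow X\rightarrow I(B_{1})[1]$—whose first three terms all lie in the heart—produces a short exact sequence $0\to I(B_{1})\to I(B_{0})\to X\to 0$ in $\mathrm{LH}(\mathcal{A})$. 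Second, if $\phi\colon C\to B$ is a morphism of $\mathcal{A}$ for which $I(\phi)$ is an epimorphism in $\mathrm{LH}(\mathcal{A})$, then $\phi$ is a strict (admissible) epimorphism in $\mathcal{A}$: since $I$ preserves kernels, $\mathrm{Ker}(I(\phi))=I(\mathrm{Ker}\,\phi)$, so $I(B)$ is the cokernel in $\mathrm{LH}(\mathcal{A})$ of $I(\mathrm{Ker}\,\phi)\to I(C)$; comparing this universal property against objects of $\mathcal{A}$ and using that $I$ is fully faithful identifies $B$ with $\mathrm{Coker}_{\mathcal{A}}(\mathrm{Ker}\,\phi\to C)$ with structural map $\phi$, so $\phi=\mathrm{Coker}(\mathrm{Ker}\,\phi)$ is strict. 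I will refer to these as facts (i) and (ii).

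Granting (i) and (ii), the rest is a formal diagram chase. If $B$ is projective in $\mathcal{A}$, then $I(B)$ is projective in $\mathrm{LH}(\mathcal{A})$: given an epimorphism $q\colon Y\twoheadrightarrow Z$ in $\mathrm{LH}(\mathcal{A})$ and a morphism $g\colon I(B)\to Z$, form the pullback $W=Y\times_{Z}I(B)$ so that the projection $\pi_{1}\colon W\to I(B)$ is again an epimorphism; by (i) choose an epimorphism $\varepsilon\colon I(W_{0})\twoheadrightarrow W$; then $\pi_{1}\varepsilon=I(\phi)$ for a unique $\phi\colon W_{0}\to B$ in $\mathcal{A}$, which by (ii) is an admissible epimorphism and hence splits, say $\phi\psi=\mathrm{id}_{B}$; now $\varepsilon\, I(\psi)\colon I(B)\to W$ is a section of $\pi_{1}$, and composing it with $W\to Y$ produces a lift of $g$ along $q$. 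Conversely, if $I(B)$ is projective in $\mathrm{LH}(\mathcal{A})$ then $B$ is projective in $\mathcal{A}$, since a lifting problem along an admissible epimorphism of $\mathcal{A}$ is solved by applying $I$, lifting in $\mathrm{LH}(\mathcal{A})$, and carrying the solution back via full faithfulness.

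It remains to see that every projective object $P$ of $\mathrm{LH}(\mathcal{A})$ is of the form $I(B)$, and to deduce the equivalence of ``enough projectives.'' For the first point, (i) gives an epimorphism $I(W_{0})\twoheadrightarrow P$, which splits, so $P$ is a retract of $I(W_{0})$; the corresponding idempotent of $I(W_{0})$ is $I(e)$ for an idempotent $e$ of $W_{0}$, and since $\mathcal{A}$ has all kernels it is idempotent-complete, so $e$ splits off a direct summand $B$ of $W_{0}$ with $P\cong I(B)$—and $B$ is projective in $\mathcal{A}$ by the previous paragraph. For the equivalence: if $\mathcal{A}$ has enough projectives then, for $X\in\mathrm{LH}(\mathcal{A})$, I compose the epimorphism $I(B_{0})\twoheadrightarrow X$ of (i) with $I$ applied to an admissible epimorphism $P\twoheadrightarrow B_{0}$ from an $\mathcal{A}$-projective, obtaining $I(P)\twoheadrightarrow X$ with $I(P)$ projective; conversely, if $\mathrm{LH}(\mathcal{A})$ has enough projectives then, for $A\in\mathcal{A}$, I take an epimorphism $P\twoheadrightarrow I(A)$ with $P$ projective, write $P\cong I(B)$ with $B$ projective, and use (ii) to see that $I(B)\twoheadrightarrow I(A)$ is the image under $I$ of an admissible epimorphism $B\twoheadrightarrow A$ in $\mathcal{A}$.

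The step I expect to be the main obstacle is establishing (i) and (ii): these are the only places where one leaves formal nonsense and must use the concrete description of $\mathrm{LH}(\mathcal{A})$ as two-term complexes with monic differential, the exactness and finite continuity of the embedding $I$, and its full faithfulness; once they are in place everything else is routine. This reproves \cite[Proposition 1.3.24]{schneiders_quasi-abelian_1999}.
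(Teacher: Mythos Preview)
The paper does not give its own proof of this lemma; it merely records the statement and cites \cite[Proposition 1.3.24]{schneiders_quasi-abelian_1999}. Your proposal is a correct, self-contained argument that supplies what the paper omits: facts (i) and (ii) are exactly the structural inputs one needs from the description of $\mathrm{LH}(\mathcal{A})$, and the remaining deductions (projectives transfer along $I$, every $\mathrm{LH}(\mathcal{A})$-projective is a retract of some $I(W_0)$ and hence of the form $I(B)$ by idempotent-completeness of $\mathcal{A}$, and the two directions of ``enough projectives'') are carried out cleanly. This is essentially the line of argument in Schneiders' monograph, so there is nothing to compare beyond noting that you have reproduced the cited proof rather than found an alternative.
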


Suppose that $\mathcal{A}$ is an exact category. Then $\mathrm{Hom}_{%
\mathcal{A}}$ is a functor $\mathcal{A}^{\mathrm{op}}\times \mathcal{A}%
\rightarrow \mathbf{Ab}$. The corresponding functor 
\begin{equation*}
\mathrm{Hom}^{\bullet }:\mathrm{K}^{b}\left( \mathcal{A}\right) ^{\mathrm{op}%
}\times \mathrm{K}^{b}\left( \mathcal{A}\right) \rightarrow \mathrm{K}%
^{b}\left( \mathbf{Ab}\right)
\end{equation*}%
is such that $\left( \mathrm{H}^{0}\circ \mathrm{Hom}^{\bullet }\right)
\left( A,B\right) $ is naturally isomorphic to $\mathrm{Hom}_{\mathrm{K}%
^{b}\left( \mathcal{A}\right) }\left( A,B\right) $; see \cite[Proposition
11.7.3]{kashiwara_categories_2006}. For objects $X,Y$ of $\mathcal{A}$ and $%
k\in \mathbb{Z}$, define $\mathrm{Ext}^{k}\left( X,Y\right) :=\mathrm{Hom}_{%
\mathrm{D}^{b}\left( \mathcal{A}\right) }\left( X,T^{k}Y\right) $; see \cite[%
Notation 13.1.9]{kashiwara_categories_2006}. In particular, $\mathrm{Ext}%
^{0}\left( X,Y\right) \cong \mathrm{Hom}_{\mathcal{A}}\left( X,Y\right) $
and $\mathrm{Ext}^{k}\left( X,Y\right) =0$ for $k<0$ \cite[Proposition
13.1.10]{kashiwara_categories_2006}. One says that $\mathcal{A}$ is \emph{%
hereditary }or has homological dimension at most $1$ if $\mathrm{Ext}%
^{n}\left( X,Y\right) =0$ for $n>1$ \cite[Definition 13.1.18]%
{kashiwara_categories_2006}. In this case, we set $\mathrm{Ext}\left(
X,Y\right) :=\mathrm{Ext}^{1}\left( X,Y\right) $ for objects $X,Y$ of $%
\mathcal{A}$. If $\mathcal{A}$ has enough injectives, this is equivalent to
the assertion that for some (equivalently, every) short exact sequence $%
A\rightarrow I\rightarrow C$ with $I$ injective, $C$ is also injective \cite[%
Exercise 13.8]{kashiwara_categories_2006}. When $\mathcal{A}$ has enough
projectives, this is equivalent to the assertion that for some
(equivalently, every) short exact sequence $A\rightarrow P\rightarrow C$
with $P$ projective, $A$ is also projective. The group $\mathrm{Ext}\left(
X,Y\right) $ parameterizes the \emph{extensions }of $X$ by $Y$ \cite[Section
II.2]{gelfand_methods_2003}.

\begin{lemma}
\label{Lemma:left-heart-projectives}Suppose that $\mathcal{A}$ is a
quasi-abelian category with enough projectives. The left heart $\mathrm{LH}%
\left( \mathcal{A}\right) $ is equivalent to its full subcategory consisting
of objects of the form $P/S:=\mathrm{Coker}_{\mathrm{LH}\left( \mathcal{A}%
\right) }\left( S\rightarrow P\right) $ for some monic arrow $S\rightarrow P$
in $\mathcal{A}$ with $P$ projective. Furthermore, the morphisms $%
P/S\rightarrow P^{\prime }/S^{\prime }$ are precisely those induced by pairs
of morphisms $S\rightarrow S^{\prime }$ and $P\rightarrow P^{\prime }$ that
make the diagram%
\begin{equation*}
\begin{array}{ccc}
P & \rightarrow & P^{\prime } \\ 
\uparrow &  & \uparrow \\ 
S & \rightarrow & S^{\prime }%
\end{array}%
\end{equation*}%
commute.
\end{lemma}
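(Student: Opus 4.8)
The plan is to check the two assertions separately: \emph{essential surjectivity} (every object of $\mathrm{LH}\left(\mathcal{A}\right)$ is isomorphic to one of the form $P/S$), and then the description of the hom-sets of the resulting full subcategory. Throughout I would use three facts already available: $\mathrm{LH}\left(\mathcal{A}\right)$ is an abelian category into which $\mathcal{A}$ embeds as a full subcategory by an exact and finitely continuous functor (so this functor preserves monics); by Lemma~\ref{Lemma:projectives-left-heart}, $\mathrm{LH}\left(\mathcal{A}\right)$ has enough projectives and every projective object of $\mathrm{LH}\left(\mathcal{A}\right)$ is isomorphic to a projective object of $\mathcal{A}$; and $\left(\mathrm{Ph}\left(\mathcal{A}\right),\mathcal{A}\right)$ is a torsion pair in $\mathrm{LH}\left(\mathcal{A}\right)$, so the torsion-free class $\mathcal{A}$ is closed under subobjects in $\mathrm{LH}\left(\mathcal{A}\right)$.

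For essential surjectivity, let $M$ be an object of $\mathrm{LH}\left(\mathcal{A}\right)$. Since $\mathrm{LH}\left(\mathcal{A}\right)$ has enough projectives and every projective of $\mathrm{LH}\left(\mathcal{A}\right)$ is isomorphic to a projective of $\mathcal{A}$, I would choose an epimorphism $\pi\colon P\rightarrow M$ in $\mathrm{LH}\left(\mathcal{A}\right)$ with $P$ a projective object of $\mathcal{A}$, and set $S:=\mathrm{Ker}\left(\pi\right)$ computed in $\mathrm{LH}\left(\mathcal{A}\right)$. Because $S$ is a subobject of $P$ and $\mathcal{A}$ is closed under subobjects, $S$ is isomorphic to an object of $\mathcal{A}$; as $\mathcal{A}$ is a full subcategory, the inclusion $s\colon S\rightarrow P$ is then a monic arrow of $\mathcal{A}$. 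Since $\mathrm{LH}\left(\mathcal{A}\right)$ is abelian and $\pi$ is an epimorphism with kernel $S$, we obtain $M\cong\mathrm{Coker}_{\mathrm{LH}\left(\mathcal{A}\right)}\left(S\rightarrow P\right)=P/S$.

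It remains to identify the morphisms $P/S\rightarrow P'/S'$ between two such objects. Write $q\colon P\rightarrow P/S$, $q'\colon P'\rightarrow P'/S'$ for the canonical epimorphisms and $s\colon S\rightarrow P$, $s'\colon S'\rightarrow P'$ for the given monics (note $s'$ is monic in $\mathrm{LH}\left(\mathcal{A}\right)$, so $\mathrm{Ker}\left(q'\right)=S'$). One direction is clear: any pair $u\colon S\rightarrow S'$, $v\colon P\rightarrow P'$ in $\mathcal{A}$ with $vs=s'u$ induces a morphism $P/S\rightarrow P'/S'$ by functoriality of $\mathrm{Coker}_{\mathrm{LH}\left(\mathcal{A}\right)}$. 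Conversely, given $\phi\colon P/S\rightarrow P'/S'$ in $\mathrm{LH}\left(\mathcal{A}\right)$, projectivity of $P$ in $\mathrm{LH}\left(\mathcal{A}\right)$ yields $v\colon P\rightarrow P'$ with $q'v=\phi q$, and $v$ is a morphism of $\mathcal{A}$ by fullness; since $q's=0$ forces $q'vs=\phi qs=0$, the arrow $vs$ factors through $\mathrm{Ker}\left(q'\right)=S'$, giving $u\colon S\rightarrow S'$ in $\mathcal{A}$ with $s'u=vs$, and the morphism induced by $\left(u,v\right)$ is $\phi$ by construction.

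I expect the only genuinely delicate points to be keeping $S$ and the lifted arrows $v,u$ inside $\mathcal{A}$ rather than merely in $\mathrm{LH}\left(\mathcal{A}\right)$; both are handled by the fact that $\mathcal{A}$ is a full subcategory of $\mathrm{LH}\left(\mathcal{A}\right)$ closed under subobjects. Once these are in place, the argument is a routine diagram chase in the abelian category $\mathrm{LH}\left(\mathcal{A}\right)$.
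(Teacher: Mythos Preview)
Your proof is correct, and it takes a genuinely different route from the paper's. The paper works directly with the explicit description of $\mathrm{LH}(\mathcal{A})$ as a localization of $\mathrm{LK}(\mathcal{A})$ \cite[Corollary 1.2.20]{schneiders_quasi-abelian_1999}: for essential surjectivity it pulls back the given monic $A^{-1}\to A^0$ along a strict epimorphism $P\to A^0$ in $\mathcal{A}$ (using that pullbacks of monics are monic); for the hom-set description it represents a morphism by a roof $P/S\xleftarrow{\sigma} Q/R\xrightarrow{\psi} P'/S'$, enlarges $Q,R$ so that $\sigma_Q:Q\to P$ becomes a strict epimorphism in $\mathcal{A}$, and then splits it using projectivity of $P$ in $\mathcal{A}$ to write $\sigma^{-1}$, and hence $\varphi$, as induced by a commuting square. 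Your argument instead works intrinsically in the abelian category $\mathrm{LH}(\mathcal{A})$, invoking Lemma~\ref{Lemma:projectives-left-heart} to get projectivity of $P$ in $\mathrm{LH}(\mathcal{A})$ and the torsion pair $(\mathrm{Ph}(\mathcal{A}),\mathcal{A})$ to keep kernels inside $\mathcal{A}$. Your approach is shorter and more conceptual, but it relies on Schneiders' Lemma~\ref{Lemma:projectives-left-heart} as a black box; the paper's argument is more self-contained, using only projectivity in $\mathcal{A}$ and the concrete fraction calculus, at the cost of some bookkeeping with representatives.
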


\begin{proof}
The first assertion follows from the fact that $\mathcal{A}$ has enough
projectives, and the pullback of a monic arrow is monic. Suppose now that $%
\varphi :P/S\rightarrow P^{\prime }/S^{\prime }$ is a morphism in $\mathrm{LH%
}\left( \mathcal{A}\right) $. Then by \cite[Corollary 1.2.20]%
{schneiders_quasi-abelian_1999} there exist an object $Q/R$ and morphisms $%
\sigma :Q/R\rightarrow P/S$ and $\psi :Q/R\rightarrow P^{\prime }/S^{\prime
} $ that are induced by commuting diagrams%
\begin{equation*}
\begin{array}{ccc}
Q & \overset{\psi _{Q}}{\rightarrow } & P^{\prime } \\ 
\uparrow &  & \uparrow \\ 
R & \overset{\psi _{R}}{\rightarrow } & S^{\prime }%
\end{array}%
\end{equation*}%
and%
\begin{equation*}
\begin{array}{ccc}
Q & \overset{\sigma _{Q}}{\rightarrow } & P \\ 
i_{R}\uparrow &  & \uparrow i_{S} \\ 
R & \overset{\sigma _{R}}{\rightarrow } & S%
\end{array}%
\end{equation*}%
where the latter is both a pullback and a pushout, $\sigma $ is invertible,
and $\varphi =\psi \circ \sigma ^{-1}$. After replacing $Q$ with $Q\oplus P$
and $R$ with $R\oplus S$ we can assume that the morphism $\sigma
_{Q}:Q\rightarrow P$ is a strict epimorphism. Thus, since $P$ is projective, 
$\sigma _{Q}$ admits a right inverse $\eta _{P}:P\rightarrow Q$. We also
have by the universal property of the pullback a morphism $\eta
_{S}:S\rightarrow R$ such that $i_{R}\eta _{S}=\eta _{P}i_{S}$ and $\sigma
_{R}\eta _{S}=1_{S}$. This shows that the morphism $\sigma
^{-1}:P/S\rightarrow Q/R$ is induced by the commuting diagram%
\begin{equation*}
\begin{array}{ccc}
P & \overset{\eta _{P}}{\rightarrow } & Q \\ 
i_{S}\uparrow &  & \uparrow i_{R} \\ 
S & \overset{\eta _{S}}{\rightarrow } & R%
\end{array}%
\end{equation*}%
and hence $\varphi $ is induced by the commuting diagram%
\begin{equation*}
\begin{array}{ccc}
P & \overset{\psi _{Q}\circ \eta _{P}}{\rightarrow } & P^{\prime } \\ 
i_{S}\uparrow &  & \uparrow i_{S^{\prime }} \\ 
S & \overset{\psi _{R}\circ \eta _{S}}{\rightarrow } & S^{\prime }\text{.}%
\end{array}%
\end{equation*}%
This concludes the proof.
\end{proof}

The same proof as \cite[Theorem 10.22 and Remark 10.23]{buhler_exact_2010}
gives the following:

\begin{lemma}
\label{Lemma:fully-exact}Suppose that $\mathcal{A}$ is an
idempotent-complete exact category and $\mathcal{B}$ is a fully exact
subcategory of $\mathcal{A}$ closed under direct summands. Suppose that:

\begin{enumerate}
\item for each object $A$ of $\mathcal{A}$ there exists an admissible monic $%
A\rightarrow B$ with $B$ in $\mathcal{B}$;

\item for all short exact sequences $B^{\prime }\rightarrow B\rightarrow
A^{\prime \prime }$ in $\mathcal{A}$ with $B^{\prime }$ and $B$ in $\mathcal{%
B}$, one has that $A^{\prime \prime }$ is isomorphic to an object of $%
\mathcal{B}$.
\end{enumerate}

Then the inclusion $\mathcal{B}\rightarrow \mathcal{A}$ induces an
equivalence of categories $\mathrm{D}^{b}\left( \mathcal{B}\right)
\rightarrow \mathrm{D}^{b}\left( \mathcal{A}\right) $.
\end{lemma}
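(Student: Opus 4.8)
The plan is to follow, \emph{mutatis mutandis}, the proof of \cite[Theorem 10.22 and Remark 10.23]{buhler_exact_2010}, reversing all arrows. Since $\mathcal{B}$ is a fully exact subcategory, the inclusion $\mathcal{B}\to\mathcal{A}$ is exact and hence induces a triangulated functor $\mathrm{D}^{b}\left( \mathcal{B}\right) \to \mathrm{D}^{b}\left( \mathcal{A}\right) $; I want to show this is an equivalence by establishing essential surjectivity and full faithfulness, both of which will flow from a single coresolution lemma. First note that, as $\mathcal{A}$ is idempotent-complete and $\mathcal{B}$ is closed under direct summands in $\mathcal{A}$, the category $\mathcal{B}$ is itself idempotent-complete, so $\mathrm{D}^{b}\left( \mathcal{B}\right) =\mathrm{K}^{b}\left( \mathcal{B}\right) /\mathrm{N}^{b}\left( \mathcal{B}\right) $ is a legitimate Verdier quotient with $\mathrm{N}^{b}\left( \mathcal{B}\right) $ thick, which is what makes the calculus-of-fractions manipulations below available. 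Hypotheses (1) and (2) are exactly the duals of the hypotheses in \cite[Theorem 10.22]{buhler_exact_2010}: (1) dualizes \textquotedblleft $\mathcal{B}$ is generating\textquotedblright\ and (2) dualizes \textquotedblleft $\mathcal{B}$ is closed under kernels of admissible epimorphisms with both other terms in $\mathcal{B}$\textquotedblright.

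The crux is the \emph{coresolution lemma}: every bounded complex $A^{\bullet }$ over $\mathcal{A}$ admits a quasi-isomorphism $A^{\bullet }\to B^{\bullet }$ with $B^{\bullet }$ a bounded complex over $\mathcal{B}$. For a single object $A$ (a complex concentrated in one degree) one iterates (1): pick an admissible monic $A\to B^{0}$ with $B^{0}$ in $\mathcal{B}$, pass to the cokernel, and repeat, using (2) to control the successive cosyzygies; this yields a coresolution $0\to A\to B^{0}\to B^{1}\to \cdots $ by objects of $\mathcal{B}$, which (as in B\"{u}hler's argument, and where hypothesis (2) does its essential work) may be arranged to have finite length, whence $A$ is quasi-isomorphic to $\left[ B^{0}\to \cdots \to B^{n}\right] $. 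For a general bounded complex one proceeds by d\'{e}vissage on the width: the brutal truncations $\sigma _{\geq k+1}A^{\bullet }$ and $\sigma _{\leq k}A^{\bullet }$ fit into a degreewise-split short exact sequence of complexes $0\to \sigma _{\geq k+1}A^{\bullet }\to A^{\bullet }\to \sigma _{\leq k}A^{\bullet }\to 0$, hence a distinguished triangle in $\mathrm{D}^{b}\left( \mathcal{A}\right) $, reducing the claim to complexes of smaller width and ultimately to single objects; one then reassembles a single bounded complex over $\mathcal{B}$ quasi-isomorphic to $A^{\bullet }$ by a horseshoe/mapping-cone construction, invoking (2) once more to keep every entry inside $\mathcal{B}$. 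I expect this last reassembly step---keeping the output simultaneously over $\mathcal{B}$ \emph{and} bounded---to be the main technical obstacle.

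Granting the coresolution lemma, essential surjectivity is immediate: every object of $\mathrm{D}^{b}\left( \mathcal{A}\right) $ is isomorphic to one represented by a bounded complex over $\mathcal{B}$. For full faithfulness I would argue exactly as in the proof of Lemma \ref{Lemma:fully-qa}: a morphism $X\to Y$ of $\mathrm{D}^{b}\left( \mathcal{A}\right) $ between bounded complexes over $\mathcal{B}$ is represented by a roof $X\xleftarrow{s}Z\to Y$ with $s$ a quasi-isomorphism, and the coresolution lemma shows that the roofs with $Z$ over $\mathcal{B}$ are cofinal, giving fullness; the analogous cofinality statement for factorizations of null-homotopic maps through bounded acyclic complexes shows that a morphism of complexes over $\mathcal{B}$ vanishing in $\mathrm{D}^{b}\left( \mathcal{A}\right) $ already factors through an object of $\mathrm{N}^{b}\left( \mathcal{B}\right) $, hence vanishes in $\mathrm{D}^{b}\left( \mathcal{B}\right) $, giving faithfulness. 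As in Lemma \ref{Lemma:fully-qa}, one may first replace $\mathcal{A}$ by a small fully exact quasi-abelian subcategory and apply the Freyd--Mitchell embedding theorem to realize $\mathcal{A}$ inside a module category, which renders the cofinality constructions entirely concrete.
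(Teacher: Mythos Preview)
Your approach coincides with the paper's: the paper gives no argument beyond the sentence ``The same proof as \cite[Theorem 10.22 and Remark 10.23]{buhler_exact_2010} gives the following,'' and you propose exactly to dualize that argument. The closing appeal to Freyd--Mitchell is superfluous---B\"uhler's proof is carried out directly in exact categories without any embedding into a module category---and can be omitted.
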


\begin{corollary}
\label{Corollary:derived-projectives}Suppose that $\mathcal{A}$ is a
hereditary idempotent-complete exact category with enough projectives, and $%
\mathcal{P}$ is the fully exact subcategory of $\mathcal{A}$ consisting of
projective objects. The canonical quotient map $\mathrm{D}^{b}\left( 
\mathcal{P}\right) \rightarrow \mathrm{K}^{b}\left( \mathcal{A}\right) $ is
an equivalence of categories, while the inclusion $\mathcal{P}\rightarrow 
\mathcal{A}$ induces an equivalence of categories $\mathrm{D}^{b}\left( 
\mathcal{P}\right) \rightarrow \mathrm{D}^{b}\left( \mathcal{A}\right) $.
\end{corollary}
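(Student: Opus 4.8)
The plan is to deduce both statements from Lemma~\ref{Lemma:fully-exact}, together with the observation that $\mathcal{P}$, with the exact structure it inherits as a fully exact subcategory of $\mathcal{A}$, is a \emph{split} exact category. First I would record that $\mathcal{P}$ is indeed a fully exact subcategory closed under direct summands: it is closed under extensions because a short exact sequence with projective outer terms splits (the deflation onto the projective quotient has a section), and it is closed under direct summands because $\mathcal{A}$ is idempotent-complete and a summand of a projective is projective; in particular $\mathcal{P}$ is itself an idempotent-complete exact category, so $\mathrm{D}^{b}(\mathcal{P})$ is defined. For the same reason every admissible short exact sequence in $\mathcal{P}$ splits, so a bounded complex over $\mathcal{P}$ is acyclic if and only if it is contractible: by descending induction on the top degree one splits off, at each step, a contractible direct summand $(P^{b}\xrightarrow{\,\mathrm{id}\,}P^{b})$ determined by a section of the top differential (which is an admissible epic by acyclicity at the top), noting that its kernel is a direct summand of $P^{b-1}$ and hence again lies in $\mathcal{P}$. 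Consequently $\mathrm{N}^{b}(\mathcal{P})$ is precisely the full subcategory of zero objects of $\mathrm{K}^{b}(\mathcal{P})$, the quasi-isomorphisms in $\mathrm{K}^{b}(\mathcal{P})$ are exactly the isomorphisms, and the canonical localization functor $\mathrm{K}^{b}(\mathcal{P})\to \mathrm{D}^{b}(\mathcal{P})$ is an equivalence; this gives the first assertion.

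For the second assertion I would apply the dual of Lemma~\ref{Lemma:fully-exact}. This is legitimate because $\mathcal{A}^{\mathrm{op}}$ is again an idempotent-complete exact category, $\mathcal{P}^{\mathrm{op}}$ is a fully exact subcategory of it closed under direct summands, and $\mathrm{D}^{b}(\mathcal{C}^{\mathrm{op}})\simeq \mathrm{D}^{b}(\mathcal{C})^{\mathrm{op}}$ for any idempotent-complete exact $\mathcal{C}$ (reverse arrows at the level of $\mathrm{K}^{b}$; acyclicity is self-dual). Under this dualization, hypothesis~(1) of Lemma~\ref{Lemma:fully-exact} becomes: every object of $\mathcal{A}$ admits a deflation from a projective, i.e.\ $\mathcal{A}$ has enough projectives, which is assumed; and hypothesis~(2) becomes: for every short exact sequence $A'\to P\to P''$ in $\mathcal{A}$ with $P$ and $P''$ projective, $A'$ is isomorphic to a projective --- which, as recalled just before the statement, is exactly the characterization of the hereditary condition in the presence of enough projectives. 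Lemma~\ref{Lemma:fully-exact} then yields that the inclusion $\mathcal{P}\to \mathcal{A}$ induces an equivalence $\mathrm{D}^{b}(\mathcal{P})\to \mathrm{D}^{b}(\mathcal{A})$. Composing this with the equivalence $\mathrm{K}^{b}(\mathcal{P})\xrightarrow{\ \sim\ }\mathrm{D}^{b}(\mathcal{P})$ from the first part identifies $\mathrm{K}^{b}(\mathcal{P})$, $\mathrm{D}^{b}(\mathcal{P})$, and $\mathrm{D}^{b}(\mathcal{A})$; in particular $\mathrm{K}^{b}(\mathcal{P})$ computes $\mathrm{D}^{b}(\mathcal{A})$.

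There is no deep obstacle here: the argument is essentially bookkeeping around Lemma~\ref{Lemma:fully-exact}. The two steps that need care are (i) the verification that $\mathcal{P}$, with its inherited structure, is split exact and that acyclic bounded complexes of projectives are contractible in this \emph{relative} sense --- this is where idempotent-completeness is genuinely used, to see that the kernel of the split top differential remains in $\mathcal{P}$ --- and (ii) carefully dualizing Lemma~\ref{Lemma:fully-exact} and matching its hypothesis~(2) with the stated characterization of hereditariness. Beyond what is already in the statement of the corollary, nothing further is needed.
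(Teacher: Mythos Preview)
Your proof is correct and follows exactly the approach the paper intends: the corollary is stated immediately after Lemma~\ref{Lemma:fully-exact} with no separate proof, so it is meant to be a direct application of (the dual of) that lemma together with the observation that the inherited exact structure on $\mathcal{P}$ is split. You have correctly interpreted the first assertion as the equivalence $\mathrm{K}^{b}(\mathcal{P})\to\mathrm{D}^{b}(\mathcal{P})$ (the printed ``$\mathrm{D}^{b}(\mathcal{P})\to\mathrm{K}^{b}(\mathcal{A})$'' is a typo, as confirmed by the parallel unnumbered corollary for $\mathrm{D}^{-}$ a few lines later), and your verification of the dual hypotheses---enough projectives for~(1), the hereditary characterization for~(2)---is exactly what is needed.
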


The following more general version of Lemma \ref{Lemma:fully-exact} is (the
dual of) \cite[Theorem 10.22 and Remark 10.23]{buhler_exact_2010}:

\begin{lemma}
\label{Lemma:fully-exact2}Suppose that $\mathcal{A}$ is an
idempotent-complete exact category and $\mathcal{B}$ is a fully exact
subcategory of $\mathcal{A}$ closed under direct summands. Suppose that:

\begin{enumerate}
\item for each object $A$ of $\mathcal{A}$ there exists an admissible epic $%
B\rightarrow A$ with $B$ in $\mathcal{B}$;

\item for all short exact sequences $A^{\prime }\rightarrow A\rightarrow
B^{\prime \prime }$ in $\mathcal{A}$ with $B^{\prime \prime }$ in $\mathcal{B%
}$, there exists a commutative diagram with exact rows%
\begin{equation*}
\begin{array}{ccccc}
B^{\prime } & \rightarrow & B & \rightarrow & B^{\prime \prime } \\ 
\downarrow &  & \downarrow &  & \downarrow \\ 
A^{\prime } & \rightarrow & A & \rightarrow & B^{\prime \prime }%
\end{array}%
\end{equation*}%
with $B^{\prime }$ and $B$ in $\mathcal{B}$.
\end{enumerate}

Then the inclusion $\mathcal{B}\rightarrow \mathcal{A}$ induces an
equivalence $\mathrm{D}^{+}\left( \mathcal{B}\right) \rightarrow \mathrm{D}%
^{+}\left( \mathcal{A}\right) $.
\end{lemma}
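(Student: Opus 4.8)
The statement is advertised, correctly, as the dual of \cite[Theorem 10.22 and Remark 10.23]{buhler_exact_2010}, and the plan is to deduce it from that reference by passing to opposite categories. First I would record the routine self-duality facts: if $\mathcal{A}$ is an exact category with exact structure $\mathcal{E}$, then $\mathcal{A}^{\mathrm{op}}$ is an exact category whose conflations are the pairs in $\mathcal{E}$ with their two arrows interchanged (the axioms in \cite{buhler_exact_2010} are imposed together with their duals precisely so that this holds); idempotent-completeness is self-dual, since an idempotent in $\mathcal{A}$ has a kernel if and only if it has a cokernel, i.e.\ a kernel in $\mathcal{A}^{\mathrm{op}}$; and the properties of being a fully exact subcategory and of being closed under direct summands are self-dual. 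Hence $(\mathcal{A}^{\mathrm{op}},\mathcal{B}^{\mathrm{op}})$ satisfies the standing hypotheses of the cited result.

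Next I would verify that hypotheses (1) and (2) translate into the hypotheses of \cite[Theorem 10.22 and Remark 10.23]{buhler_exact_2010} for $(\mathcal{A}^{\mathrm{op}},\mathcal{B}^{\mathrm{op}})$. An admissible epic $B\rightarrow A$ in $\mathcal{A}$ with $B$ in $\mathcal{B}$ is exactly an admissible monic $A\rightarrow B$ in $\mathcal{A}^{\mathrm{op}}$ with $B$ in $\mathcal{B}^{\mathrm{op}}$, so (1) becomes the corresponding generation-type hypothesis for $\mathcal{B}^{\mathrm{op}}$ inside $\mathcal{A}^{\mathrm{op}}$. A short exact sequence $A^{\prime}\rightarrow A\rightarrow B^{\prime\prime}$ in $\mathcal{A}$ with $B^{\prime\prime}$ in $\mathcal{B}$, read in $\mathcal{A}^{\mathrm{op}}$, is a short exact sequence $B^{\prime\prime}\rightarrow A\rightarrow A^{\prime}$ whose \emph{sub}-object $B^{\prime\prime}$ lies in $\mathcal{B}^{\mathrm{op}}$, and the commutative diagram with exact rows in (2), after reversing every arrow, is precisely the comparison diagram produced by the cited result --- the verticals now running from the $A$-row up to the $B$-row, the comparison along $B^{\prime\prime}$ still being the identity, and $B^{\prime},B$ lying in $\mathcal{B}^{\mathrm{op}}$. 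I expect this transposition --- in particular, keeping straight that ``sub-object'' and ``quotient'' exchange roles --- to be the only step that needs any care, and it is pure bookkeeping.

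With the hypotheses in place, \cite[Theorem 10.22 and Remark 10.23]{buhler_exact_2010} gives that the inclusion $\mathcal{B}^{\mathrm{op}}\rightarrow\mathcal{A}^{\mathrm{op}}$ induces an equivalence of the relevant derived categories. To return to $\mathcal{A}$ I would invoke the canonical identifications, natural in the idempotent-complete exact category $\mathcal{C}$,
\[
\mathrm{D}^{+}(\mathcal{C})\simeq\mathrm{D}^{-}(\mathcal{C}^{\mathrm{op}})^{\mathrm{op}},\qquad \mathrm{D}^{b}(\mathcal{C})\simeq\mathrm{D}^{b}(\mathcal{C}^{\mathrm{op}})^{\mathrm{op}},
\]
induced at the level of chain complexes by sending $(C^{n},d^{n})_{n}$ to $(C^{-n})_{n}$ with the transposed differentials; this chain-level functor interchanges bounded-below and bounded-above complexes, preserves acyclic complexes and chain homotopies, and hence descends to the derived categories. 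Since it is given by an explicit formula that commutes with applying any additive functor degreewise --- in particular with the inclusion $\mathcal{B}\hookrightarrow\mathcal{A}$ --- it intertwines the functor induced by $\mathcal{B}\rightarrow\mathcal{A}$ with the one induced by $\mathcal{B}^{\mathrm{op}}\rightarrow\mathcal{A}^{\mathrm{op}}$, so the former is an equivalence as well, which is the assertion.

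I do not anticipate any genuine obstacle: the mathematical substance is wholly contained in the cited theorem, and everything above is formal. If one preferred a self-contained argument, one could instead carry out, inside $\mathcal{A}$, the proof of \cite[Theorem 10.22 and Remark 10.23]{buhler_exact_2010} with every arrow reversed: given a complex over $\mathcal{A}$ of the appropriate boundedness, iterate (1) to produce admissible epics from objects of $\mathcal{B}$ and (2) to keep the successive syzygies in $\mathcal{B}$, thereby constructing a quasi-isomorphic complex over $\mathcal{B}$, and lift morphisms and homotopies by the same dualized diagram chases. This is dual to the argument underlying Lemma~\ref{Lemma:fully-exact}, which was itself imported from the same reference.
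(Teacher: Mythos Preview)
Your proposal is correct and matches the paper's approach exactly: the paper gives no proof beyond the parenthetical remark that the lemma ``is (the dual of) \cite[Theorem 10.22 and Remark 10.23]{buhler_exact_2010}'', and you have simply spelled out what that dualization entails. The only content beyond the citation is the bookkeeping you describe---checking that the hypotheses dualize and invoking the identification of $\mathrm{D}^{+}(\mathcal{C})$ with $\mathrm{D}^{-}(\mathcal{C}^{\mathrm{op}})^{\mathrm{op}}$---all of which is routine.
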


\begin{corollary}
Suppose that $\mathcal{A}$ is an idempotent-complete exact category with
enough projectives, and $\mathcal{P}$ is the fully exact subcategory of $%
\mathcal{A}$ consisting of projective objects. The canonical quotient map $%
\mathrm{K}^{-}\left( \mathcal{P}\right) \rightarrow \mathrm{D}^{-}\left( 
\mathcal{P}\right) $ is an equivalence of categories, while the inclusion $%
\mathcal{P}\rightarrow \mathcal{A}$ induces an equivalence of categories $%
\mathrm{D}^{-}\left( \mathcal{P}\right) \rightarrow \mathrm{D}^{-}\left( 
\mathcal{A}\right) $.
\end{corollary}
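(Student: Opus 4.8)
The proof splits into the two asserted equivalences, and the argument is analogous to (the projective half of) Corollary~\ref{Corollary:derived-projectives}, now in the bounded-above setting. First I would treat the quotient functor $\mathrm{K}^{-}(\mathcal{P}) \to \mathrm{D}^{-}(\mathcal{P})$. Since $\mathcal{P}$ is a \emph{fully exact} subcategory of $\mathcal{A}$, a short exact sequence in $\mathcal{P}$ is just a short exact sequence $0 \to P' \to P \to P'' \to 0$ of $\mathcal{A}$ all of whose terms are projective, and such a sequence splits because the deflation $P \to P''$, being an admissible epic onto a projective, admits a section. Consequently, given a bounded-above acyclic complex $P^{\bullet}$ over $\mathcal{P}$, the induced short exact sequences $0 \to Z^{n} \to P^{n} \to Z^{n+1} \to 0$ — where $Z^{n}$ is the image of $P^{n-1} \to P^{n}$, which lies in $\mathcal{P}$ precisely because $\mathcal{P}$ is fully exact — are all split, and a standard downward induction (begun in the degrees where $P^{n} = 0$, which is the only place boundedness above is used) assembles the splittings into a contracting homotopy. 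Thus every object of the thick subcategory $\mathrm{N}^{-}(\mathcal{P})$ is already a zero object of $\mathrm{K}^{-}(\mathcal{P})$, so the Verdier quotient $\mathrm{D}^{-}(\mathcal{P}) = \mathrm{K}^{-}(\mathcal{P})/\mathrm{N}^{-}(\mathcal{P})$ coincides with $\mathrm{K}^{-}(\mathcal{P})$ and the quotient functor is an equivalence.

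For the second equivalence I would invoke Lemma~\ref{Lemma:fully-exact2} with $\mathcal{B} := \mathcal{P}$. Note first that $\mathcal{P}$ is closed under direct summands in $\mathcal{A}$, since a retract of a projective object is projective and $\mathcal{A}$ is idempotent-complete; in particular $\mathcal{P}$ is itself an idempotent-complete exact category, so $\mathrm{D}^{-}(\mathcal{P})$ is defined. Hypothesis~(1) of Lemma~\ref{Lemma:fully-exact2}, the existence for each $A$ of an admissible epic $B \to A$ with $B$ projective, is exactly the hypothesis that $\mathcal{A}$ has enough projectives. For hypothesis~(2), let $A' \to A \to P''$ be short exact in $\mathcal{A}$ with $P''$ projective; the deflation $A \to P''$ splits, so $A \cong A' \oplus P''$, and choosing an admissible epic $\pi \colon P' \to A'$ with $P'$ projective (enough projectives) and setting $B := P' \oplus P''$, the split short exact sequence $P' \to P' \oplus P'' \to P''$ maps onto $A' \to A \to P''$ via $\pi \oplus \mathrm{id}_{P''}$, furnishing the required commutative diagram with exact rows and $P', B$ in $\mathcal{P}$. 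Lemma~\ref{Lemma:fully-exact2} then shows that the inclusion $\mathcal{P} \to \mathcal{A}$ induces an equivalence $\mathrm{D}^{-}(\mathcal{P}) \to \mathrm{D}^{-}(\mathcal{A})$, completing the proof.

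The only step with any real content is the contractibility statement in the first paragraph, and even there the essential fact — that a bounded-above acyclic complex of projectives is split exact — is classical; the mild subtleties are that the homotopy induction genuinely uses boundedness above to have somewhere to start, and that the images $Z^{n}$ remain projective, which is why $\mathcal{P}$ must be taken as a fully exact subcategory rather than merely a full one. Everything in the second paragraph is a routine verification of the hypotheses of Lemma~\ref{Lemma:fully-exact2}, using only that admissible epics onto projectives split and that $\mathcal{A}$ has enough projectives.
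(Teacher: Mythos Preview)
Your proof is correct and follows exactly the approach the paper intends: the corollary is stated without proof as an immediate consequence of Lemma~\ref{Lemma:fully-exact2}, and you have supplied precisely the routine verification of its hypotheses together with the standard contractibility argument for bounded-above acyclic complexes of projectives. Note that the conclusion of Lemma~\ref{Lemma:fully-exact2} as printed says $\mathrm{D}^{+}$, but given its projective-type hypotheses (admissible epics from $\mathcal{B}$) and its role as the dual of \cite[Theorem~10.22]{buhler_exact_2010}, this is evidently a typo for $\mathrm{D}^{-}$, which you have read correctly.
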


\subsection{Derived functors\label{Section:derived}}

Suppose that $F:\mathcal{A}\rightarrow \mathcal{B}$ is a functor between
exact categories. Then $F$ induces a triangulated functor \textrm{K}$%
^{b}\left( \mathcal{A}\right) \rightarrow \mathrm{K}^{b}\left( \mathcal{B}%
\right) $, which we still denote by $F$. If $F$ is exact, then its extension 
\textrm{K}$^{b}\left( \mathcal{A}\right) \rightarrow \mathrm{K}^{b}\left( 
\mathcal{B}\right) $ maps acyclic complexes to acyclic complexes, and hence
it induces a canonical triangulated functor $G:\mathrm{D}^{b}\left( \mathcal{%
A}\right) \rightarrow \mathrm{D}^{b}\left( \mathcal{B}\right) $ such that $%
GQ_{\mathcal{A}}$ is isomorphic to $Q_{\mathcal{B}}F$.

In general, we say that $F$ has a \emph{total right derived functor} if
there exists an (essentially unique) triangulated functor $\mathrm{R}F:%
\mathrm{D}^{b}\left( \mathcal{A}\right) \rightarrow \mathrm{D}^{b}\left( 
\mathcal{B}\right) $ with an isomorphism $\mu :Q_{\mathcal{B}}F\Rightarrow
\left( \mathrm{R}F\right) Q_{\mathcal{A}}$ of triangulated functors, and
such that for any other triangulated functor $G:\mathrm{D}^{b}\left( 
\mathcal{A}\right) \rightarrow \mathrm{D}^{b}\left( \mathcal{B}\right) $ and
morphism $\nu :Q_{\mathcal{B}}F\Rightarrow GQ_{\mathcal{A}}$ of triangulated
functors there exists a unique morphism $\sigma :\mathrm{R}F\Rightarrow G$
of triangulated functors such that $\mu \circ \sigma Q_{\mathcal{A}}=\nu $.

Suppose that $F:\mathcal{A}\times \mathcal{B}\rightarrow \mathcal{C}$ is an
additive functor, where $\mathcal{A}$, $\mathcal{B}$, and $\mathcal{C}$ are
exact categories. Let $\mathrm{Ch}^{b}\left( \mathcal{A}\right) $ be the
category of bounded complexes over $\mathcal{A}$ and $\mathrm{Ch}^{b}\left( 
\mathcal{A}\right) ^{2}$ be the category of bounded \emph{double} complexes
over $\mathcal{A}$. Then we have that $F$ induces a functor $F:\mathrm{Ch}%
^{b}\left( \mathcal{A}\right) \times \mathrm{Ch}^{b}\left( \mathcal{B}%
\right) \rightarrow \mathrm{Ch}^{b}\left( \mathcal{C}\right) ^{2}$ defined by%
\begin{equation*}
F\left( A,B\right) ^{i,j}:=F\left( A^{i},B^{j}\right)
\end{equation*}%
see \cite[Section 11.6]{kashiwara_categories_2006}. The vertical and
horizontal maps in $F\left( A,B\right) $ are given by%
\begin{equation*}
\delta _{\mathrm{v}}^{ij}:=F\left( \delta _{A}^{i},\mathrm{id}%
_{B^{j}}\right) :F\left( A^{i},B^{j}\right) \rightarrow F\left(
A^{i+1},B^{j}\right)
\end{equation*}%
\begin{equation*}
\delta _{\text{\textrm{h}}}^{ij}:=F(\mathrm{id}_{A_{i}},\delta
_{B}^{j}):F\left( A^{i},B^{j}\right) \rightarrow F\left(
A^{i},B^{j+1}\right) \text{.}
\end{equation*}%
The functor $F^{\bullet }:\mathrm{Ch}^{b}\left( \mathcal{A}\right) ^{\mathrm{%
op}}\times \mathrm{Ch}^{b}\left( \mathcal{B}\right) \rightarrow \mathrm{Ch}%
^{b}\left( \mathcal{C}\right) $ is defined by%
\begin{equation*}
F^{\bullet }\left( A,B\right) =\mathrm{Tot}\left( F\left( A,B\right) \right)
\end{equation*}%
for bounded complexes $A$ and $B$, where $\mathrm{Tot}\left( F\left(
A,B\right) \right) $ denotes the \emph{total complex} of $F\left( A,B\right) 
$ \cite[Section 11.5]{kashiwara_categories_2006}. This induces a \emph{%
triangulated} bifunctor $F^{\bullet }:\mathrm{K}^{b}\left( \mathcal{A}%
\right) \times \mathrm{K}^{b}\left( \mathcal{B}\right) \rightarrow \mathrm{K}%
^{b}\left( \mathcal{C}\right) $ in the sense of \cite[Definition 10.3.6]%
{kashiwara_categories_2006}; see \cite[Proposition 11.6.4]%
{kashiwara_categories_2006}.

If $\Phi :\mathrm{K}^{b}\left( \mathcal{A}\right) \times \mathrm{K}%
^{b}\left( \mathcal{B}\right) \rightarrow \mathrm{K}^{b}\left( \mathcal{C}%
\right) $ is a triangulated bifunctor, then a \emph{total right derived
functor }$\mathrm{R}\Phi $ of $\Phi $ is a triangulated bifunctor $\mathrm{D}%
^{b}\left( \mathcal{A}\right) \times \mathrm{D}^{b}\left( \mathcal{B}\right)
\rightarrow \mathrm{D}^{b}\left( \mathcal{C}\right) $ with a natural
isomorphism $g:Q_{\mathcal{C}}\Phi \Rightarrow \mathrm{R}\Phi \left( Q_{%
\mathcal{A}}\times Q_{\mathcal{B}}\right) $ such that for any other
triangulated bifunctor $\Psi :\mathrm{D}^{b}\left( \mathcal{A}\right) \times 
\mathrm{D}^{b}\left( \mathcal{B}\right) \rightarrow \mathrm{D}^{b}\left( 
\mathcal{C}\right) $ with natural transformation $h:Q_{\mathcal{C}}\Phi
\Rightarrow \Psi \left( Q_{\mathcal{A}}\times Q_{\mathcal{B}}\right) $ there
exists a unique morphism $\gamma :\mathrm{R}\Phi \Rightarrow \Psi $ of
triangulated functors such that $\gamma \left( Q_{\mathcal{A}}\times Q_{%
\mathcal{B}}\right) \circ g=h$.

Similar consideration apply if one replaces the bounded categories of
complexes with the left-bounded or right-bounded categories of complexes;
see \cite[Proposition 11.6.4]{kashiwara_categories_2006}. The notion of 
\emph{left derived functor }is obtained by duality.

\subsection{Construction of derived functors}

The most common way to produce derived functors is by using \emph{projective
resolutions}. The following can be sen as a particular instance of the
version for bounded complexes of \cite[Proposition 13.4.3]%
{kashiwara_categories_2006}.

\begin{proposition}
\label{Proposition:derived-functor}Let $\mathcal{A}$, $\mathcal{B}$, $%
\mathcal{C}$ be idempotent-complete exact categories. Suppose that $\mathcal{%
A}$ is hereditary with enough projectives. Let also $F:\mathcal{A}\times 
\mathcal{B}\rightarrow \mathcal{C}$ be an additive functor such that $%
F\left( P,-\right) :\mathcal{B}\rightarrow \mathcal{C}$ is exact for every
projective object $P$ of $\mathcal{A}$. Then the triangulated bifunctor $%
F^{\bullet }:\mathrm{K}^{b}(\mathcal{A)}\times \mathrm{K}^{b}(\mathcal{B)}%
\rightarrow \mathrm{K}^{b}\left( \mathcal{C}\right) $ has a total left
derived functor $\mathrm{L}F^{\bullet }:\mathrm{D}^{b}(\mathcal{A)}\times 
\mathrm{D}^{b}(\mathcal{B)}\rightarrow \mathrm{D}^{b}(\mathcal{C)}$.
Furthermore, this can be defined as follows: For a bounded complex $A$ over $%
\mathcal{A}$, pick a bounded complex $P_{A}$ over $\mathcal{P}$ together
with quasi-isomorphisms $\eta _{A}:P_{A}\rightarrow A$.\ Then one can define 
\begin{equation*}
\mathrm{L}F^{\bullet }\left( A,B\right) :=F^{\bullet }\left( P_{A},B\right)
\end{equation*}
for $A\in \mathrm{D}^{b}\left( \mathcal{A}\right) $ and $B\in \mathrm{D}%
^{b}\left( \mathcal{A}\right) $.

When $\mathcal{C}$ is quasi-abelian, one sets 
\begin{equation*}
\mathrm{L}^{k}F:=\left( \mathrm{H}^{k}\circ \mathrm{L}F\right) |_{\mathcal{A}%
\times \mathcal{B}}:\mathcal{A}\times \mathcal{B}\rightarrow \mathrm{LH}%
\left( \mathcal{C}\right)
\end{equation*}%
If $F$ is right exact in each variable, then $\mathrm{L}^{0}F\cong F$ and $%
\mathrm{L}^{k}F=0$ for $k>0$.
\end{proposition}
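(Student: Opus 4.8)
The plan is to construct $\mathrm{L}F^{\bullet}$ by resolving only the first variable by bounded complexes of projectives, and to show that the hypothesis ``$F(P,-)$ exact for $P$ projective'' is exactly what makes a one-variable resolution compute a genuine derived functor. First I would record, using that $\mathcal{A}$ is hereditary, idempotent-complete and has enough projectives, that Corollary~\ref{Corollary:derived-projectives} provides an equivalence between $\mathrm{K}^{b}(\mathcal{P})$ and $\mathrm{D}^{b}(\mathcal{A})$ (and an equivalence $\mathrm{D}^{b}(\mathcal{P})\to\mathrm{D}^{b}(\mathcal{A})$); here I also note that $\mathcal{P}$, as a fully exact subcategory in which every admissible epic onto a projective object splits, carries the split exact structure, so bounded acyclic complexes over $\mathcal{P}$ are contractible. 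Fixing a quasi-inverse, for each bounded complex $A$ over $\mathcal{A}$ I obtain a bounded complex $P_{A}$ over $\mathcal{P}$ together with a chain-level quasi-isomorphism $\eta_{A}\colon P_{A}\to A$ (such a chain map exists because a bounded complex of projectives already represents morphisms out of it in the homotopy category), and $A\mapsto P_{A}$ becomes a functor on $\mathrm{D}^{b}(\mathcal{A})$.

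The technical heart is an acyclicity lemma: for $P$ a bounded complex over $\mathcal{P}$ and $B$ a bounded acyclic complex over $\mathcal{B}$, the total complex $F^{\bullet}(P,B)=\mathrm{Tot}\,F(P,B)$ is acyclic. I would prove this from the bounded double complex $F(P,B)^{i,j}=F(P^{i},B^{j})$: for each fixed $i$ the complex $j\mapsto F(P^{i},B^{j})$ is $F(P^{i},-)$ applied to $B$, hence acyclic since $F(P^{i},-)$ is exact and $B$ is acyclic. Filtering by the column index $i$ exhibits $\mathrm{Tot}\,F(P,B)$ as a finite iterated extension, through termwise-split short exact sequences of complexes over $\mathcal{C}$, of the shifted acyclic rows $F(P^{i},B)[-i]$; each such short exact sequence yields a distinguished triangle in $\mathrm{K}^{b}(\mathcal{C})$, and since $\mathrm{N}^{b}(\mathcal{C})$ is thick, in particular a triangulated subcategory, an induction on the length of the filtration gives $\mathrm{Tot}\,F(P,B)\in\mathrm{N}^{b}(\mathcal{C})$. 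I expect this to be the main obstacle: it is the only place the exactness hypothesis on $F$ is used, and it has to be carried out with ``acyclic'' understood in the exact-category sense (exactness at each spot through admissible morphisms) rather than via elements or homology.

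Granting the lemma, for each $P\in\mathrm{K}^{b}(\mathcal{P})$ the functor $Q_{\mathcal{C}}F^{\bullet}(P,-)\colon\mathrm{K}^{b}(\mathcal{B})\to\mathrm{D}^{b}(\mathcal{C})$ annihilates $\mathrm{N}^{b}(\mathcal{B})$, hence factors through a triangulated functor $\overline{F}_{P}\colon\mathrm{D}^{b}(\mathcal{B})\to\mathrm{D}^{b}(\mathcal{C})$, naturally in $P$; I then set $\mathrm{L}F^{\bullet}(A,B):=\overline{F}_{P_{A}}(B)$, i.e.\ $F^{\bullet}(P_{A},B)$ regarded in $\mathrm{D}^{b}(\mathcal{C})$, using the functorial resolution above. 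I would check it is a triangulated bifunctor: triangulated in the second variable since each $\overline{F}_{P}$ is, and in the first variable because a distinguished triangle in $\mathrm{D}^{b}(\mathcal{A})\simeq\mathrm{K}^{b}(\mathcal{P})$ is isomorphic to a strict triangle of bounded complexes over $\mathcal{P}$, to which the triangulated bifunctor $F^{\bullet}(-,B)$ on $\mathrm{K}^{b}$ sends a distinguished triangle; the graded structure maps $\ell,r$ descend from those of $F^{\bullet}$ on $\mathrm{K}^{b}$. The comparison transformation between $Q_{\mathcal{C}}F^{\bullet}$ and $\mathrm{L}F^{\bullet}(Q_{\mathcal{A}}\times Q_{\mathcal{B}})$ is induced by the augmentations $\eta_{A}$, and the universal property holds by the usual argument: for any competing triangulated bifunctor the factorizing natural transformation is forced on bounded complexes of projectives by the given comparison data, and is thereby determined on all of $\mathrm{D}^{b}(\mathcal{A})\times\mathrm{D}^{b}(\mathcal{B})$ via the chosen resolutions, yielding both existence and uniqueness. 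The left-bounded and right-bounded variants run identically.

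For the last assertions, assume $\mathcal{C}$ quasi-abelian and let $A\in\mathcal{A}$, $B\in\mathcal{B}$. Since $\mathcal{A}$ is hereditary I may take $P_{A}$ to be a two-term complex $P^{-1}\xrightarrow{d}P^{0}$ in cohomological degrees $-1,0$ fitting in a short exact sequence $0\to P^{-1}\xrightarrow{d}P^{0}\to A\to 0$ in $\mathcal{A}$; then $\mathrm{L}F^{\bullet}(A,B)$ is represented by $F(P^{-1},B)\xrightarrow{F(d,1_{B})}F(P^{0},B)$ in degrees $-1,0$, so $\mathrm{H}^{k}$ vanishes for $k\notin\{-1,0\}$, in particular $\mathrm{L}^{k}F=0$ for $k>0$. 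Finally $\mathrm{H}^{0}$ of this complex is the cokernel of $F(d,1_{B})$ in $\mathrm{LH}(\mathcal{C})$, which by right-exactness of $F$ in the first variable applied to the displayed short exact sequence is canonically identified with $F(A,B)$; hence $\mathrm{L}^{0}F\cong F$.
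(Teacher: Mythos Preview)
Your proposal is correct and follows essentially the same approach as the paper: resolve the first variable by bounded complexes of projectives, verify that $F^{\bullet}(P,B)$ is acyclic whenever $P$ or $B$ is, and invoke the standard machinery for derived functors. The paper's proof is a short sketch that packages the resolution step via Lemma~\ref{Lemma:fully-exact} applied to $\mathcal{P}\times\mathcal{B}\subseteq\mathcal{A}\times\mathcal{B}$ and defers the remaining verifications to \cite[Section 13.4]{kashiwara_categories_2006} and \cite[Proposition 1.3.11]{schneiders_quasi-abelian_1999}, whereas you spell out the acyclicity lemma via the finite column filtration and the thickness of $\mathrm{N}^{b}(\mathcal{C})$, and compute $\mathrm{L}^{0}F$ and $\mathrm{L}^{k}F$ directly from the two-term projective resolution afforded by heredity.
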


\begin{proof}
Let $\mathcal{P}$ be the class of projective objects of $\mathcal{A}$.
Consider the full subcategory $\mathcal{P}\times \mathcal{B}$ of $\mathcal{A}%
\times \mathcal{B}$. By\ Lemma \ref{Lemma:fully-exact}, the inclusion $%
\mathcal{P}\times \mathcal{B}\rightarrow \mathcal{A}\times \mathcal{B}$
induces an equivalence between their bounded derived categories. Suppose
that $P$ is a bounded complex in $\mathcal{P}$ and $A$ is a bounded complex
in $\mathcal{A}$. Considering that $\mathcal{P}$ is the class of projective
objects of $\mathcal{A}$, and that $F\left( P,-\right) :\mathcal{B}%
\rightarrow \mathcal{C}$ is exact for every $P$ in $\mathcal{P}$, one can
see that $F^{\bullet }\left( P,B\right) $ is acyclic whenever $P$ is an
acyclic complex in $\mathcal{P}$ or $B$ is an acyclic in $\mathcal{B}$.

The conclusion that $\mathrm{L}F^{\bullet }$ as defined in the statement is
a total left derived functor for $F^{\bullet }$ is a consequence of the
above remarks; see also \cite[Section 13.4]{kashiwara_categories_2006}. When 
$F$ is right-exact in each variable, it follows as in (the dual of) \cite[%
Proposition 1.3.11]{schneiders_quasi-abelian_1999} that $\mathrm{L}^{0}F|_{%
\mathcal{A}\times \mathcal{B}}\cong F$ and $\mathrm{L}^{k}F=0$ for $k>0$.
(Notice that in the terminology of \cite{schneiders_quasi-abelian_1999},
\textquotedblleft RL left exact\textquotedblright\ is equivalent to left
exact; see \cite[Proposition 1.1.15]{schneiders_quasi-abelian_1999}.)
\end{proof}

A similar proof as Proposition \ref{Proposition:derived-functor} gives the
following result (a particular instance of \cite[Proposition 13.4.3]%
{kashiwara_categories_2006}), which applies to exact categories that are not
necessarily hereditary.

\begin{proposition}
\label{Proposition:derived-functor2}Let $\mathcal{A}$, $\mathcal{B}$, $%
\mathcal{C}$ be idempotent-complete exact categories. Suppose that $\mathcal{%
A}$ has enough projectives. Let also $F:\mathcal{A}\times \mathcal{B}%
\rightarrow \mathcal{C}$ be an additive functor such that $F\left(
P,-\right) :\mathcal{B}\rightarrow \mathcal{C}$ is exact for every
projective object $P$ of $\mathcal{A}$. Then the triangulated bifunctor $%
F^{\bullet }:\mathrm{K}^{+}(\mathcal{A)}\times \mathrm{K}^{+}(\mathcal{B)}%
\rightarrow \mathrm{K}^{+}\left( \mathcal{C}\right) $ has a total left
derived functor $\mathrm{L}F^{\bullet }:\mathrm{D}^{+}(\mathcal{A)}\times 
\mathrm{D}^{+}(\mathcal{B)}\rightarrow \mathrm{D}^{+}(\mathcal{C)}$.
Furthermore, this can be defined as follows: For a left-bounded complex $A$
over $\mathcal{A}$, pick a left-bounded complex $P_{A}$ over $\mathcal{P}$
together with a quasi-isomorphism $\eta _{A}:P_{A}\rightarrow A$.\ Then one
can define 
\begin{equation*}
\mathrm{L}F^{\bullet }\left( A,B\right) :=F^{\bullet }\left( P_{A},B\right)
\end{equation*}
for $A\in \mathrm{D}^{+}\left( \mathcal{A}\right) $ and $B\in \mathrm{D}%
^{+}\left( \mathcal{A}\right) $.

When $\mathcal{C}$ is quasi-abelian, one sets 
\begin{equation*}
\mathrm{L}^{k}F:=\left( \mathrm{H}^{k}\circ \mathrm{L}F\right) |_{\mathcal{A}%
\times \mathcal{B}}:\mathcal{A}\times \mathcal{B}\rightarrow \mathrm{LH}%
\left( \mathcal{C}\right)
\end{equation*}%
If $F$ is right exact in each variable, then $\mathrm{L}^{0}F\cong F$.
\end{proposition}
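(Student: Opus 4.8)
The plan is to transcribe the proof of Proposition~\ref{Proposition:derived-functor} with the hereditary ingredients replaced by their one-sided-bounded analogues; I carry it out in the right-bounded derived category $\mathrm{D}^{-}$, which is where projective resolutions live. Let $\mathcal{P}$ be the fully exact subcategory of $\mathcal{A}$ consisting of projective objects; it is closed under direct summands. Since $\mathcal{A}$ has enough projectives, the Corollary following Lemma~\ref{Lemma:fully-exact2} gives that $\mathrm{K}^{-}(\mathcal{P})\to\mathrm{D}^{-}(\mathcal{P})$ is an equivalence and that the inclusion $\mathcal{P}\to\mathcal{A}$ induces an equivalence $\mathrm{D}^{-}(\mathcal{P})\to\mathrm{D}^{-}(\mathcal{A})$. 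Concretely, every right-bounded complex $A$ over $\mathcal{A}$ admits a right-bounded complex $P_{A}$ over $\mathcal{P}$ together with a quasi-isomorphism $\eta_{A}\colon P_{A}\to A$, unique up to homotopy equivalence, and for $A$ already over $\mathcal{P}$ we take $P_{A}=A$ and $\eta_{A}=\mathrm{id}$.

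The homological core is that the triangulated bifunctor $F^{\bullet}$ preserves acyclic complexes in each variable. If $P$ is an acyclic right-bounded complex over $\mathcal{P}$, a descending induction shows all of its cycle objects are projective, so it decomposes as a direct sum of contractible complexes, hence is contractible; since $F^{\bullet}$ is additive and preserves homotopies, $F^{\bullet}(P,B)$ is contractible and in particular acyclic. If $B$ is an acyclic complex over $\mathcal{B}$, then the exactness of $F(P,-)\colon\mathcal{B}\to\mathcal{C}$ for projective $P$, together with the standard filtration (acyclic-assembly) argument on the associated quadrant double complex, shows $\mathrm{Tot}\bigl(F(P,B)\bigr)$ is acyclic; the one-sided boundedness is what makes the argument converge. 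Consequently $Q_{\mathcal{C}}\circ F^{\bullet}$, restricted so that the first variable ranges over complexes over $\mathcal{P}$, inverts quasi-isomorphisms in both variables and hence factors through $\mathrm{D}^{-}(\mathcal{P})\times\mathrm{D}^{-}(\mathcal{B})$; transporting along the equivalence $\mathrm{D}^{-}(\mathcal{P})\simeq\mathrm{D}^{-}(\mathcal{A})$ via the quasi-inverse $A\mapsto P_{A}$ yields a triangulated bifunctor $\mathrm{L}F^{\bullet}$ with $\mathrm{L}F^{\bullet}(A,B)=F^{\bullet}(P_{A},B)$, and the $\eta_{A}$ assemble into a natural transformation $g\colon Q_{\mathcal{C}}F^{\bullet}\Rightarrow\mathrm{L}F^{\bullet}(Q_{\mathcal{A}}\times Q_{\mathcal{B}})$.

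For the universal property, given another triangulated bifunctor $\Psi$ on the derived categories with $h\colon Q_{\mathcal{C}}F^{\bullet}\Rightarrow\Psi(Q_{\mathcal{A}}\times Q_{\mathcal{B}})$, precomposing $h$ with the isomorphisms induced by the $\eta_{A}$ produces the unique morphism $\gamma\colon\mathrm{L}F^{\bullet}\Rightarrow\Psi$ with $\gamma(Q_{\mathcal{A}}\times Q_{\mathcal{B}})\circ g=h$; uniqueness is forced because the $P_{A}$ generate $\mathrm{D}^{-}(\mathcal{A})$ under the equivalence and $g$ is built from the $\eta_{A}$. This is exactly the mechanism of \cite[Proposition~13.4.3]{kashiwara_categories_2006}, now in the one-sided-bounded setting, which thus supplies both the existence of $\mathrm{L}F^{\bullet}$ and the displayed formula. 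Finally, when $\mathcal{C}$ is quasi-abelian and $F$ is right exact in each variable, $\mathrm{L}^{0}F\cong F$ follows as in (the dual of) \cite[Proposition~1.3.11]{schneiders_quasi-abelian_1999}: for objects $A,B$ pick a projective presentation $P_{1}\to P_{0}\to A\to 0$; exactness of $F(P_{0},-)$ and $F(P_{1},-)$ and right exactness of $F(-,B)$ identify $\mathrm{H}^{0}$ of $F^{\bullet}(P_{A},B)$ with $F(A,B)$ in $\mathrm{LH}(\mathcal{C})$. Since $\mathcal{A}$ is not assumed hereditary, no vanishing of $\mathrm{L}^{k}F$ for $k>0$ is asserted. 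The step I expect to be the main obstacle is the acyclicity of $\mathrm{Tot}\bigl(F(P,B)\bigr)$ for acyclic $B$: one must verify that the double-complex filtration is exhaustive with a genuinely convergent spectral sequence, which is precisely where the boundedness hypothesis (absent the hereditariness shortcut used in Proposition~\ref{Proposition:derived-functor}) does the work; the remainder is a routine transcription with Lemma~\ref{Lemma:fully-exact} replaced by the Corollary following Lemma~\ref{Lemma:fully-exact2}.
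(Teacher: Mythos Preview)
Your proposal is correct and follows essentially the same approach as the paper, which merely refers back to the proof of Proposition~\ref{Proposition:derived-functor} and to \cite[Proposition~13.4.3]{kashiwara_categories_2006}; you simply supply the expected details (and sensibly work in $\mathrm{D}^{-}$, where projective resolutions live, addressing what looks like a sign-convention slip in the statement's use of $\mathrm{D}^{+}$).
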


\subsection{Monoidal categories}

Let $\mathcal{C}$ be a category. A \emph{monoidal structure }on $\mathcal{C}$
consists of a quintuple $\left( \otimes ,I,\alpha ,\lambda ,\rho \right) $
where $\otimes $ is a functor $\mathcal{C}\times \mathcal{C}\rightarrow 
\mathcal{C}$ called \emph{tensor product}, $I$ is an object of $\mathcal{C}$
called \emph{tensor identity}, $\alpha $ is a natural isomorphism 
\begin{equation*}
\otimes \circ \left( \otimes \times 1_{\mathcal{C}}\right) \cong \otimes
\circ \left( 1_{\mathcal{C}}\times \otimes \right)
\end{equation*}%
called \emph{associator}, $\rho $ is a natural isomorphism 
\begin{equation*}
\otimes \circ \left( 1_{\mathcal{C}}\times I\right) \cong 1_{\mathcal{C}}
\end{equation*}%
called \emph{right unitor}, and $\lambda $ is a natural isomorphism 
\begin{equation*}
\otimes \circ \left( I\times 1_{\mathcal{C}}\right) \cong 1_{\mathcal{C}}
\end{equation*}%
called \emph{left unitor}, such that $\alpha ,\lambda ,\rho $ satisfy the
associativity and unit coherence axioms; see \cite[Section VII.1]%
{mac_lane_categories_1998} or \cite[Definition 6.1.1]%
{borceux_handbook_1994-2}. Thus, for objects $x,y,z$ of $\mathcal{C}$, $%
\alpha _{x,y,z}$ is an isomorphism%
\begin{equation*}
\left( x\otimes y\right) \otimes z\cong x\otimes \left( y\otimes z\right) 
\text{,}
\end{equation*}%
$\rho _{x}$ is an isomorphism%
\begin{equation*}
x\otimes I\cong x\text{,}
\end{equation*}%
and $\lambda _{x}$ is an isomorphism%
\begin{equation*}
I\otimes x\cong x\text{.}
\end{equation*}%
A \emph{monoidal category} is a category endowed with a monoidal structure.

A \emph{tensor category }or \emph{symmetric monoidal category} is a monoidal
category together with a natural isomorphism $\beta $, called \emph{braiding}%
, between $\otimes $ and $\otimes \circ \sigma $, where $\sigma :\mathcal{C}%
\times \mathcal{C}\rightarrow \mathcal{C}\times \mathcal{C}$ is the flip,
satisfying $\beta \circ \left( \beta \sigma \right) =\mathrm{1}_{\otimes }$ (%
\emph{symmetry}), and the associativity and unit cohere axioms \cite[%
Definition 6.1.2]{borceux_handbook_1994-2}. Thus, for objects $x,y$ of $%
\mathcal{C}$, $\beta _{x,y}$ is an isomorphism%
\begin{equation*}
x\otimes y\cong y\otimes x\text{.}
\end{equation*}%
The symmetry requirement asserts that $\beta _{y,x}\circ \beta _{x,y}$ is
the identity of $x\otimes y$.

A functor $F:\mathcal{A}\rightarrow \mathcal{B}$ between monoidal categories
is strict monoidal if $F\left( I\right) =I$ and $F\left( A\otimes B\right)
=FA\otimes FB$ for objects $A,B$ of $\mathcal{A}$. The notion of (lax)
monoidal functor is obtained in a similar fashion by replacing equalities
with natural transformations that satisfy coherence conditions that involve
the associator and unitors of $\mathcal{A}$ and $\mathcal{B}$.

Let $\mathcal{V}$ be a monoidal category. A $\mathcal{V}$-category is given
by a collection of objects and, for objects $x,y,z$, an object $\mathrm{Hom}%
\left( x,y\right) $ of $\mathcal{V}$, a morphism $1_{x}:I\rightarrow \mathrm{%
Hom}\left( x,x\right) $ (the \emph{identity}), and a morphism $\mathrm{Hom}%
\left( y,z\right) \otimes \mathrm{Hom}\left( x,y\right) \rightarrow \mathrm{%
Hom}\left( x,z\right) $ (\emph{composition}), that satisfy the natural
version of the associativity and identity axioms of a category, phrased in
terms of commuting diagrams involving the associator and the left and right
unitor of $\mathcal{V}$. A $\mathcal{V}$-functor $F:\mathcal{C}\rightarrow 
\mathcal{D}$ between $\mathcal{V}$-categories is given by a function $%
x\mapsto F\left( x\right) $ mapping objects of $\mathcal{C}$ to objects of $%
\mathcal{D}$ together with morphisms $\mathrm{Hom}\left( x,y\right)
\rightarrow \mathrm{Hom}\left( Fx,Fy\right) $ in $\mathcal{V}$ for objects $%
x,y$ of $\mathcal{C}$ that preserve identities and composition in the
obvious sense.

\subsection{Tensor quasi-abelian categories}

A tensor additive category is a tensor category that is also additive, and
such that the tensor product functor is additive in each variable. The
notion of tensor exact category is defined in the same fashion.

A tensor quasi-abelian category is a tensor additive category that is also
quasi-abelian, where the quasi-abelian and monoidal structures are
compatible in a suitable sense. For our purposes, it will be convenient to
consider the following:

\begin{definition}
\label{Defnition:monoidal-quasiabelian}A\emph{\ tensor quasi-abelian} \emph{%
category} is a tensor category which is also quasi-abelian, such that the
tensor identity $I$ is projective and the tensor product functor is right
exact in each variable.

An object $X$ of a tensor quasi-abelian category is \emph{flat }if the
functor $X\otimes -$ is exact.
\end{definition}

The reason for this definition is that it includes the categories of modules
over a commutative ring, and other categories that we will consider. If $%
\mathcal{A}$ is a monoidal quasi-abelian category, then by the universal
property of $\mathrm{LH}\left( \mathcal{A}\right) $, the tensor product
functor $\otimes _{\mathcal{A}}$ extend to a functor $\otimes _{\mathrm{LH}%
\left( \mathcal{A}\right) }:\mathrm{LH}\left( \mathcal{A}\right) \times 
\mathrm{LH}\left( \mathcal{A}\right) \rightarrow \mathrm{LH}\left( \mathcal{A%
}\right) $ which is still right exact in each variable. The tensor identity
for $\mathcal{A}$ is also a tensor identity for $\mathrm{LH}\left( \mathcal{A%
}\right) $. Furthermore, the associator, unitors, and braiding for $\otimes
_{\mathcal{A}}$ induce associator, unitors, and braiding for $\otimes _{%
\mathrm{LH}\left( \mathcal{A}\right) }$.\ This endows $\mathrm{LH}\left( 
\mathcal{A}\right) $ with a canonical structure of tensor abelian category.

\subsection{Tensor triangulated categories}

A variety of notions of tensor triangulated category have been considered in
the literature. Informally, a tensor triangulated category is a triangulated
category that is also a symmetric monoidal category, with compatibility
requirements between the triangulated and monoidal structures. Depending on
the source and the desired applications, the compatibility requirements
between the triangulated and monoidal structures may vary. In this paper, we
consider the ones from \cite{dellambrogio_triangulated_2016}; see also \cite%
{xu_hopf_2025}, \cite[Definition 3.1]{dubey_compactly_2023}, and \cite[%
Appendix A]{hovey_axiomatic_1997}. Unlike these sources, we consider
monoidal categories that are not necessarily closed, i.e., we do not require
the tensor product functor to have a right adjoint.

Let thus $\mathcal{T}$ be a triangulated category with translation functor $%
T $ (called \emph{suspension }in \cite[Appendix A]{hovey_axiomatic_1997})
endowed with a tensor structure which we denote as above (the tensor functor
is denoted by $\wedge $ and called smash product in \cite[Appendix A]%
{hovey_axiomatic_1997}). We require that the tensor product functor be
triangulated in the sense of \cite[Definition 10.3.6]%
{kashiwara_categories_2006}, as witnessed by natural isomorphisms%
\begin{equation*}
\left( Tx\right) \otimes y\cong _{\ell _{x,y}}T\left( x\otimes y\right)
\end{equation*}%
and%
\begin{equation*}
x\otimes \left( Ty\right) \cong _{r_{x,y}}T\left( x\otimes y\right)
\end{equation*}%
for objects $x,y$ of $\mathcal{T}$. The triangulated and monoidal structures
together with the natural transformations $\ell $ and $r$, called\emph{\
left and right translator }respectively, define a \emph{tensor triangulated
category} provided they satisfy the following compatibility requirements as
in \cite{dellambrogio_triangulated_2016} for all objects $x,y,z$:

\begin{enumerate}
\item the right translator and the left unitor satisfy%
\begin{equation*}
T\lambda _{x}\circ r_{x}=\lambda _{Tx}\text{;}
\end{equation*}

\item the left translator and the right unitor satisfy%
\begin{equation*}
T\rho _{x}\circ \ell _{x}=\rho _{Tx}
\end{equation*}

\item the left and right translator satisfy%
\begin{equation*}
T\ell _{x,y}\circ r_{Tx,y}=Tr_{x,y}\circ \ell _{x,Ty}
\end{equation*}

\item the associator and the left translator satisfy%
\begin{equation*}
\ell _{x,y\otimes z}\circ \alpha _{Tx,y,z}=T\alpha _{x,y,z}\circ \ell
_{x\otimes y,z}\circ \left( \ell \otimes 1_{z}\right)
\end{equation*}

\item the left and right translator determine each other via the braiding%
\emph{\ }by the identity%
\begin{equation*}
r_{x,y}=\left( T\beta _{y,x}\right) \circ \ell _{y,x}\circ \beta _{x,Ty}%
\text{.}
\end{equation*}
\end{enumerate}

Suppose that $\mathcal{A}$ is a tensor additive category. Then the tensor
product functor extends as in Section \ref{Section:derived} to a
triangulated bifunctor $\otimes ^{\bullet }$ on $\mathrm{K}^{b}\left( 
\mathcal{A}\right) $ which turns it into a tensor triangulated category with
the same tensor identity as $\mathcal{A}$, and associators, unitors, and
braiding induced by those of $\mathcal{A}$. The same applies to $\mathrm{K}%
^{+}\left( \mathcal{A}\right) $.

\begin{proposition}
Let $\mathcal{A}$ be a tensor exact category. Suppose that:

\begin{itemize}
\item the tensor product functor on $\mathcal{A}$ is right exact in each
variable;

\item $\mathcal{A}$ has enough \emph{flat }projectives.
\end{itemize}

Then the triangulated bifunctor $\otimes :\mathrm{K}^{+}(\mathcal{A)}\times 
\mathrm{K}^{+}(\mathcal{A)}\rightarrow \mathrm{K}^{+}\left( \mathcal{A}%
\right) $ has a total left derived functor $\otimes :\mathrm{D}^{+}(\mathcal{%
A)}\times \mathrm{D}^{+}(\mathcal{A)}\rightarrow \mathrm{D}^{+}(\mathcal{M)}$%
. This turns $\mathrm{D}^{+}\left( \mathcal{A}\right) $ into a tensor
triangulated category, such that the inclusion $\mathcal{A}\rightarrow 
\mathrm{D}^{+}\left( \mathcal{A}\right) $ is monoidal and triangulated.

If $\mathcal{A}$ is hereditary, the same conclusions hold replacing the
category of left-bounded complexes with the category of bounded complexes.
\end{proposition}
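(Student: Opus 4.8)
The plan is first to observe that the hypothesis of enough flat projectives forces \emph{every} projective object of $\mathcal{A}$ to be flat, which reduces the statement to a direct application of the derived-functor machinery of Proposition~\ref{Proposition:derived-functor2}. Indeed, given a projective $P$, choose an admissible epic $Q\rightarrow P$ with $Q$ flat projective; since $P$ is projective this epic splits, so $P$ is a direct summand of $Q$, and a direct summand of a flat object is flat, because for every short exact sequence $M'\rightarrow M\rightarrow M''$ the three-term complex $P\otimes M'\rightarrow P\otimes M\rightarrow P\otimes M''$ is a retract of the acyclic complex $Q\otimes M'\rightarrow Q\otimes M\rightarrow Q\otimes M''$ and retracts of acyclic complexes are acyclic \cite[Remark 10.10]{buhler_exact_2010}. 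Hence $\otimes\colon\mathcal{A}\times\mathcal{A}\rightarrow\mathcal{A}$ is an additive bifunctor with $P\otimes-$ exact for every projective $P$, and $\mathcal{A}$ has enough projectives, so Proposition~\ref{Proposition:derived-functor2} with $\mathcal{A}=\mathcal{B}=\mathcal{C}$ produces the total left derived functor $\otimes^{\mathbb{L}}\colon\mathrm{D}^{+}(\mathcal{A})\times\mathrm{D}^{+}(\mathcal{A})\rightarrow\mathrm{D}^{+}(\mathcal{A})$ of the tensor bifunctor $\otimes^{\bullet}$ on $\mathrm{K}^{+}(\mathcal{A})$, computed by $(A,B)\mapsto\mathrm{Tot}(P_{A}\otimes B)$ for a quasi-isomorphism $P_{A}\rightarrow A$ with $P_{A}$ a bounded-below complex of projectives; when $\mathcal{A}$ is hereditary, Proposition~\ref{Proposition:derived-functor} applies in the same way with bounded complexes in place of left-bounded ones.

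To exhibit the tensor triangulated structure I would pass to the symmetric model. For quasi-isomorphisms $P_{A}\rightarrow A$ and $P_{B}\rightarrow B$ with $P_{A},P_{B}$ bounded-below complexes of projectives, the canonical maps $\mathrm{Tot}(P_{A}\otimes P_{B})\rightarrow\mathrm{Tot}(P_{A}\otimes B)$ and $\mathrm{Tot}(P_{A}\otimes P_{B})\rightarrow\mathrm{Tot}(A\otimes P_{B})$ are quasi-isomorphisms, since the cone of each is the total complex of a bounded-below double complex with acyclic columns (tensoring with the flat object $P_{A}^{i}$, respectively $P_{B}^{j}$, is exact). Thus $A\otimes^{\mathbb{L}}B$ is also computed by $\mathrm{Tot}(P_{A}\otimes P_{B})$, which is symmetric in $A$ and $B$ and, a tensor product of flats being flat, is itself a bounded-below complex of flat objects; consequently it may be fed directly into a further derived tensor product in place of a complex of projectives. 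As recalled just before the statement, $\mathrm{K}^{+}(\mathcal{A})$ is already a tensor triangulated category with tensor identity $I$ and associator, unitors, braiding, and left and right translators induced from $\mathcal{A}$; transporting these natural isomorphisms through the symmetric model yields the corresponding data for $\otimes^{\mathbb{L}}$ on $\mathrm{D}^{+}(\mathcal{A})$, and the pentagon, triangle, hexagon, and symmetry identities together with the five compatibility axioms of \cite{dellambrogio_triangulated_2016} descend from the corresponding identities in $\mathrm{K}^{+}(\mathcal{A})$. The complex $I$ in degree $0$ is still the tensor identity because $I$ is flat ($I\otimes-$ being naturally isomorphic to the identity functor), so its unitors survive derivation; the same discussion applies with $\mathrm{D}^{b}$ in the hereditary case.

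Finally, the inclusion $\mathcal{A}\rightarrow\mathrm{D}^{+}(\mathcal{A})$, sending an object to the complex concentrated in degree $0$, is triangulated by the construction of the derived category, and it is monoidal: it sends the tensor identity to the tensor identity via the identity of $I$, and the comparison for the tensor product is the canonical morphism $\mathrm{Tot}(P_{A}\otimes B)\rightarrow A\otimes B$ induced by $P_{A}\rightarrow A$ (an isomorphism precisely when one of the two arguments is flat), which is compatible with the associator and unitors by the very way they were constructed above. I expect no conceptual obstacle here; the bulk of the work---and the only delicate part---is the bookkeeping of the second paragraph, namely checking that every structure isomorphism and coherence axiom for $\otimes^{\mathbb{L}}$ is obtained correctly by resolving, applying the corresponding datum of $\otimes^{\bullet}$ on $\mathrm{K}^{+}(\mathcal{A})$, and descending, which requires care with the totalizations of the double and triple complexes involved but is otherwise routine.
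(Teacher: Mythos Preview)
Your proof is correct and follows essentially the same route as the paper: invoke Proposition~\ref{Proposition:derived-functor2} to obtain the total left derived tensor product, pass to the symmetric model $P_{A}\otimes^{\bullet}P_{B}$ with both arguments resolved by (flat) projectives, and let the associator, unitors, braiding, and translators descend from $\mathrm{K}^{+}(\mathcal{A})$. Your preliminary observation that \emph{every} projective is flat (as a retract of a flat projective) is a useful clarification that makes the hypotheses of Proposition~\ref{Proposition:derived-functor2} hold on the nose, whereas the paper simply chooses flat projective resolutions directly; otherwise the two arguments coincide, with yours spelling out more of the bookkeeping.
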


\begin{proof}
By Proposition \ref{Proposition:derived-functor2}, the tensor product
functor $\otimes ^{\bullet }$ on $\mathrm{K}^{+}\left( \mathcal{A}\right) $
admits a total left derived functor $\otimes $ on $\mathrm{D}^{+}\left( 
\mathcal{A}\right) $. By definition, for each complex $A$ one picks a
quasi-isomorphism $A\rightarrow P_{A}$, where $P_{A}$ is a complex of \emph{%
flat} projectives. On then has%
\begin{equation*}
A\otimes B:=P_{A}\otimes ^{\bullet }P_{B}\text{.}
\end{equation*}%
The associators, unitors, and braiding for $\otimes ^{\bullet }$ on $\mathrm{%
K}^{+}\left( \mathcal{A}\right) $ induce those for $\otimes $ on $\mathrm{D}%
^{+}\left( \mathcal{A}\right) $.
\end{proof}

\subsection{Tilting}

Suppose that $\left( \mathcal{E},\mathcal{F}\right) $ is a torsion pair in a
quasi-abelian category $\mathcal{A}$. The pair is called \emph{tilting}, and 
$\mathcal{E}$ a tilting torsion class, if $\mathcal{E}$ cogenerates $%
\mathcal{A}$, i.e., for every object $A$ of $\mathcal{A}$ there exists a
strict monic $A\rightarrow E$ with $E$ in $\mathcal{A}$ \cite%
{happel_tilting_1996}; see also \cite[Appendix B]{bondal_generators_2003}.
Dually, the pair $\left( \mathcal{E},\mathcal{F}\right) $ is called
cotilting, and $\mathcal{F}$ a cotilting torsion-free class, if $\mathcal{F}$
generates $\mathcal{A}$, i.e., for every object $A$ of $\mathcal{A}$ there
exists a strict epic $F\rightarrow \mathcal{A}$ with $F$ in $\mathcal{F}$.
In this case, $\mathcal{A}$ is equivalent to $\mathrm{LH}\left( \mathcal{E}%
\right) $ via an equivalence that is the identity of $\mathcal{E}$.

Suppose that $\left( \mathcal{E},\mathcal{F}\right) $ is a torsion pair in $%
\mathcal{A}$. Then the inclusion $\mathcal{E}\rightarrow \mathcal{A}$ is
exact and cocontinuous. For an arrow $f$ in $\mathcal{E}$ one has $\mathrm{%
\mathrm{Ker}}_{\mathcal{E}}\left( f\right) =\pi _{\mathcal{E}}\mathrm{%
\mathrm{Ker}}_{\mathcal{A}}\left( f\right) $. For an object $A$ of $\mathcal{%
A}$, by definition we have a short exact sequence%
\begin{equation*}
0\rightarrow E_{A}\rightarrow A\rightarrow F_{A}\rightarrow 0
\end{equation*}%
with $E_{A}$ in $\mathcal{E}$ and $F_{A}$ in $\mathcal{F}$. The assignment $%
A\rightarrow F_{A}$ defines a functor $\pi _{\mathcal{E}}:\mathcal{A}%
\rightarrow \mathcal{E}$ that is right adjoint of the inclusion functor $%
\mathcal{E}\rightarrow \mathcal{A}$ and satisfies $\pi _{\mathcal{E}}|_{%
\mathcal{E}}=\mathrm{id}_{\mathcal{E}}$ and $\pi _{\mathcal{E}}|_{\mathcal{F}%
}=0$. It is transparent that $\mathcal{E}$ is a $\pi _{\mathcal{E}}$%
-injective category in the sense of \cite[Proposition 1.3.5]%
{schneiders_quasi-abelian_1999}. Therefore by \cite[Proposition 1.3.5]%
{schneiders_quasi-abelian_1999}, $\pi _{\mathcal{E}}$ is explicitly right
derivable in the sense of \cite[Definition 1.3.6]%
{schneiders_quasi-abelian_1999}. The derived functor $\mathrm{R}^{1}\pi _{%
\mathcal{E}}:\mathcal{A}\rightarrow \mathrm{LH}\left( \mathcal{E}\right) $
is defined as follows. For an object $A$ of $\mathcal{A}$, since $\mathcal{E}
$ is a tilting torsion class, there exists a strict monic $A\rightarrow I$.
Then the complex%
\begin{equation*}
0\rightarrow I\rightarrow \mathrm{Coker}_{\mathcal{A}}\left( I\rightarrow
A\right) \rightarrow 0
\end{equation*}%
with $I$ in degree $0$, is quasi-isomorphic to $A$. Its cohomology in degree 
$1$ is $\mathrm{R}^{1}\pi _{\mathcal{E}}\left( A\right) $. Thus,%
\begin{equation*}
\mathrm{R}^{1}\pi _{\mathcal{E}}\left( A\right) =\mathrm{Coker}_{\mathrm{LH}%
\left( \mathcal{E}\right) }\left( I\rightarrow \mathrm{Coker}_{\mathcal{A}%
}\left( I\rightarrow A\right) \right) \text{.}
\end{equation*}%
Recall the definition of the phantom category of a quasi-abelian category
from Definition \ref{Definition:phantom-category}.

\begin{proposition}
\label{Proposition:phantom-category}Let $\mathcal{A}$ be a quasi-abelian
category, and $\left( \mathcal{E},\mathcal{F}\right) $ be a tilting torsion
pair in $\mathcal{A}$:

\begin{enumerate}
\item the restriction of $\mathrm{R}^{1}\pi _{\mathcal{E}}$ to $\mathcal{F}$
is faithful, and its essential image is the phantom category $\mathrm{Ph}%
\left( \mathcal{E}\right) $ of $\mathcal{E}$;

\item if for every object $F$ of $\mathcal{F}$ there exists a strict monic $%
F\rightarrow P$ where $P$ is projective in $\mathcal{E}$, $\mathrm{R}^{1}\pi
_{\mathcal{E}}$ establishes an equivalence of categories between $\mathcal{F}
$ and $\mathrm{Ph}\left( \mathcal{E}\right) $.
\end{enumerate}
\end{proposition}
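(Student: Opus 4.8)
The plan is to realize $\mathrm{R}^{1}\pi_{\mathcal{E}}$ through the explicit length-one $\mathcal{E}$-resolutions afforded by the tilting hypothesis. For an object $A$ of $\mathcal{A}$, fix a strict monic $A\hookrightarrow I_{A}$ with $I_{A}$ in $\mathcal{E}$ and put $C_{A}:=\mathrm{Coker}_{\mathcal{A}}(A\to I_{A})$, which lies in $\mathcal{E}$ because $\mathcal{E}$ is closed under quotients; thus $0\to A\to I_{A}\xrightarrow{p_{A}}C_{A}\to 0$ is short exact with $\pi_{\mathcal{E}}$-acyclic terms $I_{A},C_{A}$, and, by the description recalled before the statement, $\mathrm{R}^{1}\pi_{\mathcal{E}}A=\mathrm{Coker}_{\mathrm{LH}(\mathcal{E})}(p_{A})$. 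Applying $\mathrm{R}\pi_{\mathcal{E}}$ to this sequence and using $\mathrm{R}^{1}\pi_{\mathcal{E}}I_{A}=0$ yields the exact sequence $0\to\pi_{\mathcal{E}}A\to I_{A}\xrightarrow{p_{A}}C_{A}\xrightarrow{\partial_{A}}\mathrm{R}^{1}\pi_{\mathcal{E}}A\to 0$ in $\mathrm{LH}(\mathcal{E})$; in particular, for $F$ in $\mathcal{F}$, where $\pi_{\mathcal{E}}F=0$, the map $p_{F}$ is a monomorphism in $\mathrm{LH}(\mathcal{E})$.

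I would then pin down the essential image of $\mathrm{R}^{1}\pi_{\mathcal{E}}|_{\mathcal{F}}$. It is contained in $\mathrm{Ph}(\mathcal{E})$: applying the right exact functor $\kappa$, which is the identity on $\mathcal{E}$, to the exact sequence above gives $\kappa(\mathrm{R}^{1}\pi_{\mathcal{E}}F)=\mathrm{Coker}_{\mathcal{E}}(p_{F})$, and this vanishes because $p_{F}$ is a cokernel in $\mathcal{A}$, hence an epimorphism in $\mathcal{E}$ (the inclusion $\mathcal{E}\to\mathcal{A}$ being cocontinuous). Conversely, let $X$ be a phantom object of $\mathcal{E}$, represented by a two-term complex $E^{-1}\xrightarrow{d}E^{0}$ in degrees $-1,0$ with $d$ monic in $\mathcal{E}$; since $\kappa X=\mathrm{Coker}_{\mathcal{E}}(d)=0$, $d$ is also epic in $\mathcal{E}$. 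I set $F:=\ker_{\mathcal{A}}(d)$; then $\pi_{\mathcal{E}}F=\ker_{\mathcal{E}}(d)=0$, so $F\in\mathcal{F}$, and $0\to F\to E^{-1}\xrightarrow{\mathrm{can}}C\to 0$ with $C:=\mathrm{Coim}_{\mathcal{A}}(d)\in\mathcal{E}$ is a resolution of $F$, so $\mathrm{R}^{1}\pi_{\mathcal{E}}F\cong[E^{-1}\xrightarrow{\mathrm{can}}C]$. Factoring $d=\bar d\circ\mathrm{can}$, one sees $\bar d:C\to E^{0}$ is monic in $\mathcal{E}$ (its kernel in $\mathcal{A}$ is trivial) and epic in $\mathcal{E}$ (because $d$ is), so the chain map $(\mathrm{id}_{E^{-1}},\bar d):[E^{-1}\xrightarrow{\mathrm{can}}C]\to[E^{-1}\xrightarrow{d}E^{0}]$ has underlying square both a pullback and a pushout in $\mathcal{E}$ — a square with identity left leg is a pullback, resp.\ a pushout, precisely when the opposite leg is monic, resp.\ epic — hence is a quasi-isomorphism; therefore $\mathrm{R}^{1}\pi_{\mathcal{E}}F\cong X$.

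Faithfulness I would get by a diagram chase. Given $f:F\to F'$ in $\mathcal{F}$ with $\mathrm{R}^{1}\pi_{\mathcal{E}}f=0$, resolve $F$ using degree-zero term $I:=I_{F'}\oplus I_{F}$ and strict monic $(\iota_{F'}f,\iota_{F}):F\hookrightarrow I$, with cokernel $q:I\to C$; the projection $I\to I_{F'}$ induces a lift $(g,h)$ of $f$ along the two resolutions. By naturality of the connecting map, $\partial_{F'}\circ h=\mathrm{R}^{1}\pi_{\mathcal{E}}f\circ\partial=0$, so $h$ factors as $p_{F'}\circ k$ with $k:C\to I_{F'}$ in $\mathcal{E}$; then $p_{F'}\circ(g-kq)=0$, so $g-kq$ factors through $\ker_{\mathcal{A}}(p_{F'})=F'$, and restricting to $F$ shows $f$ factors through $I\in\mathcal{E}$ via some $\ell\in\mathrm{Hom}_{\mathcal{A}}(I,F')$; since $\mathrm{Hom}_{\mathcal{A}}(\mathcal{E},\mathcal{F})=0$, we get $\ell=0$ and $f=0$. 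This would complete (1).

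For (2), assuming each object of $\mathcal{F}$ admits a strict monic into a projective object of $\mathcal{E}$, I take $I_{F}=:P$ projective in the resolution, so that $\mathrm{R}^{1}\pi_{\mathcal{E}}F$ is presented by $[P\xrightarrow{p}C]$ with $P$ projective. By (1) only fullness remains, and this is where the extra hypothesis enters. A morphism $\varphi:\mathrm{R}^{1}\pi_{\mathcal{E}}F\to\mathrm{R}^{1}\pi_{\mathcal{E}}F'$ in $\mathrm{LH}(\mathcal{E})$ is a priori only a roof $[P\to C]\xleftarrow{s}[Z^{-1}\to Z^{0}]\xrightarrow{\psi}[P'\to C']$ with $s$ a quasi-isomorphism; performing the standard enlargement of the middle complex as in the proof of Lemma \ref{Lemma:left-heart-projectives} one may assume the degree-$(-1)$ component $Z^{-1}\to P$ of $s$ is a strict epimorphism, which then splits since $P$ is projective, and using the pullback square witnessing that $s$ is a quasi-isomorphism one rectifies $\varphi$ to an honest chain map $[P\xrightarrow{p}C]\to[P'\xrightarrow{p'}C']$, i.e.\ a commuting square in $\mathcal{E}$ with top arrow $g:P\to P'$. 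Since $g$ carries $\ker_{\mathcal{A}}(p)=F$ into $\ker_{\mathcal{A}}(p')=F'$, it restricts to $f:=g|_{F}:F\to F'$, and the chain map is a lift of $f$ along the two resolutions, so $\mathrm{R}^{1}\pi_{\mathcal{E}}f=\varphi$; hence $\mathrm{R}^{1}\pi_{\mathcal{E}}|_{\mathcal{F}}$ is fully faithful and essentially surjective onto $\mathrm{Ph}(\mathcal{E})$, i.e.\ an equivalence. I expect this rectification — promoting a morphism of $\mathrm{LH}(\mathcal{E})$, which is only a roof, to a genuine chain map of the two-term resolution complexes, a step that fails without the projectivity hypothesis — to be the main obstacle, together with the usual care required for coimages and images in the non-abelian category $\mathcal{E}$.
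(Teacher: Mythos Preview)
Your overall strategy matches the paper's closely: show the image lies in $\mathrm{Ph}(\mathcal{E})$ via $\kappa$, take $F=\ker_{\mathcal{A}}(d)$ for essential surjectivity, chase a diagram for faithfulness, and for (2) rectify roofs using projectivity exactly as in Lemma~\ref{Lemma:left-heart-projectives}. Your faithfulness argument and part (2) are fine and essentially reproduce the paper's.

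The essential surjectivity step, however, contains an error. Your criterion ``a square with identity left leg is a pushout precisely when the opposite leg is epic'' is false: the pushout of $\mathrm{id}\colon E^{-1}\to E^{-1}$ and $\mathrm{can}\colon E^{-1}\to C$ along $E^{-1}$ is simply $C$, so your square is a pushout exactly when $\bar d\colon C\to E^{0}$ is an \emph{isomorphism}, not merely an epimorphism. (The pullback half of your criterion is correct; its honest dual says that a square with identity \emph{right} leg is a pushout iff the \emph{left} leg is epic --- but that is not the configuration you have.) You have shown only that $\bar d$ is a bimorphism in $\mathcal{E}$ and in $\mathcal{A}$; since these are quasi-abelian and not abelian, bimorphisms need not be isomorphisms. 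Equivalently, the cone of $(\mathrm{id},\bar d)$ is the three-term complex $E^{-1}\to C\oplus E^{-1}\to E^{0}$ whose rightmost map factors as a split epimorphism followed by $\bar d$, and this is an admissible epimorphism in $\mathcal{E}$ only if $\bar d$ is one, i.e.\ only if $\bar d$ is an isomorphism. So $(\mathrm{id},\bar d)$ is not established to be a quasi-isomorphism, and $\mathrm{R}^{1}\pi_{\mathcal{E}}F\cong X$ is not proved by your argument.

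For comparison, the paper does not pass through the coimage at all: it asserts directly that $X^{0}\to X^{1}$ is the cokernel in $\mathcal{A}$ of $F\to X^{0}$ --- exactly the statement that your $\bar d$ is an isomorphism --- and proceeds from there. Your coimage detour correctly isolates what must be checked; the problem is only the justification you give for it.
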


\begin{proof}
(1) Suppose that $F\in \mathcal{F}$. We need to prove that $\mathrm{R}%
^{1}\pi _{\mathcal{E}}\left( F\right) \in \mathrm{Ph}\left( \mathcal{E}%
\right) $. Let $\kappa :\mathrm{LH}\left( \mathcal{E}\right) \rightarrow 
\mathcal{E}$ be the left adjoint of the inclusion $\mathcal{E}\rightarrow 
\mathrm{LH}\left( \mathcal{E}\right) $. Then we have that%
\begin{eqnarray*}
\kappa \left( \mathrm{R}^{1}\pi _{\mathcal{E}}\left( A\right) \right)
&=&\kappa \left( \mathrm{Coker}_{\mathrm{LH}\left( \mathcal{E}\right)
}\left( I\rightarrow \mathrm{Coker}_{\mathcal{A}}\left( I\rightarrow
A\right) \right) \right) \\
&=&\mathrm{Coker}_{\mathcal{E}}\left( I\rightarrow \mathrm{Coker}_{\mathcal{A%
}}\left( I\rightarrow A\right) \right) \\
&=&\mathrm{Coker}_{\mathcal{A}}\left( I\rightarrow \mathrm{Coker}_{\mathcal{A%
}}\left( I\rightarrow A\right) \right) \\
&=&0\text{.}
\end{eqnarray*}%
Conversely, suppose that $X$ is a phantom object of $\mathcal{E}$, i.e., an
object of $\mathrm{Ph}\left( \mathcal{E}\right) $. Then 
\begin{equation*}
X\cong \mathrm{Coker}_{\mathrm{LH}\left( \mathcal{E}\right) }\left(
X^{0}\rightarrow X^{1}\right)
\end{equation*}%
for some monic arrow $X^{0}\rightarrow X^{1}$ in $\mathcal{E}$ satisfying%
\begin{equation*}
\mathrm{Coker}_{\mathcal{E}}\left( X^{0}\rightarrow X^{1}\right) =0\text{.}
\end{equation*}%
Define now%
\begin{equation*}
F:=\mathrm{\mathrm{Ker}}_{\mathcal{A}}\left( X^{0}\rightarrow X^{1}\right) 
\text{.}
\end{equation*}%
By definition $F$ is an object of $\mathcal{A}$, but since $X^{0}\rightarrow
X^{1}$ is a monic arrow of $\mathcal{E}$ and $\left( \mathcal{E},\mathcal{F}%
\right) $ is a torsion pair in $\mathcal{A}$, we must have that $F$ is an
object of $\mathcal{A}$. Since%
\begin{equation*}
\mathrm{Coker}_{\mathcal{E}}\left( X^{0}\rightarrow X^{1}\right) =\mathrm{%
Coker}_{\mathcal{A}}\left( X^{0}\rightarrow X^{1}\right) =0
\end{equation*}%
we have that the arrow $X^{0}\rightarrow X^{1}$ is the cokernel in $\mathcal{%
A}$ of the canonical arrow $F\rightarrow X^{0}$. Thus, we have that by
definition%
\begin{equation*}
\mathrm{R}^{1}\pi _{\mathcal{E}}\left( F\right) =\mathrm{Coker}_{\mathcal{A}%
}\left( F\rightarrow X^{0}\right) =\mathrm{Coker}_{\mathrm{LH}\left( 
\mathcal{E}\right) }\left( X^{0}\rightarrow X^{1}\right) \cong X\text{.}
\end{equation*}%
This shows that the essential image of $\mathcal{F}$ under $\mathrm{R}%
^{1}\pi _{\mathcal{E}}$ is $\mathrm{Ph}\left( \mathcal{E}\right) $.

We now prove that $\mathrm{R}^{1}\pi _{\mathcal{E}}$ is faitfhul on $%
\mathcal{F}$. Suppose that $f:F\rightarrow F^{\prime }$ is an arrow in $%
\mathcal{F}$ with $\mathrm{R}^{1}\pi _{\mathcal{E}}\left( f\right) =0$. By
definition, we have that $\mathrm{R}^{1}\pi _{\mathcal{E}}\left( f\right) $
is obtained by choosing strict monic arrows%
\begin{equation*}
i:F\rightarrow E
\end{equation*}%
and%
\begin{equation*}
i^{\prime }:F^{\prime }\rightarrow E^{\prime }
\end{equation*}%
and an arrow%
\begin{equation*}
\varphi :E\rightarrow E^{\prime }
\end{equation*}%
such that 
\begin{equation*}
\varphi i=fi^{\prime }\text{.}
\end{equation*}%
Then the pair $\left( f,\varphi \right) $ induces an arrow%
\begin{equation*}
\rho :\mathrm{Coker}_{\mathcal{A}}\left( i\right) \rightarrow \mathrm{Coker}%
\left( i^{\prime }\right)
\end{equation*}%
which in turn induces together with $f$ an arrow%
\begin{equation*}
\mathrm{R}^{1}\pi _{\mathcal{E}}\left( f\right) :\mathrm{Coker}_{\mathrm{LH}%
\left( \mathcal{E}\right) }\left( E\rightarrow \mathrm{Coker}_{\mathcal{A}%
}\left( i\right) \right) \rightarrow \mathrm{Coker}_{\mathrm{LH}\left( 
\mathcal{E}\right) }\left( E^{\prime }\rightarrow \mathrm{Coker}_{\mathcal{A}%
}\left( i^{\prime }\right) \right) \text{.}
\end{equation*}%
This implies that $\rho $ factors through $i^{\prime }$. In turn, this
implies that the arrow $\varphi $ factors through $F^{\prime }$. Since $%
\mathrm{Hom}\left( \mathcal{E},\mathcal{F}\right) =0$ this implies $\varphi
=0$ and hence $f=0$.

(2) We now assume that for every object $F$ of $\mathcal{F}$ there exists a
strict monic $i:F\rightarrow P$ where $P$ is projective in $\mathcal{E}$. In
this case, in the definition of $\mathrm{R}^{1}\pi _{\mathcal{E}}\left(
F\right) $ we can use such a strict monic $i$.\ Suppose that $F,F^{\prime }$
are objects of $\mathcal{F}$ and $i:F\rightarrow P$ and $i^{\prime
}:F^{\prime }\rightarrow P^{\prime }$ are the corresponding strict monics in 
$\mathcal{A}$ with $P$ and $P^{\prime }$ projective in $\mathcal{E}$.
Consider a morphism $g:\mathrm{R}^{1}\pi _{\mathcal{E}}\left( F\right)
\rightarrow \mathrm{R}^{1}\pi _{\mathcal{E}}\left( F^{\prime }\right) $.
Since $P$ is projective in $\mathcal{E}$, by Lemma \ref%
{Lemma:left-heart-projectives} $g$ is induced by a pair of morphisms $\gamma
:P\rightarrow P^{\prime }$, $\psi :\mathrm{Coker}_{\mathcal{A}}\left(
i\right) \rightarrow \mathrm{Coker}_{\mathcal{A}}\left( i^{\prime }\right) $%
. By the universal property of $\mathrm{Coker}_{\mathcal{A}}\left( i^{\prime
}\right) $, we must have that $\psi \circ i$ factors through $i^{\prime }$,
whence there exists a morphism $f:F\rightarrow F^{\prime }$ such that $%
i^{\prime }f=\gamma i$. It follows from the construction that $\mathrm{R}%
^{1}\pi _{\mathcal{E}}\left( f\right) =\gamma $.
\end{proof}

\subsection{Categories of towers\label{Subsection:towers}}

Let $\mathcal{C}$ be a category. We let $\mathrm{pro}\left( \mathcal{C}%
\right) $ be the full subcategory of the pro-category $\mathrm{Pro}\left( 
\mathcal{C}\right) $ spanned by the pro-objects with countable index set. Up
to equivalence, one can identify $\mathrm{pro}\left( \mathcal{C}\right) $
with the category whose objects are towers (i.e., $\omega ^{\mathrm{op}}$%
-diagrams) $\left( A^{\left( n\right) }\right) $ of objects of $\mathcal{C}$%
, and the hom-set $\left( A^{\left( n\right) }\right) \rightarrow \left(
B^{\left( n\right) }\right) $ is by definition%
\begin{equation*}
\mathrm{\mathrm{lim}}_{i}\mathrm{co\mathrm{lim}}_{j}\mathrm{Hom}(A^{\left(
j\right) },B^{\left( i\right) })\text{;}
\end{equation*}%
see \cite[Definition 6.1.1]{prosmans_derived_1999} and \cite[Chapter 6]%
{kashiwara_categories_2006}. When $\mathcal{A}$ is a quasi-abelian category, 
$\mathrm{pro}\left( \mathcal{A}\right) $ is a countably complete
quasi-abelian category, and the canonical inclusion $\mathcal{A}\rightarrow 
\mathrm{pro}\left( \mathcal{A}\right) $ is exact and finitely continuous 
\cite[Proposition 7.1.5, Proposition 7.1.7]{prosmans_derived_1999}; see also 
\cite[Section 8.6]{kashiwara_categories_2006} and \cite[Section 2.1]%
{edwards_cech_1976}.

Let us say that a tower $\left( A^{\left( n\right) }\right) $ is \emph{%
reduced }if $\mathrm{lim}_{n}{}A^{\left( n\right) }=0$, the limit being
computed in $\mathrm{pro}\left( \mathcal{A}\right) $. A tower $\left(
A^{\left( n\right) }\right) $ is \emph{epimorphic }if the bonding maps $%
A^{\left( n+1\right) }\rightarrow A^{\left( n\right) }$ are admissible
epimorphisms, and \emph{essentially epimorphic} if it is isomorphic to an
epimorphic tower. We define $\boldsymbol{\Pi }\left( \mathcal{A}\right) $ to
be the full subcategory of $\mathrm{pro}\left( \mathcal{A}\right) $ spanned
by the essentially epimorphic towers, and $\mathrm{P}\left( \mathcal{A}%
\right) $ to be the full subcategory of $\mathrm{pro}\left( \mathcal{A}%
\right) $ spanned by the reduced towers. Then the pair $\left( \boldsymbol{%
\Pi }\left( \mathcal{A}\right) ,\mathrm{P}\left( \mathcal{A}\right) \right) $
defines a torsion pair on $\mathrm{pro}\left( \mathcal{A}\right) $, i.e., we
have an extension%
\begin{equation*}
0\rightarrow \boldsymbol{\Pi }\left( \mathcal{A}\right) \rightarrow \mathrm{%
pro}\left( \mathcal{A}\right) \rightarrow \mathrm{P}\left( \mathcal{A}%
\right) \rightarrow 0\text{.}
\end{equation*}%
Furthermore, $\boldsymbol{\Pi }\left( \mathcal{A}\right) $ is a tilting
torsion class, and the right adjoint of the inclusion $\boldsymbol{\Pi }%
\left( \mathcal{A}\right) \rightarrow \mathrm{pro}\left( \mathcal{A}\right) $
is the functor $\mathrm{\mathrm{lim}}:\mathrm{pro}\left( \mathcal{A}\right)
\rightarrow \boldsymbol{\Pi }\left( \mathcal{A}\right) $ that maps a tower
to its limit. In this case, the derived functor $\mathrm{R}^{1}\mathrm{%
\mathrm{lim}}$ is denoted by $\mathrm{lim}^{1}$. Explicitly, given a tower $%
\left( A^{\left( n\right) }\right) $ of objects of $\mathcal{A}$ one can
realize $\mathrm{lim}_{n}^{1}{}A^{\left( n\right) }$ as the cokernel of the
morphism%
\begin{equation*}
\prod_{n}A^{\left( n\right) }\rightarrow \prod_{n}A^{\left( n\right) }
\end{equation*}%
defined in terms of generalized elements by%
\begin{equation*}
\left( x_{n}\right) \mapsto \left( x_{n}-p^{\left( n,n+1\right) }\left(
x_{n+1}\right) \right)
\end{equation*}%
where $p^{\left( n,n+1\right) }:A^{\left( n+1\right) }\rightarrow A^{\left(
n\right) }$ is the bonding map. (Here, the product is computed in $%
\boldsymbol{\Pi }\left( \mathcal{A}\right) $.) As a particular instance of
Proposition \ref{Proposition:phantom-category} we have the following:

\begin{proposition}
\label{Proposition:phantom-tower}Let $\mathcal{A}$ be a quasi-abelian
category:

\begin{enumerate}
\item the restriction of the functor $\mathrm{lim}^{1}:\mathrm{pro}\left( 
\mathcal{A}\right) \rightarrow \boldsymbol{\Pi }\left( \mathcal{A}\right) $
to the full subcategory $\mathrm{P}\left( \mathcal{A}\right) $ comprising
the reduced towers is faithful, and its essential image is the phantom
category $\mathrm{Ph}\left( \boldsymbol{\Pi }\left( \mathcal{A}\right)
\right) $ of $\boldsymbol{\Pi }\left( \mathcal{A}\right) $;

\item if every object of $\mathcal{A}$ is projective, then $\mathrm{lim}^{1}$
establishes an equivalence of categories between the category of reduced
towers over $\mathcal{A}$ and\textrm{\ }the phantom category of $\boldsymbol{%
\Pi }\left( \mathcal{A}\right) $.
\end{enumerate}
\end{proposition}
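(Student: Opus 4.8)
The plan is to obtain Proposition \ref{Proposition:phantom-tower} as the instance of Proposition \ref{Proposition:phantom-category} for the tilting torsion pair $\left(\boldsymbol{\Pi}\left(\mathcal{A}\right),\mathrm{P}\left(\mathcal{A}\right)\right)$ on $\mathrm{pro}\left(\mathcal{A}\right)$, whose torsion-free coreflector is the functor $\mathrm{lim}\colon\mathrm{pro}\left(\mathcal{A}\right)\rightarrow\boldsymbol{\Pi}\left(\mathcal{A}\right)$, so that $\mathrm{R}^{1}\pi_{\boldsymbol{\Pi}\left(\mathcal{A}\right)}=\mathrm{lim}^{1}$. Under this identification, part (1) is literally Proposition \ref{Proposition:phantom-category}(1): the restriction of $\mathrm{lim}^{1}$ to the torsion-free class $\mathrm{P}\left(\mathcal{A}\right)$ of reduced towers is faithful, with essential image the phantom category $\mathrm{Ph}\left(\boldsymbol{\Pi}\left(\mathcal{A}\right)\right)$.

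For part (2), by Proposition \ref{Proposition:phantom-category}(2) it is enough to verify, under the hypothesis that every object of $\mathcal{A}$ is projective, that every reduced tower $\boldsymbol{A}=\left(A^{\left(n\right)}\right)$ admits a strict monomorphism in $\mathrm{pro}\left(\mathcal{A}\right)$ into an object of $\boldsymbol{\Pi}\left(\mathcal{A}\right)$ that is projective in $\boldsymbol{\Pi}\left(\mathcal{A}\right)$. I would use the ``telescope'' tower $\boldsymbol{P}$ with $P^{\left(n\right)}=\bigoplus_{i\leq n}A^{\left(i\right)}$ and bonding maps the canonical projections $\bigoplus_{i\leq n+1}A^{\left(i\right)}\rightarrow\bigoplus_{i\leq n}A^{\left(i\right)}$; since these projections are split epimorphisms, $\boldsymbol{P}$ is epimorphic, hence an object of $\boldsymbol{\Pi}\left(\mathcal{A}\right)$. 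The morphism $\boldsymbol{A}\rightarrow\boldsymbol{P}$ whose degree-$n$ component is $a\mapsto\left(p^{\left(i,n\right)}\left(a\right)\right)_{i\leq n}$, with $p^{\left(i,n\right)}\colon A^{\left(n\right)}\rightarrow A^{\left(i\right)}$ the composite bonding map of $\boldsymbol{A}$, commutes with the bonding maps and is in each degree a split monomorphism (the last coordinate projection is a retraction); since a morphism of towers that is a strict monomorphism in each degree is a strict monomorphism in $\mathrm{pro}\left(\mathcal{A}\right)$, this is the desired admissible monic.

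It remains to show that the telescope $\boldsymbol{P}$ is projective in $\boldsymbol{\Pi}\left(\mathcal{A}\right)$; this is the main point. I would compute $\mathrm{Hom}_{\mathrm{pro}\left(\mathcal{A}\right)}\left(\boldsymbol{P},\boldsymbol{C}\right)$ from the formula $\mathrm{lim}_{m}\mathrm{colim}_{n}\mathrm{Hom}\left(P^{\left(n\right)},C^{\left(m\right)}\right)$: because the bonding maps of $\boldsymbol{P}$ are projections, precomposition with them is extension by zero, so the colimit over $n$ identifies a morphism $\boldsymbol{P}\rightarrow\boldsymbol{C}$ with a compatible family of morphisms out of the summands $A^{\left(i\right)}$. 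Given an $\mathcal{E}$-deflation $\boldsymbol{g}\colon\boldsymbol{B}\rightarrow\boldsymbol{C}$ in $\boldsymbol{\Pi}\left(\mathcal{A}\right)$, which up to reindexing and replacing $\boldsymbol{B},\boldsymbol{C}$ by isomorphic towers may be taken to be levelwise a strict epimorphism, one lifts each component $A^{\left(i\right)}\rightarrow C^{\left(m\right)}$ through $g^{\left(m\right)}$ using projectivity of $A^{\left(i\right)}$ in $\mathcal{A}$, and then corrects these lifts by a descending induction on $m$ so that they assemble into a morphism $\boldsymbol{P}\rightarrow\boldsymbol{B}$ over $\boldsymbol{g}$; this correction is exactly the classical Mittag--Leffler argument underlying the vanishing of $\mathrm{lim}^{1}$, made possible here by the split bonding maps of $\boldsymbol{P}$. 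The delicate step is this inductive correction together with the bookkeeping of the colimit identifications in the pro-hom formula; everything else is a direct transcription of Proposition \ref{Proposition:phantom-category} and of the elementary facts about $\mathrm{pro}\left(\mathcal{A}\right)$ recalled above.
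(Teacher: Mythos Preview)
Your proposal is correct and follows exactly the paper's approach: the paper states this proposition as a direct instance of Proposition \ref{Proposition:phantom-category} applied to the tilting torsion pair $\left(\boldsymbol{\Pi}\left(\mathcal{A}\right),\mathrm{P}\left(\mathcal{A}\right)\right)$ on $\mathrm{pro}\left(\mathcal{A}\right)$, without spelling out the verification of the hypothesis in part (2). Your telescope construction and its projectivity argument are the natural way to fill in that detail, and indeed the paper remarks immediately afterward that products of projectives of $\mathcal{A}$ are projective in $\boldsymbol{\Pi}\left(\mathcal{A}\right)$, which is exactly what your telescope represents as a pro-object.
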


It is easy to see that if $\mathcal{A}$ is hereditary with enough
projectives, then $\boldsymbol{\Pi }(\mathcal{A)}$ is also hereditary with
enough projectives. Furthermore, products of projectives of $\mathcal{A}$
are projective in $\boldsymbol{\Pi }\left( \mathcal{A}\right) $.

Suppose that $\mathcal{A}$ is a tensor quasi-abelian category. As by
assumption the tensor product functor is right exact in each variable, it
preserves strict epimorphisms. Therefore, it induces a tensor product
functor on $\boldsymbol{\Pi }\left( \mathcal{A}\right) $. The associator,
unitors, and braiding for the tensor product of $\mathcal{A}$ induce
associator, unitors, and braiding for the tensor product on $\boldsymbol{\Pi 
}\left( \mathcal{A}\right) $, which turn it into a tensor quasi-abelian
category.

\subsection{Categories of modules\label{Subsection:M-categories}}

We would like to consider generalizations of additive, quasi-abelian, and
triangulated categories, where the hom-sets are not just abelian groups, but
objects in a more general category, such as the category of pro-countable
Polish modules, or its left heart. To this purpose, we consider the abstract
notion of \emph{category of modules}. We think of a category of modules as a
category whose objects are modules \textquotedblleft with additional
structure\textquotedblright\ and the morphisms are the module homomorphisms
that \textquotedblleft preserve\textquotedblright\ this additional
structure. For example, the additional structure can be a topology, in which
case the module homomorphisms that preserve the additional structure are the
continuous ones. Consistently with this interpretation, we will notice that
a category of modules in the sense we are about to define is always a (not
necessarily full) subcategory of the category of $R$-modules for some ring $%
R $.

Recall that one can define the notion of \emph{(abelian) group object} in an
abstract category with finite products \cite[Definition 4.1]%
{awodey_category_2006} (here and in the following, we only consider group
objects with respect to a cartesian monoidal structure). The group objects
in the category of groups are precisely the abelian groups \cite[Corollary
4.6]{awodey_category_2006}. Furthermore, such a group object structure is
unique, and it is induced by the given operation on the abelian group. When $%
R$ is a ring, $R$ itself is not only a tensor identity, but also a
projective generator for the category of $R$-modules. (This means that $R$
is projective, and two parallel arrows $f,g$ are equal if and only if $fx=gx$
for every arrow $x$ with source $R$.)

Suppose now that $\mathcal{M}$ is a category with finite products such that
every object has a unique group object structure. By the \emph{%
Eckmann--Hilton argument} \cite[Example 4.4]{awodey_category_2006}, the
unique group object structure on a given object is necessarily abelian.
Furthermore, every morphism in $\mathcal{M}$ is necessarily a homomorphism 
\cite[Definition 4.2]{awodey_category_2006} with respect to the unique group
object structures on the source and target. The group structure on the
objects induce a group operation on the hom-sets, that turns $\mathcal{M}$
into an $\mathbf{Ab}$-category. We say that $\mathcal{M}$ is (quasi-)abelian
if it is a (quasi-)abelian category with respect to this canonical $\mathbf{%
Ab}$-category structure.

\begin{definition}
A \emph{category of modules }is a \emph{locally small tensor category} $%
\mathcal{M}$ such that:

\begin{enumerate}
\item $\mathcal{M}$ has finite products, and every object has a unique group
object structure;

\item with respect to the induced $\mathbf{Ab}$-category structure and the
given tensor structure, $\mathcal{M}$ is a quasi-abelian tensor category
with enough projectives;

\item the tensor identity $R$ is projective and a generator;

\item for projective objects $P$ and $Q$, $P\otimes Q$ is projective;

\item every projective object $P$ is flat, i.e., the functor $P\otimes -$ is
exact.
\end{enumerate}
\end{definition}

If $\mathcal{M}$ is a category of modules, then it is easily seen
considering the remarks above that $\boldsymbol{\Pi }\left( \mathcal{M}%
\right) $ is also a category of modules. The fact that each object of $%
\boldsymbol{\Pi }\left( \mathcal{M}\right) $ admits a unique group object
structure follows from instance from \cite[Proposition 2.1.5]%
{edwards_cech_1976}. Likewise, the left heart $\mathrm{LH}\left( \mathcal{M}%
\right) $ is also a category of modules, where the unique group structure on
an object $P/R$ is the one induced by $P$. The uniqueness follows for
example from the Eckmann--Hilton argument \cite{eckmann_group-like_1962}
together with Lemma \ref{Lemma:left-heart-projectives}.

If $R$ is the tensor identity of $\mathcal{M}$, one can identify $\mathcal{M}
$ with the concrete category whose objects are the abelian groups $\mathrm{%
Hom}\left( R,A\right) $ for $A\in \mathcal{M}$, and whose morphisms $\mathrm{%
Hom}\left( R,A\right) \rightarrow \mathrm{Hom}\left( R,B\right) $ are the
group homomorphisms induced by arrows $A\rightarrow B$ in $\mathcal{M}$.
Notice that the group $\mathrm{Hom}\left( R,R\right) $ is in fact a ring,
and $\mathrm{Hom}\left( R,A\right) $ a right $\mathrm{Hom}\left( R,R\right) $%
-module for every $A\in \mathcal{M}$.

As $\mathcal{M}$ is a monoidal category, one can consider the notion of $%
\mathcal{M}$-category. For example, if $R$ is a commutative ring, then the
category $\mathrm{Mod}\left( R\right) $ of $R$-modules is an abelian
category of modules in the sense above. A $\mathrm{Mod}\left( R\right) $%
-category is then precisely an $R$-linear category in the usual sense.

Given an $\mathcal{M}$-category $\mathcal{A}$, for every finite category $J$%
, one can endow the category of diagrams $\mathcal{A}^{J}$ of shape $J$ with
a natural structure of $\mathcal{M}$-category. This allows one to define a 
\emph{(quasi-)abelian }$\mathcal{M}$\emph{-category} to be an $\mathcal{M}$%
-category that is also (quasi-)abelian, and such that for every finite
category $J$, finite limits and finite colimits of shape $J$ are given by $%
\mathcal{M}$-functors $\mathcal{A}^{J}\rightarrow \mathcal{A}$. Likewise,
one defines the notion of \emph{exact }$\mathcal{M}$-\emph{category}.

One can similarly generalize the notion of triangulated category to the
context of $\mathcal{M}$-categories. A\emph{\ triangulated }$\mathcal{M}$%
\emph{-category} will be an $\mathcal{M}$-category $\mathcal{T}$ that is
also triangulated, such that the shift functor is an $\mathcal{M}$-functor,
and the $\mathrm{Hom}$-functor is a cohomological functor in the first
variable and a homological functor in the second variable \cite[Definition
1.1.7]{neeman_triangulated_2001}, when regarded as a functor with values in
the left heart of $\mathcal{M}$.

As a particular instances of Corollary \ref{Corollary:derived-projectives}
and Proposition \ref{Proposition:derived-functor} we obtain the following;
see also \cite[Section 13.4]{kashiwara_categories_2006}. Notice that we can
identify $\mathrm{Ch}^{b}\left( \mathcal{A}\right) ^{\mathrm{op}}$ with $%
\mathrm{Ch}^{b}\left( \mathcal{A}^{\mathrm{op}}\right) $ using the
convention on signs from \cite[Proposition 11.3.12]%
{kashiwara_categories_2006}.

\begin{proposition}
Suppose that $\mathcal{M}$ is a category of modules. Let $\mathcal{P}$ be
its full subcategory spanned by its projective objects. Then the inclusion $%
\mathcal{P}\rightarrow \mathcal{M}$ induces an equivalence $\mathrm{K}%
^{b}\left( \mathcal{P}\right) \rightarrow \mathrm{D}^{b}\left( \mathcal{M}%
\right) $. In particular, $\mathrm{D}^{b}\left( \mathcal{M}\right) $ is a
triangulated $\mathrm{LH}\left( \mathcal{P}\right) $-category.
\end{proposition}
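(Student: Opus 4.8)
The plan is to derive both assertions from Corollary~\ref{Corollary:derived-projectives} and Proposition~\ref{Proposition:derived-functor}, applied with $\mathcal{A}=\mathcal{M}$. First I would check the hypotheses of Corollary~\ref{Corollary:derived-projectives} for $\mathcal{M}$, regarded as an exact category with its maximal exact structure: being quasi-abelian it has all kernels, hence every idempotent has a kernel and $\mathcal{M}$ is idempotent-complete; it has enough projectives by the definition of a category of modules; and it is hereditary. Its full subcategory $\mathcal{P}$ of projective objects is fully exact and closed under direct summands, so Corollary~\ref{Corollary:derived-projectives} applies and the inclusion $\mathcal{P}\hookrightarrow\mathcal{M}$ induces an equivalence $\mathrm{D}^{b}(\mathcal{P})\rightarrow\mathrm{D}^{b}(\mathcal{M})$.

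Next I would identify $\mathrm{D}^{b}(\mathcal{P})$ with $\mathrm{K}^{b}(\mathcal{P})$. As a fully exact subcategory of $\mathcal{M}$, the category $\mathcal{P}$ carries the exact structure whose short exact sequences are precisely those of $\mathcal{M}$ with all three terms projective; every such sequence splits, because an admissible epic onto a projective object admits a section. Hence every bounded acyclic complex over $\mathcal{P}$ is null-homotopic, the thick subcategory $\mathrm{N}^{b}(\mathcal{P})$ consists of objects isomorphic to $0$ in $\mathrm{K}^{b}(\mathcal{P})$, and the Verdier quotient $\mathrm{D}^{b}(\mathcal{P})=\mathrm{K}^{b}(\mathcal{P})/\mathrm{N}^{b}(\mathcal{P})$ coincides with $\mathrm{K}^{b}(\mathcal{P})$. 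Composing with the equivalence of the first step, the inclusion $\mathcal{P}\rightarrow\mathcal{M}$ induces an equivalence $\mathrm{K}^{b}(\mathcal{P})\rightarrow\mathrm{D}^{b}(\mathcal{M})$, which is the first assertion.

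For the \emph{in particular} clause I would transport along this equivalence the triangulated $\mathrm{LH}(\mathcal{P})$-category structure of $\mathrm{K}^{b}(\mathcal{P})$. For bounded complexes $P^{\bullet},Q^{\bullet}$ of projectives the Hom-complex $\mathrm{Hom}^{\bullet}(P^{\bullet},Q^{\bullet})$ is a complex whose terms $\mathrm{Hom}_{\mathcal{M}}(P^{i},Q^{j})$ are objects of $\mathcal{M}$, since $\mathrm{Hom}_{\mathcal{M}}(P,-)$ takes values in $\mathcal{M}$ when $P$ is projective; passing to cohomology yields the hom-objects of $\mathrm{D}^{b}(\mathcal{M})$ as objects of $\mathrm{LH}(\mathcal{M})$, which by Lemma~\ref{Lemma:projectives-left-heart} and Lemma~\ref{Lemma:left-heart-projectives} --- using that a subobject of a projective is projective when $\mathcal{M}$ is hereditary --- is the abelian category built out of $\mathcal{P}$, here denoted $\mathrm{LH}(\mathcal{P})$. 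That these hom-objects are the $\mathrm{Ext}$-objects of Proposition~\ref{Proposition:derived-functor} (applied to the bifunctor $\mathrm{Hom}_{\mathcal{M}}$, which is exact in the relevant variable on projectives), and that the shift is an $\mathrm{LH}(\mathcal{P})$-functor with $\mathrm{Hom}$ cohomological in the first and homological in the second variable, follows from the triangulatedness of the total derived bifunctor $\mathrm{L}\,\mathrm{Hom}^{\bullet}$ supplied by that Proposition together with the long exact sequences attached to strict triangles. I expect this last clause to be the main point requiring care: the equivalence $\mathrm{K}^{b}(\mathcal{P})\simeq\mathrm{D}^{b}(\mathcal{M})$ is a formal consequence of Corollary~\ref{Corollary:derived-projectives} and the splitting of conflations of projectives, whereas checking that the Hom-complexes genuinely live over $\mathcal{M}$ --- rather than merely over $\mathbf{Ab}$ --- and that their cohomologies assemble into the claimed enrichment over $\mathrm{LH}(\mathcal{P})$ is where the monoidal and quasi-abelian structure of $\mathcal{M}$, which is not assumed closed, must be used with care.
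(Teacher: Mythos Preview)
Your approach is exactly the paper's: the proposition is stated there without proof, introduced only as ``a particular instance of Corollary~\ref{Corollary:derived-projectives} and Proposition~\ref{Proposition:derived-functor}'', and you unpack precisely that derivation.

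One point you assert but should flag instead: you write ``and it is hereditary'' as if this were part of the definition of a category of modules, but the definition (items (1)--(5) preceding the proposition) does not include heredity, and nothing in those axioms forces it. Corollary~\ref{Corollary:derived-projectives} genuinely needs the hereditary hypothesis to get a \emph{bounded} equivalence $\mathrm{K}^{b}(\mathcal{P})\simeq\mathrm{D}^{b}(\mathcal{M})$ rather than only the one-sided version; the paper is tacitly assuming it (all the intended examples, such as $\boldsymbol{\Pi}(\mathbf{Flat}(R))$ over a Pr\"ufer domain, are hereditary), so you are matching the paper's level of rigor, but the step is not justified by the stated definition alone. Your closing caution about the enrichment---that $\mathrm{Hom}_{\mathcal{M}}(P,Q)$ must be shown to live in $\mathcal{M}$ rather than merely in $\mathbf{Ab}$, when $\mathcal{M}$ is not assumed closed---is well placed and is likewise glossed over in the paper.
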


\begin{proposition}
Suppose that $\mathcal{M}$ is a category of modules.\ Let $\mathcal{A}$ be a
hereditary idempotent-complete exact $\mathcal{M}$-category with enough
projectives. Then the triangulated bifunctor $\mathrm{Hom}_{\mathcal{A}%
}^{\bullet }:\mathrm{K}^{b}(\mathcal{A)}^{\mathrm{op}}\times \mathrm{K}^{b}(%
\mathcal{A)}\rightarrow \mathrm{K}^{b}\left( \mathcal{M}\right) $ has a
total right derived functor $\mathrm{RHom}_{\mathcal{A}}^{\bullet }:\mathrm{D%
}^{b}(\mathcal{A)}^{\mathrm{op}}\times \mathrm{D}^{b}(\mathcal{A)}%
\rightarrow \mathrm{D}^{b}(\mathcal{M)}$. Furthermore, this can be defined
as follows: For a bounded complex $A$ over $\mathcal{A}$, pick a bounded
complex $P_{A}$ over $\mathcal{P}$ together with quasi-isomorphisms $\eta
_{A}:P_{A}\rightarrow A$.\ Then one can define $\left( \mathrm{RHom}_{%
\mathcal{A}}^{\bullet }\right) \left( A,B\right) :=\mathrm{Hom}_{\mathcal{A}%
}^{\bullet }\left( P_{A},B\right) $ for $A\in \mathrm{D}^{b}\left( \mathcal{A%
}\right) ^{\mathrm{op}}$ and $B\in \mathrm{D}^{b}\left( \mathcal{A}\right) $%
. We have that 
\begin{equation*}
\mathrm{H}^{0}\circ \mathrm{RHom}_{\mathcal{A}}^{\bullet }:\mathrm{D}%
^{b}\left( \mathcal{A}\right) ^{\mathrm{op}}\times \mathrm{D}^{b}\left( 
\mathcal{A}\right) \rightarrow \mathrm{LH}\left( \mathcal{M}\right)
\end{equation*}%
is a cohomological right derived functor of $\mathrm{Hom}_{\mathcal{A}%
}^{\bullet }$ such that 
\begin{equation*}
\mathrm{H}^{k}\circ \mathrm{RHom}^{\bullet }\left( A,B\right) \cong \mathrm{%
Ext}^{k}\left( A,B\right) \in \mathrm{LH}\left( \mathcal{M}\right)
\end{equation*}%
for objects $A,B$ of $\mathcal{A}$.
\end{proposition}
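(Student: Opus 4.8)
The plan is to obtain this as an application of Corollary \ref{Corollary:derived-projectives} together with the construction used in the proof of Proposition \ref{Proposition:derived-functor}, the only extra care being the contravariance of $\mathrm{Hom}_{\mathcal{A}}$ in its first argument: resolving the \emph{first} variable by projectives of $\mathcal{A}$ produces the total \emph{right} derived functor of the bifunctor $\mathrm{Hom}^\bullet_{\mathcal{A}}\colon\mathrm{K}^b(\mathcal{A})^{\mathrm{op}}\times\mathrm{K}^b(\mathcal{A})\to\mathrm{K}^b(\mathcal{M})$, exactly dually to how resolving the first variable by projectives produces the total \emph{left} derived functor in Proposition \ref{Proposition:derived-functor2}. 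Write $\mathcal{P}$ for the fully exact subcategory of projectives of $\mathcal{A}$.

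First, by Corollary \ref{Corollary:derived-projectives} the canonical functor $\mathrm{K}^b(\mathcal{P})\to\mathrm{D}^b(\mathcal{P})$ is an equivalence and the inclusion $\mathcal{P}\to\mathcal{A}$ induces an equivalence $\mathrm{D}^b(\mathcal{P})\to\mathrm{D}^b(\mathcal{A})$; composing, every bounded complex $A$ over $\mathcal{A}$ receives a quasi-isomorphism $\eta_A\colon P_A\to A$ from a bounded complex $P_A$ of projectives, functorially up to the choice of a quasi-inverse. Since each projective $P$ satisfies that $\mathrm{Hom}_{\mathcal{A}}(P,-)\colon\mathcal{A}\to\mathcal{M}$ is exact, the same bookkeeping as in the proof of Proposition \ref{Proposition:derived-functor} shows that $\mathrm{Hom}^\bullet_{\mathcal{A}}(Q,B)$ is acyclic whenever $Q$ is an acyclic bounded complex over $\mathcal{P}$ (such a $Q$ is in fact contractible) or $B$ is acyclic over $\mathcal{A}$ (then $\mathrm{Hom}^\bullet_{\mathcal{A}}(Q,B)$ is the total complex of a bounded double complex with exact columns). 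Hence $\mathrm{Hom}^\bullet_{\mathcal{A}}$ descends along the localizations to a triangulated bifunctor $\mathrm{D}^b(\mathcal{P})^{\mathrm{op}}\times\mathrm{D}^b(\mathcal{A})\to\mathrm{D}^b(\mathcal{M})$, and precomposing with the equivalence $\mathrm{D}^b(\mathcal{A})^{\mathrm{op}}\simeq\mathrm{D}^b(\mathcal{P})^{\mathrm{op}}$ of Corollary \ref{Corollary:derived-projectives} in the first variable yields $\mathrm{RHom}^\bullet_{\mathcal{A}}$, computed by $\mathrm{RHom}^\bullet_{\mathcal{A}}(A,B)=\mathrm{Hom}^\bullet_{\mathcal{A}}(P_A,B)$. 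The natural transformation $Q_{\mathcal{M}}\mathrm{Hom}^\bullet_{\mathcal{A}}\Rightarrow\mathrm{RHom}^\bullet_{\mathcal{A}}(Q_{\mathcal{A}^{\mathrm{op}}}\times Q_{\mathcal{A}})$ induced by the $\eta_A$ is then universal by the usual localization argument (\cite[Section 13.4]{kashiwara_categories_2006}), which is where cofinality of the projective resolutions enters; this shows $\mathrm{RHom}^\bullet_{\mathcal{A}}$ is a total right derived functor of $\mathrm{Hom}^\bullet_{\mathcal{A}}$.

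For the cohomology, fix objects $A,B$ of $\mathcal{A}$ in degree $0$. Triangulatedness of $\mathrm{Hom}^\bullet_{\mathcal{A}}$ in the second variable gives $\mathrm{Hom}^\bullet_{\mathcal{A}}(P_A,T^kB)\cong T^k\mathrm{Hom}^\bullet_{\mathcal{A}}(P_A,B)$, so $\mathrm{H}^k(\mathrm{RHom}^\bullet_{\mathcal{A}}(A,B))\cong\mathrm{H}^0(\mathrm{Hom}^\bullet_{\mathcal{A}}(P_A,T^kB))$. By (the $\mathcal{M}$-enriched form of) \cite[Proposition 11.7.3]{kashiwara_categories_2006}, $\mathrm{H}^0\circ\mathrm{Hom}^\bullet_{\mathcal{A}}$ is naturally isomorphic, as a functor into $\mathrm{LH}(\mathcal{M})$, to $\mathrm{Hom}_{\mathrm{K}^b(\mathcal{A})}$; and since $P_A$ is a bounded complex of projectives the localization induces an isomorphism $\mathrm{Hom}_{\mathrm{K}^b(\mathcal{A})}(P_A,T^kB)\cong\mathrm{Hom}_{\mathrm{D}^b(\mathcal{A})}(P_A,T^kB)$, identified via $\eta_A$ with $\mathrm{Hom}_{\mathrm{D}^b(\mathcal{A})}(A,T^kB)=\mathrm{Ext}^k(A,B)$. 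Thus $\mathrm{H}^k\circ\mathrm{RHom}^\bullet_{\mathcal{A}}(A,B)\cong\mathrm{Ext}^k(A,B)$ in $\mathrm{LH}(\mathcal{M})$; in particular $\mathrm{H}^0\circ\mathrm{RHom}^\bullet_{\mathcal{A}}$ restricts on $\mathcal{A}\times\mathcal{A}$ to $\mathrm{Hom}_{\mathcal{A}}$, and being the composite of the triangulated bifunctor $\mathrm{RHom}^\bullet_{\mathcal{A}}$ with the cohomological functor $\mathrm{H}^0\colon\mathrm{D}^b(\mathcal{M})\to\mathrm{LH}(\mathcal{M})$, it is a cohomological right derived functor of $\mathrm{Hom}^\bullet_{\mathcal{A}}$.

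I expect the main obstacle to be organizational rather than conceptual: one must keep the variance straight — checking that projective resolutions in the contravariant slot give the \emph{right} (not left) derived functor and that the universal property reads with the arrows reversed — and verify that the classical $\mathbf{Ab}$-level isomorphisms ($\mathrm{H}^0\circ\mathrm{Hom}^\bullet\cong\mathrm{Hom}_{\mathrm{K}^b}$, and $\mathrm{Hom}_{\mathrm{K}^b(\mathcal{A})}(P_A,-)\cong\mathrm{Hom}_{\mathrm{D}^b(\mathcal{A})}(P_A,-)$) upgrade to isomorphisms of $\mathrm{LH}(\mathcal{M})$-objects; this is precisely where the hypothesis that $\mathcal{A}$ is an $\mathcal{M}$-category (so that all the relevant hom's and their resolutions are already objects of $\mathrm{LH}(\mathcal{M})$) is used.
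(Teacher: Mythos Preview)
Your proposal is correct and follows essentially the same approach as the paper, which does not write out a proof but simply presents the proposition as a particular instance of Corollary~\ref{Corollary:derived-projectives} and Proposition~\ref{Proposition:derived-functor} (with a pointer to \cite[Section 13.4]{kashiwara_categories_2006}). Your write-up just spells out in more detail what the paper leaves implicit, including the variance check and the identification of $\mathrm{H}^k\circ\mathrm{RHom}^\bullet$ with $\mathrm{Ext}^k$ via \cite[Proposition 11.7.3]{kashiwara_categories_2006}.
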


\section{Countable modules\label{Section:countable}}

By a \emph{domain} we will mean a \emph{unital commutative }ring $R$ with no
zero divisors. We will denote by $K$ its field of fractions. A module will
always mean an $R$-module. We say that a module $M$ is \emph{finite }or
finitely-generated if there exist $n\in \omega $ and $a_{0},\ldots ,a_{n}\in
M$ such that $M$ is equal to the submodule of $M$ generated by $a_{0},\ldots
,a_{n}$. An ideal is finite or finitely-generated if it is so as a module.

\subsection{Some ring terminology}

A ring is called:

\begin{itemize}
\item \emph{semihereditary} if every finite ideal is projective \cite[%
Section 4.4]{rotman_introduction_2009};

\item \emph{hereditary }if every ideal is projective \cite[Section 4.3]%
{rotman_introduction_2009}.
\end{itemize}

A \emph{Pr\"{u}fer domain} is a semihereditary domain, while a \emph{%
Dedekind domain} is a hereditary domain. Pr\"{u}fer and Dedekind domains are
well-studied classes of rings, which admit several equivalent
characterizations and extend many other important classes of rings.

A \emph{Bezout domain }is a domain in which every finite ideal is principal,
while a \emph{principal ideal domain} (PID) is a domain where every ideal is
principal. A \emph{valuation ring} is a domain $R$ that satisfies $a|b$ or $%
b|a$ for all $a,b\in R$. A \emph{local ring }is a ring with a unique maximal
ideal. A domain is \emph{Noetherian }if every ideal is finitely generated.
Every PID is a Dedekind domain, every valuation ring is a Bezout domain, and
every\ Bezout domain is a Pr\"{u}fer domain. Conversely, a domain $R$ is Pr%
\"{u}fer if and only if its localization $R_{P}$ with respect to any prime
(or, equivalently, to any maximal) ideal $P$ is a valuation ring \cite[%
Example 4.28]{rotman_introduction_2009}. A ring is a Dedekind domain if and
only if it is a Noetherian Pr\"{u}fer domain, and a valuation domain if and
only if it is a local Pr\"{u}fer domain.

A module $M$ is \emph{finitely-presented }if there exists an exact sequence $%
R^{d}\rightarrow R^{n}\rightarrow M\rightarrow 0$. When $R$ is a Pr\"{u}fer
domain, this is equivalent to the assertion that there exists a short exact
sequence $0\rightarrow Q\rightarrow P\rightarrow M\rightarrow 0$ where $P$
and $Q$ are finite projective modules. When $R$ is a Dedekind domain, every
finite module is finitely-presented. A module $M$ is \emph{flat} if the
functor $M\otimes -$ is exact \cite[Section 3.3]{rotman_introduction_2009}.
We say that $M$ is \emph{finiteflat} if it is both finite and flat, and 
\emph{countableflat }if it is both countable and flat. A short exact sequence%
\begin{equation*}
0\rightarrow A\rightarrow B\rightarrow C\rightarrow 0
\end{equation*}%
of modules is \emph{pure-exact }if, for every module $N$,%
\begin{equation*}
0\rightarrow N\otimes A\rightarrow N\otimes B\rightarrow N\otimes
C\rightarrow 0
\end{equation*}%
is exact \cite[Section 3.3.1]{rotman_introduction_2009}. In this case, one
says that $A$ is a pure submodule of $B$. The pure-exact sequences are
precisely those that belong to the exact structure projectively generated by
the finitely-presented modules \cite[Theorem 2.1.2]{benson_phantom_1999}.

\begin{lemma}
\label{Lemma:pure-exact-extension}Suppose that $M^{\prime }\rightarrow
M\rightarrow M^{\prime \prime }$ is a short exact sequence of modules.

\begin{itemize}
\item If $M^{\prime \prime }$ is flat, then $M$ is flat if and only if $%
M^{\prime }$ is flat;

\item If $M$ is flat, then $M^{\prime }\rightarrow M\rightarrow M^{\prime
\prime }$ is pure if and only if $M^{\prime \prime }$ is flat;

\item If $M^{\prime \prime }$ is flat, then $M^{\prime }\rightarrow
M\rightarrow M^{\prime \prime }$ is pure.
\end{itemize}
\end{lemma}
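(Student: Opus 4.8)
The three assertions are all classical facts about flat modules and purity over a commutative ring, and I would prove them by unwinding the definition of flatness via the tensor product and the long exact sequence it induces. The starting point is the observation that, because $-\otimes N$ is right exact, a short exact sequence $0\to M'\to M\to M''\to 0$ yields for every module $N$ an exact sequence
\begin{equation*}
\operatorname{Tor}_1(M',N)\to\operatorname{Tor}_1(M,N)\to\operatorname{Tor}_1(M'',N)\to M'\otimes N\to M\otimes N\to M''\otimes N\to 0,
\end{equation*}
together with the fact that $M$ is flat if and only if $\operatorname{Tor}_1(M,N)=0$ for all $N$, and that the sequence is pure-exact if and only if $0\to M'\otimes N\to M\otimes N$ is exact for all $N$ (equivalently, $\operatorname{Tor}_1(M'',N)\to M'\otimes N$ is the zero map for all $N$). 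Alternatively, to stay strictly within the framework of the excerpt, one can avoid $\operatorname{Tor}$ and argue directly: purity of the sequence is exactly the statement that $-\otimes N$ preserves its left-exactness, and flatness of $M$ means $-\otimes M$ is exact; I would phrase the argument in whichever language the paper has set up, but the combinatorics of the diagram chase are the same.

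For the first bullet, assume $M''$ is flat. Tensoring with an arbitrary $N$, flatness of $M''$ kills $\operatorname{Tor}_1(M'',N)$, so the connecting map shows $\operatorname{Tor}_1(M',N)\cong\operatorname{Tor}_1(M,N)$; hence one vanishes for all $N$ precisely when the other does, giving the equivalence ``$M$ flat $\iff$ $M'$ flat''. For the second bullet, assume $M$ is flat, so $\operatorname{Tor}_1(M,N)=0$ for all $N$; then the long exact sequence degenerates to $0\to\operatorname{Tor}_1(M'',N)\to M'\otimes N\to M\otimes N\to M''\otimes N\to 0$. Thus the sequence $0\to M'\otimes N\to M\otimes N\to M''\otimes N\to 0$ is exact for all $N$ (i.e.\ the sequence is pure) if and only if $\operatorname{Tor}_1(M'',N)=0$ for all $N$ (i.e.\ $M''$ is flat). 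This is exactly the claimed equivalence. For the third bullet, assume $M''$ is flat; then $\operatorname{Tor}_1(M'',N)=0$ for all $N$, and the long exact sequence immediately shows $0\to M'\otimes N\to M\otimes N\to M''\otimes N\to 0$ is exact for all $N$, which is the definition of purity. (Alternatively, the third bullet follows from the fact, recalled in the excerpt, that pure-exact sequences are those in the exact structure projectively generated by the finitely-presented modules, combined with the standard fact that every short exact sequence ending in a flat module splits after tensoring with a finitely-presented module; but the $\operatorname{Tor}$ argument is cleanest.)

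The only mild subtlety — and the one step I would be most careful about — is making sure the three statements are genuinely independent enough that the third is not simply a trivial consequence of being quoted out of the second: the second bullet has the standing hypothesis ``$M$ flat'' and concludes an \emph{equivalence}, whereas the third has the hypothesis ``$M''$ flat'' and concludes purity unconditionally (in particular without assuming $M$ flat), so it does need its own one-line argument from the long exact sequence as above. I would also note in passing, as a free byproduct of the second and third bullets combined with the first, that when $M''$ is flat the sequence is automatically pure \emph{and} $M$ is flat iff $M'$ is — a coherence check that the three bullets fit together. No step here is a real obstacle; the proof is a routine diagram chase, and the main point is simply to organize the three cases against the single long exact sequence.
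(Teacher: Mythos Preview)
Your proof is correct; the $\operatorname{Tor}$ long exact sequence argument you outline is the standard way to see all three bullets, and your organization of the three cases is clean. The only micro-omission is that for the first bullet you need both $\operatorname{Tor}_1(M'',N)=0$ and $\operatorname{Tor}_2(M'',N)=0$ to conclude the isomorphism $\operatorname{Tor}_1(M',N)\cong\operatorname{Tor}_1(M,N)$, but of course flatness of $M''$ gives the vanishing of all higher $\operatorname{Tor}$, so this is implicit.

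The paper, by contrast, does not give an argument at all: its proof consists entirely of citations to Rotman, Lam, and Benson--Gnacadja for the three respective bullets. Your approach is therefore strictly more self-contained, and since the lemma is used repeatedly throughout the paper, having the actual $\operatorname{Tor}$ argument written out is arguably more useful to a reader than the bare references.
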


\begin{proof}
This combines \cite[Corollary 7.4]{rotman_introduction_2009}, \cite[%
Corollary 4.86]{lam_lectures_1999}, and \cite[Section 2.7]%
{benson_phantom_1999}.
\end{proof}

A nonzero element $a$ of a module $M$ is \emph{torsion }if $ra=0$ for some $%
r\in R$. A module is called \emph{torsion }if every nonzero element is
torsion, and \emph{torsion-free }if\emph{\ }no nonzero element is torsion.
If $M$ is a module, then the set $M_{\mathrm{t}}$ of torsion elements is a
submodule such that $M\left/ M_{\mathrm{t}}\right. $ is torsion-free. Every
flat module is torsion-free, while the converse does not holds in general.

A submodule $A$ of a torsion-free module $B$ is \emph{relatively divisible }%
if $rA=rB\cap A$ for every $r\in R$. Every pure submodule is relatively
divisible, but the converse does not necessarily hold.

\subsection{Characterization of Pr\"{u}fer and Dedekind domains}

One can characterize the classes of Pr\"{u}fer and Dedekind domains in terms
of properties of modules.

\begin{proposition}
\label{Proposition:characterize-prufer}Let $R$ be a domain. The following
assertions are equivalent:

\begin{enumerate}
\item $R$\emph{\ }is a Pr\"{u}fer domain;

\item every torsion-free $R$-module is flat;

\item every finite torsion-free $R$-module is projective;

\item Every finite $R$-submodule of a projective $R$-module is projective;

\item If $M$ is a flat $R$-module and $N$ is a relatively divisible $R$%
-submodule of $M$, then $N$ is pure in $M$;

\item an $R$-module is projective if and only if it is a sum of finite
ideals of $R$;

\item if $M$ is a finite $R$-module, then $M_{\mathrm{t}}$ is a direct
summand of $M$.
\end{enumerate}
\end{proposition}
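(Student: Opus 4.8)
The plan is to prove the seven conditions equivalent by running a cycle through (1)--(4) and then attaching each of (5), (6), (7) to (1) by a pair of implications. The load-bearing step, which I would isolate as a lemma, is that over a Pr\"ufer domain every finite torsion-free module $F$ is a finite direct sum of finite ideals, proved by induction on the rank $r=\dim_{K}(F\otimes_{R}K)$. The base case $r=0$ is trivial, as $F$ embeds in $F\otimes_{R}K=0$. For $r\geq 1$, embed $F\hookrightarrow F\otimes_{R}K\cong K^{r}$ and pick a coordinate projection not identically zero on $F$; its restriction to $F$ has nonzero finitely generated image inside $K$, which after clearing a common denominator is isomorphic to a nonzero finite ideal $I$. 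Since finite ideals are projective by the definition of Pr\"ufer domain, the sequence $0\to F'\to F\to I\to 0$ splits, so $F\cong F'\oplus I$ with $F'$ a summand of $F$ --- hence finite --- of rank $r-1$, and the induction closes. This yields (1)$\Rightarrow$(3) at once. The other core implications are short: (3)$\Rightarrow$(4), since a finite submodule of a projective module over a domain is finite and torsion-free; (4)$\Rightarrow$(1), since a finite ideal is a finite submodule of the free module $R$; and (3)$\Rightarrow$(2), since every torsion-free module is the directed colimit of its finite submodules, each projective hence flat by (3), and a directed colimit of flat modules is flat. To close the cycle I would prove (2)$\Rightarrow$(1): a finite ideal $I$ is torsion-free, hence flat by (2), and a finitely generated flat ideal in a domain is invertible --- localizing at a maximal ideal $P$, the finitely generated flat $R_{P}$-module $I_{P}$ is free, hence (being nonzero and contained in $R_{P}$) free of rank one, so $I$ is finite and locally principal, thus invertible and projective, making $R$ semihereditary. (Alternatively, (1)$\Rightarrow$(2) is immediate from the fact recalled above that $R$ is Pr\"ufer iff $R_{P}$ is a valuation domain for every maximal $P$, together with locality of flatness and the equality ``torsion-free $=$ flat'' over valuation domains.)

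For (5), assuming (1) and hence (2): if $M$ is flat and $N\subseteq M$ is relatively divisible, then $M/N$ is torsion-free, because $r\bar x=0$ with $r\neq 0$ gives $rx\in rM\cap N=rN$, say $rx=rn$, whence $r(x-n)=0$ and $x=n\in N$ by torsion-freeness of $M$; thus $M/N$ is flat by (2), and Lemma~\ref{Lemma:pure-exact-extension} shows $N$ is pure in $M$. Conversely, assuming (5): for a finite ideal $I$ take a surjection $\pi\colon R^{n}\twoheadrightarrow I$ with kernel $N$; since $I$ is torsion-free, $N$ is relatively divisible in $R^{n}$ (from $rx\in N$ with $r\neq0$ one gets $r\pi(x)=0$, hence $\pi(x)=0$ and $rx\in rN$), so $N$ is pure in $R^{n}$ by (5), and then Lemma~\ref{Lemma:pure-exact-extension} makes $R^{n}/N\cong I$ flat, hence projective as above, so $R$ is Pr\"ufer.

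For (7), the implication (1)$\Rightarrow$(7) is immediate: for a finite module $M$ the quotient $M/M_{\mathrm t}$ is finite and torsion-free, hence projective by (3), so $0\to M_{\mathrm t}\to M\to M/M_{\mathrm t}\to 0$ splits and $M_{\mathrm t}$ is a direct summand. For (6), the ``if'' direction of (1)$\Rightarrow$(6) is likewise easy (a direct sum of finite ideals is a direct sum of projectives), while for the ``only if'' direction I would invoke the classical structure theorem that a projective module over a semihereditary domain is a direct sum of finite ideals, reducing to the countably generated case by Kaplansky's theorem; and (6)$\Rightarrow$(1) is trivial, since a single finite ideal is a one-term direct sum of finite ideals, hence projective by the backward half of (6).

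I expect the one genuinely delicate point to be (7)$\Rightarrow$(1). Observe that (7) forces every short exact sequence $0\to T\to M\to F\to 0$ with $F$ finite torsion-free and $T$ finite torsion to split --- its middle term is finite with torsion submodule exactly $T$ --- so $\mathrm{Ext}^{1}_{R}(F,T)=0$ for all such $F,T$, and from this one must deduce that every finite ideal is projective. What makes this resist a routine argument is that a non-projective finite ideal cannot be realized as a direct summand of a \emph{projective} module carrying nontrivial torsion (projectives over a domain are torsion-free), so the deduction has to go through the vanishing of these Ext-groups and the characterization of Pr\"ufer domains by weak global dimension at most one. Since this is one of the standard characterizations of Pr\"ufer domains, in practice I would simply quote it from the literature \cite{bazzoni_prufer_2006} rather than reprove it, and the same goes for the structure theorem used in the harder half of (6).
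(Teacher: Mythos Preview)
Your proof is correct. The paper's own proof is considerably terser than yours: it cites \cite[Section 4.4]{rotman_introduction_2009} for the equivalence of (1)--(4) without further argument, cites Kaplansky \cite{kaplansky_projective_1958} for (1)$\Leftrightarrow$(6) and \cite{kaplansky_modules_1952,kaplansky_characterization_1960} for (1)$\Leftrightarrow$(7), and only spells out (2)$\Leftrightarrow$(5), via exactly the argument you give (relative divisibility forces $M/N$ torsion-free, then Lemma~\ref{Lemma:pure-exact-extension}; conversely lift along a projective cover). So on (5), (6), (7) you and the paper do essentially the same thing. The genuine difference is in (1)--(4): where the paper cites Rotman, you supply a self-contained induction on rank showing that a finite torsion-free module over a Pr\"ufer domain splits as a direct sum of finite ideals, and then close the cycle with the standard ``finitely generated flat over a local ring is free'' fact. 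Your route is more informative and in fact already delivers part of (6) (the finite case) as a byproduct; the paper's route is shorter but opaque. Both are fine for the purposes of this proposition, which is really a compendium of known characterizations.
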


\begin{proof}
See \cite[Section 4.4]{rotman_introduction_2009} for the equivalence of
(1)--(4).

(2)$\Rightarrow $(5) Suppose that $N$ is relatively divisible in $M$. Then $%
M/N$ is torsion-free, whence flat. Thus, $N$ is pure in $M$ by Lemma \ref%
{Lemma:pure-exact-extension}.

(5)$\Rightarrow $(2) Suppose that $M$ is a torsion-free module. Then there
exists a short-exact sequence $A\rightarrow P\rightarrow M$ where $P$ is
projective. Since $M$ is torsion-free, $A$ is relatively divisible in $P$,
whence pure in $P$. Thus, by Lemma \ref{Lemma:pure-exact-extension}, $M$ is
flat.

(1)$\Leftrightarrow $(6) This is \cite[Theorem 3]{kaplansky_projective_1958}.

(1)$\Leftrightarrow $(7) This is proved in \cite[Theorem 1]%
{kaplansky_modules_1952} and \cite{kaplansky_characterization_1960}.
\end{proof}

\begin{proposition}
\label{Proposition:characterize-Dedekind}Let $R$ be a domain. Then the
following assertions are equivalent:

\begin{enumerate}
\item $R$ is a Dedekind domain;

\item every $R$-submodule of a projective $R$-module is projective;

\item an $R$-module is projective if and only if it is a sum of ideals of $R$%
.
\end{enumerate}

Likewise, we have that $R$ is a PID if and only if every $R$-submodule of a
free $R$-module is free.
\end{proposition}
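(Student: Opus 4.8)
The plan is to prove the two easy implications toward (1) directly, and then obtain \emph{both} implications out of (1) simultaneously from a single transfinite ``basis filtration'' argument, of which the PID addendum will be a refinement. First observe that (2)$\Rightarrow$(1) is immediate: every ideal of $R$ is a submodule of $R$, which is free of rank one, hence projective by (2), so $R$ is hereditary, i.e.\ Dedekind. Likewise (3)$\Rightarrow$(1) is immediate: a single ideal $I$ is, trivially, a direct sum of ideals, so (3) forces $I$ to be projective, again making $R$ Dedekind.

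For the substance, I would prove the following claim: if $R$ is a Dedekind domain and $M$ is a submodule of a free module $F=\bigoplus_{i\in I}R$, then $M$ is isomorphic to a direct sum of ideals of $R$, in particular projective. This claim yields (1)$\Rightarrow$(2) at once, since a submodule $N$ of a projective $P$ is a submodule of a free $F$ in which $P$ sits as a direct summand; and it yields the nontrivial half of (1)$\Rightarrow$(3), since a projective $M$ is a direct summand, hence a submodule, of a free module, while the trivial half of (1)$\Rightarrow$(3) (a direct sum of ideals is projective) holds because each ideal is projective over a Dedekind domain. To prove the claim, well-order $I$ and identify it with an ordinal $\lambda$; for $\gamma\le\lambda$ set $F_\gamma=\bigoplus_{i<\gamma}R$ and $M_\gamma=M\cap F_\gamma$, so $M_0=0$, $M_\lambda=M$, and $M_\gamma=\bigcup_{\beta<\gamma}M_\beta$ at limit $\gamma$ because elements of $F$ have finite support. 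For each $\beta<\lambda$ the projection of $F_{\beta+1}$ onto the $\beta$-th coordinate restricts to a surjection $M_{\beta+1}\to I_\beta$ onto an ideal $I_\beta$ of $R$ with kernel exactly $M_\beta$; since $I_\beta$ is projective, the sequence $0\to M_\beta\to M_{\beta+1}\to I_\beta\to 0$ splits, and we fix a section $s_\beta\colon I_\beta\to M_{\beta+1}$.

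A transfinite induction then gives $M_\gamma=\bigoplus_{\beta<\gamma}s_\beta(I_\beta)$ for all $\gamma\le\lambda$: at a successor this is the split sequence together with the inductive hypothesis $M_\beta=\bigoplus_{\delta<\beta}s_\delta(I_\delta)$, and at a limit $\gamma$ it follows from continuity, $M_\gamma=\bigcup_{\beta<\gamma}M_\beta$, together with directness of the family $\{s_\beta(I_\beta)\}_{\beta<\gamma}$, which holds because any finite subfamily already lies in some $M_{\beta_0}$ where the sum is direct. Setting $\gamma=\lambda$ gives $M\cong\bigoplus_{\beta<\lambda}I_\beta$, proving the claim. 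For the PID addendum: if $R$ is a PID it is Dedekind, and each $I_\beta$ is principal, hence $0$ or isomorphic to $R$ since $R$ is a domain, so $M$ is free; conversely, if every submodule of every free module is free, then every ideal $I$ of $R$ is a free submodule of the domain $R$, and it cannot have a basis with two distinct elements $b_1,b_2$ since $b_2\cdot b_1-b_1\cdot b_2=0$ would be a nontrivial relation, so $I$ is $0$ or principal and $R$ is a PID.

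The main obstacle is the limit stage of the transfinite induction: one must commit to the sections $s_\beta$ once and for all and check that the internal sum $\sum_{\beta<\lambda}s_\beta(I_\beta)$ is simultaneously all of $M$ (continuity of the filtration, from finiteness of supports) and direct (an auxiliary induction on partial sums); everything else is formal. An alternative route avoiding this: a Dedekind domain is Noetherian, so (1)$\Leftrightarrow$(3) is the case of Proposition~\ref{Proposition:characterize-prufer}(6) in which every ideal is finitely generated, while (1)$\Leftrightarrow$(2) is the classical characterization of hereditary rings and may simply be cited.
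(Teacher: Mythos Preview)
Your proof is correct and complete. The paper itself does not give a proof at all: it simply cites \cite[Section 4.3]{rotman_introduction_2009}. What you have supplied is precisely the classical Kaplansky-style transfinite filtration argument that one finds in such a reference, so in substance you are filling in what the paper outsources. Your handling of the limit stage is fine (fixing the sections $s_\beta$ once and for all and using finiteness of supports is exactly the right move), and your derivations of (1)$\Rightarrow$(2), (1)$\Rightarrow$(3), and the PID addendum from the single claim are clean. The alternative route you sketch at the end---reducing (1)$\Leftrightarrow$(3) to Proposition~\ref{Proposition:characterize-prufer}(6) via Noetherianity---is also valid and is closer in spirit to how the paper organizes things, since the paper treats the Pr\"ufer case first and then specializes.
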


\begin{proof}
See \cite[Section 4.3]{rotman_introduction_2009}.
\end{proof}

Let $M$ be a flat module over a Pr\"{u}fer domain. We define its\emph{\ rank}
to be the dimension of $K\otimes M$ as a $K$-vector space. Clearly, if $M$
is finite, then it has finite rank. The converse holds if $M$ is projective,
as in this case it is a sum of finite ideals of $R$.

By Proposition \ref{Proposition:characterize-prufer}, if $X$ is a subset of $%
M$, then there exists a smallest pure submodule $\langle X\rangle _{\ast }$
of $M$ containing $X$, called the \emph{pure hull} of $X$ in $M$. If $X$ has 
$n$ elements, then $\langle X\rangle _{\ast }$ has rank at most $n$.

\subsection{Finite flat modules\label{Subsection:finiteflat}}

In this section, we assume that $R$ is a Pr\"{u}fer domain. Let $M$ be a
finite flat (whence, projective) module. We set $M^{\ast }:=\mathrm{Hom}%
\left( M,R\right) $, which is also a finite flat module. We have a canonical
homomorphism $M\rightarrow M^{\ast \ast }$.

\begin{lemma}
\label{Lemma:finiteflat-duality}Suppose that $R$ is a Pr\"{u}fer domain. Let 
$M$ be a finite flat module. Then the canonical homomorphism $\eta
_{M}:M\rightarrow M^{\ast \ast }$ is an isomorphism.
\end{lemma}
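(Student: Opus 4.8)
The statement is the standard fact that a finitely generated projective module over a commutative ring is reflexive. Over a Prüfer domain we have the extra structure that finite flat = finite projective (by Proposition~\ref{Proposition:characterize-prufer}), and that $M^\ast = \mathrm{Hom}(M,R)$ is again finite flat. The natural plan is to reduce to the case $M = R$ by a direct-summand argument, since the evaluation map $\eta_M$ is natural in $M$ and is an isomorphism for $M = R$; then additivity does the rest.

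First I would recall that $M$, being finite flat over a Prüfer domain, is finite projective, hence a direct summand of some $R^n$: there is a split short exact sequence with a projection $p : R^n \to R^n$, $p\circ p = p$, and $M \cong \mathrm{im}(p)$, $R^n \cong M \oplus N$ where $N = \ker(p)$ is also finite projective. The contravariant functor $(-)^\ast = \mathrm{Hom}(-,R)$ is additive, so $(M \oplus N)^\ast \cong M^\ast \oplus N^\ast$, and applying it twice gives $(M\oplus N)^{\ast\ast} \cong M^{\ast\ast} \oplus N^{\ast\ast}$. The double-dual evaluation map $\eta$ is a natural transformation from the identity functor to $(-)^{\ast\ast}$ (this is a general fact about modules over any commutative ring, requiring no hypothesis on $R$), so $\eta_{M\oplus N} = \eta_M \oplus \eta_N$ under these identifications. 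For $M = R$ the map $\eta_R : R \to R^{\ast\ast} = \mathrm{Hom}(\mathrm{Hom}(R,R),R)$ is the obvious isomorphism ($r \mapsto (\varphi \mapsto \varphi(r))$, with $\mathrm{Hom}(R,R) \cong R$ canonically), and hence $\eta_{R^n}$ is an isomorphism for every $n$. Since $\eta_{R^n} = \eta_M \oplus \eta_N$ is an isomorphism, the direct summand $\eta_M$ is an isomorphism as well: a direct summand of an isomorphism (i.e.\ a map that becomes an isomorphism after taking a biproduct with another map) is an isomorphism, because from $\eta_M \oplus \eta_N$ being invertible one reads off a two-sided inverse of $\eta_M$ by composing with the inclusion/projection of the summand.

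I would write this cleanly as follows: pick $n$ and maps $i : M \to R^n$, $r : R^n \to M$ with $r\circ i = \mathrm{id}_M$. Naturality of $\eta$ gives a commuting square relating $\eta_M$ and $\eta_{R^n}$ along $i$ and along $r$; concretely $\eta_{R^n}\circ i = i^{\ast\ast}\circ \eta_M$ and $r^{\ast\ast}\circ \eta_{R^n} = \eta_M \circ r$. Then $r^{\ast\ast}\circ \eta_{R^n}\circ i^{\ast\ast\,-1}\circ$\,$\bigl(\cdot\bigr)$ — more simply, since $r^{\ast\ast}\circ i^{\ast\ast} = (i\circ r)^{\ast\ast}$ need not be the identity, it is cleanest to just use the biproduct decomposition above rather than a bare splitting, so that $i^{\ast\ast}$ and $r^{\ast\ast}$ are genuinely the inclusion and projection of a biproduct and the computation $\eta_M = $ (projection)$\circ \eta_{R^n}\circ$(inclusion) exhibits $\eta_M$ as an isomorphism directly.

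The only step that requires any thought is verifying that the functor $(-)^\ast$ really does send the finite flat module $M$ to a finite flat module and commutes with finite direct sums — but the former is noted in the text immediately preceding the lemma, and the latter is elementary additivity of $\mathrm{Hom}(-,R)$. There is genuinely no obstacle here: the argument is purely formal once one invokes that finite flat modules over a Prüfer domain are direct summands of free modules of finite rank, which is Proposition~\ref{Proposition:characterize-prufer}. I would keep the write-up to a few lines: reduce to $R^n$ via a biproduct decomposition, check $\eta_{R^n}$ is an isomorphism, conclude by naturality that the summand $\eta_M$ is an isomorphism.
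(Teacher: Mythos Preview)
Your argument is correct: it is the standard proof that finitely generated projective modules over any commutative ring are reflexive, specialised here via Proposition~\ref{Proposition:characterize-prufer} to identify finite flat with finite projective. The paper takes a different, more hands-on route: it first reduces to the case of a finite ideal $I$ (using that a finite flat module is a direct sum of such), then shows directly that the image of $\eta_I$ is a \emph{pure} submodule of $I^{\ast\ast}$, hence a direct summand $I^{\ast\ast} = I \oplus I^\perp$; finally it embeds $I$ as a summand of $R^n$ and compares ranks on both sides of $R^n \cong (R^n)^{\ast\ast} \cong I \oplus I^\perp \oplus M^{\ast\ast}$ to force $I^\perp = 0$. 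Your approach is shorter and more general (it uses nothing about Pr\"ufer domains beyond finite flat $=$ finite projective), while the paper's argument stays closer to the ambient toolkit of purity and rank that is used elsewhere in the section; in particular the paper's purity step for $\mathrm{Ran}(\eta_I)$ is a concrete computation that your naturality argument bypasses entirely.
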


\begin{proof}
As $M$ is a finite direct sum of finite ideals of $R$, it suffices to
consider the case when $M$ is a finite ideal $I$ of $R$. We claim that the
image of the canonical homomorphism $\eta _{I}:I\rightarrow I^{\ast \ast }$
is a pure submodule of $I^{\ast \ast }$. Suppose that $\Phi \in I^{\ast \ast
}$ and $r\in R$ are such that $r\Phi \in \mathrm{Ran}\left( \eta _{I}\right) 
$. Thus, there exists $a$ such that $r\Phi =\eta _{I}\left( a\right) $. For
every $\psi \in I^{\ast }$ we have that%
\begin{equation*}
r\Phi \left( \psi \right) =\left( r\Phi \right) \left( \psi \right) =\left(
\eta _{I}\left( a\right) \right) \left( \psi \right) =\psi \left( a\right)
\end{equation*}%
In particular when $\psi $ is the inclusion $\iota :I\rightarrow R$,%
\begin{equation*}
r\Phi \left( \iota \right) =\iota \left( a\right) =a\text{.}
\end{equation*}%
Thus, setting%
\begin{equation*}
b:=\Phi \left( \iota \right) =\frac{1}{r}a
\end{equation*}%
we have that, for every $\psi \in I^{\ast }$,%
\begin{equation*}
\Phi \left( \psi \right) =\frac{1}{r}\psi \left( a\right) =\psi (\frac{1}{r}%
a)=\psi \left( b\right) \text{.}
\end{equation*}%
This shows that $\mathrm{Ran}\left( \iota \right) $ is pure in $I^{\ast \ast
}$. Thus, we can write $I^{\ast \ast }=I\oplus I^{\bot }$ for some pure
submodule $I^{\bot }$ of $I^{\ast \ast }$.

Since $I$ is projective and finite, it is a direct summand of a free finite
module. Thus, there exist $n\in \omega $ such that $R^{n}=I\oplus M$ for
some pure submodule $M$ of $R^{n}$. Thus,%
\begin{equation*}
R^{n}\cong \left( R^{n}\right) ^{\ast \ast }\cong I^{\ast \ast }\oplus
M^{\ast \ast }\cong I\oplus I^{\bot }\oplus M^{\ast \ast }\text{.}
\end{equation*}%
This implies%
\begin{eqnarray*}
n &=&\mathrm{\mathrm{\mathrm{rk}}}\left( R^{n}\right) =\mathrm{\mathrm{%
\mathrm{rk}}}\left( I\right) +\mathrm{\mathrm{\mathrm{rk}}}\left( I^{\bot
}\right) +\mathrm{\mathrm{\mathrm{rk}}}\left( M^{\ast \ast }\right) \\
&\geq &\mathrm{\mathrm{\mathrm{rk}}}\left( I^{\bot }\right) +\mathrm{\mathrm{%
\mathrm{rk}}}\left( I\right) +\mathrm{\mathrm{\mathrm{rk}}}\left( M\right) \\
&=&n+\mathrm{\mathrm{\mathrm{rk}}}\left( I^{\bot }\right) \text{.}
\end{eqnarray*}%
This forces $I^{\bot }=0$, concluding the proof.
\end{proof}

\subsection{Finitely-presented modules}

In this section, we continue to assume that $R$ is a Pr\"{u}fer domain with
field of fractions $K$. A \emph{fractional ideal }of $R$ is a submodule $%
I\subseteq K$ such that there exists $a\in R$ with $aI\subseteq R$. Given
two fractional ideals $I$ and $J$, one defines $IJ$ to be the submodule of $%
K $ generated by $xy$ for $x\in I$ and $y\in J$. A fractional ideal $I$ is 
\emph{invertible }if there exists a fractional ideal $J$ such that $IJ=R$,
in which case $J$ is the ideal $I^{-1}=\left\{ a\in R:aI\subseteq R\right\} $%
.

\begin{lemma}
\label{Lemma:projective-invertible}Suppose that $I$ is a fractional ideal of 
$R$. Then $I$ is invertible if and only if $I$ is projective, and in this
case $I$ is finite.
\end{lemma}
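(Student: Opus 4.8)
The plan is to prove the two directions separately, exploiting the standard "dual basis" characterization of projectivity together with the fact that over a Prüfer domain finitely generated ideals are projective (and conversely). First I would treat the implication "invertible $\Rightarrow$ projective and finite". If $I$ is invertible, pick a fractional ideal $J$ with $IJ=R$; then there are finitely many elements $x_1,\dots,x_n\in I$ and $y_1,\dots,y_n\in J$ with $\sum_{k=1}^n x_k y_k=1$. For $a\in I$ we have $a=\sum_k (a y_k) x_k$, and since $a\in I$, $y_k\in J$, we get $a y_k\in IJ\subseteq R$; thus the $x_k$ generate $I$, so $I$ is finite. Moreover the maps $\varphi_k:I\to R$, $\varphi_k(a)=a y_k$ (well-defined since $a y_k\in IJ=R$), form a dual basis: $a=\sum_k \varphi_k(a)x_k$ for all $a\in I$. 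By the dual-basis lemma, $I$ is projective.

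Next I would treat the converse, "projective $\Rightarrow$ invertible". Suppose $I$ is a projective fractional ideal. Fix $0\neq a\in R$ with $aI\subseteq R$; then $aI$ is an ordinary ideal of $R$ isomorphic to $I$ as an $R$-module, hence also projective, and multiplication by $a^{-1}$ gives $I\cong aI$ compatibly with the embeddings into $K$, so it suffices to show $aI$ is invertible as a fractional ideal (since $I=a^{-1}(aI)$ and invertibility is preserved under scaling: if $(aI)(aI)^{-1}=R$ then $I\cdot(a^2 (aI)^{-1})=R$). So without loss of generality $I$ is a projective ideal of $R$. Since $R$ is a domain, $I$ is nonzero, pick $0\neq c\in I$. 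Projectivity gives a dual basis: a (possibly infinite) family $\{x_i\}\subseteq I$ and homomorphisms $\{\varphi_i:I\to R\}$ with $a=\sum_i\varphi_i(a)x_i$ for each $a\in I$, where for each $a$ only finitely many $\varphi_i(a)$ are nonzero. Each $\varphi_i$ extends to a $K$-linear map $K\to K$ (localize at $0$), hence is multiplication by the scalar $q_i:=\varphi_i(c)/c\in K$; one checks $q_i$ is independent of the choice of $c\in I$, so $\varphi_i(a)=q_i a$ for all $a\in I$. Then $q_i I=\varphi_i(I)\subseteq R$, so each $q_i$ lies in $I^{-1}=\{q\in K: qI\subseteq R\}$. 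Applying the dual-basis identity to our fixed $c$: $c=\sum_i\varphi_i(c)x_i=\sum_i (q_i c)x_i=c\sum_i q_i x_i$, and cancelling $c$ (domain!) gives $1=\sum_i q_i x_i\in I^{-1}I$. Hence $I^{-1}I=R$, i.e. $I$ is invertible.

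Finally I would note that in the projective-implies-invertible direction the "finite" conclusion then follows from the first part (invertible ideals are finitely generated), closing the circle; alternatively one observes directly that the relation $1=\sum_i q_i x_i$ involves only finitely many indices, and those finitely many $x_i$ already generate $I$ via $a = a\cdot 1 = \sum_i (q_i a) x_i$ with $q_i a\in R$. The main obstacle is the bookkeeping in the converse: one must be careful that the dual-basis family may be infinite, check that each coordinate functional $\varphi_i$ is genuinely given by multiplication by a fixed element of $K$ independent of the chosen nonzero element of $I$, and handle the reduction from an arbitrary fractional ideal to an ordinary ideal cleanly. None of these is deep — the key inputs are the dual-basis characterization of projectivity and the torsion-freeness (embedding into $K$) of ideals of a domain — but they are the steps where an incautious argument could go wrong. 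Everything else is the short computation $1=\sum q_i x_i$ that simultaneously exhibits the inverse ideal and a finite generating set.
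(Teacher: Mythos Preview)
Your argument is correct and complete (modulo a harmless arithmetic slip: the inverse of $I=a^{-1}(aI)$ is $a\,(aI)^{-1}$, not $a^{2}(aI)^{-1}$; in fact the reduction to an integral ideal is unnecessary, since the dual-basis computation works verbatim for any nonzero fractional ideal).

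The paper takes a different route. Rather than proving both implications via the dual-basis lemma, it first argues directly that a projective ideal must be finitely generated: a splitting $I\to F$ of a surjection from a free module $F$ must land in a finite-rank direct summand of $F$, because $I$, being an ideal of a domain, has rank one. Having reduced to the finitely generated case, the paper then simply cites the standard characterization of Pr\"ufer domains (Bazzoni--Glaz) for the equivalence of invertibility and projectivity. Your approach is more self-contained and makes the mechanism visible---the identity $1=\sum q_i x_i$ simultaneously exhibits the inverse and a finite generating set---while the paper's approach is terser and isolates the finiteness as a separate rank-one phenomenon before appealing to the literature.
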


\begin{proof}
Notice that a projective ideal is necessarily finite. Indeed, if $F$ is a
free module, $F\rightarrow I$ is a surjective homomorphism, and $%
I\rightarrow F$ is a right inverse, then $I\rightarrow F$ factors through a
finite direct summand of $F$, since $I$ is an ideal of $R$ and $R$ is a
domain. The conclusion thus follows from \cite[Theorem 2.5]%
{bazzoni_prufer_2006}.
\end{proof}

Notice that a module is finite flat of rank $1$ if and only if it is
isomorphic to a finite (fractional) ideal of $R$. Let us say that a
finitely-presented torsion module is \emph{cyclic} if it is of the form $L/I$
for some rank $1$ finite flat module $L$ and finite submodule $I\subseteq L$.

\begin{lemma}
\label{Lemma:structure-torsion}Every finitely-presented torsion module is a
finite direct sum of finitely-presented cyclic torsion modules.
\end{lemma}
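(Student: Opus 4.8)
I would reduce the statement to a claim about an inclusion of finite flat modules and then induct on rank. Since $R$ is a Pr\"ufer domain and $M$ is finitely-presented, there is (as recalled in Section~\ref{Section:countable}) a short exact sequence $0\to Q\to P\to M\to 0$ with $P$ and $Q$ finite projective, i.e.\ finite flat. Because $M$ is torsion, $K\otimes Q\to K\otimes P$ is an isomorphism, so $\mathrm{rk}(P)=\mathrm{rk}(Q)=:n$; and because $M$ is finitely generated and torsion there is $0\neq r\in R$ with $rM=0$, i.e.\ $rP\subseteq Q$. The induction is on $n$, the case $n=0$ (so that $P=Q=0$ and $M=0$, the empty direct sum) being trivial.

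The inductive step follows from the claim: \emph{there is a rank $1$ finite flat direct summand $L$ of $P$, with complement $P'$, such that $Q=(Q\cap L)\oplus(Q\cap P')$.} Granting this, $M\cong L/(Q\cap L)\oplus P'/(Q\cap P')$. The first summand is a finitely-presented cyclic torsion module: $L$ has rank $1$, and $Q\cap L$ is a finite submodule of $L$ since $L/(Q\cap L)$ embeds into the finitely generated torsion module $M$. For the second summand, $P'$ is finite flat of rank $n-1$, and $Q\cap P'$ is a finite submodule of the projective module $P'$, hence projective by Proposition~\ref{Proposition:characterize-prufer}(4), of rank $n-1$ because $P'/(Q\cap P')$ is a direct summand of $M$ and so finitely-presented torsion. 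Thus $0\to Q\cap P'\to P'\to P'/(Q\cap P')\to 0$ is a presentation of the same type of rank $n-1$, and the induction hypothesis applies.

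To prove the claim I would run the invariant-factor / Smith-normal-form argument adapted to Pr\"ufer domains. Set $\mathfrak{d}:=\sum_{\varphi\in P^{*}}\varphi(Q)$, where $P^{*}=\mathrm{Hom}(P,R)$. Since $P^{*}$ is finitely generated, $\mathfrak{d}=\varphi_1(Q)+\dots+\varphi_m(Q)$ for a finite generating set $\varphi_1,\dots,\varphi_m$ of $P^{*}$, so $\mathfrak{d}$ is a finitely generated, nonzero (as $n\geq 1$) ideal, hence invertible by Lemma~\ref{Lemma:projective-invertible}. The plan is to pick $q_0\in Q$ of maximal content, i.e.\ with $\sum_{\varphi}\varphi(q_0)R=\mathfrak{d}$; then its pure hull $L:=\langle q_0\rangle_{*}$ has rank $1$ (the pure hull of a single nonzero element), and is a direct summand of $P$ because $P/L$ is finite and torsion-free (by Lemma~\ref{Lemma:pure-exact-extension}, as $L$ is pure in the flat module $P$), hence projective by Proposition~\ref{Proposition:characterize-prufer}(3); and the maximality of the content of $q_0$ should force a complement $P'$ of $L$ for which the projection $P\to P'$ carries $Q$ into $Q$, giving exactly $Q=(Q\cap L)\oplus(Q\cap P')$.

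The main obstacle is precisely the production of $q_0$ together with the compatible complement: $\mathfrak{d}$ is the sum of the content ideals $c(q)=\sum_{\varphi}\varphi(q)R$ over $q\in Q$, and over a non-Noetherian Pr\"ufer domain a supremum of ideals need not be realised by a single element, nor need $\mathfrak{d}$ be principal. I would resolve this by localising at each maximal ideal $\mathfrak{p}$: there $R_{\mathfrak{p}}$ is a valuation domain, $P_{\mathfrak{p}}$ and $Q_{\mathfrak{p}}$ are free of rank $n$, a finitely generated ideal of $R_{\mathfrak{p}}$ is principal and generated by one of its generators (totally ordered by divisibility), so $Q_{\mathfrak{p}}\hookrightarrow P_{\mathfrak{p}}$ admits a Smith normal form; these local splittings patch, all relevant modules being modules over $R/rR$, whose localisations $R_{\mathfrak{p}}/rR_{\mathfrak{p}}$ are chain rings. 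Equivalently, one may invoke the structure theory of finitely presented modules over the arithmetical ring $R/rR$, over which $M$ is a module, to write $M$ directly as a finite direct sum of cyclically presented modules $R/\mathfrak{a}$ with $\mathfrak{a}$ finitely generated, each of the required form $L/I$ with $L=R$.
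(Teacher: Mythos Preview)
Your approach differs substantially from the paper's. The paper writes $M \cong F/S$ with $F$ finite free and $S$ finite flat, decomposes $S$ (via Proposition~\ref{Proposition:characterize-prufer}(6)) as an internal direct sum $I_0 \oplus \cdots \oplus I_n$ of rank-$1$ submodules, takes pure hulls $F_k := \langle I_k\rangle_* \subseteq F$, and asserts $F = F_0 \oplus \cdots \oplus F_n$, giving $M \cong \bigoplus_k F_k/I_k$ in one stroke---no induction, content ideals, or localization. (One should note that this assertion fails for an \emph{arbitrary} rank-$1$ decomposition of $S$---e.g.\ $R=\mathbb{Z}$, $F=\mathbb{Z}^2$, $I_0=\mathbb{Z}(2,0)$, $I_1=\mathbb{Z}(1,2)$---so a compatible decomposition is being tacitly assumed, which is exactly the Smith-form content you are after.) In any case, the paper decomposes the \emph{submodule} first and lets pure hulls split $F$; you instead try to split $P$ and $Q$ simultaneously, one rank at a time.

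Your inductive reduction to the claim is clean, but the claim carries all the weight, and neither of your proposed resolutions proves it. ``These local splittings patch'' is the missing step: the rank-$1$ summand peeled off by Smith normal form over $R_{\mathfrak{p}}$ depends on choices and need not agree across primes, and living over $R/rR$ gives no gluing principle on its own (for a general Pr\"ufer domain $R/rR$ need not be semilocal). Appealing to the structure theory of finitely presented modules over arithmetical rings is circular---that is essentially the statement being proved. To finish your route you need either a genuine global construction of $q_0$ with content $\mathfrak{d}$ together with a compatible complement (possible, but real work outside the B\'ezout case), or to organise the local data through globally defined invariants such as Fitting ideals, whose successive quotients recover the invariant factors.
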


\begin{proof}
Let $M$ be a finitely-presented torsion module. Since $M$ is
finitely-presented, $M\cong F/S$ for some finite free module $F$ and finite
submodule $S\subseteq F$. Since $M$ is torsion, $\langle S\rangle _{\ast }=F$%
. As $S$ is finite and flat, there exist $n\in \omega $ and finite ideals $%
I_{0},\ldots ,I_{n}$ in $R$ such that $S=I_{0}\oplus \cdots \oplus I_{n}$.
If $F_{k}:=\langle I_{k}\rangle _{\ast }\subseteq F $ for $0\leq k\leq n$,
then we have that $F=F_{0}\oplus \cdots \oplus F_{n} $, and%
\begin{equation*}
F/S\cong F_{0}/I_{0}\oplus \cdots \oplus F_{n}/I_{n}\text{.}
\end{equation*}%
As $F_{k}$ is a pure submodule of $F$, it is a finite projective module,
which has rank $1$ since so does $I_{k}$.
\end{proof}

\begin{lemma}
\label{Lemma:structure-finitely-presented}Every finitely-presented module is
a finite direct sum of finitely-presented cyclic torsion modules and finite
ideals.
\end{lemma}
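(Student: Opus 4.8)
The plan is to split a finitely-presented module into its torsion and torsion-free parts and then feed each part into a structure result that is already available. First I would note that a finitely-presented module $M$ is in particular finite, so Proposition \ref{Proposition:characterize-prufer}(7) applies and yields a direct sum decomposition $M\cong M_{\mathrm{t}}\oplus N$, where $M_{\mathrm{t}}$ is the torsion submodule of $M$ and $N\cong M/M_{\mathrm{t}}$ is torsion-free. As quotients of $M$, both summands are finite.

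Next I would handle the torsion-free summand $N$. Since $R$ is a Pr\"{u}fer domain and $N$ is finite and torsion-free, $N$ is projective by Proposition \ref{Proposition:characterize-prufer}(3); being a finite projective module over a Pr\"{u}fer domain, $N$ is then a finite direct sum of finite ideals of $R$, exactly as in the decomposition of finite flat modules used in the proof of Lemma \ref{Lemma:structure-torsion} (alternatively, this is Proposition \ref{Proposition:characterize-prufer}(6) together with finiteness of $N$).

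It then remains to treat the torsion summand $M_{\mathrm{t}}$, where the goal is to invoke Lemma \ref{Lemma:structure-torsion}; this requires that $M_{\mathrm{t}}$ be not merely finite but finitely-presented. To see this I would use the general fact that a direct summand of a finitely-presented module is finitely-presented: writing $M=A\oplus B$ with $M$ finitely-presented and choosing surjections $R^{n}\twoheadrightarrow A$ and $R^{m}\twoheadrightarrow B$ (possible since $A$ and $B$ are finite), the induced surjection $R^{n+m}\twoheadrightarrow M$ has kernel $\mathrm{Ker}(R^{n}\rightarrow A)\oplus \mathrm{Ker}(R^{m}\rightarrow B)$, which is finitely generated by Schanuel's lemma, hence so is each of its direct summands; thus $A$ is finitely-presented. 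Applying this with $A=M_{\mathrm{t}}$, Lemma \ref{Lemma:structure-torsion} then expresses $M_{\mathrm{t}}$ as a finite direct sum of finitely-presented cyclic torsion modules. Combining this with the decomposition of $N$ gives $M$ as a finite direct sum of finitely-presented cyclic torsion modules and finite ideals, as desired.

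This argument is largely a bookkeeping assembly of earlier results, so I do not expect any genuine obstacle; the only step needing an independent argument is that $M_{\mathrm{t}}$ inherits finite presentation from $M$, which is the point where one must be slightly careful, since $R$ is not assumed Noetherian. That point, however, is handled by the Schanuel's-lemma observation above and uses nothing special about Pr\"{u}fer domains.
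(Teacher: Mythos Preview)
Your proof is correct and follows essentially the same route as the paper: split $M$ into its torsion part $M_{\mathrm{t}}$ and a finite torsion-free (hence projective) complement, observe both summands are finitely-presented, and then invoke Lemma~\ref{Lemma:structure-torsion} and Proposition~\ref{Proposition:characterize-prufer}. The only cosmetic differences are that the paper obtains the splitting by noting the torsion-free quotient is projective (so the short exact sequence $0\to M_{\mathrm{t}}\to M\to A\to 0$ splits) rather than citing Proposition~\ref{Proposition:characterize-prufer}(7), and it asserts without argument that direct summands of a finitely-presented module are finitely-presented, whereas you spell this out via Schanuel's lemma.
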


\begin{proof}
Let $M$ be a finitely-presented module. We have a short exact sequence%
\begin{equation*}
0\rightarrow M_{\mathrm{t}}\rightarrow M\rightarrow A\rightarrow 0
\end{equation*}%
where $A$ is finite and torsion-free, whence projective.\ Therefore $M\cong
M_{\mathrm{t}}\oplus A$. Being direct summands of $M$, $M_{\mathrm{t}}$ and $%
A$ are finitely-presented. The conclusion thus follows from Lemma \ref%
{Lemma:structure-torsion} and Proposition \ref%
{Proposition:characterize-prufer}.
\end{proof}

Every Pr\"{u}fer domain is, in particular, a coherent ring \cite[Section II.3%
]{lombardi_commutative_2015}. It follows that finitely-presented modules
form a thick subcategory of the category of modules \cite[Section IV.3]%
{lombardi_commutative_2015}.

\subsection{Countably presented modules}

A module $M$ is \emph{countably presented }if there exists an exact sequence%
\begin{equation*}
R^{\left( \omega \right) }\rightarrow R^{\left( \omega \right) }\rightarrow
M\rightarrow 0\text{.}
\end{equation*}%
This is equivalent to the assertion that $M\cong \mathrm{co\mathrm{lim}}%
_{n}M_{n}$ for some inductive sequence $\left( M_{n}\right) $ of \emph{%
finitely presented }modules, as well as to the assertion that $M\cong P/Q$
where $Q\subseteq P$ are countable projective modules. Similar arguments as
for finitely-presented modules show that countably presented modules form an
abelian category $\mathbf{Mod}_{\aleph _{0}}\left( R\right) $. When $R$ is
countable, every countably-presented module is countable, and $\mathbf{Mod}%
_{\aleph _{0}}\left( R\right) $ is a thick abelian subcategory of the
abelian category of countable modules.

\section{Pro-countable Polish modules\label{Section:pro-countable}}

Throughout this section, we assume that $R$ is a \emph{countable} domain,
and assume that all the modules are $R$-modules.

\subsection{Polish modules}

A \emph{Polish }module is a module that is also a Polish abelian group, such
that for every $r\in R$ the function $M\rightarrow M$, $x\mapsto rx$ is
continuous. We have that the category $\mathbf{PolMod}\left( R\right) $ of 
\emph{Polish} $R$-modules and \emph{continuous} homomorphisms is a
quasi-abelian category \cite[Section 6]{lupini_looking_2024}. The admissible
arrows in this category are the continuous homomorphisms with closed image.
We let $\mathbf{Mod}\left( R\right) $ be the category of countable modules, $%
\mathbf{Flat}\left( R\right) $ be its full subcategory of countable flat
modules, and\textrm{\ }$\mathbf{FinFlat}\left( R\right) $ be its full
subcategory of finite flat modules.

We say that a Polish module $M$ is \emph{pro-countable} if it is isomorphic
to the inverse limit of a tower of countably presented modules, and \emph{%
pro-finite }if it is isomorphic to the inverse limit of a tower of finite
modules. More generally, if $\mathcal{A}$ is a fully exact subcategory of
the category $\mathbf{Mod}_{\aleph _{0}}\left( R\right) $ of countably
presented modules, we say that a Polish module is pro-$\mathcal{A}$ if it is
isomorphic to the inverse limit of a tower of modules in $\mathcal{A}$. We
let $\mathbf{\Pi }\left( \mathcal{A}\right) $ be the full subcategory of $%
\mathbf{PolMod}\left( R\right) $ spanned by the pro-$\mathcal{A}$ modules.

The same proof as \cite[Proposition 2.1]{ding_non-archimedean_2017} shows
that, when $\mathcal{A}$ is closed under submodules, $M$ is pro-$\mathcal{A}$
if and only if $M$ is isomorphic to a closed submodule of a countable
product of modules in $\mathcal{A}$. In particular, in this case $%
\boldsymbol{\Pi }\left( \mathcal{A}\right) $ is closed under taking closed
submodules. Note that, for pro-countable Polish modules $N\cong \mathrm{%
\mathrm{lim}}_{k}N_{k}$ and $M\cong \mathrm{\mathrm{lim}}_{k}M_{k}$ with $%
N_{k},M_{k}$ in $\mathcal{A}$, we have that the group of continuous
homomorphisms $\mathrm{Hom}\left( N,M\right) $ is isomorphic to 
\begin{equation*}
\mathrm{\mathrm{lim}}_{k}\mathrm{co\mathrm{lim}}_{j}\mathrm{Hom}\left(
N_{j},M_{k}\right) \text{.}
\end{equation*}%
This establishes an equivalence between $\boldsymbol{\Pi }(\mathcal{A})$ and
the full subcategory of the category $\mathrm{pro}\left( \mathcal{A}\right) $
of towers of modules in $\mathcal{A}$ spanned by essentially epimorphic
towers. Thus, the notation adopted here is consistent with the notation from
Section \ref{Subsection:towers}.

\begin{lemma}
\label{Lemma:quotient-prodiscrete}Let $\mathcal{A}$ be a thick abelian
subcategory of $\mathbf{Mod}\left( R\right) $. Then $\mathbf{\Pi }\left( 
\mathcal{A}\right) $ is closed under quotients by closed submodules.
\end{lemma}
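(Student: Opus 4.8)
The plan is to reduce to a tower presentation of $M$ and push the submodule $N$ down to each stage. First I would use the identification (discussed in Section~\ref{Section:pro-countable}, consistently with Section~\ref{Subsection:towers}) of $\boldsymbol{\Pi}\left(\mathcal{A}\right)$ with the category of essentially epimorphic towers over $\mathcal{A}$ to write $M\cong\mathrm{\mathrm{lim}}_{k}M_{k}$ for a tower $\left(M_{k}\right)$ in $\mathcal{A}$ whose bonding maps $p^{\left(k,k+1\right)}\colon M_{k+1}\rightarrow M_{k}$ are surjective; the canonical projections $\pi_{k}\colon M\rightarrow M_{k}$ are then surjective, and since every object of $\mathcal{A}$ is discrete as a Polish module, each $M^{\left(k\right)}:=\ker\pi_{k}$ is an open submodule, and these form a neighborhood basis at $0$ in $M$. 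Given a closed submodule $N\subseteq M$, set $N_{k}:=\pi_{k}\left(N\right)\subseteq M_{k}$. Because $\mathcal{A}$ is a thick subcategory of $\mathbf{Mod}\left(R\right)$, it is closed under submodules and quotients, so $N_{k}\in\mathcal{A}$ and $M_{k}/N_{k}\in\mathcal{A}$; moreover $p^{\left(k,k+1\right)}\left(N_{k+1}\right)=p^{\left(k,k+1\right)}\pi_{k+1}\left(N\right)=\pi_{k}\left(N\right)=N_{k}$, so the bonding maps descend to surjections $M_{k+1}/N_{k+1}\rightarrow M_{k}/N_{k}$ and $\left(M_{k}/N_{k}\right)$ is again an essentially epimorphic tower over $\mathcal{A}$. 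Hence $L:=\mathrm{\mathrm{lim}}_{k}M_{k}/N_{k}$ is a pro-$\mathcal{A}$ Polish module, and the composites $M\rightarrow M_{k}\rightarrow M_{k}/N_{k}$ induce a canonical continuous homomorphism $q\colon M/N\rightarrow L$.

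It then remains to show that $q$ is an isomorphism of Polish modules. For injectivity, note that $\ker q$ is the image in $M/N$ of $\bigcap_{k}\pi_{k}^{-1}\left(N_{k}\right)=\bigcap_{k}\left(N+M^{\left(k\right)}\right)$; since $\left(M^{\left(k\right)}\right)$ is a neighborhood basis at $0$ and $N$ is closed, this intersection equals $\overline{N}=N$, so $\ker q=0$. Equivalently, this says $\mathrm{\mathrm{lim}}_{k}N_{k}=N$. For surjectivity, observe that the tower $\left(N_{k}\right)$ has surjective bonding maps (again because $p^{\left(k,k+1\right)}\left(N_{k+1}\right)=N_{k}$), hence satisfies the Mittag--Leffler condition and $\mathrm{lim}^{1}_{k}N_{k}=0$; applying $\mathrm{\mathrm{lim}}$ to the short exact sequence of towers $0\rightarrow\left(N_{k}\right)\rightarrow\left(M_{k}\right)\rightarrow\left(M_{k}/N_{k}\right)\rightarrow0$ yields an exact sequence $0\rightarrow\mathrm{\mathrm{lim}}_{k}N_{k}\rightarrow M\rightarrow L\rightarrow0$, which, combined with $\mathrm{\mathrm{lim}}_{k}N_{k}=N$, exhibits $q$ as a bijection. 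Finally, $q$ is a continuous bijective homomorphism between Polish groups, hence open by the open mapping theorem for Polish groups, so it is a topological isomorphism. Therefore $M/N\cong L$ is pro-$\mathcal{A}$.

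The main obstacle is the surjectivity of $q$, equivalently the exactness of $\mathrm{\mathrm{lim}}$ on the chosen short exact sequence of towers; this is why it is essential to define $N_{k}$ as the honest image $\pi_{k}\left(N\right)$ rather than via some abstract tower presentation of $N$, as this choice forces the sub-tower $\left(N_{k}\right)$ to have surjective bonding maps and hence vanishing $\mathrm{lim}^{1}$. The identity $\bigcap_{k}\left(N+M^{\left(k\right)}\right)=\overline{N}$ and the appeal to the open mapping theorem are the other points to state with care, though both are standard facts about Polish groups.
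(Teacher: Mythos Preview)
Your proof is correct and takes essentially the same approach as the paper's: realize $M/N$ as the inverse limit of the tower $(M_k/N_k)$ of quotients in $\mathcal{A}$. Your treatment is in fact more careful than the paper's terse sketch---you verify injectivity via closedness of $N$, surjectivity via the Mittag--Leffler condition on $(N_k)$, and the topological isomorphism via the open mapping theorem---and you correctly take $N_k$ to be the image $\pi_k(N)$, which is what makes the bonding maps on $(N_k)$ surjective.
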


\begin{proof}
Suppose that $\left( A_{i}\right) _{i\in \omega }$ is a sequence of modules
in $\mathcal{A}$ and $N$ is a closed submodule of $A:=\prod_{i\in \omega
}A_{i}$. Define $A_{\leq k}:=\prod_{i\leq k}A_{i}$, which we can identify
with a closed submodule of $A$, and $N_{\leq k}:=A_{\leq k}\cap N$. Then we
have that $A\left/ N\right. $ is isomorphic to the inverse limit%
\begin{equation*}
\mathrm{\mathrm{lim}}_{k}(A_{\leq k}/N_{\leq k})\text{.}
\end{equation*}%
Since $A_{\leq k}/N_{\leq k}$ is isomorphic to an object of $\mathcal{A}$,
this shows that $A/N$ is pro-countable.
\end{proof}

\begin{lemma}
\label{Lemma:extension-prodiscrete1}Suppose $\mathcal{A}$ is a fully exact
subcategory of $\mathbf{Mod}\left( R\right) $ closed under submodules.
Suppose that $A\rightarrow B\rightarrow C$ is a short exact sequence in $%
\mathbf{PolMod}\left( R\right) $. If $A$ is pro-$\mathcal{A}$ and $C$ is in $%
\mathcal{A}$, then $B$ is pro-$\mathcal{A}$.
\end{lemma}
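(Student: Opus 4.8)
The plan is to realize $B$ itself as the inverse limit of a tower of modules in $\mathcal{A}$, by transporting to $B$ a suitable filtration of $A$; the key leverage is that, because $C$ is discrete, $A$ is not merely a closed submodule of $B$ but an \emph{open} one.

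First I would record two structural facts. Since $C$ is a countable Polish module it is discrete, so $A=\ker(B\to C)$ is the preimage of $\{0\}$ under a continuous homomorphism to a discrete module; hence $A$ is an open submodule of $B$. Secondly, since $A$ is pro-$\mathcal{A}$ and $\mathcal{A}$ is closed under submodules, by the characterization recalled above $A$ is isomorphic to a closed submodule of a countable product $\prod_{i\in\omega}A_{i}$ with each $A_{i}\in\mathcal{A}$. Setting $M_{k}:=A\cap\prod_{i>k}A_{i}$ one gets a decreasing sequence of open submodules of $A$ with $\bigcap_{k}M_{k}=0$, such that $A/M_{k}$ embeds into $A_{0}\oplus\cdots\oplus A_{k}$ and therefore lies in $\mathcal{A}$ (using that $\mathcal{A}$ is closed under extensions, hence under finite direct sums, and under submodules), and such that the canonical map $A\to\lim_{k}A/M_{k}$ is an isomorphism (because $A$ is closed in $\prod_{i}A_{i}=\lim_{k}\prod_{i\le k}A_{i}$).

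Now transport this filtration to $B$. Since $M_{k}$ is open in $A$ and $A$ is open in $B$, each $M_{k}$ is an open submodule of $B$, so $B/M_{k}$ is discrete; and the inclusions $M_{k}\le A\le B$ give a short exact sequence $0\to A/M_{k}\to B/M_{k}\to C\to 0$ of countable modules, whence $B/M_{k}\in\mathcal{A}$ because $\mathcal{A}$ is closed under extensions and contains $A/M_{k}$ and $C$. The quotient maps assemble into a continuous homomorphism $\Phi:B\to\lim_{k}B/M_{k}$, whose target is a closed subgroup of the countable product $\prod_{k}B/M_{k}$ of discrete countable groups, hence Polish. The map $\Phi$ is injective as $\bigcap_{k}M_{k}=0$. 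For surjectivity, given a coherent sequence $(x_{k})_{k}$ in $\lim_{k}B/M_{k}$, its images in $C=B/A$ form a coherent sequence in the constant tower, so they all equal some $c$; lifting $c$ to $b\in B$, the corrected sequence $\left(x_{k}-(b+M_{k})\right)_{k}$ lies in $\ker(B/M_{k}\to C)=A/M_{k}$ for each $k$ and is coherent, so it is an element of $\lim_{k}A/M_{k}=A$, coming from some $a\in A$; then $(x_{k})_{k}$ is the image of $a+b$. Thus $\Phi$ is a continuous bijective homomorphism of Polish groups, hence a topological isomorphism by the open mapping theorem for Polish groups. Therefore $B\cong\lim_{k}B/M_{k}$ with $B/M_{k}\in\mathcal{A}$, i.e., $B$ is pro-$\mathcal{A}$.

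The step I expect to require the most care is the interplay between algebra and topology: one must make sure the quotients $B/M_{k}$ are genuinely discrete countable modules in $\mathcal{A}$ (not merely extensions of $\mathcal{A}$-objects computed in $\mathbf{Mod}(R)$), which is exactly why it is essential to establish at the outset that $A$ is \emph{open} in $B$ — a consequence of $C$ being discrete — and one must invoke automatic continuity for Polish groups to upgrade the algebraic identification $B\cong\lim_{k}B/M_{k}$ to a homeomorphism. Everything else is a routine diagram chase together with the standard fact that a closed submodule of a countable product of $\mathcal{A}$-modules is the inverse limit of its images.
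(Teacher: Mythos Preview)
Your proof is correct, and it takes a somewhat different route from the paper's. The paper first treats the special case $A=\prod_{i}A_{i}$ by pushing out along each coordinate projection $\pi_{i}:A\to A_{i}$ to obtain extensions $A_{i}\to B_{i}\to C$ with $B_{i}\in\mathcal{A}$, and then shows that the diagonal map $\xi:B\to\prod_{i}B_{i}$ is a closed embedding (its image is the preimage of the diagonal in $C^{\omega}$); the general case is then reduced to this one by pushing out along an embedding $A\hookrightarrow\prod_{k}A_{k}$. Your argument instead avoids the two-step reduction and directly exhibits $B$ as an inverse limit: you observe that $A$ is \emph{open} in $B$ (since $C$ is discrete), transport the neighborhood basis $(M_{k})$ of $0$ in $A$ to a neighborhood basis in $B$, and realize $B\cong\lim_{k}B/M_{k}$ with each $B/M_{k}$ a discrete extension of $C$ by $A/M_{k}$. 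Your approach is arguably more elementary and makes the role of the discreteness of $C$ explicit; the paper's pushout construction is more categorical and realizes $B$ as a closed submodule of a product rather than directly as a limit, but ultimately relies on the same underlying structure (and on the same automatic-continuity step at the end).
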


\begin{proof}
Suppose initially that $A=\prod_{i\in \omega }A_{i}$ for a sequence $\left(
A_{i}\right) $ of modules in $\mathcal{A}$. For $i\in \omega $, consider the
commutative diagram%
\begin{equation*}
\begin{array}{ccccc}
A & \rightarrow & B & \rightarrow & C \\ 
\pi _{i}\downarrow &  & \downarrow \xi _{i} &  & \downarrow \\ 
A_{i} & \overset{\tau _{i}}{\rightarrow } & B_{i} & \overset{p_{i}}{%
\rightarrow } & C%
\end{array}%
\end{equation*}%
obtained via pushout, where $\pi _{i}:A\rightarrow A_{i}$ is the canonical
projection. Thus, we have that $B_{i}$ is the quotient of $A_{i}\oplus B$ by
the submodule%
\begin{equation*}
\Xi _{i}:=\left\{ \left( -\pi _{i}\left( x\right) ,x\right) :x\in A\right\} 
\text{.}
\end{equation*}%
Since $B_{i}$ is an extension of $C$ by $A_{i}$, it is isomorphic to an
object of $\mathcal{A}$. Define 
\begin{equation*}
B_{\omega }:=\prod_{i\in \omega }B_{i}
\end{equation*}%
and the continuous homomorphism%
\begin{equation*}
\xi :B\rightarrow B_{\omega }\text{, }b\mapsto \left( \xi _{i}\left(
b\right) \right) _{i\in \omega }\text{.}
\end{equation*}%
Then $\xi $ is injective, and its image is equal to the preimage of the
diagonal $\Delta _{C}\subseteq C^{\omega }$ under the continuous
homomorphism $B_{\omega }\rightarrow C^{\omega }$, $\left( b_{i}\right)
\mapsto \left( p_{i}\left( b_{i}\right) \right) $. Thus, $B$ is isomorphic
to a closed submodule of $B_{\omega }$, and therefore pro-$\mathcal{A}$.

If $A$ is an arbitrary pro-$\mathcal{A}$ module, then we have an admissible
monic $A\rightarrow \prod_{k\in \omega }A_{k}$ for some sequence $\left(
A_{k}\right) $ of modules in $\mathcal{A}$. By taking pushouts, we obtain a
commutative diagram%
\begin{equation*}
\begin{array}{ccccc}
A & \rightarrow & X & \rightarrow & C \\ 
\downarrow &  & \downarrow &  & \downarrow \\ 
\prod_{k\in \omega }A_{k} & \rightarrow & Y & \rightarrow & C%
\end{array}%
\end{equation*}%
where the arrow $X\rightarrow Y$ is an admissible monic. By the above
discussion, $Y$ is pro-$\mathcal{A}$, and hence also $X$ is pro-$\mathcal{A}$%
.
\end{proof}

\begin{lemma}
\label{Lemma:enough-projectives}Suppose that $\mathcal{A}$ is a fully exact
subcategory of $\mathbf{Mod}\left( R\right) $. Suppose that $\mathcal{P}%
\subseteq \mathcal{A}$ is a class of projective objects of $\mathcal{A}$
closed under direct summands such that for every $A\in \mathcal{A}$ there
exists $Q\in \mathcal{P}$ and a strict epimorphism $Q\rightarrow A$. For
every pro-$\mathcal{A}$ module $C$ there exists a surjective continuous
module homomorphism $P\rightarrow C$ where $P$ is a countable product of
objects of $\mathcal{P}$.
\end{lemma}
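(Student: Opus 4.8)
The plan is to present $C$ by a convenient tower and then carry out a ``staircase'' construction of a projective resolution. Using the equivalence (recorded above) between $\boldsymbol{\Pi}\left(\mathcal{A}\right)$ and the full subcategory of $\mathrm{pro}\left(\mathcal{A}\right)$ spanned by essentially epimorphic towers, I would first write $C\cong\lim_{k}C_{k}$ for a tower $\left(C_{k}\right)$ of modules in $\mathcal{A}$ whose bonding maps $p_{k}\colon C_{k+1}\to C_{k}$ are admissible epimorphisms of $\mathcal{A}$. Then $N_{0}:=C_{0}$ and $N_{k+1}:=\mathrm{Ker}\left(p_{k}\right)$ all belong to $\mathcal{A}$, so by hypothesis I may choose strict epimorphisms $\sigma_{k}\colon Q_{k}\to N_{k}$ with $Q_{k}\in\mathcal{P}$ (the $Q_{k}$ being countable, as objects of $\mathcal{A}\subseteq\mathbf{Mod}\left(R\right)$). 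Set $P_{k}:=Q_{0}\oplus\cdots\oplus Q_{k}$, with the coordinate projections $P_{k+1}\to P_{k}$ as bonding maps; its inverse limit $\lim_{k}P_{k}$ is canonically the Polish module $P:=\prod_{k}Q_{k}$, a countable product of objects of $\mathcal{P}$, as wanted.

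Next I would build surjective module homomorphisms $f_{k}\colon P_{k}\to C_{k}$ compatible with the bonding maps, by recursion on $k$. Put $f_{0}:=\sigma_{0}$. Given $f_{k}$, note that $P_{k}$ is projective in $\mathcal{A}$ (a finite coproduct of objects of $\mathcal{P}$), so $f_{k}$ lifts along the admissible epimorphism $p_{k}$ to a map $a_{k+1}\colon P_{k}\to C_{k+1}$ with $p_{k}a_{k+1}=f_{k}$; then define $f_{k+1}\colon P_{k}\oplus Q_{k+1}\to C_{k+1}$ by $f_{k+1}\left(x,y\right)=a_{k+1}\left(x\right)+\sigma_{k+1}\left(y\right)$, viewing $\sigma_{k+1}$ as landing in $C_{k+1}$ through the inclusion $N_{k+1}=\mathrm{Ker}\left(p_{k}\right)\subseteq C_{k+1}$. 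Compatibility with the bonding maps is immediate from $p_{k}a_{k+1}=f_{k}$ and $p_{k}\sigma_{k+1}=0$. The image of $f_{k+1}$ equals $\mathrm{Ran}\left(a_{k+1}\right)+\mathrm{Ker}\left(p_{k}\right)$, and $p_{k}$ carries $\mathrm{Ran}\left(a_{k+1}\right)$ onto $\mathrm{Ran}\left(f_{k}\right)=C_{k}$, so $f_{k+1}$ is onto. The same computation shows the kernel tower $\left(\mathrm{Ker}\left(f_{k}\right)\right)$ has surjective bonding maps: if $z\in\mathrm{Ker}\left(f_{k}\right)$ then $a_{k+1}\left(z\right)\in\mathrm{Ker}\left(p_{k}\right)=\mathrm{Ran}\left(\sigma_{k+1}\right)$, so $z$ lifts to an element of $\mathrm{Ker}\left(f_{k+1}\right)$.

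Finally I would pass to inverse limits. The levelwise exact sequences $0\to\mathrm{Ker}\left(f_{k}\right)\to P_{k}\to C_{k}\to0$ assemble into a short exact sequence of towers; since $\left(P_{k}\right)$ has split surjective bonding maps, $\mathrm{lim}^{1}_{k}P_{k}=0$, so the long exact sequence for $\mathrm{lim}^{1}$ gives $\lim_{k}P_{k}\to\lim_{k}C_{k}\to\mathrm{lim}^{1}_{k}\mathrm{Ker}\left(f_{k}\right)\to0$. As $\left(\mathrm{Ker}\left(f_{k}\right)\right)$ has surjective bonding maps we have $\mathrm{lim}^{1}_{k}\mathrm{Ker}\left(f_{k}\right)=0$, whence $\Phi:=\lim_{k}f_{k}\colon P=\prod_{k}Q_{k}\to\lim_{k}C_{k}=C$ is surjective; being the inverse limit of continuous $R$-module homomorphisms it is itself a continuous $R$-module homomorphism, which is the asserted surjection.

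The only point requiring care is the initial reduction: that a pro-$\mathcal{A}$ module can be presented by an epimorphic tower in $\mathcal{A}$ whose bonding maps are \emph{admissible} epimorphisms. This is precisely what places each $\mathrm{Ker}\left(p_{k}\right)$ in $\mathcal{A}$, making the projective covers $Q_{k}$ available, and it is exactly the content of the equivalence $\boldsymbol{\Pi}\left(\mathcal{A}\right)\simeq\left\{\text{essentially epimorphic towers over }\mathcal{A}\right\}$ together with the definition of an essentially epimorphic tower. Everything after that is routine bookkeeping with finite coproducts of projective objects and with $\mathrm{lim}^{1}$ of Mittag--Leffler towers.
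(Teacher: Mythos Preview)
Your proof is correct and shares the paper's overall strategy---lift an epimorphic presentation of $C$ to a tower of projectives and take the limit---but the construction of the projective tower differs. The paper works by pullback: at stage $k$ it forms $B^{(k)}=A^{(k)}\times_{A^{(k-1)}}P^{(k-1)}$ and chooses a cover $P^{(k)}\twoheadrightarrow B^{(k)}$ with $P^{(k)}\in\mathcal{P}$; the resulting epimorphic tower of projectives splits, so $P=\lim_{k}P^{(k)}$ is a product of direct summands of the $P^{(k)}$, which is where closure of $\mathcal{P}$ under summands enters. Your staircase construction instead covers the kernels $N_{k}$ and sets $P_{k}=Q_{0}\oplus\cdots\oplus Q_{k}$, making $P=\prod_{k}Q_{k}$ transparent without invoking the summand hypothesis. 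You also make the surjectivity of the limit map explicit via a $\mathrm{lim}^{1}$/Mittag--Leffler argument, whereas the paper's pullback formulation leaves the elementwise lifting implicit. Your final paragraph correctly isolates the one nontrivial input shared by both arguments: the reduction to a presentation with bonding maps that are \emph{admissible} epimorphisms in $\mathcal{A}$ (so that the kernels, and hence the pullbacks, lie in $\mathcal{A}$); the paper relies on this just as you do, and in the applications it is secured by $\mathcal{A}$ being closed under submodules.
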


\begin{proof}
Let $\left( A^{\left( n\right) }\right) $ be a tower of modules in $\mathcal{%
A}$ with epimorphic bonding maps $A^{\left( n+1\right) }\rightarrow
A^{\left( n\right) }$ such that $A=\mathrm{lim}_{n}A^{\left( n\right) }$. We
define by recursion on $k\in \omega $ modules $P^{\left( k\right) }$ in $%
\mathcal{P}$, surjective homomorphisms $P^{\left( k\right) }\rightarrow
A^{\left( k\right) }$ and $P^{\left( k+1\right) }\rightarrow P^{\left(
k\right) }$ for $k\geq 1$ such that the diagram%
\begin{equation*}
\begin{array}{ccc}
P^{\left( k+1\right) } & \rightarrow & A^{\left( k+1\right) } \\ 
\downarrow &  & \downarrow \\ 
P^{\left( k\right) } & \rightarrow & A^{\left( k\right) }%
\end{array}%
\end{equation*}%
commutes. Define $P^{\left( 0\right) }$ to be a module in $\mathcal{P}$
together with a surjective homomorphism $P^{\left( 0\right) }\rightarrow
A^{\left( 0\right) }$. Supposing that $P^{\left( i\right) }$ has been
defined for $i<k$, let $B^{\left( k\right) }$ be the pullback%
\begin{equation*}
\begin{array}{ccc}
B^{\left( k\right) } & \rightarrow & A^{\left( k\right) } \\ 
\downarrow &  & \downarrow \\ 
P^{\left( k-1\right) } & \rightarrow & A^{\left( k-1\right) }%
\end{array}%
\end{equation*}%
Define $P^{\left( k\right) }$ to be a module in $\mathcal{P}$ together with
a surjective homomorphism $P^{\left( k\right) }\rightarrow B^{\left(
k\right) }$. Consider then the homomorphisms $P^{\left( k\right)
}\rightarrow B^{\left( k\right) }\rightarrow P^{\left( k-1\right) }$ and $%
P^{\left( k\right) }\rightarrow B^{\left( k\right) }\rightarrow A^{\left(
k\right) }$. This concludes the recursive definition. One then sets $P:=%
\mathrm{lim}_{n}P^{\left( k\right) }$, and $P\rightarrow A$ be the
surjective continuous homomorphism induced by the morphisms $P^{\left(
k\right) }\rightarrow A^{\left( k\right) }$ for $k\in \omega $. Since $%
\left( P^{\left( k\right) }\right) $ is an epimorphic tower of projective
modules of $\mathcal{A}$ and $\mathcal{P}$ is closed under direct summands, $%
P$ is isomorphic to a product of objects of $\mathcal{P}$.
\end{proof}

\subsection{Extensions of modules and cocycles\label{Section:cocycles}}

Suppose that $C,A$ are Polish modules. We can define the group $\mathrm{Ext}%
_{\mathrm{Yon}}\left( C,A\right) $ of isomorphism classes of extensions $%
A\rightarrow X\rightarrow C$, where the group operation is induced by Baer
sum of extensions \cite[Section 7.2]{rotman_introduction_2009}. The trivial
element of $\mathrm{Ext}_{\mathrm{Yon}}\left( C,A\right) $ corresponds to 
\emph{split }extensions of $C$ by $A$. One can obtain an equivalent
description of such a group in terms of cocycles.

A ($2$-)cocycle on $C$ with values in $A$ is a pair $\left( c,\rho \right) $
where $c:C\times C\rightarrow A$ and $\rho :R\times C\rightarrow A$ satisfy
the following identities, for all $r,s\in R$ and $x,y,z\in C$:

\begin{enumerate}
\item $c\left( x,0\right) =0$;

\item $c\left( x,y\right) =c\left( y,x\right) $;

\item $c\left( x+y,z\right) +c\left( x,y\right) =c\left( x,y+z\right)
+c\left( y,z\right) $;

\item $\rho \left( r+s,x\right) =c\left( rx,ry\right) +\rho \left(
r,x\right) +\rho \left( s,x\right) $;

\item $c\left( rx,ry\right) +\rho \left( r,x\right) +\rho \left( r,y\right)
=\rho \left( r,x+y\right) +rc\left( x,y\right) $.
\end{enumerate}

We say that a cocycle is Borel (respectively, continuous) if each of the
functions $c$ and $\rho $ is Borel (respectively, continuous). A Borel
cocycle is a coboundary if there exists a Borel function $\xi :C\rightarrow
A $ such that%
\begin{equation*}
\left( c,\rho \right) :=\delta \xi
\end{equation*}%
where $\delta \xi $ is the pair of functions $\left( c_{\xi },\rho _{\xi
}\right) $ defined by%
\begin{equation*}
c_{\xi }\left( x,y\right) :=\xi \left( x+y\right) -\xi \left( x\right) -\xi
\left( y\right)
\end{equation*}%
and%
\begin{equation*}
\rho _{\xi }\left( r,x\right) :=\xi \left( rx\right) -r\xi \left( x\right) 
\text{.}
\end{equation*}%
Given a continuous cocycle $\left( c,\rho \right) $ on $C$ with values in $A$%
, one can define an extension%
\begin{equation*}
A\rightarrow X_{\left( c,\rho \right) }\rightarrow C
\end{equation*}%
where the Polish module $X_{\left( c,\rho \right) }$ is defined as follows.
As a topological space, $X_{\left( c,\rho \right) }$ is equal to $C\times A$%
, with module operation defined by setting%
\begin{equation*}
\left( x,a\right) +\left( y,b\right) :=\left( x+y,c\left( x,y\right)
+a+b\right)
\end{equation*}%
\begin{equation*}
r\left( x,a\right) :=\left( rx,\rho \left( r,x\right) +a\right)
\end{equation*}%
for $r\in R$, $x,y\in C$, and $a,b\in A$.

\begin{lemma}
Suppose that $C,A$ are Polish modules, and $\left( c,\rho \right) $ is a
cocycle on $C$ with values in $A$. Let $\xi $ be a Borel function such that $%
c\left( x,y\right) =\xi \left( x+y\right) -\xi \left( x\right) -\xi \left(
y\right) $ for all $x,y\in C$. Then $\xi $ is continuous.
\end{lemma}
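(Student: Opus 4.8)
The plan is to reduce the statement to continuity of $\xi$ at the origin, and then to establish continuity at $0$ by a Pettis-type Baire category argument. The mechanism is that, although $\xi$ is only assumed Borel, its coboundary $c$ is continuous, and this rigidity propagates to $\xi$.

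First I would record the normalization $\xi(0)=0$: the cocycle condition $c(x,0)=0$ gives, at $x=0$, that $0=c(0,0)=\xi(0)-2\xi(0)=-\xi(0)$. Next I would reduce to continuity at $0$. For an arbitrary $x\in C$ the defining identity rewrites as $\xi(x+y)-\xi(x)=\xi(y)+c(x,y)$; since $c$ is continuous, $c(x,y)\to c(x,0)=0$ as $y\to 0$, so once $\xi$ is known to be continuous at $0$ (that is, $\xi(y)\to\xi(0)=0$) it follows that $\xi(x+y)\to\xi(x)$, i.e.\ $\xi$ is continuous everywhere.

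For continuity at $0$: since $\xi$ is Borel it has the Baire property, hence it is continuous on some dense $G_\delta$ set $D\subseteq C$ (here we use that $A$ is second countable). Fix $t_0\in D$, and let $V$ be a neighborhood of $0$ in $A$. Choose a symmetric neighborhood $V_1$ of $0$ with $V_1+V_1+V_1\subseteq V$; shrink an open neighborhood $W_0$ of $t_0$ so that $\xi(D\cap W_0)\subseteq\xi(t_0)+V_1$, using continuity of $\xi|_D$ at $t_0$; and, using the \emph{joint} continuity of $c$ at $(t_0,0)$ together with $c(t_0,0)=0$, choose open neighborhoods $W_1\subseteq W_0$ of $t_0$ and $U_1$ of $0$ with $c(W_1\times U_1)\subseteq V_1$. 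Put $U:=U_1\cap(W_1-t_0)$, a neighborhood of $0$. For $y\in U$ the open set $W_1\cap(W_1-y)$ contains $t_0$, hence is nonempty; since $D$, and therefore its translate $D-y$, is comeager and $C$ is a Baire space, the comeager set $D\cap(D-y)$ meets $W_1\cap(W_1-y)$, so we may pick $t\in D\cap(D-y)\cap W_1\cap(W_1-y)$. Then $t$ and $t+y$ both lie in $D\cap W_0$, so $\xi(t),\xi(t+y)\in\xi(t_0)+V_1$ and hence $\xi(t+y)-\xi(t)\in V_1-V_1$; moreover $(t,y)\in W_1\times U_1$, so $c(t,y)\in V_1$. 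Therefore
\begin{equation*}
\xi(y)=\xi(t+y)-\xi(t)-c(t,y)\in V_1-V_1-V_1=V_1+V_1+V_1\subseteq V.
\end{equation*}
Thus $\xi(U)\subseteq V$, which proves that $\xi$ is continuous at $0$; by the reduction, $\xi$ is continuous.

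The step I expect to be the crux is the simultaneous choice of the auxiliary point $t$: for each small $y$ one needs a single point lying in $D$, in $D-y$, and in a fixed small neighborhood of $t_0$, so that $\xi(t)$, $\xi(t+y)$, and $c(t,y)$ are all controlled at once. This is exactly where translation-invariance of meagerness (a comeager set meets every nonempty open set in a Baire space) and the \emph{joint} continuity of $c$ — not merely its continuity in each variable separately — are indispensable, the latter because the point $t$ unavoidably depends on $y$.
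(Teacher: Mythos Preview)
Your proof is correct but takes a genuinely different route from the paper's. The paper does not work with $\xi$ directly: instead it forms the Polish group $X_{(c,\rho)}$ on the underlying space $C\times A$ with the twisted addition $(x,a)+(y,b)=(x+y,c(x,y)+a+b)$ (this is where continuity of $c$ is used), observes that $(x,a)\mapsto (x,\xi(x)+a)$ is a Borel group isomorphism $X_{(c,\rho)}\to C\oplus A$, and then invokes Pettis' automatic continuity theorem for Borel homomorphisms between Polish groups (\cite[Theorem 9.9]{kechris_classical_1995}) to conclude that this map, and hence $\xi$, is continuous. Your argument instead unpacks the Pettis mechanism by hand: you exploit the Baire property of $\xi$ to find a comeager set of continuity points and then use translates and the identity $\xi(y)=\xi(t+y)-\xi(t)-c(t,y)$ to transfer continuity to $0$. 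The paper's approach is shorter and delegates the Baire-category work to a black-box citation; yours is self-contained and makes transparent exactly where the \emph{joint} continuity of $c$ is needed (at the choice of the auxiliary point $t$ depending on $y$), a dependence that is hidden inside the cited theorem in the paper's version.
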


\begin{proof}
Define the Polish module $X_{\left( c,\rho \right) }$ as above. Consider
also the direct sum $C\oplus A$. We can define a Borel group isomorphism $%
\varphi :X_{\left( c,\rho \right) }\rightarrow C\oplus A$, $\left(
x,a\right) \mapsto \left( x,\xi \left( x\right) +a\right) $. Since $%
X_{\left( c,\rho \right) }$ and $C\oplus A$ are Polish groups, we must have
that $\varphi $ is continuous \cite[Theorem 9.9]{kechris_classical_1995},
and $\xi $ is continuous.
\end{proof}

Borel cocycles on $C$ with values in $A$ form a module $\mathrm{Z}\left(
C,A\right) $ that has coboundaries as a submodule $\mathrm{B}\left(
C,A\right) $. We let $\mathrm{Ext}_{\mathrm{c}}\left( C,A\right) $ be the
module obtained as the quotient $\mathrm{Z}\left( C,A\right) \left/ \mathrm{B%
}\left( C,A\right) \right. $.

When $C,A$ are pro-countable, we define a homomorphism $\mathrm{Ext}_{%
\mathrm{Yon}}\left( C,A\right) \rightarrow \mathrm{Ext}_{\mathrm{c}}\left(
C,A\right) $ as follows. Given an extension $A\rightarrow X\rightarrow C$,
then by \cite[Theorem 4.5]{bergfalk_definable_2024}, the map $X\rightarrow C$
has a continuous right inverse $f:C\rightarrow X$. Identifying $A$ with a
closed submodule of $X$, we can define $c\left( x,y\right) =f\left(
x+y\right) -f\left( x\right) -f\left( y\right) $ and $\rho \left( r,x\right)
=\rho \left( rx\right) -r\rho \left( x\right) $. This defines a continuous
cocycle $\left( c,\xi \right) $ on $C$ with values in $A$, whose
corresponding element of $\mathrm{Ext}_{\mathrm{c}}\left( C,A\right) $ only
depends on the isomorphism class of the given extension.

\begin{lemma}
Suppose that $C$ and $A$ are pro-countable Polish modules. The assignment
described above defines an injective homomorphism $\mathrm{Ext}_{\mathrm{Yon}%
}\left( C,A\right) \rightarrow \mathrm{Ext}_{\mathrm{c}}\left( C,A\right) $,
whose image is the submodule of $\mathrm{Ext}_{\mathrm{c}}\left( C,A\right) $
consisting of elements that are represented by a continuous cocycle.
\end{lemma}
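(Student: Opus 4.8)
The plan is to establish, in turn, that the assignment is well-defined, additive, injective, and has the stated image. For \emph{well-definedness}, I would fix an extension $A\to X\to C$ (with quotient map $q$ and inclusion $\iota$) together with a continuous right inverse $f$ of $q$, which exists by \cite[Theorem 4.5]{bergfalk_definable_2024}. A direct computation, using only that $X$ is a module, $q$ a homomorphism, and $A=\ker q$, shows that $(c_f,\rho_f)$ --- with $c_f(x,y)=f(x+y)-f(x)-f(y)$ and $\rho_f(r,x)=f(rx)-rf(x)$ --- is a cocycle on $C$ with values in $A$, and it is continuous because $f$, the addition and scalar multiplications of $X$, and $\iota$ are all continuous. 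For a second continuous section $f'$, the difference $h:=f'-f$ takes values in $A$, is continuous, and satisfies $\delta h=(c_{f'},\rho_{f'})-(c_f,\rho_f)$, so the resulting class in $\mathrm{Ext}_{\mathrm{c}}(C,A)$ does not depend on $f$; and if $\varphi\colon X\to X'$ is an isomorphism of extensions, then $\varphi\circ f$ is a continuous section of $X'\to C$ inducing the \emph{same} cocycle, since $\varphi$ restricts to the identity on $A$. This yields a well-defined map $[X]\mapsto[(c_f,\rho_f)]$, which by construction lands in the submodule of classes admitting a continuous representative.

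For \emph{additivity}, given extensions $E_1,E_2$ with continuous sections $f_1,f_2$, I would observe that $x\mapsto(f_1(x),f_2(x))$ is a continuous section of the pullback of $E_1\oplus E_2$ along the diagonal $C\to C\oplus C$, and that pushing it forward along the codiagonal $A\oplus A\to A$ produces a continuous section of the Baer sum $E_1+E_2$ whose associated cocycle is $(c_{f_1}+c_{f_2},\rho_{f_1}+\rho_{f_2})$. For \emph{injectivity}, if $[X]$ maps to $0$ then $(c_f,\rho_f)=\delta\xi$ for some Borel $\xi\colon C\to A$, which is automatically continuous by the preceding lemma; then $g:=f-\iota\circ\xi\colon C\to X$ is continuous, satisfies $q\circ g=\mathrm{id}_C$, and has $\delta g=\delta f-\delta\xi=0$, so $g$ is a continuous $R$-module homomorphism splitting the extension, whence $[X]=0$ in $\mathrm{Ext}_{\mathrm{Yon}}(C,A)$. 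Finally, the \emph{image} is exactly the submodule of classes with a continuous representative: one inclusion was recorded above, and for the other, given a continuous cocycle $(c,\rho)$, the module $X_{(c,\rho)}$ built just before the lemma is Polish (precisely because $c$ and $\rho$ are continuous), fits into an extension $A\to X_{(c,\rho)}\to C$, and the continuous section $x\mapsto(x,0)$ induces exactly $(c,\rho)$.

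The step I expect to be the main obstacle is additivity: it requires setting up the Baer sum concretely as a pullback followed by a pushout of extensions of Polish modules, checking that the pushout is again an extension in $\mathbf{PolMod}(R)$, and then verifying that the section threaded through this construction realizes the sum of the two cocycles. By contrast, the cocycle identities for $(c_f,\rho_f)$, the verification that $\delta g=0$ forces $g$ to be an $R$-module homomorphism, and the identification of the canonical section of $X_{(c,\rho)}$ are routine; the only genuinely external ingredient besides \cite[Theorem 4.5]{bergfalk_definable_2024} is the automatic continuity of $\xi$ furnished by the preceding lemma, which is precisely what drives injectivity.
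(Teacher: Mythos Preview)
Your proposal is correct and follows the same approach as the paper: use the continuous section guaranteed by \cite[Theorem~4.5]{bergfalk_definable_2024} to produce a continuous cocycle, and use the explicit module $X_{(c,\rho)}$ for the converse inclusion. The paper's own proof is much terser---it only spells out the two inclusions for the image claim---whereas you correctly fill in the routine verifications of well-definedness, additivity via the Baer sum, and injectivity via the automatic-continuity lemma immediately preceding the statement; these are exactly the details the paper leaves implicit.
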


\begin{proof}
Suppose that $A\rightarrow B\rightarrow C$ is a short exact sequence of
pro-countable Polish modules. Then by \cite[Proposition 4.6]%
{bergfalk_definable_2024} the homomorphism $B\rightarrow C$ has a continuous
right inverse $t:C\rightarrow B$. This yields a continuous cocycle
representing an element of $\mathrm{Ext}_{\mathrm{c}}\left( C,A\right) $
that is the image of the element of $\mathrm{Ext}_{\mathrm{Yon}}\left(
C,A\right) $ represented by the given extension. Conversely, if $\left(
c,\rho \right) $ is a continuous cocycle on $C$ with values in $A$, then one
can define an extension $A\rightarrow B\rightarrow C$ associated with $%
\left( c,\rho \right) $ as above. This represents an element of $\mathrm{Ext}%
_{\mathrm{Yon}}\left( C,A\right) $ whose image in $\mathrm{Ext}_{\mathrm{c}%
}\left( C,A\right) $ is represented by the cocycle $\left( c,\rho \right) $.
\end{proof}

One can also easily prove directly the following:

\begin{lemma}
Suppose that $X,Y$ are Polish modules, and $A\rightarrow B\rightarrow C$ is
a short exact sequence of Polish modules.

\begin{enumerate}
\item If $X$ and $C$ are pro-countable, then we have an exact sequence of
abelian groups%
\begin{equation*}
0\rightarrow \mathrm{Hom}\left( X,A\right) \rightarrow \mathrm{Hom}\left(
X,B\right) \rightarrow \mathrm{Hom}\left( X,C\right) \rightarrow \mathrm{Ext}%
_{\mathrm{Yon}}\left( X,A\right) \rightarrow \mathrm{Ext}_{\mathrm{Yon}%
}\left( X,B\right) \rightarrow \mathrm{Ext}_{\mathrm{Yon}}\left( X,C\right)
\end{equation*}

\item If $A,B,C$ are pro-countable, then we have an exact sequence of
abelian groups%
\begin{equation*}
0\rightarrow \mathrm{Hom}\left( C,Y\right) \rightarrow \mathrm{Hom}\left(
B,Y\right) \rightarrow \mathrm{Hom}\left( A,Y\right) \rightarrow \mathrm{Ext}%
_{\mathrm{Yon}}\left( C,Y\right) \rightarrow \mathrm{Ext}_{\mathrm{Yon}%
}\left( B,Y\right) \rightarrow \mathrm{Ext}_{\mathrm{Yon}}\left( A,Y\right)
\end{equation*}
\end{enumerate}
\end{lemma}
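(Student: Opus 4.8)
The plan is to realise both sequences as the usual $\mathrm{Hom}$--$\mathrm{Ext}_{\mathrm{Yon}}$ long exact sequences attached to a short exact sequence, working throughout in the quasi-abelian (hence exact) category $\mathbf{PolMod}(R)$, and using the existence of continuous right inverses of admissible epimorphisms onto pro-countable Polish modules (cited above from \cite[Theorem 4.5, Proposition 4.6]{bergfalk_definable_2024}) to make the classical diagram chases go through with continuous maps and Polish intermediate objects. I would prove (1) in detail and then obtain (2) by the same method with the two arguments of $\mathrm{Hom}$ and $\mathrm{Ext}_{\mathrm{Yon}}$ interchanged and with pullback and pushout exchanging roles; there the continuous sections required are of admissible epimorphisms onto $A$, $B$, and $C$, which is why all three are assumed pro-countable.

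For (1), the first three terms form an exact sequence because $\mathrm{Hom}(X,-)$ preserves kernels of admissible arrows in any quasi-abelian category, so no hypothesis is needed there. The connecting map $\partial\colon \mathrm{Hom}(X,C)\to\mathrm{Ext}_{\mathrm{Yon}}(X,A)$ sends $f\colon X\to C$ to the class of the pulled-back extension $A\to B\times_C X\to X$: since $\mathbf{PolMod}(R)$ is quasi-abelian, the pullback of the admissible epimorphism $B\to C$ along $f$ is again an admissible epimorphism with kernel $A$, and $B\times_C X$ is a closed submodule of $B\times X$, hence Polish, so $\partial$ is well defined; its additivity follows from the compatibility of pullback with Baer sum. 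The last two arrows are pushout along $A\to B$ and along the composite $A\to B\to C$; these send admissible monomorphisms to admissible monomorphisms (pushout of a kernel along an arbitrary arrow is a kernel in a quasi-abelian category), preserve Polishness because one is pushing out along the closed embedding $A\to B$, and are group homomorphisms since pushout commutes with Baer sum.

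Exactness is then checked spot by spot by the standard chases, carried out with continuous maps. At $\mathrm{Hom}(X,C)$: $f\in\ker\partial$ iff the pullback extension splits iff $f$ factors through $B\to C$; one direction composes a splitting $X\to B\times_C X$ with the projection to $B$, the other uses the universal property of the pullback, and the splitting of a split kernel--cokernel pair is automatically a morphism of $\mathbf{PolMod}(R)$. At $\mathrm{Ext}_{\mathrm{Yon}}(X,A)$: an extension $A\to E\to X$ dies after pushout along $A\to B$ iff it equals $\partial f$ for some $f$; here $E\to B\to C$ kills the image of $A$, hence factors through the admissible epimorphism $E\to X$ (whose kernel is the image of $A$), yielding a continuous $f\colon X\to C$, and one verifies that the pullback of $A\to B\to C$ along this $f$ recovers $E$. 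At $\mathrm{Ext}_{\mathrm{Yon}}(X,B)$: an extension $B\to E\to X$ whose pushout along $B\to C$ is split lies in the image of $i_*$, the witnessing extension of $X$ by $A$ being obtained by pulling $E$ back along a section of its pushout produced from the splitting; this is the most bookkeeping-intensive step and I would organise it as a diagram of kernel--cokernel pairs. Together with the routine vanishing of successive composites, this gives the asserted exact sequence.

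The main obstacle is not the algebra---which is exactly the classical proof of the long exact sequence of Yoneda $\mathrm{Ext}$---but the requirement that every construction (lifts, splittings, factorizations, Baer sums) be performed with continuous homomorphisms and produce genuinely Polish modules. Pullbacks and pushouts remain within Polish modules by the remarks above (closed subspaces of products, respectively quotients by closed embeddings), but the chases that in an abelian category would choose arbitrary set-theoretic lifts must here be fed continuous lifts, and this is precisely where the pro-countability hypotheses enter, via the continuous right inverses of admissible epimorphisms onto pro-countable Polish modules. Once continuity is tracked at each step, the verification of exactness is identical to the classical abelian case, and the proof of (2) is the mirror image of the proof of (1).
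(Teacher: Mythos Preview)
Your approach is correct and is the standard Yoneda-$\mathrm{Ext}$ long exact sequence argument carried out in the exact (quasi-abelian) category $\mathbf{PolMod}(R)$; the paper itself omits the proof entirely, stating only that ``one can also easily prove directly the following''. One remark: your final paragraph overstates the role of the pro-countability hypotheses. The pullback/pushout chase you outline uses only universal properties of kernels, cokernels, pullbacks, and pushouts in $\mathbf{PolMod}(R)$, and all intermediate objects are automatically Polish (closed subspaces of products, quotients by closed submodules); no set-theoretic or continuous sections of surjections are actually invoked at any step, so the long exact sequence in fact holds without the pro-countability assumptions. Those hypotheses are present in the statement because pro-countable modules are the ambient setting of the paper (and are needed for the cocycle description of $\mathrm{Ext}_{\mathrm{Yon}}$ in the preceding lemma), not because the proof of this lemma requires them.
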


\subsection{The thick subcategory of pro-countable Polish modules}

Using the representation of extensions in terms of cocycles we can show that
pro-countable modules form a thick subcategory of the category of Polish
modules. The same proof as \cite[Lemma 3.37]{casarosa_homological_2024}
gives the following result.

\begin{lemma}
\label{Lemma:pro-projective}Suppose that $\mathcal{A}$ is a fully exact
subcategory of $\mathbf{Mod}\left( R\right) $ closed under submodules, and $%
\left( C_{i}\right) _{i\in \omega }$ is a sequence of projective objects in $%
\mathcal{A}$. Then, setting $C:=\prod_{i\in \omega }C_{i}$, we have that $%
\mathrm{Ext}_{\mathrm{Yon}}\left( C,A\right) =0$ for every pro-$\mathcal{A}$
module $A$.
\end{lemma}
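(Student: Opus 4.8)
The plan is to use the description of extensions of pro-countable Polish modules in terms of continuous cocycles established just above. Note first that $C=\prod_{i\in\omega}C_{i}$ is itself pro-$\mathcal{A}$, being an inverse limit of the tower $(\prod_{i<n}C_{i})_{n}$ of finite biproducts of objects of $\mathcal{A}$ (which lie in $\mathcal{A}$, as $\mathcal{A}$ is closed under extensions). Given an extension $A\rightarrow X\overset{\pi}{\rightarrow}C$ in $\mathbf{PolMod}(R)$ with $A$ pro-$\mathcal{A}$, I would choose a continuous right inverse $f:C\rightarrow X$ of $\pi$ with $f(0)=0$ (as in the cocycle construction above, using the continuous-selection theorem cited there), producing a continuous cocycle $(c,\rho)=\delta f$ on $C$ with values in $A$. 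It then suffices to show that $(c,\rho)$ is a continuous coboundary, i.e.\ that $f$ can be corrected to a continuous $R$-module homomorphism $C\rightarrow X$ splitting $\pi$.

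First I would reduce to the case where the coefficient module is a single object $D$ of $\mathcal{A}$. Write $A\cong\mathrm{lim}_{k}A^{(k)}$ as an epimorphic tower with $A^{(k)}\in\mathcal{A}$, projections $q_{k}:A\rightarrow A^{(k)}$, and bonding maps $\beta_{k}:A^{(k+1)}\rightarrow A^{(k)}$; then $(\ker q_{k})_{k}$ is a decreasing basis of clopen submodules of $A$ with trivial intersection, and $\ker\beta_{k}\in\mathcal{A}$ since $\mathcal{A}$ is closed under submodules. Assuming the lemma for coefficients in $\mathcal{A}$, I would build recursively continuous maps $\xi^{(k)}:C\rightarrow A^{(k)}$ with $\delta\xi^{(k)}=q_{k}\circ(c,\rho)$ and $\beta_{k}\circ\xi^{(k+1)}=\xi^{(k)}$: at the inductive step, for any continuous witness $\eta$ of $q_{k+1}\circ(c,\rho)$ the map $\beta_{k}\circ\eta-\xi^{(k)}$ is a continuous homomorphism $C\rightarrow A^{(k)}$, and it lifts through the epimorphism $\beta_{k}$ because the induced map $\mathrm{Hom}(C,A^{(k+1)})\rightarrow\mathrm{Hom}(C,A^{(k)})$ is onto, its cokernel embedding in $\mathrm{Ext}_{\mathrm{Yon}}(C,\ker\beta_{k})=0$ (the $6$-term exact sequence, plus the case of coefficients in $\mathcal{A}$). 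Then the compatible family $(\xi^{(k)})_{k}$ assembles into a continuous map $\xi:C\rightarrow\mathrm{lim}_{k}A^{(k)}=A$, continuous since $q_{k}\circ\xi=\xi^{(k)}$ for all $k$, and $\delta\xi=(c,\rho)$ since $q_{k}\circ\delta\xi=q_{k}\circ(c,\rho)$ for all $k$ and $\bigcap_{k}\ker q_{k}=0$.

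For coefficients $D\in\mathcal{A}$ I would exploit that $D$ is discrete and work with a telescoping decomposition. Let $\pi_{n}:C\rightarrow C$ be the $R$-module endomorphism truncating to the first $n$ coordinates (re-embedded by zeros); these idempotents satisfy $\pi_{0}=0$, $\pi_{n}\rightarrow\mathrm{id}_{C}$ pointwise, and have image the discrete module $C_{<n}:=\prod_{i<n}C_{i}\in\mathcal{A}$, which is a finite biproduct of projective objects of $\mathcal{A}$ hence projective in $\mathcal{A}$. Setting $g_{n}:=(c,\rho)\circ(\pi_{n+1},\pi_{n+1})-(c,\rho)\circ(\pi_{n},\pi_{n})$, the $g_{n}$ are continuous cocycles with $\sum_{n}g_{n}=(c,\rho)$ pointwise (partial sums telescope to $(c,\rho)\circ(\pi_{m},\pi_{m})$, which is eventually equal to $(c,\rho)$ at each point because $c,\rho$ are continuous and $D$ discrete). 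Each $g_{n}$ factors through $\pi_{n+1}$, hence comes from a cocycle on $C_{<n+1}=C_{<n}\oplus C_{n}$ vanishing on the first summand; since any short exact sequence $D\rightarrow Y\rightarrow C_{<n+1}$ lies in $\mathcal{A}$ and splits there with an automatically continuous section ($C_{<n+1}$ discrete and projective), we may write $g_{n}=\delta\xi_{n}$ for a continuous $\xi_{n}:C\rightarrow D$ factoring through $\pi_{n+1}$ and vanishing on $C_{<n}$. Once $\sum_{n}\xi_{n}$ is pointwise finite, $\xi:=\sum_{n}\xi_{n}$ is a well-defined continuous function with $\delta\xi=\sum_{n}\delta\xi_{n}=(c,\rho)$, finishing the proof.

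The hard part will be exactly the pointwise finiteness of $\sum_{n}\xi_{n}$: the witnesses $\xi_{n}$ are determined only up to a homomorphism $C_{n}\rightarrow D$, and a careless choice produces a sum that never stabilizes at a fixed $y\in C$. The plan is to make the choices coherently, using continuity of $c$ and $\rho$ into the discrete $D$ at the origin to confine, at stage $n$, the relevant obstruction to finitely many coordinates of $C$, and then to shrink the "support" of each $\xi_{n}$ so that a given $y$ is seen by only finitely many of them; this is where the product structure of $C$ is used essentially. In the general pro-$\mathcal{A}$ case one may alternatively bypass the reduction above and run the telescoping directly over the combined filtration of $C$ by coordinate blocks and of $A$ by $(\ker q_{k})_{k}$, arranging $\xi_{n}$ to take values in $\ker q_{k(n)}$ with $k(n)\rightarrow\infty$, so that $\sum_{n}\xi_{n}$ converges by completeness of $A$; the delicate bookkeeping of these supports is the core of the argument.
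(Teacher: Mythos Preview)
Your reduction from general pro-$\mathcal{A}$ coefficients to a single object $D\in\mathcal{A}$ is correct and essentially the same as the paper's: the paper filters $A$ by a basis of open submodules $V_{n}$ with $A/V_{n}\in\mathcal{A}$, applies the discrete case to produce $\xi_{n}:C\to V_{n}$ with $(c,\rho)-\delta(\xi_{0}+\cdots+\xi_{n})$ taking values in $V_{n+1}$, and sums $\xi=\sum_{n}\xi_{n}$ (convergent since $\xi_{n}\in V_{n}$). Your inverse-limit version is an equivalent repackaging.

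The gap is in the discrete case. You correctly set up the telescoping $g_{n}=(c,\rho)\circ(\pi_{n+1},\pi_{n+1})-(c,\rho)\circ(\pi_{n},\pi_{n})$ and observe that each $g_{n}$ is a coboundary $\delta\xi_{n}$ by projectivity of $C_{<n+1}$ in $\mathcal{A}$, but you yourself flag the pointwise finiteness of $\sum_{n}\xi_{n}$ as ``the hard part'' and do not resolve it. The plan to control the support of $\xi_{n}$ is not carried out, and the claim that $\xi_{n}$ can be chosen vanishing on $C_{<n}$, even if justified, does not by itself make $\sum_{n}\xi_{n}(y)$ eventually constant for a $y$ with infinitely many nonzero coordinates.

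The paper avoids this entirely by a much simpler observation: since $A=D$ is discrete, the quotient map $X\to C$ has discrete kernel, so $X$ has an open submodule $W$ with $W\cap A=0$; its image under $X\to C$ contains some tail $C_{[n_{0},\omega)}$, and $(\pi|_{W})^{-1}$ gives a continuous $R$-linear splitting there. Thus the cocycle is trivial on $C_{[n_{0},\omega)}$ outright. On the complementary \emph{finite} head $C_{[0,n_{0})}=\prod_{i<n_{0}}C_{i}$, projectivity in $\mathcal{A}$ gives a splitting. Since $C=C_{[0,n_{0})}\oplus C_{[n_{0},\omega)}$, the two splittings assemble to a splitting of the whole extension. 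No infinite sum, no support bookkeeping. The missing idea in your argument is precisely this use of continuity at the origin to truncate to a \emph{finite} product, after which only a single application of projectivity is needed.
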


\begin{proof}
Assume initially that $A$ is isomorphic to an object of $\mathcal{A}$. Let $%
\left( c,\rho \right) $ be a continuous cocycle representing an element of $%
\mathrm{Ext}_{\mathrm{Yon}}\left( C,A\right) $.\ For $n_{0}<n_{1}\leq \omega 
$ define 
\begin{equation*}
C_{[n_{0},n_{1})}=\prod_{n_{0}\leq i<n_{1}}C_{i}\text{,}
\end{equation*}%
which we identify with a closed submodule of $C$. Since $\left( c,\rho
\right) $ is continuous, there exists $n_{0}\in \omega $ such that $\left(
c,\rho \right) $ is trivial on $C_{[n_{0},\omega )}$. Since $C_{[0,n_{0})}$
is isomorphic to a projective object of $\mathcal{A}$, and $A$ is isomorphic
to an object of $\mathcal{A}$, we also have that $\left( c,\rho \right) $ is
trivial on $C_{[0,n_{0})}$. This easily implies that $\left( c,\rho \right) $
is trivial.

Consider now the general case. Let $\left( V_{n}\right) $ be a basis of open
submodules of $A$ such that $V_{0}=A$ and $A/V_{n}$ is isomorphic to an
object of $\mathcal{A}$ for every $n\in \omega $. Thus, there exists a
continuous function $\xi _{0}:C\rightarrow V_{0}$ such that 
\begin{equation*}
\left( c,\rho \right) \equiv \delta \xi _{0}\ \mathrm{mod}V_{1}\text{,}
\end{equation*}%
which means that the cocycle%
\begin{equation*}
\left( c_{1},\rho _{1}\right) =\left( c,\rho \right) -\delta \xi _{0}
\end{equation*}%
takes values in $V_{1}$. Again, there exists a continuous function $\xi
_{1}:C\rightarrow V_{1}$ such that 
\begin{equation*}
\left( c_{1},\rho _{1}\right) \equiv \delta \xi _{1}\ \mathrm{\mathrm{mod}}%
V_{2}\text{.}
\end{equation*}
Proceeding in this fashion one defines a sequence of continuous cocycles $%
\left( c_{n},\rho _{n}\right) $ on $C$ with values in $V_{n}$ with $\left(
c_{0},\rho _{0}\right) =\left( c,\rho \right) $ and continuous functions $%
\xi _{n}:C\rightarrow V_{n}$ such that 
\begin{equation*}
\left( c_{n+1},\rho _{n+1}\right) =\left( c_{n},\rho _{n}\right) -\delta \xi
_{n}\text{.}
\end{equation*}
Setting%
\begin{equation*}
\xi :=\sum_{n\in \omega }\xi _{n}:C\rightarrow A
\end{equation*}%
we obtain a continuous function such that $\left( c,\rho \right) =\delta \xi 
$.
\end{proof}

\begin{proposition}
\label{Proposition:pro-countable}Let $R$ be a countable domain.

\begin{enumerate}
\item If $\mathcal{A}$ is a fully exact subcategory of $\mathbf{Mod}\left(
R\right) $ with enough projectives and closed under submodules, then $%
\boldsymbol{\Pi }(\mathcal{A})$ is a fully exact subcategory of $\mathbf{%
PolMod}\left( R\right) $ with enough projectives and closed under closed
submodules;

\item If $\mathcal{A}$ is a thick subcategory of $\mathbf{Mod}\left(
R\right) $ with enough projectives, then $\boldsymbol{\Pi }\left( \mathcal{A}%
\right) $ is a thick subcategory of $\mathbf{PolMod}\left( R\right) $ with
enough projectives.
\end{enumerate}
\end{proposition}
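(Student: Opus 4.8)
The plan is to check, for part~(1), the three conditions in the definition of a fully exact subcategory with enough projectives that is closed under closed submodules, and then to obtain part~(2) from part~(1) together with Lemma~\ref{Lemma:quotient-prodiscrete}. Closure under closed submodules is immediate from the characterization recalled just before the statement: since $\mathcal{A}$ is closed under submodules, a Polish module is pro-$\mathcal{A}$ exactly when it embeds as a closed submodule of some countable product $\prod_k A_k$ with each $A_k$ in $\mathcal{A}$, and a closed submodule of such a module is again of this form. I would also record the resulting closure of $\boldsymbol{\Pi}(\mathcal{A})$ under countable products, hence under countable inverse limits (which embed as closed submodules of the product).

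For enough projectives, Lemma~\ref{Lemma:enough-projectives} gives, for each pro-$\mathcal{A}$ module $C$, a surjective continuous homomorphism $P\to C$ with $P$ a countable product of projective objects of $\mathcal{A}$; this is an admissible epic in $\mathbf{PolMod}(R)$ since its image is all of $C$, hence closed. That $P$ is projective in $\boldsymbol{\Pi}(\mathcal{A})$ follows from Lemma~\ref{Lemma:pro-projective}, which yields $\mathrm{Ext}_{\mathrm{Yon}}(P,A)=0$ for every pro-$\mathcal{A}$ module $A$: feeding an arbitrary short exact sequence of $\boldsymbol{\Pi}(\mathcal{A})$ into the six-term exact sequence relating $\mathrm{Hom}$ and $\mathrm{Ext}_{\mathrm{Yon}}$ then shows that $\mathrm{Hom}(P,-)$ carries admissible epics to surjections.

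The crux is closure under extensions. Given $0\to A\to B\to C\to 0$ in $\mathbf{PolMod}(R)$ with $A,C$ pro-$\mathcal{A}$, I would reduce to Lemma~\ref{Lemma:extension-prodiscrete1} in two steps. First, embed $A$ as a closed submodule of $\prod_k A_k$ and push the sequence out along this embedding to obtain $0\to\prod_k A_k\to B'\to C\to 0$ with $B\hookrightarrow B'$ a closed submodule; modding out by the subgroups $\prod_{k>j}A_k$ and using completeness of $B'$, one identifies $B'$ with the inverse limit $\mathrm{lim}_j B'_j$, where $0\to\prod_{k\le j}A_k\to B'_j\to C\to 0$. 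Since countable inverse limits of pro-$\mathcal{A}$ modules are pro-$\mathcal{A}$ and $B$ is a closed submodule of $B'$, this reduces the problem to the case in which the kernel lies in $\mathcal{A}$. Second, writing $C=\mathrm{lim}_m C^{(m)}$ as an epimorphic tower in $\mathcal{A}$ and letting $M_m$ be the preimage in $B$ of $\ker(C\to C^{(m)})$, the short exact sequences $0\to M_m\to B\to C^{(m)}\to 0$ with $C^{(m)}$ in $\mathcal{A}$ let Lemma~\ref{Lemma:extension-prodiscrete1} conclude, once $M_m$ is known to be pro-$\mathcal{A}$. Proving that last fact --- that the extension $0\to A\to M_m\to N_m\to 0$ of the pro-$\mathcal{A}$ module $N_m=\ker(C\to C^{(m)})$ by the object $A$ of $\mathcal{A}$ is pro-$\mathcal{A}$ --- is the step I expect to be the main obstacle; I would attack it by pulling the surjection $B\to C$ back along a surjection $q\colon P\to C$ with $P$ a countable product of projectives of $\mathcal{A}$, splitting the pulled-back extension $0\to A\to E\to P\to 0$ using $\mathrm{Ext}_{\mathrm{Yon}}(P,A)=0$ (Lemma~\ref{Lemma:pro-projective}) so that $E\cong A\oplus P$ is pro-$\mathcal{A}$, and then presenting $B$ as the quotient $E/\ker q$ of a pro-$\mathcal{A}$ module by a closed submodule and re-running the tower bookkeeping of the first step --- the delicate point being to ensure that the finite-stage quotients that appear stay inside $\mathcal{A}$, not merely inside $\mathbf{Mod}(R)$.

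Finally, for part~(2), assume $\mathcal{A}$ is a thick abelian subcategory of $\mathbf{Mod}(R)$ with enough projectives; it is then in particular fully exact and closed under submodules, so part~(1) already gives that $\boldsymbol{\Pi}(\mathcal{A})$ is a fully exact subcategory of $\mathbf{PolMod}(R)$ with enough projectives and closed under closed submodules. Lemma~\ref{Lemma:quotient-prodiscrete} adds closure under quotients by closed submodules; combined with closure under closed submodules this shows that kernels, cokernels, and pushouts of morphisms of $\boldsymbol{\Pi}(\mathcal{A})$ may be computed in $\mathbf{PolMod}(R)$ and remain in $\boldsymbol{\Pi}(\mathcal{A})$. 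From this I would deduce that $\boldsymbol{\Pi}(\mathcal{A})$ is quasi-abelian and that the inclusion $\boldsymbol{\Pi}(\mathcal{A})\to\mathbf{PolMod}(R)$ is exact, finitely continuous, and finitely cocontinuous, i.e.\ that $\boldsymbol{\Pi}(\mathcal{A})$ is a thick subcategory; enough projectives is already established in part~(1).
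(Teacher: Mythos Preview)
The paper's argument is exactly your final pullback move, applied immediately rather than after a detour: pull the sequence $0\to A\to X\to C\to 0$ back along a surjection $P\to C$ with $P$ a countable product of projectives of $\mathcal{A}$ (produced by Lemma~\ref{Lemma:enough-projectives}), split the resulting $0\to A\to Y\to P\to 0$ via Lemma~\ref{Lemma:pro-projective} so that $Y\cong A\oplus P$ is pro-$\mathcal{A}$, and then observe that $X$ is the quotient of $Y$ by the closed submodule $\ker(P\to C)$; the paper finishes by invoking Lemma~\ref{Lemma:quotient-prodiscrete}. Your pushout Step~1 is unnecessary and does not actually simplify the problem: it lands you on extensions with kernel in $\mathcal{A}$ and cokernel merely pro-$\mathcal{A}$, which is the \emph{opposite} configuration from what Lemma~\ref{Lemma:extension-prodiscrete1} treats, so the circularity you encounter in Step~2 is intrinsic to that route rather than an artifact of execution. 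The pullback is what breaks the circle, and the paper uses it from the outset.

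Your caution about the final quotient step under the hypotheses of part~(1) is well-placed. Lemma~\ref{Lemma:quotient-prodiscrete} is stated only for \emph{thick} $\mathcal{A}$, so the paper's written proof cleanly settles part~(2); under the weaker hypotheses of part~(1) (closure under submodules and extensions but not quotients), the fact that the particular quotient $Y/\ker(P\to C)$ remains pro-$\mathcal{A}$ requires separate justification, and neither your outline nor the paper's text spells this out. Your instinct that this is ``the delicate point'' is therefore correct; the paper simply does not isolate it.
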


\begin{proof}
In view of Lemma \ref{Lemma:quotient-prodiscrete}, it remains to prove that
if $A\rightarrow X\rightarrow C$ is a short exact sequence of Polish modules
where $A$ and $C$ are pro-$\mathcal{A}$, then $X$ is pro-$\mathcal{A}$. As $%
\mathcal{A}$ has enough projectives, it is easy to see that there exists a
continuous surjective homomorphism $P\rightarrow C$ where $P$ is a product
of projective objects of $\mathcal{A}$. By Lemma \ref%
{Lemma:enough-projectives}, $P$ is projective in $\boldsymbol{\Pi }\left( 
\mathcal{A}\right) $. Via pullback we obtain a commuting diagram%
\begin{equation*}
\begin{array}{ccccc}
A & \rightarrow & Y & \rightarrow & P \\ 
\downarrow &  & \downarrow &  & \downarrow \\ 
A & \rightarrow & X & \rightarrow & C%
\end{array}%
\end{equation*}%
By Lemma \ref{Lemma:pro-projective}, $Y$ is isomorphic to $A\oplus P$. Thus, 
$Y$ is pro-$\mathcal{A}$, and hence $X$ is pro-$\mathcal{A}$ by Lemma \ref%
{Lemma:quotient-prodiscrete}, being isomorphic to a quotient of $Y$ by a
closed submodule.
\end{proof}

\subsection{Projective pro-countable Polish modules}

We now characterize the projective objects in $\mathbf{\Pi }\left( \mathcal{A%
}\right) $.

\begin{lemma}
\label{Lemma:submodule-projective}Let $\mathcal{A}$ be a \emph{hereditary }%
fully exact subcategory of $\mathbf{Mod}\left( R\right) $ with enough
projectives. Then $\mathbf{\Pi }\left( \mathcal{A}\right) $ is hereditary,
and its projective objects are precisely the products of projective objects
of $\mathcal{A}$.
\end{lemma}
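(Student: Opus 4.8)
The plan is to prove the two assertions separately. That $\boldsymbol{\Pi}(\mathcal{A})$ is hereditary was already observed in Section~\ref{Subsection:towers}, as was the fact that products of projective objects of $\mathcal{A}$ are projective in $\boldsymbol{\Pi}(\mathcal{A})$; combined with Lemma~\ref{Lemma:enough-projectives}, this shows that $\boldsymbol{\Pi}(\mathcal{A})$ has enough projectives and that the projective covers can be taken to be products of projective objects of $\mathcal{A}$. It therefore remains only to prove the reverse containment: every projective object $X$ of $\boldsymbol{\Pi}(\mathcal{A})$ is isomorphic to a product of projective objects of $\mathcal{A}$. Since $X$ is projective and admits a strict epimorphism $P\to X$ with $P=\prod_{i\in\omega}P_{i}$ and each $P_{i}$ projective in $\mathcal{A}$, this epimorphism splits, so $X$ is a direct summand of $P$ in $\boldsymbol{\Pi}(\mathcal{A})$. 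Hence it suffices to prove that a direct summand in $\boldsymbol{\Pi}(\mathcal{A})$ of a product of projective objects of $\mathcal{A}$ is again isomorphic to such a product. Note that no Eilenberg swindle is available here, since $\boldsymbol{\Pi}(\mathcal{A})$ has no infinite coproducts, so a genuinely structural argument is needed.

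To carry this out, realize $P$ as the limit of the tower $(P^{(n)})_{n\in\omega}$ with $P^{(n)}=\prod_{i\le n}P_{i}$, whose bonding maps $b_{n}\colon P^{(n+1)}\to P^{(n)}$ are split epimorphisms; fix sections $c_{n}$ of the $b_{n}$. Let $e\in\mathrm{End}_{\boldsymbol{\Pi}(\mathcal{A})}(P)$ be the idempotent with image $X$. Since every continuous homomorphism from $P$ to a discrete module factors through some $P^{(n)}$, one has $\mathrm{Hom}_{\boldsymbol{\Pi}(\mathcal{A})}(P,P^{(n)})=\mathrm{colim}_{m}\mathrm{Hom}_{\mathcal{A}}(P^{(m)},P^{(n)})$, and a cofinality argument then lets us pass to a cofinal subtower of $(P^{(n)})$---whose terms, and the kernels of whose bonding maps, are finite products of the $P_{i}$ and hence still projective in $\mathcal{A}$---along which $e$ is induced by a tower $(\epsilon_{n})$ of genuine idempotents $\epsilon_{n}\in\mathrm{End}_{\mathcal{A}}(P^{(n)})$ commuting with the bonding maps, i.e.\ $b_{n}\epsilon_{n+1}=\epsilon_{n}b_{n}$. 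This step is the heart of the matter; it is the tower analogue of the fact that in the pro-category of an idempotent-complete additive category every idempotent is, up to cofinal reindexing, induced by a tower of idempotents, and it uses that the class of projective objects of $\mathcal{A}$ is additive and idempotent-complete.

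Granting the $\epsilon_{n}$, the argument finishes by routine manipulation of split short exact sequences in $\mathcal{A}$. Each $P^{(n)}\cong\mathrm{Im}(\epsilon_{n})\oplus\mathrm{Im}(1-\epsilon_{n})$ with both summands projective in $\mathcal{A}$, and since $b_{n}\epsilon_{n+1}=\epsilon_{n}b_{n}$ the decompositions are compatible: $b_{n}$ is block-diagonal and restricts to a split epimorphism $\mathrm{Im}(\epsilon_{n+1})\to\mathrm{Im}(\epsilon_{n})$ with section $\epsilon_{n+1}c_{n}$, because $b_{n}\epsilon_{n+1}c_{n}=\epsilon_{n}b_{n}c_{n}=\epsilon_{n}$. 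Thus $(\mathrm{Im}(\epsilon_{n}))_{n}$ is a split tower of projective objects of $\mathcal{A}$ whose limit computes $\mathrm{Im}(e)=X$ (the decompositions being compatible across the tower), so $X\cong\mathrm{Im}(\epsilon_{0})\oplus\prod_{n\ge 0}\mathrm{Ker}\!\left(\mathrm{Im}(\epsilon_{n+1})\to\mathrm{Im}(\epsilon_{n})\right)$ is a product of projective objects of $\mathcal{A}$, as required. The main obstacle, as indicated, is the reindexing step in the second paragraph: the simultaneous extraction of a cofinal subtower along which both the commutation relations and the idempotency of the representing morphisms hold on the nose; everything else is bookkeeping with split exact sequences in $\mathcal{A}$.
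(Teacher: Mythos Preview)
Your approach contains a genuine gap at exactly the point you flag as ``the main obstacle'': the claim that the idempotent $e$ on $P$ can be represented, after passing to a cofinal subtower, by a tower of \emph{levelwise} idempotents $\epsilon_n$ with $b_n\epsilon_{n+1}=\epsilon_n b_n$. You assert this as ``the tower analogue'' of idempotent-completeness of pro-categories, but idempotent-completeness only guarantees that $e$ \emph{splits}, not that the splitting is witnessed by compatible levelwise idempotents. Representing $e$ by maps $e_n\colon P^{(m_n)}\to P^{(n)}$, the relation $e^2=e$ only gives $e_n e_{m_n} b=e_n b$ after composing with a further bonding map $b$; there is no evident way to convert this into honest idempotents at each level, and you do not supply one.

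The paper sidesteps this difficulty entirely by exploiting the hypothesis you never use: that $\mathcal{A}$ is \emph{hereditary}. Once $X$ is a direct summand of $P=\prod_i P_i$, it is in particular a closed submodule. Take the open submodules $N_k=\{(x_i)\in P:x_i=0\text{ for }i\le k\}$; then $X\cong\lim_k X/(N_k\cap X)$, and each $X/(N_k\cap X)$ embeds as a submodule of $P/N_k\cong P_0\oplus\cdots\oplus P_k$, which is projective in $\mathcal{A}$. Since $\mathcal{A}$ is hereditary, each $X/(N_k\cap X)$ is itself projective, so the epimorphic tower $(X/(N_k\cap X))_k$ has split bonding maps and $X$ is a product of projectives. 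No idempotent lifting is needed. Moreover, this argument applies to \emph{any} closed submodule $L\subseteq P$, not just direct summands, so it simultaneously proves that $\boldsymbol{\Pi}(\mathcal{A})$ is hereditary---which you invoked from Section~\ref{Subsection:towers} as already known, but which is in fact established here.
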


\begin{proof}
Let $P$ be a projective pro-$\mathcal{A}$ module. Then we have that there
exists a surjective continuous homomorphism $Q\rightarrow P$ where 
\begin{equation*}
Q=\prod_{k\in \omega }Q_{k}
\end{equation*}%
where $Q_{k}$ is a projective object of $\mathcal{A}$. Thus, $P$ is a direct
summand of $Q$. In order to prove that $\mathcal{A}$ is hereditary, it
suffices to prove that if $L$ is a closed submodule of $Q$, then $L$ is a
product of projective objects of $\mathcal{A}$. For $k\in \omega $ define%
\begin{equation*}
N_{k}=\left\{ \left( x_{i}\right) \in Q:\forall i\leq k,x_{i}=0\right\} 
\text{.}
\end{equation*}%
Then we have that $N_{k}$ is an open submodule of $Q$, and $\left(
N_{k}\right) $ is a basis of zero neighborhoods of $Q$. Thus, we have that $%
\left( N_{k}\cap L\right) $ is a basis of zero neighborhoods of $L$. Hence,
we have $L\cong \mathrm{\mathrm{lim}}_{k}(L\left/ \left( N_{k}\cap L\right)
\right. )$. For $k\in \omega $, $L\left/ \left( N_{k}\cap L\right) \right. $
is isomorphic to a closed submodule of $Q\left/ N_{k}\right. \cong
Q_{0}\oplus \cdots \oplus Q_{k}$, and hence a projective object of $\mathcal{%
A}$. Thus, $L$ is isomorphic to a product of projective objects of $\mathcal{%
A}$.
\end{proof}

\subsection{Pro-flat modules}

We let $\mathbf{Flat}\left( R\right) $ be the category of
countably-presented flat modules, and $\mathbf{FinFlat}\left( R\right) $ the
category of finite flat modules.

\begin{lemma}
\label{Lemma:characterize-proflat}Suppose that $R$ is a Pr\"{u}fer domain.
Let $M $ be a pro-countable module. The following assertions are equivalent:

\begin{enumerate}
\item $M$ is torsion-free and has a basis of zero neighborhoods consisting
of pure open submodules;

\item $M$ is pro-countableflat;
\end{enumerate}
\end{lemma}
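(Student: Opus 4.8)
The plan is to prove the two implications separately; the key tool throughout is the interplay between purity and flatness over a Pr\"{u}fer domain recorded in Lemma~\ref{Lemma:pure-exact-extension} and Proposition~\ref{Proposition:characterize-prufer}.

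For $(2)\Rightarrow(1)$, write $M$ as the inverse limit $\mathrm{lim}_{n}M_{n}$ of a tower of countably presented flat modules. Then $M$ is isomorphic to a closed submodule of $\prod_{n}M_{n}$, and over a Pr\"{u}fer domain a flat module is torsion-free while a product and a submodule of torsion-free modules are torsion-free, so $M$ is torsion-free. The submodules $N_{n}:=\ker(M\rightarrow M_{n})$ form a basis of open submodules of $M$ for the inverse-limit topology, and $M/N_{n}$ is isomorphic to a submodule of the flat module $M_{n}$, hence torsion-free, hence flat by Proposition~\ref{Proposition:characterize-prufer}. By the third item of Lemma~\ref{Lemma:pure-exact-extension}, $N_{n}$ is therefore pure in $M$, so $\{N_{n}\}$ is the desired basis of pure open submodules.

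For $(1)\Rightarrow(2)$, let $\{U_{n}\}$ be a basis of pure open submodules of the torsion-free module $M$; passing to a subsequence we may assume $U_{0}\supseteq U_{1}\supseteq\cdots$ with $\bigcap_{n}U_{n}=0$ (each term still being one of the original submodules, hence pure open). Since $M$ is torsion-free it is flat by Proposition~\ref{Proposition:characterize-prufer}, so by the second item of Lemma~\ref{Lemma:pure-exact-extension} each $M/U_{n}$ is flat, in particular torsion-free. Each $M/U_{n}$ is also countable: since $M$ is pro-countable, $U_{n}$ contains $\ker(M\rightarrow A)$ for some countable module $A$ appearing in a defining tower of $M$, so $M/U_{n}$ is a quotient of the countable image of $M$ in $A$. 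Being a countable torsion-free module over a Pr\"{u}fer domain, $M/U_{n}$ is the filtered union of its finitely generated submodules, each of which is finitely generated torsion-free, hence projective, hence finitely presented, so $M/U_{n}$ is countably presented. Finally, the canonical map $M\rightarrow\mathrm{lim}_{n}(M/U_{n})$ is a continuous homomorphism of Polish modules that is injective (as $\bigcap_{n}U_{n}=0$) and surjective (since $M$, being Polish, is Raikov complete, a coherent sequence of cosets lifts to a Cauchy sequence in $M$ converging to a preimage); a continuous bijective homomorphism of Polish modules is an isomorphism, so $M$ is pro-countableflat.

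The step I expect to be the main obstacle is the $(1)\Rightarrow(2)$ direction, and within it the verification that each quotient $M/U_{n}$ is at once flat and countably presented: flatness must be extracted from purity together with the Pr\"{u}fer hypothesis, while countable presentability rests on the structure theory of finitely generated modules over a Pr\"{u}fer domain (finitely generated torsion-free implies projective implies finitely presented). Once this is in place, recovering $M$ as $\mathrm{lim}_{n}(M/U_{n})$ is a routine completeness argument.
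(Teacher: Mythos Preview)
Your proof is correct and follows essentially the same approach as the paper. Both directions hinge on the same use of Lemma~\ref{Lemma:pure-exact-extension} and Proposition~\ref{Proposition:characterize-prufer}; you have simply spelled out in more detail the steps the paper leaves implicit (that $M/U_{n}$ is countable because $U_{n}$ contains a basic open submodule of the pro-countable $M$, that a countable flat module over a Pr\"{u}fer domain is automatically countably presented, and that $M$ coincides with $\mathrm{lim}_{n}M/U_{n}$ by completeness).
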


\begin{proof}
(1)$\Rightarrow $(2) Since $R$ is a Pr\"{u}fer domain, $M$ is flat. Suppose
that $U\subseteq M$ is a pure open submodule. Then $M/U$ is a countable flat
module by Lemma \ref{Lemma:pure-exact-extension}.{}

(2)$\Rightarrow $(1) If $M=\mathrm{lim}_{k}M_{k}$ where for every $k\in
\omega $, $M_{k}$ is flat, then $M$ is torsion-free, whence flat.
Furthermore, for every $k\in \omega $, $\mathrm{\mathrm{Ker}}\left(
M\rightarrow M_{k}\right) $ is a pure open submodule by Lemma \ref%
{Lemma:pure-exact-extension}, since $M_{k}$ and $M$ are flat.
\end{proof}

\begin{lemma}
\label{Lemma:pure-proflat}Suppose that $R$ is a Pr\"{u}fer domain. Let $M$
be a pro-countableflat module and $N\subseteq M$ be a closed submodule. The
following assertions are equivalent:

\begin{enumerate}
\item $N$ is pure in $M$;

\item $U\cap N$ is pure in $N$ for every pure open submodule $U$ of $M$.
\end{enumerate}

Furthermore, in this case $N$ is pro-countableflat.
\end{lemma}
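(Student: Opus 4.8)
The plan is to establish the two implications after some standard reductions. By Lemma~\ref{Lemma:characterize-proflat} I would fix a decreasing sequence $(U_{k})_{k\in\omega}$ of pure open submodules of $M$ forming a basis of zero neighborhoods; since $R$ is a Pr\"{u}fer domain and $M$ is pro-countableflat, $M$ is torsion-free and hence flat, each $M/U_{k}$ is a countable flat module, and by Proposition~\ref{Proposition:characterize-prufer} purity of a submodule of any of $M$, $N$, $M/U_{k}$ can be checked by relative divisibility, i.e.\ by the one-element condition $rB\cap A=rA$. Note also that $N$, being a submodule of the torsion-free module $M$, is itself torsion-free, hence flat. For $(1)\Rightarrow(2)$, given a pure open submodule $U$ of $M$ I would observe that $N\cap U$ is an intersection of relatively divisible submodules of the torsion-free module $M$ (using that $N$ is pure, hence relatively divisible, in $M$); since an intersection of relatively divisible submodules of a torsion-free module is again relatively divisible, $N\cap U$ is pure in $M$. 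As $N\cap U\subseteq N$ with both $N\cap U$ and $N$ pure in $M$ and $N$ flat, relative divisibility of $N\cap U$ inside $N$ is then immediate, so $N\cap U$ is pure in $N$.

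For the converse $(2)\Rightarrow(1)$, which I expect to be the crux, I would reduce to showing that $M/N$ is torsion-free; since $M$ is torsion-free this is equivalent to relative divisibility of $N$ in $M$, i.e.\ to the implication $rm\in N\Rightarrow m\in N$ for $r\in R\setminus\{0\}$ and $m\in M$. As $N$ is closed and $(U_{k})$ is a basis of zero neighborhoods, $N=\bigcap_{k}(N+U_{k})$, so it suffices to show $m\in N+U_{k}$ for every $k$; equivalently, that the image $(N+U_{k})/U_{k}$ of $N$ in the countable flat module $M/U_{k}$ contains $m+U_{k}$, given that it contains $rm+U_{k}$. The idea is to extract this from hypothesis~(2): the fact that $U_{j}\cap N$ is pure in $N$ for all $j$ in the tail $\{\,j:j\ge k\,\}$ should let one solve the divisibility problem modulo $U_{k}$ for each $k$, and then, since the successive corrections can be taken in $U_{k}$ with $k\to\infty$, assemble a convergent sequence of elements of $N$ whose limit is $m$. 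The genuine obstacle is converting the intrinsic statement ``$U_{j}\cap N$ is pure in $N$'' into relative divisibility of the images $(N+U_{k})/U_{k}$ in $M/U_{k}$; I expect this to require passing to a cleverly chosen cofinal subfamily of $(U_{k})$, for instance one adapted to the pure hull of a fixed lift of $m$ in $M$ (which has rank at most $1$ by Proposition~\ref{Proposition:characterize-prufer}), together with bookkeeping ensuring that all corrections converge simultaneously.

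For the last assertion I would argue that it follows at once from the equivalence: once $(1)$ and $(2)$ hold, the submodules $U_{k}\cap N$ are open in $N$ (as $U_{k}$ is open in $M$ and $N$ carries the subspace topology), form a basis of zero neighborhoods of $N$, and are pure in $N$ by $(2)$; since $N$ is torsion-free, Lemma~\ref{Lemma:characterize-proflat} applied to $N$ yields that $N$ is pro-countableflat.
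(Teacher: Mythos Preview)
Your argument for $(1)\Rightarrow(2)$ and for the final assertion is fine. The obstacle you sense in $(2)\Rightarrow(1)$, however, is not a technicality to be finessed: with $(2)$ read literally (``$U\cap N$ is pure in $N$''), the implication $(2)\Rightarrow(1)$ is \emph{false}. In fact $(2)$ holds automatically for \emph{every} closed submodule $N$ of a pro-countableflat $M$: for any pure open $U\subseteq M$ one has $N/(U\cap N)\cong (N+U)/U\subseteq M/U$, and since $M/U$ is torsion-free so is $(N+U)/U$, hence flat over the Pr\"{u}fer domain $R$; as $N$ is also flat, Lemma~\ref{Lemma:pure-exact-extension} gives that $U\cap N$ is pure in $N$. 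A concrete failure of $(1)$ is $R=\mathbb{Z}$, $M=\prod_{n\in\omega}\mathbb{Z}$, $N=2\mathbb{Z}\times\prod_{n\geq 1}\mathbb{Z}$: here $M/N\cong\mathbb{Z}/2\mathbb{Z}$ is not flat, so $N$ is not pure in $M$, yet the argument just given shows $(2)$ holds. No approximation-and-convergence scheme can rescue this direction.

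The paper's own proof indicates the intended reading of $(2)$. Its computation for $(1)\Rightarrow(2)$ starts from $x\in rU\cap N$ and concludes $x\in r(U\cap N)$, which is relative divisibility of $U\cap N$ in $U$ (equivalently in $M$), not in $N$. With $(2)$ corrected to ``$U\cap N$ is pure in $U$'', the implication $(2)\Rightarrow(1)$ really is obvious: $M$ is itself a pure open submodule of $M$, so taking $U=M$ gives $M\cap N=N$ pure in $M$, which is $(1)$. So replace your plan for $(2)\Rightarrow(1)$ by this one-line observation once the statement is repaired. (As a byproduct of the first paragraph, the ``Furthermore'' clause---that $N$ is pro-countableflat---actually holds for \emph{any} closed submodule $N$, pure or not.)
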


\begin{proof}
We have that $M$ is flat, whence $N$ is also flat. If (2) holds, then $N$
has a basis of zero neighborhoods consisting of pure open submodules.

(1)$\Rightarrow $(2) Let $U$ be a pure open submodule of $M$. Then $U\cap N$
is pure in $N$. Indeed, if $x\in rU\cap N$, then $x=rz$ for some $z\in U$.
Since $x\in N$ and $N$ is pure in $M$, necessarily $z\in U\cap N$.

(2)$\Rightarrow $(1) Obvious.
\end{proof}

\begin{theorem}
\label{Theorem:hd-2}Suppose that $R$ is a countable domain.

\begin{enumerate}
\item The category $\boldsymbol{\Pi }\left( \mathbf{Mod}\left( R\right)
\right) $ of pro-countable modules is a thick subcategory of $\mathbf{PolMod}%
\left( R\right) $ with enough projectives.

\item If $R$ is Pr\"{u}fer, then the category $\boldsymbol{\Pi }\left( 
\mathbf{Mod}_{\aleph _{0}}\left( R\right) \right) $ is a thick subcategory
of $\boldsymbol{\Pi }\left( \mathbf{Mod}\left( R\right) \right) $ with
enough projectives;

\item If $R$ is Dedekind, then $\mathbf{\Pi }\left( \mathbf{Mod}\left(
R\right) \right) $ is hereditary, and its projective objects are precisely
the countable products of countable direct sums of ideals of $R$;

\item If $R$ is Pr\"{u}fer, then the category $\mathbf{\Pi }\left( \mathbf{%
Flat}\left( R\right) \right) $ is a hereditary quasi-abelian category with
enough projectives, which is a fully exact subcategory of $\mathbf{\Pi }%
\left( \mathbf{Mod}\left( R\right) \right) $ closed under taking closed
submodules, and its projective objects are precisely the countable products
of countable direct sums of finite ideals of $R$;

\item If $R$ is Pr\"{u}fer, then the category $\mathbf{\Pi }\left( \mathbf{%
FinFlat}\left( R\right) \right) $ is a thick subcategory of $\mathbf{\Pi }%
\left( \mathbf{Flat}\left( R\right) \right) $ that is hereditary with enough
projectives, whose projective objects are precisely the countable products
of finite sums of finite ideals of $R$.
\end{enumerate}
\end{theorem}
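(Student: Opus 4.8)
The plan is to derive all five statements from three facts about the pro-construction available at this point: Proposition~\ref{Proposition:pro-countable}, which transports ``(thick or fully exact) with enough projectives, and closed under closed submodules'' from a subcategory $\mathcal{A}\subseteq\mathbf{Mod}(R)$ to $\boldsymbol{\Pi}(\mathcal{A})$; Lemma~\ref{Lemma:submodule-projective}, which, when $\mathcal{A}$ is moreover hereditary, shows that $\boldsymbol{\Pi}(\mathcal{A})$ is hereditary with projective objects exactly the products of projective objects of $\mathcal{A}$; and the observation that $\boldsymbol{\Pi}$ of a quasi-abelian category is quasi-abelian. So the work is to supply, for each of $\mathbf{Mod}(R)$, $\mathbf{Mod}_{\aleph_{0}}(R)$, $\mathbf{Flat}(R)$, $\mathbf{FinFlat}(R)$, the corresponding exactness/heredity data together with an explicit description of the projectives. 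Parts (1) and (2) are bookkeeping: $\mathbf{Mod}(R)$ is trivially thick in itself and has enough projectives (a countable free module surjects onto any countable module), which gives (1) via Proposition~\ref{Proposition:pro-countable}(2); and since a Pr\"ufer domain is coherent, $\mathbf{Mod}_{\aleph_{0}}(R)$ is, as recalled earlier, a thick abelian subcategory of $\mathbf{Mod}(R)$ with enough projectives, so $\boldsymbol{\Pi}(\mathbf{Mod}_{\aleph_{0}}(R))$ is thick in $\mathbf{PolMod}(R)$, hence thick in the fully exact subcategory $\boldsymbol{\Pi}(\mathbf{Mod}(R))$ that contains it, which is (2).

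For part (3), with $R$ Dedekind, Proposition~\ref{Proposition:characterize-Dedekind} gives that every submodule of a countable free module is projective, so first syzygies in $\mathbf{Mod}(R)$ are projective and $\mathbf{Mod}(R)$ is a hereditary exact category with enough projectives. Lemma~\ref{Lemma:submodule-projective} then makes $\boldsymbol{\Pi}(\mathbf{Mod}(R))$ hereditary with projectives the countable products of projective $R$-modules; over a Dedekind domain a countable projective $R$-module is a countable direct sum of ideals (finitely generated ones by Steinitz, infinitely generated ones being free), which is the stated description.

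The new content lies in parts (4) and (5). Over a Pr\"ufer domain flat $=$ torsion-free (Proposition~\ref{Proposition:characterize-prufer}), so $\mathbf{Flat}(R)$ is the torsion-free class of a torsion pair in $\mathbf{Mod}(R)$, hence a fully exact quasi-abelian subcategory closed under submodules; it has enough projectives because $R^{(\omega)}\twoheadrightarrow C$ is an admissible epimorphism of $\mathbf{Flat}(R)$ whenever $C$ is flat (its kernel is torsion-free), and its projective objects are exactly the projective $R$-modules (projectivity in $\mathbf{Flat}(R)$ splits $R^{(\omega)}\twoheadrightarrow P$), i.e.\ the countable direct sums of finite ideals by Proposition~\ref{Proposition:characterize-prufer}(6). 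The essential step is that $\mathbf{Flat}(R)$ is hereditary: a countable flat module $C$ is the colimit over $\omega$ of its finitely generated submodules $C_{n}$, each of which is finite and torsion-free, hence finite projective by Proposition~\ref{Proposition:characterize-prufer}(3), so the telescope sequence $0\to\bigoplus_{n}C_{n}\to\bigoplus_{n}C_{n}\to C\to 0$ is a projective resolution of length one, and Schanuel's lemma (valid in any exact category with enough projectives) then forces the kernel of every admissible epimorphism onto $C$ from a projective to be projective. Granting this, Proposition~\ref{Proposition:pro-countable}(1) and Lemma~\ref{Lemma:submodule-projective} give part (4), $\boldsymbol{\Pi}(\mathbf{Flat}(R))$ being fully exact in $\boldsymbol{\Pi}(\mathbf{Mod}(R))$ because $\mathbf{Flat}(R)$ is fully exact in $\mathbf{Mod}(R)$.

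For part (5), $\mathbf{FinFlat}(R)$ is the category of finite projective modules, i.e.\ of finite direct sums of finite ideals (Proposition~\ref{Proposition:characterize-prufer}); it is closed under extensions in $\mathbf{Mod}(R)$ and is in fact a thick subcategory of $\mathbf{Flat}(R)$, because a submodule of a finite projective module with torsion-free quotient, and such a quotient itself, is again finite projective---one uses here that a finitely generated torsion-free module over a Pr\"ufer domain is projective (Proposition~\ref{Proposition:characterize-prufer}(3)), so the relevant short exact sequences split and all terms stay finite. Since a short exact sequence of finite projectives splits, every object of $\mathbf{FinFlat}(R)$ is projective, so it is trivially hereditary with enough projectives; Lemma~\ref{Lemma:submodule-projective} together with the earlier remark that $\boldsymbol{\Pi}$ preserves heredity and enough projectives then gives that $\boldsymbol{\Pi}(\mathbf{FinFlat}(R))$ is hereditary with enough projectives and with projectives the countable products of finite direct sums of finite ideals, while thickness of $\boldsymbol{\Pi}(\mathbf{FinFlat}(R))$ in $\boldsymbol{\Pi}(\mathbf{Flat}(R))$ follows from thickness of $\mathbf{FinFlat}(R)$ in $\mathbf{Flat}(R)$ by the same extension argument used in Proposition~\ref{Proposition:pro-countable}. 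The main obstacle is the heredity of $\mathbf{Flat}(R)$ for a Pr\"ufer domain that is not Dedekind: there submodules of projective modules need not be projective---a non-finitely-generated ideal of a valuation domain is flat but not projective---so one cannot copy the Dedekind argument and must genuinely exploit that countable flat modules have projective dimension at most one; a secondary, non-Noetherian subtlety is verifying that $\mathbf{FinFlat}(R)$ is closed under the subobjects and quotients needed to be a thick subcategory of $\mathbf{Flat}(R)$.
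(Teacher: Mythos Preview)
Your overall strategy---feeding the relevant $\mathcal{A}$ into Proposition~\ref{Proposition:pro-countable} and Lemma~\ref{Lemma:submodule-projective} after verifying the needed hypotheses on $\mathcal{A}$---matches the paper's. Your supplementary arguments (heredity of $\mathbf{Flat}(R)$ via the telescope resolution plus Schanuel, thickness of $\mathbf{FinFlat}(R)$ in $\mathbf{Flat}(R)$ via splitting of finite projectives) are correct and in fact more explicit than the paper, which for these points simply cites Propositions~\ref{Proposition:characterize-prufer} and~\ref{Proposition:characterize-Dedekind}.

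The one genuine divergence is in part (4). You obtain quasi-abelianness of $\boldsymbol{\Pi}(\mathbf{Flat}(R))$ abstractly from Section~\ref{Subsection:towers}, whereas the paper verifies it by hand: it identifies the kernel of a map in $\boldsymbol{\Pi}(\mathbf{Flat}(R))$ as the set-theoretic kernel and the cokernel as the quotient by the \emph{closed pure hull} of the image, and then checks directly that pushouts of kernels are kernels and pullbacks of cokernels are cokernels. Your abstract route is valid for quasi-abelianness per se, but it leaves implicit a point the paper's computation makes explicit: that the intrinsic kernel--cokernel pairs in $\boldsymbol{\Pi}(\mathbf{Flat}(R))$ coincide with the short exact sequences inherited from $\mathbf{PolMod}(R)$. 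This is not automatic, because the cokernel in $\boldsymbol{\Pi}(\mathbf{Flat}(R))$ is \emph{not} the cokernel in $\mathbf{PolMod}(R)$ in general (one must quotient by the pure hull, not merely the closure); the two exact structures agree only because purity of a kernel forces the ambient quotient to land back in $\boldsymbol{\Pi}(\mathbf{Flat}(R))$. The paper's Lemmas~\ref{Lemma:characterize-proflat} and~\ref{Lemma:pure-proflat}, which you do not invoke, are the tools that make this identification work, and the explicit pushout/pullback calculations pin down the quasi-abelian structure concretely enough to be used downstream. Without this step, the phrase ``hereditary quasi-abelian category which is a fully exact subcategory'' in (4) is asserting compatibility of two a priori different exact structures, and your argument does not close that gap.
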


\begin{proof}
(1) This follows from Proposition \ref{Proposition:pro-countable} applied
when $\mathcal{A}=\mathbf{Mod}\left( R\right) $.

(2) This follows from Proposition \ref{Proposition:pro-countable} applied
when $\mathcal{A}=\mathbf{Mod}_{\aleph _{0}}\left( R\right) $.

(3) This follows from Proposition \ref{Proposition:characterize-Dedekind}
and Lemma \ref{Lemma:submodule-projective} applied when $\mathcal{A}=\mathbf{%
Mod}\left( R\right) $.

(4) This follows from \ref{Proposition:characterize-prufer}, Lemma \ref%
{Lemma:submodule-projective} applied when $\mathcal{A}=\mathbf{Flat}\left(
R\right) $, Lemma \ref{Lemma:characterize-proflat}, and Lemma \ref%
{Lemma:pure-proflat}, together with the following observations.

Recall that Polish modules form a quasi-abelian category, whose kernels are
the injective homomorphisms with closed image and whose cokernels are the
surjective continuous homomorphisms. If $\varphi :M\rightarrow N$ is a
continuous homomorphism between Polish modules in $\boldsymbol{\Pi }\left( 
\mathbf{Flat}\left( R\right) \right) $, then it is easily seen that its
kernel in $\mathbf{\Pi }\left( \mathbf{Flat}\left( R\right) \right) $ is $%
\left\{ x\in M:\varphi \left( x\right) =0\right\} $ while its cokernel is
the quotient of $N$ by the closed pure hull $\langle \varphi \left( X\right)
\rangle _{\ast }$ of $\varphi \left( X\right) $. Suppose that%
\begin{equation*}
\begin{array}{ccc}
A^{\prime } & \overset{f^{\prime }}{\rightarrow } & B^{\prime } \\ 
g\uparrow &  & \uparrow h \\ 
A & \overset{f}{\rightarrow } & B%
\end{array}%
\end{equation*}%
is a pushout diagram, where $f$ is a kernel in $\boldsymbol{\Pi }\left( 
\mathbf{Flat}\right) $. We have that%
\begin{equation*}
N:=\left\{ \left( g\left( a\right) ,-f\left( a\right) \right) :a\in A\right\}
\end{equation*}%
is a pure submodule of $A^{\prime }\oplus B$. Indeed, if $\left( a^{\prime
},b\right) \in A^{\prime }\oplus B$ and $r\in R\setminus \left\{ 0\right\} $
are such that $r\left( a^{\prime },b\right) =\left( g\left( a\right)
,-f\left( a\right) \right) $ for some $a\in A$, then, since $f$ is a kernel,
there exists $a_{0}\in A$ such that $-f\left( a_{0}\right) =b$ and hence $%
g\left( a_{0}\right) =a^{\prime }$. Since $N$ is also closed, we have that $%
B^{\prime }$ is the quotient of $A^{\prime }\oplus B$ by $N$.\ We claim that 
$f^{\prime }$ is kernel. If $b^{\prime }\in B^{\prime }$ and $a^{\prime }\in
A^{\prime }$ and $r\in R\setminus \left\{ 0\right\} $ are such that $%
rb^{\prime }=f^{\prime }\left( a^{\prime }\right) $, then we have that $%
b^{\prime }=\left( x,y\right) +N$ for some $x\in A^{\prime }$ and $y\in B$
and $\left( rx,ry\right) \equiv \left( a^{\prime },0\right) \ \mathrm{mod}N$%
. Thus, there exists $a\in A$ such that $rx+g\left( a\right) =a^{\prime }$
and $ry=f\left( a\right) $. Since $f$ is a kernel, there exists $a_{0}\in A$
such that $y=f\left( a_{0}\right) $ and hence $r\left( x+g\left(
a_{0}\right) \right) =a^{\prime }$. Thus, we have that $\left( x,y\right)
\equiv \left( x+g\left( a_{0}\right) ,0\right) \mathrm{mod}N$. Thus, if $%
a^{\prime \prime }:=x+g\left( a_{0}\right) \in A^{\prime }$ then we have $%
f^{\prime }\left( a^{\prime \prime }\right) =b^{\prime }$.

Suppose now that%
\begin{equation*}
\begin{array}{ccc}
A^{\prime } & \overset{f^{\prime }}{\rightarrow } & B^{\prime } \\ 
g\uparrow &  & \uparrow h \\ 
A & \overset{f}{\rightarrow } & B%
\end{array}%
\end{equation*}%
is a pullback diagram. Observe that $\left\{ \left( a^{\prime },b\right) \in
A^{\prime }\oplus B:f^{\prime }\left( a^{\prime }\right) =h\left( b\right)
\right\} $ is pure in $A^{\prime }\oplus B$. Since it is also closed, is
equal to $A$. It follows now that $f$ is a cokernel.

(5) This follows from Lemma \ref{Lemma:submodule-projective} applied when $%
\mathcal{A}=\mathbf{FinFlat}\left( R\right) $.
\end{proof}

It follows from Proposition \ref{Proposition:derived-functor2} that the
functor $\mathrm{Hom}$ on $\boldsymbol{\Pi }(\mathbf{Mod}\left( R\right) )$
admits a total right derived functor $\mathrm{RHom}$. We let $\mathrm{Ext}%
^{n}=\mathrm{H}^{n}\circ \mathrm{RHom}$ for $n\in \mathbb{Z}$. For
pro-countable Polish modules $C,A$ we denote $\mathrm{Ext}^{1}\left(
C,A\right) $ by $\mathrm{Ext}\left( C,A\right) $. By \cite[Section\ III.2]%
{gelfand_methods_2003}, $\mathrm{Ext}\left( C,A\right) $ is isomorphic to $%
\mathrm{Ext}_{\mathrm{Yon}}\left( C,A\right) $. If $R$ is Pr\"{u}fer and $%
C,A $ are pro-flat, then $\mathrm{Ext}^{n}\left( C,A\right) =0$ for $n\geq 2$%
. If $R$ is Dedekind, then $\mathrm{Ext}^{n}=0$ for $n\geq 2$.

\section{Modules with a Polish cover\label{Section:modules-with-polish-cover}%
}

In this section we recall the definition and fundamental properties of \emph{%
modules with a Polish cover}, and how they provide an explicit description
of the left heart of the quasi-abelian category of Polish modules. We denote
by $R$ a countable Pr\"{u}fer domain and assume that all the modules are $R$%
-modules.

\subsection{Modules with a Polish cover}

The following notion was introduced in \cite{bergfalk_definable_2024-1}. A 
\emph{module with a Polish cover} is a module $G$ explicitly presented as a
quotient $\hat{G}/N$ where $\hat{G}$ is a Polish module and $N$ is a
Polishable submodule of $\hat{G}$. This means that $N$ is a Borel submodule
of $\hat{G}$ such that there exists a (necessarily unique) Polish module
topology on $N$ whose Borel sets are precisely the Borel subsets of $\hat{G}$
contained in $N$ or, equivalently, such that the inclusion $N\rightarrow 
\hat{G}$ is continuous. Equivalently, there exist a Polish module $L$ and a
continuous (injective) homomorphism $L\rightarrow \hat{G}$ with image equal
to $N$. In what follows, unless otherwise specified, all the topological
notions pertaining to subsets of $N$ will be considered in reference its
unique Polish module topology that makes the inclusion $N\rightarrow \hat{G}$
is continuous.

Suppose that $G=\hat{G}/N$ and $H=\hat{H}/M$ are modules with a Polish
cover. A homomorphism $\varphi :G\rightarrow H$ is \emph{Borel} if there
exists a \emph{Borel} \emph{lift }for $\varphi $, namely a Borel function $%
\hat{\varphi}:\hat{G}\rightarrow \hat{H}$ such that $\varphi \left(
x+N\right) =\hat{\varphi}\left( x\right) +M$ for $x\in G$. We regard modules
with a Polish cover as objects of a category with Borel group homomorphisms
as morphisms. It is proved in \cite[Section 6]{lupini_looking_2024} that
this is an abelian category, where the sum of two Borel homomorphisms $%
\varphi ,\psi :G\rightarrow H$ is defined by setting $\left( \varphi +\psi
\right) \left( x\right) =\varphi \left( x\right) +\psi \left( x\right) $ for 
$x\in G$. By \cite[Remark 3.10]{lupini_looking_2024}, a Borel homomorphism
between modules with a Polish cover is an isomorphism in the category of
modules with a Polish cover if and only if it is a bijection.

We identify a Polish module $G$ with the module with a Polish cover $\hat{G}%
/N$ where $\hat{G}=G$ and $N=\left\{ 0\right\} $. By \cite[Theorem 9.10 and
Theorem 12.17]{kechris_classical_1995}, this realizes the category of Polish
modules and a continuous module homomorphisms as a full subcategory of the
category of modules with a Polish cover. A module with a Polish cover $\hat{G%
}/N$ is Borel isomorphic to a Polish module if and only if $N$ is a closed
submodule of $\hat{G}$.

The category $\mathbf{PolMod}\left( R\right) $ of Polish modules is a
quasi-abelian category, whose left heart is equivalent to the category of
modules with a Polish cover \cite[Theorem 6.3]{lupini_looking_2024}. More
generally, if $\mathcal{B}$ is a thick subcategory of $\mathbf{PolMod}\left(
R\right) $, then $\mathrm{LH}\left( \mathcal{B}\right) $ can be identified
with the full subcategory of the category of groups with a Polish cover
spanned by modules with a Polish cover of the form $\hat{G}/N$ where both $%
\hat{G}$ and $N$ are in $\mathcal{B}$; see \cite[Theorem 6.14 and
Proposition 6.16]{lupini_looking_2024}. For example, this applies to the
category $\boldsymbol{\Pi }(\mathbf{Mod}\left( R\right) )$ of pro-countable
Polish modules. We say that $\hat{G}/N$ is a pro-countable module with a
Polish cover if both $\hat{G}$ and $N$ are pro-countable. A submodule with a
pro-countable Polish cover of $\hat{G}/N$ is a submodule of the form $\hat{H}%
/N$ where $\hat{H}$ is a pro-countable Polishable submodule of $\hat{G}$.

\begin{lemma}
\label{Lemma:thick-left-heart}Suppose that $\mathcal{A}$ is a strictly full
thick subcategory of $\mathbf{PolMod}\left( R\right) $ and $\mathcal{B}$ is
a strictly full thick subcategory of $\mathcal{A}$. Suppose that for every
object $B$ of $\mathcal{B}$ there exists an object $P$ of $\mathcal{B}$ that
is projective in $\mathcal{A}$ and a strict epimorphism $P\rightarrow B$.
Then\textrm{\ }$\mathrm{LH}\left( \mathcal{B}\right) $ is a thick
subcategory of $\mathrm{LH}\left( \mathcal{A}\right) $.
\end{lemma}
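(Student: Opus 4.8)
The plan is to combine Lemma~\ref{Lemma:fully-qa} with the projective-cover hypothesis. First I would observe that thickness is transitive, so that $\mathcal{B}$, being thick in $\mathcal{A}$ and $\mathcal{A}$ thick in $\mathbf{PolMod}(R)$, is thick in $\mathbf{PolMod}(R)$; hence, by the description of left hearts of thick subcategories of $\mathbf{PolMod}(R)$ recalled above, $\mathrm{LH}(\mathcal{B})$ and $\mathrm{LH}(\mathcal{A})$ are the full subcategories of the category of modules with a Polish cover spanned by the $\hat{G}/N$ with $\hat{G},N\in\mathcal{B}$, respectively with $\hat{G},N\in\mathcal{A}$, and by Lemma~\ref{Lemma:fully-qa} the inclusion $F\colon\mathrm{LH}(\mathcal{B})\hookrightarrow\mathrm{LH}(\mathcal{A})$ is fully faithful and exact. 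Being exact between abelian categories, $F$ preserves finite limits and finite colimits, so the inclusion is exact, finitely continuous and finitely cocontinuous, $\mathrm{LH}(\mathcal{B})$ is abelian, and kernels, cokernels and finite biproducts of objects of $\mathrm{LH}(\mathcal{B})$ formed in $\mathrm{LH}(\mathcal{A})$ land back in $\mathrm{LH}(\mathcal{B})$. Thus the only thing left to check is closure under extensions in $\mathrm{LH}(\mathcal{A})$, and this is where the hypothesis is used.

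The key preliminary observation is that every object of $\mathrm{LH}(\mathcal{B})$ receives an epimorphism in $\mathrm{LH}(\mathcal{A})$ from an object $P\in\mathcal{B}$ that is projective in $\mathcal{A}$: writing the object as $\hat{G}/N$ with $\hat{G},N\in\mathcal{B}$, the canonical map $\hat{G}\to\hat{G}/N$ is the cokernel of $N\to\hat{G}$, hence an epimorphism, and I would precompose it with a strict epimorphism $P\to\hat{G}$ supplied by the hypothesis. Such a $P$ is automatically projective in $\mathrm{LH}(\mathcal{A})$: an extension of $P$ by $X\in\mathrm{LH}(\mathcal{A})$ is classified by an element of $\mathrm{Hom}_{\mathrm{D}^{b}(\mathcal{A})}(P,X[1])\cong H^{1}(\mathrm{Hom}_{\mathcal{A}}^{\bullet}(P,X))$, which vanishes since $P$ is projective and $X$ is represented by a complex concentrated in degrees $-1,0$; this is also contained in Lemma~\ref{Lemma:projectives-left-heart}. (One also gets, more than is needed, that $\mathcal{B}$ has enough projectives.)

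Given a short exact sequence $0\to X^{\prime}\to Y\to X^{\prime\prime}\to 0$ in $\mathrm{LH}(\mathcal{A})$ with $X^{\prime},X^{\prime\prime}\in\mathrm{LH}(\mathcal{B})$, I would choose an epimorphism $p\colon P_{0}\to X^{\prime\prime}$ with $P_{0}\in\mathcal{B}$ projective in $\mathcal{A}$ as above and form the pullback $W:=Y\times_{X^{\prime\prime}}P_{0}$. Pulling the sequence back along $p$ gives a short exact sequence $0\to X^{\prime}\to W\to P_{0}\to 0$, while the other projection $W\to Y$ is an epimorphism with kernel $K:=\ker_{\mathrm{LH}(\mathcal{A})}(p)$. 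Since $P_{0}$ is projective in $\mathrm{LH}(\mathcal{A})$, the first sequence splits, so $W\cong X^{\prime}\oplus P_{0}$ lies in $\mathrm{LH}(\mathcal{B})$; and $K$ lies in $\mathrm{LH}(\mathcal{B})$ because $p$ is a morphism between objects of $\mathrm{LH}(\mathcal{B})$ and $\mathrm{LH}(\mathcal{B})$ is closed under kernels. Finally $Y=\mathrm{Coker}_{\mathrm{LH}(\mathcal{A})}(K\to W)$ is computed from the monomorphism $K\to W$, which by fullness of $F$ comes from a morphism of $\mathrm{LH}(\mathcal{B})$, and whose cokernel computed in $\mathrm{LH}(\mathcal{B})$ is sent to $Y$ by the exact functor $F$; hence $Y\in\mathrm{LH}(\mathcal{B})$, which finishes the proof.

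The step I expect to be most delicate is the bookkeeping in the first paragraph: making precise that the abstract $\mathrm{LH}(\mathcal{B})$ is identified, compatibly with the functor of Lemma~\ref{Lemma:fully-qa}, with a genuine full subcategory of $\mathrm{LH}(\mathcal{A})$, so that kernels, cokernels, biproducts and extension classes really can be compared between the two hearts. The remaining steps---that projectives of $\mathcal{A}$ stay projective in $\mathrm{LH}(\mathcal{A})$, and the pullback argument for extension-closure---are then routine.
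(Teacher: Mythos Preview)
Your argument is correct, and it takes a genuinely different route from the paper's. The paper works concretely at the level of Polish modules: given $0\to L/K\to G/N\to H/M\to 0$ with $L,K,H,M\in\mathcal{B}$, it replaces $H$ and $L$ by $\mathcal{A}$-projectives, uses projectivity of $H$ to split a surjection $G\to H$ and reduce to $G=H$, then identifies $K=L\cap N$, $M=L+N$, builds the fibre product $\Gamma=\{(x,y)\in N\oplus(L+N):x+(L\cap N)=y+L\}$ inside $\mathcal{A}$, and uses the two short exact sequences $0\to K\to\Gamma\to M\to 0$ and $0\to L\to\Gamma\to N\to 0$ together with thickness of $\mathcal{B}$ in $\mathcal{A}$ to force $N\in\mathcal{B}$. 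By contrast, you stay entirely inside the abelian hearts: you invoke Lemma~\ref{Lemma:fully-qa} once to get a fully faithful exact embedding $\mathrm{LH}(\mathcal{B})\hookrightarrow\mathrm{LH}(\mathcal{A})$, reduce to extension-closure, and then run the standard projective-pullback trick (cover $X''$ by a projective $P_0$, pull back, split). Your approach is more portable---it only uses Lemmas~\ref{Lemma:fully-qa} and~\ref{Lemma:projectives-left-heart} and would go through for any thick quasi-abelian subcategory pair satisfying the projective-cover hypothesis, with no reference to $\mathbf{PolMod}(R)$ beyond the identification of the left hearts---whereas the paper's argument makes the Polish-cover mechanics visible and avoids appealing to Lemma~\ref{Lemma:fully-qa} for anything beyond the set-up. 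The point you flag as delicate (compatibility of the two descriptions of $\mathrm{LH}(\mathcal{B})$) is indeed the one place where care is needed, but the references you give handle it.
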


\begin{proof}
Suppose that%
\begin{equation*}
0\rightarrow L/K\rightarrow G/N\rightarrow H/M\rightarrow 0
\end{equation*}%
is a short exact sequence in \textrm{LH}$\left( \mathcal{A}\right) $ such
that $L,K,H,M$ are objects of $\mathcal{B}$. We need to prove that $G/N$ is
isomorphic in $\mathrm{LH}\left( \mathcal{A}\right) $ to an object of $%
\mathrm{LH}\left( \mathcal{B}\right) $. By hypothesis, we can assume that $H$
and $L$ are projective in $\mathcal{B}$. Without loss of generality, after
replacing $G/N$ with an isomorphic object in $\mathrm{LH}\left( \mathcal{A}%
\right) $, we can assume that the morphism $G/N\rightarrow H/M$ is induced
by a continuous group homomorphism $G\rightarrow H$. Furthermore, after
replacing $G$ with $G\oplus M$ and $N$ with $N\oplus M$, we can assume
without loss of generality that the continuous group homomorphism $%
G\rightarrow H$ is surjective. Thus, we have that $G=H\oplus H^{\bot }$ and
the homomorphism $G\rightarrow H$ is the first-coordinate projection. Thus,
we have that $H^{\bot }\subseteq N$. As $H^{\bot }$ is a closed submodule of 
$G$, and hence of $N$, after replacing $G$ with $G/H^{\bot }$ and $N$ with $%
N/H^{\bot }$, we can assume without loss of generality that $H^{\bot }=0$
and hence $G=H$.

Arguing as before, we can assume that the morphism $L/K\rightarrow G/N$ is
induced by an injective continuous homomorphism $L\rightarrow G$. By
identifying $L$ with its (not necessarily closed) image inside of $G$ we
have that $K=L\cap N$ and $L+N=M$. Thus, the inclusion $N\rightarrow L+N$
induces a Borel isomorphism%
\begin{equation*}
N/\left( L\cap N\right) \cong \left( L+N\right) /L\text{.}
\end{equation*}%
Consider%
\begin{equation*}
\Gamma =\left\{ \left( x,y\right) \in N\oplus \left( L+N\right) :x+\left(
L\cap N\right) =y+L\right\} \text{.}
\end{equation*}%
Then we have a short exact sequence%
\begin{equation*}
0\rightarrow L\rightarrow \Gamma \rightarrow N\rightarrow 0\text{.}
\end{equation*}%
Since $\mathcal{B}$ is a strictly thick subcategory of $\mathcal{A}$, this
implies that $\Gamma $ is in $\mathcal{B}$. We also have an exact sequence%
\begin{equation*}
0\rightarrow L\cap N\rightarrow \Gamma \rightarrow L+N\rightarrow 0\text{.}
\end{equation*}%
This implies that $K=L\cap N$ is in $\mathcal{B}$, concluding the proof.
\end{proof}

If $G$ is a module with a Polish cover, and $N$ is the closure of $\left\{
0\right\} $ in $G$, then $G/N$ is a Polish module. Furthermore, we have an
extension%
\begin{equation*}
0\rightarrow N\rightarrow G\rightarrow G/N\rightarrow 0\text{.}
\end{equation*}%
As a particular instance of Definition \ref{Definition:phantom-category} we
have the following:

\begin{definition}
A \emph{phantom Polish module} is a module with a Polish cover whose trivial
submodule is dense. Likewise, a phantom pro-countable Polish module is a
module with a pro-countable Polish cover whose trivial submodule is dense.
\end{definition}

We let $\mathrm{Ph}\left( \mathbf{\Pi (Mod}\left( R\right) )\right) $ be the
category of phantom pro-countable Polish modules. Then $\mathrm{Ph}\left( 
\boldsymbol{\Pi }(\mathbf{Mod}\left( R\right) )\right) $ is a thick abelian
subcategory of $\boldsymbol{\Pi }(\mathbf{Mod}\left( R\right) )$, and we
have an extension%
\begin{equation*}
0\rightarrow \mathrm{Ph}(\boldsymbol{\Pi }(\mathbf{Mod}\left( R\right)
))\rightarrow \mathrm{LH}(\boldsymbol{\Pi }(\mathbf{Mod}\left( R\right)
))\rightarrow \boldsymbol{\Pi }(\mathbf{Mod}\left( R\right) )\rightarrow 0
\end{equation*}

\begin{lemma}
Suppose that $\boldsymbol{G}=\left( G^{\left( n\right) }\right) $ is an
inverse sequence of phantom Polish modules. Then its limit $\mathrm{lim}%
\boldsymbol{G}$ in $\boldsymbol{\Pi }(\mathbf{Mod}\left( R\right) )$ is a
phantom Polish module.
\end{lemma}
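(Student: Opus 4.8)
The plan is to compute $\mathrm{lim}\,\boldsymbol{G}$ concretely, via a projective resolution of the whole inverse sequence whose kernel tower is \emph{epimorphic}, and then to check density by hand. Write $\boldsymbol{G}=(G^{(n)},\phi^{(n)})$. First I would construct, by recursion on $n$, objects $P^{(n)}$ that are projective in $\boldsymbol{\Pi}(\mathbf{Mod}(R))$, admissible epimorphisms $q^{(n)}:P^{(n)}\twoheadrightarrow G^{(n)}$ in $\mathrm{LH}(\boldsymbol{\Pi}(\mathbf{Mod}(R)))$, and morphisms $p^{(n)}:P^{(n+1)}\to P^{(n)}$ compatible with the $\phi^{(n)}$, such that, setting $N^{(n)}:=\mathrm{Ker}\,q^{(n)}$, the restrictions $s^{(n)}:=p^{(n)}|_{N^{(n+1)}}:N^{(n+1)}\to N^{(n)}$ are surjective. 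At the recursion step, given $q^{(n)}$, form the pullback $Q^{(n+1)}:=P^{(n)}\times_{G^{(n)}}G^{(n+1)}$ in $\mathrm{LH}(\boldsymbol{\Pi}(\mathbf{Mod}(R)))$; the projection $b:Q^{(n+1)}\to G^{(n+1)}$ is an epimorphism with $\mathrm{Ker}\,b$ isomorphic to $N^{(n)}$ via the other projection $a:Q^{(n+1)}\to P^{(n)}$. Choosing $P^{(n+1)}$ projective in $\boldsymbol{\Pi}(\mathbf{Mod}(R))$ with an epimorphism $P^{(n+1)}\twoheadrightarrow Q^{(n+1)}$ and composing with $a$ and $b$ yields $p^{(n)}$ and $q^{(n+1)}$; then $N^{(n+1)}=\mathrm{Ker}\,q^{(n+1)}$ is the preimage of $\mathrm{Ker}\,b$ under $P^{(n+1)}\twoheadrightarrow Q^{(n+1)}$, hence maps onto $\mathrm{Ker}\,b$, which $a$ carries isomorphically onto $N^{(n)}$; this gives surjectivity of $s^{(n)}$. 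Each $N^{(n)}$ lies in $\boldsymbol{\Pi}(\mathbf{Mod}(R))$ since the torsion-free class of a torsion pair is closed under subobjects, and $N^{(n)}$ is dense in $P^{(n)}$ because $P^{(n)}/N^{(n)}\cong G^{(n)}$ is phantom.

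Next, the resulting short exact sequence $0\to\boldsymbol{S}\to\boldsymbol{P}\to\boldsymbol{G}\to0$ of $\omega^{\mathrm{op}}$-diagrams in $\mathrm{LH}(\boldsymbol{\Pi}(\mathbf{Mod}(R)))$ has $\boldsymbol{S}=(N^{(n)},s^{(n)})$ epimorphic, so $\mathrm{lim}^{1}\boldsymbol{S}=0$ (the telescope map $\prod_{n}N^{(n)}\to\prod_{n}N^{(n)}$ is surjective, hence an admissible epimorphism of pro-countable Polish modules). The long exact sequence of the derived functors $\mathrm{lim}$ and $\mathrm{lim}^{1}$ therefore reduces to $0\to\mathrm{lim}\,\boldsymbol{S}\to\mathrm{lim}\,\boldsymbol{P}\to\mathrm{lim}\,\boldsymbol{G}\to0$, so $\mathrm{lim}\,\boldsymbol{G}\cong(\mathrm{lim}\,\boldsymbol{P})/(\mathrm{lim}\,\boldsymbol{S})$. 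Here $\mathrm{lim}\,\boldsymbol{P}$ and $\mathrm{lim}\,\boldsymbol{S}$ are the topological inverse limits, closed submodules of the pro-countable Polish modules $\prod_{n}P^{(n)}$ and $\prod_{n}N^{(n)}$, and $\mathrm{lim}\,\boldsymbol{S}\to\mathrm{lim}\,\boldsymbol{P}$ is a continuous injective homomorphism; thus $\mathrm{lim}\,\boldsymbol{G}$ is a pro-countable module with a Polish cover, and it is phantom precisely when $\mathrm{lim}\,\boldsymbol{S}$ is dense in $\mathrm{lim}\,\boldsymbol{P}$.

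This density is the heart of the matter, and it is where epimorphicity of $\boldsymbol{S}$ is used. Given $x=(x_{n})\in\mathrm{lim}\,\boldsymbol{P}$ and a basic neighbourhood of $x$ constraining the coordinates $m<M$, I would pick an open submodule $V\ni0$ of $P^{(M-1)}$ small enough that the composite bonding maps $P^{(M-1)}\to P^{(m)}$ carry $V$ into the prescribed neighbourhoods for all $m<M$, use density of $N^{(M-1)}$ in $P^{(M-1)}$ to choose $y_{M-1}\in N^{(M-1)}$ with $y_{M-1}-x_{M-1}\in V$, lift $y_{M-1}$ upward through the surjections $s^{(m)}$ to a coherent sequence $(y_{m})_{m\ge M-1}$ in the $N^{(m)}$, and continue it downward by applying the $p^{(m)}$. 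Since $x$ is coherent, for $m<M$ the difference $y_{m}-x_{m}$ is the image of $y_{M-1}-x_{M-1}\in V$ under $P^{(M-1)}\to P^{(m)}$, so $y=(y_{m})\in\mathrm{lim}\,\boldsymbol{S}$ falls in the chosen neighbourhood of $x$; hence $\mathrm{lim}\,\boldsymbol{S}$ is dense in $\mathrm{lim}\,\boldsymbol{P}$ and $\mathrm{lim}\,\boldsymbol{G}$ is phantom. The step I expect to require the most care is the recursive construction of the resolution: one must maintain projectivity of the $P^{(n)}$, the identification of the quotient tower with $\boldsymbol{G}$, surjectivity of the maps $s^{(n)}$ between the kernels, and the fact that each $N^{(n)}$ is again a pro-countable object of $\boldsymbol{\Pi}(\mathbf{Mod}(R))$, all at once.
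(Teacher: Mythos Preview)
Your proof is correct and takes a genuinely different route from the paper. The paper works with the \emph{given} presentations $G^{(n)}=\hat G^{(n)}/N^{(n)}$ directly: it writes down an explicit Polish cover of $\lim\boldsymbol{G}$ as the ``mapping-path space''
\[
\hat G=\Bigl\{(x_n,z_n)\in\textstyle\prod_n\bigl(\hat G^{(n)}\oplus N^{(n)}\bigr):x_n-p^{(n,n+1)}(x_{n+1})=z_n\Bigr\},
\]
with $N=\{(x_n,z_n)\in\hat G:x_n\in N^{(n)}\text{ for all }n\}$, and then proves density of $N$ in $\hat G$ by approximating $a_{n_0}$ at a single high index by an element of $N^{(n_0)}$ and propagating the correction downward through the bonding maps, tracking the defects $z_n$ explicitly. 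You instead first \emph{replace} the presentations by projective ones, recursively engineered (via pullbacks) so that the kernel tower $\boldsymbol{S}$ is epimorphic; this lets you invoke the six-term exact sequence to identify $\lim\boldsymbol{G}\cong\lim\boldsymbol{P}/\lim\boldsymbol{S}$, and makes the density proof cleaner because an approximation at index $M-1$ can be lifted \emph{upward} along the surjections $s^{(m)}$ to a full element of $\lim\boldsymbol{S}$. The paper's argument is more elementary---no need for enough projectives or the derived-functor exact sequence---once one has continuous lifts of the bonding maps at the level of covers (which the paper assumes implicitly). Your argument handles that lifting issue automatically via projectivity (this is essentially Lemma~\ref{Lemma:left-heart-projectives}) and gives a more transparent density step, at the cost of more categorical machinery up front.
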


\begin{proof}
Write $G^{\left( n\right) }=\hat{G}^{\left( n\right) }/N^{\left( n\right) }$
for $n\in \omega $, where $\hat{G}^{\left( n\right) }$ and $N^{\left(
n\right) }$ are Polish modules. Thus, we have that $\mathrm{lim}\boldsymbol{G%
}=\hat{G}/N$ where%
\begin{equation*}
\hat{G}=\left\{ \left( x_{n},z_{n}\right) _{n\in \omega }\in \prod_{n\in
\omega }\hat{G}^{\left( n\right) }\oplus N^{\left( n\right) }:\forall n\in
\omega ,x_{n}-p^{\left( n,n+1\right) }\left( x_{n+1}\right) =z_{n}\right\}
\end{equation*}%
and%
\begin{equation*}
N=\left\{ \left( x_{n},z_{n}\right) _{n\in \omega }\in \hat{G}:\forall n\in
\omega ,x_{n}\in N^{\left( n\right) }\right\} \text{.}
\end{equation*}%
Suppose that $\left( a_{n},c_{n}\right) _{n\in \omega }\in \hat{G}$ and $W$
is an open neighborhood of $\left( a_{n},c_{n}\right) _{n\in \omega }$ in $%
\hat{G}$. Without loss of generality, we can assume that there exist $%
n_{0}\geq 1$ and, for $n<n_{0}$, open neighborhoods $U_{n}$ of $0$ in $\hat{G%
}^{\left( n\right) }$ and $V_{n}$ of $0$ in $N^{\left( n\right) }$ such that 
$V_{n}\subseteq U_{n}$ and 
\begin{equation*}
W=\left\{ \left( x_{n},z_{n}\right) _{n\in \omega }:\forall n<n_{0},\left(
x_{n}-a_{n},z_{n}-c_{n}\right) \in U_{n}\times V_{n}\right\} \text{.}
\end{equation*}%
For $n\leq n_{0}$, we have that%
\begin{equation*}
p^{\left( n,n_{0}\right) }a_{n_{0}}-a_{n}=p^{\left( n,n_{0}-1\right)
}c_{n_{0}-1}+p^{\left( n,n_{0}-2\right) }c_{n_{0}-2}+\cdots +p^{\left(
n,n+1\right) }c_{n+1}+c_{n}\text{.}
\end{equation*}%
By assumption, $N^{\left( n_{0}\right) }$ is dense in $\hat{G}^{\left(
n_{0}\right) }$. Thus, there exists a sequence $(a_{n_{0}}^{\left( k\right)
})$ in $N^{\left( n_{0}\right) }$ that converges to $a_{n_{0}}$ in $\hat{G}%
^{\left( n_{0}\right) }$. Thus, 
\begin{equation*}
(p^{\left( n,n_{0}\right) }a_{n_{0}}^{\left( k\right) }-(p^{\left(
n,n_{0}-1\right) }c_{n_{0}-1}+p^{\left( n,n_{0}-2\right) }c_{n_{0}-2}+\cdots
+p^{\left( n,n+1\right) }c_{n+1}+c_{n}))
\end{equation*}%
converges in $\hat{G}^{\left( n\right) }$ to $a_{n}$ for $n<n_{0}$. Thus,
there exists $k\in \omega $ such that, for $n<n_{0}$,%
\begin{equation*}
b_{n}:=p^{\left( n,n_{0}\right) }a_{n_{0}}^{\left( k\right) }-(p^{\left(
n,n_{0}-1\right) }c_{n_{0}-1}+p^{\left( n,n_{0}-2\right) }c_{n_{0}-2}+\cdots
+p^{\left( n,n+1\right) }c_{n+1}+c_{n})\in U_{n}+a_{n}\text{.}
\end{equation*}%
Set $b_{n_{0}}:=a_{n_{0}}^{\left( k\right) }$, $d_{n_{0}}:=a_{n_{0}}^{\left(
k\right) }$, and $d_{n}:=c_{n}$ for $n<n_{0}$. Define for $n>n_{0}$, $%
b_{n}=d_{n}=0$. Then we have that $\left( b_{n},d_{n}\right) _{n\in \omega }$
defines an element of $\hat{G}$, since for $n<n_{0}$ we have that%
\begin{eqnarray*}
&&p^{\left( n,n+1\right) }b_{n+1}-b_{n} \\
&=&p^{\left( n,n+1\right) }\left( p^{\left( n+1,n_{0}\right)
}a_{n_{0}}^{\left( k\right) }-(p^{\left( n+1,n_{0}-1\right)
}c_{n_{0}-1}+p^{\left( n+1,n_{0}-2\right) }c_{n_{0}-2}+\cdots
+c_{n+1})\right) \\
&&-\left( p^{\left( n,n_{0}\right) }a_{n_{0}}^{\left( k\right) }-(p^{\left(
n,n_{0}-1\right) }c_{n_{0}-1}+p^{\left( n,n_{0}-2\right) }c_{n_{0}-2}+\cdots
+p^{\left( n,n+1\right) }c_{n+1}+c_{n})\right) \\
&=&c_{n}=d_{n}\text{.}
\end{eqnarray*}%
Furthermore, we have that $\left( b_{n},d_{n}\right) _{n\in \omega }$
belongs to $W$, since for $n<n_{0}$ we have that 
\begin{equation*}
d_{n}-c_{n}=0
\end{equation*}%
and%
\begin{equation*}
b_{n}-a_{n}\in U_{n}\text{.}
\end{equation*}%
This concludes the proof.
\end{proof}

\subsection{Submodules with a Polish cover\label%
{Section:modules-polish-cover}}

Suppose that $G=\hat{G}/N$ is a module with a Polish cover, and let $H=\hat{H%
}/N$ be a submodule of $G$. Then we say that $H\ $is a\emph{\ submodule with
a Polish cover} of $G$ if $\hat{H}$ is a Polishable submodule of $\hat{G}$.
We denote by $\overline{H}^{G}$ the closure of $H$ in $G$ with respect to
the quotient topology. It is proved in \cite[Section 4]{lupini_looking_2024}
that images and preimages of submodules with a Polish cover of modules with
a Polish cover under Borel homomorphisms are submodules with a Polish cover.

Recall that a\emph{\ complexity class} $\Gamma $ is an assignment $X\mapsto
\Gamma \left( X\right) $ where $X$ is a Polish space and $\Gamma \left(
X\right) $ is a collection of subsets of $X$, such that $f^{-1}\left(
A\right) \in \Gamma \left( X\right) $ for every continuous function $%
f:X\rightarrow Y$ between Polish spaces and for every $A\in \Gamma \left(
Y\right) $. We say that a submodule with a Polish cover $H=\hat{H}/N$ of a
module with a Polish cover $G=\hat{G}/N$ is $\Gamma $ in $G$ or belongs to $%
\Gamma \left( G\right) $ if $\hat{H}\in \Gamma (\hat{G})$. Furthermore, $H$
has complexity class $\Gamma $ in $G$ if $\hat{H}\in \Gamma (\hat{G})$ and $%
\hat{G}\setminus \hat{H}\notin \Gamma (\hat{G})$. Let $\Gamma $ be the
complexity class $\boldsymbol{\Sigma }_{\alpha }^{0}$, $\boldsymbol{\Pi }%
_{\alpha }^{0}$, or $D(\boldsymbol{\Pi }_{\alpha }^{0})$ for $\alpha <\omega
_{1}$; see \cite[Section 1.3]{gao_invariant_2009}. (Here, $D(\boldsymbol{\Pi 
}_{\alpha }^{0})$ is the class of sets that can be written as intersection
of a $\boldsymbol{\Sigma }_{\alpha }^{0}$ set and a $\boldsymbol{\Pi }%
_{\alpha }^{0}$ set.) It is proved in \cite[Section 3]%
{lupini_complexity_2024} that if $\varphi :G\rightarrow H=\hat{H}/M$ is a
Borel homomorphism between modules with a pro-countable Polish cover and $%
H_{0}=\hat{H}_{0}/M$ is a submodule with a pro-countable Polish cover of $H$%
, then:

\begin{itemize}
\item $H_{0}\in \Gamma \left( H\right) $ implies $\varphi ^{-1}\left(
H_{0}\right) \in \Gamma \left( G\right) $;

\item $H_{0}\in \Gamma \left( H\right) $ if and only if the coset
equivalence relation of $\hat{H}_{0}$ in $\hat{H}$ is \emph{potentially} $%
\Gamma $, namely Borel reducible to an equivalence relation $E$ on a Polish
space $X$ such that $E\in \Gamma \left( X\times X\right) $.
\end{itemize}

\subsection{Solecki submodules\label{Section:solecki-submodules}}

Let $G=\hat{G}/N$ be a module with a Polish cover. The first Solecki
submodule $s_{1}\left( G\right) $ of $G$ is the smallest $\boldsymbol{\Pi }%
_{3}^{0}$ submodule with a Polish cover of $G$. It was shown by Solecki in 
\cite[Theorem 2.1]{solecki_polish_1999} that such a submodule exists, and $%
s_{1}\left( G\right) \subseteq \overline{\left\{ 0\right\} }^{G}$. In fact,
one can explicitly define $s_{1}(G)$ as $\hat{G}_{1}/N$ where $\hat{G}_{1}$
is the submodule of $\hat{G}$ comprising the $x\in \hat{G}$ such that for
every zero neighborhood $V$ of $N$ there exists $z\in N$ such that $x+z$
belongs to the closure $\overline{V}^{\hat{G}}$ of $V$ in $\hat{G}$ \cite[%
Lemma 2.3]{solecki_polish_1999}.

One then defines the sequence $\left( s_{\alpha }\left( G\right) \right)
_{\alpha <\omega _{1}}$ of Solecki submodules of $G$ by recursion on $\alpha 
$ by setting:

\begin{itemize}
\item $s_{0}\left( G\right) =\overline{\left\{ 0\right\} }^{G}$;

\item $s_{\alpha +1}\left( G\right) =s_{1}\left( s_{\alpha }\left( G\right)
\right) $;

\item $s_{\lambda }\left( G\right) =\bigcap_{\alpha <\lambda }s_{\alpha
}\left( G\right) $ for $\lambda $ limit.
\end{itemize}

It is shown in \cite[Theorem 2.1]{solecki_polish_1999} that there exists $%
\sigma <\omega _{1}$ such that $s_{\sigma }\left( G\right) =\left\{
0\right\} $. The least such a countable ordinal is called the \emph{Solecki
length} of $G$ in \cite{lupini_looking_2024}. When $\alpha $ is a successor
ordinal, we say that $G$ has plain Solecki length $\alpha $ if it has
Solecki length $\alpha $ and $\left\{ 0\right\} \in \boldsymbol{\Sigma }%
_{2}^{0}\left( s_{\alpha -1}\left( G\right) \right) $. In this case, we also
say that $G$ is plain.

For every $\alpha <\omega _{1}$, $s_{\alpha }\left( G\right) $ is the
smallest $\boldsymbol{\Pi }_{1+\alpha +1}^{0}$ submodule of $G$ \cite[%
Theorem 5.4]{lupini_looking_2024}. If $G$ is a pro-countable module with a
Polish cover, then $s_{\alpha }\left( G\right) $ is a pro-countable
submodule of $G$ for every $\alpha <\omega _{1}$. The following
characterization of the first Solecki submodule can be easily obtained with
the methods of \cite{solecki_polish_1999}; see \cite[Lemma 4.2]%
{lupini_complexity_2024}.

\begin{lemma}
\label{Lemma:1st-Solecki}Suppose that $G=\hat{G}/N$ is a module with a
Polish cover, and let $H=\hat{H}/N$ be a $\boldsymbol{\Pi }_{3}^{0}$
submodule with a Polish cover of $G$. Suppose that:

\begin{enumerate}
\item $N$ is dense in $\hat{H}$;

\item for every open neighborhood $V$ of the identity in $N$, $\overline{V}^{%
\hat{G}}$ contains an open neighborhood of the identity in $\hat{H}$.
\end{enumerate}

Then $H=s_{1}\left( G\right) $.
\end{lemma}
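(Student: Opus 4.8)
The plan is to prove the two inclusions $s_{1}\left( G\right) \subseteq H$ and $H\subseteq s_{1}\left( G\right) $ separately; the first is immediate from the general theory, and all the work (such as it is) goes into the second, which is where hypotheses (1) and (2) enter.

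First I would dispose of the inclusion $s_{1}\left( G\right) \subseteq H$: by assumption $H$ is a $\boldsymbol{\Pi }_{3}^{0}$ submodule with a Polish cover of $G$, and $s_{1}\left( G\right) $ is by \cite[Theorem 2.1]{solecki_polish_1999} the \emph{smallest} such submodule, so $s_{1}\left( G\right) \subseteq H$ with nothing further to check.

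For the reverse inclusion I would use the explicit description recalled above: $s_{1}\left( G\right) =\hat{G}_{1}/N$, where $\hat{G}_{1}$ is the submodule of $\hat{G}$ consisting of those $x$ such that for every zero neighborhood $V$ of $N$ there exists $z\in N$ with $x+z\in \overline{V}^{\hat{G}}$. Writing $H=\hat{H}/N$ with $N\subseteq \hat{H}\subseteq \hat{G}$ and $\hat{H}$ equipped with its unique Polish module topology making the inclusion $\hat{H}\rightarrow \hat{G}$ continuous, it is enough to show $\hat{H}\subseteq \hat{G}_{1}$. So fix $x\in \hat{H}$ and an open zero neighborhood $V$ of $N$. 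By hypothesis (2) there is an open neighborhood $W$ of the identity in $\hat{H}$ with $W\subseteq \overline{V}^{\hat{G}}$. Since $\hat{H}$ is a topological group in its Polish topology, $W-x$ is an open subset of $\hat{H}$, and it is nonempty because $-x=0-x\in W-x$; by hypothesis (1), $N$ is dense in $\hat{H}$, so we may pick $z\in N\cap \left( W-x\right) $, and then $x+z\in W\subseteq \overline{V}^{\hat{G}}$. As $V$ was an arbitrary zero neighborhood of $N$, this shows $x\in \hat{G}_{1}$; hence $\hat{H}\subseteq \hat{G}_{1}$, i.e.\ $H\subseteq s_{1}\left( G\right) $. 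Combining with the first inclusion yields $H=s_{1}\left( G\right) $.

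The argument is short and the only real obstacle is minor: one must keep the three ambient topologies carefully separated — the Polish topology on $N$ (making $N\rightarrow \hat{G}$ continuous), in which $V$ is a neighborhood; the Polish topology on $\hat{H}$ (making $\hat{H}\rightarrow \hat{G}$ continuous), in which $W$ is a neighborhood and with respect to which hypothesis (1) asserts density; and the topology of $\hat{G}$, in which the closure $\overline{V}^{\hat{G}}$ is formed and in which $W-x$ need \emph{not} be open. Once this bookkeeping is in place, everything is formal; in particular the key point is simply that, because $N$ is dense in $\hat{H}$, the nonempty $\hat{H}$-open set $W-x$ must contain a point of $N$, which is exactly what the Solecki criterion for membership in $\hat{G}_{1}$ asks for.
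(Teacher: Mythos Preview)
Your proof is correct and follows exactly the approach the paper indicates: the paper does not give its own proof of this lemma but defers to \cite{solecki_polish_1999} and \cite[Lemma 4.2]{lupini_complexity_2024}, relying on the explicit description of $s_{1}(G)=\hat{G}_{1}/N$ it has just recalled. Your argument---one inclusion from minimality of $s_{1}(G)$ among $\boldsymbol{\Pi}_{3}^{0}$ submodules, the other by directly verifying the membership criterion for $\hat{G}_{1}$ via hypotheses (1) and (2)---is precisely what the paper's phrase ``can be easily obtained with the methods of \cite{solecki_polish_1999}'' points to.
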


To simplify the statement of the following result, we introduce some
notation. For $\lambda <\omega _{1}$ either zero or limit and $n<\omega $ we
define the complexity class:%
\begin{equation*}
\Gamma _{\lambda +n}:=\left\{ 
\begin{array}{ll}
\boldsymbol{\Pi }_{1}^{0} & \text{for }\lambda =n=0\text{;} \\ 
\boldsymbol{\Pi }_{\lambda } & \text{for }n=0\text{ and }\lambda >0\text{;}
\\ 
\boldsymbol{\Pi }_{1+\lambda +n+1}^{0} & \text{for }\lambda >0\text{ and }n>0%
\text{;}%
\end{array}%
\right.
\end{equation*}%
and%
\begin{equation*}
\Gamma _{\lambda +n}^{\mathrm{semiplain}}:=\left\{ 
\begin{array}{ll}
\Gamma _{\lambda +n} & \text{for }n=0\text{;} \\ 
D(\boldsymbol{\Pi }_{1+\lambda +n}^{0}) & \text{for }n\geq 1\text{;}%
\end{array}%
\right.
\end{equation*}%
and%
\begin{equation*}
\Gamma _{\lambda +n}^{\mathrm{plain}}:=\left\{ 
\begin{array}{ll}
\Gamma _{\lambda +n}^{\mathrm{semiplain}} & \text{for }n\neq 1\text{;} \\ 
\boldsymbol{\Sigma }_{1+\lambda +1}^{0} & \text{for }n=1\text{.}%
\end{array}%
\right.
\end{equation*}%
By \cite[Theorem 6.1]{lupini_complexity_2024} we have the following
description of the complexity of $\left\{ 0\right\} $ in a module with a
pro-countable Polish cover.

\begin{theorem}
\label{Theorem:complexity}Suppose that $G$ is a module with a pro-countable
Polish cover.\ Let $\alpha <\omega _{1}$ be the Solecki length of $G$.

\begin{enumerate}
\item If $G$ is plain, then the complexity class of $\left\{ 0\right\} $ in $%
G$ is $\Gamma _{\alpha }^{\mathrm{plain}}$;

\item If $G$ is not plain, then the complexity class of $\left\{ 0\right\} $
in $G$ is $\Gamma _{\alpha }$.
\end{enumerate}
\end{theorem}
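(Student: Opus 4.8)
The statement is \cite[Theorem 6.1]{lupini_complexity_2024}, so in this paper it is used by citation; what follows is the line of argument one would follow to prove it directly. Write $G=\hat{G}/N$ with $\hat{G}$ and $N$ pro-countable Polish modules, and for $\beta<\omega_{1}$ let $\hat{G}_{\beta}$ be the pro-countable Polishable submodule of $\hat{G}$ with $s_{\beta}(G)=\hat{G}_{\beta}/N$, so that $(\hat{G}_{\beta})_{\beta\leq\alpha}$ is a decreasing chain with $\hat{G}_{0}=\overline{N}^{\hat{G}}$ and $\hat{G}_{\alpha}=N$. Since by definition the complexity class of $\{0\}$ in $G$ is the complexity class of $N$ in $\hat{G}$, the plan is to locate $N$ in the Borel hierarchy of $\hat{G}$ by descending this chain, and then to show that the location is optimal.

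First I would establish the upper bound $\{0\}\in\Gamma_{\alpha}(G)$, refined to $\Gamma_{\alpha}^{\mathrm{plain}}(G)$ when $G$ is plain, by transfinite induction on the Solecki length $\alpha$. The case $\alpha=0$ is immediate because $N=\hat{G}_{0}$ is closed. For a successor $\alpha=\beta+1$ I would feed the explicit description of the first Solecki submodule from \cite[Lemma 2.3]{solecki_polish_1999} (compare Lemma \ref{Lemma:1st-Solecki})---$\hat{G}_{\beta+1}$ is a countable intersection of sets of the form $\overline{V}^{\hat{G}_{\beta}}+N$, with $V$ ranging over a countable basis of zero-neighborhoods of $N$ inside $\hat{G}_{\beta}$---into an inductive complexity estimate: knowing the class of $\hat{G}_{\beta}$ in $\hat{G}$, each set $\overline{V}^{\hat{G}_{\beta}}+N$ lies one or two Borel levels higher, and the outer countable intersection adds the terminal $\boldsymbol{\Pi}$. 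At a limit stage $\alpha=\lambda$ one has $N=\bigcap_{\beta<\lambda}\hat{G}_{\beta}$, a countable intersection of $\boldsymbol{\Pi}_{1+\beta+1}^{0}$ sets and hence $\boldsymbol{\Pi}_{\lambda}^{0}=\Gamma_{\lambda}$. The plainness hypothesis enters only to upgrade the estimate at the last successor step of a block $\lambda+n$ from $D(\boldsymbol{\Pi}_{1+\lambda+1}^{0})$ to $\boldsymbol{\Sigma}_{1+\lambda+1}^{0}$.

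Next I would prove the matching lower bound: $\{0\}$ lies in no strictly smaller complexity class. The engine is the minimality of the Solecki submodules, namely that $s_{\beta}(G)$ is the \emph{smallest} $\boldsymbol{\Pi}_{1+\beta+1}^{0}$ submodule with a Polish cover of $G$ \cite[Theorem 5.4]{lupini_looking_2024}: if $\{0\}$ were $\boldsymbol{\Pi}_{1+\beta+1}^{0}$ for some $\beta<\alpha$, then $s_{\beta}(G)=\{0\}$, contradicting that the Solecki length of $G$ is $\alpha$. This already rules out every class properly inside $\boldsymbol{\Pi}_{1+\alpha}^{0}$ when $\alpha$ is a limit, and inside $\boldsymbol{\Pi}_{1+(\alpha-1)+1}^{0}$ when $\alpha$ is a successor. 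To obtain the sharp bound I would refine this case by case in $\alpha=\lambda+n$, using that $s_{\alpha-1}(G)$ is a nontrivial phantom Polish module on which $\{0\}$ is not $\boldsymbol{\Sigma}_{2}^{0}$ unless $G$ is plain---this is precisely the dichotomy between cases (1) and (2)---to pin $\{0\}$ to $D(\boldsymbol{\Pi}^{0}_{1+\lambda+n})$ rather than to a full $\boldsymbol{\Pi}^{0}$ or $\boldsymbol{\Sigma}^{0}$ class at the relevant level in the remaining subcases.

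The hard part is the precise bookkeeping at the last successor step inside a block $\lambda+n$: one must show that $N$ lands \emph{exactly} on the predicted class, and not half a level off, which requires a careful analysis of how the $\overline{V}+N$ construction interacts with the pro-countable inverse-limit structure of $\hat{G}$ and $N$, and of exactly what plainness contributes there. This analysis is the technical core of \cite{solecki_polish_1999} and \cite{lupini_complexity_2024}, and reproducing it faithfully is where the real work lies.
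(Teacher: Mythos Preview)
Your proposal is correct: the paper does not prove this theorem but imports it verbatim from \cite[Theorem~6.1]{lupini_complexity_2024}, exactly as you say in your first sentence. The sketch you then give of the underlying argument---upper bound by transfinite induction along the Solecki filtration using the explicit description of $s_{1}$, lower bound from the minimality characterization of $s_{\beta}$ as the smallest $\boldsymbol{\Pi}_{1+\beta+1}^{0}$ submodule, with the plain/non-plain split entering at the final successor step---is an accurate outline of how the cited result is established, and your candid acknowledgment that the sharp bookkeeping at that last step is where the real technical content lies is entirely appropriate.
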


\begin{corollary}
\label{Corollary:complexity}Let $G$ be a module with a pro-countable Polish
cover. The the complexity class of $\left\{ 0\right\} $ in $G$ is either $%
\Gamma _{\alpha }$ or $\Gamma _{\alpha }^{\mathrm{plain}}$ for some $\alpha
<\omega _{1}$.
\end{corollary}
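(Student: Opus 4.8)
The plan is to deduce this directly from Theorem \ref{Theorem:complexity}, which already carries all the content; the corollary is essentially a repackaging that makes explicit the two possible answers without reference to whether $G$ is plain. First I would recall that, by Solecki's theorem (\cite[Theorem 2.1]{solecki_polish_1999}), for any module with a Polish cover $G$ the decreasing chain of Solecki submodules $\left(s_{\alpha}\left(G\right)\right)_{\alpha<\omega_{1}}$ reaches $\left\{0\right\}$ after countably many steps. Hence the Solecki length of $G$, i.e.\ the least $\alpha$ with $s_{\alpha}\left(G\right)=\left\{0\right\}$, is a well-defined countable ordinal; call it $\alpha$. In particular $\alpha<\omega_{1}$, so the indices $1+\alpha+1$ appearing in the definitions of $\Gamma_{\lambda+n}$, $\Gamma_{\lambda+n}^{\mathrm{semiplain}}$, $\Gamma_{\lambda+n}^{\mathrm{plain}}$ make sense for $\alpha=\lambda+n$ in Cantor normal form.

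Next I would observe that the notion of being \emph{plain} partitions the modules with a pro-countable Polish cover into two exhaustive and mutually exclusive classes: $G$ is either plain (which, by definition, requires $\alpha$ to be a successor ordinal and $\left\{0\right\}\in\boldsymbol{\Sigma}_{2}^{0}\left(s_{\alpha-1}\left(G\right)\right)$) or it is not (this covers in particular the cases $\alpha=0$ and $\alpha$ limit, where $G$ is automatically non-plain). Applying Theorem \ref{Theorem:complexity}(1) in the first case shows that the complexity class of $\left\{0\right\}$ in $G$ is $\Gamma_{\alpha}^{\mathrm{plain}}$, while Theorem \ref{Theorem:complexity}(2) in the second case shows that it is $\Gamma_{\alpha}$. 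Either way the complexity class of $\left\{0\right\}$ in $G$ is one of the two displayed options, which is exactly the assertion of the corollary.

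I do not expect a genuine obstacle here: the substantive input—that $\left\{0\right\}$ always lands in one of the complexity classes $\boldsymbol{\Pi}_{\xi}^{0}$, $D(\boldsymbol{\Pi}_{\xi}^{0})$, or $\boldsymbol{\Sigma}_{\xi}^{0}$ listed above, with $\xi$ governed by the Solecki length—is contained in Theorem \ref{Theorem:complexity}, whose proof is cited from \cite[Theorem 6.1]{lupini_complexity_2024}. The only two points one must be careful not to overlook are (i) that the Solecki length is indeed a countable ordinal, which is precisely Solecki's theorem quoted above, and (ii) that the plain/non-plain case split is genuinely exhaustive, so that no module with a pro-countable Polish cover escapes both clauses of Theorem \ref{Theorem:complexity}. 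Both are routine, so the proof will be a one-line invocation of the preceding theorem.
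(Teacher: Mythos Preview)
Your proposal is correct and matches the paper's approach: the corollary is stated immediately after Theorem \ref{Theorem:complexity} without a separate proof, precisely because it follows by the exhaustive plain/non-plain case split you describe. Your added care about the Solecki length being countable and the dichotomy being exhaustive is appropriate but, as you anticipate, the paper treats this as a one-line consequence.
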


In fact, it is also proved in \cite[Theorem 1.2]{lupini_complexity_2024}
that the ones in Corollary \ref{Corollary:complexity} are the only possible
complexity classes of a non-Archimedean Polishable subgroup of a Polish
group. Recall that $\Gamma _{1}^{\mathrm{plain}}$ is by definition the
complexity class $\boldsymbol{\Sigma }_{2}^{0}$.

\begin{lemma}
\label{Lemma:compact-phantom}Suppose that $\left( G_{i}\right) $ is an
inverse sequence of phantom pro-countable modules of plain Solecki length at
most $1$, and $H$ is a phantom pro-countable module. Suppose that $G=\mathrm{%
lim}_{i}G_{i}$ and $\Phi :G\rightarrow H$ is a Borel homomorphism. If $H$
has plain Solecki length at most $1$, then there exists $n\in \omega $ such
that $\Phi $ factors through $G_{n}$.
\end{lemma}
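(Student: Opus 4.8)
The plan is to reduce the statement to a descriptive‑set‑theoretic assertion about the Polish covers and then settle it by a Baire‑category/fusion argument.

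\textit{Coordinates and reduction.} By the earlier lemma that the $\boldsymbol{\Pi}(\mathbf{Mod}(R))$‑limit of an inverse sequence of phantom Polish modules is phantom, I would write $G=\lim_i G_i=\hat G/N$ exactly as in the proof of that lemma, with $G_i=\hat G^{(i)}/N^{(i)}$. The canonical projection $\pi_i\colon G\to G_i$ is induced by the continuous coordinate homomorphism $\hat G\to\hat G^{(i)}$, so $\ker\pi_i=\hat K_i/N$, where $\hat K_i\le\hat G$ is the preimage of the Polishable submodule $N^{(i)}\le\hat G^{(i)}$; the $\hat K_i$ form a decreasing chain of Polishable submodules of $\hat G$ with $\bigcap_i\hat K_i=N$, and $N$ is dense in $\hat G$. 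Passing, if necessary, to the essentially epimorphic tower $(\pi_i(G))_i$ (same limit, and whose terms still have plain Solecki length at most $1$, being submodules with a Polish cover of the $G_i$), I may assume each $\pi_i$ is surjective; then $G/\ker\pi_i\cong G_i$, so $\Phi$ factors through $G_i$ if and only if $\ker\Phi\supseteq\ker\pi_i$, i.e.\ if and only if $\hat\Phi^{-1}(M)\supseteq\hat K_i$ for a Borel lift $\hat\Phi\colon\hat G\to\hat H$ of $\Phi$ (with $H=\hat H/M$). Thus it suffices to produce such an $i$.

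\textit{Complexity transfer.} Since $H$ has plain Solecki length at most $1$, Theorem \ref{Theorem:complexity} together with $\Gamma_1^{\mathrm{plain}}=\boldsymbol{\Sigma}_2^0$ gives that the trivial submodule of $H$ is $\boldsymbol{\Sigma}_2^0$ in $H$; by the first of the bulleted transfer properties recalled in Section \ref{Section:modules-polish-cover}, $\ker\Phi=\Phi^{-1}(\{0\})$ is then $\boldsymbol{\Sigma}_2^0$ in $G$, i.e.\ $D:=\hat\Phi^{-1}(M)$ is an $F_\sigma$ subgroup of $\hat G$ containing the dense submodule $N$. The same argument applied to each $\pi_i$ (its target also having plain Solecki length at most $1$) shows that each $\hat K_i$ is $F_\sigma$ in $\hat G$. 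The problem has thus been reduced to the following: in the non‑Archimedean Polish group $\hat G$ one is given a decreasing chain of dense $F_\sigma$ subgroups $\hat K_0\supseteq\hat K_1\supseteq\cdots$ with $\bigcap_i\hat K_i=N$, and a dense $F_\sigma$ subgroup $D\supseteq N$ arising as $\hat\Phi^{-1}(M)$ for a Borel lift $\hat\Phi$ into a module of plain Solecki length at most $1$; show $\hat K_i\subseteq D$ for some $i$.

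\textit{Fusion argument, and the main obstacle.} Suppose not, and choose for each $i$ an element $y_i\in\hat K_i\setminus D$, so $\hat\Phi(y_i)\notin M$. The idea is to fuse the $y_i$ into a single $x=\sum_i y_i\in\hat G$ and force a contradiction: one selects the $y_i$ inside a rapidly shrinking neighborhood basis of $0$ for $\hat G$ coming from the tower, so that the tails $\sum_{j\ge i}y_j$ converge and lie in $\hat K_i$, running this selection in parallel with the explicit description of the first Solecki submodule behind Lemma \ref{Lemma:1st-Solecki} applied to $H$ — namely, $h\notin M$ is witnessed by a neighborhood $V$ of $0$ in $M$ with $h\notin M+\overline V^{\hat H}$ — and arranges a single such $V$ valid for all $i$. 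Then $x$ differs, for each $i$, from an element of $\hat K_i$ by a fixed finite partial sum; tracking the Borel (non‑homomorphic) lift $\hat\Phi$ along this decomposition and using that $\hat\Phi(\hat K_i)$ is "absorbed" modulo $M$ as $i\to\infty$ yields $\hat\Phi(x)\in M$, whereas the choice of $V$ gives $\hat\Phi(x)\notin M+\overline V^{\hat H}$ — a contradiction. I expect the bulk of the work, and the main obstacle, to be exactly this bookkeeping: the $\hat K_i$ carry their own, strictly finer, Polish topologies and need not be phantom, so the convergence of $\sum_i y_i$ and the behavior of $\hat\Phi$ on partial sums and tails must be controlled level by level, and the auxiliary case in which some $\hat K_i\cap D$ is non‑meager (hence clopen) in $\hat K_i$ must be disposed of separately. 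This is precisely where the hypothesis that all the $G_i$, as well as $H$, have plain Solecki length at most $1$ is genuinely used.
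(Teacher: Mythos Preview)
Your reduction is sound, but the fusion argument has a real gap. The step ``using that $\hat\Phi(\hat K_i)$ is `absorbed' modulo $M$ as $i\to\infty$ yields $\hat\Phi(x)\in M$'' is essentially circular: the statement $\hat\Phi(\hat K_i)\subseteq M$ is exactly $\hat K_i\subseteq D$, which is what you are trying to prove. Since the lift $\hat\Phi$ is only Borel, not a homomorphism, there is no way to control $\hat\Phi(\sum_i y_i)$ from the individual $\hat\Phi(y_i)$, and at the quotient level the infinite sum $\sum_i\Phi([y_i])$ has no meaning in $H$. Nothing in the hypotheses forces the ``single $V$ valid for all $i$'' to exist either. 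The bookkeeping you flag is not a detail to be filled in; the mechanism is missing.

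The paper avoids all of this by exploiting the hypotheses more sharply, in two moves you did not make. First, ``plain Solecki length at most $1$'' for a \emph{pro-countable} $G_i$ lets you choose a presentation $G_i=\hat G_i/L_i$ with $L_i$ \emph{countable} (quotient both numerator and denominator by an open-in-$N^{(i)}$ submodule that is closed in $\hat G^{(i)}$). With these presentations, the denominator $L$ of $G$ in the explicit $\hat G$ has a product-like Polish topology with discrete factors, so an open submodule of $L$ must contain a tail $\{(x_j,z_j)\in L:\ x_j=0\text{ for }j\le n\}$. Second, apply Baire category not to $\hat K_i\cap D$ but to $K:=\hat\Phi^{-1}(M)$ itself: since $K$ is a $\boldsymbol\Sigma_2^0$ pro-countable Polishable submodule of $\hat G$, one of the closed pieces in an $F_\sigma$ decomposition has nonempty interior in $K$'s own topology, and (using non-Archimedeanity) this yields an open submodule $U\le K$ that is \emph{closed in $\hat G$}. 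Now $U\cap L$ is open in $L$, hence contains a tail of $L$; since $U$ is closed in $\hat G$ and that tail of $L$ is dense in the corresponding tail $\{(x_j,z_j)\in\hat G:\ x_j=0\text{ for }j\le n\}$ of $\hat G$, you get $U\supseteq$ this $\hat G$-tail. Together with $L\subseteq K$ this gives $\hat K_n\subseteq K$, and you are done---no fusion needed.
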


\begin{proof}
As $\left\{ 0\right\} \in \Sigma _{2}^{0}\left( G_{i}\right) $, we can write 
$G_{i}=\hat{G}_{i}/L_{i}$ for some pro-countable Polish module $\hat{G}_{i}$
and some countable submodule $L_{i}$. Set 
\begin{equation*}
\hat{G}=\left\{ \left( x_{i},y_{i}\right) _{i\in \omega }\in \prod_{i\in
\omega }\hat{G}_{i}\oplus L_{i}:\forall i,p^{\left( i,i+1\right) }\left(
x_{i+1}\right) -x_{i}=y_{i}\right\}
\end{equation*}%
and%
\begin{equation*}
L:=\left\{ \left( x_{i},y_{i}\right) _{i\in \omega }\in \hat{G}:\forall
i,x_{i}\in L_{i}\right\} \text{.}
\end{equation*}%
Then we have that $G=\hat{G}/L$ and $\mathrm{\mathrm{\mathrm{Ker}}}\left(
\Phi \right) =K/L$ for some $\boldsymbol{\Sigma }_{2}^{0}$ pro-countable
Polishable submodule $K$ of $\hat{G}$ containing $L$.

Since $K$ is a $\boldsymbol{\Sigma }_{2}^{0}$ pro-countable Polishable
submodule of $\hat{G}$, by the Baire Category Theorem it has an open
submodule $U$ that is closed in $\hat{G}$. As $U$ is open in $K$, $U\cap L$
is open in $L$. Thus, there exists $n\in \omega $ such that $U\cap L$
contains 
\begin{equation*}
\left\{ \left( x_{i},y_{i}\right) _{i\in \omega }\in L:\forall i\leq
n,x_{i}=0\right\} \text{.}
\end{equation*}%
As $U$ is closed in $\hat{G}$, this implies that $U$ contains 
\begin{equation*}
\left\{ \left( x_{i},y_{i}\right) _{i\in \omega }\in \hat{G}:\forall i\leq
n,x_{i}=0\right\} .
\end{equation*}%
Thus, the same holds for $K$, and the conclusion follows.
\end{proof}

\begin{corollary}
\label{Corollary:compact-phantom}Let $\left( G^{\left( n\right) },p^{\left(
n,n+1\right) }:G^{\left( n+1\right) }\rightarrow G^{\left( n\right) }\right) 
$ be an inverse sequence of phantom pro-countable modules such that $%
p^{\left( n,n+1\right) }$ is surjective for every $n\in \omega $. Assume
that $G^{\left( n\right) }$ has plain Solecki length at most $1$ for every $%
n\in \omega $. Set $G:=\mathrm{lim}_{n}G^{\left( n\right) }$. The following
assertions are equivalent:

\begin{enumerate}
\item $G$ has plain Solecki length at most $1$;

\item there exists $k\in \omega $ such that the canonical map $G\rightarrow
G^{\left( k\right) }$ is an isomorphism;

\item there exists $k\in \omega $ such that for all $n\geq k$, $p^{\left(
n,n+1\right) }\ $is an isomorphism.
\end{enumerate}
\end{corollary}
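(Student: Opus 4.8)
The plan is to establish the cycle of implications $(1)\Rightarrow(3)\Rightarrow(2)\Rightarrow(1)$. The only substantive step is $(1)\Rightarrow(3)$, which is an application of Lemma~\ref{Lemma:compact-phantom} to the identity morphism of $G$.

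Assume $(1)$. Recall that the limit $G=\mathrm{lim}_n G^{(n)}$ of a tower of phantom pro-countable modules is again a phantom pro-countable module, so under $(1)$ the map $\mathrm{id}_G\colon G\to G$ is a Borel homomorphism from the limit of a tower of phantom pro-countable modules of plain Solecki length at most $1$ to a phantom pro-countable module of plain Solecki length at most $1$. By Lemma~\ref{Lemma:compact-phantom} there are $k\in\omega$ and a Borel homomorphism $\phi\colon G^{(k)}\to G$ with $\mathrm{id}_G=\phi\circ\pi_k$, where $\pi_m\colon G\to G^{(m)}$ denotes the canonical projection. For $n\ge k$ write $q_{k,n}\colon G^{(n)}\to G^{(k)}$ for the composite $p^{(k,k+1)}\circ\cdots\circ p^{(n-1,n)}$ of bonding maps (with $q_{k,k}=\mathrm{id}$). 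Compatibility of the limit cone gives $q_{k,n}\circ\pi_n=\pi_k$, hence $(\phi\circ q_{k,n})\circ\pi_n=\mathrm{id}_G$, so $\pi_n$ is injective for every $n\ge k$. Moreover each $\pi_n$ is surjective, because all bonding maps are: writing $G^{(m)}=\hat G^{(m)}/N^{(m)}$ with $\hat G^{(m)}$ pro-countable Polish and $N^{(m)}$ dense and Polishable, and given a representative $a\in\hat G^{(n)}$, one builds a compatible family of representatives $(x_m)_m$ by setting $x_n=a$, choosing $x_m$ for $m>n$ by successively lifting along the surjective bonding maps modulo the $N^{(m)}$, and letting $x_m$ for $m<n$ be the image of $a$ under the chosen lifts of the bonding maps; the resulting element of $\hat G=\{(x_m,z_m)_m: x_m-\hat p^{(m,m+1)}(x_{m+1})=z_m\in N^{(m)}\}$ is carried by $\pi_n$ to the class of $a$. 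By \cite[Remark 3.10]{lupini_looking_2024} a bijective Borel homomorphism between modules with a Polish cover is an isomorphism, so $\pi_n$ is an isomorphism for every $n\ge k$; consequently $p^{(n,n+1)}=\pi_n\circ\pi_{n+1}^{-1}$ is an isomorphism for every $n\ge k$, which is $(3)$. Specializing to $n=k$ also gives $(2)$.

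For $(3)\Rightarrow(2)$, if $p^{(n,n+1)}$ is an isomorphism for all $n\ge k$ then the subtower $(G^{(n)})_{n\ge k}$ consists of isomorphisms, so its limit is $G^{(k)}$ with the canonical projection equal to the identity; since this subtower is cofinal, $G=\mathrm{lim}_n G^{(n)}=\mathrm{lim}_{n\ge k}G^{(n)}$ with compatible projections, so $\pi_k\colon G\to G^{(k)}$ is an isomorphism. For $(2)\Rightarrow(1)$, plain Solecki length is invariant under isomorphism of modules with a Polish cover---the Solecki submodules $s_\alpha$ and the complexity class of $\{0\}$ are preserved, using the functoriality of submodules with a Polish cover recalled in Section~\ref{Section:modules-polish-cover}---so if $\pi_k\colon G\to G^{(k)}$ is an isomorphism and $G^{(k)}$ has plain Solecki length at most $1$, then so does $G$. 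This closes the cycle.

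The only point that goes beyond formal manipulation and the cited lemma is the surjectivity of the projections $\pi_n$ under the hypothesis that all bonding maps are surjective; this is handled by the lifting argument above and is equivalent to the standard fact that $\mathrm{lim}^1$ vanishes and the limit projections are surjective for a tower with surjective bonding maps, which one reads off from the description of $\mathrm{lim}$ and $\mathrm{lim}^1$ in Section~\ref{Subsection:towers}. All remaining ingredients---the behaviour of limits of cofinal subtowers and of towers of isomorphisms, and the isomorphism-invariance of plain Solecki length---are routine.
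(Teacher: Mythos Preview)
Your proof is correct and follows the intended route: the paper states this result as an immediate corollary of Lemma~\ref{Lemma:compact-phantom}, and applying that lemma to $\mathrm{id}_G$ to obtain a section of $\pi_k$, then combining with surjectivity of the projections (from surjectivity of the bonding maps) to conclude that the $\pi_n$ for $n\ge k$ are isomorphisms, is exactly the argument implicit in the paper's presentation. The remaining implications $(3)\Rightarrow(2)\Rightarrow(1)$ are formal, as you note.
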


\begin{lemma}
\label{Lemma:compact-phantom2}Suppose that $G,H$ are modules with a
pro-countable cover. Let $\varphi :G\rightarrow H$ be a Borel group
homomorphism. If $G$ has plain Solecki length at most $1$, then so does $%
\varphi \left( G\right) $.
\end{lemma}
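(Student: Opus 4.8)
The plan is to reduce the statement to a question about quotients and then to a concrete complexity computation at the level of Polish covers. Since a bijective Borel homomorphism between modules with a Polish cover is an isomorphism (\cite[Remark 3.10]{lupini_looking_2024}), and $\varphi$ factors as $G\twoheadrightarrow G/\ker\varphi\hookrightarrow H$ with the second arrow inducing a bijection onto $\varphi(G)$, I would first record the canonical isomorphism $\varphi(G)\cong G/\ker\varphi$. Here $\ker\varphi$ is a submodule with a pro-countable Polish cover of $G$ (it is the preimage of the trivial submodule of $H$, and such preimages are again submodules with a pro-countable Polish cover, cf.\ Section \ref{Section:modules-polish-cover}), and it has plain Solecki length at most $1$: by Theorem \ref{Theorem:complexity} and Corollary \ref{Corollary:complexity} a module with a pro-countable Polish cover has plain Solecki length at most $1$ precisely when its trivial submodule is $\boldsymbol{\Sigma}_2^0$, and this property passes to submodules because the cover topology of a submodule is finer than the one induced from the ambient cover, so a set that is $F_\sigma$ for the latter is $F_\sigma$ for the former. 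Thus it suffices to show that $G/K$ has plain Solecki length at most $1$ whenever $K$ is a submodule with a pro-countable Polish cover of $G$ and $G$ has plain Solecki length at most $1$.

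Next I would pass to covers. Writing $G=\hat G/N$ and $K=\hat J/N$ with $N\subseteq\hat J\subseteq\hat G$ pro-countable Polishable submodules, we have $G/K\cong\hat G/\hat J$, and by the criterion above the hypotheses become: $N$ is $F_\sigma$ in $\hat G$, and $N$ is $F_\sigma$ in $\hat J$; the desired conclusion is that $\hat J$ is $F_\sigma$ in $\hat G$ (the plainness clause is then automatic, since $\{0\}$ is visibly $\boldsymbol{\Sigma}_2^0$ in $\hat G/\hat J$ once $\hat J$ is shown to be $F_\sigma$). To exploit the extra structure I would fix a decreasing neighbourhood basis $\hat J=W_0\supseteq W_1\supseteq\cdots$ at $0$ consisting of open subgroups of $\hat J$ with $\bigcap_mW_m=0$, and an increasing exhaustion $N=\bigcup_iC_i$ with $C_i$ closed in $\hat G$. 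The idea is to use the characterization of the first Solecki submodule (Lemma \ref{Lemma:1st-Solecki}), applied \emph{inside} $\hat J$, to pass to a subsequence of the $W_m$ with $\overline{W_{m+1}}^{\hat G}\subseteq W_m$; combined with the $\hat G$-$F_\sigma$-ness of $N$ this should force each $W_m$ itself to be a countable union of $\hat G$-closed sets, after which one expresses $\hat J$ as a countable union of sets of the form $C_i+W_m$ and checks these are $F_\sigma$ in $\hat G$ (using that $C_i$ is $\hat G$-closed, $W_m$ is a $\hat G$-$F_\sigma$ subgroup, and $W_m$ has countable index in $C_i+W_m$ because $\hat J/W_m$ is countable).

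The main obstacle is exactly this last step — promoting ``$N$ is $F_\sigma$ in both $\hat G$ and $\hat J$, and $\hat J$ is pro-countable'' to ``$\hat J$ is $F_\sigma$ in $\hat G$.'' Pro-countability of $\hat J$ is essential and not merely a convenience: without it the statement fails, since a $c_0$-type Polishable submodule of a pro-finite module is $\boldsymbol{\Pi}_3^0$-complete while still containing the $F_\sigma$ subgroup $\{0\}$, so the only leverage must come from the open-subgroup basis of $\hat J$. A second, possibly cleaner, route to the same step would be to verify $s_1(\hat G/\hat J)=0$ directly from the explicit formula in Lemma \ref{Lemma:1st-Solecki}: for $x\notin\hat J$ one must produce an open $V\ni 0$ in $\hat J$ with $(x+\hat J)\cap\overline V^{\hat G}=\emptyset$, which one would assemble from a separating neighbourhood for the coset $x+N$ (available since $G$ has Solecki length at most $1$) together with a sufficiently thin $W_m$, again invoking the $\hat G$-$F_\sigma$ decomposition of $N$ and the pro-countable structure of $\hat J$. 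Either way, once $\hat J$ is shown to be $F_\sigma$ in $\hat G$ the conclusion that $\varphi(G)\cong G/K\cong\hat G/\hat J$ has plain Solecki length at most $1$ is immediate.
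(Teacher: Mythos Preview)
Your reduction to the quotient $G/\ker\varphi$ and your identification of the core obstacle---showing that the Polishable cover $\hat J$ of $\ker\varphi$ is $\boldsymbol{\Sigma}_2^0$ in $\hat G$---are both correct, but your claim that pro-countability of $\hat J$ supplies the missing leverage is mistaken: the $c_0$-type example you invoke as a cautionary non-pro-countable case \emph{is} pro-countable. Concretely, take $\hat G=\prod_n\mathbb{Z}_p$ (pro-finite), $N=\{0\}$, and $\hat J=\{(x_n)\in\hat G:x_n\to 0\}$. Then $\hat J\cong\varprojlim_k\bigoplus_n\mathbb{Z}/p^k$ via the open subgroups $V_k:=\hat J\cap\prod_n p^k\mathbb{Z}_p$ of countable index, and the inclusion $\hat J\hookrightarrow\hat G$ is continuous, so $\hat J$ is a pro-countable Polishable submodule of $\hat G$. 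A Baire argument shows $\hat J$ is not $\boldsymbol{\Sigma}_2^0$ in $\hat G$: any $\hat G$-closed subset of $\hat J$ with nonempty $\hat J$-interior contains a coset of some $V_k$, hence (being $\hat G$-closed) the $\hat G$-closure $\prod_n p^k\mathbb{Z}_p$, which is not contained in $\hat J$. With $G=\hat G$ (Solecki length $0$, so certainly plain length $\le 1$) and $\varphi$ the quotient map onto $H=\hat G/\hat J$ (a module with pro-countable cover), one obtains $\varphi(G)=H$ of Solecki length $1$ but \emph{not} plain. Thus neither of your proposed routes can close the gap, and the lemma as stated appears to be false. (Your second route would in any case only yield $s_1(\hat G/\hat J)=0$, i.e.\ Solecki length $\le 1$, not the stronger $\boldsymbol{\Sigma}_2^0$ condition required for \emph{plain} length $\le 1$.)

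For comparison, the paper first uses a Baire argument on $N$ (pro-countable and $\boldsymbol{\Sigma}_2^0$ in $\hat G$) to find an open subgroup $V\subseteq N$ that is closed in $\hat G$, and quotients by it to reduce to $N$ countable---a genuine simplification you do not make. It then asserts without justification that the cover $K$ of $\ker\varphi$ is closed in $\hat G$, whence $N+K=K$ is $\boldsymbol{\Sigma}_2^0$. This closure claim fails in the same example (where $N=0$ is already countable and $K=\hat J$ is not closed), so the paper's argument has the same gap at the same point.
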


\begin{proof}
Since $G$ is a module with a pro-countable cover, we can write $G=\hat{G}/N$
where $\hat{G}$ and $N$ are pro-countable modules. Since $\left\{ 0\right\}
\in \boldsymbol{\Sigma }_{2}^{0}\left( G\right) $, we have that $N$ has a
zero neighborhood $V$ that is a closed submodule of $\hat{G}$. After
replacing $\hat{G}$ with $\hat{G}/V$ and $N$ with $N/V$, we can assume that $%
N$ is countable. We can also assume without loss of generality that $\varphi 
$ is surjective. If $\mathrm{\mathrm{Ker}}\left( \varphi \right) $ is the
submodule $K/N$, then we have that $H\cong \hat{G}/(N+K)$. Since $K$ is
closed in $\hat{G}$, we have that $N+K\in \boldsymbol{\Sigma }_{2}^{0}(\hat{G%
})$, and hence $\left\{ 0\right\} \in \boldsymbol{\Sigma }_{2}^{0}\left(
H\right) $.
\end{proof}

\section{Ranks and games\label{Section:ranks}}

In this section, we recall some notions from descriptive set theory
concerning \emph{ranks }and their description in terms of games. More
information can be found in \cite{kechris_classical_1995}.

\subsection{Rank of well-founded binary relation}

Suppose that $Y$ is a set, and $\prec $ is an binary relation on $Y$. For a
subset $A$ of $Y$, define its $\prec $-\emph{derivative} to be the set%
\begin{equation*}
D_{\prec }\left( A\right) :=\left\{ y\in Y:\exists a\in A\text{, }a\prec
y\right\} \text{.}
\end{equation*}%
Define the recursively, for every ordinal $\alpha $,%
\begin{equation*}
D_{\prec }^{0}\left( A\right) :=A\text{,}
\end{equation*}%
\begin{equation*}
D_{\prec }^{\alpha +1}\left( A\right) :=D_{\prec }\left( D_{\prec }^{\alpha
}\left( A\right) \right) \text{,}
\end{equation*}%
and for $\lambda $ limit%
\begin{equation*}
D_{\prec }^{\lambda }\left( A\right) :=\bigcap_{\alpha <\lambda }D_{\prec
}^{\alpha }\left( A\right) \text{.}
\end{equation*}%
The binary relation $\prec $ is \emph{ill-founded }if there exists a
sequence $\left( x_{n}\right) $ in $Y$ with $x_{n+1}\prec x_{n}$ for every $%
n\in \omega $, and \emph{well-founded }otherwise. If $\prec $ is
well-founded, one defines the rank $\rho _{\prec }:Y\rightarrow \mathbf{ORD}$%
, where $\mathbf{ORD}$ is the class of ordinals, by well-founded recursion
on $\prec $ by setting%
\begin{equation*}
\rho _{\prec }\left( x\right) =\mathrm{sup}\left\{ \rho _{\prec }\left(
y\right) +1:y\in Y\text{ and }y\prec x\right\} \text{.}
\end{equation*}%
One defines then the rank $\rho \left( \prec \right) $ of $\prec $ to be the
least ordinal not in the range of $\rho _{\prec }$, thus%
\begin{equation*}
\rho \left( \prec \right) =\mathrm{\mathrm{sup}}\left\{ \rho _{\prec }\left(
y\right) +1:y\in Y\right\} \text{;}
\end{equation*}%
see \cite[Appendix B]{kechris_classical_1995}. When $\prec $ is ill-founded,
one sets $\rho \left( \prec \right) =\infty $. It is clear that we have the
equality%
\begin{equation*}
D_{\prec }^{\alpha }\left( Y\right) =\left\{ y\in Y:\alpha \leq \rho _{\prec
}\left( y\right) \right\} \text{.}
\end{equation*}%
Therefore, the following result is immediate.

\begin{lemma}
Suppose that $Y$ is a set, and $\prec $ is a binary relation on $Y$. Then $%
\prec $ is well-founded if and only if there exists an ordinal $\alpha $
such that $D_{\prec }^{\alpha }\left( Y\right) =\varnothing $. In this case,
we have that $\rho \left( \prec \right) $ is the least such an ordinal.
\end{lemma}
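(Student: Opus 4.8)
The plan is to prove the two directions of the equivalence separately, and then read off the identification of $\rho(\prec)$ from the identity $D_{\prec}^{\alpha}(Y) = \{y \in Y : \alpha \leq \rho_{\prec}(y)\}$ recorded immediately above the statement.

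For the implication "$D_{\prec}^{\alpha}(Y) = \varnothing$ for some $\alpha$ implies $\prec$ well-founded", I would argue by contraposition. Suppose $\prec$ is ill-founded, witnessed by a sequence $(x_n)_{n\in\omega}$ in $Y$ with $x_{n+1} \prec x_n$ for every $n$. I claim that $x_n \in D_{\prec}^{\alpha}(Y)$ for every $n\in\omega$ and every ordinal $\alpha$; this I would prove by transfinite induction on $\alpha$, with the statement at stage $\alpha$ being "$x_n \in D_{\prec}^{\alpha}(Y)$ for all $n$". The base case $\alpha = 0$ is immediate since $D_{\prec}^{0}(Y) = Y$. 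At a successor $\alpha+1$, the inductive hypothesis gives $x_{n+1} \in D_{\prec}^{\alpha}(Y)$, and since $x_{n+1} \prec x_n$ this yields $x_n \in D_{\prec}(D_{\prec}^{\alpha}(Y)) = D_{\prec}^{\alpha+1}(Y)$. At a limit $\lambda$, the hypothesis for all $\alpha < \lambda$ gives $x_n \in \bigcap_{\alpha<\lambda} D_{\prec}^{\alpha}(Y) = D_{\prec}^{\lambda}(Y)$. In particular $x_0 \in D_{\prec}^{\alpha}(Y)$ for every $\alpha$, so no $D_{\prec}^{\alpha}(Y)$ is empty.

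For the converse together with the identification of $\rho(\prec)$, I would assume $\prec$ well-founded, so that $\rho_{\prec} : Y \to \mathbf{ORD}$ is defined, and invoke the displayed identity. Then $D_{\prec}^{\alpha}(Y) = \varnothing$ holds precisely when no $y \in Y$ satisfies $\rho_{\prec}(y) \geq \alpha$, i.e. when $\rho_{\prec}(y) + 1 \leq \alpha$ for every $y \in Y$, i.e. when $\rho(\prec) = \sup\{\rho_{\prec}(y)+1 : y \in Y\} \leq \alpha$. Hence such an $\alpha$ exists (take $\alpha = \rho(\prec)$), and the least one is exactly $\rho(\prec)$, which is what is asserted.

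I do not expect a genuine obstacle here; the only point deserving attention is that the displayed identity already presupposes well-foundedness of $\prec$, so the reverse implication ("an empty derivative forces well-foundedness") cannot be extracted from it directly and genuinely needs the separate transfinite-induction argument along an infinite descending chain given above.
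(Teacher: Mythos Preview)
Your proof is correct and follows the paper's approach: the paper declares the lemma ``immediate'' from the displayed identity $D_{\prec}^{\alpha}(Y) = \{y \in Y : \alpha \leq \rho_{\prec}(y)\}$, which handles the well-founded direction and the identification of $\rho(\prec)$ exactly as you do. You are right that this identity presupposes well-foundedness and so does not directly yield the contrapositive direction; your transfinite induction along an infinite descending chain is the clean way to fill that gap, which the paper leaves implicit.
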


Suppose now that $Y$ is a \emph{standard Borel space}, and $\prec $ is \emph{%
analytic }binary relation on $Y$. (Recall that a subset $A$ of a standard
Borel space $Y$ is analytic if it is the image of a continuous function $%
f:Z\rightarrow Y$ for some standard Borel space $Z$, and co-analytic if its
complement in $Y$ is analytic.) In this case, if $\prec $ is well-founded,
then its rank is a \emph{countable }ordinal \cite[Theorem 31.1]%
{kechris_classical_1995}. Furthermore, for an analytic subset $A\subseteq Y$%
, its derivative $D_{\prec }^{\alpha }\left( A\right) $ is still analytic.

\subsection{Coanalytic ranks}

Recall that if $X$ is a standard Borel space, then a co-analytic rank on $X$
is a function $\varphi :X\rightarrow \omega _{1}\cup \left\{ \infty \right\} 
$ such that there exist analytic binary relations $\leq _{\varphi }$ and $%
<_{\varphi }$ on $X$ satisfying, for $x,y\in X$ with $\varphi \left(
y\right) <\infty $:%
\begin{equation*}
\left( \varphi \left( x\right) <\infty \wedge \varphi \left( x\right) \leq
\varphi \left( y\right) \right) \Leftrightarrow x\leq _{\varphi }y
\end{equation*}%
and%
\begin{equation*}
\left( \varphi \left( x\right) <\infty \wedge \varphi \left( x\right)
<\varphi \left( y\right) \right) \Leftrightarrow x<_{\varphi }y\text{;}
\end{equation*}%
see \cite[Section 34.B]{kechris_classical_1995}.

Suppose now that $X,Y$ are standard Borel spaces, and $\prec \subseteq
X\times Y\times Y$ is an analytic set. For $x\in X$ we define the analytic
binary relation $\prec _{x}$ on $Y$ by setting, for $y,y^{\prime }\in Y$, 
\begin{equation*}
y\prec _{x}y^{\prime }\Leftrightarrow \left( x,y,y^{\prime }\right) \in
{}\prec \text{.}
\end{equation*}%
We think of $\prec $ as an analytic assignment of binary relations on $Y$ to
elements of $X$. Define now 
\begin{equation*}
\varphi _{\prec }:X\rightarrow \omega _{1}\text{, }x\mapsto \rho (\prec _{x})%
\text{.}
\end{equation*}%
Define $\mathrm{LO}$ to be the standard Borel space of $t\in 2^{\omega
\times \omega }$ such that $D\left( t\right) :=\left\{ i\in \omega :t\left(
i,i\right) =1\right\} $ is a nonempty subset of $\omega $, and setting%
\begin{equation*}
i<_{t}j\Leftrightarrow t\left( i,j\right) =1
\end{equation*}%
defines a linear order on $D\left( t\right) $ with least element $0$. Define
also $\mathrm{WO}\subseteq \mathrm{LO}$ to be the co-analytic set of $t\in 
\mathrm{LO}$ such that $<_{t}$ is a well-order, in which case we define $%
\alpha _{t}<\omega _{1}$ to be its order type.

\begin{proposition}
\label{Proposition:rank}Suppose that $X,Y$ are standard Borel spaces, and $%
x\mapsto \prec _{x}$ is an analytic assignment of binary relations on $Y$ to
elements of $X$. Then the relation%
\begin{equation*}
\left\{ \left( t,x\right) \in \mathrm{LO}\times X:\alpha _{t}\leq \varphi
_{\prec }\left( x\right) \right\}
\end{equation*}%
is analytic, and $\varphi _{\prec }$ is a co-analytic rank of $X$.
\end{proposition}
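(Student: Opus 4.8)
\emph{The plan.} The argument follows the classical route by which the rank of a well-founded (analytic) relation is recognized as a co-analytic rank \cite[\S34]{kechris_classical_1995}, adapted to the parametrized situation and to the fact that $\prec$ is merely analytic. For $t\in\mathrm{LO}$ let $\widetilde{T}_{t}\subseteq\omega^{<\omega}$ be the tree of finite $<_{t}$-strictly-decreasing sequences from $D(t)$; it is well-founded exactly when $t\in\mathrm{WO}$, and in that case the rank of its root equals $\alpha_{t}$. Similarly, for $x\in X$ let $T_{x}\subseteq Y^{<\omega}$ be the tree of finite $\prec_{x}$-strictly-decreasing sequences from $Y$; it is well-founded exactly when $\prec_{x}$ is, and in that case the rank of its root is $\rho(\prec_{x})=\varphi_{\prec}(x)$. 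The first step is to prove the characterization
\begin{equation*}
\alpha_{t}\le\varphi_{\prec}(x)\iff\Big[\,\prec_{x}\text{ is ill-founded}\,\Big]\ \vee\ \Big[\,\text{there is a }\subsetneq\text{-monotone map }\widetilde{T}_{t}\to T_{x}\,\Big].
\end{equation*}
(For $t\notin\mathrm{WO}$ we read the left-hand side under the convention $\alpha_{t}=\infty$, so it says $\varphi_{\prec}(x)=\infty$.) For ``$\Leftarrow$'': a $\subsetneq$-monotone map transports an infinite branch, so if $\widetilde{T}_{t}$ is ill-founded then $T_{x}$, hence $\prec_{x}$, is ill-founded; if instead both trees are well-founded, an easy induction shows the rank of the root of $\widetilde{T}_{t}$ is at most that of the root of $T_{x}$, i.e.\ $\alpha_{t}\le\varphi_{\prec}(x)$. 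For ``$\Rightarrow$'': if $\varphi_{\prec}(x)=\infty$ the first disjunct holds; otherwise $\prec_{x}$, hence $T_{x}$, is well-founded, so $\alpha_{t}\le\varphi_{\prec}(x)<\infty$ forces $t\in\mathrm{WO}$, and the standard greedy construction (choosing, node by node, an extension of the image of large enough rank) produces a $\subsetneq$-monotone map $\widetilde{T}_{t}\to T_{x}$.

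\emph{Analyticity of the set.} The first disjunct defines $\mathrm{LO}\times\{x:\exists(y_{n})_{n}\in Y^{\omega}\ \forall n\ (x,y_{n+1},y_{n})\in{}\prec\}$, which is analytic since $\prec$ is. For the second disjunct the crucial point is that $\widetilde{T}_{t}$ lives on the \emph{countable} set $\omega^{<\omega}$: a $\subsetneq$-monotone map $\widetilde{T}_{t}\to T_{x}$ can be coded by an element $\psi$ of the \emph{standard Borel} space $(Y^{<\omega})^{\omega^{<\omega}}$ (its values off $\widetilde{T}_{t}$ being irrelevant), and the second disjunct becomes the assertion that there is such a $\psi$ with, for all $s,s'\in\omega^{<\omega}$, both $s\in\widetilde{T}_{t}\Rightarrow\psi(s)\in T_{x}$ and $(s,s'\in\widetilde{T}_{t}$ and $s\subsetneq s')\Rightarrow\psi(s)\subsetneq\psi(s')$. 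Here ``$s\in\widetilde{T}_{t}$'' is clopen in $t$, ``$s\subsetneq s'$'' is clopen, and ``$\psi(s)\in T_{x}$'' is a finite conjunction of instances of $\prec$, hence analytic in $(x,\psi)$; since the conditions on membership in $\widetilde{T}_{t}$ occur only as hypotheses they contribute harmless Borel factors, so the matrix is a countable conjunction of analytic conditions, and the existential quantifier over the standard Borel space $(Y^{<\omega})^{\omega^{<\omega}}$ keeps us within the analytic pointclass. Taking the union of the two analytic pieces gives that $\mathcal{R}:=\{(t,x):\alpha_{t}\le\varphi_{\prec}(x)\}$ is analytic.

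\emph{$\varphi_{\prec}$ is a co-analytic rank.} First, $\{x:\varphi_{\prec}(x)<\infty\}=\{x:\prec_{x}\text{ well-founded}\}$ is co-analytic. To produce the comparison relations one proceeds as in \cite[\S34.B]{kechris_classical_1995}: one combines the analytic relation $\mathcal{R}$ with the classical analytic relations of (initial-segment) order-embeddability between codes in $\mathrm{LO}$; equivalently, one exhibits a Borel map $f:X\to\mathrm{LO}$ with $f^{-1}(\mathrm{WO})=\{x:\prec_{x}\text{ well-founded}\}$ and $\alpha_{f(x)}=\varphi_{\prec}(x)$, obtained by unfolding a Borel Souslin scheme for $\prec$ into an associated tree and invoking the analyticity of the iterated derivatives $D^{\alpha}_{\prec}$ recalled above; then $\varphi_{\prec}$ is the pullback along $f$ of the canonical co-analytic rank (order type) on $\mathrm{WO}$, and $\mathcal{R}=(\mathrm{id}\times f)^{-1}$ of the corresponding analytic relation on $\mathrm{LO}^{2}$. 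Since co-analytic ranks pull back along Borel maps, $\varphi_{\prec}$ is a co-analytic rank of $X$.

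\emph{Main obstacle.} The delicate point throughout is to confirm that nothing leaves the analytic pointclass even though $\prec$ is only analytic, not Borel; the device that rescues this is precisely that the linear-order codes $t$ carry trees on the countable set $\omega$, so the witnessing monotone maps range over a standard Borel space and the only occurrences of the analytic set $\prec$ are positive. The remaining technical point is the construction of the Borel reduction $f$ (equivalently, the direct definition of $\le_{\varphi}$ and $<_{\varphi}$) with the rank matched exactly, which is where the analyticity of the derivatives $D^{\alpha}_{\prec}$ is used.
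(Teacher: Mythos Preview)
Your argument for the first assertion is correct and takes a different route from the paper. The paper asserts that $\alpha_t\le\varphi_\prec(x)$ is equivalent to: for every $n$, the formula $\forall a_0\,\exists b_0\cdots\forall a_n\,\exists b_n\,[\,a_n<_t\cdots<_t a_0\Rightarrow b_n\prec_x\cdots\prec_x b_0\,]$ holds. That equivalence actually fails in the direction needed for analyticity: take $<_t$ of order type $\omega+1$ and $\prec_x$ the usual order on $\omega$; then for each fixed $n$ the existential player wins by opening with $b_0=n$ and decrementing, yet $\alpha_t=\omega+1>\omega=\varphi_\prec(x)$. The problem is that the finite-level games allow the strategy to depend on $n$. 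Your characterization via a single $\subsetneq$-monotone map $\widetilde T_t\to T_x$, coded over the \emph{countable} domain $\omega^{<\omega}$, is exactly the right repair, and your complexity calculation---a countable conjunction of implications ``clopen $\Rightarrow$ analytic'' followed by an existential over a standard Borel space---is sound.

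The second assertion is where your sketch is thin. You claim a Borel $f:X\to\mathrm{LO}$ with $\alpha_{f(x)}=\varphi_\prec(x)$, obtained by ``unfolding a Souslin scheme for $\prec$''. Unfolding does convert the analytic $\prec$ into a closed relation $\prec^*$ on $Y\times\omega^\omega$, and one can check that $\rho(\prec^*_x)=\rho(\prec_x)$; but the further step---producing from $\prec^*$ a Borel-in-$x$ element of $\mathrm{LO}$ whose order type is \emph{exactly} $\varphi_\prec(x)$---is not automatic, since the obvious tree-of-attempts on $\omega$ will typically have strictly larger rank than $\rho(\prec^*_x)$. Your alternative (``combine $\mathcal{R}$ with order-embeddability on $\mathrm{LO}$'') is not spelled out either, and the naive attempts run into a $\forall$ over an analytic condition. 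The paper is equally terse here, pointing to the trees-on-$\omega$ case in Kechris; to make the argument self-contained you should either exhibit the rank-preserving passage to $\mathrm{LO}$ explicitly, or bypass $f$ and establish the $\Sigma^1_1$ definability of $\le_\varphi$ and $<_\varphi$ directly from the closed unfolding $\prec^*$.
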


\begin{proof}
For $x\in X$ with $\varphi _{\prec }\left( x\right) <\infty $ and $t\in 
\mathrm{WO}$ we have that $\alpha _{t}\leq \varphi _{\prec }\left( x\right) $
if and only if for every $n\in \omega $ we have%
\begin{equation*}
\forall a_{0}\exists b_{0}\forall a_{1}\exists b_{1}\cdots \forall
a_{n}\exists b_{n}\text{, }\left( \text{if }a_{n}<_{t}a_{n-1}<_{t}\cdots
<_{t}a_{0}\text{ then }b_{n}\prec _{x}\cdots \prec _{x}b_{0}\right)
\end{equation*}%
where $a_{0},\ldots ,a_{n}\in D\left( t\right) $ and $b_{0},\ldots ,b_{n}\in
X$. More generally, for $a\in D\left( t\right) $ and $b\in X$ we have that $%
\rho _{<_{t}}\left( a\right) \leq \rho _{\prec _{x}}\left( b\right) $ if and
only if%
\begin{equation*}
\forall a_{0}\exists b_{0}\forall a_{1}\exists b_{1}\cdots \forall
a_{n}\exists b_{n}\text{, }\left( \text{if }a_{n}<_{t}a_{n-1}<_{t}\cdots
<_{t}a_{0}<_{t}a\text{ then }b_{n}\prec _{y}\cdots \prec _{y}b_{0}\prec
_{y}b\right) \text{.}
\end{equation*}%
The second assertion follows from the first one as in the proof of \cite[%
Lemma 34.11]{kechris_classical_1995}.
\end{proof}

A fundamental property of co-analytic ranks is that their restriction to
Borel subsets is necessarily bounded; see \cite[Theorem 35.23]%
{kechris_classical_1995}.

\begin{proposition}
\label{Proposition:bounded-rank}Suppose that $X$ is a standard Borel space
and $\varphi :X\rightarrow \omega _{1}\cup \left\{ \infty \right\} $ is a
co-analytic rank on $X$. If $A$ is a Borel subset of $X$ such that $\varphi
|_{A}\ $takes values in $\omega _{1}$, then there exists $\alpha <\omega
_{1} $ such that $\varphi |_{A}:A\rightarrow \alpha $.
\end{proposition}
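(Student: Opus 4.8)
The plan is to deduce the statement from the fact, recalled earlier in this section, that an analytic well-founded binary relation on a standard Borel space has countable rank \cite[Theorem 31.1]{kechris_classical_1995}. Suppose toward a contradiction that $\sup_{x\in A}\varphi (x)=\omega _{1}$, and put $S:=\{\varphi (x):x\in A\}\subseteq \omega _{1}$, which is then cofinal in $\omega _{1}$. Let $\leq _{\varphi }$ and $<_{\varphi }$ be the analytic binary relations on $X$ witnessing that $\varphi $ is a co-analytic rank, and define
\begin{equation*}
P:=\left( A\times A\right) \cap {<_{\varphi }}\subseteq X\times X\text{.}
\end{equation*}
Since $A$ is Borel and $<_{\varphi }$ is analytic, $P$ is an analytic binary relation on the standard Borel space $X$.

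First I would record the behaviour of $P$ on $A$. By hypothesis every element of $A$ has finite $\varphi $-value, so for $x,y\in A$ the defining property for $<_{\varphi }$ applies to the pair $(x,y)$ (it is legitimate because $\varphi (y)<\infty $) and yields $x\mathrel{P}y\Leftrightarrow x<_{\varphi }y\Leftrightarrow \varphi (x)<\varphi (y)$. In particular, a $P$-descending sequence $x_{0},x_{1},\dots $ would produce an infinite strictly decreasing sequence of ordinals $\varphi (x_{0})>\varphi (x_{1})>\cdots $, which is impossible; hence $P$ is well-founded.

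Next I would compute its rank. A routine induction along $P$ shows that $\rho _{P}(x)=0$ for $x\notin A$, while for $a\in A$ one has $\rho _{P}(a)=\mathrm{ot}\bigl( S\cap \varphi (a)\bigr) $, the order type of $\{\beta \in S:\beta <\varphi (a)\}$; the inductive step is exactly the identity $\mathrm{ot}(S\cap \varphi (a))=\sup \{\mathrm{ot}(S\cap \beta )+1:\beta \in S\cap \varphi (a)\}$ together with the description of $P$ on $A$ above. Consequently
\begin{equation*}
\rho (P)=\sup \{\rho _{P}(x)+1:x\in X\}=\mathrm{ot}(S)\text{.}
\end{equation*}
Since $S$ is cofinal in $\omega _{1}$ and $\omega _{1}$ is regular, $S$ cannot have countable order type (its supremum would then be a countable ordinal), so $\mathrm{ot}(S)=\omega _{1}$ and hence $\rho (P)=\omega _{1}$. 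This contradicts the fact that the analytic well-founded relation $P$ on the standard Borel space $X$ has countable rank. Therefore $\sup_{x\in A}\varphi (x)<\omega _{1}$, and taking $\alpha $ to be this supremum augmented by $1$ gives $\varphi |_{A}:A\rightarrow \alpha $. The only step requiring genuine care is the bookkeeping identifying $\rho (P)$ with $\mathrm{ot}(S)$, and hence with $\omega _{1}$; the rest consists of direct applications of the definitions and of \cite[Theorem 31.1]{kechris_classical_1995}.
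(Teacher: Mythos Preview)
Your argument is correct. The paper does not actually prove this proposition: it merely cites \cite[Theorem 35.23]{kechris_classical_1995} as its source, so there is no ``paper's own proof'' to compare against. What you have written is a self-contained derivation of the boundedness theorem from the Kunen--Martin theorem (the fact, recalled earlier in the section, that an analytic well-founded relation on a standard Borel space has countable rank). This is one of the standard routes to the result; the argument in Kechris's book is organised somewhat differently, going through the canonical $\Pi^1_1$-rank on $\mathrm{WO}$, but your reduction to Kunen--Martin is equally valid and arguably more direct given the material already set up in the section.

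Two minor remarks. First, when you write ``every element of $A$ has finite $\varphi$-value'' you mean $\varphi(a)<\infty$, i.e.\ $\varphi(a)\in\omega_1$, not $\varphi(a)<\omega$; the subsequent sentence makes your intent clear, but the word ``finite'' is a slip. Second, the identification $\rho(P)=\mathrm{ot}(S)$ tacitly uses that $A$ is nonempty (otherwise the elements $x\notin A$ contribute $\rho_P(x)+1=1$ to the supremum defining $\rho(P)$ while $\mathrm{ot}(S)=0$); of course if $A=\varnothing$ the conclusion is trivial, so this is harmless, but it is worth a word.
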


\subsection{Trees}

A canonical example of co-analytic rank is the rank of combinatorial trees.
Let $X$ be a countable set. A \emph{tree }on $X$ is a set $T$ of finite
tuples of elements of $X$ (called the \emph{nodes} of $T$) that is closed
under taking initial segments. The \emph{root }of the tree is the empty
tuple $\varnothing $. The \emph{body }$[T]$ of $T$ is the closed subsets of $%
X^{\omega }$ comprising the infinite sequences whose initial segments all
belong to $T$. The binary relation $\prec _{T}$ on the set $X^{<\omega }$ of
finite tuples of elements of $X$ is defined by setting $a\prec _{T}b$ if and
only if $a,b\in T$ and $b$ is an initial segment of $a$.\ Then we have that $%
\prec _{T}$ is \emph{well-founded} if and only if the body of $T$ is empty.
Furthermore, the $\alpha $-th derivative $D_{\prec _{T}}^{\alpha }\left(
T\right) $ is a subtree $T^{\alpha }$ of $T$. The rank $\rho \left( T\right) 
$ of $T$ is defined to be the rank of $\prec _{T}$, which is thus the least $%
\alpha $ such that $T^{\alpha }$ is empty. As the root is easily seen to
have the largest rank among the nodes of $T$, we have by definition 
\begin{equation*}
\rho \left( T\right) =\rho _{\prec _{T}}\left( \varnothing \right) +1\text{.}
\end{equation*}%
In particular, the rank of $T$ is always a successor ordinal.

Trees form a closed subspace $\mathrm{Tree}$ of $2^{X^{<\omega }}$, of which
the well-founded trees are a co-analytic subspace. If $Z$ is a standard
Borel space, and $z\mapsto T_{z}$ is an analytic assignment of trees to
elements of $Z$, then $z\mapsto {}\prec _{T_{z}}$ is an analytic assignments
of binary relations on $X^{<\omega }$ to elements of $Z$. It follows from
this and Proposition \ref{Proposition:rank} that the map $z\mapsto \rho
\left( T_{z}\right) $ is a co-analytic rank on $Z$.

Graph-theoretically, a rooted tree is a connected acyclic graph with a
distinguished vertex (the root). When $X$ is the set $\omega $ of natural
numbers, one can think of a tree $T$ on $\omega $ as a graph-theoretic
countable rooted tree such that each level-set is endowed with a well-order
of order type at most $\omega $. We regard a rooted tree $T$ as an ordered
set with respect to the relation between nodes of $T$%
\begin{equation*}
v\prec w\Leftrightarrow w\text{ belongs to the unique path from the root to }%
v\text{.}
\end{equation*}%
We define a forest to be a disjoint union of rooted trees, with the induced
order.

\subsection{Games}

Let $X$ be a countable discrete set, and denote by $X^{\omega }$ the Polish
space of infinite sequences of elements of $X$. A subset $A$ of $X^{\omega }$
defines a game $G_{A}$ between two players, Alice and Bob. The game starts
with Alice picking an element $x_{0}$ of $X$, to which Bob must reply with
an element $x_{1}$ of $X$. The games continues in this fashion, with Alice
and Bob alternating to pick elements $x_{2n}$ and, respectively, $x_{2n+1}$
at Alice's and, respectively, Bob's turn of index $n$. A run of the game
produces an infinite sequence $\left( x_{n}\right) _{n\in \omega }$. Alice
wins the game if such a sequence belongs to $A$, otherwise Bob wins the
game. One can similarly define a run of the game with Bob as starting
player. The payoff set for Bob is clearly $B:=X^{\omega }\setminus A$.

One says that Alice has a winning strategy for such a game if there exists a
way for Alice to choose her $n$-th move (as given by a function from finite
tuples of elements of $X$ to $X$) in a way that lets her win any run of the
game. A winning strategy for Bob is defined in the same fashion. The game is
called \emph{determined }if either Alice or Bob has a winning strategy. If $%
A $ is a \emph{Borel }subset of $X^{\omega }$, then the game is determined.

Following \cite{allison_class_2024}, we define the notion of \emph{game rank}
for a game $G_{A}$ with \emph{open }payoff set $A$ for Alice (and, hence,
closed payoff set for Bob), and more generally the notion of \emph{game rank}
for a finite tuple $x\in X^{<\omega }$ representing a position of the game
after finitely many steps. In fact, we define two ranks, the $\sigma $-game
rank $\sigma _{A}$ and $\pi $-game rank $\pi _{A}$, depending on whether one
considers Alice and Bob as starting player. We say that a finite tuple $x$
is a\emph{\ winning position} for Alice if the open set $N_{x}$ of sequences
that have $x$ as initial segment is a subset of $A$. Define:

\begin{itemize}
\item $\pi _{A}\left( x\right) =0$ if and only if $\sigma _{A}\left(
x\right) =0$ if and only if $x$ is a winning position for Alice;

\item $\sigma _{A}\left( x\right) \leq \alpha $ if and only if $\exists z\in
X$, $\pi _{A}\left( x,z\right) <\alpha $;

\item $\pi _{A}\left( x\right) \leq \alpha $ if and only if $\forall z\in X$%
, $\sigma _{A}\left( x,z\right) <\alpha $.
\end{itemize}

We then define the rank $\rho \left( G_{A}\right) $ of the game $G_{A}$ to
be the least $\alpha $ such that $\pi _{A}\left( \varnothing \right) \leq
\alpha $.

This description of ranks is a reformulation of the one in terms of trees,
considering the following: If the payoff set $A$ for Alice is an open subset
of $X^{\omega }$, then the payoff set $B:=X^{\omega }\setminus A$ for Bob is
a \emph{closed }subset of $X^{\omega }$. Thus, there exists a tree $T$ such
that $B$ is the body of $T$. Then we have that the rank of $G_{A}$ is at
most $\alpha $ if and only if the rank of $T$ is at most $\alpha $.

\subsection{Indices\label{Subsection:indices}}

We define by recursion on countable ordinals sets of indices which can be
seen as a version of \emph{walks on ordinals }\cite[Chapter 2]%
{todorcevic_walks_2007}. Recall that for each limit ordinal $\lambda $ we
have fixed an increasing sequence of successor ordinals $\left( \lambda
_{i}\right) $ cofinal in $\lambda $. Likewise, if $\alpha $ is a successor
or zero, we set $\alpha _{i}=\alpha $ for every $i<\omega $. We now define
by recursion on $\alpha <\omega _{1}$ the sets of indices $I_{\alpha }$ and,
when $\alpha $ is a successor, $I_{\alpha }^{\mathrm{plain}}$. Define $%
I_{1}^{\mathrm{plain}}=\left\{ 0\right\} $.\ If $I_{\beta }$ and $I_{\beta
}^{\mathrm{plain}}$ has been defined for $1\leq \beta <\alpha $ define $%
I_{\alpha }$ to be the set of pairs $\left( i;n\right) $ with $i\in
I_{\alpha _{n}}^{\mathrm{plain}}$. Define then $I_{\alpha }^{\mathrm{plain}}$
to be $I_{\alpha -1}\cup \left\{ \alpha -1\right\} $.

We also define, for $\beta <\gamma $:

\begin{itemize}
\item $\sigma _{\gamma }I_{\beta }=I_{\beta }$;

\item $\sigma _{\gamma }I_{\beta }^{\mathrm{plain}}=I_{\beta }^{\mathrm{plain%
}}$;

\item $\partial _{\gamma }I_{\beta }=\varnothing $;

\item $\partial _{\gamma }I_{\beta }^{\mathrm{plain}}=\varnothing $;

\item $\sigma _{\gamma }I_{\gamma +1}^{\mathrm{plain}}=I_{\gamma }$;

\item $\partial _{\gamma }I_{\gamma +1}^{\mathrm{plain}}=\left\{ \gamma
\right\} $.
\end{itemize}

If $\sigma _{\gamma }I_{\beta }^{\mathrm{plain}}$ and $\partial _{\gamma
}I_{\beta }^{\mathrm{plain}}$ have been defined for $\gamma +1\leq \beta
\leq \alpha $ define $\sigma _{\gamma }I_{\alpha }$ to be the set of pairs $%
\left( i;n\right) $ with $i\in \sigma _{\gamma }I_{\alpha _{n}}^{\mathrm{%
plain}}$, and $\partial _{\gamma }I_{\alpha }$ to be the set of pairs $%
\left( i;n\right) $ with $i\in \partial _{\beta }I_{\alpha _{n}}^{\mathrm{%
plain}}$. We also set, for $\gamma <\alpha $, 
\begin{equation*}
\sigma _{\gamma }I_{\alpha }^{\mathrm{plain}}=\sigma _{\gamma }I_{\alpha -1}%
\text{ and }\partial _{\gamma }I_{\alpha }^{\mathrm{plain}}=\partial
_{\gamma }I_{\alpha -1}\cup \left\{ \alpha -1\right\} \text{.}
\end{equation*}%
Thus, by definition, $\sigma _{\gamma }$ and $\partial _{\gamma }$ describe
complementary subsets of $I_{\alpha }$ and $I_{\alpha }^{\mathrm{plain}}$.

We consider $I_{\alpha }$ and $I_{\alpha }^{\mathrm{plain}}$ as ordered set,
with respect to the relation $\prec $ defined recursively as follows. Say
that $\alpha -1$ is the largest element of $I_{\alpha }^{\mathrm{plain}}$,
and the map $I_{\left( \alpha -1\right) _{n}}^{\mathrm{plain}}\rightarrow
I_{\alpha }^{\mathrm{plain}}$, $i\mapsto \left( i;n\right) $ is
order-preserving. This can be seen as the relation of belonging to the same
branch on a canonical tree structure on $I_{\alpha }^{\mathrm{plain}}$ with
root $\alpha $. In fact, this can be regarded as a tree on $\omega $, where
the order on each level set in $I_{\alpha }^{\mathrm{plain}}$ is recursively
defined by setting $\left( \left( \alpha -1\right) _{n};n\right) \prec
\left( \left( \alpha -1\right) _{k};k\right) \Leftrightarrow n<k$. It is
easily seen by induction that $I_{\alpha }^{\mathrm{plain}}$ is a countable
well-founded tree of rank $\alpha $. Furthermore, every countable
well-founded rooted tree of rank at most $\alpha $ is isomorphic to a
subtree of $I_{\alpha }^{\mathrm{plain}}$, i.e., a nonempty subset $%
S\subseteq I_{\alpha }^{\mathrm{plain}}$ that contains, for each of its
nodes, the path from it to the root.

Clearly, if $\left( x_{i}\right) _{i\in S}$ is a diagram of shape $S$ in a
category $\mathcal{C}$ that has a limit $x$, then one can extend it to a
diagram of shape over $I_{\alpha }^{\mathrm{plain}}$ in $\mathcal{C}$ with
the same limit. This shows that every limit of a diagram whose shape is a
countable well-founded rooted tree of rank at most $\alpha $ can be realized
as a limit of a diagram of shape $I_{\alpha }^{\mathrm{plain}}$. Likewise,
we have that $I_{\alpha }$ is a countable forest of rank $\alpha $, and
every limit of a diagram whose shape is a countable forest of rank at most $%
\alpha $ is also a limit of a diagram of shape $I_{\alpha }$. By duality,
the same applies to colimits of diagrams of shape $S^{\mathrm{op}}$, i.e.,
presheaves on $S$.

There is a canonical linear order $<$ on $I_{\alpha }^{\mathrm{plain}}$
defined recursively by letting $\alpha -1$ be the largest element, every
descendent of $\left( \left( \alpha -1\right) _{n};n\right) $ less than
every descendent of $\left( \left( \alpha -1\right) _{k};k\right) $ for $n<k$%
, and the map $I_{\left( \alpha -1\right) _{n}}^{\mathrm{plain}}\rightarrow
I_{\alpha }^{\mathrm{plain}}$, $i\mapsto \left( i;n\right) $ is
order-preserving. In terms of the structure as tree on $\omega $ on $%
I_{\alpha }^{\mathrm{plain}}$, this is the order on nodes defined by setting 
$x<y$ if and only if $x$ is less than $y$ in the lexicographic order, or $%
x\prec y$. Such a linear order on $I_{\alpha }^{\mathrm{plain}}$ is easily
seen by induction to be isomorphic to $\omega ^{\alpha -1}+1$. The subset of
elements of $I_{\alpha }^{\mathrm{plain}}$ whose rank with respect to the
tree structure is at most $\beta <\alpha -1$ corresponds via this
isomorphism to the set of ordinals whose Cantor normal form is $\omega
^{\beta _{0}}k_{0}+\cdots +\omega ^{\beta _{\ell }}k_{\ell }$ for $%
k_{0},\ldots ,k_{\ell }\in \omega $ and $\alpha -1>\beta _{0}\geq \beta
_{1}\geq \cdots \geq \beta _{\ell }\geq \beta $. In particular, for $\beta
=1 $ we obtain that the\emph{\ terminal nodes} of $I_{\alpha }^{\mathrm{plain%
}}$ correspond to the \emph{successor ordinals} in $\omega ^{\alpha -1}$.

\section{The functor $\mathrm{lim}^{1}$\label{Section:lim1}}

In this section we regard the $\mathrm{lim}^{1}$ of a tower of countable
modules as a phantom pro-countable module. We explicitly describe its
Solecki submodules in terms of an ordinal sequence of towers obtained from
the given one by iterating the operation which we call \emph{derivation}. As
an application, we give a purely algebraic characterization of the towers $%
\boldsymbol{A}$ of countable modules such that $\left\{ 0\right\} $ has a
given complexity within $\mathrm{lim}^{1}\boldsymbol{A}$. We continue to let 
$R$ be a countable domain and to consider all modules to be $R$-modules.

\subsection{$\mathrm{lim}$\textrm{\ }and $\mathrm{lim}^{1}$ for a tower of
modules}

We adopt the notations from Section \ref{Subsection:towers} concerning the
category of towers of Polish modules. Thus, a \emph{tower }of Polish modules
is a sequence $\boldsymbol{G}=\left( G^{\left( n\right) },p^{\left(
n,n+1\right) }\right) _{n\in \omega }$ of Polish modules such that, for
every $n\in \omega $, $p^{\left( n,n+1\right) }:G^{\left( n+1\right)
}\rightarrow G^{\left( n\right) }$ is a continuous homomorphism. For $n\leq
m $, we define recursively $p^{\left( n,n\right) }:=\mathrm{id}_{G^{\left(
n\right) }}$ and $p^{\left( n,m+1\right) }:=p^{\left( n,m\right) }\circ
p^{\left( m,m+1\right) }$.

Suppose that $\boldsymbol{G}=(G^{\left( n\right) })$ and $\boldsymbol{H}%
=(H^{\left( n\right) })$ are towers of\emph{\ }Polish modules. A morphism
from $\boldsymbol{G}$ to $\boldsymbol{H}$ is represented by a sequence $%
\left( m_{k},f^{\left( k\right) }\right) $ such that $\left( m_{k}\right) $
is an increasing sequence in $\omega $, and $f^{\left( k\right) }:G^{\left(
m_{k}\right) }\rightarrow H^{\left( k\right) }$ is a continuous homomorphism
such that $p^{\left( k,k+1\right) }f^{\left( m_{k+1}\right) }=f^{\left(
k\right) }p^{\left( m_{k},m_{k+1}\right) }$ for every $k\in \omega $. Two
such sequences $\left( m_{k},f^{\left( k\right) }\right) $ and $\left(
m_{k}^{\prime },f^{\prime \left( k\right) }\right) $ define the same
morphism if and only if there exists an increasing sequence $\left( \tilde{m}%
_{k}\right) _{k\in \omega }$ in $\omega $ such that $\tilde{m}_{k}\geq \max
\left\{ m_{k},m_{k}^{\prime }\right\} $ for every $k\in \omega $, and%
\begin{equation*}
f^{\left( k\right) }p^{\left( m_{k},\tilde{m}_{k}\right) }=f^{\prime \left(
k\right) }p^{(m_{k}^{\prime },\tilde{m}_{k})}
\end{equation*}%
for every $k\in \omega $. This defines an additive category $\mathrm{pro}%
\left( \mathbf{PolMod}\left( R\right) \right) $ of towers of Polish modules.

We let:

\begin{itemize}
\item $\mathrm{pro}\left( \mathbf{Mod}\left( R\right) \right) $ be the thick
subcategory of $\mathrm{pro}\left( \mathbf{PolMod}\left( R\right) \right) $
spanned by towers of countable modules;

\item $\mathrm{pro}\left( \mathbf{Fin}\left( R\right) \right) $ be the thick
subcategory of $\mathrm{pro}\left( \mathbf{Mod}\left( R\right) \right) $
spanned by towers of finite modules.
\end{itemize}

Suppose that $\boldsymbol{G}=(G^{\left( n\right) })_{n\in \omega }$ is a
tower of Polish modules. For $n\in \omega $, set%
\begin{equation*}
\Lambda _{n}:=\left\{ \left( \lambda _{0},\ldots ,\lambda _{n}\right) \in
\omega \times \cdots \times \omega :\lambda _{0}\leq \lambda _{1}\leq \cdots
\leq \lambda _{n}\right\} \text{.}
\end{equation*}%
Define $K\left( \boldsymbol{G}\right) $ to be the cochain complex of Polish
modules obtained by setting%
\begin{equation*}
K^{n}\left( \boldsymbol{G}\right) =\left\{ 0\right\}
\end{equation*}%
for $n<0$ and%
\begin{equation*}
K^{n}\left( \boldsymbol{G}\right) =\prod_{\left( \lambda _{0},\ldots
,\lambda _{n}\right) \in \Lambda _{n}}G^{\left( \lambda _{0}\right) }
\end{equation*}%
for $n\geq 0$. The continuous homomorphism $\delta ^{n}:K^{n-1}\left(
G\right) \rightarrow K^{n}\left( G\right) $ is given by%
\begin{equation*}
\delta ^{n}\left( c\right) _{\left( \lambda _{0},\ldots ,\lambda _{n}\right)
}=p^{\left( \lambda _{0},\lambda _{1}\right) }\left( c_{\left( \lambda
_{1},\ldots ,\lambda _{n}\right) }\right) +\sum_{i=1}^{n}\left( -1\right)
^{i}c_{(\lambda _{0},\ldots ,\widehat{\lambda }_{i},\ldots ,\lambda _{n})}%
\text{.}
\end{equation*}%
One then defines, for $n\in \omega $,%
\begin{equation*}
\mathrm{Z}^{n}\left( K(\boldsymbol{G})\right) =\mathrm{\mathrm{Ker}}\left(
\delta ^{n}:K^{n}\left( \boldsymbol{G}\right) \rightarrow K^{n+1}\left( 
\boldsymbol{G}\right) \right)
\end{equation*}%
\begin{equation*}
\mathrm{B}^{n}\left( K(\boldsymbol{G})\right) =\mathrm{\mathrm{Ran}}\left(
\delta ^{n-1}:K^{n-1}\left( \boldsymbol{G}\right) \rightarrow K^{n}\left( 
\boldsymbol{G}\right) \right) \subseteq \mathrm{Z}^{n}\left( K(\boldsymbol{G}%
)\right)
\end{equation*}%
and%
\begin{equation*}
H^{n}\left( K(\boldsymbol{G})\right) =\frac{\mathrm{Z}^{n}\left( K(%
\boldsymbol{G})\right) }{\mathrm{B}^{n}\left( K(\boldsymbol{G})\right) }%
\text{.}
\end{equation*}%
By definition one has that 
\begin{equation*}
\mathrm{lim}\boldsymbol{G}=\mathrm{H}^{0}\left( K\left( \boldsymbol{G}%
\right) \right)
\end{equation*}%
and 
\begin{equation*}
\mathrm{lim}^{1}\boldsymbol{G}=\mathrm{H}^{1}\left( K\left( \boldsymbol{G}%
\right) \right) \text{;}
\end{equation*}%
see \cite[Section 11.5]{mardesic_strong_2000}. Furthermore, $\mathrm{H}%
^{n}\left( K\left( \boldsymbol{G}\right) \right) =0$ for $n\geq 2$.

We have that $\mathrm{lim}$ defines a countably continuous functor $\mathrm{%
pro}\left( \boldsymbol{\Pi }(\mathbf{Mod}\left( R\right) )\right)
\rightarrow \boldsymbol{\Pi }(\mathbf{Mod}\left( R\right) )$, while $\mathrm{%
lim}^{1}$ defines a countably continuous functor $\mathrm{pro}\left( 
\boldsymbol{\Pi }(\mathbf{Mod}\left( R\right) )\right) \rightarrow \mathrm{Ph%
}\left( \boldsymbol{\Pi }(\mathbf{Mod}\left( R\right) )\right) $. A short
exact sequence%
\begin{equation*}
0\rightarrow \boldsymbol{A}\rightarrow \boldsymbol{B}\rightarrow \boldsymbol{%
C}\rightarrow 0
\end{equation*}%
in $\mathrm{pro}\left( \boldsymbol{\Pi }(\mathbf{Mod}\left( R\right)
)\right) $ gives rise to a six-term exact sequence%
\begin{equation*}
0\rightarrow \mathrm{lim}\boldsymbol{A}\rightarrow \mathrm{lim}\boldsymbol{B}%
\rightarrow \mathrm{lim}\boldsymbol{C}\rightarrow \mathrm{lim}^{1}%
\boldsymbol{A}\rightarrow \mathrm{lim}^{1}\boldsymbol{B}\rightarrow \mathrm{%
lim}^{1}\boldsymbol{C}\rightarrow 0
\end{equation*}%
in $\mathrm{LH}\left( \mathbf{\Pi }\left( \mathbf{Mod}\left( R\right)
\right) \right) $.

As in Section \ref{Subsection:towers}, we say that $\boldsymbol{G}$ is an
epimorphic (respectively, monomorphic) tower if $p^{\left( n,n+1\right)
}:G^{\left( n+1\right) }\rightarrow G^{\left( n\right) }$ is a strict
epimorphism (respectively, a monomorphism) for every $n\in \omega $. We say
that $\boldsymbol{G}$ is essentially epimorphic (respectively, essentially
monomorphic) if it is isomorphic to an epimorphic (respectively,
monomorphic) tower.

A tower $\boldsymbol{G}$ is essentially epimorphic if and only if it
satisfies the so-called \emph{Mittag-Leffler condition}: for every $n\in
\omega $ there exists $k_{0}\in \omega $ such that, for every $k\geq k_{0}$, 
$p^{\left( n,n+k\right) }\left( G^{\left( n+k\right) }\right) =p^{\left(
n,n+k_{0}\right) }\left( G^{\left( n+k_{0}\right) }\right) $. This implies
that $\mathrm{lim}^{1}\boldsymbol{G}$ is trivial, where the converse holds
if $\boldsymbol{G}$ is a tower of countable modules; see \cite[Ch. II,
Section 6.2]{mardesic_shape_1982}. We let $\mathbf{\Pi }\left( \mathbf{Mod}%
\left( R\right) \right) $ be the full subcategory of $\mathrm{pro}\left( 
\mathbf{Mod}\left( R\right) \right) $ spanned by the (essentially)
epimorphic towers.

\begin{definition}
Let $\boldsymbol{A}=\left( A^{\left( n\right) }\right) $ be a tower of
pro-countable modules. Define $A_{\infty }^{\left( n\right) }$ to be the
submodule of $A^{\left( n\right) }$ consisting of $a\in A^{\left( n\right) }$
such that there exist $b_{i}\in A^{\left( i\right) }$ for $i\geq n$ such
that $b_{n}=a$ and $p^{\left( i,i+1\right) }\left( b_{i+1}\right) =b_{i}$
for $i\geq n$. Then $\boldsymbol{A}_{\infty }=(A_{\infty }^{\left( n\right)
})$ is a subtower of $\boldsymbol{A}$.
\end{definition}

Since $\boldsymbol{A}_{\infty }$ satisfies the Mittag-Leffler condition, $%
\mathrm{lim}^{1}\boldsymbol{A}_{\infty }=0$, and the quotient $\boldsymbol{A}%
\rightarrow \boldsymbol{A}/\boldsymbol{A}_{\infty }$ induces an isomorphism
at the level of $\mathrm{lim}^{1}$ modules. We say that a tower $\boldsymbol{%
A}$ is $\emph{reduced}$ if $\boldsymbol{A}_{\infty }=0$ or, equivalently, $%
\mathrm{lim}\boldsymbol{A}=0$.

Notice that the full subcategory $\mathrm{P}\left( \mathbf{Mod}\left(
R\right) \right) $ of \textrm{pro}$\left( \mathbf{Mod}\left( R\right)
\right) $ spanned by reduced towers is a fully exact quasi-abelian category
that is closed under subobjects but not under quotients. Let $\boldsymbol{B}$
be a tower of countable modules, and $\boldsymbol{A}$ be a subtower of $%
\boldsymbol{B}$. Consider the quotient $\boldsymbol{B}/\boldsymbol{A}$ of $%
\boldsymbol{B}$ by $\boldsymbol{A}$ in $\boldsymbol{\Pi }\left( \mathbf{Mod}%
\left( R\right) \right) $. Then we have an exact sequence%
\begin{equation*}
\mathrm{\mathrm{lim}}\boldsymbol{B}\rightarrow \mathrm{\mathrm{lim}}%
\boldsymbol{B/A}\rightarrow \mathrm{\mathrm{lim}}^{1}\boldsymbol{A}%
\rightarrow \mathrm{lim}^{1}\boldsymbol{B}\text{.}
\end{equation*}%
Thus, if $\boldsymbol{B}$ is reduced, then $\boldsymbol{B/A}$ is reduced if
and only if the inclusion $\mathrm{lim}^{1}\boldsymbol{A}\rightarrow \mathrm{%
lim}^{1}\boldsymbol{B}$ is injective, in which case $\boldsymbol{B}/%
\boldsymbol{A}$ is the quotient of $\boldsymbol{B}$ by $\boldsymbol{A}$ in
the category of reduced towers of countable modules.

\subsection{Derived towers}

Let $\boldsymbol{A}$ be a reduced tower of countable modules. We define a
sequence $\left( \boldsymbol{A}_{\alpha }\right) $ of reduced subtowers of $%
\boldsymbol{A}$, indexed by countable ordinals, which we called the \emph{%
derived towers} of $\boldsymbol{A}$. For every limit ordinal $\lambda $ we
fix a strictly increasing sequence $\left( \lambda _{n}\right) $ of
successor ordinals with $\sup_{n}\lambda _{n}=\lambda $. We also set $\alpha
_{k}=\alpha $ if $\alpha $ is either zero or a successor ordinal.

Given a tower $\boldsymbol{A}=\left( A^{\left( n\right) }\right) $ we define 
$A_{0}^{\left( n\right) }=A^{\left( n\right) }$ and for each successor
ordinal $\alpha $,%
\begin{equation*}
A_{\alpha }^{\left( n\right) }:=\bigcap_{k\in \omega }p^{\left( n,n+k\right)
}(A_{\left( \alpha -1\right) _{k}}^{\left( n+k\right) })\text{.}
\end{equation*}%
We then define for each ordinal $\alpha $, $\boldsymbol{A}_{\alpha }$ to be
the tower $(A_{\alpha _{n}}^{\left( n\right) })_{n\in \omega }$, which we
call the $\alpha $-th derived tower of $\boldsymbol{A}$.

Notice that for limit $\lambda $ we have%
\begin{equation*}
A_{\lambda +1}^{\left( n\right) }=\bigcap_{\alpha <\lambda }A_{\alpha
}^{\left( n\right) }\text{.}
\end{equation*}

\begin{definition}
Let $\boldsymbol{A}$ be a reduced tower of countable modules, and $\alpha
<\omega _{1}$. We say that $\boldsymbol{A}$ has (Mittag-Leffler) \emph{length%
} at most $\alpha $ if $\boldsymbol{A}_{\alpha }\cong 0$. When $\alpha $ is
a successor ordinal, we say that $\boldsymbol{A}$ has plain length at most $%
\alpha $ if $\boldsymbol{A}_{\alpha -1}$ is essentially monomorphic.
\end{definition}

A similar definition is also considered in \cite[Definition 3.2]%
{boardman_conditionally_1999}; see also \cite{ha_higher_2003}.

Let $X$ be the disjoint union of $A^{\left( n\right) }$ for $n\in \omega $.\
Then we have that the set $T_{\boldsymbol{A}}$ of $x=\left( a_{0},\ldots
,a_{n}\right) $ such that, for all $i\in \left\{ 0,1,\ldots ,n\right\} $ and 
$j\in \left\{ 1,2,\ldots ,n\right\} $, $a_{i}\in A^{\left( i\right) }$ and $%
p^{\left( j-1,j\right) }\left( a_{j}\right) =a_{j-1}$, is a tree on $X$.\
Clearly, we have that $\boldsymbol{A}$ is reduced if and only if $T_{%
\boldsymbol{A}}$ is well-founded, and $\boldsymbol{A}\cong 0$ if and only if 
$T_{\boldsymbol{A}}$ is finite. By recursion on $\alpha $ one can see that 
\begin{equation*}
a_{n}\in A_{\alpha }^{\left( n\right) }\Leftrightarrow \left( p^{\left(
0,n\right) }\left( a_{n}\right) ,\ldots ,p^{\left( n-1,n\right) }\left(
a_{n}\right) ,a_{n}\right) \in T_{\boldsymbol{A}}^{\left( \omega \alpha
\right) }\text{.}
\end{equation*}%
Thus, the length of $\boldsymbol{A}$ is at most $\alpha $ if and only if $T_{%
\boldsymbol{A}}^{\left( \omega \alpha \right) }$ is finite. This also shows
the length defines a co-analytic rank on towers of countable modules.

\begin{definition}
Let $\boldsymbol{A}$ be a reduced tower of countable modules, and $\alpha
<\omega _{1}$. For $n\in \omega $, define the chain of submodules 
\begin{equation*}
S^{\left( n\right) }\left( \boldsymbol{A}\right) =(p^{\left( n,k\right)
}(A^{\left( k\right) }))_{k\geq n}
\end{equation*}%
of $A^{\left( n\right) }$. We regard $S^{\left( n\right) }\left( \boldsymbol{%
A}\right) $ as a monomorphic tower, where the bonding maps are the inclusion
maps.
\end{definition}

We let $L^{\left( n\right) }\left( \boldsymbol{A}\right) $ be the completion
of $A^{\left( n\right) }$ with respect to the chain of submodules $%
(S^{\left( n\right) }\left( \boldsymbol{A}\right) )$ and $\kappa _{%
\boldsymbol{A}}^{\left( n\right) }:A^{\left( n\right) }\rightarrow L^{\left(
n\right) }\left( \boldsymbol{A}\right) $ be the canonical homomorphism. The
cokernel $E^{\left( n\right) }\left( \boldsymbol{A}\right) =\mathrm{Coker}%
(\kappa _{\boldsymbol{A}}^{\left( n\right) })$ is then isomorphic to $%
\mathrm{lim}^{1}S^{\left( n\right) }\left( \boldsymbol{A}\right) $; see \cite%
[Theorem 5.13]{bergfalk_definable_2024}. This gives an epimorphic tower $%
(E^{\left( n\right) }\left( \boldsymbol{A}\right) )$ of phantom
pro-countable modules. By definition, $A_{1}^{\left( n\right) }$ is equal to 
$\mathrm{\mathrm{Ker}}(\kappa _{\boldsymbol{A}}^{\left( n\right) })$. We
have an exact sequence%
\begin{equation*}
0\rightarrow \boldsymbol{A}_{1}\rightarrow \boldsymbol{A}\rightarrow \mathrm{%
lim}_{n}{}S^{\left( n\right) }\left( \boldsymbol{A}\right) \rightarrow 0%
\text{,}
\end{equation*}%
and hence an exact sequence%
\begin{equation*}
0\rightarrow \mathrm{lim}^{1}\boldsymbol{A}_{1}\rightarrow \mathrm{lim}^{1}%
\boldsymbol{A}\rightarrow \mathrm{lim}_{n}{}E^{\left( n\right) }\left( 
\boldsymbol{A}\right) \rightarrow 0\text{.}
\end{equation*}%
If $\lambda $ is a limit ordinal, then%
\begin{equation*}
\mathrm{lim}^{1}\boldsymbol{A}_{\lambda }=\bigcap_{\beta <\lambda }\mathrm{%
lim}^{1}\boldsymbol{A}_{\beta }\subseteq \mathrm{lim}^{1}\boldsymbol{A}
\end{equation*}%
and we have an exact sequence%
\begin{equation*}
0\rightarrow \boldsymbol{A}_{\lambda }\rightarrow \boldsymbol{A}\rightarrow 
\mathrm{lim}_{k}\boldsymbol{A}/\boldsymbol{A}_{\lambda _{k}}\rightarrow 0%
\text{.}
\end{equation*}%
The sequence of submodules $(\mathrm{\mathrm{Ker}}(\mathrm{lim}^{1}%
\boldsymbol{A}\rightarrow E^{\left( n\right) }\left( \boldsymbol{A}\right)
))_{n\in \omega }$ is sometimes called the \emph{Gray filtration}; see \cite%
{ha_gray_2001}.

\subsection{The Solecki submodules of $\mathrm{\mathrm{\mathrm{lim}}}^{1}$}

In this section we describe the Solecki submodules of the $\mathrm{lim}^{1}$
of a tower of countable modules in terms of its derived towers.

\begin{proposition}
\label{Proposition:lim1-Solecki1}Suppose that $\boldsymbol{A}=\left(
A^{\left( n\right) }\right) $ is a tower of countable modules.\ Then%
\begin{equation*}
\mathrm{lim}^{1}\boldsymbol{A}_{1}=s_{1}\left( \mathrm{lim}^{1}\boldsymbol{A}%
\right) \text{.}
\end{equation*}
\end{proposition}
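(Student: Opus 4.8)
The plan is to bypass Lemma~\ref{Lemma:1st-Solecki} (whose sufficient condition~(2) in fact fails in this situation) and instead compute $s_{1}(\mathrm{lim}^{1}\boldsymbol{A})$ \emph{directly} from Solecki's explicit formula, then match it with the submodule $\mathrm{lim}^{1}\boldsymbol{A}_{1}$. First I would fix the Polish cover $\mathrm{lim}^{1}\boldsymbol{A}=\hat{G}/N$ with $\hat{G}=\prod_{n}A^{(n)}$ (a pro-countable Polish module since $R$ is countable) and $N=\mathrm{Ran}(d)$, where $d\colon\hat{G}\to\hat{G}$ is the continuous homomorphism $d((x_{n})_{n})=(x_{n}-p^{(n,n+1)}(x_{n+1}))_{n}$. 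Since $\mathrm{Ker}(d)$ is closed, $N$ is a Polishable subgroup of $\hat{G}$ whose (unique) Polish topology is the quotient topology along $d$, so the sets $V_{m}:=d(W_{m})$, with $W_{m}:=\{(x_{n})_{n}:x_{i}=0\text{ for }i\le m\}$, form a neighbourhood basis of $0$ in $N$; this lets one check Solecki's formula on the $V_{m}$ alone, since $V\subseteq V'$ gives $\overline{V}^{\hat{G}}\subseteq\overline{V'}^{\hat{G}}$. By the exact sequence $0\to\mathrm{lim}^{1}\boldsymbol{A}_{1}\to\mathrm{lim}^{1}\boldsymbol{A}\to\mathrm{lim}_{n}E^{(n)}(\boldsymbol{A})\to 0$ established above, $\mathrm{lim}^{1}\boldsymbol{A}_{1}$ is the submodule with a Polish cover $\hat{H}/N$ of $\hat{G}/N$, where $\hat{H}=P_{1}+N$ with $P_{1}:=\prod_{n}A_{1}^{(n)}$ (indeed $\mathrm{lim}^{1}\boldsymbol{A}_{1}$ is the cokernel of $d$ restricted to $P_{1}$, and $P_{1}+N$ is Polishable in $\hat{G}$ as the image of the continuous homomorphism $P_{1}\times\hat{G}\to\hat{G}$, $(a,p)\mapsto a+d(p)$, which has closed kernel). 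So the goal is to show $\hat{G}_{1}=P_{1}+N$, where $\hat{G}_{1}=\{x\in\hat{G}:\forall\text{ nbhd }V\text{ of }0\text{ in }N,\ \exists z\in N,\ x+z\in\overline{V}^{\hat{G}}\}$ is Solecki's set, so that $s_{1}(\mathrm{lim}^{1}\boldsymbol{A})=\hat{G}_{1}/N=\hat{H}/N=\mathrm{lim}^{1}\boldsymbol{A}_{1}$.

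\textbf{Two telescoping computations.} The core is elementary: a sequence $(y_{n})_{n}$ lies in $\overline{V_{m}}^{\hat{G}}$ iff for every $K$ the finite system $-p^{(m,m+1)}(x_{m+1})=y_{m}$, $x_{i}-p^{(i,i+1)}(x_{i+1})=y_{i}$ for $m<i\le K$ (with $x_{K+1}$ free) is solvable over $A^{(\bullet)}$; solving from the top down and telescoping, this is equivalent to $y_{i}=0$ for $i<m$ together with $\sum_{i=m}^{K}p^{(m,i)}(y_{i})\in p^{(m,K+1)}(A^{(K+1)})$ for all $K\ge m$. Plugging this into Solecki's formula and expanding $z=d((w_{n})_{n})$, a second telescoping (the terms $w_{m+1},\dots,w_{K}$ cancel and $p^{(m,K+1)}(w_{K+1})$ is absorbed into $p^{(m,K+1)}(A^{(K+1)})$) leaves only $w_{m}$, yielding
\[
x=(x_{n})_{n}\in\hat{G}_{1}\ \Longleftrightarrow\ \forall m\ \exists w_{m}\in A^{(m)}\ \forall K\ge m:\ w_{m}+\textstyle\sum_{i=m}^{K}p^{(m,i)}(x_{i})\in p^{(m,K+1)}(A^{(K+1)}).
\]

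\textbf{Matching $\hat{G}_{1}$ with $P_{1}+N$.} The inclusion $\supseteq$ is easy: $N\subseteq\hat{G}_{1}$ because $0\in\overline{V_{m}}^{\hat{G}}$, and $P_{1}\subseteq\hat{G}_{1}$ by taking $w_{m}=0$, using $A_{1}^{(n)}=\bigcap_{k}p^{(n,n+k)}(A^{(n+k)})$. For $\subseteq$, given $x\in\hat{G}_{1}$ with witnesses $(w^{\ast}_{m})_{m}$, I would set $p_{n}:=-w^{\ast}_{n}$ and $a_{n}:=x_{n}-p_{n}+p^{(n,n+1)}(p_{n+1})=x_{n}+w^{\ast}_{n}-p^{(n,n+1)}(w^{\ast}_{n+1})$; subtracting the displayed condition for $(m,K)=(n+1,n+k-1)$, pushed down by $p^{(n,n+1)}$, from that for $(m,K)=(n,n+k-1)$ gives $a_{n}\in p^{(n,n+k)}(A^{(n+k)})$ for all $k\ge 1$, i.e.\ $a:=(a_{n})_{n}\in P_{1}$, and $x=a+d((p_{n})_{n})\in P_{1}+N$. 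Hence $\hat{G}_{1}=P_{1}+N=\hat{H}$, and $s_{1}(\mathrm{lim}^{1}\boldsymbol{A})=\mathrm{lim}^{1}\boldsymbol{A}_{1}$.

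\textbf{Main obstacle.} The hard part is not conceptual but the bookkeeping: carrying out the two telescoping computations (for $\overline{V_{m}}^{\hat{G}}$ and then for $\hat{G}_{1}$) cleanly, and—above all—spotting that the correct witnesses to membership in $P_{1}+N$ are obtained by reading $p_{n}=-w^{\ast}_{n}$ straight off Solecki's data; once that is seen, the verification $a_{n}\in A_{1}^{(n)}$ is a one-line difference of two instances of the membership condition. Care is also needed to justify that the Solecki criterion may be tested on the basis $(V_{m})_{m}$, and that the Polish structure on $\hat{H}=P_{1}+N$ coming from the quotient $P_{1}\times\hat{G}\to\hat{G}$ is the unique Polishable one, so that it agrees with the covering module of $\mathrm{lim}^{1}\boldsymbol{A}_{1}$.
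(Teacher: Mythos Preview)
Your argument is correct and takes a genuinely different route from the paper. The paper works with the cochain-complex presentation $\mathrm{lim}^{1}\boldsymbol{A}=\mathrm{Z}^{1}(K(\boldsymbol{A}))/\mathrm{B}^{1}(K(\boldsymbol{A}))$ and applies Lemma~\ref{Lemma:1st-Solecki} directly: it first checks that $\mathrm{lim}^{1}\boldsymbol{A}_{1}$ is $\boldsymbol{\Pi}^{0}_{3}$ and has dense zero, and then verifies condition~(2) by exhibiting, for each basic $V\subseteq \mathrm{B}^{1}(K(\boldsymbol{A}))$, an open $U\subseteq \mathrm{Z}^{1}(K(\boldsymbol{A}_{1}))$ with $U\subseteq\overline{V}$, via a recursive construction of lifts $b_{N+2},\dots,b_{M}$ exploiting $A_{1}^{(i)}\subseteq p^{(i,M)}(A^{(M)})$. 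You instead use the simpler presentation $\hat{G}=\prod_{n}A^{(n)}$, $N=\mathrm{Ran}(d)$, and compute Solecki's set $\hat{G}_{1}$ explicitly, reducing it by two telescoping identities to the pointwise condition $\exists w_{m}\,\forall K:\ w_{m}+\sum_{i=m}^{K}p^{(m,i)}(x_{i})\in p^{(m,K+1)}(A^{(K+1)})$, and then reading the decomposition $x=a+d(p)$ with $a\in P_{1}$ straight off the witnesses $w^{\ast}_{m}$. Your approach is more self-contained (it uses only Solecki's formula, not the auxiliary lemma) and makes the algebraic content of $s_{1}$ very explicit; the paper's approach has the advantage of packaging the analysis into a reusable criterion. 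One minor point: your parenthetical claim that condition~(2) of Lemma~\ref{Lemma:1st-Solecki} ``in fact fails'' in your presentation is not right---a computation analogous to your own shows that taking $m'=m''=m$ in the basic neighbourhoods of $\hat{H}=P_{1}+N$ does land inside $\overline{V_{m}}^{\hat{G}}$---but this is only a motivational remark and does not affect your argument.
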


\begin{proof}
By the remarks in the previous section, we have that%
\begin{equation*}
\mathrm{lim}_{n}^{1}A_{1}^{\left( n\right) }=\mathrm{\mathrm{Ker}}\left( 
\mathrm{lim}_{n}^{1}A^{\left( n\right) }\rightarrow \mathrm{lim}%
_{n}{}E_{1}(A^{\left( n\right) })\right) \text{.}
\end{equation*}%
For $n\in \omega $, since $A^{\left( n\right) }$ is countable, $\left\{
0\right\} $ is $\boldsymbol{\Sigma }_{2}^{0}$ in $E_{1}(A^{\left( n\right)
}) $. As this holds for every $n\in \omega $, we have that $\left\{
0\right\} $ is $\boldsymbol{\Pi }_{3}^{0}$ in $\mathrm{lim}_{n}E_{1}\left(
A^{\left( n\right) }\right) $.\ Hence, $\mathrm{lim}_{n}^{1}{}A_{1}^{\left(
n\right) }$ is $\boldsymbol{\Pi }_{3}^{0}$ in $\mathrm{lim}_{n}^{1}A^{\left(
n\right) }$. We have that $\left\{ 0\right\} $ is dense in $\mathrm{lim}%
_{n}^{1}A_{1}^{\left( n\right) }$ by Proposition \ref%
{Proposition:phantom-category}(1). By Lemma \ref{Lemma:1st-Solecki}, it
remains to prove that if $V$ is an open neighborhood of $0$ in $\mathrm{B}%
^{1}\left( K\left( \boldsymbol{A}\right) \right) $, then $\overline{V}^{%
\mathrm{Z}^{1}\left( K\left( \boldsymbol{A}\right) \right) }$ contains an
open neighborhood of $0$ in $\mathrm{Z}^{1}\left( K\left( \boldsymbol{A}%
_{1}\right) \right) \subseteq \mathrm{Z}^{1}\left( K\left( \boldsymbol{A}%
\right) \right) $. Without loss of generality, we can assume that there
exists $N\in \omega $ such that%
\begin{equation*}
V=\left\{ \left( p^{\left( n_{0},n_{1}\right) }\left( b_{n_{1}}\right)
-b_{n_{0}}\right) \in \mathrm{B}^{1}\left( K\left( \boldsymbol{A}\right)
\right) :\forall n\leq N,b_{n}=0\right\} \text{.}
\end{equation*}%
Consider now%
\begin{equation*}
U=\left\{ \left( a_{n_{0},n_{1}}\right) \in \mathrm{Z}^{1}\left( K\left( 
\boldsymbol{A}_{1}\right) \right) :a_{n_{0},n_{1}}\in A_{1}^{\left(
n_{0}\right) },\forall n_{0},n_{1}\leq N,a_{n_{0},n_{1}+1}=0\right\} \text{.}
\end{equation*}%
Then we have that $U$ is an open neighborhood of $0$ in $\mathrm{Z}%
^{1}\left( K\left( \boldsymbol{A}_{1}\right) \right) $.

We claim that $U\subseteq \overline{V}^{\mathrm{Z}^{1}\left( K\left( 
\boldsymbol{A}\right) \right) }$. Suppose that%
\begin{equation*}
\tilde{x}=\left( \tilde{a}_{n_{0},n_{1}}\right) \in U\text{.}
\end{equation*}%
Thus, we have that $\tilde{a}_{n_{0},n_{1}}=0$ for $n_{0},n_{1}\leq N$. Fix
an open neighborhood $W$ of $\tilde{x}$ in $\mathrm{Z}^{1}\left( K\left( 
\boldsymbol{A}\right) \right) $. Then there exists $M>N$ such that $W$
contains%
\begin{equation*}
\left\{ \left( a_{n_{0},n_{1}}\right) \in \mathrm{Z}^{1}\left( K\left( 
\boldsymbol{A}\right) \right) :a_{n_{0},n_{1}}\in A^{\left( n_{0}\right)
},\forall n_{0},n_{1}\leq M,a_{n_{0},n_{1}}=\tilde{a}_{n_{0},n_{1}}\right\} 
\text{.}
\end{equation*}%
We need to find $b_{n}\in A^{\left( n\right) }$ for $n\in \omega $ such that:

\begin{enumerate}
\item $b_{n}=0$ for $n\leq N$;

\item $p^{\left( n,n+1\right) }\left( b_{n+1}\right) -b_{n}=\tilde{a}%
_{n,n+1} $ for $n<M$.
\end{enumerate}

This will give an element $\left( p^{\left( n,n+1\right) }\left(
b_{n+1}\right) -b_{n}\right) _{n\in \omega }$ in $V\cap W$. Set $b_{n}=0$
for $n\leq N+1$ and for $n>M$. We now define recursively $%
b_{N+2},b_{N+2},\ldots ,b_{M}$ such that%
\begin{equation*}
b_{n}\in p^{\left( n,M\right) }(A^{\left( M\right) })\subseteq A^{\left(
n\right) }
\end{equation*}%
and%
\begin{equation*}
p^{\left( n,n+1\right) }\left( b_{n+1}\right) =b_{n}+\tilde{a}_{n,n+1}
\end{equation*}%
for $n<M$. Since $\tilde{a}_{N+1,N+2}\in A_{1}^{\left( N+1\right) }$ there
exists $b_{N+2}\in p^{\left( N+1,M\right) }(A^{\left( M\right) })$ such that 
\begin{equation*}
p^{\left( N+1,N+2\right) }\left( b_{N+2}\right) =\tilde{a}_{N+1,N+2}=b_{N+1}+%
\tilde{a}_{N+1,N+2}\text{.}
\end{equation*}%
Suppose that $b_{N+2},\ldots ,b_{k}$ have been defined for some $k<M$. Since 
$\tilde{a}_{k,k+1}\in A_{1}^{\left( N+1\right) }$, we have that%
\begin{equation*}
b_{k}+\tilde{a}_{k,k+1}\in p^{\left( k,M\right) }(A^{\left( M\right)
})+A_{1}^{\left( k\right) }\subseteq p^{\left( k,M\right) }(A^{\left(
M\right) })\text{.}
\end{equation*}%
Thus, there exists $b_{k+1}\in p^{\left( k+1,M\right) }\left( A^{\left(
M\right) }\right) $ such that%
\begin{equation*}
p^{\left( k,k+1\right) }\left( b_{k+1}\right) =b_{k}+\tilde{a}_{k,k+1}\text{.%
}
\end{equation*}%
This concludes the recursive definition, and the proof.
\end{proof}

\begin{lemma}
\label{Lemma:monomorphic-tower}Let $\boldsymbol{A}=\left( A^{\left( n\right)
}\right) $ be a reduced tower of countable modules.\ The following
assertions are equivalent:

\begin{enumerate}
\item $\boldsymbol{A}$ has plain length at most $1$;

\item $\mathrm{lim}^{1}\boldsymbol{A}$ has plain Solecki length at most $1$;

\item $\boldsymbol{A}\cong S_{1}^{\left( n\right) }\left( \boldsymbol{A}%
\right) $ for some $n\in \omega $.
\end{enumerate}
\end{lemma}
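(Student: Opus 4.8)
The plan is to run the cycle $(1)\Rightarrow(3)\Rightarrow(2)\Rightarrow(1)$, after first recording what the two \emph{plain} notions say concretely. For a reduced tower, plain length at most $1$ means exactly that $\boldsymbol{A}=\boldsymbol{A}_0$ is essentially monomorphic. For a phantom pro-countable module $G$ (which, unless it is zero, has Solecki length $\geq 1$, and for which $s_0(G)=G$), plain Solecki length at most $1$ means exactly $s_1(G)=0$ together with $\{0\}\in\boldsymbol{\Sigma}_2^0(G)$, by Theorem \ref{Theorem:complexity}. I shall also use the standard facts that a reduced monomorphic tower has trivial first derived tower, and that essential monomorphicity of a tower is equivalent to the kernels $\mathrm{Ker}(p^{(n,m)})$ stabilizing in $m$ for every $n$.

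For $(1)\Rightarrow(3)$ I would argue purely at the level of towers: when the kernels $\mathrm{Ker}(p^{(n,m)})$ stabilize, quotienting each $A^{(n)}$ by their (countable) union produces a monomorphic tower of countable modules isomorphic to $\boldsymbol{A}$ which, for $n$ large enough that the stabilization has occurred, is identified level-by-level with $S_1^{(n)}(\boldsymbol{A})$; unwinding the definition of $S_1^{(n)}(\boldsymbol{A})$ gives $\boldsymbol{A}\cong S_1^{(n)}(\boldsymbol{A})$, and conversely $(3)\Rightarrow(1)$ is immediate since $S_1^{(n)}(\boldsymbol{A})$ is a monomorphic tower. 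For $(3)\Rightarrow(2)$: presenting $\boldsymbol{A}$ up to isomorphism as a reduced monomorphic tower of submodules $A^{(0)}\supseteq A^{(1)}\supseteq\cdots$ of the countable module $A^{(0)}$, we get $\boldsymbol{A}_1\cong 0$, hence $s_1(\mathrm{lim}^1\boldsymbol{A})=\mathrm{lim}^1\boldsymbol{A}_1=0$ by Proposition \ref{Proposition:lim1-Solecki1}; moreover $\mathrm{lim}^1\boldsymbol{A}$ is the quotient of the pro-countable Polish module $\mathrm{lim}_k A^{(0)}/A^{(k)}$ by the dense countable image of $A^{(0)}$, so $\{0\}$ is $F_\sigma$, i.e. $\boldsymbol{\Sigma}_2^0$, in $\mathrm{lim}^1\boldsymbol{A}$, whence $\mathrm{lim}^1\boldsymbol{A}$ has plain Solecki length at most $1$.

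The substance is $(2)\Rightarrow(1)$. From $s_1(\mathrm{lim}^1\boldsymbol{A})=0$ and Proposition \ref{Proposition:lim1-Solecki1} we get $\mathrm{lim}^1\boldsymbol{A}_1=0$; since $\boldsymbol{A}_1$ is a reduced tower and $\mathrm{lim}^1$ is faithful on reduced towers (Proposition \ref{Proposition:phantom-tower}), comparing $\mathrm{lim}^1(\mathrm{id}_{\boldsymbol{A}_1})$ with $\mathrm{lim}^1(0)$ forces $\boldsymbol{A}_1\cong 0$. The exact sequence $0\to\mathrm{lim}^1\boldsymbol{A}_1\to\mathrm{lim}^1\boldsymbol{A}\to\mathrm{lim}_n E^{(n)}(\boldsymbol{A})\to 0$ then identifies $\mathrm{lim}^1\boldsymbol{A}$ with $\mathrm{lim}_n E^{(n)}(\boldsymbol{A})$, in which each $E^{(n)}(\boldsymbol{A})=\mathrm{lim}^1 S^{(n)}(\boldsymbol{A})$ is a phantom pro-countable module whose trivial submodule is the image of the countable module $A^{(n)}$, hence $\boldsymbol{\Sigma}_2^0$; thus each $E^{(n)}(\boldsymbol{A})$ has plain Solecki length at most $1$ and $(E^{(n)}(\boldsymbol{A}))_n$ is an epimorphic tower. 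Corollary \ref{Corollary:compact-phantom}, applied to this tower whose limit $\mathrm{lim}^1\boldsymbol{A}$ has plain Solecki length at most $1$ by hypothesis, produces $k$ with $E^{(n+1)}(\boldsymbol{A})\to E^{(n)}(\boldsymbol{A})$ an isomorphism for all $n\geq k$; it remains to translate this back into essential monomorphicity of $\boldsymbol{A}$.

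That translation is the hard part: because $\mathrm{lim}^1$ on reduced towers is faithful but not full, the isomorphism $\mathrm{lim}^1\boldsymbol{A}\cong E^{(k)}(\boldsymbol{A})=\mathrm{lim}^1 S^{(k)}(\boldsymbol{A})$ cannot simply be pulled back to a tower isomorphism. Instead I would work explicitly with the completions $L^{(n)}(\boldsymbol{A})$, the maps $\kappa_{\boldsymbol{A}}^{(n)}$, and the six-term sequences attached to the level-wise surjections $0\to\boldsymbol{N}^{(n)}\to S^{(n+1)}(\boldsymbol{A})\to S^{(n)}(\boldsymbol{A})\to 0$, where $\boldsymbol{N}^{(n)}_j=\mathrm{Ker}(p^{(n,n+1)})\cap p^{(n+1,j)}(A^{(j)})$: from ``$E^{(n+1)}(\boldsymbol{A})\to E^{(n)}(\boldsymbol{A})$ is an isomorphism'' one extracts that $\mathrm{lim}^1\boldsymbol{N}^{(n)}$ is covered by the image of $\mathrm{lim}\,S^{(n)}(\boldsymbol{A})=A_1^{(n)}$, and, using that $(A_1^{(n)})_n$ is pro-null since $\boldsymbol{A}_1\cong 0$, one deduces that $\boldsymbol{N}^{(n)}$ is essentially constant for $n\geq k$, hence that the kernels $\mathrm{Ker}(p^{(n,m)})$ stabilize in $m$ — which is exactly condition $(1)$.
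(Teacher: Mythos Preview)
Your $(1)\Rightarrow(3)$ is not a proof. The kernels $\mathrm{Ker}(p^{(n,m)})$ live in different modules $A^{(m)}$ as $m$ varies, so ``stabilizing in $m$'' is not a well-posed condition; the natural reading is that the kernel tower $\boldsymbol{K}^{(n)}=(\mathrm{Ker}(p^{(n,m)}))_{m\geq n}$ is pro-null, but $\boldsymbol{K}^{(n)}$ being pro-null is precisely the assertion $\boldsymbol{A}\cong S^{(n)}(\boldsymbol{A})$, i.e.\ condition~(3). You have assumed what is to be shown. The paper avoids this by running $(1)\Rightarrow(2)$, $(2)\Rightarrow(1)$, $(3)\Rightarrow(2)$, and $(1)\wedge(2)\Rightarrow(3)$ instead.

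Your $(2)\Rightarrow(1)$ has the right architecture, but the last step does not close. From the six-term sequence attached to $0\to\boldsymbol{N}^{(n)}\to S^{(n+1)}(\boldsymbol{A})\to S^{(n)}(\boldsymbol{A})\to 0$ together with injectivity of $E^{(n+1)}\to E^{(n)}$ you correctly extract that $\mathrm{lim}^1\boldsymbol{N}^{(n)}$ is the cokernel of $A_1^{(n+1)}\to A_1^{(n)}$ in the left heart. But ``$(A_1^{(n)})_n$ is pro-null'' does not make this cokernel vanish: pro-null towers can have every term nonzero. What is missing is the observation that this cokernel is a \emph{discrete countable} module while $\mathrm{lim}^1\boldsymbol{N}^{(n)}$ is a \emph{phantom} module, and an isomorphism in $\mathrm{LH}(\boldsymbol{\Pi}(\mathbf{Mod}(R)))$ between a Polish object and a phantom object forces both to be zero. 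That gives $A_1^{(n+1)}\to A_1^{(n)}$ surjective for $n\geq k$; combined with $\mathrm{lim}_n A_1^{(n)}\subseteq\mathrm{lim}\boldsymbol{A}=0$, the resulting epimorphic tower must have $A_1^{(n)}=0$ for $n\geq k$. Only then is each $\boldsymbol{N}^{(n)}$ eventually \emph{zero} (not merely constant), which is what one needs to build a subsequence with injective bonding maps.

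The paper's $(2)\Rightarrow(1)$ achieves this more directly: it first replaces $\boldsymbol{A}$ by the isomorphic tower $\boldsymbol{A}/\boldsymbol{A}_1$ (legitimate since $\boldsymbol{A}_1$ is pro-null under (2)), so that $A_1^{(n)}=0$ holds from the outset and $A^{(n)}$ embeds in its completion $L^{(n)}(\boldsymbol{A})$. Injectivity of $E^{(n+1)}\to E^{(n)}$ then says that the closure of $\mathrm{Ker}(p^{(n,n+1)})$ inside $L^{(n+1)}$ lies in the countable set $A^{(n+1)}$, hence is a countable closed subgroup of a Polish group and therefore discrete by Baire; so some basic neighborhood $p^{(n+1,j)}(A^{(j)})$ meets it only at $0$. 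Iterating produces the subsequence $(n_i)$ with $p^{(n_i,n_{i+1})}(A^{(n_{i+1})})\cap\mathrm{Ker}(p^{(n_{i-1},n_i)})=0$, and the tower $(p^{(n_i,n_{i+1})}(A^{(n_{i+1})}))_i$ is the monomorphic tower witnessing (1).
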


\begin{proof}
(1)$\Rightarrow $(2) If $\boldsymbol{A}$ is a monomorphic tower, then the $%
E^{\left( n\right) }\left( \boldsymbol{A}\right) \rightarrow E^{\left(
0\right) }\left( \boldsymbol{A}\right) $ is an isomorphism for every $n\in
\omega $. Furthermore, by Proposition \ref{Proposition:lim1-Solecki1} the
map $\mathrm{lim}^{1}\boldsymbol{A}\rightarrow E^{\left( 0\right) }\left( 
\boldsymbol{A}\right) $ is an isomorphism. As $\left\{ 0\right\} $ is $%
\boldsymbol{\Sigma }_{2}^{0}$ in $E^{\left( 0\right) }\left( \boldsymbol{A}%
\right) $, the same holds for $\mathrm{lim}^{1}\boldsymbol{A}$.

(2)$\Rightarrow $(1) If $\left\{ 0\right\} $ is $\boldsymbol{\Sigma }%
_{2}^{0} $ in $\mathrm{lim}^{1}\boldsymbol{A}$, then $\boldsymbol{A}$ has
length $1$. As $\boldsymbol{A}$ is reduced, after passing to an isomorphic
tower we can assume that for every $n\in \omega $, $\kappa _{\boldsymbol{A}%
}^{\left( n\right) }:A^{\left( n\right) }\rightarrow L^{\left( n\right)
}\left( \boldsymbol{A}\right) $ is injective. We can thus identify $%
A^{\left( n\right) }$ with its image inside $L^{\left( n\right) }\left( 
\boldsymbol{A}\right) $. By Proposition \ref{Proposition:lim1-Solecki1}, we
have that $\left\{ 0\right\} $ is $\boldsymbol{\Sigma }_{2}^{0}$ in $\mathrm{%
lim}_{n}E^{\left( n\right) }\left( \boldsymbol{A}\right) $. Hence, by
Corollary \ref{Corollary:compact-phantom}, without loss of generality we can
assume that for every $n\in \omega $, $E^{\left( n+1\right) }\left( 
\boldsymbol{A}\right) \rightarrow E^{\left( n\right) }\left( \boldsymbol{A}%
\right) $ is an isomorphism. The fact that $E^{\left( 1\right) }\left( 
\boldsymbol{A}\right) \rightarrow E^{\left( 0\right) }\left( \boldsymbol{A}%
\right) $ is injective implies that the closure of $N^{\left( 1\right) }:=%
\mathrm{Ker}\left( p^{\left( 0,1\right) }:A^{\left( 1\right) }\rightarrow
A^{\left( 0\right) }\right) $ within $L^{\left( 1\right) }\left( \boldsymbol{%
A}\right) $ is contained in $A^{\left( 1\right) }$. Therefore, we have that
the chain of submodules $\left( N^{\left( 1\right) }\cap p^{\left(
1,n\right) }\left( A^{\left( n\right) }\right) \right) _{n\in \omega }$ of $%
N^{\left( 1\right) } $ is eventually constant. Hence, there exists $%
n_{2}\geq 1$ such that $p^{\left( 1,n_{2}\right) }\left( A^{\left(
n_{2}\right) }\right) \cap N^{\left( 1\right) }=\left\{ 0\right\} $.
Proceeding in this fashion, we can define an increasing sequence $\left(
n_{k}\right) $ in $\omega $ with $n_{0}=0$ and $n_{1}=1$ such that $%
p^{\left( n_{i},n_{i+1}\right) }\left( A^{\left( n_{i+1}\right) }\right)
\cap \mathrm{\mathrm{Ker}}\left( p^{\left( n_{i-1},n_{i}\right) }\right)
=\left\{ 0\right\} $ for every $i\geq 1$. The tower $\left( p^{\left(
n_{i},n_{i+1}\right) }\left( A^{\left( n_{i+1}\right) }\right) \right)
_{i\in \omega }$ is the desired monomorphic tower isomorphic to $\boldsymbol{%
A}$.

(1)$\wedge $(2)$\Rightarrow $(3) If $\boldsymbol{A}$ is a monomorphic tower,
then $E^{\left( n\right) }\left( \boldsymbol{A}\right) \rightarrow E^{\left(
0\right) }\left( \boldsymbol{A}\right) $ is an isomorphism. Furthermore, if $%
\left\{ 0\right\} $ is $\boldsymbol{\Sigma }_{2}^{0}$ in $\mathrm{lim}^{1}$ $%
\boldsymbol{A}$ then by Proposition \ref{Proposition:lim1-Solecki1}, 
\begin{equation*}
\mathrm{\mathrm{\mathrm{Ke}}r}\left( \mathrm{\mathrm{lim}}^{1}\boldsymbol{A}%
\rightarrow E^{\left( 0\right) }\left( \boldsymbol{A}\right) \right) =%
\mathrm{\mathrm{Ker}}\left( \mathrm{lim}^{1}\boldsymbol{A}\rightarrow 
\mathrm{lim}_{n}E^{\left( n\right) }\left( \boldsymbol{A}\right) \right) =0
\end{equation*}%
for every $n\in \omega $.

(3)$\Rightarrow $(2) This follows from the fact that $\left\{ 0\right\} $ is 
$\boldsymbol{\Sigma }_{2}^{0}$ in $E^{\left( n\right) }\left( \boldsymbol{A}%
\right) $ for every $n\in \omega $.
\end{proof}

\begin{theorem}
\label{Theorem:Solecki-lim1}Suppose that $\left( A^{\left( n\right) }\right) 
$ is a tower of countable modules and $\alpha <\omega _{1}$. Then we have
that%
\begin{equation*}
s_{\alpha }\left( \mathrm{lim}^{1}\boldsymbol{A}\right) =\mathrm{lim}^{1}%
\boldsymbol{A}_{\alpha }\text{.}
\end{equation*}
\end{theorem}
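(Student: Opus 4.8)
The plan is to establish $s_{\alpha}(\mathrm{lim}^{1}\boldsymbol{A})=\mathrm{lim}^{1}\boldsymbol{A}_{\alpha}$ by transfinite induction on $\alpha<\omega_{1}$. First I would reduce to the case that $\boldsymbol{A}$ is reduced, replacing it if necessary by $\boldsymbol{A}/\boldsymbol{A}_{\infty}$, which changes neither $\mathrm{lim}^{1}\boldsymbol{A}$ nor the $\mathrm{lim}^{1}$ of its derived towers. In the reduced case the tree characterization $a_{n}\in A_{\alpha}^{(n)}\Leftrightarrow(p^{(0,n)}(a_{n}),\ldots,p^{(n-1,n)}(a_{n}),a_{n})\in T_{\boldsymbol{A}}^{(\omega\alpha)}$ is available, every $\boldsymbol{A}_{\alpha}$ is a reduced subtower of $\boldsymbol{A}$, and $\mathrm{lim}^{1}\boldsymbol{A}$ is a phantom pro-countable module into which each $\mathrm{lim}^{1}\boldsymbol{A}_{\alpha}$ embeds, via the exact sequences recorded in the discussion of derived towers above.

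The base case $\alpha=0$ is immediate: $\boldsymbol{A}_{0}=\boldsymbol{A}$, and $s_{0}(\mathrm{lim}^{1}\boldsymbol{A})$ is the closure of $\{0\}$ in $\mathrm{lim}^{1}\boldsymbol{A}$, which is all of $\mathrm{lim}^{1}\boldsymbol{A}$ since the latter is phantom by Proposition~\ref{Proposition:phantom-tower}(1). For a limit ordinal $\lambda$, the inductive hypothesis for all $\beta<\lambda$ together with $s_{\lambda}(G)=\bigcap_{\beta<\lambda}s_{\beta}(G)$ gives $s_{\lambda}(\mathrm{lim}^{1}\boldsymbol{A})=\bigcap_{\beta<\lambda}\mathrm{lim}^{1}\boldsymbol{A}_{\beta}$, and this equals $\mathrm{lim}^{1}\boldsymbol{A}_{\lambda}$ by the identity $\mathrm{lim}^{1}\boldsymbol{A}_{\lambda}=\bigcap_{\beta<\lambda}\mathrm{lim}^{1}\boldsymbol{A}_{\beta}$ recorded above for limit $\lambda$.

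The heart of the argument is the successor step $\alpha=\beta+1$. By the inductive hypothesis $s_{\beta+1}(\mathrm{lim}^{1}\boldsymbol{A})=s_{1}(s_{\beta}(\mathrm{lim}^{1}\boldsymbol{A}))=s_{1}(\mathrm{lim}^{1}\boldsymbol{A}_{\beta})$, and applying Proposition~\ref{Proposition:lim1-Solecki1} to the tower $\boldsymbol{A}_{\beta}$ of countable modules yields $s_{1}(\mathrm{lim}^{1}\boldsymbol{A}_{\beta})=\mathrm{lim}^{1}(\boldsymbol{A}_{\beta})_{1}$. It therefore remains to prove the combinatorial identity $(\boldsymbol{A}_{\beta})_{1}=\boldsymbol{A}_{\beta+1}$ as subtowers of $\boldsymbol{A}$. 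Writing $(\boldsymbol{A}_{\beta})^{(m)}=A_{\beta_{m}}^{(m)}$, and using that the bonding maps of $\boldsymbol{A}_{\beta}$ are the restrictions of those of $\boldsymbol{A}$, the level-$n$ term of $(\boldsymbol{A}_{\beta})_{1}$ is $\bigcap_{k}p^{(n,n+k)}(A_{\beta_{n+k}}^{(n+k)})$, whereas, $\beta+1$ being a successor, the level-$n$ term of $\boldsymbol{A}_{\beta+1}$ is $A_{\beta+1}^{(n)}=\bigcap_{k}p^{(n,n+k)}(A_{\beta_{k}}^{(n+k)})$. Two elementary facts read off from the tree characterization drive the comparison: $A_{\delta}^{(m)}\subseteq A_{\gamma}^{(m)}$ for $\gamma\le\delta$ (since $T^{(\omega\delta)}\subseteq T^{(\omega\gamma)}$), and $p^{(l,m)}(A_{\gamma}^{(m)})\subseteq A_{\gamma}^{(l)}$ for $l\le m$ (since $T^{(\omega\gamma)}$ is a subtree, hence closed under initial segments). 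As $\beta_{n+k}\ge\beta_{k}$, monotonicity gives $(\boldsymbol{A}_{\beta})_{1}^{(n)}\subseteq A_{\beta+1}^{(n)}$. For the reverse inclusion, fix $a\in A_{\beta+1}^{(n)}$ and $k$: taking the index $j=n+k$ in the formula for $A_{\beta+1}^{(n)}$ produces some $b\in A_{\beta_{n+k}}^{(2n+k)}$ with $p^{(n,2n+k)}(b)=a$, and then $c:=p^{(n+k,2n+k)}(b)$ lies in $A_{\beta_{n+k}}^{(n+k)}$ by the second fact and satisfies $p^{(n,n+k)}(c)=a$; since $k$ is arbitrary, $a\in(\boldsymbol{A}_{\beta})_{1}^{(n)}$. (The same two facts show $p^{(n,n+1)}$ maps $A_{\beta_{n+1}}^{(n+1)}$ into $A_{\beta_{n}}^{(n)}$, so that $\boldsymbol{A}_{\beta}$ is a well-defined subtower.)

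The main obstacle is not any individual step of the induction: the base and limit cases are bookkeeping, and the genuine descriptive-set-theoretic content—verifying the hypotheses of Lemma~\ref{Lemma:1st-Solecki}—is already packaged in Proposition~\ref{Proposition:lim1-Solecki1}. Rather, the delicate point is the combinatorial identity $(\boldsymbol{A}_{\beta})_{1}=\boldsymbol{A}_{\beta+1}$ when $\beta$ is a limit, where the index shift between the occurrences of $\beta_{n+k}$ and $\beta_{k}$ must be absorbed by the diagonalization above; getting the bookkeeping of the walk-like index assignments $\alpha\mapsto\alpha_{n}$ to line up with the $\omega$-fold iteration of the tree derivative is where care is required.
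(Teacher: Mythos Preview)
Your proof is correct and follows the same inductive strategy as the paper: the limit case via $\mathrm{lim}^{1}\boldsymbol{A}_{\lambda}=\bigcap_{\beta<\lambda}\mathrm{lim}^{1}\boldsymbol{A}_{\beta}$, and the successor case via Proposition~\ref{Proposition:lim1-Solecki1} applied to the predecessor derived tower. The paper's proof is a two-line sketch that leaves the identity $(\boldsymbol{A}_{\beta})_{1}=\boldsymbol{A}_{\beta+1}$ implicit; you have unpacked and verified precisely this point, including the index-shift diagonalization needed when $\beta$ is limit, which is indeed the only place where any real care is needed.
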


\begin{proof}
We prove that the conclusion holds by induction on $\alpha $. The limit case
follows from the observation that, when $\lambda $ is limit,%
\begin{equation*}
\mathrm{lim}^{1}\boldsymbol{A}_{\lambda }=\bigcap_{\beta <\lambda }\mathrm{%
lim}^{1}\boldsymbol{A}_{\beta }\text{.}
\end{equation*}%
The successor stage follows from Proposition \ref{Proposition:lim1-Solecki1}
applied to $\boldsymbol{A}_{\alpha }$.
\end{proof}

\begin{corollary}
\label{Corollary:Solecki-lim1}A reduced tower $\boldsymbol{A}$ has (plain)
length at most $\alpha $ if and only if $\mathrm{lim}^{1}\boldsymbol{A}$ has
(plain)\ Solecki length at most $\alpha $.
\end{corollary}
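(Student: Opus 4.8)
The plan is to read off both equivalences from Theorem~\ref{Theorem:Solecki-lim1}, which identifies $s_{\alpha}(\mathrm{lim}^{1}\boldsymbol{A})$ with $\mathrm{lim}^{1}\boldsymbol{A}_{\alpha}$, combined with the two triviality criteria already at hand: faithfulness of $\mathrm{lim}^{1}$ on reduced towers (Proposition~\ref{Proposition:phantom-tower}(1)) and Lemma~\ref{Lemma:monomorphic-tower}. The substantive work has been done in those results; what remains is to line up the definitions of ``(plain) length'' and ``(plain) Solecki length''. Throughout one uses that a derived tower $\boldsymbol{A}_{\alpha}$ of a reduced tower $\boldsymbol{A}$ is again reduced, being a subtower of $\boldsymbol{A}$ (the category of reduced towers is closed under subobjects).

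For the non-plain statement, observe that by definition $\mathrm{lim}^{1}\boldsymbol{A}$ has Solecki length at most $\alpha$ exactly when $s_{\alpha}(\mathrm{lim}^{1}\boldsymbol{A})=0$, which by Theorem~\ref{Theorem:Solecki-lim1} says $\mathrm{lim}^{1}\boldsymbol{A}_{\alpha}=0$; and $\boldsymbol{A}$ has length at most $\alpha$ exactly when $\boldsymbol{A}_{\alpha}\cong 0$. So I would reduce to showing that $\mathrm{lim}^{1}\boldsymbol{A}_{\alpha}=0$ iff $\boldsymbol{A}_{\alpha}\cong 0$. One implication is immediate; for the other, $\boldsymbol{A}_{\alpha}$ is a reduced tower of countable modules, so by Proposition~\ref{Proposition:phantom-tower}(1) the additive functor $\mathrm{lim}^{1}$ is faithful on it, and a faithful additive functor sends a nonzero identity morphism to a nonzero morphism; hence $\mathrm{lim}^{1}\boldsymbol{A}_{\alpha}=0$ forces $\mathrm{id}_{\boldsymbol{A}_{\alpha}}=0$, i.e.\ $\boldsymbol{A}_{\alpha}\cong 0$. (One may instead argue that $\mathrm{lim}^{1}\boldsymbol{A}_{\alpha}=0$ makes $\boldsymbol{A}_{\alpha}$ essentially epimorphic, and that a reduced essentially epimorphic tower of countable modules is isomorphic to $0$, since its limit surjects onto each of its terms.)

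For the plain statement I would take $\alpha$ to be a successor ordinal. The first step is the purely formal remark that, since $s_{\alpha}(G)=s_{1}(s_{\alpha-1}(G))$, a module $G$ with a pro-countable Polish cover has plain Solecki length at most $\alpha$ if and only if $s_{\alpha-1}(G)$ has plain Solecki length at most $1$. Applying this to $G=\mathrm{lim}^{1}\boldsymbol{A}$ and invoking Theorem~\ref{Theorem:Solecki-lim1} to rewrite $s_{\alpha-1}(\mathrm{lim}^{1}\boldsymbol{A})$ as $\mathrm{lim}^{1}\boldsymbol{A}_{\alpha-1}$, the condition becomes: $\mathrm{lim}^{1}\boldsymbol{A}_{\alpha-1}$ has plain Solecki length at most $1$. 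Now Lemma~\ref{Lemma:monomorphic-tower}, applied to the reduced tower $\boldsymbol{A}_{\alpha-1}$, equates this with $\boldsymbol{A}_{\alpha-1}$ having plain length at most $1$, i.e.\ with $\boldsymbol{A}_{\alpha-1}$ being essentially monomorphic --- which is exactly the defining condition for $\boldsymbol{A}$ to have plain length at most $\alpha$. This closes the chain of equivalences.

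The only point requiring genuine attention --- and the closest thing here to an obstacle --- is the bookkeeping around the ``plain'' decorations: verifying that the unwound definitions of plain Solecki length at most $\alpha$ and of plain length at most $\alpha$ match through the identifications above, and checking that every tower one passes to ($\boldsymbol{A}_{\alpha}$, $\boldsymbol{A}_{\alpha-1}$, and the intermediate Solecki submodules) stays within the classes for which Theorem~\ref{Theorem:Solecki-lim1}, Proposition~\ref{Proposition:phantom-tower}(1), and Lemma~\ref{Lemma:monomorphic-tower} are stated. Once that is in place, the corollary is a direct chain of substitutions.
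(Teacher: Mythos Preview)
Your proof is correct and follows exactly the approach the paper intends: the corollary is stated without proof, as an immediate consequence of Theorem~\ref{Theorem:Solecki-lim1} together with Lemma~\ref{Lemma:monomorphic-tower} for the plain case and the faithfulness of $\mathrm{lim}^{1}$ on reduced towers (equivalently, the Mittag-Leffler characterization) for the non-plain case. Your unpacking of the definitions and the bookkeeping around reducedness of the derived towers is precisely what is needed to make the implicit argument explicit.
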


For a tower $\boldsymbol{A}=\left( A^{\left( n\right) }\right) $, we let $%
\boldsymbol{A}^{\oplus \omega }$ be the tower whose $n$-th term is the
countable direct sum of infinitely many copies of $A^{\left( n\right) }$,
with bonding maps induced by those of $\boldsymbol{A}$. It is easy to prove
by induction on $\alpha <\omega $ that $(\boldsymbol{A}^{\oplus \omega
})_{\alpha }=(\boldsymbol{A}_{\alpha })^{\oplus \omega }$. Furthermore, $%
\boldsymbol{A}$ is plain if and only if $\boldsymbol{A}^{\oplus \omega }$ is
plain. As a consequence of these observations and Corollary \ref%
{Corollary:Solecki-lim1} we obtain the following.

\begin{corollary}
\label{Corollary:sum-tower}If $\boldsymbol{A}$ is a reduced tower of
countable modules, then $\left\{ 0\right\} $ has the same complexity class
in $\mathrm{lim}^{1}\boldsymbol{A}$ and in $\mathrm{lim}^{1}\boldsymbol{A}%
^{\oplus \omega }$.
\end{corollary}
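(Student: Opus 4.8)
The strategy is to reduce the statement to Theorem~\ref{Theorem:complexity}. By that theorem together with Corollary~\ref{Corollary:complexity}, the complexity class of $\{0\}$ in a module $G$ with a pro-countable Polish cover is completely determined by the pair consisting of the Solecki length $\alpha$ of $G$ and the truth value of ``$G$ is plain''. Since $\boldsymbol{A}^{\oplus\omega}$ is again a reduced tower of countable modules (each $\bigoplus_{j}A^{\left(n\right)}$ is countable, and $\mathrm{lim}\,\boldsymbol{A}^{\oplus\omega}$ embeds into $\prod_{j}\mathrm{lim}\,\boldsymbol{A}=0$), both $\mathrm{lim}^{1}\boldsymbol{A}$ and $\mathrm{lim}^{1}\boldsymbol{A}^{\oplus\omega}$ are phantom pro-countable Polish modules, in particular modules with a pro-countable Polish cover, so Theorem~\ref{Theorem:complexity} applies to them. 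It therefore suffices to show that these two modules have the same Solecki length and that one is plain exactly when the other is.

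The computational heart of the matter is the identity
\begin{equation*}
\left(\boldsymbol{A}^{\oplus\omega}\right)_{\alpha}=\left(\boldsymbol{A}_{\alpha}\right)^{\oplus\omega}\qquad(\alpha<\omega_{1})\text{,}
\end{equation*}
which I would prove by transfinite induction — more precisely, by showing $\bigl((\boldsymbol{A}^{\oplus\omega})^{\left(n\right)}\bigr)_{\beta}=\bigl(A_{\beta}^{\left(n\right)}\bigr)^{\oplus\omega}$ for every successor-or-zero ordinal $\beta$ and every $n$, which then yields the displayed identity by passing to the tower $(A_{\alpha_{n}}^{\left(n\right)})_{n}$. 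The base case is trivial, and the inductive step follows at once from the recursive formula $A_{\beta}^{\left(n\right)}=\bigcap_{k}p^{\left(n,n+k\right)}\bigl(A_{\left(\beta-1\right)_{k}}^{\left(n+k\right)}\bigr)$ upon noting: for a submodule $B\subseteq A^{\left(n\right)}$ the submodule $B^{\oplus\omega}\subseteq\bigoplus_{j}A^{\left(n\right)}$ consists precisely of the finitely supported sequences all of whose entries lie in $B$; consequently $(-)^{\oplus\omega}$ commutes with arbitrary intersections of submodules inside a fixed ambient module; and the bonding maps of $\boldsymbol{A}^{\oplus\omega}$ act coordinatewise, so $(-)^{\oplus\omega}$ commutes with images under finite composites of bonding maps. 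Along the way I would record the elementary fact that a tower $\boldsymbol{B}$ of countable modules is pro-zero if and only if $\boldsymbol{B}^{\oplus\omega}$ is pro-zero, since being pro-zero only asks that sufficiently long composites of bonding maps vanish, which holds for $\boldsymbol{B}$ exactly when it holds for $\boldsymbol{B}^{\oplus\omega}$.

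Granting this, equality of Solecki lengths is immediate: by Corollary~\ref{Corollary:Solecki-lim1} the Solecki length of $\mathrm{lim}^{1}\boldsymbol{A}$ is the least $\alpha$ with $\boldsymbol{A}_{\alpha}\cong0$, and similarly for $\boldsymbol{A}^{\oplus\omega}$, while by the displayed identity and the pro-zero remark $\left(\boldsymbol{A}^{\oplus\omega}\right)_{\alpha}\cong0$ iff $\left(\boldsymbol{A}_{\alpha}\right)^{\oplus\omega}\cong0$ iff $\boldsymbol{A}_{\alpha}\cong0$. Call this common value $\alpha$. If $\alpha$ is $0$ or a limit ordinal, then neither $\mathrm{lim}^{1}\boldsymbol{A}$ nor $\mathrm{lim}^{1}\boldsymbol{A}^{\oplus\omega}$ is plain and we are done; so suppose $\alpha$ is a successor. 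By the definitions together with Corollary~\ref{Corollary:Solecki-lim1} (using that the Solecki length already equals $\alpha$), $\mathrm{lim}^{1}\boldsymbol{A}$ is plain exactly when the reduced tower $\boldsymbol{A}_{\alpha-1}$ is essentially monomorphic, and $\mathrm{lim}^{1}\boldsymbol{A}^{\oplus\omega}$ is plain exactly when $\left(\boldsymbol{A}^{\oplus\omega}\right)_{\alpha-1}=\left(\boldsymbol{A}_{\alpha-1}\right)^{\oplus\omega}$ is essentially monomorphic. Thus everything comes down to the claim that \emph{for a reduced tower $\boldsymbol{B}$ of countable modules, $\boldsymbol{B}$ is essentially monomorphic if and only if $\boldsymbol{B}^{\oplus\omega}$ is.}

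For the claim, one implication is easy: an isomorphism of $\boldsymbol{B}$ with a monomorphic tower $\boldsymbol{C}$ yields an isomorphism of $\boldsymbol{B}^{\oplus\omega}$ with $\boldsymbol{C}^{\oplus\omega}$, which is monomorphic since a termwise direct sum of monomorphisms is a monomorphism. For the converse, $\boldsymbol{B}$ is a retract of $\boldsymbol{B}^{\oplus\omega}$ via the zeroth-coordinate inclusion $\iota$ and projection $\pi$ with $\pi\iota=\mathrm{id}$; applying the functor $\mathrm{lim}^{1}$, the Borel homomorphism $\mathrm{lim}^{1}\pi$ admits the section $\mathrm{lim}^{1}\iota$, hence is surjective, so $\mathrm{lim}^{1}\boldsymbol{B}$ is a Borel-homomorphic image of $\mathrm{lim}^{1}\boldsymbol{B}^{\oplus\omega}$. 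If $\boldsymbol{B}^{\oplus\omega}$ is essentially monomorphic then, by Corollary~\ref{Corollary:Solecki-lim1}, $\mathrm{lim}^{1}\boldsymbol{B}^{\oplus\omega}$ has plain Solecki length at most $1$; hence so does $\mathrm{lim}^{1}\boldsymbol{B}$ by Lemma~\ref{Lemma:compact-phantom2}, and one more application of Corollary~\ref{Corollary:Solecki-lim1} gives that $\boldsymbol{B}$ is essentially monomorphic, Lemma~\ref{Lemma:monomorphic-tower} being what underwrites the translation between ``plain Solecki length at most $1$'' for $\mathrm{lim}^{1}\boldsymbol{B}$ and ``essentially monomorphic'' for the reduced tower $\boldsymbol{B}$. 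I expect the converse direction of this last claim to be the only genuine obstacle: the remainder reduces to bookkeeping with the derived-tower formulas, whereas here essential monomorphicity must be transported across a retraction, and — lacking a purely combinatorial description of essential monomorphicity that is manifestly preserved by $(-)^{\oplus\omega}$ and its retracts — one is forced through $\mathrm{lim}^{1}$ and the complexity-theoretic input of Lemmas~\ref{Lemma:compact-phantom2} and \ref{Lemma:monomorphic-tower}.
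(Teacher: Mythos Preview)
Your proof is correct and follows essentially the same approach as the paper: the paper's argument (given in the paragraph preceding the corollary) consists precisely of the identity $(\boldsymbol{A}^{\oplus\omega})_{\alpha}=(\boldsymbol{A}_{\alpha})^{\oplus\omega}$, the claim that $\boldsymbol{A}$ is plain iff $\boldsymbol{A}^{\oplus\omega}$ is plain, and an appeal to Corollary~\ref{Corollary:Solecki-lim1}. You supply considerably more detail than the paper does---in particular your retraction argument via Lemma~\ref{Lemma:compact-phantom2} for the nontrivial direction of the plainness equivalence, which the paper leaves entirely to the reader---but the skeleton is the same.
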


\begin{lemma}
\label{Lemma:quotient-tower}Suppose that $\left( A^{\left( k\right) }\right) 
$ is a tower of countable modules, and $0\rightarrow B\rightarrow A^{\left(
k\right) }\rightarrow A^{\left( k\right) }/B\rightarrow 0$ is an exact
sequence for every $k\in \omega $. Then the map $\mathrm{\mathrm{lim}}%
_{k}^{1}A^{\left( k\right) }\rightarrow \mathrm{\mathrm{lim}}%
_{k}^{1}(A^{\left( k\right) }/B)$ is an isomorphism, which restricts to an
isomorphism $\mathrm{\mathrm{\mathrm{li}}m}_{k}^{1}{}A_{\alpha }^{\left(
k\right) }\rightarrow \mathrm{\mathrm{lim}}_{k}^{1}(A^{\left( k\right)
}/B)_{\alpha }$ for every $\alpha <\omega _{1}$.
\end{lemma}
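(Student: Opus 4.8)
The plan is to realize $B$ as a constant subtower of $\boldsymbol{A}$ and to deduce both assertions from the six-term exact sequence of $\mathrm{lim}$ and $\mathrm{lim}^{1}$ together with Theorem~\ref{Theorem:Solecki-lim1}. First I would let $\boldsymbol{B}$ be the constant tower with value $B$ and identity bonding maps; by hypothesis the copies of $B$ inside the $A^{(k)}$ are compatible with the bonding maps of $\boldsymbol{A}$, so $\boldsymbol{B}$ is a subtower of $\boldsymbol{A}$ and the termwise quotients $A^{(k)}/B$ form a tower $\boldsymbol{A}/\boldsymbol{B}$ of countable modules with the induced bonding maps, yielding a (termwise) short exact sequence of towers
\begin{equation*}
0\rightarrow \boldsymbol{B}\rightarrow \boldsymbol{A}\rightarrow \boldsymbol{A}/\boldsymbol{B}\rightarrow 0\text{.}
\end{equation*}

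Next I would apply the six-term exact sequence of $\mathrm{lim}$ and $\mathrm{lim}^{1}$ recorded in Section~\ref{Section:lim1}. Since $\boldsymbol{B}$ is epimorphic it satisfies the Mittag--Leffler condition, so $\mathrm{lim}^{1}\boldsymbol{B}=0$; the relevant tail
\begin{equation*}
\mathrm{lim}^{1}\boldsymbol{B}\rightarrow \mathrm{lim}^{1}\boldsymbol{A}\overset{\phi}{\rightarrow }\mathrm{lim}^{1}(\boldsymbol{A}/\boldsymbol{B})\rightarrow 0
\end{equation*}
(which terminates here because $\mathrm{H}^{n}(K(-))=0$ for $n\geq 2$) then shows that $\phi$ is an isomorphism, and $\phi$ is exactly the map $\mathrm{lim}^{1}_{k}A^{(k)}\rightarrow \mathrm{lim}^{1}_{k}(A^{(k)}/B)$ induced by the quotient maps. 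This establishes the first assertion.

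For the second assertion I would appeal to the functoriality of the Solecki submodules: by the results of Section~\ref{Section:modules-polish-cover}, a Borel homomorphism between modules with a pro-countable Polish cover pulls $\boldsymbol{\Pi}^{0}_{1+\alpha+1}$ submodules back to $\boldsymbol{\Pi}^{0}_{1+\alpha+1}$ submodules, and since $s_{\alpha}$ is the smallest $\boldsymbol{\Pi}^{0}_{1+\alpha+1}$ submodule, applying this to both $\phi$ and $\phi^{-1}$ gives $\phi(s_{\alpha}(\mathrm{lim}^{1}\boldsymbol{A}))=s_{\alpha}(\mathrm{lim}^{1}(\boldsymbol{A}/\boldsymbol{B}))$. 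Combining this with Theorem~\ref{Theorem:Solecki-lim1}, which identifies $s_{\alpha}(\mathrm{lim}^{1}\boldsymbol{A})=\mathrm{lim}^{1}\boldsymbol{A}_{\alpha}$ and $s_{\alpha}(\mathrm{lim}^{1}(\boldsymbol{A}/\boldsymbol{B}))=\mathrm{lim}^{1}(A^{(k)}/B)_{\alpha}$, yields the claimed restricted isomorphism for every $\alpha<\omega_{1}$. I do not expect a genuine obstacle here; the only mild bookkeeping points are that $\boldsymbol{A}/\boldsymbol{B}$ really is a tower of countable modules and that the displayed sequence is exact in the ambient pro-category, both of which are immediate, so the whole argument reduces to the vanishing $\mathrm{lim}^{1}\boldsymbol{B}=0$ and Theorem~\ref{Theorem:Solecki-lim1}.
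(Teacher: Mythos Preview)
Your proposal is correct and follows essentially the same approach as the paper: the paper's one-sentence proof invokes the six-term exact sequence together with the vanishing $\mathrm{lim}^{1}_{k}B^{(k)}=0$ for the constant tower, and your argument spells this out in detail, with the second assertion obtained exactly as you do via Theorem~\ref{Theorem:Solecki-lim1} and the invariance of Solecki submodules under Borel isomorphisms.
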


\begin{proof}
It follows from the six-term exact sequence relating $\mathrm{lim}$ and $%
\mathrm{lim}^{1}$, considering that $\mathrm{\mathrm{lim}}%
_{k}^{1}{}B^{\left( k\right) }=0$ if $B^{\left( k\right) }=B$ for every $%
k\in \omega $.
\end{proof}

\begin{lemma}
\label{Lemma:lim1-derived-tower}Suppose that $\boldsymbol{A}$ is a reduced
tower of countable modules, and let $\boldsymbol{A}_{\alpha }$ be its
derived tower for $\alpha <\omega _{1}$. For $\ell \leq i<\omega $ and $%
\alpha <\omega _{1}$, define%
\begin{equation*}
A_{\alpha }^{\left( i\right) }[\ell ]=\mathrm{\mathrm{Ker}}(A_{\alpha
}^{\left( i\right) }\rightarrow A_{\alpha }^{\left( \ell \right) })\text{,}
\end{equation*}%
For fixed $\ell <\omega $ and $\alpha <\omega _{1}$ we let $\boldsymbol{A}%
_{\alpha }[\ell ]$ be the tower $(A_{\alpha }^{\left( i\right) }[\ell
])_{i\geq \ell }$. The canonical homomorphism $\mathrm{\mathrm{lim}}^{1}%
\boldsymbol{A}_{\alpha }[\ell ]\rightarrow \mathrm{lim}^{1}\boldsymbol{A}%
[\ell ]$ is injective. For $\ell \leq i<\omega $ and $\alpha <\omega _{1}$,
we have a short exact sequence%
\begin{equation*}
0\rightarrow A_{\alpha }^{\left( i\right) }[\ell ]\rightarrow A_{\alpha
}^{\left( i\right) }\rightarrow \mathrm{Ran}(A_{\alpha }^{\left( i\right)
}\rightarrow A_{\alpha }^{\left( \ell \right) })\rightarrow 0\text{.}
\end{equation*}%
Considering the six-term exact sequence relating $\mathrm{lim}$ and $\mathrm{%
lim}^{1}$, this gives rise to an exact sequence%
\begin{equation*}
\mathrm{lim}^{1}A_{\alpha }^{\left( i\right) }\rightarrow A_{\alpha
+1}^{\left( \ell \right) }\rightarrow \mathrm{\mathrm{lim}}^{1}\boldsymbol{A}%
_{\alpha }[\ell ]\rightarrow \mathrm{\mathrm{lim}}^{1}\boldsymbol{A}_{\alpha
}\rightarrow \mathrm{lim}_{i}^{1}\mathrm{Ran}(A_{\alpha }^{\left( i\right)
}\rightarrow A_{\alpha }^{\left( \ell \right) })\rightarrow 0\text{.}
\end{equation*}%
We have that%
\begin{equation*}
\mathrm{\mathrm{lim}}^{1}\boldsymbol{A}_{\alpha +1}=\bigcap_{\ell \in \omega
}\mathrm{\mathrm{Ker}}(\mathrm{\mathrm{lim}}^{1}\boldsymbol{A}_{\alpha
}\rightarrow \mathrm{\mathrm{lim}}_{i}^{1}\mathrm{Ran}(A_{\alpha }^{\left(
i\right) }\rightarrow A_{\alpha }^{\left( \ell \right) }))=\bigcap_{\ell \in
\omega }\mathrm{Ran}(\mathrm{\mathrm{lim}}^{1}\boldsymbol{A}_{\alpha }[\ell
]\rightarrow \mathrm{\mathrm{lim}}^{1}\boldsymbol{A}_{\alpha })\text{.}
\end{equation*}
\end{lemma}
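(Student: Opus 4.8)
The plan is that nearly every assertion follows formally from what has already been established for derived towers, the one substantive point being that forming the subtower $\boldsymbol{A}_\alpha[\ell]$ commutes with derivation. First I would dispose of the short exact sequence: for $\ell\le i$, by construction $A_\alpha^{(i)}[\ell]$ is the kernel and $\mathrm{Ran}(A_\alpha^{(i)}\to A_\alpha^{(\ell)})$ the image of the composite bonding map $A_\alpha^{(i)}\to A_\alpha^{(\ell)}$, so the sequence is the first isomorphism theorem. Letting $i$ vary, and noting that the bonding maps restrict to $\boldsymbol{A}_\alpha[\ell]$, this is a short exact sequence of towers indexed by $i\ge\ell$ whose quotient $(\mathrm{Ran}(A_\alpha^{(i)}\to A_\alpha^{(\ell)}))_{i\ge\ell}=(p^{(\ell,i)}(A_\alpha^{(i)}))_{i\ge\ell}$ is a decreasing chain of submodules of $A_\alpha^{(\ell)}$ with inclusions as bonding maps.

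For the injectivity of $\mathrm{lim}^1\boldsymbol{A}_\alpha[\ell]\to\mathrm{lim}^1\boldsymbol{A}[\ell]$ I would argue as follows. The tower $\boldsymbol{A}[\ell]$ is a subtower of the reduced tower $\boldsymbol{A}$, hence reduced by left exactness of $\mathrm{lim}$. I would then prove by transfinite recursion on $\alpha$ that the $\alpha$-th derived tower of $\boldsymbol{A}[\ell]$ is $\boldsymbol{A}_\alpha[\ell]$. The computational heart is the observation that if $a'\in A_\beta^{(i+k)}$ satisfies $p^{(\ell,i)}(p^{(i,i+k)}(a'))=0$ then already $a'\in A_\beta^{(i+k)}[\ell]$, whence $p^{(i,i+k)}(A_\beta^{(i+k)}[\ell])=p^{(i,i+k)}(A_\beta^{(i+k)})\cap A^{(i)}[\ell]$; intersecting over $k$ in the defining recursion for the next derived tower and inserting the inductive hypothesis (using, at limit stages, the globally fixed cofinal sequences, which serve for $\boldsymbol{A}$ and $\boldsymbol{A}[\ell]$ alike) gives $(\boldsymbol{A}[\ell])_\alpha=\boldsymbol{A}_\alpha[\ell]$. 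Theorem \ref{Theorem:Solecki-lim1} applied to the reduced tower $\boldsymbol{A}[\ell]$ then identifies $\mathrm{lim}^1\boldsymbol{A}_\alpha[\ell]$ with the Solecki submodule $s_\alpha(\mathrm{lim}^1\boldsymbol{A}[\ell])$ of $\mathrm{lim}^1\boldsymbol{A}[\ell]$, so the map induced by the tower inclusion is injective.

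The six-term exact sequence is obtained by feeding the short exact sequence of towers from the first step into the six-term sequence relating $\mathrm{lim}$ and $\mathrm{lim}^1$. Since $\boldsymbol{A}$, hence $\boldsymbol{A}_\alpha$, is reduced and truncation changes neither $\mathrm{lim}$ nor $\mathrm{lim}^1$, the terms $\mathrm{lim}\,\boldsymbol{A}_\alpha[\ell]$ and $\mathrm{lim}_i A_\alpha^{(i)}$ vanish and the $\mathrm{lim}^1$ of the middle tower is $\mathrm{lim}^1\boldsymbol{A}_\alpha$; moreover $\mathrm{lim}_i\mathrm{Ran}(A_\alpha^{(i)}\to A_\alpha^{(\ell)})=\bigcap_{i\ge\ell}p^{(\ell,i)}(A_\alpha^{(i)})=A_{\alpha+1}^{(\ell)}$ by the defining recursion for the derived tower (immediate for $\alpha$ a successor or zero; the limit case needs a short additional cofinality argument with the fixed cofinal sequences). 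This produces the displayed exact sequence.

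Finally, for the formula for $\mathrm{lim}^1\boldsymbol{A}_{\alpha+1}$: using $s_{\alpha+1}=s_1\circ s_\alpha$ together with Theorem \ref{Theorem:Solecki-lim1} applied twice, $\mathrm{lim}^1\boldsymbol{A}_{\alpha+1}=s_1(\mathrm{lim}^1\boldsymbol{A}_\alpha)$. Proposition \ref{Proposition:lim1-Solecki1} applied to the reduced tower $\boldsymbol{A}_\alpha$ (in the explicit form obtained in its proof) gives $s_1(\mathrm{lim}^1\boldsymbol{A}_\alpha)=\mathrm{lim}^1(\boldsymbol{A}_\alpha)_1=\mathrm{Ker}(\mathrm{lim}^1\boldsymbol{A}_\alpha\to\mathrm{lim}_\ell E^{(\ell)}(\boldsymbol{A}_\alpha))$, where $E^{(\ell)}(\boldsymbol{A}_\alpha)\cong\mathrm{lim}^1 S^{(\ell)}(\boldsymbol{A}_\alpha)=\mathrm{lim}_i^1\mathrm{Ran}(A_\alpha^{(i)}\to A_\alpha^{(\ell)})$ and the maps into the $E^{(\ell)}(\boldsymbol{A}_\alpha)$ are precisely the maps occurring in the third step. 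Since the kernel of a homomorphism into an inverse limit is the intersection over $\ell$ of the kernels of its compositions with the projections, this is the first equality; rewriting each kernel through the exact sequence of the third step as $\mathrm{Ran}(\mathrm{lim}^1\boldsymbol{A}_\alpha[\ell]\to\mathrm{lim}^1\boldsymbol{A}_\alpha)$ gives the second. I expect the main obstacle to be the limit-ordinal bookkeeping in the second and third steps: one must verify that the walk-on-ordinals indexing of the derived towers is compatible with the truncation operation $\boldsymbol{B}\mapsto\boldsymbol{B}[\ell]$ uniformly in the fixed cofinal sequences, which is precisely where the transfinite induction is at risk of breaking.
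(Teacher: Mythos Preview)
Your proposal is correct, and your treatment is in fact more thorough than the paper's. The paper's proof only addresses the final displayed equality; the short exact sequence, the six-term exact sequence, and the injectivity assertion are stated without explicit argument.

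For the final equality the two approaches genuinely differ. The paper reduces to $\alpha=0$ by replacing $\boldsymbol{A}$ with $\boldsymbol{A}_\alpha$ and then verifies the two inclusions by direct element chasing: for the forward inclusion it takes a cocycle $(a_{ij})$ with $a_{ij}\in A_1^{(i)}=\bigcap_t p^{(i,t)}(A^{(t)})$ and recursively constructs coboundary witnesses $b_i$ showing that the image in $\mathrm{lim}_i^1 p^{(\ell,i)}(A^{(i)})$ is trivial; for the reverse inclusion it invokes the isomorphism $\mathrm{lim}_i^1 p^{(\ell,i)}(A^{(i)})\cong\hat A^{(\ell)}/\kappa(A^{(\ell)})$ and extracts corrections $c_\ell\in A^{(\ell)}$ at each level to show the modified cocycle lands in $A_1^{(\ell)}$. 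You instead route everything through the exact sequence $0\to\mathrm{lim}^1(\boldsymbol{A}_\alpha)_1\to\mathrm{lim}^1\boldsymbol{A}_\alpha\to\mathrm{lim}_\ell E^{(\ell)}(\boldsymbol{A}_\alpha)\to 0$ already recorded in the derived-tower subsection, together with $(\boldsymbol{A}_\alpha)_1=\boldsymbol{A}_{\alpha+1}$; this is cleaner and avoids re-proving by hand what that sequence already encodes. Your argument for injectivity via the identification $(\boldsymbol{A}[\ell])_\alpha=\boldsymbol{A}_\alpha[\ell]$ and Theorem~\ref{Theorem:Solecki-lim1} is a useful structural observation that the paper leaves implicit. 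The limit-ordinal bookkeeping you flag does need attention, but as your sketch indicates it resolves by cofinality of the fixed sequences $(\alpha_k)$ together with the monotonicity $A_\beta^{(m)}\supseteq A_\gamma^{(m)}$ for $\beta\le\gamma$.
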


\begin{proof}
After replacing $\boldsymbol{A}$ with $\boldsymbol{A}_{\alpha }$, it
suffices to consider the case $\alpha =1$.

Suppose that $a$ is an element of $\mathrm{\mathrm{lim}}^{1}\boldsymbol{A}%
_{1}$, represented by $\left( a_{ij}\right) \in \prod_{i\leq j}A_{1}^{\left(
i\right) }$. We fix $\ell <\omega $ and show that $a$ belongs to 
\begin{equation*}
\mathrm{\mathrm{Ker}}(\mathrm{\mathrm{lim}}^{1}\boldsymbol{A}\rightarrow 
\mathrm{\mathrm{lim}}_{i}^{1}\mathrm{Ran}(A^{\left( i\right) }\rightarrow
A^{\left( \ell \right) }))\text{.}
\end{equation*}%
Consider the element $\left( p^{\left( \ell ,i\right) }\left( a_{ij}\right)
\right) _{i\leq j}\in \prod_{\ell \leq i\leq j}p^{\left( \ell ,i\right)
}\left( A^{\left( i\right) }\right) $. Using the fact that 
\begin{equation*}
a_{ij}\in A_{1}^{\left( i\right) }=\bigcap_{t>i}p^{\left( i,t\right)
}(A^{\left( t\right) })
\end{equation*}%
we can define recursively $b_{i}\in A^{\left( i\right) }$ for $i\geq \ell $
such that $b_{\ell }=0$ and 
\begin{equation*}
p^{\left( \ell ,i\right) }\left( a_{i,i+1}\right) =p^{\left( \ell ,i\right)
}\left( b_{i}\right) -p^{\left( \ell ,i+1\right) }\left( b_{i+1}\right)
\end{equation*}%
for $i\geq \ell $. This shows that $\left( p^{\left( \ell ,i\right) }\left(
a_{ij}\right) \right) _{i\leq j}$ represents the trivial element of $\mathrm{%
\mathrm{lim}}_{i}^{1}\mathrm{Ran}\left( A^{\left( i\right) }\rightarrow
A^{\left( \ell \right) }\right) $. Therefore, $a$ belongs to 
\begin{equation*}
\mathrm{\mathrm{Ker}}(\mathrm{\mathrm{lim}}^{1}\boldsymbol{A}\rightarrow 
\mathrm{\mathrm{lim}}_{i}^{1}\mathrm{Ran}(A^{\left( i\right) }\rightarrow
A^{\left( \ell \right) }))\text{.}
\end{equation*}

Conversely, suppose that $a$ is an element of 
\begin{equation*}
\bigcap_{\ell \in \omega }\mathrm{\mathrm{Ker}}(\mathrm{\mathrm{lim}}^{1}%
\boldsymbol{A}\rightarrow \mathrm{\mathrm{lim}}_{i}^{1}p^{\left( \ell
,i\right) }(A^{\left( i\right) }))\text{,}
\end{equation*}%
represented by $\left( a_{ij}\right) \in \prod_{i\leq j}A^{\left( i\right) }$%
. Fix $\ell \in \omega $ and define $\hat{A}^{\left( \ell \right) }=\mathrm{%
lim}_{i}(A^{\left( \ell \right) }/p^{\left( \ell ,i\right) }\left( A^{\left(
i\right) }\right) )$. We also let $\kappa ^{\left( \ell \right) }:A^{\left(
\ell \right) }\rightarrow \hat{A}^{\left( \ell \right) }$ be the canonical
homomorphism obtained by mapping $a$ to $\left( a+p^{\left( \ell ,i\right)
}\left( A^{\left( i\right) }\right) \right) _{i\geq \ell }$. Consider the
isomorphism%
\begin{equation*}
\mathrm{\mathrm{lim}}_{i}^{1}p^{\left( \ell ,i\right) }(A^{\left( i\right)
})\cong \hat{A}^{\left( \ell \right) }/\kappa (A^{\left( \ell \right) })
\end{equation*}%
obtained by mapping an element $b$ of $\mathrm{\mathrm{lim}}%
_{i}^{1}p^{\left( \ell ,i\right) }\left( A^{\left( i\right) }\right) $
represented by $\left( b_{ij}\right) $ to $\left( \mathrm{lim}_{j}\kappa
\left( b_{\ell j}\right) \right) +\kappa \left( A^{\left( \ell \right)
}\right) $; see \cite[Theorem 5.13]{bergfalk_definable_2024}. Since $\left(
a_{ij}\right) $ represents an element of $\mathrm{\mathrm{Ker}}(\mathrm{%
\mathrm{lim}}^{1}\boldsymbol{A}\rightarrow \mathrm{\mathrm{lim}}%
_{i}^{1}p^{\left( \ell ,i\right) }\left( A^{\left( i\right) }\right) )$, we
have that $\left( p^{\left( \ell ,i\right) }\left( a_{ij}\right) \right) $
represents the trivial element in $\mathrm{\mathrm{lim}}_{i}^{1}p^{\left(
\ell ,i\right) }\left( A^{\left( i\right) }\right) $. Thus, there exists $%
c_{\ell }\in A^{\left( \ell \right) }$ such that $\mathrm{\mathrm{lim}}%
_{j}\kappa ^{\left( \ell \right) }(a_{\ell j})=\kappa ^{\left( \ell \right)
}(c_{\ell })$. This means that 
\begin{equation*}
\kappa ^{\left( \ell \right) }\left( a_{\ell ,\ell +1}\right) =\mathrm{%
\mathrm{lim}}_{i}\kappa ^{\left( \ell \right) }(p^{\left( \ell ,i\right)
}\left( a_{\ell i}\right) )-\mathrm{\mathrm{lim}}_{i}\kappa ^{\left( \ell
\right) }(p^{\left( \ell ,\ell +1\right) }\left( a_{\ell +1,i}\right)
)=\kappa ^{\left( \ell \right) }\left( c_{\ell }-p^{\left( \ell ,\ell
+1\right) }\left( c_{\ell +1}\right) \right) .
\end{equation*}%
Thus, 
\begin{equation*}
b_{\ell ,\ell +1}:=a_{\ell ,\ell +1}-(c_{\ell }-p^{\left( \ell ,\ell
+1\right) }\left( c_{\ell +1}\right) )\in \mathrm{\mathrm{Ker}}(\kappa
^{\left( \ell \right) })=A_{1}^{\left( \ell \right) }.
\end{equation*}%
As $\left( b_{\ell ,\ell +1}\right) $ and $\left( a_{ij}\right) $ represent
the same element $\mathrm{\mathrm{lim}}^{1}\boldsymbol{A}$, this shows that $%
a$ belongs to $\mathrm{\mathrm{lim}}^{1}\boldsymbol{A}_{1}$.
\end{proof}

\subsection{Fishbone towers}

We now present a way to construct new towers from old. In order to present
this construction more succinctly, we introduce the notion of \emph{fishbone}
of towers.

\begin{definition}
\label{Definition:fishbone-tower}Consider a nontrivial reduced essentially
monomorphic tower $\boldsymbol{A}=\left( A^{\left( n\right) }\right) _{n\in
\omega }$ and reduced towers $\boldsymbol{B}[k]=\left( B^{\left( n\right)
}[k]\right) _{n\in \omega }$ for $k\in \omega $.

\begin{itemize}
\item We say that $(\boldsymbol{A},\boldsymbol{B}[k])_{k\in \omega }$ is a 
\emph{fishbone }with spine $\boldsymbol{A}$ and ribs $\boldsymbol{B}[k]$ for 
$k\in \omega $ if $A^{\left( n\right) }=B^{\left( 0\right) }[n]$ for every $%
n\in \omega $.

\item If $\alpha $ is a successor ordinal, then we say that such a fishbone
is \emph{straight} of length $\alpha $ if for every $k\in \omega $, $%
\boldsymbol{B}[k]$ has plain length $\beta \lbrack k]<\alpha $, $%
\sup_{k}\beta \lbrack k]=\alpha -1$, and 
\begin{equation*}
p^{\left( \ell ,0\right) }(B_{\beta \lbrack k]-1}^{\left( \ell \right)
}[k])+p^{(k,\ell )}(A^{\left( \ell \right) })=A^{\left( k\right) }
\end{equation*}%
for every $\ell \geq k$.

\item Suppose that $(\boldsymbol{A},\boldsymbol{B}[k])_{k\in \omega }$ is a 
\emph{fishbone }with spine $\boldsymbol{A}$ and ribs $\boldsymbol{B}[k]$ for 
$k\in \omega $. We define the corresponding \emph{fishbone tower }to be the
tower $\left( C^{\left( n\right) }\right) _{n\in \omega }$ where 
\begin{equation*}
C^{\left( n\right) }:=B^{\left( n\right) }[0]\oplus B^{\left( n-1\right)
}[1]\oplus \cdots \oplus B^{\left( 1\right) }[n-1]\oplus A^{\left( n\right) }
\end{equation*}%
and bonding maps $C^{\left( n+1\right) }\rightarrow C^{\left( n\right) }$,%
\begin{equation*}
\left( x_{0},x_{1},\ldots ,x_{n+1}\right) \mapsto \left( p^{B[0]}\left(
x_{0}\right) ,p^{B[1]}\left( x_{1}\right) ,\ldots ,p^{B[n-1]}\left(
x_{n-1}\right) ,p^{B[n]}\left( x_{n}\right) +p^{A}\left( x_{n+1}\right)
\right) \text{.}
\end{equation*}
\end{itemize}
\end{definition}

The notion of straight fishbone allows one to easily compute the derived
towers.

\begin{proposition}
\label{Proposition:fishbone-tower}Suppose that $(\boldsymbol{A},\boldsymbol{B%
}[k])_{k\in \omega }$ is a fishbone\emph{\ }with spine $\boldsymbol{A}$ and
ribs $\boldsymbol{B}[k]$ for $k\in \omega $. Let $\boldsymbol{C}$ be the
corresponding fishbone tower. Suppose that $(\boldsymbol{A},\boldsymbol{B}%
[k])_{k\in \omega }$ is \emph{straight} of length $\alpha $. Let $\beta
\lbrack k]$ be the plain length of $\boldsymbol{B}[k]$.

\begin{enumerate}
\item If $k\in \omega $, $\beta \lbrack k]\leq \beta <\beta \lbrack k+1]$,
then for $n\leq k$,%
\begin{equation*}
C_{\beta }^{\left( n\right) }=0\oplus \cdots \oplus 0\oplus p^{\left(
n,k\right) }(A^{\left( k\right) })
\end{equation*}%
and for $n>k$,%
\begin{equation*}
C_{\beta }^{\left( n\right) }=B_{\beta }^{\left( n\right) }[0]\oplus \cdots
\oplus B_{\beta }^{\left( 1\right) }[n-1]\oplus A^{\left( n\right) }\text{;}
\end{equation*}

\item $\boldsymbol{C}$ has plain length $\alpha $.
\end{enumerate}
\end{proposition}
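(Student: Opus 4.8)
The plan is to prove both assertions simultaneously by a careful bookkeeping of which components of $C^{(n)}$ survive under iterated derivation, directly from the recursive definition $A_{\alpha}^{(n)} = \bigcap_{k}p^{(n,n+k)}(A_{(\alpha-1)_{k}}^{(n+k)})$. The crucial structural fact is that the bonding maps of $\boldsymbol{C}$ are ``lower triangular with shift'': each rib component $B^{(m)}[j]$ maps into the rib component $B^{(m-1)}[j]$ of the same rib, except that the last rib component $B^{(1)}[n-1]$ of $C^{(n)}$ and the spine component $A^{(n)}$ both feed into the spine component $A^{(n-1)}$ of $C^{(n-1)}$ (via $p^{B[n]}$ and $p^{A}$ respectively). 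So under derivation the ribs evolve independently according to their own derived towers, while the spine component accumulates contributions from ribs that ``merge in'' at each level.

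First I would prove (1) by induction on $\beta<\alpha-1$, treating the base case $\beta=0$ (where $\beta[k]\le 0$ forces a rib of plain length $0$, i.e. trivial, so only sufficiently long ribs and the spine contribute) and the successor step together; the limit step is handled by the identity $C_{\lambda}^{(n)}=\bigcap_{\beta<\lambda}C_{\beta}^{(n)}$ for $\lambda$ limit, which is the general fact noted after the definition of derived towers. For the inductive step, fix $k$ with $\beta[k]\le\beta<\beta[k+1]$ and compute $C_{\beta}^{(n)}$ from $C_{\beta-1}^{(n+j)}$ for all $j\ge 0$. For the rib components $B_{\beta}^{(m)}[\ell]$ with $\ell>n$ there is nothing to do beyond the inductive hypothesis applied componentwise (using that $\boldsymbol{B}[\ell]$ has plain length $\beta[\ell]$, so its $\beta$-th derived tower is already computed, and when $\beta\ge\beta[\ell]$ the rib $\boldsymbol{B}[\ell]$ has died so that component is $0$). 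The delicate point is the spine component of $C_{\beta}^{(n)}$: for $n\le k$ it equals $\bigcap_{j} p^{(n,n+j)}$ applied to (the spine component of) $C_{\beta-1}^{(n+j)}$, and one must show this intersection equals $p^{(n,k)}(A^{(k)})$. Here the \emph{straightness} hypothesis
\[
p^{(\ell,0)}(B_{\beta[k]-1}^{(\ell)}[k]) + p^{(k,\ell)}(A^{(\ell)}) = A^{(k)}
\]
is exactly what is needed: it says that after one further derivation step the rib $\boldsymbol{B}[k]$ still surjects (modulo the image of the spine) onto the relevant piece of $A^{(k)}$, which lets the intersection over all $j$ collapse precisely onto $p^{(n,k)}(A^{(k)})$ rather than onto something smaller. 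Since $\boldsymbol{A}$ is essentially monomorphic, $p^{(n,k)}(A^{(k)})$ is just a copy of $A^{(k)}$ sitting inside $A^{(n)}$, so the chain $(p^{(n,k)}(A^{(k)}))_{k\ge n}$ stabilizes only in the trivial sense and this term does not vanish.

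For (2), I would use (1) together with $\sup_{k}\beta[k]=\alpha-1$. By part (1), for $\beta<\alpha-1$ the tower $\boldsymbol{C}_{\beta}$ has, in term $n$, a nontrivial rib block $B_{\beta}^{(n)}[0]=A^{(n)}$ once $n$ is large (since $\beta[0]$ could be anything, but at worst the spine component $A^{(n)}$ is always there), so $\boldsymbol{C}_{\beta}\not\cong 0$; hence the plain length of $\boldsymbol{C}$ is at least $\alpha-1+1=\alpha$. Conversely, taking $\beta=\alpha-1$ in the formula of (1) — or rather passing to the limit/next stage — every rib $\boldsymbol{B}[k]$ has been derived $\beta[k]-1<\alpha-1$ times and then one more, so all rib blocks have died, and $C_{\alpha-1}^{(n)}$ reduces to the spine block $0\oplus\cdots\oplus 0\oplus p^{(n,k)}(A^{(k)})$ with $k$ arbitrarily large, i.e. $\boldsymbol{C}_{\alpha-1}\cong\boldsymbol{A}$ up to shift, which is essentially monomorphic by hypothesis. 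By the definition of plain length (and Lemma~\ref{Lemma:monomorphic-tower}), $\boldsymbol{C}$ therefore has plain length exactly $\alpha$.

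I expect the main obstacle to be the spine-component computation in the inductive step of (1): one has to show that the descending intersection $\bigcap_{j}p^{(n,n+j)}(\text{spine of }C_{\beta-1}^{(n+j)})$ does not overshoot, and this is where the precise straightness identity must be invoked rather than a cruder surjectivity statement. The bookkeeping is further complicated by the fact that, at a given stage $\beta$, different ribs are at different points in their own derivation (governed by $\beta[k]$ vs.\ $\beta$), so one cannot treat all components uniformly and must case-split on whether $\beta<\beta[\ell]$, $\beta=\beta[\ell]-1$ (the ``plain'' threshold), or $\beta\ge\beta[\ell]$. Once the indexing conventions are set up, each case is a routine diagram chase through the triangular bonding maps.
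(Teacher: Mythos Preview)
Your approach is essentially the same as the paper's: both prove (1) by induction on $\beta$, computing $C_\beta^{(n)}$ componentwise from the recursive formula $C_\beta^{(n)}=\bigcap_i p^{(n,n+i)}(C_{(\beta-1)_i}^{(n+i)})$ and the triangular structure of the bonding maps, and then deduce (2) by observing that $\boldsymbol{C}_{\alpha-1}\cong\boldsymbol{A}$. The paper's write-up is terser than yours---it does not explicitly invoke the straightness identity in the inductive step and simply says ``by the inductive hypothesis'' for both the $n>k$ and $n\le k$ cases---but your identification of the straightness condition as precisely what controls the spine component for $n\le k$ is correct and is the substance behind that step. One small slip: for limit $\lambda$ the paper's convention gives $C_{\lambda+1}^{(n)}=\bigcap_{\beta<\lambda}C_\beta^{(n)}$ (not $C_\lambda^{(n)}$; the tower $\boldsymbol{C}_\lambda$ is defined as $(C_{\lambda_n}^{(n)})_n$), so your limit case needs to be routed through the successor $\lambda+1$, but this does not affect the argument.
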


\begin{proof}
Recall that given a tower $\boldsymbol{D}=\left( D^{\left( n\right) }\right) 
$ we have defined $D_{0}^{\left( n\right) }=D^{\left( n\right) }$ and for
each successor ordinal $\beta $%
\begin{equation*}
D_{\beta }^{\left( n\right) }:=\bigcap_{i\in \omega }p^{\left( n,n+i\right)
}(A_{\left( \beta -1\right) _{i}}^{(n+i)})\text{.}
\end{equation*}

(1) Suppose that the conclusion holds for ordinals less then $\beta $.
Suppose that $\beta \lbrack k]\leq \beta <\beta \lbrack k+1]$. By definition
we have%
\begin{equation*}
C_{\beta }^{\left( n\right) }=\bigcap_{i\in \omega }p^{\left( n,n+i\right)
}(C_{\left( \beta -1\right) _{n+i}}^{\left( n+i\right) })\text{.}
\end{equation*}%
Suppose initially that $\beta -1>\beta \lbrack k]$. In this case for $n>k$
we have by the inductive hypothesis%
\begin{equation*}
C_{\beta }^{\left( n\right) }=B_{\beta }^{\left( n\right) }[0]\oplus
B_{\beta }^{\left( n-1\right) }[1]\oplus \cdots \oplus B_{\beta }^{\left(
1\right) }[n-1]\oplus A^{\left( n\right) }
\end{equation*}%
as above, and for $n\leq k$ by the inductive hypothesis%
\begin{equation*}
C_{\beta }^{\left( n\right) }=\bigcap_{i\in \omega }p^{\left( n,n+i\right)
}(C_{\left( \beta -1\right) _{i}}^{\left( n+i\right) })=0\oplus \cdots
\oplus 0\oplus p^{\left( n,k\right) }(A^{\left( k\right) })\text{.}
\end{equation*}%
Suppose now that $\beta -1=\beta \lbrack k]$. This means that $\beta -1$ is
a successor ordinal. Therefore we have $\left( \beta -1\right) _{i}=\beta
\lbrack k]$ for every $i\in \omega $. Therefore as before we conclude from
the inductive hypothesis that, for $n>k$,%
\begin{equation*}
C_{\beta }^{\left( n\right) }=B_{\beta }^{\left( n\right) }[0]\oplus
B_{\beta }^{\left( n-1\right) }[1]\oplus \cdots \oplus B_{\beta }^{\left(
1\right) }[n-1]\oplus A^{\left( n\right) }
\end{equation*}%
and for $n\leq k$,%
\begin{equation*}
C_{\beta }^{\left( n\right) }=0\oplus \cdots \oplus 0\oplus p^{\left(
n,k\right) }(A^{\left( k\right) })\text{.}
\end{equation*}

(2) It follows from (1) that $\boldsymbol{C}_{\alpha -1}\cong \boldsymbol{A}$%
, whence $\boldsymbol{C}$ has plain length $\alpha $.
\end{proof}

\section{Phantom morphisms\label{Section:phantom}}

In this section, we recall the notion of \emph{phantom morphism }in a
triangulated category. We also consider the natural analogue in the context
of triangulated category of the notion of \emph{higher order phantom map}
from topology.

\subsection{Phantom morphisms}

Suppose that $\mathcal{T}$ is a triangulated category with translation
functor $\Sigma $. An object $C$ of $\mathcal{T}$ is \emph{compact} \cite[%
Section 3]{bird_duality_2024} if, for every sequence $\left( X_{i}\right)
_{i\in \omega }$ of objects of $\mathcal{T}$, if $X$ is its coproduct, then 
\begin{equation*}
\mathrm{Hom}(C,X)\cong \bigoplus_{i\in \omega }\mathrm{Hom}\left(
C,X_{i}\right) \text{.}
\end{equation*}%
Let $\mathcal{C}$ be a class of compact objects in $\mathcal{T}$. As the
functor $\mathrm{Hom}\left( -,Y\right) $ is triangulated, a distinguished
triangle 
\begin{equation*}
C\rightarrow X\rightarrow X/C
\end{equation*}%
in $\mathcal{T}$ induces an exact sequence%
\begin{equation*}
\mathrm{Hom}\left( X/C,Y\right) \rightarrow \mathrm{Hom}\left( X,Y\right)
\rightarrow \mathrm{Hom}\left( C,Y\right) \text{.}
\end{equation*}%
Given objects $X,Y$ in $\mathcal{T}$, the collection $\mathrm{PhHom}\left(
X,Y\right) $ of \emph{phantom morphisms} is the intersection%
\begin{eqnarray*}
\mathrm{PhHom}\left( X,Y\right) &=&\bigcap \mathrm{\mathrm{Ker}}(\mathrm{Hom}%
\left( X,Y\right) \rightarrow \mathrm{Hom}\left( C,Y\right) ) \\
&=&\bigcap \mathrm{Ran}\left( \mathrm{Hom}\left( X/C,Y\right) \rightarrow 
\mathrm{Hom}\left( X,Y\right) \right)
\end{eqnarray*}%
ranging over the distinguished triangles $C\rightarrow X\rightarrow X/C$
with $C$ in $\mathcal{C}$; see \cite[Section 5]{christensen_ideals_1998}.
This is a notion inspired by the notion of \emph{phantom map }from topology;
see \cite{mcgibbon_phantom_1995}. One can likewise generalize the notion of 
\emph{higher order }phantom map introduced in the topological context in 
\cite{ha_higher_2003}.

\begin{definition}
We define by recursion on an ordinal $\alpha $ the set $\mathrm{Ph}^{\alpha }%
\mathrm{Hom}\left( X,Y\right) $ of phantom morphisms of order $\alpha $ from 
$X$ to $Y$ by setting%
\begin{equation*}
\mathrm{Ph}^{0}\mathrm{Hom}\left( X,Y\right) =\mathrm{PhHom}\left(
X,Y\right) \text{;}
\end{equation*}%
\begin{equation*}
\mathrm{Ph}^{\alpha +1}\mathrm{Hom}\left( X,Y\right) =\bigcap \mathrm{Ran}%
\left( \mathrm{Ph}^{\alpha }\mathrm{Hom}\left( X/C,Y\right) \rightarrow 
\mathrm{Hom}\left( X,Y\right) \right)
\end{equation*}%
where the intersection is ranging over the distinguished triangles $%
C\rightarrow X\rightarrow X/C$ with $C$ in $\mathcal{C}$;%
\begin{equation*}
\mathrm{Ph}^{\lambda }\mathrm{Hom}\left( X,Y\right) =\bigcap_{\beta <\lambda
}\mathrm{Ph}^{\beta }\mathrm{Hom}\left( X,Y\right)
\end{equation*}%
for $\lambda $ limit.
\end{definition}

Suppose that $\boldsymbol{X}=\left( X_{i},\phi _{i}:X_{i}\rightarrow
X_{i+1}\right) _{i\in \omega }$ is an inductive sequence of compact objects
in $\mathcal{T}$. An object $\mathrm{hoco\mathrm{\mathrm{lim}}}\boldsymbol{X}
$ of $\mathcal{T}$ is called a \emph{homotopy colimit} of $\boldsymbol{X}$
if there exists a distinguished triangle%
\begin{equation*}
\bigoplus \boldsymbol{X}\overset{\mathrm{id}-\phi }{\rightarrow }\bigoplus 
\boldsymbol{X}\rightarrow \mathrm{hoco\mathrm{lim}}\boldsymbol{X}
\end{equation*}%
where $\bigoplus \boldsymbol{X}$ is the coproduct of $\left( X_{i}\right)
_{i\in \omega }$ with canonical maps $\eta _{i}:X_{i}\rightarrow \bigoplus 
\boldsymbol{X}$ for $i\in \omega $, and $\phi :\bigoplus \boldsymbol{X}%
\rightarrow \bigoplus \boldsymbol{X}$ is such that 
\begin{equation*}
\phi \circ \eta _{i}=\eta _{i+1}\circ \phi _{i}
\end{equation*}
for $i\in \omega $.

If $C$ is a compact object in $\mathcal{T}$, and $X$ is the homotopy colimit
of $\boldsymbol{X}$, then by \cite[Lemma 3.4.3]{krause_homological_2022} we
have that $\mathrm{Hom}\left( C,X\right) $ is the colimit of $\left( \mathrm{%
Hom}\left( C,X_{i}\right) \right) _{i\in \omega }$. Thus, we have that%
\begin{eqnarray*}
\mathrm{PhHom}\left( X,Y\right) &=&\bigcap_{\ell \in \omega }\mathrm{\mathrm{%
Ker}}(\mathrm{Hom}\left( X,Y\right) \rightarrow \mathrm{Hom}\left( X_{\ell
},Y\right) ) \\
&=&\bigcap_{\ell \in \omega }\mathrm{Ran}\left( \mathrm{Hom}\left( X/X_{\ell
},Y\right) \rightarrow \mathrm{Hom}\left( X,Y\right) \right) \text{.}
\end{eqnarray*}%
Furthermore, by \cite[Lemma 5.2.6]{krause_homological_2022}, the exact
triangle defining the homotopy colimit induces a natural exact sequence 
\begin{equation*}
0\rightarrow \mathrm{PhHom}\left( X,Y\right) \rightarrow \mathrm{Hom}\left(
X,Y\right) \rightarrow \mathrm{lim}_{n}\mathrm{Hom}\left( X_{n},Y\right)
\rightarrow 0
\end{equation*}%
and a natural isomorphism%
\begin{equation*}
\mathrm{PhHom}\left( X,Y\right) \cong \mathrm{\mathrm{lim}}_{\ell }^{1}%
\mathrm{Hom}\left( X_{\ell },\Sigma ^{-1}Y\right) \cong \mathrm{\mathrm{lim}}%
_{\ell }^{1}\mathrm{Hom}\left( \Sigma X_{\ell },Y\right) \text{.}
\end{equation*}

\subsection{Phantom cohomological subfunctors}

Suppose that $\mathcal{T}$ is a triangulated category, $\mathcal{C}$ is the
class of compact objects of $\mathcal{T}$, $\mathcal{M}$ is a \emph{%
countably complete} abelian category, and $F:\mathcal{T}\rightarrow \mathcal{%
M}$ is a contravariant cohomological functor. We generalize the notion of
(higher order) phantom morphisms to obtain a chain of \emph{phantom
subfunctors }of $F$.

Define $\mathrm{Pro}\left( \mathcal{M}\right) $ to be the pro-category of $%
\mathcal{M}$, which is the opposite category of $\mathrm{Ind}\left( \mathcal{%
M}^{\mathrm{op}}\right) $ \cite[Section 6.1]{kashiwara_categories_2006}.
Then we have that $\mathrm{Pro}\left( \mathcal{M}\right) $ is still abelian
and contains $\mathcal{M}$ as a thick subcategory \cite[Proposition 8.6.11,
Theorem 8.6.5]{kashiwara_categories_2006}. Given an object $F$ of $X$, the
phantom subobject \textrm{Ph}$F\left( X\right) $ of $F\left( X\right) $
(which is computed in $\mathrm{Pro}\left( \mathcal{M}\right) $) is defined
to be%
\begin{equation*}
\bigcap \mathrm{\mathrm{Ker}}\left( F\left( X\right) \rightarrow F\left(
C\right) \right) =\bigcap \mathrm{Ran}\left( F\left( X/C\right) \rightarrow
F\left( X\right) \right)
\end{equation*}%
ranging over the distinguished triangles $C\rightarrow X\rightarrow X/C$
with $C$ in $\mathcal{C}$. Recursively, one defines the phantom subobject of
order $\alpha $ for every ordinal $\alpha $ by setting%
\begin{equation*}
\mathrm{Ph}^{\alpha +1}F\left( X\right) :=\bigcap \mathrm{Ran}\left( \mathrm{%
Ph}^{\alpha }F\left( X/C\right) \rightarrow F\left( X\right) \right)
\end{equation*}%
ranging over the distinguished triangles $C\rightarrow X\rightarrow X/C$,
and for a limit ordinal $\lambda $%
\begin{equation*}
\mathrm{Ph}^{\lambda }F\left( X\right) :=\bigcap_{\beta <\lambda }\mathrm{Ph}%
^{\beta }F\left( X\right) \text{.}
\end{equation*}%
This defines for each ordinal $\alpha $ a subfunctor $\mathrm{Ph}^{\alpha }F$
of $F$.

Suppose that $\boldsymbol{X}=\left( X_{i},\phi _{i}:X_{i}\rightarrow
X_{i+1}\right) $ is an inductive sequence of compact objects in $\mathcal{T}$%
, with homotopy colimit $X:=\mathrm{hoco\mathrm{\mathrm{lim}}}\boldsymbol{X}$%
. The same argument as in the case of the functor $\mathrm{Hom}\left(
-,Y\right) $ shows that%
\begin{equation*}
\mathrm{Ph}F\left( X\right) =\bigcap_{\ell \in \omega }\mathrm{\mathrm{Ker}}%
(F\left( X\right) \rightarrow F\left( X_{i}\right) )=\bigcap_{\ell \in
\omega }\mathrm{Ran}\left( F\left( X/X_{\ell }\right) \rightarrow F\left(
X\right) \right)
\end{equation*}%
Likewise, we have that for every ordinal $\alpha $:%
\begin{equation*}
\mathrm{Ph}^{\alpha +1}F\left( X\right) :=\bigcap_{\ell \in \omega }\mathrm{%
Ran}\left( \mathrm{Ph}^{\alpha }F\left( X/X_{\ell }\right) \rightarrow
F\left( X\right) \right)
\end{equation*}%
As $\mathcal{M}$ is countably complete, this guarantees that $\mathrm{Ph}%
^{\alpha }F\left( X\right) $ is (naturally isomorphic to) an object of $%
\mathcal{M}$ for every countable ordinal $\alpha $.

Furthermore, the exact triangle defining $X$ as the homotopy colimit of%
\textrm{\ }$\boldsymbol{X}$ induces a natural exact sequence 
\begin{equation*}
0\rightarrow \mathrm{Ph}F\left( X\right) \rightarrow F\left( X\right)
\rightarrow \mathrm{lim}_{n}F\left( X_{n}\right) \rightarrow 0
\end{equation*}%
and a natural isomorphism%
\begin{equation*}
\mathrm{Ph}F\left( X\right) \cong \mathrm{\mathrm{lim}}_{\ell }^{1}F\left(
\Sigma X_{\ell }\right) \text{.}
\end{equation*}

\begin{theorem}
\label{Theorem:phantom-contravariant-subfunctor}Let $R$ be a countable
domain. Suppose that $\mathcal{T}$ is a triangulated category, and 
\begin{equation*}
F:\mathcal{T}\rightarrow \mathrm{LH}\left( \mathbf{\Pi }(\mathbf{Mod}\left(
R\right) )\right)
\end{equation*}%
is a contravariant cohomological functor. Suppose that $\boldsymbol{X}%
=\left( X_{i},\phi _{i}:X_{i}\rightarrow X_{i+1}\right) $ is an inductive
sequence of compact objects in $\mathcal{T}$, with homotopy colimit $X:=%
\mathrm{hoco\mathrm{\mathrm{lim}}}\boldsymbol{X}$. If $F\left( X_{i}\right) $
is countable for every $i\in \omega $, then the isomorphism%
\begin{equation*}
\mathrm{Ph}F\left( X\right) \cong \mathrm{\mathrm{lim}}_{\ell
}^{1}{}F(\Sigma X_{\ell })
\end{equation*}%
maps $\mathrm{Ph}^{\alpha }F\left( X\right) $ to $s_{\alpha }\left( \mathrm{%
\mathrm{lim}}_{\ell }^{1}F\left( \Sigma X_{\ell }\right) \right) $ for $%
\alpha <\omega _{1}$. As a consequence, $\mathrm{Ph}^{\alpha }F\left(
X\right) =s_{\alpha }\left( F\left( X\right) \right) $ for every $\alpha
<\omega _{1}$.
\end{theorem}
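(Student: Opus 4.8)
The plan is to transport the statement to the tower $\boldsymbol{A}:=(F(\Sigma X_\ell))_{\ell\in\omega}$ (with $i$-th term $A^{(i)}=F(\Sigma X_i)$) via the natural isomorphism $\Theta\colon\mathrm{Ph}\,F(X)\xrightarrow{\cong}\mathrm{lim}^{1}\boldsymbol{A}$ furnished by the Milnor sequence of the homotopy colimit, and then to invoke Theorem~\ref{Theorem:Solecki-lim1}. By hypothesis $\boldsymbol{A}$ is a tower of countable $R$-modules (the recursion below also uses that $F(X_i/X_\ell)$ is countable, which follows from the long exact sequence of $X_\ell\to X_i\to X_i/X_\ell$), and we may assume $\boldsymbol{A}$ reduced, since passing to $\boldsymbol{A}/\boldsymbol{A}_{\infty}$ changes neither $\mathrm{lim}^{1}\boldsymbol{A}$ nor its derived towers. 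As $s_{\alpha}(\mathrm{lim}^{1}\boldsymbol{A})=\mathrm{lim}^{1}\boldsymbol{A}_{\alpha}$ by Theorem~\ref{Theorem:Solecki-lim1}, it suffices to prove by transfinite induction on $\alpha<\omega_1$ that $\Theta$ restricts to an isomorphism $\mathrm{Ph}^{\alpha}F(X)\cong\mathrm{lim}^{1}\boldsymbol{A}_{\alpha}$; note that $\mathrm{Ph}^{\alpha}F$ is a subfunctor of $\mathrm{Ph}\,F=\mathrm{Ph}^{0}F$, so $\mathrm{Ph}^{\alpha}F(X)$ lies in the domain of $\Theta$, and more generally images of phantom morphisms of order $\alpha$ are phantom. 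The case $\alpha=0$ is the definition of $\Theta$; the limit case follows from $\mathrm{Ph}^{\lambda}F(X)=\bigcap_{\beta<\lambda}\mathrm{Ph}^{\beta}F(X)$, the identity $\mathrm{lim}^{1}\boldsymbol{A}_{\lambda}=\bigcap_{\beta<\lambda}\mathrm{lim}^{1}\boldsymbol{A}_{\beta}$, and injectivity of $\Theta$.

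For the successor step, fix $\ell$. Since $X/X_\ell$ is a homotopy colimit of the inductive sequence of compact objects $(X_i/X_\ell)_{i\ge\ell}$ (cones of $X_\ell\to X_i$), the inductive hypothesis applies to it: writing $\boldsymbol{B}[\ell]:=(F(\Sigma(X_i/X_\ell)))_{i\ge\ell}$ and $\Theta_{\ell}\colon\mathrm{Ph}\,F(X/X_\ell)\xrightarrow{\cong}\mathrm{lim}^{1}\boldsymbol{B}[\ell]$, one gets $\Theta_{\ell}(\mathrm{Ph}^{\alpha}F(X/X_\ell))=\mathrm{lim}^{1}(\boldsymbol{B}[\ell])_{\alpha}$. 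Applying the contravariant cohomological functor $F$ to the suspended triangle $\Sigma X_\ell\to\Sigma X_i\to\Sigma(X_i/X_\ell)\to\Sigma^{2}X_\ell$ yields, for each $i\ge\ell$, a short exact sequence $0\to K_i\to F(\Sigma(X_i/X_\ell))\to A^{(i)}[\ell]\to0$, where $A^{(i)}[\ell]=\mathrm{Ker}(A^{(i)}\to A^{(\ell)})$ and $K_i$ is a quotient of the fixed module $F(\Sigma^{2}X_\ell)$; compatibility of the triangles over $X_i\to X_{i+1}$ makes $\boldsymbol{K}:=(K_i)_{i\ge\ell}$ a quotient of a constant tower, so $\mathrm{lim}^{1}\boldsymbol{K}=0$ and the six-term sequence gives an isomorphism $\mathrm{lim}^{1}\boldsymbol{B}[\ell]\cong\mathrm{lim}^{1}\boldsymbol{A}[\ell]$, where $\boldsymbol{A}[\ell]:=(A^{(i)}[\ell])_{i\ge\ell}$. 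Since Solecki submodules are preserved by isomorphisms and $s_{\alpha}(\mathrm{lim}^{1}\boldsymbol{A}[\ell])=\mathrm{lim}^{1}\boldsymbol{A}_{\alpha}[\ell]$ by Theorem~\ref{Theorem:Solecki-lim1} (forming the derived tower and passing to the kernel subtower $[\ell]$ commute), this isomorphism carries $\mathrm{lim}^{1}(\boldsymbol{B}[\ell])_{\alpha}$ onto $\mathrm{lim}^{1}\boldsymbol{A}_{\alpha}[\ell]$, compatibly with the maps into $\mathrm{lim}^{1}\boldsymbol{A}$. By naturality of $\Theta$ applied to the map $X\to X/X_\ell$ of homotopy colimits, the square relating $\Theta_\ell$, $\Theta$ and the induced maps commutes, so $\Theta(\mathrm{Ran}(\mathrm{Ph}^{\alpha}F(X/X_\ell)\to F(X)))=\mathrm{Ran}(\mathrm{lim}^{1}\boldsymbol{A}_{\alpha}[\ell]\to\mathrm{lim}^{1}\boldsymbol{A})$. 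Intersecting over $\ell$, and using $\mathrm{Ph}^{\alpha+1}F(X)=\bigcap_{\ell}\mathrm{Ran}(\mathrm{Ph}^{\alpha}F(X/X_\ell)\to F(X))$ on one side and Lemma~\ref{Lemma:lim1-derived-tower} (which gives $\mathrm{lim}^{1}\boldsymbol{A}_{\alpha+1}=\bigcap_{\ell}\mathrm{Ran}(\mathrm{lim}^{1}\boldsymbol{A}_{\alpha}[\ell]\to\mathrm{lim}^{1}\boldsymbol{A}_{\alpha})$, together with the injection $\mathrm{lim}^{1}\boldsymbol{A}_{\alpha}\hookrightarrow\mathrm{lim}^{1}\boldsymbol{A}$) on the other, we obtain $\Theta(\mathrm{Ph}^{\alpha+1}F(X))=\mathrm{lim}^{1}\boldsymbol{A}_{\alpha+1}$, completing the induction.

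To conclude, the Milnor sequence exhibits $F(X)$ as an extension of the Polish module $\mathrm{lim}_{n}F(X_n)$ by the phantom module $\mathrm{Ph}\,F(X)$, hence $\mathrm{Ph}\,F(X)=s_{0}(F(X))$; thus under $\Theta$ we have $\mathrm{Ph}^{\alpha}F(X)=s_{\alpha}(\mathrm{lim}^{1}\boldsymbol{A})=s_{\alpha}(s_{0}(F(X)))$. Finally, for every module with a Polish cover $G$ one has $s_{\alpha}(s_{0}(G))=s_{\alpha}(G)$: for $\alpha=0$ because $\{0\}$ is dense in $s_{0}(G)=\overline{\{0\}}^{G}$, and for successor and limit stages by the recursive definition of the Solecki submodules together with the fact that a submodule of $s_{0}(G)$ has the same Borel complexity computed in $s_{0}(G)$ as in $G$ (the Polish cover of $s_{0}(G)$ being a closed submodule of that of $G$). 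This gives $\mathrm{Ph}^{\alpha}F(X)=s_{\alpha}(F(X))$ for all $\alpha<\omega_1$.

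The main obstacle is the successor step, and within it the identification of $\mathrm{Ran}(\mathrm{Ph}^{\alpha}F(X/X_\ell)\to F(X))$ with $\mathrm{Ran}(\mathrm{lim}^{1}\boldsymbol{A}_{\alpha}[\ell]\to\mathrm{lim}^{1}\boldsymbol{A})$: one must simultaneously recognize $X/X_\ell$ as a homotopy colimit of compact objects so that the induction applies, run the long-exact-sequence computation identifying the tower of $X/X_\ell$ with $\boldsymbol{A}[\ell]$ up to a tower with vanishing $\mathrm{lim}^{1}$, and track naturality carefully so that all these identifications are compatible with the maps into $F(X)$ and $\mathrm{lim}^{1}\boldsymbol{A}$.
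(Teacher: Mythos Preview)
The proposal is correct and follows essentially the same approach as the paper: transfinite induction on $\alpha$, recognizing $X/X_\ell$ as a homotopy colimit via the Octahedral Axiom, relating its tower to $\boldsymbol{A}[\ell]$ through the long exact sequence (modulo a tower with vanishing $\mathrm{lim}^1$), and then invoking Lemma~\ref{Lemma:lim1-derived-tower} and Theorem~\ref{Theorem:Solecki-lim1} to close the successor step. Your write-up is somewhat more explicit than the paper's about the naturality bookkeeping and about the commutation $(\boldsymbol{A}[\ell])_\alpha=\boldsymbol{A}_\alpha[\ell]$, which the paper uses without comment; one small imprecision is that your inductive hypothesis is phrased for the fixed $X$ but then applied to $X/X_\ell$---this is fine since the induction is really universal over all homotopy colimits satisfying the hypotheses, but it would be cleaner to say so (the paper sidesteps this by assuming $X_0=0$, so that $X=X/X_0$).
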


\begin{proof}
Define $F^{n}:=F\circ \Sigma ^{n}$ for $n\in \mathbb{Z}$. Since $\left\{
0\right\} $ is dense in $\mathrm{lim}_{\ell }^{1}F^{1}\left( X_{\ell
}\right) $ and $\mathrm{\mathrm{Ker}}\left( F\left( X\right) \rightarrow 
\mathrm{lim}_{n}F\left( X_{n}\right) \right) $ is closed, we have that the
latter is equal to $s_{0}\left( F\left( X\right) \right) $. Without loss of
generality, we can assume $X_{0}=0$. Fix $\ell \in \omega $ and for $i\geq
\ell $ consider distinguished triangles%
\begin{equation*}
X_{\ell }\rightarrow X_{i}\rightarrow X_{i}/X_{\ell }\text{.}
\end{equation*}%
Then we have an exact sequence%
\begin{equation*}
F^{2}\left( X_{\ell }\right) \rightarrow F^{1}\left( X_{i}/X_{\ell }\right)
\rightarrow F^{1}\left( X_{i}\right) \rightarrow F^{1}(X_{\ell })\text{.}
\end{equation*}%
It follows from the Octahedral Axiom of triangulated categories \cite[%
Definition 10.1.6 (TR5)]{kashiwara_categories_2006} that $X/X_{\ell }$ is
the homotopy colimit of the inductive sequence $\left( X_{i}/X_{\ell
}\right) _{i\geq \ell }$. Thus, we have a natural isomorphism%
\begin{equation*}
F\left( X/X_{\ell }\right) \cong \mathrm{\mathrm{lim}}_{i\geq \ell
}^{1}F^{1}\left( X_{i}/X_{\ell }\right) \text{.}
\end{equation*}%
We prove by induction on $\alpha <\omega _{1}$ that under such an
isomorphism, 
\begin{equation*}
\mathrm{Ph}^{\alpha }F\left( X/X_{\ell }\right)
\end{equation*}%
corresponds to 
\begin{equation*}
s_{\alpha }\left( \mathrm{\mathrm{lim}}_{i}^{1}F^{1}\left( X_{i}/X_{\ell
}\right) \right) \text{.}
\end{equation*}%
Suppose that the conclusion holds for $\alpha $. Define $\boldsymbol{A}$ to
be the tower $\left( F^{1}\left( X_{i}\right) \right) _{i\in \omega }$.
Adopting the notation from Lemma \ref{Lemma:lim1-derived-tower}, we set 
\begin{eqnarray*}
\boldsymbol{A}[\ell ] &=&\left( \mathrm{\mathrm{Ker}}\left( F^{1}\left(
X_{i}\right) \rightarrow F^{1}\left( X_{\ell }\right) \right) \right)
_{i\geq \ell } \\
&=&\left( \mathrm{Ran}\left( F^{1}\left( X_{i}/X_{\ell }\right) \rightarrow
F^{1}\left( X_{i}\right) \right) \right) _{i\geq \ell }\text{.}
\end{eqnarray*}%
Then we have by Lemma \ref{Lemma:lim1-derived-tower},%
\begin{eqnarray*}
s_{\alpha +1}\left( \mathrm{\mathrm{lim}}_{i}^{1}\boldsymbol{A}\right)
&=&\bigcap_{\ell \in \omega }\mathrm{Ran}\left( s_{\alpha }(\mathrm{lim}^{1}%
\boldsymbol{A}[\ell ])\rightarrow \mathrm{lim}^{1}\boldsymbol{A}\right) \\
&\cong &\bigcap_{\ell \in \omega }\mathrm{Ran}\left( s_{\alpha }(\mathrm{lim}%
^{1}\frac{F^{1}(X_{i}/X_{\ell })}{F^{2}\left( X_{\ell }\right) })\rightarrow 
\mathrm{\mathrm{lim}}^{1}\boldsymbol{A}\right) \\
&\cong &\bigcap_{\ell \in \omega }\mathrm{Ran}\left( s_{\alpha }(\mathrm{lim}%
^{1}F^{1}\left( X_{i}/X_{\ell }\right) )\rightarrow \mathrm{\mathrm{lim}}^{1}%
\boldsymbol{A}\right) \\
&\cong &\bigcap_{\ell \in \omega }\mathrm{Ran}\left( \mathrm{Ph}^{\alpha
}F\left( X/X_{\ell }\right) \rightarrow F\left( X\right) \right) \\
&=&\mathrm{Ph}^{\alpha +1}F\left( X\right) \text{.}
\end{eqnarray*}%
The case of limit stages is trivial.
\end{proof}

\subsection{Phantom extensions in derived categories}

Suppose that $\mathcal{M}$ is a category of modules, and $\mathcal{A}$ is an
idempotent-complete hereditary exact $\mathcal{M}$-category with enough
projectives; see Section \ref{Subsection:M-categories}.\ Then we have that $%
\mathrm{D}^{b}\left( \mathcal{A}\right) $ is a triangulated $\mathrm{LH}%
\left( \mathcal{M}\right) $-category.\ Fix a bounded complex $X$ over $%
\mathcal{A}$. As $\mathrm{Hom}_{\mathrm{D}^{b}\left( \mathcal{A}\right)
}\left( -,X\right) :\mathrm{D}^{b}\left( \mathcal{A}\right) \rightarrow 
\mathrm{LH}\left( \mathcal{M}\right) $ is a cohomological functor, we can
consider for every ordinal $\alpha $ its $\alpha $-th phantom subfunctor 
\begin{equation*}
\mathrm{Ph}^{\alpha }\mathrm{Hom}_{\mathrm{D}^{b}\left( \mathcal{A}\right)
}\left( -,X\right) :\mathrm{D}^{b}\left( \mathcal{A}\right) \rightarrow 
\mathrm{LH}\left( \mathcal{M}\right) \text{.}
\end{equation*}%
For objects $A,B$ of $\mathcal{A}$, recall that one sets 
\begin{equation*}
\mathrm{Ext}\left( A,B\right) :=\mathrm{Hom}_{\mathrm{D}^{b}\left( \mathcal{A%
}\right) }\left( A,B[1]\right) .
\end{equation*}%
We define its $\alpha $-th phantom subobject%
\begin{equation*}
\mathrm{Ph}^{\alpha }\mathrm{Ext}\left( A,B\right) :=\mathrm{Ph}^{\alpha }%
\mathrm{Hom}_{\mathrm{D}^{b}\left( \mathcal{A}\right) }\left( A,B[1]\right) 
\text{.}
\end{equation*}%
As a particular instance of \cite[Lemma 5.2.6]{krause_homological_2022} and
Theorem \ref{Theorem:phantom-contravariant-subfunctor} we have the following:

\begin{theorem}
\label{Theorem:phantom-Ext}Suppose that $\mathcal{A}$ is an
idempotent-complete hereditary exact $\boldsymbol{\Pi }(\mathbf{Mod}\left(
R\right) )$-category with enough projectives. Suppose that $\boldsymbol{X}%
=\left( X_{i},\phi _{i}:X_{i}\rightarrow X_{i+1}\right) $ is an inductive
sequence of compact objects in $\mathcal{A}$ with colimit $X:=\mathrm{co%
\mathrm{\mathrm{lim}}}\boldsymbol{X}$, and $Y$ is an object of $A$. If $%
\mathrm{Hom}\left( X_{i},Y\right) $ is countable for every $i\in \omega $,
then we have an exact sequence%
\begin{equation*}
0\rightarrow \mathrm{PhExt}\left( X,Y\right) \rightarrow \mathrm{Ext}\left(
X,Y\right) \rightarrow \mathrm{lim}_{n}\mathrm{Ext}\left( X_{n},Y\right)
\rightarrow 0
\end{equation*}%
and an isomorphism%
\begin{equation*}
\mathrm{PhExt}\left( X,Y\right) \cong \mathrm{\mathrm{lim}}_{\ell }^{1}%
\mathrm{Hom}\left( X_{\ell },Y\right)
\end{equation*}%
in $\mathrm{LH}\left( \boldsymbol{\Pi }\left( \mathbf{Mod}\left( R\right)
\right) \right) $. Under such an isomorphism, $\mathrm{Ph}^{\alpha }\mathrm{%
Ext}\left( X,Y\right) $ corresponds to $s_{\alpha }\left( \mathrm{Ext}\left(
X,Y\right) \right) $ for every $\alpha <\omega _{1}$.
\end{theorem}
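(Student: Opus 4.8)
The plan is to deduce Theorem~\ref{Theorem:phantom-Ext} from the general theory of phantom cohomological subfunctors by specializing to $\mathcal{T}=\mathrm{D}^{b}(\mathcal{A})$ and to the contravariant cohomological functor $F:=\mathrm{Hom}_{\mathrm{D}^{b}(\mathcal{A})}(-,Y[1])$. First I would record that $F$ is a well-defined contravariant cohomological functor into $\mathrm{LH}(\boldsymbol{\Pi}(\mathbf{Mod}(R)))$: since $\mathcal{A}$ is an idempotent-complete hereditary exact $\boldsymbol{\Pi}(\mathbf{Mod}(R))$-category with enough projectives, the proposition producing the total right derived functor $\mathrm{RHom}_{\mathcal{A}}$ shows that $\mathrm{D}^{b}(\mathcal{A})$ is a triangulated $\mathrm{LH}(\boldsymbol{\Pi}(\mathbf{Mod}(R)))$-category, so $\mathrm{Hom}_{\mathrm{D}^{b}(\mathcal{A})}(-,Y[1])$ is cohomological in the first variable with values in the left heart. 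I would then note the identifications $F(X)=\mathrm{Ext}(X,Y)$, $F(X_{n})=\mathrm{Ext}(X_{n},Y)$, and $F(\Sigma X_{\ell})=\mathrm{Hom}_{\mathrm{D}^{b}(\mathcal{A})}(X_{\ell}[1],Y[1])=\mathrm{Hom}(X_{\ell},Y)$; in particular the hypothesis of the theorem says precisely that the tower $(F(\Sigma X_{\ell}))_{\ell\in\omega}=(\mathrm{Hom}(X_{\ell},Y))_{\ell\in\omega}$ consists of countable modules, and by definition $\mathrm{Ph}^{\alpha}F(X)=\mathrm{Ph}^{\alpha}\mathrm{Ext}(X,Y)$.

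The one point requiring genuine work is to exhibit $X$ as a homotopy colimit of $\boldsymbol{X}$ in $\mathrm{D}^{b}(\mathcal{A})$, so that the general theory applies. After replacing $\boldsymbol{X}$ by the isomorphic mapping-telescope sequence one may assume each $\phi_{i}$ is an admissible monic; then the countable coproduct $\bigoplus_{i}X_{i}$ exists in $\mathcal{A}$ (each $X_{i}$ being a single object of $\mathcal{A}$, and compact in $\mathrm{D}^{b}(\mathcal{A})$), the morphism $\mathrm{id}-\phi\colon\bigoplus_{i}X_{i}\to\bigoplus_{i}X_{i}$ is an admissible monic, and its cokernel is $X=\mathrm{colim}\,\boldsymbol{X}$; the resulting conflation yields a distinguished triangle in $\mathrm{D}^{b}(\mathcal{A})$ which is the defining triangle of the homotopy colimit, and the same reasoning applied to the tails gives $X/X_{\ell}=\mathrm{hocolim}_{i\geq\ell}(X_{i}/X_{\ell})$. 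This is essentially \cite[Lemma 5.2.6]{krause_homological_2022}. I expect this to be the main obstacle, chiefly the verification that the ambient coproducts exist and that $\mathrm{id}-\phi$ is an admissible monic---both unproblematic in the cases of interest, where $\mathcal{A}$ is a category of countable flat modules, closed under countable direct sums.

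With $X=\mathrm{hocolim}\,\boldsymbol{X}$ in hand, the Milnor exact sequence recorded just before Theorem~\ref{Theorem:phantom-contravariant-subfunctor} (that is, \cite[Lemma 5.2.6]{krause_homological_2022}) supplies the natural exact sequence $0\to\mathrm{Ph}F(X)\to F(X)\to\mathrm{lim}_{n}F(X_{n})\to 0$ together with the natural isomorphism $\mathrm{Ph}F(X)\cong\mathrm{lim}^{1}_{\ell}F(\Sigma X_{\ell})$; substituting the identifications of the first step, this is exactly the exact sequence and the isomorphism $\mathrm{PhExt}(X,Y)\cong\mathrm{lim}^{1}_{\ell}\mathrm{Hom}(X_{\ell},Y)$ in the statement. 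Finally, since the tower $(\mathrm{Hom}(X_{\ell},Y))_{\ell}=(F(\Sigma X_{\ell}))_{\ell}$ is a tower of countable modules, Theorem~\ref{Theorem:phantom-contravariant-subfunctor} applies: under the above isomorphism $\mathrm{Ph}^{\alpha}\mathrm{Ext}(X,Y)=\mathrm{Ph}^{\alpha}F(X)$ is carried onto $s_{\alpha}\!\left(\mathrm{lim}^{1}_{\ell}\mathrm{Hom}(X_{\ell},Y)\right)$, and moreover $\mathrm{Ph}^{\alpha}F(X)=s_{\alpha}(F(X))=s_{\alpha}(\mathrm{Ext}(X,Y))$ for every $\alpha<\omega_{1}$, which is the last assertion. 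Apart from the homotopy-colimit identification, the proof is a direct translation through the equivalences and the theorems already established.
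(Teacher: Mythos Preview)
Your proposal is correct and follows exactly the route the paper takes: the paper does not give a detailed argument but simply records the theorem ``as a particular instance of \cite[Lemma 5.2.6]{krause_homological_2022} and Theorem~\ref{Theorem:phantom-contravariant-subfunctor}'', and your proof is precisely the unpacking of that sentence with $F=\mathrm{Hom}_{\mathrm{D}^{b}(\mathcal{A})}(-,Y[1])$. Your identification of the homotopy-colimit step (passing from $\mathrm{colim}\,\boldsymbol{X}$ in $\mathcal{A}$ to $\mathrm{hocolim}\,\boldsymbol{X}$ in $\mathrm{D}^{b}(\mathcal{A})$ via the conflation $\bigoplus X_i\to\bigoplus X_i\to X$) is the one genuine detail the paper leaves implicit, and your treatment of it is correct; your caveat about existence of countable coproducts is well placed, though in every application in the paper $\mathcal{A}$ is a category of countable modules where this is immediate.
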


\section{The functors $\mathrm{Ext}$ and $\mathrm{PExt}$\label{Section:Ext}}

In this section we assume that $R$ is a countable Pr\"{u}fer domain, and
consider all modules to be $R$-modules.

\subsection{Extensions of countable modules}

We let $\mathbf{Mod}_{\aleph _{0}}\left( R\right) $ be the category of
countably-presented modules. For countably-presented modules $C,A$, we
regard $\mathrm{Hom}\left( C,A\right) $ as a pro-countably-presented Polish
module endowed with the topology of pointwise convergence, where $A$ and $C$
have the discrete topology. This turns $\mathbf{Mod}_{\aleph _{0}}\left(
R\right) $ into an abelian $\boldsymbol{\Pi }(\mathbf{Mod}_{\aleph
_{0}}\left( R\right) )$-category with enough projectives. (See Section \ref%
{Subsection:M-categories} for the notion of abelian $\mathcal{M}$-category,
where $\mathcal{M}$ is a category of modules.) By Proposition \ref%
{Proposition:derived-functor2} we have that the triangulated functor 
\begin{equation*}
\mathrm{Hom}^{\bullet }:\mathrm{K}^{-}(\mathbf{Mod}_{\aleph _{0}}\left(
R\right) \mathcal{)}^{\mathrm{op}}\times \mathrm{K}^{+}(\mathbf{Mod}_{\aleph
_{0}}\left( R\right) \mathcal{)}\rightarrow \boldsymbol{\Pi }(\mathbf{Mod}%
_{\aleph _{0}}\left( R\right) )
\end{equation*}%
has a total right derived functor 
\begin{equation*}
\mathrm{RHom}^{\bullet }:\mathrm{D}^{-}(\mathbf{Mod}_{\aleph _{0}}\left(
R\right) \mathcal{)}^{\mathrm{op}}\times \mathrm{D}^{+}(\mathbf{Mod}_{\aleph
_{0}}\left( R\right) \mathcal{)}\rightarrow \mathrm{D}^{+}(\boldsymbol{\Pi }(%
\mathbf{Mod}_{\aleph _{0}}\left( R\right) )\mathcal{)}\text{.}
\end{equation*}%
We let $\mathrm{Ext}^{n}=\mathrm{H}^{n}\circ \mathrm{RHom}^{\bullet }$ for $%
n\geq 0$.

Suppose that $\mathcal{C}$ is the class of \emph{finitely-presented}
modules. We let $\mathcal{E}_{0}$ be the exact structure \emph{projectively
generated }by $\mathcal{C}$ on the abelian category $\mathbf{Mod}_{\aleph
_{0}}\left( R\right) $. The sequences in $\mathcal{E}_{0}$ are precisely the 
\emph{pure short-exact} sequences of countable modules; see \cite[Theorem
3.69]{rotman_introduction_2009}. An admissible arrow in this exact category
is called \emph{pure-admissible}. A countable module is projective\emph{\ }%
in $\left( \mathbf{Mod}_{\aleph _{0}}\left( R\right) ,\mathcal{E}_{0}\right) 
$, in which case it is called \emph{pure-projective}, if and only if it is a
direct summand of a countable direct sum of finitely-presented modules \cite[%
Theorem 2.2.3]{benson_phantom_1999}. The exact category $\left( \mathbf{Mod}%
_{\aleph _{0}}\left( R\right) ,\mathcal{E}_{0}\right) $ is hereditary with
enough projectives \cite[Section 2.6]{benson_phantom_1999}. We let $\mathrm{%
PExt}^{n}$ for $n\geq 0$ be the corresponding derived functor of $\mathrm{Hom%
}$. We have that $\mathrm{PExt}^{n}=0$ for $n\geq 2$. We write $\mathrm{PExt}
$ for $\mathrm{PExt}^{1}$. Furthermore, if $C$ is a countable flat module
and $A$ is any countable module, then $\mathrm{Ext}^{n}\left( C,A\right) =%
\mathrm{PExt}^{n}\left( C,A\right) $ for every $n\geq 0$ \cite[Proposition
2.7.1]{benson_phantom_1999}. In this case, we write $\mathrm{Ext}\left(
C,-\right) $ for $\mathrm{Ext}^{1}\left( C,-\right) $.

\begin{lemma}
\label{Lemma:countable-Ext}Let $C$ be a finitely-presented module and $A$ be
a countable flat module. Then $\mathrm{Ext}^{1}\left( C,A\right) $ is
countably presented.
\end{lemma}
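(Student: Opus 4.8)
The plan is to compute $\mathrm{Ext}^1(C,A)$ from a short projective resolution of $C$ and to observe that everything in sight is discrete and countable. Since $R$ is a Pr\"{u}fer domain and $C$ is finitely-presented, there is a short exact sequence $0\rightarrow Q\rightarrow P\rightarrow C\rightarrow 0$ with $P$ and $Q$ finite projective modules (this is the characterization of finitely-presented modules over a Pr\"{u}fer domain recalled in Section~\ref{Section:countable}; alternatively one may use Lemma~\ref{Lemma:structure-finitely-presented} together with Proposition~\ref{Proposition:characterize-prufer}). Deleting $C$, this is a length-one complex $[Q\rightarrow P]$ of projective objects of $\mathbf{Mod}_{\aleph_{0}}(R)$ quasi-isomorphic to $C$, so by the standard computation of $\mathrm{Ext}$ via a projective resolution of the (contravariant) first variable, $\mathrm{Ext}^n(C,A)$ is the $n$-th cohomology of the complex $\mathrm{Hom}(P,A)\overset{d}{\rightarrow}\mathrm{Hom}(Q,A)$ placed in degrees $0$ and $1$. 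In particular $\mathrm{Ext}^n(C,A)=0$ for $n\geq 2$, and $\mathrm{Ext}^1(C,A)$ is its degree-$1$ cohomology.

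Next I would note that, because $P$ and $Q$ are finitely generated and $A$ is discrete, the Polish modules $\mathrm{Hom}(P,A)$ and $\mathrm{Hom}(Q,A)$ — which carry the topology of pointwise convergence — are in fact discrete: picking a finite generating set $g_{1},\dots,g_{k}$ of $P$, the evaluation map $\varphi\mapsto(\varphi(g_{1}),\dots,\varphi(g_{k}))$ is a continuous injection of $\mathrm{Hom}(P,A)$ into the discrete module $A^{k}$, so $\{0\}$ is an open subgroup. Being countable and discrete over the countable ring $R$, both $\mathrm{Hom}(P,A)$ and $\mathrm{Hom}(Q,A)$ are countably-presented modules, that is, objects of $\mathbf{Mod}_{\aleph_{0}}(R)$ viewed as discrete objects of $\boldsymbol{\Pi}(\mathbf{Mod}_{\aleph_{0}}(R))$.

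Finally, $d$ is then a morphism of $\mathbf{Mod}_{\aleph_{0}}(R)$; since that category is abelian and its inclusion into the quasi-abelian category $\boldsymbol{\Pi}(\mathbf{Mod}_{\aleph_{0}}(R))$ is exact, $d$ is strict and its factorization through $\mathrm{Ran}(d)$ is a cokernel followed by a kernel in $\boldsymbol{\Pi}(\mathbf{Mod}_{\aleph_{0}}(R))$. Consequently the degree-$1$ cohomology of the two-term complex, computed in $\mathrm{LH}(\boldsymbol{\Pi}(\mathbf{Mod}_{\aleph_{0}}(R)))$, coincides with the honest cokernel $\mathrm{Hom}(Q,A)/\mathrm{Ran}(d)$ formed in $\mathbf{Mod}_{\aleph_{0}}(R)$, which is a countably-presented module. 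Hence $\mathrm{Ext}^1(C,A)$ is countably presented.

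The step I expect to need the most care is the last one: verifying that the degree-$1$ cohomology actually lands in $\mathbf{Mod}_{\aleph_{0}}(R)$ rather than producing a strictly larger object of the left heart $\mathrm{LH}(\boldsymbol{\Pi}(\mathbf{Mod}_{\aleph_{0}}(R)))$; this is precisely where the discreteness of the $\mathrm{Hom}$-modules and the strictness of $d$ enter. If one prefers to avoid the left-heart bookkeeping, one can instead argue that the underlying abelian group of $\mathrm{Ext}^1(C,A)$ is the Yoneda group $\mathrm{Ext}^1_{\mathrm{Yon}}(C,A)$, which from the resolution above is a quotient of the countable group $\mathrm{Hom}(Q,A)$, and then invoke that a module with a pro-countable Polish cover whose underlying group is countable is, by the Baire category theorem, Borel isomorphic to a discrete countable module.
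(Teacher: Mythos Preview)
Your proof is correct and follows essentially the same approach as the paper: compute $\mathrm{Ext}^1(C,A)$ as the cokernel of a map between countable $\mathrm{Hom}$-modules arising from a length-one resolution of $C$ by finite projectives. The only cosmetic difference is that the paper first reduces (via Lemma~\ref{Lemma:structure-finitely-presented}) to the case where $C$ is a cyclic torsion module $L/I$ and uses the resolution $0\to I\to L\to C\to 0$, whereas you work with a general finite projective resolution and are more explicit about the left-heart bookkeeping.
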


\begin{proof}
By Lemma \ref{Lemma:structure-finitely-presented}, without loss of
generality we can assume that $C=L/I$ is a cyclic torsion module, for some
finite ideals $I\subseteq L\subseteq R$. Considering the short exact sequence%
\begin{equation*}
0\rightarrow I\rightarrow L\rightarrow C\rightarrow 0
\end{equation*}%
we have an induced exact sequence%
\begin{equation*}
\mathrm{Hom}\left( L,A\right) \rightarrow \mathrm{Hom}\left( I,A\right)
\rightarrow \mathrm{Ext}\left( C,A\right) \rightarrow \mathrm{Ext}\left(
L,A\right) =0\text{.}
\end{equation*}%
Thus, $\mathrm{Ext}\left( C,A\right) $ is countably presented.
\end{proof}

We let $\mathbf{Flat}\left( R\right) $ be the full subcategory of $\mathbf{%
Mod}\left( R\right) $ spanned by the\emph{\ }countable \emph{flat} modules.
A short exact sequence in $\mathbf{Flat}\left( R\right) $ is necessarily
pure. By Lemma \ref{Lemma:projective-flat}, the projective objects in $%
\mathbf{Flat}\left( R\right) $ are precisely the projective countable
modules. As any countable flat module is countably presented, $\mathbf{Flat}%
\left( R\right) $ is a \emph{hereditary} quasi-abelian category with enough
projectives; see \cite[Corollary 2.7.3]{benson_phantom_1999}.

\begin{lemma}
\label{Lemma:projective-flat}Let $M$ be a countable torsion-free module.
Then $M$ is pure-projective if and only if it is projective.
\end{lemma}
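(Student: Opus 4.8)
The plan is to establish the nontrivial implication — that a countable torsion‑free pure‑projective module is projective — the converse being immediate: a countable projective module is a direct summand of $R^{(\omega)}=\bigoplus_{n\in\omega}R$, a countable direct sum of finitely‑presented (even cyclic free) modules, hence pure‑projective by \cite[Theorem 2.2.3]{benson_phantom_1999}. (Equivalently, a projective object of $\mathbf{Mod}_{\aleph_0}(R)$ is projective with respect to any sub‑exact‑structure, in particular $\mathcal{E}_0$, since every $\mathcal{E}_0$‑deflation is in particular an epimorphism.)

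For the main direction, suppose $M$ is countable, torsion‑free, and pure‑projective. By \cite[Theorem 2.2.3]{benson_phantom_1999}, $M$ is a direct summand of $\bigoplus_{i\in\omega}F_i$ for some finitely‑presented modules $F_i$. By Lemma \ref{Lemma:structure-finitely-presented}, each $F_i\cong T_i\oplus P_i$ where $T_i$ is a finite direct sum of finitely‑presented cyclic torsion modules (hence torsion) and $P_i$ is a finite direct sum of finite ideals of $R$. Setting $T:=\bigoplus_i T_i$ and $P:=\bigoplus_i P_i$, we get $\bigoplus_i F_i\cong T\oplus P$ with $T$ torsion and $P$ a direct sum of finite ideals of $R$. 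Since over a Pr\"ufer domain every finite ideal is projective and an arbitrary direct sum of projectives is projective, $P$ is a projective $R$‑module, hence flat, hence torsion‑free. Thus $M\oplus N\cong T\oplus P$ for some $N$, with $T$ torsion and $P$ projective and torsion‑free.

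Now fix an isomorphism $\varphi\colon M\oplus N\xrightarrow{\ \sim\ }T\oplus P$. Computing torsion submodules, $(M\oplus N)_{\mathrm{t}}=M_{\mathrm{t}}\oplus N_{\mathrm{t}}=0\oplus N_{\mathrm{t}}$ because $M$ is torsion‑free, while $(T\oplus P)_{\mathrm{t}}=T_{\mathrm{t}}\oplus P_{\mathrm{t}}=T\oplus 0$ because $P$ is torsion‑free. Since any isomorphism carries the torsion submodule onto the torsion submodule, $\varphi$ restricts to an isomorphism $0\oplus N_{\mathrm{t}}\xrightarrow{\sim}T\oplus 0$, and hence descends to an isomorphism of quotients $M\oplus (N/N_{\mathrm{t}})\cong (M\oplus N)/N_{\mathrm{t}}\cong (T\oplus P)/T\cong P$. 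Therefore $M$ is (isomorphic to) a direct summand of the projective module $P$, and a direct summand of a projective module is projective; so $M$ is projective, completing the proof.

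I do not expect a real obstacle here: the entire content is packaged in the structure theorem Lemma \ref{Lemma:structure-finitely-presented} (which is where the Pr\"ufer hypothesis is used) together with the elementary bookkeeping of torsion submodules. The only point needing a little care is to invoke the correct characterization of pure‑projectives over the coherent ring $R$ and to note that, in the splitting $T\oplus P$, the summand $P$ is genuinely projective and torsion‑free, so that $T$ is exactly the torsion submodule of $T\oplus P$ and can be cancelled.
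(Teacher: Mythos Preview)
Your proof is correct but takes a genuinely different route from the paper's. The paper argues more directly from flatness: since $M$ is torsion-free over a Pr\"ufer domain it is flat, hence the colimit of a sequence of finite projective modules; the telescope construction then yields a short exact sequence $0\to P\to Q\to M\to 0$ with $P,Q$ projective, which is automatically \emph{pure} because $M$ is flat (Lemma~\ref{Lemma:pure-exact-extension}), and pure-projectivity of $M$ splits it. Your approach instead goes through the global characterization of pure-projectives as summands of $\bigoplus_i F_i$ with $F_i$ finitely presented, invokes the structure theorem Lemma~\ref{Lemma:structure-finitely-presented} to decompose $\bigoplus_i F_i\cong T\oplus P$, and then cancels $T$ via a torsion-submodule computation. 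Both arguments rest on the Pr\"ufer hypothesis (finite torsion-free $\Rightarrow$ projective), but the paper's route avoids the finer structure of finitely-presented modules and never introduces the auxiliary complement $N$, while your route is more hands-on and makes the role of torsion cancellation explicit rather than hiding it inside the purity of a resolution.
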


\begin{proof}
Since $R$ is a Pr\"{u}fer domain, $M$ is the colimit of a sequence of finite
projective modules. Whence there is a pure short-exact sequence%
\begin{equation*}
0\rightarrow P\rightarrow Q\rightarrow M\rightarrow 0
\end{equation*}%
where $P$ and $Q$ are projective. If $M$ is pure-projective, then $M$ is
isomorphic to a direct summand of $Q$, whence it is projective.
\end{proof}

By definition, for countably-presented modules $C,A$, $\mathrm{Ph}^{0}%
\mathrm{Ext}^{1}\left( C,A\right) $ is the submodule $\mathrm{PExt}\left(
C,A\right) $ of \textrm{Ext}$^{1}\left( C,A\right) $ parametrizing $\emph{%
pure}$ extensions, which can be identified with the derived functor $\mathrm{%
Ext}_{\mathcal{E}_{0}}$ of $\mathrm{Hom}$ on $\left( \mathbf{Mod}\left(
R\right) ,\mathcal{E}_{0}\right) $. When both $C$ and $A$ are \emph{flat},
one has that $\mathrm{PExt}^{1}\left( C,A\right) =\mathrm{Ext}^{1}\left(
C,A\right) $; see \cite[Proposition 2.7.1]{benson_phantom_1999}. If $A$ is
flat, then it follows from \cite[Lemma 5.2.6]{krause_homological_2022} and
Lemma \ref{Lemma:countable-Ext} that $\mathrm{PExt}\left( C,A\right) $ is
the closure of the trivial submodule in $\mathrm{Ext}^{1}\left( C,A\right) $%
. Furthermore, if $C\cong \mathrm{co\mathrm{lim}}_{n}C_{n}$, where $\left(
C_{n}\right) $ is an inductive sequence of finitely-presented modules, then%
\begin{equation*}
\mathrm{PExt}\left( C,A\right) \cong \mathrm{lim}_{n}^{1}\mathrm{Hom}\left(
C_{n},A\right) \text{;}
\end{equation*}%
see also \cite[Theorem 2.6.1]{benson_phantom_1999}. If $C$ is a
countably-presented torsion module, and $A$ is a countable flat module, then 
$\mathrm{Hom}\left( C_{n},A\right) =0$ for every $n\in \omega $, and hence $%
\mathrm{Ext}^{1}\left( C,A\right) \cong \mathrm{lim}_{n}\mathrm{Ext}%
^{1}\left( C_{n},A\right) $ is a pro-countably-presented Polish module.

\begin{lemma}
\label{Lemma:zero-closure}Suppose that $C,A$ are countably-presented
modules, with $A$ flat. Then the quotient map $C\rightarrow C/C_{\mathrm{t}}$
induces an injective homomorphism $\mathrm{Ext}(C/C_{\mathrm{t}%
},A)\rightarrow \mathrm{Ext}\left( C,A\right) $ whose image is equal to $%
\mathrm{PExt}\left( C,A\right) $.
\end{lemma}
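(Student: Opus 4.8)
The plan is to apply $\mathrm{Ext}\left( -,A\right) $ to the short exact sequence $0\rightarrow C_{\mathrm{t}}\rightarrow C\overset{q}{\rightarrow }C/C_{\mathrm{t}}\rightarrow 0$ and to pin down the kernel and image of the resulting maps using the functoriality of the $0$-th Solecki submodule $s_{0}$.

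First I would record the elementary facts. Since $R$ is countable, $C$ is countable, so $C/C_{\mathrm{t}}$ is a countable torsion-free module, hence flat by Proposition \ref{Proposition:characterize-prufer}, hence countably presented; as $\mathbf{Mod}_{\aleph _{0}}\left( R\right) $ is a thick abelian subcategory of the category of countable modules, $C_{\mathrm{t}}=\mathrm{Ker}\left( q\right) $ is countably presented as well, and the displayed sequence lives in $\mathbf{Mod}_{\aleph _{0}}\left( R\right) $. Applying $\mathrm{RHom}\left( -,A\right) $ (which exists by Proposition \ref{Proposition:derived-functor2}) and taking cohomology produces an exact sequence
\begin{equation*}
\mathrm{Hom}\left( C_{\mathrm{t}},A\right) \rightarrow \mathrm{Ext}\left( C/C_{\mathrm{t}},A\right) \overset{q^{\ast }}{\rightarrow }\mathrm{Ext}\left( C,A\right) \overset{r}{\rightarrow }\mathrm{Ext}\left( C_{\mathrm{t}},A\right)
\end{equation*}
in $\mathrm{LH}\left( \boldsymbol{\Pi }\left( \mathbf{Mod}_{\aleph _{0}}\left( R\right) \right) \right) $, where $q^{\ast }=\mathrm{Ext}\left( q,A\right) $ is exactly the homomorphism induced by the quotient map and $r$ is induced by the inclusion $C_{\mathrm{t}}\hookrightarrow C$.

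Since $C_{\mathrm{t}}$ is torsion and $A$, being flat, is torsion-free, $\mathrm{Hom}\left( C_{\mathrm{t}},A\right) =0$, so exactness gives at once that $q^{\ast }$ is injective. For the image I would use that, for $A$ flat, $\mathrm{PExt}\left( C^{\prime },A\right) =s_{0}\left( \mathrm{Ext}\left( C^{\prime },A\right) \right) $ for every countably-presented $C^{\prime }$ (recorded after Lemma \ref{Lemma:countable-Ext}), together with the functoriality $\varphi \left( s_{0}\left( G\right) \right) \subseteq s_{0}\left( H\right) $ for any Borel homomorphism $\varphi :G\rightarrow H$ of modules with a pro-countable Polish cover --- which holds because $H/s_{0}\left( H\right) $ is a Polish module and the kernel of a Borel homomorphism into a Polish module is a closed submodule, hence contains $s_{0}\left( G\right) =\overline{\left\{ 0\right\} }^{G}$. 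Since $C/C_{\mathrm{t}}$ and $A$ are both flat, $\mathrm{Ext}\left( C/C_{\mathrm{t}},A\right) =\mathrm{PExt}\left( C/C_{\mathrm{t}},A\right) =s_{0}\left( \mathrm{Ext}\left( C/C_{\mathrm{t}},A\right) \right) $, and applying functoriality to $q^{\ast }$ gives $\mathrm{Ran}\left( q^{\ast }\right) \subseteq s_{0}\left( \mathrm{Ext}\left( C,A\right) \right) =\mathrm{PExt}\left( C,A\right) $. Conversely, since $C_{\mathrm{t}}$ is a countably-presented torsion module and $A$ is flat, $\mathrm{Ext}\left( C_{\mathrm{t}},A\right) $ is a genuine (Hausdorff) pro-countably-presented Polish module, so $s_{0}\left( \mathrm{Ext}\left( C_{\mathrm{t}},A\right) \right) =0$; hence $r\left( \mathrm{PExt}\left( C,A\right) \right) =r\left( s_{0}\left( \mathrm{Ext}\left( C,A\right) \right) \right) \subseteq s_{0}\left( \mathrm{Ext}\left( C_{\mathrm{t}},A\right) \right) =0$, so by exactness $\mathrm{PExt}\left( C,A\right) \subseteq \mathrm{Ker}\left( r\right) =\mathrm{Ran}\left( q^{\ast }\right) $. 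Combining the two inclusions, $\mathrm{Ran}\left( q^{\ast }\right) =\mathrm{PExt}\left( C,A\right) $, which is the assertion.

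The only delicate point is bookkeeping: one must ensure that the long exact sequence genuinely takes place in $\mathrm{LH}\left( \boldsymbol{\Pi }\left( \mathbf{Mod}_{\aleph _{0}}\left( R\right) \right) \right) $ (so that the preimage and functoriality facts for modules with a pro-countable Polish cover apply) and that the embedding $\mathrm{PExt}\left( -,A\right) \hookrightarrow \mathrm{Ext}\left( -,A\right) $ really does identify $\mathrm{PExt}\left( C,A\right) $ with $s_{0}\left( \mathrm{Ext}\left( C,A\right) \right) $ for $A$ flat --- exactly what is recorded after Lemma \ref{Lemma:countable-Ext}. Beyond this I do not expect any obstacle.
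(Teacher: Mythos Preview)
Your proof is correct and follows essentially the same approach as the paper: both use the long exact sequence associated with $0\to C_{\mathrm t}\to C\to C/C_{\mathrm t}\to 0$, the vanishing $\mathrm{Hom}(C_{\mathrm t},A)=0$ for injectivity, the fact that $\mathrm{Ext}(C_{\mathrm t},A)$ is a genuine Polish module (so the image of $q^{\ast}$ is closed), and that $\mathrm{Ext}(C/C_{\mathrm t},A)$ is phantom since $C/C_{\mathrm t}$ is flat. The paper states this more tersely (``injective with closed image'' and ``$\{0\}$ dense in the source''), while you spell out the same content via the functoriality $\varphi(s_{0}(G))\subseteq s_{0}(H)$ applied to both $q^{\ast}$ and $r$.
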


\begin{proof}
Notice that $\mathrm{Hom}\left( C_{\mathrm{t}},A\right) =0$. Furthermore, $%
\mathrm{Ext}\left( C_{\mathrm{t}},A\right) $ is a pro-countably-presented
Polish module. Thus, the homomorphism $\mathrm{Ext}(C/C_{\mathrm{t}%
},A)\rightarrow \mathrm{Ext}\left( C,A\right) $ is injective with closed
image. Since $\left\{ 0\right\} $ is dense in $\mathrm{Ext}\left( C/C_{%
\mathrm{t}},A\right) $, being $C/C_{\mathrm{t}}$ flat, the conclusion
follows.
\end{proof}

\begin{corollary}
\label{Corollary:Ext-sum} Suppose that $C,A$ are countably-presented
modules, with $A$ flat. Then the quotient map $C\rightarrow C/C_{\mathrm{t}}$
induces an isomorphism $\mathrm{Ph}^{\alpha }\mathrm{Ext}\left( C/C_{\mathrm{%
t}},A\right) \rightarrow \mathrm{Ph}^{\alpha }\mathrm{Ext}\left( C,A\right) $
for every $\alpha \in \omega _{1}$. Furthermore, $\left\{ 0\right\} $ has
the same complexity class in $\mathrm{Ext}^{1}\left( C,A\right) $ and $%
\mathrm{Ext}^{1}\left( C,A^{\left( \omega \right) }\right) $.
\end{corollary}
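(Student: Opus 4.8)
The plan is to deduce both assertions from the identification $\mathrm{Ph}^{\alpha}\mathrm{Ext}(C,A)=s_{\alpha}(\mathrm{Ext}^{1}(C,A))$ provided by Theorem~\ref{Theorem:phantom-Ext}, from Lemma~\ref{Lemma:zero-closure}, and from the two facts about the complexity of $\{0\}$ recorded as Theorem~\ref{Theorem:complexity} and Corollary~\ref{Corollary:sum-tower}.

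\emph{The isomorphism.} Write $C\cong\mathrm{colim}_{n}C_{n}$ with $C_{n}$ finitely-presented and $C/C_{\mathrm{t}}\cong\mathrm{colim}_{n}D_{n}$ with $D_{n}$ finite flat; then $\mathrm{Hom}(C_{n},A)$ and $\mathrm{Hom}(D_{n},A)$ are countable, so Theorem~\ref{Theorem:phantom-Ext} yields $\mathrm{Ph}^{\alpha}\mathrm{Ext}(C,A)=s_{\alpha}(\mathrm{Ext}^{1}(C,A))$ and $\mathrm{Ph}^{\alpha}\mathrm{Ext}(C/C_{\mathrm{t}},A)=s_{\alpha}(\mathrm{Ext}^{1}(C/C_{\mathrm{t}},A))$, the Solecki submodules being taken with respect to the module-with-a-Polish-cover structure. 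By Lemma~\ref{Lemma:zero-closure} the map $q^{\ast}\colon\mathrm{Ext}^{1}(C/C_{\mathrm{t}},A)\to\mathrm{Ext}^{1}(C,A)$ induced by the quotient $q\colon C\to C/C_{\mathrm{t}}$ is injective with image $\mathrm{PExt}(C,A)=s_{0}(\mathrm{Ext}^{1}(C,A))$, so $q^{\ast}$ restricts to a Borel isomorphism onto $\mathrm{PExt}(C,A)$. Since $s_{\alpha}$ is characterized intrinsically as the smallest $\boldsymbol{\Pi}^{0}_{1+\alpha+1}$ submodule (\cite[Theorem 5.4]{lupini_looking_2024}), Borel isomorphisms carry Solecki submodules onto Solecki submodules, and from the recursion defining $s_{\alpha}$ one has $s_{\alpha}(s_{0}(G))=s_{\alpha}(G)$ for every $G$; hence $q^{\ast}\bigl(s_{\alpha}(\mathrm{Ext}^{1}(C/C_{\mathrm{t}},A))\bigr)=s_{\alpha}(\mathrm{PExt}(C,A))=s_{\alpha}(\mathrm{Ext}^{1}(C,A))$. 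Thus the restriction of $q^{\ast}$ to $\mathrm{Ph}^{\alpha}\mathrm{Ext}(C/C_{\mathrm{t}},A)$ is a bijection onto $\mathrm{Ph}^{\alpha}\mathrm{Ext}(C,A)$, hence an isomorphism of modules with a Polish cover by \cite[Remark 3.10]{lupini_looking_2024}.

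\emph{The complexity statement.} First I would reduce to the flat case. By Theorem~\ref{Theorem:complexity} the complexity class of $\{0\}$ in a module with a pro-countable Polish cover $G$ depends only on its Solecki length and on whether $G$ is plain, and these two invariants agree for $G$ and for $s_{0}(G)=\overline{\{0\}}^{G}$. Combined with the Borel isomorphism $s_{0}(\mathrm{Ext}^{1}(C,A))=\mathrm{PExt}(C,A)\cong\mathrm{Ext}^{1}(C/C_{\mathrm{t}},A)$ of Lemma~\ref{Lemma:zero-closure}, this shows that $\{0\}$ has the same complexity class in $\mathrm{Ext}^{1}(C,A)$ as in $\mathrm{Ext}^{1}(C/C_{\mathrm{t}},A)$, and likewise with $A$ replaced by the countably-presented flat module $A^{(\omega)}$. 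So we may assume $C$ is flat, say $C\cong\mathrm{colim}_{n}D_{n}$ with $D_{n}$ finite flat. Then $\mathrm{Ext}^{1}(C,A)\cong\mathrm{lim}^{1}_{n}\mathrm{Hom}(D_{n},A)$, and since each $D_{n}$ is finitely generated, $\mathrm{Hom}(D_{n},A^{(\omega)})\cong\mathrm{Hom}(D_{n},A)^{(\omega)}$ with the bonding maps acting coordinatewise, so $\mathrm{Ext}^{1}(C,A^{(\omega)})\cong\mathrm{lim}^{1}\boldsymbol{H}^{\oplus\omega}$ where $\boldsymbol{H}=(\mathrm{Hom}(D_{n},A))_{n}$. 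It remains to compare $\{0\}$ in $\mathrm{lim}^{1}\boldsymbol{H}$ and in $\mathrm{lim}^{1}\boldsymbol{H}^{\oplus\omega}$. The tower $\boldsymbol{H}$ need not be reduced, so I would pass to $\boldsymbol{K}:=\boldsymbol{H}/\boldsymbol{H}_{\infty}$: the quotient induces $\mathrm{lim}^{1}\boldsymbol{H}\cong\mathrm{lim}^{1}\boldsymbol{K}$, and applying $(-)^{\oplus\omega}$ to $0\to\boldsymbol{H}_{\infty}\to\boldsymbol{H}\to\boldsymbol{K}\to0$, noting that $\boldsymbol{H}_{\infty}^{\oplus\omega}$ still satisfies the Mittag--Leffler condition so that $\mathrm{lim}^{1}\boldsymbol{H}_{\infty}^{\oplus\omega}=0$, the six-term exact sequence gives $\mathrm{lim}^{1}\boldsymbol{H}^{\oplus\omega}\cong\mathrm{lim}^{1}\boldsymbol{K}^{\oplus\omega}$. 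Since $\boldsymbol{K}$ is reduced, Corollary~\ref{Corollary:sum-tower} gives that $\{0\}$ has the same complexity class in $\mathrm{lim}^{1}\boldsymbol{K}$ and in $\mathrm{lim}^{1}\boldsymbol{K}^{\oplus\omega}$. Chaining the (complexity-preserving) isomorphisms above completes the proof.

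The main difficulty is essentially bookkeeping rather than a single deep step: one must make sure that the identification of $\mathrm{Ph}^{\alpha}\mathrm{Ext}$ with $s_{\alpha}$ of $\mathrm{Ext}^{1}$ is compatible with the morphism induced by $C\to C/C_{\mathrm{t}}$, and, in the complexity statement, that passing to the reduction is unaffected by the $\omega$-fold direct sum (equivalently, that $(\boldsymbol{H}^{\oplus\omega})_{\infty}=(\boldsymbol{H}_{\infty})^{\oplus\omega}$, which follows because the bonding maps act coordinatewise).
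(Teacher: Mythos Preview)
Your proof is correct and follows essentially the same approach as the paper: both parts rest on Lemma~\ref{Lemma:zero-closure} together with the identification $\mathrm{Ph}^{\alpha}\mathrm{Ext}=s_{\alpha}\mathrm{Ext}$ and Corollary~\ref{Corollary:sum-tower}. Your write-up is simply more explicit than the paper's (which declares the first assertion ``immediate'' and applies Corollary~\ref{Corollary:sum-tower} without explicitly passing to the reduced tower), and the only cosmetic difference is that the paper works directly with finite submodules $C_n\subseteq C$ rather than first reducing to $C/C_{\mathrm t}$.
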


\begin{proof}
The first assertion is an immediate consequence of Lemma \ref%
{Lemma:zero-closure}.

Let $\left( C_{n}\right) $ be an increasing sequence of finite submodules of 
$C$ with union equal to $C$. Then we have that $\mathrm{PExt}\left(
C,A\right) \cong \mathrm{lim}_{n}^{1}\mathrm{Hom}\left( C_{n},A\right) $ is
the closure of $\left\{ 0\right\} $ in $\mathrm{Ext}\left( C,A\right) $.
Thus, the complexity class of $\left\{ 0\right\} $ in $\mathrm{Ext}%
^{1}\left( C,A\right) $ is the same as the complexity class of $\left\{
0\right\} $ in $\mathrm{lim}_{n}^{1}\mathrm{Hom}\left( C_{n},A\right) $. The
same holds replacing $A$ with $A^{\left( \omega \right) }$. Thus, the
conclusion follows from Corollary \ref{Corollary:sum-tower}.
\end{proof}

\begin{corollary}
\label{Corollary:Ext-projective}Suppose that $C$ is a countably-presented
module. If $\mathrm{Ext}\left( C,R\right) $ has (plain) Solecki length at
most $\alpha $, then the same holds for $\mathrm{Ext}\left( C,P\right) $ for
any countable projective nontrivial module.
\end{corollary}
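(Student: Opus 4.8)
The plan is to reduce the statement to the case $P=R^{(\omega)}$ of a countably generated free module, where Corollary~\ref{Corollary:Ext-sum} applies, and then to observe that a retract inherits any bound on Solecki length (including plainness). The genuine input is already packaged in Corollary~\ref{Corollary:Ext-sum}; what remains is functorial bookkeeping.

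First I would reduce to the free case. Since $P$ is countable it is countably generated, so there is a surjection $R^{(\omega)}\twoheadrightarrow P$; as $P$ is projective this splits, giving a countably presented projective module $Q$ with $R^{(\omega)}\cong P\oplus Q$. The functor $\mathrm{Ext}^1(C,-)\colon\mathbf{Mod}_{\aleph_0}(R)\to\mathrm{LH}(\boldsymbol\Pi(\mathbf{Mod}_{\aleph_0}(R)))$ is additive, hence preserves binary biproducts, so $\mathrm{Ext}^1(C,R^{(\omega)})\cong\mathrm{Ext}^1(C,P)\oplus\mathrm{Ext}^1(C,Q)$. Thus $\mathrm{Ext}^1(C,P)$ is a retract of $\mathrm{Ext}^1(C,R^{(\omega)})$ in the category of modules with a pro-countable Polish cover: the associated inclusion $\iota$ and projection $\pi$ are Borel homomorphisms with $\pi\iota=\mathrm{id}$, and after replacing $\mathrm{Ext}^1(C,P)$ by the image of $\iota$ (which is again a submodule with a pro-countable Polish cover) we may take $\iota$ to be an actual inclusion of submodules.

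Next I would invoke Corollary~\ref{Corollary:Ext-sum} with $A=R$, which is flat: $\{0\}$ has the same complexity class in $\mathrm{Ext}^1(C,R)$ as in $\mathrm{Ext}^1(C,R^{(\omega)})$. By Theorem~\ref{Theorem:complexity} together with Corollary~\ref{Corollary:complexity}, for any module $G$ with a pro-countable Polish cover the complexity class of $\{0\}$ in $G$ equals $\Gamma_\beta$ or $\Gamma_\beta^{\mathrm{plain}}$ for a unique $\beta$, and this class records exactly the Solecki length of $G$ and whether $G$ is plain. Hence $\mathrm{Ext}^1(C,R^{(\omega)})$ has (plain) Solecki length at most $\alpha$ precisely when $\mathrm{Ext}^1(C,R)$ does; by hypothesis, it does.

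Finally I would show that a retract $H\subseteq G$ (via Borel homomorphisms $\iota\colon H\hookrightarrow G$, $\pi\colon G\to H$ with $\pi\iota=\mathrm{id}_H$) of a module $G$ with a pro-countable Polish cover inherits bounds on (plain) Solecki length. By the preservation of complexity classes under Borel homomorphisms recalled in Section~\ref{Section:modules-polish-cover}, $\iota^{-1}(s_\beta(G))$ is a $\boldsymbol\Pi^0_{1+\beta+1}$ submodule with a pro-countable Polish cover of $H$ for each $\beta<\omega_1$; since $s_\beta(H)$ is the smallest such (Section~\ref{Section:solecki-submodules}), $s_\beta(H)\subseteq\iota^{-1}(s_\beta(G))$, i.e.\ $s_\beta(H)\subseteq s_\beta(G)$ inside $G$. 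So $s_\alpha(G)=0$ forces $s_\alpha(H)=0$. If moreover $\alpha$ is a successor and $\{0\}\in\boldsymbol\Sigma_2^0(s_{\alpha-1}(G))$, then $s_{\alpha-1}(H)$ is a submodule with a pro-countable Polish cover of $s_{\alpha-1}(G)$, and the same preservation property with $\Gamma=\boldsymbol\Sigma_2^0$ (applied to the inclusion $s_{\alpha-1}(H)\hookrightarrow s_{\alpha-1}(G)$) gives $\{0\}\in\boldsymbol\Sigma_2^0(s_{\alpha-1}(H))$; hence $H$ has plain Solecki length at most $\alpha$. Taking $G=\mathrm{Ext}^1(C,R^{(\omega)})$ and $H=\mathrm{Ext}^1(C,P)$ finishes the proof. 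The only delicate point is this last step, where one must be careful that passing to a retract transports not merely the vanishing of $s_\alpha$ but also the $\boldsymbol\Sigma_2^0$-ness of $\{0\}$ in $s_{\alpha-1}$ that governs plainness; both follow from functoriality of the Solecki submodules and the behaviour of complexity classes under preimages established earlier.
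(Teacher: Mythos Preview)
Your proof is correct and follows essentially the same approach as the paper: apply Corollary~\ref{Corollary:Ext-sum} to pass from $R$ to $R^{(\omega)}$, then use that $P$ is a direct summand of $R^{(\omega)}$ to conclude. The paper's proof is terser, simply asserting the direct-summand step, whereas you spell out carefully why a retract inherits both the bound on Solecki length and plainness via the complexity-class preservation properties of Section~\ref{Section:modules-polish-cover}.
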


\begin{proof}
By Corollary \ref{Corollary:Ext-sum}, if $\mathrm{Ext}\left( C,R\right) $
has (plain) Solecki length at most $\alpha $, then the same holds for $%
\mathrm{Ext}\left( C,R^{\left( \omega \right) }\right) $. As $P$ is a direct
summand of $R^{\left( \omega \right) }$, the same holds for $\mathrm{Ext}%
\left( C,P\right) $.
\end{proof}

The module with a Polish cover $\mathrm{Ext}^{1}\left( C,A\right) $ is
naturally isomorphic to the module with a Polish cover $\mathrm{Ext}_{%
\mathrm{Yon}}\left( C,A\right) $ of continuous cocycles modulo coboundaries
as defined in Section \ref{Section:cocycles}. The isomorphism is defined by
mapping a given extension to the corresponding cocycle; see \cite[Section
II.2]{gelfand_methods_2003}.

Suppose that $C,A$ are countable flat modules. Let $\left( C_{n}\right)
_{n\in \omega }$ be an increasing sequence of submodules of $C$. For every $%
n\in \omega $, the inclusion map $\iota _{n}:C_{n}\rightarrow C$ induces a
surjective homomorphism $\mathrm{Ext}\left( C,A\right) \rightarrow \mathrm{%
Ext}\left( C_{n},A\right) $. Together, these induce a homomorphism $\zeta :%
\mathrm{Ext}\left( C,A\right) \rightarrow \mathrm{lim}_{n\in \omega }\mathrm{%
Ext}\left( C_{n},A\right) $. Roos' Theorem expresses the kernel of such a
homomorphism as the $\mathrm{lim}^{1}$ of the tower of pro-countable modules 
$\left( \mathrm{Hom}\left( C_{n},A\right) \right) _{n\in \omega }$; see \cite%
[Theorem 6.2]{schochet_pext_2003}.

\begin{theorem}[Roos]
\label{Lemma:Ext-and-lim1}Adopt the notation above. Then we have an exact
sequence 
\begin{equation*}
0\rightarrow \mathrm{lim}_{n}^{1}\mathrm{\mathrm{Hom}}\left( C_{n},A\right)
\rightarrow \mathrm{Ext}\left( C,A\right) \rightarrow \mathrm{lim}_{n}%
\mathrm{Ext}\left( C_{n},A\right) \rightarrow 0
\end{equation*}%
in $\mathrm{LH}\left( \mathbf{\Pi }\left( \mathbf{Mod}\left( R\right)
\right) \right) $.
\end{theorem}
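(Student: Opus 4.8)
The plan is to identify this with the Milnor exact sequence of the mapping telescope of the inductive system $(C_{n})$, but computed inside the hereditary quasi-abelian category $\mathbf{Flat}(R)$, so that it automatically lands in $\mathrm{LH}(\boldsymbol{\Pi}(\mathbf{Mod}(R)))$; this is the enriched refinement of the classical statement quoted as \cite[Theorem 6.2]{schochet_pext_2003}. We adopt the notation of the statement; in particular, as is implicit in the term $\mathrm{lim}_{n}\mathrm{Ext}(C_{n},A)$, we assume $C=\bigcup_{n}C_{n}$, and write $\phi _{n}:C_{n}\rightarrow C_{n+1}$ for the inclusions.

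First I would record the telescope short exact sequence. Let $\iota _{n}:C_{n}\rightarrow \bigoplus_{m}C_{m}$ be the coprojections, let $\psi :\bigoplus_{n}C_{n}\rightarrow \bigoplus_{n}C_{n}$ be determined by $\psi \iota _{n}=\iota _{n}-\iota _{n+1}\phi _{n}$, and let $\pi :\bigoplus_{n}C_{n}\rightarrow C$ be determined by $\pi \iota _{n}$ equal to the inclusion $C_{n}\rightarrow C$. Then
\begin{equation*}
0\rightarrow \textstyle\bigoplus_{n}C_{n}\overset{\psi }{\rightarrow }\textstyle\bigoplus_{n}C_{n}\overset{\pi }{\rightarrow }C\rightarrow 0
\end{equation*}
is exact: $\pi $ is surjective since $C=\bigcup_{n}C_{n}$, $\psi $ is injective, and $\mathrm{Ker}\,\pi =\mathrm{Ran}\,\psi $ by the standard telescope computation. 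Since $C$ and $\bigoplus_{n}C_{n}$ are countable flat modules, Lemma~\ref{Lemma:pure-exact-extension} shows the sequence is pure, hence it is a short exact sequence in the quasi-abelian category $\mathbf{Flat}(R)$.

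Next I would apply the contravariant derived functor $\mathrm{RHom}^{\bullet }(-,A)$ of Section~\ref{Section:Ext}. Since $\mathbf{Flat}(R)$ is hereditary with enough projectives, the telescope sequence gives a long exact $\mathrm{Ext}$-sequence in $\mathrm{LH}(\boldsymbol{\Pi}(\mathbf{Mod}_{\aleph _{0}}(R)))$ — equivalently, via Lemma~\ref{Lemma:thick-left-heart}, in $\mathrm{LH}(\boldsymbol{\Pi}(\mathbf{Mod}(R)))$ — namely
\begin{equation*}
0\rightarrow \mathrm{Hom}(C,A)\rightarrow \mathrm{Hom}(\textstyle\bigoplus_{n}C_{n},A)\overset{\psi ^{\ast }}{\rightarrow }\mathrm{Hom}(\textstyle\bigoplus_{n}C_{n},A)\overset{\partial }{\rightarrow }\mathrm{Ext}(C,A)\overset{\pi ^{\ast }}{\rightarrow }\mathrm{Ext}(\textstyle\bigoplus_{n}C_{n},A)\overset{\psi ^{\ast }}{\rightarrow }\mathrm{Ext}(\textstyle\bigoplus_{n}C_{n},A).
\end{equation*}
I then make the identifications $\mathrm{Hom}(\bigoplus_{n}C_{n},A)\cong \prod_{n}\mathrm{Hom}(C_{n},A)$ and $\mathrm{Ext}(\bigoplus_{n}C_{n},A)\cong \prod_{n}\mathrm{Ext}(C_{n},A)$ in $\mathrm{LH}(\boldsymbol{\Pi}(\mathbf{Mod}(R)))$: the first is immediate from the universal property of the coproduct, the product topology being the topology of pointwise convergence; the second follows by choosing a two-term projective resolution $0\rightarrow Q_{n}\rightarrow P_{n}\rightarrow C_{n}\rightarrow 0$ in $\mathbf{Flat}(R)$ (whose projectives are the countable projective modules, see Section~\ref{Section:Ext} and Lemma~\ref{Lemma:projective-flat}), taking the coproduct of these resolutions — which is a projective resolution of $\bigoplus_{n}C_{n}$ — and using that the countable products involved are exact in $\boldsymbol{\Pi}(\mathbf{Mod}(R))$, so that cohomology commutes with them. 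Under these identifications $\psi ^{\ast }$ becomes, on both factors, the map $(x_{n})_{n}\mapsto (x_{n}-p^{(n,n+1)}(x_{n+1}))_{n}$ with $p^{(n,n+1)}$ restriction along $\phi _{n}$, i.e.\ precisely the morphism of Subsection~\ref{Subsection:towers} whose kernel and cokernel compute $\mathrm{lim}$ and $\mathrm{lim}^{1}$; hence $\mathrm{Ran}\,\partial \cong \mathrm{Coker}\,\psi ^{\ast }_{\mathrm{Hom}}=\mathrm{lim}^{1}_{n}\mathrm{Hom}(C_{n},A)$ and $\mathrm{Ker}\,\psi ^{\ast }_{\mathrm{Ext}}=\mathrm{lim}_{n}\mathrm{Ext}(C_{n},A)$.

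Reading off the long exact sequence finishes the argument: exactness forces $\partial $ to factor through an injection $\mathrm{lim}^{1}_{n}\mathrm{Hom}(C_{n},A)\rightarrow \mathrm{Ext}(C,A)$ with image $\mathrm{Ker}\,\pi ^{\ast }$, and $\mathrm{Ran}\,\pi ^{\ast }=\mathrm{Ker}\,\psi ^{\ast }_{\mathrm{Ext}}=\mathrm{lim}_{n}\mathrm{Ext}(C_{n},A)$; since $\pi \iota _{n}$ is the inclusion $C_{n}\rightarrow C$, the composite of $\pi ^{\ast }$ with the $n$-th projection is $\mathrm{Ext}(\iota _{n},A)$, so $\pi ^{\ast }$ corestricts to the map $\zeta $ of the statement, which is therefore a strict epimorphism onto $\mathrm{lim}_{n}\mathrm{Ext}(C_{n},A)$ with kernel $\mathrm{lim}^{1}_{n}\mathrm{Hom}(C_{n},A)$ — the asserted short exact sequence. (Consistently, $\mathrm{Ext}^{2}(C,A)=\mathrm{PExt}^{2}(C,A)=0$ as $C$ is flat, so the last $\psi ^{\ast }$ is onto and $\mathrm{lim}^{1}_{n}\mathrm{Ext}(C_{n},A)=0$, the $\mathrm{Ext}$-tower being essentially epimorphic.) The step demanding the most care is the identification $\mathrm{Ext}(\bigoplus_{n}C_{n},A)\cong \prod_{n}\mathrm{Ext}(C_{n},A)$ together with the compatibility of $\psi ^{\ast }$ with the telescope map: this must be carried out at the level of the left heart, so one has to keep track of the pro-countable Polish structure and assemble the facts that the coproduct in $\mathbf{Flat}(R)$ is the usual direct sum, that coproducts of projectives are projective, and that cohomology commutes with countable products in $\mathrm{LH}(\boldsymbol{\Pi}(\mathbf{Mod}(R)))$ — all consequences of the structure theory of Section~\ref{Section:pro-countable} (notably Theorem~\ref{Theorem:hd-2}) and Lemma~\ref{Lemma:thick-left-heart}.
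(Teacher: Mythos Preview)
The paper does not prove this theorem: it is stated with the attribution ``Roos'' and a pointer to \cite[Theorem 6.2]{schochet_pext_2003}, so there is no proof to compare against. Your argument is correct and is exactly the standard Milnor--telescope proof of Roos' sequence, carried out so that every step lives in $\mathrm{LH}(\boldsymbol{\Pi}(\mathbf{Mod}(R)))$: the telescope presentation of $C=\bigcup_{n}C_{n}$ gives a short exact sequence in $\mathbf{Flat}(R)$ by Lemma~\ref{Lemma:pure-exact-extension}, the associated long exact $\mathrm{Ext}$-sequence lands in the left heart, and the identification of $\psi^{\ast}$ on $\prod_{n}\mathrm{Hom}(C_{n},A)$ and $\prod_{n}\mathrm{Ext}(C_{n},A)$ with the shift map of Subsection~\ref{Subsection:towers} yields the claimed kernel and cokernel. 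Your explicit remark that $C=\bigcup_{n}C_{n}$ is tacitly assumed is well taken --- the paper's setup sentence does not say this, but the theorem is false without it. The one place to be slightly more careful is the enriched product identification $\mathrm{Ext}(\bigoplus_{n}C_{n},A)\cong\prod_{n}\mathrm{Ext}(C_{n},A)$: your sketch via termwise projective resolutions and exactness of countable products in $\boldsymbol{\Pi}(\mathbf{Mod}(R))$ is right, but you should also note that the resulting comparison map is the one induced by the coprojections $\iota_{n}$, so that it really does intertwine $\psi^{\ast}$ with the shift; this is routine but is the point where the enriched statement goes beyond the classical one.
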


\subsection{Pure extensions}

In this section we assume that $R$ is a PID. For an element $x$ of a module $%
M$ we let its order $\mathfrak{o}\left( x\right) $ to be the kernel of $%
R\rightarrow M$, $\lambda \mapsto \lambda x$. If $N$ is a submodule of $M$
then we define $\mathfrak{o}\left( x\right) $ $\mathrm{\mathrm{mod}}N$ to be
the order of $x+N$ in $M/N$. The notion of pure extension can be phrased in
terms of liftings of elements of given order. Thus, an extension%
\begin{equation*}
A\rightarrow X\overset{\pi }{\rightarrow }C
\end{equation*}%
is \emph{pure} if for every $c\in C$ there exist a \emph{lift} $\hat{c}\in X$
of $c$ (so that $\pi (\hat{c})=c$) such that $\mathfrak{o}(\hat{c})=%
\mathfrak{o}(c)$. The natural \textquotedblleft order one\textquotedblright\
version of this consists in requiring that for $c_{0}\in C$, there exists a
lift $\hat{c}_{0}\in X$ of $c_{0}$ such that $\mathfrak{o}(\hat{c}_{0})=%
\mathfrak{o}(c_{0})$ and for every $c_{1}\in C$ there exists a lift $\hat{c}%
_{1}\in X$ of $c_{1}$ such that $\mathfrak{o}\left( \hat{c}_{1}\right) \ 
\mathrm{mod}\langle \hat{c}_{0}\rangle =\mathfrak{o}\left( c_{1}\right) \ 
\mathrm{mod}\langle c_{0}\rangle $. This motivates the following:

\begin{definition}
Let $R$ be a countable PID. Consider an extension 
\begin{equation*}
\mathfrak{S}:A\rightarrow X\rightarrow C\text{.}
\end{equation*}%
Say that a triple 
\begin{equation*}
(c,C_{0},\hat{C}_{0})
\end{equation*}%
with $c\in C$, $C_{0}$ submodule of $C$, and $\hat{C}_{0}$ submodule of $X$
with $\pi \hat{C}_{0}=C_{0}$ has:

\begin{itemize}
\item a sharp lift of order $0$ if there exists $\hat{c}\in X$ with $\pi 
\hat{c}=c$ and $\mathfrak{o}\left( \hat{c}\right) \ \mathrm{mod}\hat{C}_{0}=%
\mathfrak{o}\left( c\right) \ \mathrm{mod}C_{0}$, and

\item a sharp lift of order $\alpha $ if for every $\beta <\alpha $ there
exists $\hat{c}\in X$ with $\pi \hat{c}=c$ and $\mathfrak{o}\left( \hat{c}%
\right) \ \mathrm{mod}\hat{C}_{0}=\mathfrak{o}\left( x\right) \ \mathrm{mod}%
C_{0}$ and for every $d\in C$, 
\begin{equation*}
(d,\langle C_{0},c\rangle ,\langle \hat{C}_{0},\hat{c}\rangle )
\end{equation*}
has a sharp lift of order $\beta $.
\end{itemize}

The extension $\mathfrak{S}$ is \emph{pure} of order $\alpha $ if and only
if for every $c\in C$ and $r\in R$, the triple $\left( c,0,0\right) $ has a
sharp lift of order $\alpha $.
\end{definition}

\begin{proposition}
Let $R$ be a countable PID. Consider an extension%
\begin{equation*}
\mathfrak{S}:A\rightarrow X\rightarrow C
\end{equation*}%
of $C$ by $A$ as above. Then the following assertions are equivalent:

\begin{enumerate}
\item $\mathfrak{S}$ is phantom of order $\alpha $;

\item $\mathfrak{S}$ is pure of order $\alpha $.
\end{enumerate}
\end{proposition}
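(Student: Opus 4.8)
The plan is to prove the equivalence by induction on $\alpha$, running the recursive definition of $\mathrm{Ph}^{\alpha}\mathrm{Ext}(C,A)$ and that of ``pure of order $\alpha$'' in parallel. The base case $\alpha=0$ is the Pr\"ufer purity criterion: an extension $\mathfrak{S}\colon A\to X\xrightarrow{\pi}C$ lies in $\mathrm{Ph}^{0}\mathrm{Ext}(C,A)=\mathrm{PExt}(C,A)$ exactly when $A$ is pure in $X$, i.e.\ $rX\cap A=rA$ for all $r\in R$; over a PID this holds if and only if every $c\in C$ admits a lift $\hat{c}\in X$ with $\mathfrak{o}(\hat{c})=\mathfrak{o}(c)$, which is exactly ``$\mathfrak{S}$ pure of order $0$'' (the parameter $r$ in that definition is inert). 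In one direction, if $\mathfrak{o}(c)=\langle r\rangle$ and $x$ lifts $c$, then $rx\in A$, so $rx=ra$ with $a\in A$ and $\hat{c}:=x-a$ is a sharp lift; conversely, if $a=rx\in A$, set $c=\pi(x)$, write $\mathfrak{o}(c)=\langle s\rangle$ with $s\mid r$, take a sharp lift $\hat{c}$ (so $s\hat{c}=0$), and then $x-\hat{c}\in A$ gives $rx\in rA$.

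For the inductive step I record two elementary facts. First, a ``context'' in the sharp-lift recursion is absorbed by a quotient extension: if $\hat{c}\in X$ lifts $c$ with $\mathfrak{o}(\hat{c})=\mathfrak{o}(c)$, then $\langle\hat{c}\rangle\cap A=0$, so $\mathfrak{S}/\hat{c}\colon A\to X/\langle\hat{c}\rangle\to C/\langle c\rangle$ is again an extension of a countable module by the countable flat module $A$, and for every $d\in C$ and every $\beta$ the triple $(d,\langle c\rangle,\langle\hat{c}\rangle)$ has a sharp lift of order $\beta$ in $\mathfrak{S}$ iff $(d+\langle c\rangle,0,0)$ has a sharp lift of order $\beta$ in $\mathfrak{S}/\hat{c}$. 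Together with monotonicity of ``pure of order $\cdot$'' (immediate: a sharp lift of order $\alpha$ is one of every smaller order, and in the successor unfolding the lifting element may be taken uniformly at order $\alpha$), this shows that $\mathfrak{S}$ is pure of order $\alpha+1$ iff for every $c\in C$ there is a lift $\hat{c}$ with $\mathfrak{o}(\hat{c})=\mathfrak{o}(c)$ and $\mathfrak{S}/\hat{c}$ pure of order $\alpha$. Second, forming $\mathfrak{S}/\hat{c}$ inverts pullback along $q_{c}\colon C\to C/\langle c\rangle$: an extension $\mathfrak{T}$ of $C/\langle c\rangle$ by $A$ has $q_{c}^{*}\mathfrak{T}\cong\mathfrak{S}$ iff there is a lift $\hat{c}$ of $c$ with $\mathfrak{o}(\hat{c})=\mathfrak{o}(c)$ and $\mathfrak{S}/\hat{c}\cong\mathfrak{T}$; in one direction take $\hat{c}=(c,0)$ in the fibre product $C\times_{C/\langle c\rangle}Y$ presenting $q_{c}^{*}\mathfrak{T}$, and in the other check directly that $x\mapsto(\pi(x),x+\langle\hat{c}\rangle)$ is an isomorphism $X\xrightarrow{\ \sim\ }C\times_{C/\langle c\rangle}(X/\langle\hat{c}\rangle)$.

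Granting the reformulation
\[
\mathrm{Ph}^{\alpha+1}\mathrm{Ext}(C,A)=\bigcap_{c\in C}\mathrm{Ran}\!\left(\mathrm{Ph}^{\alpha}\mathrm{Ext}(C/\langle c\rangle,A)\xrightarrow{\ q_{c}^{*}\ }\mathrm{Ext}(C,A)\right),
\]
the successor step becomes formal: $\mathfrak{S}\in\mathrm{Ph}^{\alpha+1}\mathrm{Ext}(C,A)$ iff for every $c$ there is $\mathfrak{T}\in\mathrm{Ph}^{\alpha}\mathrm{Ext}(C/\langle c\rangle,A)$ with $q_{c}^{*}\mathfrak{T}\cong\mathfrak{S}$, iff (by the pullback/quotient correspondence) for every $c$ there is a lift $\hat{c}$ with $\mathfrak{o}(\hat{c})=\mathfrak{o}(c)$ and $\mathfrak{S}/\hat{c}\in\mathrm{Ph}^{\alpha}\mathrm{Ext}(C/\langle c\rangle,A)$, iff (by the inductive hypothesis applied to $\mathfrak{S}/\hat{c}$) for every $c$ there is such a lift with $\mathfrak{S}/\hat{c}$ pure of order $\alpha$, iff $\mathfrak{S}$ is pure of order $\alpha+1$. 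The limit stage is immediate: $\mathrm{Ph}^{\lambda}\mathrm{Ext}(C,A)=\bigcap_{\beta<\lambda}\mathrm{Ph}^{\beta}\mathrm{Ext}(C,A)$, and unwinding the recursion (using that $\lambda$ is a limit) shows $\mathfrak{S}$ is pure of order $\lambda$ iff it is pure of order $\beta$ for every $\beta<\lambda$, so the inductive hypothesis closes this case.

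The main obstacle is the displayed reformulation. A priori $\mathrm{Ph}^{\alpha+1}\mathrm{Ext}(C,A)$ is an intersection of ranges over \emph{all} distinguished triangles $K\to C\to C/K$ with $K$ compact --- here the finitely presented modules --- and one must cut this down to the cyclic submodules $\langle c\rangle\subseteq C$. I would first replace an arbitrary map $K\to C$ from a finitely presented $K$ by an honest submodule inclusion, using the octahedral axiom and the fact that over a coherent ring --- hence over any PID --- submodules of finitely presented modules are finitely presented; then d\'evissage a finitely presented submodule of $C$ into a chain of one-generator extensions (over a PID a finitely generated submodule is generated by finitely many elements adjoined one at a time with cyclic successive quotients) and invoke transitivity of pullback of extensions, so that the nested intersection over the chain collapses to the cyclic case. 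This is mirrored precisely by the way the sharp-lift recursion enlarges its submodule one generator at a time with the order dropping at each stage, which is what makes the two recursions match without loss. Alternatively one may route the reformulation through the identification $\mathrm{Ph}^{\alpha}\mathrm{Ext}(C,A)\cong s_{\alpha}\bigl(\mathrm{lim}^{1}(\mathrm{Hom}(C_{n},A))_{n}\bigr)$ provided by Theorem~\ref{Theorem:phantom-Ext} and Theorem~\ref{Theorem:Solecki-lim1}, reading off the effect of the quotients $C\to C/\langle c\rangle$ on the tower and its derived towers via Lemma~\ref{Lemma:lim1-derived-tower}.
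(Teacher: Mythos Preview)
Your approach is essentially the same as the paper's: both proceed by induction on $\alpha$, translate between the quotient extension $\mathfrak{S}/\hat{c}$ and the pullback along $C\to C/\langle c\rangle$, and invoke the inductive hypothesis on the quotient. The paper's proof is in fact terser than yours and simply asserts, in the pure$\Rightarrow$phantom direction, that it suffices to check the condition ``$\mathfrak{S}\in\mathrm{Ran}\bigl(\mathrm{Ph}^{\beta}\mathrm{Ext}(C/\langle c\rangle,A)\to\mathrm{Ext}(C,A)\bigr)$'' for every $c$, without addressing why the cyclic quotients suffice; you are more careful in flagging this as the crux.

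One caution about your d\'evissage sketch: the argument ``adjoin generators one at a time and the order drops at each stage, so the two recursions match without loss'' does not close the gap as stated. If $C'=\langle c_1,\dots,c_n\rangle$ and you start from $\mathfrak{S}\in\mathrm{Ran}\bigl(\mathrm{Ph}^{\gamma}\mathrm{Ext}(C/\langle c_1\rangle,A)\bigr)$, then iterating through $c_2,\dots,c_n$ via the inductive hypothesis only yields $\mathfrak{S}\in\mathrm{Ran}\bigl(\mathrm{Ph}^{\gamma-n+1}\mathrm{Ext}(C/C',A)\bigr)$, not $\mathrm{Ph}^{\gamma}$; the order genuinely drops on the phantom side, but the target order $\gamma$ in the definition of $\mathrm{Ph}^{\gamma+1}$ does not. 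Your alternative route through Theorem~\ref{Theorem:phantom-Ext} and the derived-tower description of the Solecki submodules is the right way to fill this: once $\mathrm{Ph}^{\alpha}\mathrm{Ext}(C,A)$ is identified with $s_{\alpha}\mathrm{Ext}(C,A)$, the reduction to a cofinal filtration $(C_n)$ with $C_{n+1}/C_n$ cyclic (available over a PID) together with Lemma~\ref{Lemma:lim1-derived-tower} gives the needed reformulation.
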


\begin{proof}
By recursion of $\alpha $. For $\alpha =0$ this is remarked above. Suppose
that the conclusion holds for $\beta <\alpha $.

(1)$\Rightarrow $(2) Assume that $\mathfrak{S}$ is pure of order $\alpha $.
We claim that it is phantom of order $\alpha $. Fix $\beta <\alpha $. We
need to prove that $\mathfrak{S}$ represents an element of%
\begin{equation*}
\mathrm{Ran}\left( \mathrm{Ph}^{\beta }\mathrm{Ext}\left( C/\langle c\rangle
,A\right) \rightarrow \mathrm{Ext}\left( C,A\right) \right) \text{.}
\end{equation*}%
By hypothesis, the triple $\left( c,0,0\right) $ has a sharp lift of order $%
\alpha $. Thus, there exists $\hat{c}\in X$ with $\pi \hat{c}=c$ and $%
\mathfrak{o}\left( \hat{c}\right) =\mathfrak{o}\left( c\right) $ such that
for every $d\in C$, $\left( d,\langle c\rangle ,\langle \hat{c}\rangle
\right) $ has a sharp lift of order $\beta $. Consider the homomorphism $\pi
_{1}:X/\langle \hat{c}\rangle \rightarrow C/\langle c\rangle $ induced by $%
\pi :X\rightarrow C$. As $A\cap \langle \hat{c}\rangle =0$ we have a short
exact sequence%
\begin{equation*}
A\rightarrow X/\langle \hat{c}\rangle \overset{\pi _{1}}{\rightarrow }%
C/\langle c\rangle \text{.}
\end{equation*}%
Such an extension is pure of order $\beta $. Therefore, by the inductive
hypothesis it is phantom of order at $\beta $. As the extension of $C$ by $A$
that it induces via the quotient map $C\rightarrow C/\langle c\rangle $ is
isomorphic to $\mathfrak{S}$, this concludes the proof.

(2)$\Rightarrow $(1) Assume that $\mathfrak{S}$ is phantom of order $\alpha $%
. We claim that it is pure of order $\alpha $. Fix $\beta <\alpha $. Then $%
\mathfrak{S}$ represents an element of%
\begin{equation*}
\mathrm{Ran}\left( \mathrm{Ph}^{\beta }\mathrm{Ext}\left( C/\langle c\rangle
,A\right) \rightarrow \mathrm{Ext}\left( C,A\right) \right) \text{.}
\end{equation*}%
Thus, there exists an extension%
\begin{equation*}
A\rightarrow Y\rightarrow C/\langle c\rangle
\end{equation*}%
that is phantom of order $\beta $ that yields via pullback along the
quotient map $C\rightarrow C/\langle c\rangle $ an extension isomorphic to $%
\mathfrak{S}$. Consider the pullback diagram%
\begin{equation*}
\begin{array}{ccc}
X & \overset{\pi }{\rightarrow } & C \\ 
\rho \downarrow &  & \downarrow \\ 
Y & \rightarrow & C/\langle c\rangle%
\end{array}%
\end{equation*}%
Let $\hat{c}$ be the unique element of $X$ such that $\rho \hat{c}=0$ and $%
\pi \hat{c}=c$. Then we have that $\mathfrak{o}\left( \hat{c}\right) =%
\mathfrak{o}\left( c\right) $. By the inductive hypothesis, the extension%
\begin{equation*}
A\rightarrow Y\rightarrow C/\langle c\rangle
\end{equation*}%
is pure of order $\beta $. Thus, for every $d\in C$, the triple $\left(
d+\langle c\rangle ,0,0\right) $ has a sharp lift of order $\beta $. This
implies that the triple $\left( d,\langle c\rangle ,\langle \hat{c}\rangle
\right) $ has a sharp lift of order $\beta $. This shows that $\left(
c,0,0\right) $ has a sharp lift of order $\alpha $, concluding the proof.
\end{proof}

\subsection{Thin modules}

Let us say that a countable module $M$ is \emph{thin }if its torsion
submodule $M_{\mathrm{t}}$ is finitely-presented. We denote by $\mathbf{Thin}%
\left( R\right) $ the full subcategory of $\mathbf{Mod}\left( R\right) $
consisting of thin modules. Every countable flat module is, in particular,
thin.

\begin{proposition}
\label{Proposition:resolution-thin}Suppose that $X$ is a countable thin
module. Then there exists a short exact sequence%
\begin{equation*}
0\rightarrow A\rightarrow B\rightarrow X\rightarrow 0
\end{equation*}%
where $A$ and $B$ are countable projective modules.
\end{proposition}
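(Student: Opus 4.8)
The plan is to reduce the statement to a statement about the torsion part and the flat quotient separately, and then combine projective resolutions of the two pieces.

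First I would consider the short exact sequence
\begin{equation*}
0\rightarrow X_{\mathrm{t}}\rightarrow X\rightarrow X/X_{\mathrm{t}}\rightarrow 0\text{,}
\end{equation*}
where $X_{\mathrm{t}}$ is the torsion submodule of $X$. Since $X$ is thin, $X_{\mathrm{t}}$ is finitely-presented; and $X/X_{\mathrm{t}}$ is a countable torsion-free module, hence (as $R$ is Pr\"{u}fer) a countable flat module. A countable flat module over a Pr\"{u}fer domain is the colimit of an inductive sequence of finite flat modules, so by Lemma \ref{Lemma:projective-flat} (or its proof) there is a pure short exact sequence $0\rightarrow P'\rightarrow Q'\rightarrow X/X_{\mathrm{t}}\rightarrow 0$ with $P',Q'$ countable projective. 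For the torsion part, since $X_{\mathrm{t}}$ is finitely-presented, there is a short exact sequence $0\rightarrow S\rightarrow F\rightarrow X_{\mathrm{t}}\rightarrow 0$ with $F$ finite free; here $S$ is a finite submodule of a free module, hence finite projective by Proposition \ref{Proposition:characterize-prufer}(4). So both $X_{\mathrm{t}}$ and $X/X_{\mathrm{t}}$ admit length-one projective resolutions.

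Next I would apply the horseshoe lemma to the sequence $0\rightarrow X_{\mathrm{t}}\rightarrow X\rightarrow X/X_{\mathrm{t}}\rightarrow 0$: taking $B:=F\oplus Q'$ and $A:=S\oplus P'$ (as abelian groups, with the middle map assembled in the usual triangular way), one obtains a short exact sequence of complexes
\begin{equation*}
0\rightarrow (0\rightarrow S\rightarrow F)\rightarrow (0\rightarrow A\rightarrow B)\rightarrow (0\rightarrow P'\rightarrow Q')\rightarrow 0
\end{equation*}
whose total homology gives a short exact sequence $0\rightarrow A\rightarrow B\rightarrow X\rightarrow 0$. Since $A=S\oplus P'$ and $B=F\oplus Q'$ are direct sums of countable projectives, they are countable projective, which is exactly what is needed. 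The construction of the map $A\rightarrow B$ and of the surjection $B\rightarrow X$ uses only that $Q'\rightarrow X/X_{\mathrm{t}}$ is surjective and that $F$ is projective (to lift the composite along the quotient), so it is entirely standard.

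The main obstacle I anticipate is not the homological bookkeeping but making sure the torsion-free quotient genuinely admits a projective (not merely pure-projective) length-one resolution: this is where the Pr\"{u}fer hypothesis is essential, and it is supplied by Lemma \ref{Lemma:projective-flat} together with the fact that a countable torsion-free module over a Pr\"{u}fer domain is a colimit of finite projectives, so that the kernel of a projective cover-type map is again pure, hence (being torsion-free) projective by the same lemma. Once that is in hand, everything else is a routine application of the horseshoe lemma, and I would present it as such without grinding through the diagram chase.
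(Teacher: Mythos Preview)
Your proposal is correct and follows essentially the same route as the paper: decompose $X$ via $0\to X_{\mathrm t}\to X\to X/X_{\mathrm t}\to 0$, take length-one projective resolutions of the finitely-presented torsion part and of the countable flat quotient, and splice them via the horseshoe lemma to obtain $0\to S\oplus P'\to F\oplus Q'\to X\to 0$. The paper phrases the splicing as a pullback along $Q'\to X/X_{\mathrm t}$ (which splits since $Q'$ is projective) rather than invoking the horseshoe lemma by name, but the construction and the resulting resolution are identical.
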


\begin{proof}
Consider an extension%
\begin{equation*}
0\rightarrow T\rightarrow X\rightarrow M\rightarrow 0
\end{equation*}%
where $M$ is flat and $T$ is finitely-presented and torsion. Then since $M$
is flat it has a projective resolution%
\begin{equation*}
0\rightarrow Q\rightarrow P\rightarrow M\rightarrow 0\text{.}
\end{equation*}%
Considering a pullback we obtain a commuting diagram%
\begin{equation*}
\begin{array}{ccccc}
T & \rightarrow & Y & \rightarrow & P \\ 
\downarrow &  & \downarrow &  & \downarrow \\ 
T & \rightarrow & X & \rightarrow & M%
\end{array}%
\end{equation*}%
Since $P$ is projective, $Y\cong T\oplus P$. Since $T$ is
finitely-presented, it has a projective resolution%
\begin{equation*}
0\rightarrow S\rightarrow F\rightarrow T\rightarrow 0\text{.}
\end{equation*}%
Thus,%
\begin{equation*}
0\rightarrow S\oplus Q\rightarrow F\oplus P\rightarrow X\rightarrow 0
\end{equation*}%
is as desired.
\end{proof}

\begin{lemma}
\label{Lemma:split-torsion}Let $M$ be a countable flat module and $T$ be a
finitely-presented torsion module. Then $\mathrm{Ext}\left( M,T\right) =0$.
\end{lemma}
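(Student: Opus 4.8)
The plan is to reduce to the case where $M$ has finite rank, and ultimately to the case $M = I$ is a finite ideal, by exploiting the fact that $T$ is finitely presented. First I would write $M \cong \operatorname{colim}_n M_n$ as a colimit of an increasing sequence of finite-rank pure submodules $M_n$ (for instance the pure hulls of finite subsets of $M$), each of which is finitely generated flat, hence projective by Proposition~\ref{Proposition:characterize-prufer}. Roos' Theorem (Theorem~\ref{Lemma:Ext-and-lim1}) then gives a short exact sequence
\begin{equation*}
0 \rightarrow \mathrm{lim}^1_n \mathrm{Hom}(M_n, T) \rightarrow \mathrm{Ext}(M,T) \rightarrow \mathrm{lim}_n \mathrm{Ext}(M_n, T) \rightarrow 0.
\end{equation*}
So it suffices to show two things: that $\mathrm{Ext}(F, T) = 0$ for every finite flat (= finite projective) module $F$, and that the tower $(\mathrm{Hom}(M_n, T))_n$ is pro-zero (equivalently satisfies Mittag--Leffler with all relevant images eventually zero, or more simply that each $\mathrm{Hom}(M_n,T)$ is a \emph{finite} group, so the tower is a tower of finite modules and its $\mathrm{lim}^1$ vanishes by the Mittag--Leffler criterion).

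The first point is routine: if $F$ is finite projective, then $\mathrm{Ext}(F, -) = 0$ on all modules, since $F$ is a projective object of the abelian category $\mathbf{Mod}_{\aleph_0}(R)$ and $\mathrm{Ext} = \mathrm{Ext}^1$ is the derived functor of $\mathrm{Hom}$. Hence $\mathrm{lim}_n \mathrm{Ext}(M_n, T) = 0$ automatically, each term being zero.

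For the second point, I would observe that $\mathrm{Hom}(M_n, T)$ is finite for each $n$. Indeed, $M_n$ is a finite projective module, so $M_n$ is a direct summand of some $R^{d}$; thus $\mathrm{Hom}(M_n, T)$ is a subquotient—in fact a direct summand—of $\mathrm{Hom}(R^d, T) \cong T^{d}$, which is finite because $T$ is finitely presented and torsion, hence a finite-length $R$-module (being a finite direct sum of cyclic torsion modules $L/I$ by Lemma~\ref{Lemma:structure-torsion}, each of which is finite as $R$ is countable and... here I need $R/rR$ finite). \textbf{This is the step I expect to be the main obstacle}: in general a finitely presented torsion module over a countable Pr\"{u}fer domain need not be a \emph{finite set}—e.g.\ $R = \mathbb{Z}$, $T = \mathbb{Z}/2\mathbb{Z}$ is finite, but one should be careful whether "finitely-presented torsion" forces finiteness here. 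However, finiteness of $\mathrm{Hom}(M_n,T)$ as an abelian \emph{group} is not actually needed; what is needed is that the tower $(\mathrm{Hom}(M_n,T))_n$ has trivial $\mathrm{lim}^1$. I would instead argue directly that the tower is \emph{essentially epimorphic} (Mittag--Leffler): applying $\mathrm{Hom}(-,T)$ to the pure short exact sequences $0 \to M_n \to M_{n+1} \to M_{n+1}/M_n \to 0$—pure because the quotient is flat, by Lemma~\ref{Lemma:pure-exact-extension}—and using $\mathrm{Ext}(M_{n+1}/M_n, T)$ together with the purity to control cokernels. Alternatively, and more cleanly: each restriction map $\mathrm{Hom}(M_{n+1}, T) \to \mathrm{Hom}(M_n, T)$ is \emph{surjective}, because any homomorphism $M_n \to T$ extends to $M_{n+1}$ since $\mathrm{Ext}(M_{n+1}/M_n, T) = 0$ (the quotient $M_{n+1}/M_n$ being a finite flat module—wait, it is finitely generated flat, hence projective, so indeed $\mathrm{Ext}(M_{n+1}/M_n, T) = 0$). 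An epimorphic tower has vanishing $\mathrm{lim}^1$ by the Mittag--Leffler criterion recalled in Section~\ref{Section:lim1}. Therefore $\mathrm{lim}^1_n \mathrm{Hom}(M_n, T) = 0$, and from Roos' sequence $\mathrm{Ext}(M, T) = 0$, as desired.

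Let me record the cleaner argument explicitly since it avoids the finiteness issue entirely. Choosing the $M_n$ to be finite-rank pure submodules with $M_n \subseteq M_{n+1}$, each quotient $M_{n+1}/M_n$ is torsion-free (as $M_n$ is pure) and finitely generated, hence projective by Proposition~\ref{Proposition:characterize-prufer}(3); consequently $\mathrm{Ext}(M_{n+1}/M_n, T) = 0$, and the long exact sequence in $\mathrm{Ext}(-,T)$ shows $\mathrm{Hom}(M_{n+1}, T) \twoheadrightarrow \mathrm{Hom}(M_n, T)$. Thus $(\mathrm{Hom}(M_n, T))_n$ is an epimorphic tower, so $\mathrm{lim}^1_n \mathrm{Hom}(M_n, T) = 0$. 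Since also $\mathrm{Ext}(M_n, T) = 0$ for each $n$ (as $M_n$ is projective), Roos' Theorem forces $\mathrm{Ext}(M, T) = 0$.
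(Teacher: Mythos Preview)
Your proof has a genuine gap, and it is not the one you flag. The claim that finite-rank pure submodules of $M$ are finitely generated is false: take $R = \mathbb{Z}$ and $M = \mathbb{Q}$; the pure hull of any nonzero element is all of $\mathbb{Q}$, which has rank $1$ but is not finitely generated. Consequently your $M_n$ need not be projective, so $\mathrm{Ext}(M_n, T) = 0$ is not available. More damagingly, the quotients $M_{n+1}/M_n$ are finite-rank flat but again not finitely generated, hence not projective either, and your ``cleaner argument'' that the tower $(\mathrm{Hom}(M_n, T))_n$ is epimorphic---which relies on $\mathrm{Ext}(M_{n+1}/M_n, T) = 0$---presupposes the lemma for the finite-rank modules $M_{n+1}/M_n$. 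If instead you take \emph{finite} (finitely generated) submodules $M_n$, then each $M_n$ is projective and $\mathrm{Ext}(M_n, T) = 0$, but now the quotients $M_{n+1}/M_n$ may have torsion (e.g.\ $\mathbb{Z} \subset \tfrac{1}{2}\mathbb{Z}$ inside $\mathbb{Q}$ gives $\mathbb{Z}/2\mathbb{Z}$), and $\mathrm{Ext}$ of a torsion module by $T$ need not vanish, so the tower need not be epimorphic.

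The paper's argument avoids filtrations of $M$ entirely. It reduces to cyclic $T = L/I$ and uses the natural isomorphism $\mathrm{Ext}(M, N) \cong \mathrm{Ext}(M \otimes N^{\ast}, R)$ for finite flat $N$. The exact sequence $\mathrm{Ext}(M, I) \to \mathrm{Ext}(M, L) \to \mathrm{Ext}(M, T) \to 0$ becomes $\mathrm{Ext}(M \otimes I^{\ast}, R) \to \mathrm{Ext}(M \otimes L^{\ast}, R) \to \mathrm{Ext}(M, T) \to 0$, and the first map is surjective because it is induced by the \emph{injection} $M \otimes L^{\ast} \hookrightarrow M \otimes I^{\ast}$ of countable flat modules (one checks directly that $L^{\ast} \to I^{\ast}$ is injective, then tensors with the flat module $M$).
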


\begin{proof}
As in Section \ref{Subsection:finiteflat}, for a finite flat module $M$ we
let $M^{\ast }$ be the finite flat module $\mathrm{Hom}\left( M,R\right) $.
For a finite flat module $N$ and a countable flat module $X$, we have
natural isomorphisms%
\begin{equation*}
\mathrm{Hom}\left( X,N\right) \cong \mathrm{Hom}\left( X,\mathrm{Hom}\left(
N^{\ast },R\right) \right) \cong \mathrm{Hom}\left( X\otimes N^{\ast
},R\right)
\end{equation*}%
and hence%
\begin{equation*}
\mathrm{Ext}\left( X,N\right) \cong \mathrm{Ext}\left( X\otimes N^{\ast
},R\right) \text{.}
\end{equation*}%
Since $T$ is a finite direct sum of cyclic finitely-presented torsion
modules, we can assume without loss of generality that $T$ is cyclic. Thus,
we have that $T=L/I$ for some finite ideals $I\subseteq L\subseteq R$.
Notice that the inclusion $I\rightarrow L$ induces a morphism $L^{\ast
}\rightarrow I^{\ast }$ that is injective. Indeed, if $\varphi :L\rightarrow
R$ is a homomorphism such that $\varphi |_{I}=0$, if $a\in I$ then for every 
$x\in L$ we have $a\varphi \left( x\right) =\varphi \left( ax\right) =0$ and
hence $\varphi \left( x\right) =0$. Notice also that $\mathrm{Hom}\left(
I^{\ast }/L^{\ast },R\right) =0$. Indeed, if $\varphi :I^{\ast }\rightarrow
R $ is a homomorphism such that $\varphi |_{L^{\ast }}=0$ then considering $%
a\in I$ such that $\varphi \left( \xi \right) =\xi \left( a\right) $ for $%
\xi \in I^{\ast }$, and considering $\xi \in L^{\ast }$ to be the inclusion $%
L\rightarrow R$, we have that $\xi |_{I}$ is the inclusion $I\rightarrow R$,
and $0=\varphi \left( \xi |_{I}\right) =\xi \left( a\right) =a$. This shows
that $\varphi =0$.

We have an exact sequence%
\begin{equation*}
\mathrm{Ext}\left( M,I\right) \rightarrow \mathrm{Ext}\left( M,L\right)
\rightarrow \mathrm{Ext}\left( M,T\right) \rightarrow 0
\end{equation*}%
which is naturally isomorphic to%
\begin{equation*}
\mathrm{Ext}\left( M\otimes I^{\ast },R\right) \rightarrow \mathrm{Ext}%
\left( M\otimes L^{\ast },R\right) \rightarrow \mathrm{Ext}\left( M,T\right)
\rightarrow 0\text{.}
\end{equation*}%
Since the morphism $L^{\ast }\rightarrow I^{\ast }$ is injective and $M$ is
flat, the morphism $M\otimes L^{\ast }\rightarrow M\otimes I^{\ast }$ is
injective. Hence, it induces a surjective homomorphism%
\begin{equation*}
\mathrm{Ext}\left( M\otimes I^{\ast },R\right) \rightarrow \mathrm{Ext}%
\left( M\otimes L^{\ast },R\right) \text{.}
\end{equation*}%
This shows that $\mathrm{Ext}\left( M,T\right) =0$.
\end{proof}

\begin{corollary}
\label{Corollary:thin}If $X$ is a countable thin module, then $X=T\oplus M$
where $T$ is a finitely-presented torsion module and $M$ is a countable flat
module.
\end{corollary}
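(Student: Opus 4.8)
The plan is to split $X$ along its torsion submodule and then invoke Lemma \ref{Lemma:split-torsion} to see that the resulting extension is trivial. Concretely, I would start from the canonical short exact sequence
\begin{equation*}
0\rightarrow X_{\mathrm{t}}\rightarrow X\rightarrow X/X_{\mathrm{t}}\rightarrow 0\text{,}
\end{equation*}
which exists for any module, and where $X/X_{\mathrm{t}}$ is torsion-free (as recalled in the subsection on some ring terminology). Set $T:=X_{\mathrm{t}}$ and $M:=X/X_{\mathrm{t}}$. Since $X$ is by definition thin, $T$ is a finitely-presented torsion module. Since $X$ is countable, $M$ is countable, and being torsion-free over the Pr\"{u}fer domain $R$ it is flat by Proposition \ref{Proposition:characterize-prufer}; thus $M$ is a countable flat module.

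The next step is to observe that the displayed extension represents an element of $\mathrm{Ext}\left( M,T\right)$. By Lemma \ref{Lemma:split-torsion}, applied with this $M$ and this $T$, we have $\mathrm{Ext}\left( M,T\right) =0$. Hence the extension splits, so there is a retraction $X\rightarrow T$ (equivalently a section $M\rightarrow X$), giving $X\cong T\oplus M$ with $T$ a finitely-presented torsion module and $M$ a countable flat module, as required.

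There is essentially no obstacle here: all the work has already been done in Lemma \ref{Lemma:split-torsion} (whose proof uses finiteflat duality and the flatness of $M$ to turn $\mathrm{Ext}\left( M,L/I\right)$ into a cokernel of a surjection between $\mathrm{Ext}\left( M\otimes-,R\right)$ groups). The only points to double-check when writing out the argument are that $M=X/X_{\mathrm{t}}$ genuinely lies in $\mathbf{Flat}\left( R\right)$ (it does: countable and torsion-free hence flat, since $R$ is Pr\"{u}fer) and that $T=X_{\mathrm{t}}$ is a finitely-presented torsion module (immediate from the definition of a thin module). These are the only places where the hypotheses on $R$ and on $X$ are used.
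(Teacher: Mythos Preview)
Your argument is correct and is precisely the intended one: the paper states this as an immediate corollary of Lemma~\ref{Lemma:split-torsion} with no separate proof, and the splitting of $0\to X_{\mathrm t}\to X\to X/X_{\mathrm t}\to 0$ via $\mathrm{Ext}(X/X_{\mathrm t},X_{\mathrm t})=0$ is exactly how that corollary is meant to be read off.
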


\begin{lemma}
\label{Lemma:thin-fully-exact}Suppose that%
\begin{equation*}
0\rightarrow A\rightarrow B\rightarrow C\rightarrow 0
\end{equation*}%
is a short exact sequence, where $A$ and $C$ are countable thin modules.
Then $B$ is a countable thin module.
\end{lemma}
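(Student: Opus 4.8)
The plan is to strip the finitely‑presented torsion parts off of $A$ and $C$ using the structure theorem for thin modules, reducing the statement to a single residual case, and then to compute the torsion submodule in that case via the annihilator of the torsion part.

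\emph{Reductions.} By Corollary~\ref{Corollary:thin}, write $A\cong T_{A}\oplus M_{A}$ and $C\cong T_{C}\oplus M_{C}$ with $T_{A},T_{C}$ finitely‑presented torsion and $M_{A},M_{C}$ countable flat. Pulling the given extension back along $M_{C}\hookrightarrow C$ produces a submodule $B^{(1)}\subseteq B$ sitting in $0\to A\to B^{(1)}\to M_{C}\to 0$, with $B/B^{(1)}\cong T_{C}$. Pushing the first sequence forward along $A\twoheadrightarrow A/M_{A}\cong T_{A}$ gives an extension of $M_{C}$ by $T_{A}$, which splits because $\mathrm{Ext}(M_{C},T_{A})=0$ by Lemma~\ref{Lemma:split-torsion}; hence $B^{(1)}/M_{A}\cong T_{A}\oplus M_{C}$, and pulling back once more along $M_{C}\hookrightarrow T_{A}\oplus M_{C}$ yields $B^{(2)}\subseteq B^{(1)}$ with $0\to M_{A}\to B^{(2)}\to M_{C}\to 0$ — so $B^{(2)}$ is flat by Lemma~\ref{Lemma:pure-exact-extension} — and $B^{(1)}/B^{(2)}\cong T_{A}$. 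Iterating this kind of bookkeeping (each time splitting off a finitely‑presented torsion summand that lies over a flat quotient via Lemma~\ref{Lemma:split-torsion}; using that flat modules are closed under extensions by Lemma~\ref{Lemma:pure-exact-extension}; and using that finitely‑presented modules form a thick subcategory) reduces the statement to the single implication
\[
0\to F\to E\to T\to 0,\qquad F\ \text{countable flat},\ T\ \text{finitely-presented torsion}\ \Longrightarrow\ E\ \text{thin}.
\]

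\emph{The residual case.} Let $\mathfrak{c}:=\mathrm{Ann}_{R}(T)$. Writing $T$ as a finite sum of cyclic pieces $L_{k}/I_{k}$ via Lemma~\ref{Lemma:structure-finitely-presented} and using that the finitely‑generated fractional ideals of a Prüfer domain are closed under intersection (cf.\ Proposition~\ref{Proposition:characterize-prufer}), one checks that $\mathfrak{c}$ is a nonzero finitely‑generated ideal. Applying $\mathrm{Hom}(R/\mathfrak{c},-)$ to $0\to F\to E\to T\to 0$, and using that $F$ is torsion‑free while $\mathfrak{c}\neq 0$ — so that $F[\mathfrak{c}]=0$, $T[\mathfrak{c}]=T$, and $E[\mathfrak{c}]=E_{\mathrm{t}}$ — yields a natural isomorphism
\[
E_{\mathrm{t}}\;\cong\;\ker\!\bigl(\partial\colon T\longrightarrow \mathrm{Ext}^{1}(R/\mathfrak{c},F)\bigr),
\]
with $\partial$ the connecting homomorphism. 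Thus $E$ is thin precisely when this kernel is finitely presented, equivalently (since $T$ is finitely presented) precisely when the image $\mathrm{Im}(\partial)\subseteq\mathrm{Ext}^{1}(R/\mathfrak{c},F)$ is finitely presented.

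\emph{The main obstacle.} The crux is therefore to show that $\ker\partial$ (equivalently $\mathrm{Im}\,\partial$) is finitely presented. When $R$ is a Dedekind domain this is immediate: $T$ is then Noetherian, so every submodule of it is finitely generated, hence finitely presented (a finitely generated submodule of a finitely presented module over the coherent ring $R$ is finitely presented). Over a general Prüfer domain this is the delicate point. A natural approach is to observe that $\mathrm{Ext}^{1}(R/\mathfrak{c},F)\cong(\mathfrak{c}^{\ast}/R)\otimes F$ with $\mathfrak{c}^{\ast}/R$ finitely‑presented torsion, to write $F$ as an increasing union of finitely‑generated flat (hence projective) submodules, and to try to confine the finitely‑generated submodule $\mathrm{Im}\,\partial$ inside a finitely‑presented subobject. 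The difficulty is that the associated filtration of $\mathrm{Ext}^{1}(R/\mathfrak{c},F)$ need not consist of finitely‑presented submodules, so this confinement can fail; controlling it — or identifying the extra hypothesis on $R$ under which the assertion holds — is where I expect the essential work to lie.
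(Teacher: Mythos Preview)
Your reduction to the residual case $0\to F\to E\to T\to 0$ (with $F$ countable flat and $T$ finitely-presented torsion) is sound, and your identification $E_{\mathrm t}=\ker\bigl(\partial\colon T\to\mathrm{Ext}^1(R/\mathfrak c,F)\bigr)$ is correct, as is your treatment of the Dedekind case.

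The gap you flag for general Pr\"ufer domains is real and cannot be closed: the lemma as stated is \emph{false} in that generality. Take a countable valuation domain $R$ with value group $\mathbb Q$ (for instance the valuation ring of an extension of the $t$-adic valuation to $\overline{\mathbb F_p(t)}$), choose $a\in R$ with $v(a)=1$, and set $F=\{x\in K:v(x)>\tfrac{1}{\sqrt2}-1\}$, a countable flat module containing $1$. For the extension of $T=R/aR$ by $F$ whose class in $\mathrm{Ext}^1(R/aR,F)\cong F/aF$ is $1+aF$, your formula gives $E_{\mathrm t}\cong(aF\cap R)/aR=I/aR$ where $I=\{r\in R:v(r)>\tfrac{1}{\sqrt2}\}$; since $\tfrac{1}{\sqrt2}\notin\mathbb Q$ the ideal $I$ has no element of least value and is not finitely generated, so $E_{\mathrm t}$ is not finitely presented and $E$ is not thin. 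The paper's own argument founders at the same place, though for a different stated reason: after reducing to $C$ finitely-presented torsion and writing $A=N\oplus S$ with $N$ flat and $S$ finitely-presented torsion, it asserts $\mathrm{Ext}^1(C,N)\cong 0$, which already fails for $C=R/aR$ and $N=R$. Your Dedekind-case argument is the correct salvage of the statement.
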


\begin{proof}
We need to prove that $B_{\mathrm{t}}$ is finitely-presented. Since $B_{%
\mathrm{t}}$ maps to $C_{\mathrm{t}}$, after replacing $C$ with $C_{\mathrm{t%
}}$ and $B$ with the preimage of $C_{\mathrm{t}}$, we can assume that $C$ is
a finitely-presented torsion module. By the previous corollary, we can write 
$A=N\oplus S$. Considering the canonical isomorphism%
\begin{equation*}
\mathrm{Ext}^{1}\left( C,A\right) \cong \mathrm{Ext}^{1}\left( C,N\right)
\oplus \mathrm{Ext}^{1}\left( C,S\right) \cong \mathrm{Ext}^{1}\left(
C,S\right)
\end{equation*}%
we have that%
\begin{equation*}
B\cong N\oplus T
\end{equation*}%
where $T$ is an extension $S\rightarrow T\rightarrow C$. Since $C$ and $S$
are finitely-presented torsion modules, the same holds for $T$.
\end{proof}

\begin{corollary}
The category $\mathbf{Thin}\left( R\right) $ is a fully exact subcategory of 
$\mathbf{Mod}\left( R\right) $.
\end{corollary}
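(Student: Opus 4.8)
The plan is simply to unwind the definition of \emph{fully exact subcategory} from Section~\ref{Section:categories} and observe that every clause has already been established. First I would note that $\mathbf{Mod}\left(R\right)$, the category of countable $R$-modules, is an abelian category (this is exactly the ambient category inside which $\mathbf{Mod}_{\aleph_0}\left(R\right)$ was identified as a thick abelian subcategory), hence it is an exact category with respect to its maximal exact structure, whose short exact sequences are the ordinary short exact sequences of modules. Second, $\mathbf{Thin}\left(R\right)$ is, by its very definition, a \emph{full} subcategory of $\mathbf{Mod}\left(R\right)$, and it contains the zero module, so it is nonempty.

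It therefore only remains to verify closure under extensions: if
\[
0\rightarrow A\rightarrow B\rightarrow C\rightarrow 0
\]
is a short exact sequence of countable modules with $A$ and $C$ thin, then $B$ is isomorphic to a thin module. But this is precisely the content of Lemma~\ref{Lemma:thin-fully-exact}. So the proof I would give is just: combine the fact that $\mathbf{Mod}\left(R\right)$ is abelian (hence exact) with Lemma~\ref{Lemma:thin-fully-exact}, and conclude that $\mathbf{Thin}\left(R\right)$ is a fully exact subcategory; in particular it inherits the structure of an exact category in which a short exact sequence is one of $\mathbf{Mod}\left(R\right)$ all of whose terms are thin.

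There is no genuine obstacle here — all the real work is already done in the proof of Lemma~\ref{Lemma:thin-fully-exact}, whose key step was the reduction, via Corollary~\ref{Corollary:thin} (the splitting $A\cong N\oplus S$ of a thin module into a flat summand $N$ and a finitely-presented torsion summand $S$, together with $\mathrm{Ext}^1\left(C,N\right)=0$ from Lemma~\ref{Lemma:split-torsion}), to the case where $C$ and $A$ are finitely-presented torsion, so that $B$ is finitely-presented torsion up to a flat summand and hence thin. Thus the corollary is essentially a repackaging of that lemma, and I would present it as a one-line deduction rather than a fresh argument.
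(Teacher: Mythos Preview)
Your proof is correct and is exactly the paper's approach: the corollary appears there without proof, as an immediate consequence of Lemma~\ref{Lemma:thin-fully-exact}, and your unpacking of the definition of fully exact subcategory plus that lemma is all that is needed. One small slip in your parenthetical recap of the lemma's proof: Lemma~\ref{Lemma:split-torsion} asserts $\mathrm{Ext}(M,T)=0$ for $M$ countable \emph{flat} and $T$ finitely-presented \emph{torsion}, not $\mathrm{Ext}^1(C,N)=0$ with $C$ torsion and $N$ flat as you wrote; but this does not affect your deduction of the corollary, which only invokes the \emph{statement} of Lemma~\ref{Lemma:thin-fully-exact}.
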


It follows from the above that $\mathbf{Thin}\left( R\right) $ is a \emph{%
hereditary} quasi-abelian $\mathbf{\Pi }\left( \mathbf{Mod}\left( R\right)
\right) $-category with enough projectives. Thus, the categories $\mathrm{K}%
^{b}\left( \mathbf{Thin}\left( R\right) \right) $ and $\mathrm{D}^{b}\left( 
\mathbf{Thin}\left( R\right) \right) $ are triangulated $\mathrm{LH}\left( 
\mathbf{\Pi }\left( \mathbf{Mod}\left( R\right) \right) \right) $%
-categories. The \emph{compact objects }in $\mathrm{D}^{b}\left( \mathbf{Thin%
}\left( R\right) \right) $ are the bounded complexes of finitely-presented%
\emph{\ }modules. We let $\mathrm{Ext}^{n}$ for $n\geq 0$ be the derived
functor of $\mathrm{Hom}$ on $\mathbf{Thin}\left( R\right) $ as defined in
Section \ref{Section:derived}. Since $\mathbf{Thin}\left( R\right) $ is
hereditary, $\mathrm{Ext}^{n}=0$ for $n\geq 2$. We let $\mathrm{Ext}$ be $%
\mathrm{Ext}^{1}$. Since $\mathbf{Thin}\left( R\right) $ is a fully exact
subcategory of $\mathbf{Mod}\left( R\right) $, this definition of $\mathrm{%
Ext}$ is consistent with the one previously considered in reference to the
derived functor of $\mathrm{Hom}$ on $\mathbf{Mod}\left( R\right) $.

Let now $\mathcal{E}_{0}$ be the exact structure on $\mathbf{Thin}\left(
R\right) $ \emph{projectively generated }by the class of finitely-presented
(torsion) modules. Then the exact sequences in $\mathcal{E}_{0}$ are
precisely the pure short-exact sequences. A countable module is projective%
\emph{\ }in $\left( \mathbf{Thin}\left( R\right) ,\mathcal{E}_{0}\right) $
if and only if it is of the form $T\oplus P$ where $T$ is a
finitely-presented torsion module and $P$ is projective in $\mathbf{Mod}%
\left( R\right) $.

\subsection{Largest free summand}

For a module $C$, define $\Sigma C$ to be the intersection of the kernels of
the idempotent homomorphisms $C\rightarrow C$ whose image is a (finite)
projective module, and $\Phi C$ to be the union of the images of the
idempotent homomorphisms $C\rightarrow C$ whose image is a direct summand of 
$C$ that is a (finite) projective module. We say that $C$ is \emph{coreduced 
}if $\Phi C=0$.

\begin{lemma}
\label{Lemma:coreduced-radical}Suppose that $C$ is a countable flat module.
Then:

\begin{enumerate}
\item $\Phi C$ is a countable projective module;

\item $C=\Sigma C\oplus \Phi C$;

\item $\Sigma \left( \Sigma C\right) =\Sigma C$, and $\Sigma C$ has no
projective direct summands.
\end{enumerate}
\end{lemma}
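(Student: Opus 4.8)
The plan is to prove that $\Sigma C$ is a pure submodule of $C$ with projective quotient; the resulting short exact sequence then splits, and the projective complement is $\Phi C$. Granting this, (1) and (2) are immediate, and (3) is formal.

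First I would check that $\Sigma C$ is pure in $C$. If $r\in R\setminus\{0\}$ and $rc\in\Sigma C$, then for every idempotent $e\colon C\to C$ with finite projective image we have $r\,e(c)=e(rc)=0$, so $e(c)=0$ since $C$ is torsion-free; hence $c\in\Sigma C$. Thus $rC\cap\Sigma C=r\Sigma C$, i.e.\ $\Sigma C$ is relatively divisible, and hence pure in $C$ by Proposition~\ref{Proposition:characterize-prufer}; in particular $C/\Sigma C$ is torsion-free, and so flat over the Pr\"ufer domain $R$. Moreover the evaluation homomorphism $c\mapsto(e(c))_e$, with $e$ ranging over all idempotents of $C$ with finite projective image, has kernel exactly $\Sigma C$, so it induces an embedding of $C/\Sigma C$ into the product $\prod_e\operatorname{Im}(e)$ of finite projective modules, and by construction the induced maps $\bar e\colon C/\Sigma C\to\operatorname{Im}(e)$ separate points.

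The heart of the matter, and the step I expect to be the main obstacle, is that $C/\Sigma C$ is projective. I would enumerate $C/\Sigma C$ and let $B_n$ be the pure hull of its first $n$ elements, so that $(B_n)$ is an increasing chain of pure submodules of finite rank, exhausting $C/\Sigma C$, with $B_n$ pure in $B_{n+1}$. The key claim is that every $B_n$ is finitely generated. Each $\operatorname{Ker}(\bar e|_{B_n})$ is pure in $B_n$ (since $\operatorname{Im}(e)$ is torsion-free), and $\bigcap_e\operatorname{Ker}(\bar e|_{B_n})=0$; because $B_n$ has finite rank, finitely many of these pure submodules already intersect to $0$, so finitely many maps $\bar e_1,\dots,\bar e_m$ together embed $B_n$ into the finite projective module $\bigoplus_{i\le m}\operatorname{Im}(e_i)$. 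From this I would conclude that $B_n$ is finitely generated, hence finite projective over $R$ by Proposition~\ref{Proposition:characterize-prufer}: over a Dedekind domain this is immediate from Noetherianity, while over a general Pr\"ufer domain it requires the additional observation that the embedding can be chosen to be pure. Granting finite generation of the $B_n$, each $B_n$ is pure in $B_{n+1}$ with finite projective (hence projective) quotient $B_{n+1}/B_n$, so the inclusion $B_n\hookrightarrow B_{n+1}$ splits; iterating, $C/\Sigma C=\bigcup_n B_n$ is a countable direct sum of finite projective modules, and so projective.

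With $C/\Sigma C$ projective, the pure sequence $0\to\Sigma C\to C\to C/\Sigma C\to 0$ splits, yielding $C=\Sigma C\oplus\Phi C$ with $\Phi C\cong C/\Sigma C$ a countable projective module; this gives (1) and (2). For (3): an idempotent $e'$ of $\Sigma C$ with finite projective image extends to the idempotent $e=e'\oplus 0$ of $C=\Sigma C\oplus\Phi C$, which has the same image, so $\Sigma C\subseteq\operatorname{Ker}(e)$ by definition of $\Sigma C$, forcing $e'=0$; hence $\Sigma(\Sigma C)=\Sigma C$. Finally, if $\Sigma C$ had a nonzero projective direct summand, then by Proposition~\ref{Proposition:characterize-prufer} it would split off a nonzero finite ideal $I$ of $R$, and $I$ would be a finite projective direct summand of $C$ contained in $\Sigma C$; taking an idempotent $e_I$ of $C$ with image $I$ we would get $I=\operatorname{Im}(e_I)\cap\Sigma C\subseteq\operatorname{Im}(e_I)\cap\operatorname{Ker}(e_I)=0$, a contradiction. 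Thus $\Sigma C$ has no nonzero projective direct summand.
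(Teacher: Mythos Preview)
Your route is genuinely different from the paper's. The paper works with $\Phi C$ directly: it argues that the finite projective direct summands of $C$ are closed under finite sums, so that $\Phi C$ is a submodule, and then orthogonalizes a covering family of idempotents to exhibit $\Phi C$ as a direct sum of finite projectives. You instead show that $\Sigma C$ is pure, that $C/\Sigma C$ is projective, and then split. This is cleaner conceptually and gives (3) essentially for free.

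There is, however, a genuine gap: the assertion ``the projective complement is $\Phi C$'' is not justified, and in fact it fails. Take $C=R\oplus K$. The only finite projective direct summands are the lines $R(1,c)$ for $c\in K$, so $\Phi C=\{(r,k):r\neq 0\}\cup\{0\}$, which is not even a submodule; meanwhile $\Sigma C=0\oplus K$, and any complement (say $R\oplus 0$) is strictly contained in $\Phi C$. What your argument actually produces is a projective complement $D$ of $\Sigma C$ with $D\subseteq\Phi C$; the reverse inclusion need not hold, and $\Phi C$ need not be any single complement. (The paper's own argument runs into the same obstacle: the claim that $f+g'$ is an idempotent with image $P+Q$ is not justified, and indeed for $P=R(1,0)$, $Q=R(1,a)$ in $R^2$ with $a$ a non-unit, $P+Q$ is not a direct summand at all.) Your method does establish the statement that is actually used downstream---$\Sigma C$ is a direct summand with projective complement, and $\Sigma(\Sigma C)=\Sigma C$ with no projective summand---so parts (2) and (3) survive with ``$\Phi C$'' replaced by ``a projective complement of $\Sigma C$'', but (1) as literally stated does not.

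Your flagged gap over non-Noetherian Pr\"ufer domains is also real: embedding $B_n$ into a finite projective does not by itself force $B_n$ to be finitely generated, and it is not clear the embedding can be made pure. Over a Dedekind domain your argument goes through as written.
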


\begin{proof}
Suppose that $f,g:C\rightarrow C$ are idempotent homomorphisms with images $%
P,Q$, respectively. Suppose that $P,Q$ are finite projective modules, and
direct summands of $C$. Then we have that $P\cap Q$ is a direct summand of $%
Q $. Write $Q=Q^{\prime }\oplus \left( P\cap Q\right) $. Thus, there exists
an idempotent homomorphism $g^{\prime }:C\rightarrow C$ whose image is $%
Q^{\prime }$. Thus, $f+g^{\prime }:C\rightarrow C$ is an idempotent
homomorphism whose image is $P\oplus Q^{\prime }=P\oplus Q$.

Let $\left( \phi _{n}\right) $ be a sequence of idempotent homomorphisms $%
C\rightarrow C$ whose image is a finite projective module, such that%
\begin{equation*}
\Phi C=\bigcup_{n}\mathrm{Ran}\left( \phi _{n}\right) \text{.}
\end{equation*}%
By the above remarks, one can obtain another sequence $\left( \psi
_{n}\right) $ such that, for every $n$,%
\begin{equation*}
\mathrm{Ran}\left( \psi _{0}+\cdots +\psi _{n}\right) =\bigoplus_{i}\mathrm{%
Ran}\left( \psi _{i}\right) =\sum_{i}\mathrm{Ran}\left( \varphi _{i}\right) 
\text{.}
\end{equation*}%
Thus, 
\begin{equation*}
\Phi _{\mathcal{C}}C=\bigoplus_{i\in \omega }\mathrm{Ran}\left( \psi
_{i}\right) \text{.}
\end{equation*}%
The rest of the assertions easily follow from this.
\end{proof}

It follows from Lemma \ref{Lemma:coreduced-radical} that $C\mapsto \Sigma C$
is a radical in the sense of \cite%
{charles_sous-groupes_1968,fay_preradicals_1982}.

\subsection{Injective and projective objects}

Suppose that $M$ is a module. Let $\mathcal{F}$ be a countable
downward-directed collection of submodules of $M$, and let $\mathcal{G}$ be
the collection of cosets of elements of $\mathcal{F}$. A family $\mathcal{G}%
_{0}\subseteq \mathcal{G}$ satisfies the \emph{finite intersection property}
(FIP) if every finite subset of $\mathcal{G}_{0}$ has nonempty intersection.
By definition, the module $M$ is $\mathcal{G}$-compact if every family $%
\mathcal{G}_{0}\subseteq \mathcal{G}$ with the FIP has nonempty
intersection. The family $\mathcal{F}$ satisfies the descending chain
condition (DCC) if every decreasing sequence of elements of $\mathcal{F}$ is
eventually constant.

\begin{lemma}
Adopt the notations above. Then $M$ is $\mathcal{G}$-compact if and only if
for every decreasing chain $\left( S_{n}\right) $ of elements of $\mathcal{F}
$, the canonical homomorphism%
\begin{equation*}
\varphi _{\left( S_{n}\right) }:M\rightarrow \mathrm{lim}_{n}M/S_{n}
\end{equation*}%
is surjective.
\end{lemma}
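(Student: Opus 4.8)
The plan is to prove the two implications separately; the forward one is elementary, while the converse rests on a recursive construction.

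Suppose first that $M$ is $\mathcal{G}$-compact, and fix a decreasing chain $(S_n)$ in $\mathcal{F}$ together with an element of $\mathrm{lim}_n M/S_n$, represented by a sequence $(x_n)$ in $M$ with $x_{n+1}-x_n\in S_n$ for all $n$. I would consider the family $\mathcal{G}_0=\{x_n+S_n:n\in\omega\}\subseteq\mathcal{G}$ and check that it has the finite intersection property: given $n_1<\cdots<n_k$, the element $x_{n_k}$ lies in every $x_{n_j}+S_{n_j}$, since $x_{n_k}-x_{n_j}=\sum_{i=n_j}^{n_k-1}(x_{i+1}-x_i)$ is a sum of elements of $S_{n_j},S_{n_j+1},\dots$, each contained in $S_{n_j}$ because the chain is decreasing. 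By $\mathcal{G}$-compactness there is $x\in\bigcap_n(x_n+S_n)$, so $x-x_n\in S_n$ for all $n$ and $\varphi_{(S_n)}(x)=(x_n+S_n)_n$. Hence $\varphi_{(S_n)}$ is surjective.

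For the converse, assume every $\varphi_{(S_n)}$ is surjective and let $\mathcal{G}_0\subseteq\mathcal{G}$ have the finite intersection property; I must exhibit a point of $\bigcap\mathcal{G}_0$. Since two cosets of a fixed $T\in\mathcal{F}$ that meet must coincide, $\mathcal{G}_0$ contains at most one coset of each submodule, so $\mathcal{G}_0=\{a_T+T:T\in\mathcal{F}_0\}$ for a countable $\mathcal{F}_0\subseteq\mathcal{F}$; if $\mathcal{F}_0$ is finite the finite intersection property already gives the conclusion, so fix an enumeration $\mathcal{F}_0=\{T_n:n\in\omega\}$. Put $V_n=T_0\cap\cdots\cap T_n$ and $C_n=(a_{T_0}+T_0)\cap\cdots\cap(a_{T_n}+T_n)$. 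By the finite intersection property each $C_n$ is a nonempty coset of $V_n$, the $C_n$ are nested, $\bigcap_n C_n=\bigcap\mathcal{G}_0$, and the cosets $C_n$ form a point of $\mathrm{lim}_n M/V_n$. Using the downward-directedness of $\mathcal{F}$ I would pick a decreasing chain $(S_n)$ in $\mathcal{F}$ with $S_n\subseteq V_n$ for every $n$ (say $S_0=T_0$, and $S_{n+1}$ a common lower bound of $S_n$ and $T_{n+1}$), and then lift $(C_n)$ through the commutative triangle $M\to\mathrm{lim}_n M/S_n\to\mathrm{lim}_n M/V_n$ via surjectivity of $\varphi_{(S_n)}$: concretely, build a compatible sequence $(d_n+S_n)\in\mathrm{lim}_n M/S_n$ with each representative $d_n$ in $C_n$, so that any $x$ with $\varphi_{(S_n)}(x)=(d_n+S_n)_n$ satisfies $x\in d_n+V_n=C_n$ for all $n$, i.e.\ $x\in\bigcap\mathcal{G}_0$.

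The construction of the $d_n$ is recursive, and this step is the one I expect to be the main obstacle: given $d_n\in C_n$ with $d_n\equiv d_{n-1}\pmod{S_{n-1}}$, one needs $d_{n+1}\in(d_n+S_n)\cap C_{n+1}$, and this intersection is nonempty exactly when $d_n$ is congruent modulo $S_n+V_{n+1}$ to a point of $C_{n+1}$, whereas the finite intersection property by itself only yields congruence modulo the (possibly larger) submodule $V_n$. Closing the recursion therefore forces one to build the chain $(S_n)$ and the representatives $d_n$ in concert, arranging at each stage — by a more careful, interleaved appeal to the surjectivity hypothesis — that $S_n$ be chosen large enough inside $V_n$ that $S_n+V_{n+1}=V_n$, equivalently that $M/S_{n+1}\to M/V_{n+1}$ be surjective with the compatibilities needed for the triangle above to transmit surjectivity. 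I note that when $\mathcal{F}$ is a chain of submodules — the case arising in the intended applications, where $\mathcal{F}$ is automatically closed under finite intersections — all of this collapses: $V_n=T_n\in\mathcal{F}$, the point $(C_n)$ lies already in $\mathrm{lim}_n M/V_n$, and surjectivity of $\varphi_{(V_n)}$ yields the required $x$ directly, with no recursion.
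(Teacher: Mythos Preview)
Your forward direction is correct and complete.

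For the converse you have correctly isolated the real difficulty: the finite intersections $V_n=T_0\cap\cdots\cap T_n$ need not lie in $\mathcal{F}$, so the surjectivity hypothesis cannot be applied to $(V_n)$ directly, and refining to a chain $(S_n)$ in $\mathcal{F}$ with $S_n\subseteq V_n$ leads to the obstruction you describe --- one needs $(b_n+S_n)\cap C_{n+1}\neq\varnothing$, which amounts to $b_n-c\in S_n+V_{n+1}$ for some $c\in C_{n+1}$, whereas FIP only gives $b_n-c\in V_n$. Your proposed remedy, however, does not close the gap: arranging $S_n+V_{n+1}=V_n$ with $S_n\in\mathcal{F}$ and $S_n\subseteq V_n$ is not in general possible --- nothing in the hypotheses produces such an $S_n$, and no amount of interleaving the choice of $S_n$ with the choice of $d_n$ manufactures one. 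The ``more careful, interleaved appeal to the surjectivity hypothesis'' you allude to is not an argument; as it stands, the converse is not proved.

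It is worth noting that the paper's own proof is extremely terse at precisely this point: it asserts ``$M$ is $\mathcal{G}$-compact if and only if $\varphi_{(S_n)}$ has closed image for each decreasing chain $(S_n)$'' and then observes that a dense image is closed iff it is everything. The second sentence is fine; the first is exactly the reduction from arbitrary FIP families to chains, and it is not justified. So the paper does not supply the missing idea either.

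Your closing observation is the correct salvage: when $\mathcal{F}$ is closed under finite intersections (in particular when it is a chain), $V_n\in\mathcal{F}$ and one may simply take $S_n=V_n$; surjectivity of $\varphi_{(V_n)}$ then immediately yields a point of $\bigcap_n C_n=\bigcap\mathcal{G}_0$. Both applications in the paper are covered: the DCC corollary uses only the forward direction, and for algebraic compactness the family of pp-definable subgroups is closed under finite intersections (conjunction of pp-formulas is pp).
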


\begin{proof}
We have that $M$ is $\mathcal{G}$-compact if and only if $\varphi _{\left(
S_{n}\right) }$ has closed image for each decreasing chain $\left(
S_{n}\right) $. Since $\varphi _{\left( S_{n}\right) }$ has dense image, the
conclusion follows.
\end{proof}

\begin{corollary}
\label{Corollary:DCC}Adopt the notations above. If $M$ is countable and $%
\mathcal{G}$-compact, then $\mathcal{F}$ satisfies the \emph{DCC}.
\end{corollary}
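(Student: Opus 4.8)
The plan is to argue by contradiction, leveraging the preceding lemma together with a cardinality count. Suppose $M$ is countable and $\mathcal{G}$-compact but $\mathcal{F}$ fails the descending chain condition, so that there is a decreasing sequence $\left( T_{n}\right) _{n\in \omega }$ of elements of $\mathcal{F}$ that is not eventually constant. Because the $T_{n}$ are decreasing, failure to be eventually constant means precisely that for every $N$ there is $n\geq N$ with $T_{n}\subsetneq T_{N}$; so I would first thin out the sequence to a strictly decreasing one, choosing recursively indices $n_{0}<n_{1}<\cdots $ with $T_{n_{k+1}}\subsetneq T_{n_{k}}$ and setting $S_{k}:=T_{n_{k}}$. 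Then $\left( S_{k}\right) _{k\in \omega }$ is still a decreasing chain of elements of $\mathcal{F}$, with all inclusions strict.

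Since $M$ is $\mathcal{G}$-compact, the preceding lemma applies to this chain and gives that the canonical homomorphism $\varphi \colon M\rightarrow \mathrm{lim}_{k}M/S_{k}$ is surjective. As $M$ is countable, it then suffices to show that $\mathrm{lim}_{k}M/S_{k}$ is uncountable, which is the heart of the argument. For each $k$ the bonding map $p_{k}\colon M/S_{k+1}\rightarrow M/S_{k}$ is surjective with kernel $S_{k}/S_{k+1}$, and strictness of $S_{k+1}\subsetneq S_{k}$ gives $S_{k}/S_{k+1}\neq 0$; fix once and for all some $z_{k}$ in $S_{k}/S_{k+1}\setminus \left\{ 0\right\} $. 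I would then build, by recursion on length, a binary tree $\left( a_{s}\right) _{s\in 2^{<\omega }}$ of coherent partial points: put $a_{\varnothing }:=0\in M/S_{0}$, and given $a_{s}\in M/S_{k}$ choose any lift $b\in M/S_{k+1}$ with $p_{k}\left( b\right) =a_{s}$ and set $a_{s^{\frown }0}:=b$ and $a_{s^{\frown }1}:=b+z_{k}$, so that the two children are distinct and both map to $a_{s}$ under $p_{k}$. Each $\varepsilon \in 2^{\omega }$ then yields a coherent sequence $\left( a_{\varepsilon |_{k}}\right) _{k\in \omega }$, i.e.\ a point of $\mathrm{lim}_{k}M/S_{k}$, and if $\varepsilon \neq \varepsilon ^{\prime }$ then at the least coordinate $k$ where they differ the resulting limit points already differ in their $\left( k+1\right) $-st coordinate; this exhibits an injection $2^{\omega }\hookrightarrow \mathrm{lim}_{k}M/S_{k}$.

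Putting the pieces together, $\mathrm{lim}_{k}M/S_{k}$ has cardinality at least $2^{\aleph _{0}}$, while surjectivity of $\varphi $ forces its cardinality to be at most $\left\vert M\right\vert \leq \aleph _{0}$, a contradiction; hence $\mathcal{F}$ satisfies the descending chain condition. I do not expect a genuine obstacle here: the only delicate point is the recursive construction of the tree of partial points together with the verification that distinct branches land on distinct limit points, and both are routine once one has recorded that every kernel $S_{k}/S_{k+1}$ is nonzero. (Alternatively, one could note that $\mathrm{lim}_{k}M/S_{k}$ with its natural prodiscrete topology is a nonempty Polish group every nonempty basic open set of which contains at least two points, hence has no isolated points and is therefore uncountable by the Baire category theorem; but the explicit tree argument is cleaner and self-contained.)
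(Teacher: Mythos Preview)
Your proof is correct and follows essentially the same approach as the paper: contrapose, pass to a strictly decreasing subsequence, invoke the preceding lemma for surjectivity of $M\rightarrow \mathrm{lim}_{k}M/S_{k}$, and derive a cardinality contradiction. The paper compresses your uncountability step into the single remark that $\mathrm{lim}_{k}M/S_{k}$ is an uncountable Polish module (i.e.\ your parenthetical alternative), whereas you spell out an explicit binary-tree injection of $2^{\omega}$; both are fine and lead to the same conclusion.
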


\begin{proof}
If $\left( S_{n}\right) $ is a strictly decreasing sequence of elements of $%
\mathcal{F}$, then $\mathrm{lim}_{n}M/S_{n}$ is an uncountable Polish module.
\end{proof}

A submodule $N$ of $M$ is pp-definable if it is of the form%
\begin{equation*}
\left\{ x\in M:\exists y_{0},\ldots ,y_{\ell }\in M\text{, }%
rx=r_{0}y_{0}+\cdots +r_{\ell }y_{\ell }\right\}
\end{equation*}%
for some $\ell \in \omega $ and $r,r_{0},\ldots ,r_{\ell }\in R$. The module 
$M$ is \emph{algebraically compact }if it is $\mathcal{G}$-compact with
respect to the family $\mathcal{G}$ of cosets of pp-definable submodules 
\cite[Section 4]{prest_pure-injective_2009}.

The module $M$ is $r$-divisible for some $r\in R$ if $rM=M$, and it is \emph{%
divisible }if it is $r$-divisible for every nonzero $r\in R$. For $r\in R$
define $R[r^{-1}]\subseteq K$ to be the submodule generated by $r^{-n}$ for $%
n\in \mathbb{N}$. More generally, for a finite ideal $I$, we say that $M$ is 
$I$-divisible if $IM=M$, where $IM$ is the submodule of $M$ generated by $rx$
for $r\in I$ and $x\in M$. We also let $R[I^{-1}]\subseteq K$ be the union
of $I^{-n}$ for $n\in \mathbb{N}$.

\begin{lemma}
\label{Lemma:divisible-iff-Ext}Let $A$ be a countable flat module, and $%
I\subseteq M$ be a finite ideal. Then $A$ is $I$-divisible if and only if $%
\mathrm{Ext}\left( R[I^{-1}],A\right) =0$.
\end{lemma}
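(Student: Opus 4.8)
The plan is to analyze the short exact sequence
\begin{equation*}
0 \rightarrow R \rightarrow R[I^{-1}] \rightarrow R[I^{-1}]/R \rightarrow 0
\end{equation*}
and, more fundamentally, to identify $R[I^{-1}]$ as a colimit and $R[I^{-1}]/R$ as a colimit of finitely-presented torsion modules, so that the vanishing of $\mathrm{Ext}(R[I^{-1}],A)$ can be read off from the $\mathrm{lim}^1$ description of phantom extensions together with Roos's Theorem (Theorem~\ref{Lemma:Ext-and-lim1}). Concretely, $R[I^{-1}] = \mathrm{colim}_n I^{-n}$, an increasing union of rank-$1$ finite flat (fractional) ideals, with bonding maps the inclusions $I^{-n} \hookrightarrow I^{-(n+1)}$, each of which can be identified (after multiplying by a generator, or using that $I$ is invertible by Lemma~\ref{Lemma:projective-invertible}) with ``multiplication by $I$'' $R \xrightarrow{\cdot I} R$. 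Applying $\mathrm{Hom}(-,A)$ turns this into the tower $(\mathrm{Hom}(I^{-n},A))_n$, whose bonding maps are dual to inclusion; the key point is that surjectivity of these bonding maps is exactly the statement $IA = A$, i.e., $I$-divisibility of $A$.

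First I would make precise the identification $\mathrm{Hom}(I^{-n},A) \cong A$ (using $I$ invertible so that $I^{-n}$ is projective of rank one and $\mathrm{Hom}(I^{-n},A) \cong I^n \otimes A \hookrightarrow A$, with the bonding map $\mathrm{Hom}(I^{-(n+1)},A) \to \mathrm{Hom}(I^{-n},A)$ corresponding to the inclusion $I^{n+1}A \hookrightarrow I^n A$). Then: by Roos's Theorem applied to $C = R[I^{-1}] = \mathrm{colim}_n I^{-n}$, we have an exact sequence
\begin{equation*}
0 \rightarrow \mathrm{lim}^1_n \mathrm{Hom}(I^{-n},A) \rightarrow \mathrm{Ext}(R[I^{-1}],A) \rightarrow \mathrm{lim}_n \mathrm{Ext}(I^{-n},A) \rightarrow 0.
\end{equation*}
Since each $I^{-n}$ is projective, $\mathrm{Ext}(I^{-n},A) = 0$, so $\mathrm{Ext}(R[I^{-1}],A) \cong \mathrm{lim}^1_n \mathrm{Hom}(I^{-n},A) \cong \mathrm{lim}^1_n (I^n A)$ with bonding maps the inclusions. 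If $A$ is $I$-divisible, then $I^nA = A$ for all $n$ (by induction on $n$, using $IA=A$), so this tower is constant, hence essentially epimorphic, hence satisfies Mittag--Leffler, hence has trivial $\mathrm{lim}^1$; thus $\mathrm{Ext}(R[I^{-1}],A) = 0$. Conversely, if $\mathrm{Ext}(R[I^{-1}],A) = 0$, then in particular $\mathrm{Ext}_{\mathrm{Yon}}(R[I^{-1}],A) = 0$, so the sequence $0 \to A \to X \to R[I^{-1}] \to 0$ obtained by pulling back $0 \to R \to R[I^{-1}] \to R[I^{-1}]/R \to 0$ along $\mathrm{id}_A$ — no: more directly, apply $\mathrm{Hom}(-,A)$ to $0 \to R \to R[I^{-1}] \to R[I^{-1}]/R \to 0$ to get $\mathrm{Hom}(R[I^{-1}],A) \to \mathrm{Hom}(R,A) = A \to \mathrm{Ext}(R[I^{-1}]/R, A) \to \mathrm{Ext}(R[I^{-1}],A) = 0$, and since $R[I^{-1}]/R$ is $I$-divisible torsion one checks $\mathrm{Ext}(R[I^{-1}]/R,A)$ is precisely $A/IA$ (or at least that its vanishing forces $A = IA$). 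The cleanest route for the converse is probably to observe directly from $\mathrm{lim}^1_n(I^nA) = 0$ with inclusion bonding maps that the tower is Mittag--Leffler, which for a tower of inclusions of countable modules forces $I^nA = I^{n+1}A$ eventually, and then $I^nA = I^{n+1}A$ implies (multiplying by $I^{-n}$ inside $K \otimes A$, or by a Nakayama-type argument using that $I$ is finitely generated) $A = IA$.

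The main obstacle I anticipate is the converse direction, specifically pinning down the precise cokernel/torsion structure: showing that $\mathrm{lim}^1$ of the tower of inclusions $I A \supseteq I^2 A \supseteq \cdots$ being zero genuinely forces $IA = A$ rather than merely some eventual stabilization $I^n A = I^{n+1}A$, and then bootstrapping the latter to the former. For a general Prüfer domain this requires care: $I^n A = I^{n+1} A$ does imply $I^n A = I^m A$ for all $m \geq n$, and since $R[I^{-1}] = \bigcup I^{-n}$ one has $I^{-n} I^n A = A$, so $A = I^{-n}(I^n A) = I^{-n}(I^{n+1}A) = IA$ — this works because $I$ is invertible (Lemma~\ref{Lemma:projective-invertible}), so multiplication by $I^{-n}$ and $I^n$ are mutually inverse isomorphisms on submodules of $K \otimes A$. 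So the obstacle is mostly bookkeeping about fractional-ideal arithmetic inside $K \otimes A$; the homological input (Roos, $\mathrm{lim}^1$ of essentially epimorphic towers, projectivity of fractional invertible ideals) is all available from the excerpt. I would also double-check that $A$ flat guarantees $I^n A \hookrightarrow A$ is the right identification of the bonding maps — this uses flatness of $A$ together with $0 \to I^n \to R$ giving $0 \to I^n \otimes A \to A$, i.e., $I^n \otimes A \cong I^n A$.
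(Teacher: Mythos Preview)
Your proposal is correct and follows essentially the same route as the paper: write $R[I^{-1}]=\mathrm{colim}_n I^{-n}$, identify $\mathrm{Ext}(R[I^{-1}],A)\cong \mathrm{lim}^1_n\mathrm{Hom}(I^{-n},A)\cong \mathrm{lim}^1_n(I^nA)$ with inclusion bonding maps, and then use that Mittag--Leffler for this tower means eventual constancy $I^{n+1}A=I^nA$, which by invertibility of $I$ is equivalent to $IA=A$. Your detour through the short exact sequence $0\to R\to R[I^{-1}]\to R[I^{-1}]/R\to 0$ is unnecessary, but the argument you settle on (including the bootstrapping $I^nA=I^{n+1}A\Rightarrow A=IA$ via multiplying by $I^{-n}$ inside $K\otimes A$) is exactly what the paper does, only spelled out in more detail.
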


\begin{proof}
We have that $R[I^{-1}]$ is isomorphic to the colimit of the inductive
sequence $\left( I^{-n}\right) _{n\in \omega }$. We have%
\begin{equation*}
\mathrm{Ext}\left( R[I^{-1}],A\right) \cong \mathrm{lim}_{n}^{1}\mathrm{Hom}%
\left( I^{-n},A\right) \text{.}
\end{equation*}%
The tower $\left( \mathrm{Hom}\left( I^{-n},A\right) \right) $ is isomorphic
to the tower $\left( I^{n}A\right) _{n\in \omega }$ with inclusions as
bonding maps. Thus, we have that $\mathrm{Ext}\left( R[I^{-1}],A\right) =0$
if and only if such a tower satisfies the Mittag-Leffler condition, if and
only if the sequence $\left( I^{n}A\right) $ of submodules of $A$ is
eventually constant. Since $R$ is a Pr\"{u}fer domain, $I$ is invertible.
Thus, there exists $n\in \omega $ such that $I^{n+1}A=I^{n}A$ if and only if 
$A$ is $I$-divisible, concluding the proof.
\end{proof}

\begin{lemma}
\label{Lemma:compact-iff-divisible}Let $A$ be a countable flat module. Then $%
A$ is algebraically compact if and only if it is divisible.
\end{lemma}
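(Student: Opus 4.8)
The plan is to prove the two implications separately, reading off divisibility from a chain condition on pp-definable submodules for one direction, and verifying $\mathcal{G}$-compactness by hand for the other, using that a torsion-free divisible module is a $K$-vector space.

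For the implication algebraically compact $\Rightarrow$ divisible, fix a nonzero $r \in R$. The submodules $r^{n}A = \{x \in A : \exists y\, (x = r^{n}y)\}$ are pp-definable (take $\ell = 0$, coefficient $1$ on $x$, and $r_{0} = r^{n}$) and form a countable downward-directed chain $A \supseteq rA \supseteq r^{2}A \supseteq \cdots$; call this family $\mathcal{F}$. Since $A$ is algebraically compact it is $\mathcal{G}$-compact with respect to the cosets of all pp-definable submodules, and hence a fortiori with respect to the cosets of members of $\mathcal{F}$ (shrinking the ambient family of cosets only shrinks the class of test families with the finite intersection property). As $A$ is countable, Corollary~\ref{Corollary:DCC} then forces $\mathcal{F}$ to satisfy the DCC, so $r^{n}A = r^{n+1}A$ for some $n$. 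Because $A$ is flat, hence torsion-free, multiplication by $r^{n}$ is an injective endomorphism of $A$ with image $r^{n}A$; from $r^{n}A = r^{n+1}A = r^{n}(rA)$ and this injectivity we conclude $A = rA$. Since $r$ was an arbitrary nonzero element, $A$ is divisible.

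For the converse, suppose $A$ is divisible. Being also torsion-free (flat), multiplication by every nonzero $r \in R$ is bijective on $A$, so $A$ is a vector space over the field of fractions $K$. The key observation is that the only submodules of $A$ pp-definable in the sense of the paper are $\{0\}$ and $A$: for $N = \{x \in A : \exists y_{0},\dots,y_{\ell}\, (rx = r_{0}y_{0} + \cdots + r_{\ell}y_{\ell})\}$, if $r = 0$ then $\bar y = 0$ shows $N = A$; if $r \neq 0$ but some $r_{i} \neq 0$ then, $A$ being uniquely divisible, one solves $r_{i}y_{i} = rx$ and sets the other $y_{j} = 0$, so again $N = A$; and if $r \neq 0$ with all $r_{i} = 0$ then $N = \{x : rx = 0\} = \{0\}$ by torsion-freeness. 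Consequently every coset of a pp-definable submodule is a singleton or all of $A$, and any family $\mathcal{G}_{0}$ of such cosets with the finite intersection property has nonempty intersection: if some member is a singleton $\{a\}$, the FIP puts $a$ in every member, so $\bigcap \mathcal{G}_{0} = \{a\}$; otherwise $\mathcal{G}_{0} \subseteq \{A\}$ and $\bigcap \mathcal{G}_{0} = A$. Hence $A$ is $\mathcal{G}$-compact, i.e., algebraically compact.

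I do not expect a genuine obstacle here; the only point requiring a little care is confirming that the definition of algebraic compactness (as $\mathcal{G}$-compactness for the cosets of \emph{all} pp-definable submodules) does transfer to the sub-family $\{r^{n}A\}_{n}$ needed to apply Corollary~\ref{Corollary:DCC}, which is immediate, and checking the case analysis on $N$ above, which is routine once one uses unique divisibility.
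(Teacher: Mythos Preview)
Your proof is correct and follows essentially the same approach as the paper: for the forward direction you apply Corollary~\ref{Corollary:DCC} to the chain $(r^{n}A)$ of pp-definable submodules and use torsion-freeness to pass from $r^{n}A=r^{n+1}A$ to $A=rA$, and for the converse you verify that the only pp-definable submodules of a torsion-free divisible module are $\{0\}$ and $A$, from which $\mathcal{G}$-compactness is immediate. The paper's proof is more terse but proceeds along exactly these lines; your version simply fills in the case analysis and the injectivity argument that the paper leaves implicit.
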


\begin{proof}
If $A$ is algebraically compact and countable, then by Corollary \ref%
{Corollary:DCC} the family of pp-definable submodules satisfies the DCC. In
particular, for every nonzero $r\in A$ the sequence $\left( r^{n}A\right) $
is eventually constant, whence $A$ is $r$-divisible.

Conversely, if $A$ is divisible every pp-definable submodule is either $%
\left\{ 0\right\} $ or $A$ and the conclusion follows.
\end{proof}

\begin{proposition}
\label{Proposition:characterize-injectives}Suppose that $A$ is a countable
flat module. The following assertions are equivalent:

\begin{enumerate}
\item $A$ is injective in $\mathbf{Flat}\left( R\right) $;

\item $A$ is pure-injective in $\mathbf{Thin}\left( R\right) $;

\item $A$ is pure-injective in $\mathbf{Mod}\left( R\right) $;

\item $A$ is algebraically compact;

\item $A$ is divisible;

\item $\mathrm{Ext}\left( R[1/r],A\right) =0$ for every $r\in R\setminus
\left\{ 0\right\} $;

\item $\mathrm{Ext}\left( R[I^{-1}],A\right) =0$ for every finite ideal $%
I\subseteq R$.
\end{enumerate}
\end{proposition}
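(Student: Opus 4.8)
The plan is to prove the equivalence of $(1)$--$(7)$ through a short cycle of implications, using Lemmas \ref{Lemma:divisible-iff-Ext}, \ref{Lemma:compact-iff-divisible}, and \ref{Lemma:pure-exact-extension} together with the fact that a divisible torsion-free module over $R$ is an injective $R$-module. Concretely, I would establish $(4)\Leftrightarrow(5)$, then $(5)\Leftrightarrow(6)\Leftrightarrow(7)$, and finally bring in the three ``homological'' conditions via $(5)\Rightarrow(3)\Rightarrow(2)\Rightarrow(6)$ and $(5)\Rightarrow(1)\Rightarrow(6)$.

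The equivalence $(4)\Leftrightarrow(5)$ is precisely Lemma \ref{Lemma:compact-iff-divisible}. For $(5)\Rightarrow(7)$: if $A$ is divisible and $I$ is a nonzero finite ideal, choosing any $0\neq r\in I$ gives $IA\supseteq rA=A$, so $A$ is $I$-divisible, whence $\mathrm{Ext}(R[I^{-1}],A)=0$ by Lemma \ref{Lemma:divisible-iff-Ext}. The implication $(7)\Rightarrow(6)$ is immediate, since $R[1/r]=R[(r)^{-1}]$ and $(r)$ is a finite ideal. For $(6)\Rightarrow(5)$: for every nonzero $r\in R$, Lemma \ref{Lemma:divisible-iff-Ext} applied to the principal ideal $I=(r)$ turns the hypothesis $\mathrm{Ext}(R[1/r],A)=0$ into the statement that $A$ is $r$-divisible; as $r$ is arbitrary, $A$ is divisible.

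The heart of the argument is $(5)\Rightarrow$ ``$A$ is an injective $R$-module,'' which supplies both $(1)$ and $(3)$ at once. Since $A$ is flat it is torsion-free, so multiplication by each nonzero $r\in R$ is injective on $A$; if moreover $A$ is divisible this map is bijective, and hence $A$ acquires a unique compatible structure of $K$-vector space. Now the tensor-hom adjunction along $R\to K$ gives a natural isomorphism $\mathrm{Hom}_R(-,A)\cong\mathrm{Hom}_K(K\otimes_R(-),A)$; here $K\otimes_R(-)$ is exact because $K$ is flat over $R$, and $\mathrm{Hom}_K(-,A)$ is exact because every module over the field $K$ is injective. Therefore $\mathrm{Hom}_R(-,A)$ is exact, i.e., $A$ is injective as an $R$-module, hence injective in the full (fully exact) subcategory $\mathbf{Flat}(R)$ (this is $(1)$) and a fortiori pure-injective in $\mathbf{Mod}(R)$ (this is $(3)$). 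Then $(3)\Rightarrow(2)$ holds because $\mathbf{Thin}(R)$ is a fully exact subcategory of $\mathbf{Mod}(R)$: a pure short exact sequence in $\mathbf{Thin}(R)$ with first term $A$ is one in $\mathbf{Mod}(R)$, so it splits. Finally, $R[1/r]\subseteq K$ is countable and torsion-free, hence a countable flat (thus thin) module; given any extension $0\to A\to X\to R[1/r]\to 0$, Lemma \ref{Lemma:pure-exact-extension} shows that $X$ is flat and that the sequence is pure, so $(1)$ or $(2)$ forces it to split, giving $\mathrm{Ext}(R[1/r],A)=0$, which is $(6)$; this closes the cycle.

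I expect the only genuinely delicate step to be the passage $(5)\Rightarrow(1),(3)$; everything else is either cited or formal. Two minor points to verify there: that a divisible torsion-free module really does pick up a $K$-module structure (which only uses that the maps $r\cdot(-)$ are bijective), and that an injective $R$-module is injective in the category of countable modules (an embedding of countable modules extends into $A$). If one prefers to avoid the base-change computation, an alternative is to note that a $K$-vector space is a direct summand of a power $K^{\kappa}$ (every subspace has a complement), that $K$ is injective over any domain by Baer's criterion, and that arbitrary products of injectives are injective; I would use whichever turns out shorter to write out.
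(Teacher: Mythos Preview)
Your proof is correct, and the overall cycle of implications matches the paper's in spirit, but you take a genuinely different route at one key point. The paper closes the loop via $(5)\Leftrightarrow(4)\Leftrightarrow(3)\Rightarrow(2)\Rightarrow(1)\Rightarrow(7)\Rightarrow(6)\Leftrightarrow(5)$, invoking the classical equivalence of pure-injectivity and algebraic compactness (citing Prest) for $(3)\Leftrightarrow(4)$. You instead prove $(5)\Rightarrow(3)$ directly by observing that a divisible torsion-free module carries a $K$-vector space structure and is therefore an injective $R$-module via the tensor-hom adjunction along $R\to K$. This is more elementary and fully self-contained: it avoids the external reference and yields the stronger conclusion that $A$ is injective in the full module category (not merely pure-injective), from which $(1)$, $(2)$, and $(3)$ all drop out at once. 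The paper's route has the advantage of recording the equivalence $(3)\Leftrightarrow(4)$ as a known structural fact about pure-injectivity, but your argument is cleaner for the purposes of this proposition.
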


\begin{proof}
Considering that every short-exact sequence of flat modules is pure, the
implications (3)$\Rightarrow $(2)$\Rightarrow $(1)$\Rightarrow $(7)$%
\Rightarrow $(6) are obvious. The equivalence of (3) and (4) is part of \cite%
[Theorem 4.1]{prest_pure-injective_2009}. The equivalence of (4) and (5) is
Lemma \ref{Lemma:compact-iff-divisible}, and the equivalence of (5) and (6)
is Lemma \ref{Lemma:divisible-iff-Ext}.
\end{proof}

Suppose that $A$ is a countable flat module. Then $A$ has a largest
divisible submodule, denoted by $\Delta A$, which is pure in $A$. Therefore, 
$\Delta A$ is a direct summand of $A$. We say that $A$ is \emph{reduced }if $%
\Delta A=0$.

\begin{proposition}
\label{Proposition:characterize-projectives}Suppose that $C$ is a countable
flat module. The following assertions are equivalent:

\begin{enumerate}
\item $C$ is projective in $\mathbf{Flat}\left( R\right) $;

\item $C$ is projective in $\mathbf{Mod}\left( R\right) $;

\item $C$ is a direct sum of finite ideals of $R$;

\item $\mathrm{Ext}\left( C,R\right) =0$.
\end{enumerate}
\end{proposition}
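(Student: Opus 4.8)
The plan is to establish the cycle of implications $(3)\Rightarrow(2)\Rightarrow(1)\Rightarrow(4)\Rightarrow(3)$, where the first three are quick and all the substance is in $(4)\Rightarrow(3)$. For $(3)\Rightarrow(2)$, a direct sum of finite ideals is in particular a sum of finite ideals, hence projective in $\mathbf{Mod}(R)$ by Proposition \ref{Proposition:characterize-prufer}. For $(2)\Rightarrow(1)$, a projective module is flat, and since $\mathbf{Flat}(R)$ is a fully exact subcategory of $\mathbf{Mod}(R)$, every short exact sequence in $\mathbf{Flat}(R)$ is short exact in $\mathbf{Mod}(R)$, so $\mathrm{Hom}(C,-)$ carries it to an exact sequence; thus $C$ is projective in $\mathbf{Flat}(R)$. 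For $(1)\Rightarrow(4)$, since $\mathbf{Flat}(R)$ is hereditary with enough projectives and $\mathrm{Ext}=\mathrm{Ext}^{1}$ is the derived functor of $\mathrm{Hom}$ on it, a projective object $C$ has $\mathrm{Ext}(C,A)=0$ for every $A$, in particular $\mathrm{Ext}(C,R)=0$.

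For $(4)\Rightarrow(3)$, I would first reduce to finite rank. For any submodule $C_{0}\subseteq C$, the short exact sequence $0\to C_{0}\to C\to C/C_{0}\to 0$ together with the vanishing of $\mathrm{Ext}^{2}$ (hereditariness) gives a surjection $\mathrm{Ext}(C,R)\to\mathrm{Ext}(C_{0},R)$, so $\mathrm{Ext}(C,R)=0$ forces $\mathrm{Ext}(C_{0},R)=0$ for every submodule, in particular for every finite-rank pure submodule. Then I would handle the finite-rank case: let $C_{0}$ be a countable flat module of finite rank $r$ with $\mathrm{Ext}(C_{0},R)=0$, and write $C_{0}=\bigcup_{n}C_{0,n}$ as an increasing union of finitely-generated submodules; discarding finitely many terms we may assume each $C_{0,n}$ has rank $r$, so each quotient $C_{0,n+1}/C_{0,n}$ is torsion and finitely-generated, whence $\mathrm{Hom}(C_{0,n+1}/C_{0,n},R)=0$ and the dual maps $C_{0,n+1}^{\ast}\to C_{0,n}^{\ast}$ are injective. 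Each $C_{0,n}$ is finitely-generated and torsion-free, hence projective by Proposition \ref{Proposition:characterize-prufer}, so $\mathrm{Ext}(C_{0,n},R)=0$, and Roos' Theorem \ref{Lemma:Ext-and-lim1} identifies $\mathrm{Ext}(C_{0},R)$ with $\mathrm{lim}^{1}_{n}C_{0,n}^{\ast}$. The vanishing of $\mathrm{lim}^{1}$ for a tower of countable modules forces the Mittag--Leffler condition, and for a monomorphic tower this means the bonding maps $C_{0,n+1}^{\ast}\to C_{0,n}^{\ast}$ are eventually isomorphisms; dualizing and using the natural isomorphism $C_{0,n}\cong C_{0,n}^{\ast\ast}$ of Lemma \ref{Lemma:finiteflat-duality}, the inclusions $C_{0,n}\hookrightarrow C_{0,n+1}$ are eventually isomorphisms, so $C_{0}$ is finitely-generated.

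Finally I would assemble the global statement in the manner of Pontryagin's criterion. Write $C=\bigcup_{n}C_{n}$ with $C_{n}$ finitely-generated, and replace each $C_{n}$ by its pure hull $\langle C_{n}\rangle_{\ast}$ in $C$; these form an increasing exhausting chain of pure submodules, each of finite rank (bounded by the number of generators of $C_{n}$), hence each finitely-generated by the finite-rank case just proved. Purity of $C_{n}$ in $C_{n+1}$ makes $C_{n+1}/C_{n}$ flat and finitely-generated (Lemma \ref{Lemma:pure-exact-extension}), hence projective, so the inclusion $C_{n}\hookrightarrow C_{n+1}$ splits; choosing complements inductively, $C_{n+1}=C_{n}\oplus D_{n}$, yields $C=\bigoplus_{n}D_{n}$ with each $D_{n}$ a finitely-generated projective module, hence a finite direct sum of finite ideals; so $C$ is a direct sum of finite ideals, which is $(3)$. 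The main obstacle is the finite-rank case, and the two points that require care there are both consequences of $R$ being possibly non-Noetherian: finitely-generated submodules of projectives need not be projective, so torsion-freeness of the $C_{0,n}$ is genuinely used, and the pure hull of a finitely-generated module need not a priori be finitely-generated, which is precisely why the finite-rank vanishing result must be invoked before the splitting argument in the last step.
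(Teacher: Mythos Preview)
Your proof is correct, with one imprecision: the claim that $\mathrm{Ext}(C_{0},R)=0$ for \emph{every} submodule $C_{0}\subseteq C$ is not justified by hereditariness alone, since $C/C_{0}$ need not be flat and the section works over a Pr\"ufer (not necessarily Dedekind) domain, where $\mathbf{Mod}(R)$ need not be hereditary. However, you only use this for \emph{pure} submodules, where $C/C_{0}$ is flat and hereditariness of $\mathbf{Flat}(R)$ does apply, so nothing is lost.

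The paper's argument is more direct and avoids your detour through the finite-rank Pontryagin-type lemma. It writes $C=\mathrm{colim}_{n}C_{n}$ with $C_{n}$ finite, identifies $\mathrm{Ext}(C,R)\cong\mathrm{lim}^{1}_{n}C_{n}^{\ast}$, and uses Mittag--Leffler to replace $(C_{n})$ by an isomorphic inductive sequence whose dual tower $(C_{n}^{\ast})$ is epimorphic; since each $C_{n}^{\ast}$ is projective, these epimorphisms split, so the original maps $C_{n}\to C_{n+1}$ are split monomorphisms, exhibiting $C$ as a direct sum of finite projectives in one step. Your route instead first shows that every finite-rank pure submodule is finitely generated (via a monomorphic dual tower) and then runs the splitting argument on a chain of such submodules, which is essentially doing the same work twice. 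Both arguments rest on the same core idea---duality $C_{n}\mapsto C_{n}^{\ast}$ and the implication $\mathrm{lim}^{1}=0\Rightarrow$ Mittag--Leffler---but the paper's packaging is shorter.
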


\begin{proof}
(4)$\Rightarrow $(2) Suppose that $\mathrm{Ext}\left( C,R\right) =0$. Write $%
C=\mathrm{co\mathrm{lim}}_{n}C_{n}$ for some increasing sequence $\left(
C_{n}\right) $ of finite submodules of $C$. Then we have that 
\begin{equation*}
0=\mathrm{Ext}\left( C,R\right) =\mathrm{lim}_{n}^{1}\mathrm{Hom}\left(
C_{n},R\right) =\mathrm{lim}_{n}^{1}C_{n}^{\ast }
\end{equation*}
where $C_{n}^{\ast }=\mathrm{Hom}\left( C_{n},R\right) $. Notice that $%
X\mapsto X^{\ast }$ is an equivalence between the category of finite
projective modules and its opposite. As $\mathrm{lim}_{n}^{1}C_{n}^{\ast }=0$%
, the tower $\left( C_{n}^{\ast }\right) $ is isomorphic to an epimorphic
tower. Thus, after replacing $\left( C_{n}\right) $ with an isomorphic
inductive sequence, we can assume that $\left( C_{n}^{\ast }\right) $ is an
epimorphic tower. Since $C_{n}^{\ast }$ is projective, the map $%
C_{n+1}^{\ast }\rightarrow C_{n}^{\ast }$ is a split epimorphism, whence the
map $C_{n}\rightarrow C_{n+1}$ is a split monomorphism. This shows that $C$
is isomorphic to a direct sum of finite projective modules, whence
projective.

The implication (2)$\Rightarrow $(3) is part of Proposition \ref%
{Proposition:characterize-prufer}, and the other implications are trivial.
\end{proof}

The following result is a particular case of \cite[Theorem 13]%
{macias-diaz_generalization_2010}.

\begin{lemma}
\label{Lemma:Pontryagin--Hill}Let $R$ be a countable Pr\"{u}fer domain. A
module $M$ is projective if and only if there exists an increasing sequence
of projective pure submodules of $M$ with union equal to $M$.
\end{lemma}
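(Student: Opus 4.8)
For the forward implication, suppose $M$ is projective. Then $M$ is in particular flat, and by Proposition~\ref{Proposition:characterize-prufer}(6) we may write $M$ as a sum of finite ideals of $R$; refining this (via the usual Kaplansky-type argument for projectives) to a \emph{direct} sum $M=\bigoplus_{i\in I}J_i$ with each $J_i$ a finite ideal, we enumerate a cofinal $\omega$-chain of finite subsets of $I$ and set $M_n$ to be the corresponding partial direct subsums. Each $M_n$ is a direct summand of $M$, so $M/M_n$ is a direct summand of $M$, hence flat, and therefore $M_n\hookrightarrow M\twoheadrightarrow M/M_n$ is pure by the third bullet of Lemma~\ref{Lemma:pure-exact-extension}; moreover $M_n$ is projective (a finite direct sum of finite ideals) and $\bigcup_n M_n=M$. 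This gives the required increasing sequence.

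For the reverse implication, suppose $M=\bigcup_{n<\omega}M_n$ with $M_0\subseteq M_1\subseteq\cdots$, each $M_n$ projective and pure in $M$. First note $M$ is flat, being the union of a chain of flat modules, and that $M_n$ pure in $M$ implies $M_n$ pure in $M_{n+1}$; since $M_{n+1}$ is flat, Lemma~\ref{Lemma:pure-exact-extension} yields short exact sequences
\begin{equation*}
0\rightarrow M_n\rightarrow M_{n+1}\rightarrow Q_n\rightarrow 0
\end{equation*}
with each $Q_n:=M_{n+1}/M_n$ flat. The plan is then the Pontryagin argument: refine the chain to one by \emph{finite-rank} pure submodules $P_0\subseteq P_1\subseteq\cdots$ with $\bigcup_k P_k=M$, where $P_k$ is the pure hull in $M$ of a finite subset (of rank at most its cardinality, by the discussion in Section~\ref{Subsection:finiteflat}); as $P_k$ lands in some $M_m$ and $M_m$ is pure in $M$, $P_k$ is a finite-rank pure submodule of the projective module $M_m$, hence projective, and the quotients $P_{k+1}/P_k$ are finite-rank flat, hence projective over the Pr\"ufer domain $R$. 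Thus each inclusion $P_k\hookrightarrow P_{k+1}$ splits, one assembles a projective basis compatible with the chain, and concludes that $M$ is projective — using that $\mathbf{Flat}(R)$ is hereditary to control the bookkeeping (equivalently, one verifies $\mathrm{Ext}(M,R)=0$ via Roos' Theorem~\ref{Lemma:Ext-and-lim1} and Proposition~\ref{Proposition:characterize-projectives}). This reverse implication is exactly the content of the cited result \cite[Theorem~13]{macias-diaz_generalization_2010}, specialized to the present setting, and I would invoke it rather than reproduce the full transfinite bookkeeping.

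The step I expect to be the main obstacle is establishing that the finite-rank pure submodules appearing in the refined chain are genuinely projective: over a general Pr\"ufer domain a finite-rank flat module need not be projective (e.g.\ $K$ itself), so one must really use that these submodules sit \emph{purely} inside a projective module — over a Dedekind domain this is immediate from Proposition~\ref{Proposition:characterize-Dedekind}(2), but the Pr\"ufer case is the delicate point that makes the cited theorem nontrivial.
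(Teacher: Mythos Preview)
Your proposal ultimately takes the same route as the paper: the paper gives no argument at all for this lemma and simply records it as a special case of \cite[Theorem~13]{macias-diaz_generalization_2010}, which is exactly the citation you invoke for the reverse implication. One minor remark: the forward implication is trivial --- the constant sequence $M_n=M$ already witnesses it --- so your Kaplansky-decomposition argument is unnecessary (and, as written, tacitly assumes the index set $I$ is countable, which the statement does not require).
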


\subsection{Duality\label{Subsection:duality}}

Let $M$ be a module. If $M$ is either countable or pro-finite, we set $%
M^{\ast }:=\mathrm{Hom}\left( M,R\right) $. Notice that if $M$ is countable
and flat, then $M^{\ast }$ is pro-finiteflat, while if $M$ is
pro-finiteflat, then $M^{\ast }$ is countable and flat. Furthermore, there
is a canonical morphism $M\rightarrow M^{\ast \ast }$.

\begin{lemma}
\label{Lemma:duality-Hom}Let $R$ be a countable Pr\"{u}fer domain.

\begin{enumerate}
\item If $M$ is either countable projective or pro-finiteflat, then the
canonical morphism $M\rightarrow M^{\ast \ast }$ is an isomorphism.

\item The functor $M\mapsto M^{\ast }$ establishes a duality between
countable projective modules and pro-finiteflat modules.
\end{enumerate}
\end{lemma}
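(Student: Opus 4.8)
The plan is to reduce everything to the finite–rank case treated in Lemma~\ref{Lemma:finiteflat-duality}, by commuting the functor $\mathrm{Hom}(-,R)$ with the (co)limit presentations of countable projective modules and of pro-finiteflat modules. The key preliminary step I would isolate is an \emph{interchange lemma}: if $P\cong\mathrm{lim}_{k}P_{k}$ is a pro-finiteflat module presented by an epimorphic tower $(P_{k})$ of finite flat modules, then for every discrete module $N$ the natural map $\mathrm{colim}_{k}\mathrm{Hom}(P_{k},N)\to\mathrm{Hom}(P,N)$ is an isomorphism. Injectivity is immediate from the surjectivity of the bonding maps; for surjectivity one uses that the submodules $\mathrm{Ker}(P\to P_{k})$ form a neighbourhood basis of $0$ in $P$ and that $N$ is discrete, so any continuous homomorphism $P\to N$ kills some such kernel and, since $P\to P_{k}$ is onto, factors through $P_{k}$. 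Dually and much more easily, if $C=\bigoplus_{n}I_{n}$ is a countable projective module (Proposition~\ref{Proposition:characterize-projectives}(3)), written as the increasing union $C=\mathrm{colim}_{k}C_{k}$ with $C_{k}=I_{0}\oplus\cdots\oplus I_{k}$ and split inclusions, then $C^{\ast}=\mathrm{lim}_{k}C_{k}^{\ast}$, and each $C_{k}^{\ast}$ is finite flat.

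Next I would show that $(-)^{\ast}$ interchanges the two classes. For $C$ as above, the bonding maps $C_{k+1}^{\ast}\to C_{k}^{\ast}$ are split epimorphisms (duals of the split monics $C_{k}\to C_{k+1}$), so $(C_{k}^{\ast})$ is an epimorphic tower of finite flat modules and $C^{\ast}=\mathrm{lim}_{k}C_{k}^{\ast}$ is pro-finiteflat. Conversely, let $P=\mathrm{lim}_{k}P_{k}$ be pro-finiteflat with $(P_{k})$ epimorphic. Each $P_{k+1}\to P_{k}$ is a split surjection, since its target is finite projective, so its kernel $L_{k}$ is a finite projective direct summand of $P_{k+1}$, the dual map $P_{k}^{\ast}\to P_{k+1}^{\ast}$ is a split monic, and more generally $P_{j}^{\ast}/P_{k}^{\ast}\cong L_{jk}^{\ast}$ is finite flat for $j\ge k$ (dualising the split sequence $0\to\mathrm{Ker}(P_{j}\to P_{k})\to P_{j}\to P_{k}\to 0$). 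By the interchange lemma, $P^{\ast}=\mathrm{colim}_{k}P_{k}^{\ast}$ is the increasing union of the finite projective submodules $P_{k}^{\ast}$; each quotient $P^{\ast}/P_{k}^{\ast}=\mathrm{colim}_{j}(P_{j}^{\ast}/P_{k}^{\ast})$ is a colimit of finite flat modules, hence flat, so each $P_{k}^{\ast}$ is pure in $P^{\ast}$ by Lemma~\ref{Lemma:pure-exact-extension}, and Lemma~\ref{Lemma:Pontryagin--Hill} then yields that $P^{\ast}$ is projective; it is clearly countable.

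For the double dual I would iterate. For $C$ countable projective, $C^{\ast\ast}=(\mathrm{lim}_{k}C_{k}^{\ast})^{\ast}\cong\mathrm{colim}_{k}C_{k}^{\ast\ast}$ by the interchange lemma applied to the pro-finiteflat module $C^{\ast}$; the evaluation maps $\eta_{C_{k}}\colon C_{k}\to C_{k}^{\ast\ast}$ are isomorphisms by Lemma~\ref{Lemma:finiteflat-duality}, and by naturality of $\eta$ they assemble to an isomorphism $C=\mathrm{colim}_{k}C_{k}\to\mathrm{colim}_{k}C_{k}^{\ast\ast}=C^{\ast\ast}$ which is exactly the canonical map $\eta_{C}$. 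Symmetrically, for $P$ pro-finiteflat, $P^{\ast\ast}=(\mathrm{colim}_{k}P_{k}^{\ast})^{\ast}=\mathrm{lim}_{k}P_{k}^{\ast\ast}$ and $\eta_{P}=\mathrm{lim}_{k}\eta_{P_{k}}$ is an isomorphism. This gives (1). For (2), $(-)^{\ast}$ is a contravariant functor carrying countable projective modules to pro-finiteflat modules and pro-finiteflat modules back to countable projective modules, and by (1) the composite $(-)^{\ast\ast}$ is naturally isomorphic to the identity on each of the two categories, so $(-)^{\ast}$ is a duality.

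The only genuinely delicate points I expect are the interchange lemma—specifically the surjectivity argument and the verification that the relevant towers are epimorphic with \emph{split} bonding maps, which is what makes the cokernels finite flat—together with the bookkeeping that $\eta$ is compatible with all the (co)limit identifications (so that the assembled isomorphism really is the canonical double-dual map). Everything else is a direct appeal to Lemmas~\ref{Lemma:finiteflat-duality}, \ref{Lemma:pure-exact-extension}, \ref{Lemma:Pontryagin--Hill} and Propositions~\ref{Proposition:characterize-prufer}, \ref{Proposition:characterize-projectives}.
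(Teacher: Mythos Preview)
Your proof is correct and follows essentially the same route as the paper: reduce to the finite flat case via Lemma~\ref{Lemma:finiteflat-duality} by commuting $\mathrm{Hom}(-,R)$ with the (co)limit presentations. The paper compresses this into one line by noting that a countable projective module is a direct \emph{sum} of finite flat modules and a pro-finiteflat module is a direct \emph{product} of finite flat modules (the latter using exactly your splitting observation for the epimorphic tower), so that $(-)^{\ast}$ visibly swaps $\bigoplus$ and $\prod$ and the double dual reduces termwise to Lemma~\ref{Lemma:finiteflat-duality}. Your detour through Lemma~\ref{Lemma:Pontryagin--Hill} to show $P^{\ast}$ is projective is therefore unnecessary: once you have $P\cong\prod_{k}L_{k}$ with each $L_{k}$ finite flat, your interchange lemma gives $P^{\ast}\cong\bigoplus_{k}L_{k}^{\ast}$ directly, which is already a direct sum of finite projectives.
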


\begin{proof}
(1) A countable projective module is a direct sum of finite flat modules,
while a pro-finiteflat module is a direct product of finite flat modules.
The conclusion easily follows from this and Lemma \ref%
{Lemma:finiteflat-duality}.

(2) This is a consequence of (1).
\end{proof}

Let $M$ be a module. If $M$ is either countable coreduced and flat, or
phantom pro-finiteflat, define $M^{\vee }:=\mathrm{Ext}\left( M,R\right) $.\
Notice that if $M$ is countable coreduced and flat, then $M^{\vee }$ is
phantom pro-finiteflat, while if $M$ is phantom pro-finiteflat, then $%
M^{\vee }$ is countable and flat. Furthermore, there is a canonical morphism 
$M\mapsto M^{\vee \vee }$, obtained by considering a projective resolution $%
S\rightarrow P\rightarrow M$ of $M$ and, adopting the notation above,
considering the identifications%
\begin{equation*}
M^{\vee }\cong \frac{S^{\ast }}{P^{\ast }}
\end{equation*}%
and%
\begin{equation*}
M^{\vee \vee }\cong \frac{S^{\ast \ast }}{P^{\ast \ast }}\text{.}
\end{equation*}

\begin{proposition}
\label{Proposition:duality-Ext}Let $R$ be a countable Pr\"{u}fer domain.

\begin{enumerate}
\item If $M$ is either countable coreduced and flat or phantom
pro-finiteflat, then the canonical morphism $M\rightarrow M^{\vee \vee }$ is
an isomorphism;

\item The functor $M\mapsto M^{\vee }$ establishes a duality between
countable coreduced flat modules and phantom pro-finiteflat modules.
\end{enumerate}
\end{proposition}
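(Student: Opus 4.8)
The plan is to reduce the whole statement to two explicit computations of $\mathrm{Ext}(-,R)$, combined with the self-duality of finite flat modules (Lemma \ref{Lemma:finiteflat-duality}, Lemma \ref{Lemma:duality-Hom}) and the fact, established above, that a countable flat module is coreduced if and only if it admits no nonzero homomorphism to $R$.

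First I record the two computations, each with its naturality. (A) If $C$ is a countable flat module, write $C=\mathrm{colim}_{n}C_{n}$ with $(C_{n})$ an increasing sequence of finite flat (hence projective) submodules. Since $C$ is flat, $\mathrm{Ext}(C,R)=\mathrm{PExt}(C,R)$, and Roos' Theorem \ref{Lemma:Ext-and-lim1}, together with $\mathrm{Ext}(C_{n},R)=0$, gives a natural isomorphism $C^{\vee}\cong\mathrm{lim}^{1}_{n}C_{n}^{\ast}$, where $\boldsymbol{C}^{\ast}=(C_{n}^{\ast})_{n}$ is the tower of finite flat modules with bonding maps dual to the inclusions $C_{n}\hookrightarrow C_{n+1}$; moreover $\mathrm{Hom}(C,R)=\mathrm{lim}_{n}C_{n}^{\ast}$, so $C$ is coreduced exactly when $\boldsymbol{C}^{\ast}$ is reduced. (B) If $\boldsymbol{A}=(A^{(n)})$ is a reduced tower of finite flat modules and $M=\mathrm{lim}^{1}\boldsymbol{A}$, the explicit description of $\mathrm{lim}^{1}$ yields a projective resolution $0\to Q\xrightarrow{\phi}Q\to M\to 0$ in $\mathrm{LH}(\boldsymbol{\Pi}(\mathbf{FinFlat}(R)))$, where $Q=\prod_{n}A^{(n)}$ is projective by Theorem \ref{Theorem:hd-2}(5) and $\phi$ is the map $(x_{n})\mapsto(x_{n}-p^{(n,n+1)}(x_{n+1}))$. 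Applying $\mathrm{Hom}(-,R)$ and using that a continuous homomorphism $\prod_{n}A^{(n)}\to R$ factors through a finite subproduct---so that $Q^{\ast}\cong\bigoplus_{n}(A^{(n)})^{\ast}$ is a countable projective module and $\phi^{\ast}$ becomes $\mathrm{id}$ minus the shift along the $(p^{(n,n+1)})^{\ast}$ on $\bigoplus_{n}(A^{(n)})^{\ast}$---one obtains $M^{\vee}=\mathrm{Ext}(M,R)\cong\mathrm{Coker}(\phi^{\ast})\cong\mathrm{colim}_{n}(A^{(n)})^{\ast}$, a countable flat module, with $\mathrm{Hom}(M,R)=\mathrm{Ker}(\phi^{\ast})=0$; and $\mathrm{Hom}(M^{\vee},R)\cong\mathrm{lim}_{n}(A^{(n)})^{\ast\ast}\cong\mathrm{lim}_{n}A^{(n)}=0$ by Lemma \ref{Lemma:finiteflat-duality} and reducedness of $\boldsymbol{A}$, so $M^{\vee}$ is coreduced.

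Granting (A) and (B), part (1) is immediate. If $M$ is countable coreduced flat, write $M=\mathrm{colim}_{n}M_{n}$ as in (A); then $\boldsymbol{M}^{\ast}$ is reduced, $M^{\vee}\cong\mathrm{lim}^{1}\boldsymbol{M}^{\ast}$, and (B) applied to $\boldsymbol{A}=\boldsymbol{M}^{\ast}$ gives $M^{\vee\vee}\cong\mathrm{colim}_{n}M_{n}^{\ast\ast}\cong\mathrm{colim}_{n}M_{n}\cong M$ via the natural isomorphisms $M_{n}\cong M_{n}^{\ast\ast}$. If $M$ is phantom pro-finiteflat, write $M\cong\mathrm{lim}^{1}\boldsymbol{A}$ for a reduced tower $\boldsymbol{A}=(A^{(n)})$ of finite flat modules (Proposition \ref{Proposition:phantom-tower}(2)); then (B) gives $M^{\vee}\cong\mathrm{colim}_{n}(A^{(n)})^{\ast}$, and (A) applied to this countable flat module gives $M^{\vee\vee}\cong\mathrm{lim}^{1}_{n}((A^{(n)})^{\ast})^{\ast}\cong\mathrm{lim}^{1}_{n}A^{(n)}=\mathrm{lim}^{1}\boldsymbol{A}\cong M$. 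For part (2): by (A) and the reducedness criterion together with Proposition \ref{Proposition:phantom-tower}(2), the functor $(-)^{\vee}=\mathrm{Ext}(-,R)$ carries countable coreduced flat modules to phantom pro-finiteflat modules, and by (B) it carries phantom pro-finiteflat modules to countable coreduced flat modules; since by part (1) both composites $(-)^{\vee\vee}$ are naturally isomorphic to the identity via the canonical morphism, $(-)^{\vee}$ is a duality between the two categories.

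The main work lies in identifying, for part (1), the isomorphism $M\cong M^{\vee\vee}$ produced above with the canonical morphism $M\to M^{\vee\vee}$ of the definition. One must track through the chain of two computations the biduality isomorphisms $S\cong S^{\ast\ast}$, $P\cong P^{\ast\ast}$ of Lemma \ref{Lemma:duality-Hom} for a projective resolution $0\to S\to P\to M\to 0$, checking that they are compatible with $S\to P$ (the naturality of $\mathrm{id}\Rightarrow(-)^{\ast\ast}$) and so induce on the relevant cokernels precisely the canonical map; the bookkeeping is delicate because $M^{\vee}$ is a genuine object of a left heart when $M$ is coreduced flat (its ``projective resolution'' $0\to P^{\ast}\to S^{\ast}\to M^{\vee}\to 0$ then lives in $\mathrm{LH}(\boldsymbol{\Pi}(\mathbf{FinFlat}(R)))$, with $P^{\ast}\to S^{\ast}$ a non-admissible monomorphism), whereas $M^{\vee}$ is an honest countable flat module when $M$ is phantom pro-finiteflat, and $\mathrm{Ext}$ must be computed consistently across these categories. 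A smaller but essential technical point, feeding (B), is the identification $\mathrm{Hom}(\prod_{n}A^{(n)},R)\cong\bigoplus_{n}(A^{(n)})^{\ast}$ and the ensuing description of $\phi^{\ast}$, which is exactly what makes $\mathrm{Ext}(\mathrm{lim}^{1}\boldsymbol{A},R)$ come out as the colimit $\mathrm{colim}_{n}(A^{(n)})^{\ast}$.
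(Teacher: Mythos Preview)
Your proof is correct and, at its core, follows the same idea as the paper: reduce the biduality $M\cong M^{\vee\vee}$ to Lemma~\ref{Lemma:duality-Hom} applied to the terms of a projective resolution of $M$. However, your presentation is considerably more elaborate than necessary and obscures why the result is almost immediate.

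The paper's proof is a single line: since the canonical morphism $M\to M^{\vee\vee}$ is \emph{defined} by applying the natural biduality maps $S\to S^{\ast\ast}$ and $P\to P^{\ast\ast}$ to a projective resolution $0\to S\to P\to M\to 0$ and passing to cokernels (this is spelled out in the paragraph just before the proposition), and since these maps are isomorphisms by Lemma~\ref{Lemma:duality-Hom}, the induced map on cokernels is an isomorphism; part~(2) then follows formally. The ``delicate bookkeeping'' you flag---identifying your constructed isomorphism with the canonical one---is therefore not a genuine difficulty but an artifact of your indirect route through the explicit $\mathrm{lim}^{1}$/colimit descriptions (A) and (B): had you worked directly with the abstract projective resolution, the canonical morphism would be an isomorphism by construction. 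That said, your computations (A) and (B) are correct and useful in their own right; they amount to choosing the specific resolution $\bigoplus_{n}M_{n}\to\bigoplus_{n}M_{n}\to M$ and its dual $\prod_{n}M_{n}^{\ast}\to\prod_{n}M_{n}^{\ast}\to M^{\vee}$ and then applying Lemma~\ref{Lemma:duality-Hom} termwise. The identification $\mathrm{Hom}(\prod_{n}A^{(n)},R)\cong\bigoplus_{n}(A^{(n)})^{\ast}$ you single out is precisely the content of Lemma~\ref{Lemma:duality-Hom}(1) for pro-finiteflat modules, so no separate argument is needed there either.
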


\begin{proof}
The first assertions easily follows from Lemma \ref{Lemma:duality-Hom}. The
second assertion is a consequence of the first one.
\end{proof}

The following corollary is in fact a particular instance of Proposition \ref%
{Proposition:phantom-category}(2).

\begin{corollary}
\label{Corollary:fully-faithful-lim1}Let $R$ be a countable Pr\"{u}fer
domain. The functor $\mathrm{lim}^{1}$ establishes an equivalence from the
category of reduced towers of finite flat modules to the category of phantom
pro-finiteflat modules.
\end{corollary}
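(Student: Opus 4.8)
The plan is to read the statement off as the special case $\mathcal{A}=\mathbf{FinFlat}(R)$ of Proposition~\ref{Proposition:phantom-tower}(2) (itself an instance of Proposition~\ref{Proposition:phantom-category}(2)), after checking the hypothesis of that proposition and identifying the phantom category it produces. First I would note that, $R$ being a Pr\"{u}fer domain, every finite flat module is finite torsion-free and hence projective, by the equivalence of (1) and (3) in Proposition~\ref{Proposition:characterize-prufer}. Thus every object of the quasi-abelian category $\mathbf{FinFlat}(R)$ is projective, so Proposition~\ref{Proposition:phantom-tower}(2) applies with $\mathcal{A}=\mathbf{FinFlat}(R)$ and yields that $\mathrm{lim}^{1}$ restricts to an equivalence of categories from $\mathrm{P}(\mathbf{FinFlat}(R))$, the category of reduced towers of finite flat modules, onto the phantom category $\mathrm{Ph}(\boldsymbol{\Pi}(\mathbf{FinFlat}(R)))$ of $\boldsymbol{\Pi}(\mathbf{FinFlat}(R))$.

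It then remains to recognize $\mathrm{Ph}(\boldsymbol{\Pi}(\mathbf{FinFlat}(R)))$ as the category of phantom pro-finiteflat modules. Recall from Theorem~\ref{Theorem:hd-2}(5) that $\boldsymbol{\Pi}(\mathbf{FinFlat}(R))$ is a hereditary quasi-abelian category with enough projectives whose objects are precisely the pro-finiteflat Polish modules, whose projective objects are the countable products of finite sums of finite ideals, and which (being a thick subcategory of $\boldsymbol{\Pi}(\mathbf{Flat}(R))$) is closed under closed submodules and under extensions. By Lemma~\ref{Lemma:left-heart-projectives}, every object of $\mathrm{LH}(\boldsymbol{\Pi}(\mathbf{FinFlat}(R)))$ is isomorphic to one of the form $P/S=\mathrm{Coker}_{\mathrm{LH}}(S\rightarrow P)$ for a monic arrow $S\rightarrow P$ in $\boldsymbol{\Pi}(\mathbf{FinFlat}(R))$ with $P$ projective; applying the left adjoint $\kappa$ of the inclusion $\boldsymbol{\Pi}(\mathbf{FinFlat}(R))\rightarrow\mathrm{LH}(\boldsymbol{\Pi}(\mathbf{FinFlat}(R)))$, which preserves cokernels and fixes $\boldsymbol{\Pi}(\mathbf{FinFlat}(R))$, gives $\kappa(P/S)=\mathrm{Coker}_{\boldsymbol{\Pi}(\mathbf{FinFlat}(R))}(S\rightarrow P)$, and this vanishes exactly when $S$ has dense image in $P$. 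Since $P$ is pro-finiteflat and $S$, being a closed submodule of $P$, is again pro-finiteflat, the objects of $\mathrm{Ph}(\boldsymbol{\Pi}(\mathbf{FinFlat}(R)))$ are thus exactly the modules with a Polish cover $\hat{G}/N$ with both $\hat{G}$ and $N$ pro-finiteflat and $N$ dense in $\hat{G}$, i.e. the phantom pro-finiteflat modules.

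For the converse inclusion — that every phantom pro-finiteflat module arises this way — I would, given $\hat{G}/N$ with $\hat{G},N$ pro-finiteflat and $N$ dense in $\hat{G}$, choose a strict epimorphism $\pi\colon P\rightarrow\hat{G}$ with $P$ projective in $\boldsymbol{\Pi}(\mathbf{FinFlat}(R))$ (enough projectives) and set $S=\pi^{-1}(N)$. Then $\hat{G}/N\cong P/S$; the submodule $S$ fits into a short exact sequence $0\rightarrow\mathrm{Ker}(\pi)\rightarrow S\rightarrow N\rightarrow 0$ of pro-finiteflat modules, hence is pro-finiteflat by the closure properties in Theorem~\ref{Theorem:hd-2}(5); and $S$ has dense image in $P$ because $\pi(S)=N$ is dense in $\hat{G}$ and $\pi$ is a strict (hence open, surjective) morphism. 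So $P/S$ lies in $\mathrm{Ph}(\boldsymbol{\Pi}(\mathbf{FinFlat}(R)))$, completing the identification; composing it with the equivalence of the first paragraph gives the claim.

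I expect the only step needing genuine care to be the middle one: pinning down $\mathrm{Ph}(\boldsymbol{\Pi}(\mathbf{FinFlat}(R)))$ concretely as the modules with a Polish cover $\hat{G}/N$ with $\hat{G},N$ both pro-finiteflat and $N$ dense in $\hat{G}$, which requires the characterization of left hearts via projective presentations (Lemma~\ref{Lemma:left-heart-projectives}), the behaviour of $\kappa$ on such presentations, and the stability of the class of pro-finiteflat modules under closed submodules and extensions from Theorem~\ref{Theorem:hd-2}(5). Everything else is a direct substitution into Proposition~\ref{Proposition:phantom-tower}(2) together with the Pr\"{u}fer-domain fact that finite flat modules are projective.
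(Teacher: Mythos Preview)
There is a genuine gap in your identification of $\mathrm{Ph}(\boldsymbol{\Pi}(\mathbf{FinFlat}(R)))$ with the category of phantom pro-finiteflat modules. You assert that $\kappa(P/S)=\mathrm{Coker}_{\boldsymbol{\Pi}(\mathbf{FinFlat}(R))}(S\to P)$ vanishes exactly when $S$ has dense image in $P$, but this is false: the cokernel in $\boldsymbol{\Pi}(\mathbf{FinFlat}(R))$ is the quotient of $P$ by the \emph{closed pure hull} of the image of $S$ (see the proof of Theorem~\ref{Theorem:hd-2}(4)), not by its closure, since quotients in this category must remain torsion-free. Hence $\kappa(P/S)=0$ only says that the closed pure hull of $S$ is all of $P$, which is strictly weaker than density. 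For a concrete witness with $R=\mathbb{Z}$, take $P=\mathbb{Z}^{\omega}$ and $S=2\mathbb{Z}^{\omega}$: both are pro-finiteflat, the inclusion is a (non-strict) monomorphism in $\boldsymbol{\Pi}(\mathbf{FinFlat}(\mathbb{Z}))$, and the pure hull of $2\mathbb{Z}^{\omega}$ is $\mathbb{Z}^{\omega}$, so $\kappa(P/S)=0$; yet $S$ is a proper closed (not dense) submodule and $P/S\cong(\mathbb{Z}/2\mathbb{Z})^{\omega}$ is a genuine Polish module, not a phantom one. Thus the abstract phantom category $\mathrm{Ph}(\boldsymbol{\Pi}(\mathbf{FinFlat}(R)))$ is strictly larger than the category of phantom pro-finiteflat modules in the paper's sense (modules with a Polish cover $G/N$ with $G,N$ pro-finiteflat and $N$ dense in $G$), and the final step of your argument does not go through. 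The underlying reason is that the inclusion $\boldsymbol{\Pi}(\mathbf{FinFlat}(R))\hookrightarrow\mathbf{PolMod}(R)$, while exact and finitely continuous, is \emph{not} finitely cocontinuous, so the two left-adjoint functors $\kappa$ on the respective left hearts disagree.

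The paper's proof sidesteps this entirely by not attempting to match the abstract phantom category with the concrete one. Instead it passes through the duality $M\mapsto M^{\vee}=\mathrm{Ext}(M,R)$ of Proposition~\ref{Proposition:duality-Ext}: one first notes that $M\mapsto(\mathrm{Hom}(M_{n},R))_{n}$ gives a contravariant equivalence between countable coreduced flat modules and reduced towers of finite flat modules, under which $\mathrm{lim}^{1}$ corresponds to $\mathrm{Ext}(-,R)$ via Theorem~\ref{Theorem:phantom-Ext}; then Proposition~\ref{Proposition:duality-Ext} furnishes the equivalence between countable coreduced flat modules and phantom pro-finiteflat modules directly, using the explicit projective resolutions and Lemma~\ref{Lemma:duality-Hom}. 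This route never needs to compute cokernels internal to $\boldsymbol{\Pi}(\mathbf{FinFlat}(R))$ and so avoids the pure-hull subtlety altogether.
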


\begin{proof}
Choosing for each countable flat module $M$ a cofinal increasing sequence $%
\left( M_{n}\right) $ of finite submodules defines a functor $M\mapsto
\left( \mathrm{Hom}\left( M_{n},R\right) \right) $ that establishes an
equivalence between countable coreduced flat modules and reduced towers of
finite flat modules. By Theorem \ref{Theorem:phantom-Ext}, via this
equivalence, the functor $\mathrm{lim}^{1}$ corresponds to $\mathrm{Ext}%
\left( -,R\right) $. The conclusion thus follow from Proposition \ref%
{Proposition:duality-Ext}.
\end{proof}

As a particular instance of Proposition \ref{Proposition:phantom-category}%
(1) we also have the following:

\begin{proposition}
\label{Proposition:fully-faithful}Let $R$ be a countable Pr\"{u}fer domain.
The functor $\mathrm{lim}^{1}$ from the category $\mathrm{Ph}\left( \mathbf{%
Mod}_{\aleph _{0}}\left( R\right) \right) $ of reduced towers of
countably-presented modules to the category of phantom
pro-countably-presented modules is faithful and essentially surjective.
Furthermore:

\begin{itemize}
\item the essential image of the category of reduced towers of flat modules
is the category of phantom pro-countableflat modules;

\item the essential image of the category of reduced towers of finiteflat
modules is the category of phantom pro-finiteflat modules.
\end{itemize}

For a phantom pro-countably-presented module $G/N$, let $\left( U_{n}\right) 
$ be a basis open submodules of $G$, and $\left( W_{n}\right) $ be a basis
of open submodules of $N$ with $W_{n}\subseteq U_{n}$ for every $n\in \omega 
$. Let $A^{\left( n\right) }$ be the countably-presented module%
\begin{equation*}
\frac{U_{n}\cap N}{W_{n}}\text{.}
\end{equation*}%
This defines a reduced tower $\boldsymbol{A}$ of countably-presented modules
with $\mathrm{lim}^{1}\boldsymbol{A}\cong G/N$.
\end{proposition}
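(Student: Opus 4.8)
The plan is to get faithfulness and essential surjectivity from the general torsion-pair machinery, to check the explicit construction by hand, and then to use that construction to pin down the two refined essential images.

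First I would apply Proposition \ref{Proposition:phantom-tower}(1) with $\mathcal{A}=\mathbf{Mod}_{\aleph_0}(R)$, which is a quasi-abelian (in fact abelian) category by the discussion in Section \ref{Section:countable}. This already gives that $\mathrm{lim}^1$ restricted to the category of reduced towers of countably-presented modules is faithful, with essential image the phantom category $\mathrm{Ph}(\boldsymbol{\Pi}(\mathbf{Mod}_{\aleph_0}(R)))$. It then remains to recognise this phantom category as the category of phantom pro-countably-presented Polish modules: since $R$ is Pr\"ufer, $\boldsymbol{\Pi}(\mathbf{Mod}_{\aleph_0}(R))$ is a thick subcategory of $\mathbf{PolMod}(R)$ by Theorem \ref{Theorem:hd-2}(2), so by the description of the left heart of a thick subcategory of Polish modules recalled in Section \ref{Section:modules-with-polish-cover}, $\mathrm{LH}(\boldsymbol{\Pi}(\mathbf{Mod}_{\aleph_0}(R)))$ consists of the modules with a pro-countably-presented Polish cover $\hat G/N$, and the phantom objects are exactly those with $N$ dense in $\hat G$.

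Next I would verify the explicit realisation. Given a phantom pro-countably-presented Polish module $G/N$, choose a decreasing basis $(U_n)$ of open submodules of $G$ with $G/U_n$ countably presented, and, using continuity of $N\to G$, a decreasing basis $(W_n)$ of open submodules of $N$ with $W_n\subseteq U_n$. Since $G/U_n$ is discrete and $N$ is dense in $G$, the image of $N$ in $G/U_n$ is all of $G/U_n$, so for each $n$ there is a short exact sequence $0\to A^{(n)}\to N/W_n\to G/U_n\to 0$ with $A^{(n)}=(U_n\cap N)/W_n$, and the inclusions induce bonding maps making $\boldsymbol{A}=(A^{(n)})$ a tower. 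The towers $(N/W_n)$ and $(G/U_n)$ are epimorphic, hence Mittag-Leffler, so they have vanishing $\mathrm{lim}^1$ and limits $N$ and $G$ respectively. Feeding the short exact sequence of towers $0\to\boldsymbol{A}\to(N/W_n)\to(G/U_n)\to 0$ into the six-term exact sequence relating $\mathrm{lim}$ and $\mathrm{lim}^1$ for towers of countable modules from Section \ref{Section:lim1} gives $\mathrm{lim}\boldsymbol{A}\cong\ker(N\hookrightarrow G)=0$, so $\boldsymbol{A}$ is reduced, and $\mathrm{lim}^1\boldsymbol{A}\cong G/\mathrm{im}(N\hookrightarrow G)=G/N$ in $\mathrm{LH}(\boldsymbol{\Pi}(\mathbf{Mod}(R)))$; moreover each $A^{(n)}$ is a submodule of the countably-presented module $N/W_n$, hence countably presented.

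Finally I would treat the refined essential images. For the forward inclusion, realise $\mathrm{lim}^1\boldsymbol{A}$ as the cokernel of $\mathrm{id}-\mathrm{shift}\colon\prod_n A^{(n)}\to\prod_n A^{(n)}$; reducedness makes this map injective, so $\mathrm{lim}^1\boldsymbol{A}$ has a Polish cover $\hat G/N$ with $\hat G\cong N\cong\prod_n A^{(n)}$, which is pro-countableflat when the $A^{(n)}$ are flat and pro-finiteflat when they are finiteflat; hence $\mathrm{lim}^1\boldsymbol{A}$ is a phantom pro-countableflat (resp.\ pro-finiteflat) module. For the reverse inclusion, run the construction of the previous paragraph on a phantom pro-countableflat (resp.\ pro-finiteflat) module $G/N$, but choose the $U_n$ to be pure open submodules of $G$ (with $G/U_n$ finiteflat in the finiteflat case) and the $W_n$ pure open in $N$ (with $N/W_n$ finiteflat in that case), which is possible by Lemma \ref{Lemma:characterize-proflat} and Lemma \ref{Lemma:pure-proflat}. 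Then $U_n\cap N$ is pure in $N$: if $rz\in U_n\cap N$ with $z\in N$, purity of $U_n$ in $G$ yields $z'\in U_n$ with $rz'=rz$, and torsion-freeness of $N$ forces $z=z'\in U_n\cap N$; since $W_n\subseteq U_n\cap N$ is pure in $N$ it is pure in $U_n\cap N$, so by Lemma \ref{Lemma:pure-exact-extension} the quotient $A^{(n)}=(U_n\cap N)/W_n$ is flat (and finite, hence finiteflat, in the finiteflat case, being a submodule of the finite module $N/W_n$). The main obstacle — indeed essentially the only point requiring care — is precisely this reverse inclusion: ensuring that the covering submodules can be chosen pure (so that the ambient machinery of Sections \ref{Section:pro-countable}, \ref{Section:modules-with-polish-cover}, and \ref{Section:lim1} applies) and that the resulting $A^{(n)}$ genuinely land in $\mathbf{Flat}(R)$, respectively $\mathbf{FinFlat}(R)$; everything else is bookkeeping with the Polish-cover presentations and the six-term exact sequence.
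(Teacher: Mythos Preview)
Your proof is correct and, for the explicit construction of $\boldsymbol{A}$ with $\mathrm{lim}^1\boldsymbol{A}\cong G/N$, follows exactly the paper's argument via the short exact sequence of towers $0\to\boldsymbol{A}\to(N/W_n)\to(G/U_n)\to 0$ and the six-term $\mathrm{lim}/\mathrm{lim}^1$ sequence; the paper in fact proves only this ``last assertion'' and defers faithfulness and essential surjectivity to Proposition \ref{Proposition:phantom-category}(1), while you supply the verification of the flat and finiteflat essential images that the paper omits entirely. One cosmetic slip: in the purity argument for $U_n\cap N$ in $N$ you should invoke torsion-freeness of $G$ rather than of $N$, since $z'\in U_n\subseteq G$ need not lie in $N$ a priori; the conclusion is unaffected because $G$ is flat, hence torsion-free.
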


\begin{proof}
We just prove the last assertion. For every $k\in \omega $, the inclusion $%
N\rightarrow G$ induces a continuous homomorphism $N/W_{k}\rightarrow
G/U_{k} $, which is surjective since $\left\{ 0\right\} $ is dense in $G/N$%
.\ For every $k\in \omega $ we have a short exact sequence%
\begin{equation*}
Q^{\left( k\right) }\rightarrow N/W_{k}\rightarrow G/U_{k}
\end{equation*}%
where 
\begin{equation*}
Q^{\left( k\right) }=\frac{\left( N\cap U_{k}\right) }{W_{k}}
\end{equation*}%
Taking the limit we obtain an exact sequence%
\begin{equation*}
N=\mathrm{\mathrm{\mathrm{\mathrm{lim}}}}_{k}\left( N/W_{k}\right)
\rightarrow \mathrm{lim}_{k}\left( G/U_{k}\right) \rightarrow \mathrm{lim}%
^{1}Q^{\left( k\right) }\rightarrow \mathrm{lim}^{1}N/W_{k}=0\text{.}
\end{equation*}%
This yields an isomorphism%
\begin{equation*}
\mathrm{lim}^{1}Q^{\left( k\right) }\cong G/N\text{.}
\end{equation*}
\end{proof}

\subsection{Duality over a PID}

In this subsection we assume that $R$ is a countable PID with field of
fractions $K$, to extend the duality from Section \ref{Subsection:duality}.

\begin{lemma}
Let $R$ be a countable PID. The functor $\mathrm{Hom}\left( -,K/R\right) $
from countable modules to pro-finite modules is faithful
\end{lemma}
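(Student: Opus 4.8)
The plan is to deduce faithfulness from the fact that $K/R$ is an injective cogenerator of the category of $R$-modules. Since $\mathrm{Hom}(-,K/R)$ is additive, it is faithful as soon as $\mathrm{Hom}(h,K/R)\neq 0$ for every nonzero homomorphism $h\colon M\to N$ of countable modules. Choosing $m\in M$ with $n:=h(m)\neq 0$, it then suffices to produce $\varphi\in\mathrm{Hom}(N,K/R)$ with $\varphi(n)\neq 0$, since in that case $\mathrm{Hom}(h,K/R)(\varphi)=\varphi\circ h$ is nonzero, as $(\varphi\circ h)(m)=\varphi(n)\neq 0$. (We tacitly assume $R$ is not a field, i.e.\ $K/R\neq 0$, which is surely intended; for $R=K$ the functor $\mathrm{Hom}(-,K/R)$ is the zero functor and is not faithful.)

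First I would build a homomorphism on the cyclic submodule $Rn\subseteq N$ and then extend it. Since $n\neq 0$, its annihilator $\mathfrak{o}(n)$ is a proper ideal, and $Rn\cong R/\mathfrak{o}(n)$. As $R$ is not a field there is a nonzero non-unit $b\in R$ with $\mathfrak{o}(n)\subseteq(b)$: if $\mathfrak{o}(n)=0$ take any nonzero non-unit $b$; if $\mathfrak{o}(n)=(a)$ with $a\neq 0$, then $a$ is a non-unit and one takes $b=a$. The element $\tfrac{1}{b}+R\in K/R$ is nonzero (because $b$ is a non-unit) and is annihilated by $b$, hence by $\mathfrak{o}(n)\subseteq(b)$. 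Consequently $1+\mathfrak{o}(n)\mapsto\tfrac{1}{b}+R$ defines a homomorphism $R/\mathfrak{o}(n)\to K/R$, that is, a homomorphism $\varphi_{0}\colon Rn\to K/R$ with $\varphi_{0}(n)=\tfrac{1}{b}+R\neq 0$.

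To conclude I would use that $K/R$ is injective over $R$: it is divisible, since for nonzero $r\in R$ and $x\in K$ one has $x+R=r(\tfrac{x}{r}+R)$, and a divisible module over the PID $R$ is injective by Baer's criterion. Hence $\varphi_{0}$ extends along the inclusion $Rn\hookrightarrow N$ to a homomorphism $\varphi\colon N\to K/R$, which still satisfies $\varphi(n)=\varphi_{0}(n)\neq 0$. This provides the required $\varphi$ and shows that $\mathrm{Hom}(-,K/R)$ is faithful.

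No step here is genuinely difficult; the argument is the standard injective-cogenerator computation. The only points requiring care are excluding the degenerate case $R=K$ (where $K/R=0$ and the statement fails) and checking that $\tfrac{1}{b}+R$ is annihilated by all of $\mathfrak{o}(n)$ and not merely by $(b)$---which is precisely what the containment $\mathfrak{o}(n)\subseteq(b)$ provides.
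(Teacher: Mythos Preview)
Your proof is correct and follows essentially the same approach as the paper: both arguments show that for any nonzero element $n$ of a module $N$ there is a homomorphism $N\to K/R$ not vanishing at $n$, by first defining such a map on the cyclic submodule $Rn$ and then extending via injectivity of $K/R$. Your version is slightly more explicit in constructing the target element $\tfrac{1}{b}+R$ and in flagging the degenerate case $R=K$, but the underlying idea is identical.
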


\begin{proof}
Suppose that $\varphi :C\rightarrow C^{\prime }$ is a homomorphism between
countable modules that induces the trivial homomorphism $\mathrm{Hom}\left(
C^{\prime },K/R\right) \rightarrow \mathrm{Hom}\left( C,K/R\right) $. If $%
x\in C^{\prime }$ is nonzero and $\langle x\rangle $ is the submodule of $%
C^{\prime }$ generated by $x$, then $\langle x\rangle \cong R$ or $\langle
x\rangle \cong R/rR$ for some $r\in R$. Either way, there exists a
homomorphism $\langle x\rangle \rightarrow K/R$ that maps $x$ to a nonzero
element of $K/R$. As $K/R$ is injective, this extends to a nonzero
homomorphism $\alpha :C^{\prime }\rightarrow K/R$ such that $\alpha \left(
x\right) \neq 0$. By hypothesis, we have that $\alpha \circ \varphi =0$.
This implies that $x$ does not belong to the image of $\varphi $. As this
holds for every nonzero element of $C^{\prime }$, $\varphi =0$.
\end{proof}

Let us say that a Polish module $C$ is pro-finite-torsion if it is the limit
of a tower of finite torsion modules. When $C$ is either a countable torsion
module or a pro-finite-torsion module, define as before $C^{\vee }:=\mathrm{%
Ext}\left( C,R\right) $. Considering the exact sequence%
\begin{equation*}
\mathrm{Hom}\left( C,K\right) \rightarrow \mathrm{Hom}\left( C,K/R\right)
\rightarrow \mathrm{Ext}\left( C,R\right) \rightarrow 0
\end{equation*}%
shows that in this case $C^{\vee }\cong \mathrm{Hom}\left( C,K/R\right) $.

\begin{proposition}
Let $R$ be a countable PID. The assignment $C\mapsto C^{\vee }$ establishes
a duality:

\begin{enumerate}
\item between countable torsion modules and pro-finite-torsion modules;

\item between countable modules and modules with a Polish cover $G$ such
that $G/s_{0}(G)$ is pro-finite-torsion;
\end{enumerate}
\end{proposition}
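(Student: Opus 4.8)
The plan is to establish part (1) by prolonging Pontryagin-type duality from finitely-generated torsion modules, and then obtain part (2) by splitting a countable module along its torsion submodule and gluing the duality of part (1) to the duality for flat modules of Proposition \ref{Proposition:duality-Ext}. Throughout I use the identification $C^{\vee }\cong \mathrm{Hom}\left( C,K/R\right) $ valid whenever $C$ is torsion, which is the computation recorded immediately before the statement and rests on the injectivity of $K$ over the Dedekind domain $R$ together with $\mathrm{Hom}\left( C,K\right) =0$.

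For part (1), the finite-level input is as follows. Over a PID every finitely-generated torsion module $F$ is a finite direct sum of cyclic modules $R/rR$, and $\mathrm{Hom}\left( R/rR,K/R\right) \cong \tfrac{1}{r}R/R\cong R/rR$; since $R/rR$ has finite length, the canonical evaluation map $F\rightarrow F^{\vee \vee }$ is an injection between modules of equal finite length, hence an isomorphism. Thus $(-)^{\vee }$ is a contravariant self-equivalence of the category of finitely-generated torsion modules. Now any countable torsion module is a union $C=\mathrm{colim}_{n}C_{n}$ of an increasing chain of finitely-generated torsion submodules, so $C^{\vee }\cong \mathrm{Hom}\left( C,K/R\right) =\mathrm{lim}_{n}C_{n}^{\vee }$ is pro-finite-torsion. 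Conversely, finitely-generated torsion modules over a PID have finite length and are in particular Artinian, so every tower of them automatically satisfies the Mittag-Leffler condition; hence a pro-finite-torsion module may be written as $\mathrm{lim}_{n}D_{n}$ with surjective bonding maps, and then $C:=\bigcup_{n}D_{n}^{\vee }$ is a countable torsion module with $C^{\vee }=\mathrm{lim}_{n}D_{n}^{\vee \vee }\cong \mathrm{lim}_{n}D_{n}$. Faithfulness of $(-)^{\vee }=\mathrm{Hom}\left( -,K/R\right) $ is the lemma proved just above; fullness on the relevant subcategories follows by the same $\mathrm{colim}/\mathrm{lim}$ bookkeeping, since a continuous homomorphism between pro-finite-torsion modules is a pro-morphism of towers of finitely-generated torsion modules and dualizing its components through the finite-level equivalence produces the required homomorphism. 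Together with $C^{\vee \vee }\cong C$ this gives the anti-equivalence.

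For part (2), let $C$ be a countable module (which I may assume coreduced, exactly as the flat case of Proposition \ref{Proposition:duality-Ext} is phrased for coreduced modules, so that $C\rightarrow C^{\vee \vee }$ is an isomorphism). Since $R$ is Noetherian, $C$ is countably presented, and the torsion sequence $0\rightarrow C_{\mathrm{t}}\rightarrow C\rightarrow C/C_{\mathrm{t}}\rightarrow 0$ has torsion-free, hence flat, quotient. Applying $\mathrm{Hom}\left( -,R\right) $ and using $\mathrm{Hom}\left( C_{\mathrm{t}},R\right) =0$ and $\mathrm{Ext}^{2}=0$ yields a short exact sequence $0\rightarrow \mathrm{Ext}\left( C/C_{\mathrm{t}},R\right) \rightarrow \mathrm{Ext}\left( C,R\right) \rightarrow \mathrm{Ext}\left( C_{\mathrm{t}},R\right) \rightarrow 0$ in $\mathrm{LH}\left( \boldsymbol{\Pi }\left( \mathbf{Mod}\left( R\right) \right) \right) $; by Lemma \ref{Lemma:zero-closure} its left-hand term is precisely $\mathrm{PExt}\left( C,R\right) =s_{0}\left( \mathrm{Ext}\left( C,R\right) \right) $. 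Hence $s_{0}\left( C^{\vee }\right) \cong \left( C/C_{\mathrm{t}}\right) ^{\vee }$, a phantom pro-finiteflat module by Proposition \ref{Proposition:duality-Ext} (after discarding the finite projective summands of $C/C_{\mathrm{t}}$ via Lemma \ref{Lemma:coreduced-radical}, which $\mathrm{Ext}\left( -,R\right) $ annihilates), while $C^{\vee }/s_{0}\left( C^{\vee }\right) \cong \left( C_{\mathrm{t}}\right) ^{\vee }$ is pro-finite-torsion by part (1). So $C^{\vee }$ lies in the asserted target class. For the converse, given such a $G$ one has from part (1) and Proposition \ref{Proposition:duality-Ext} a countable torsion $T$ and a countable coreduced flat $M$ with $G/s_{0}\left( G\right) \cong T^{\vee }$ and $s_{0}\left( G\right) \cong M^{\vee }$, and one must realize the extension $0\rightarrow s_{0}\left( G\right) \rightarrow G\rightarrow G/s_{0}\left( G\right) \rightarrow 0$ as $\mathrm{Ext}\left( -,R\right) $ of an extension $0\rightarrow T\rightarrow C\rightarrow M\rightarrow 0$; the naturality of the six-term $\mathrm{Hom}$--$\mathrm{Ext}$ sequences together with the dualities already established on the torsion and flat parts makes $\mathrm{Ext}\left( -,R\right) $ full and faithful on such extensions, which pins down $C$, and a five-lemma comparison of the torsion and flat pieces then gives fully-faithfulness of the global functor.

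The main obstacle is the essential-surjectivity step of part (2): a priori $G$ comes with some pro-countable Polish cover $\widehat{G}/N$, and its closure-of-zero subquotient $s_{0}\left( G\right) =\overline{N}^{\widehat{G}}/N$ is not presented by pro-finiteflat modules, even though it is abstractly forced to be a phantom pro-finiteflat module once the decomposition exists. The real work is therefore to upgrade the cover: using that $\widehat{G}/\overline{N}^{\widehat{G}}$ is pro-finite-torsion together with the structure and extension results for pro-finiteflat modules from Section \ref{Section:pro-countable} (Theorem \ref{Theorem:hd-2}, Lemma \ref{Lemma:extension-prodiscrete1}), one replaces $\widehat{G}$ and $N$ by pro-finiteflat Polishable submodules carrying the same $G$, after which Proposition \ref{Proposition:duality-Ext} applies and the extension problem can be solved. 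Verifying that the reconstructed $C$ satisfies $C^{\vee }\cong G$ as modules with a Polish cover, and not merely abstractly, is the delicate point; the remainder is routine chaining of the two dualities in hand through the natural exact sequences.
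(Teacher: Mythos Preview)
For part (2) your strategy matches the paper's two-line sketch: both split $C$ along $C_{\mathrm{t}} \to C \to C/C_{\mathrm{t}}$ and $G$ along $s_0(G) \to G \to G/s_0(G)$, then glue part (1) with the flat duality of Proposition \ref{Proposition:duality-Ext}; your flagging of the essential-surjectivity wrinkle (that $s_0(G)$ must be realized as a phantom pro-finiteflat module before the flat duality applies) goes beyond what the paper writes, but the skeleton is identical. Part (1) is where the routes genuinely differ. The paper proves it ``using the Ulm classification of countable torsion modules by induction on the Ulm length,'' i.e., by importing the full structure theory of countable torsion modules over a PID. You instead give a direct Pontryagin-style argument: establish the self-duality $F \cong F^{\vee\vee}$ on finitely-generated torsion modules via the cyclic decomposition, then extend through $\mathrm{colim}/\mathrm{lim}$, using that towers of finite-length modules automatically satisfy Mittag-Leffler so that every pro-finite-torsion module can be written with surjective bonding maps. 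Your route is more elementary and self-contained, avoiding Ulm theory entirely; the paper's is shorter to state but leans on heavier machinery.
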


\begin{proof}
(1) is easily established using the Ulm classification of countable torsion
modules by induction on the Ulm length. (2) follows from (1) and Proposition %
\ref{Proposition:duality-Ext}, considering the short exact sequence%
\begin{equation*}
C_{\mathrm{t}}\rightarrow C\rightarrow C/C_{\mathrm{t}}
\end{equation*}%
associated with a countable torsion module $C$, and the short exact sequence%
\begin{equation*}
s_{0}\left( G\right) \rightarrow G\rightarrow G/s_{0}\left( G\right)
\end{equation*}%
associated with a module with a Polish cover $G$.
\end{proof}

\subsection{Module trees for phantom modules}

Pro-countable modules can be described in terms of \textquotedblleft module
trees\textquotedblright\ on a sequence of countable modules. Let $%
\boldsymbol{M}:=\left( M_{i}\right) _{i\in \omega }$ be a sequence of
countable modules. A \emph{module tree }on $\boldsymbol{M}$ is a rooted tree 
$T$ on%
\begin{equation*}
M^{\left( \omega \right) }:=\bigoplus_{i\in \omega }M_{i}
\end{equation*}%
such that, for every $i\in \omega $, the set $T^{\left( n\right) }$ of nodes
of height $n$ in $T$ is a submodule of%
\begin{equation*}
M^{\left( n\right) }:=\bigoplus_{i\in n}M_{i}\text{.}
\end{equation*}%
Then the body $M_{T}$ of $T$ is a closed submodule of%
\begin{equation*}
\prod_{n\in \omega }M_{n}\text{.}
\end{equation*}%
Every pro-countable module can be represented in this fashion; see \cite%
{solecki_equivalence_1995,ding_non-archimedean_2017} for more on
\textquotedblleft module trees\textquotedblright\ in the case of $\mathbb{Z}$%
-modules.

Likewise, one can represent a \emph{phantom }pro-countable module by a
module tree $T$ on a sequence $\boldsymbol{M}$ of countable modules. Indeed,
such a tree defines a tower of countable modules $\left( T^{\left( n\right)
}\right) $ with bonding maps given by truncation, whence a phantom module $%
\mathfrak{M}_{T}:=\mathrm{lim}_{n}^{1}T^{\left( n\right) }$. Proposition \ref%
{Proposition:fully-faithful} shows that every phantom pro-countable module
arises in this fashion for some well-founded module tree. Furthermore, the
Solecki submodules of $\mathfrak{M}_{T}$ correspond to the derivatives of $T$%
, as $s_{\alpha }\mathfrak{M}_{T}$ is the phantom module associated with the 
$\omega \alpha $-th derivative $T^{\left( \omega \alpha \right) }$ of $T$;
see Theorem \ref{Theorem:Solecki-lim1}.

\subsection{Projective length\label{Subsection:projective-length}}

We define the notion of \emph{projective length }of a countable flat module,
as a relaxation of the notion of projective module.

\begin{definition}
Suppose that $C$ is a coreduced countable flat module.

\begin{itemize}
\item If $A$ is a countable flat module, then the (plain) $A$-projective
length of $C$ is the (plain) Solecki length of $\mathrm{Ext}\left(
C,A\right) $;

\item $C$ has (plain) projective length at most $\alpha $ if and only if it
has (plain) $A$-projective length at most $\alpha $ for every countable flat
module $A$;

\item $C$ is a \emph{phantom projective} if it has projective length at most 
$\alpha $ for some $\alpha <\omega _{1}$.
\end{itemize}
\end{definition}

Consider the quasi-abelian category $\mathbf{Flat}\left( R\right) $ of
countable \emph{flat} modules. Suppose that $\mathcal{C}_{1}$ is the class
of finite-rank flat modules. We let $\mathcal{E}_{1}$ be the exact structure 
\emph{projectively generated }by $\mathcal{C}_{1}$ on the quasi-abelian
category $\mathbf{Flat}\left( R\right) $. A similar proof as the one of \cite%
[Theorem 87.1]{fuchs_infinite_1973} yields the following.

\begin{lemma}
\label{Lemma:finite-rank-decomposable}A countable flat module $A$ is a
direct sum of finite-rank modules if and only if every finite subset of $A$
is contained in a finite-rank direct summand of $A$.
\end{lemma}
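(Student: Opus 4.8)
The plan is to mimic the classical argument for abelian groups (Fuchs, \emph{Infinite Abelian Groups}, Theorem 87.1), adapting the submodule-vs-direct-summand dichotomy to the setting of countable flat modules over a Pr\"ufer domain. The forward direction is trivial: if $A=\bigoplus_{i}A_i$ with each $A_i$ of finite rank, then any finite subset $X\subseteq A$ is contained in a finite subsum $A_{i_1}\oplus\cdots\oplus A_{i_n}$, which is a finite-rank direct summand. So the content is the converse. Assume every finite subset of $A$ lies in a finite-rank direct summand. First I would fix an enumeration $A=\{a_0,a_1,a_2,\dots\}$ and build, by recursion on $n$, an increasing chain of finite-rank direct summands $A_0\subseteq A_1\subseteq A_2\subseteq\cdots$ of $A$ with $a_n\in A_n$ and $\bigcup_n A_n=A$; this is immediate from the hypothesis together with the observation that the (finitely generated, hence pure) submodule generated by $A_{n-1}$ and $a_n$ sits inside a finite-rank direct summand. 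The key point is then to refine this chain so that each step \emph{splits}, i.e.\ to arrange that $A_n=A_{n-1}\oplus B_n$ for a finite-rank module $B_n$; once this is done, $A\cong\bigoplus_n B_n$ and we are finished.

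The heart of the argument is therefore the splitting of the inclusion $A_{n-1}\hookrightarrow A_n$ inside $\mathbf{Flat}(R)$. Here I would invoke the structure theory assembled earlier: $A_{n-1}$ and $A_n$ are finite-rank flat (hence finite-rank torsion-free) modules over a Pr\"ufer domain, and $A_{n-1}$ is a pure submodule of $A_n$ since both are flat (Lemma \ref{Lemma:pure-exact-extension}, third bullet, applied to the flat quotient $A_n/A_{n-1}$, which is flat by the first bullet of the same lemma because $A_n$ is flat and $A_n/A_{n-1}$ is torsion-free—being a submodule of the torsion-free $A/A_{n-1}$ up to the fact that $A_{n-1}$ is a summand of $A$, so $A/A_{n-1}$ is flat). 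The subtlety is that a finite-rank flat module over a Pr\"ufer domain need not be finitely generated (only over a Dedekind domain does finite rank imply finitely generated for \emph{projective} modules), so I cannot simply cite projectivity of $A_n/A_{n-1}$. Instead, the plan is: since $A_{n-1}$ is a direct summand of $A$, write $A=A_{n-1}\oplus A_{n-1}^{\perp}$; then $A_n=A_{n-1}\oplus(A_n\cap A_{n-1}^{\perp})$ automatically, because $A_n\supseteq A_{n-1}$ and the second projection restricted to $A_n$ has kernel $A_{n-1}$ and image $A_n\cap A_{n-1}^{\perp}$, which is a direct summand of $A_n$ (a pure submodule that is complemented in the ambient $A$). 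Thus $B_n:=A_n\cap A_{n-1}^{\perp}$ does the job, and it has finite rank because $\operatorname{rk}(B_n)=\operatorname{rk}(A_n)-\operatorname{rk}(A_{n-1})<\infty$.

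With the splittings in hand, one checks that the natural map $\bigoplus_n B_n\to A$ is an isomorphism: it is injective because each partial sum $B_0\oplus\cdots\oplus B_n$ maps isomorphically onto $A_n$, and surjective because $\bigcup_n A_n=A$. Hence $A$ is a countable direct sum of finite-rank flat modules, as desired. I expect the main obstacle to be the careful bookkeeping in the recursive construction of the chain $(A_n)$—specifically ensuring simultaneously that each $A_n$ is a \emph{finite-rank} \emph{direct summand}, that $A_{n-1}\subseteq A_n$, and that the enumeration is exhausted—rather than the splitting step, which is a soft consequence of $A_{n-1}$ being a summand of the whole module $A$. One technical wrinkle worth stating explicitly in the final write-up: at stage $n$ one must enlarge the finite-rank summand containing $A_{n-1}$ and $a_n$ so that it actually \emph{contains} $A_{n-1}$ as a submodule (not just a subset up to the earlier choice), which is automatic once $A_{n-1}$ is required to be a submodule of $A_n$ and both are summands of $A$, using that the class of finite-rank direct summands is closed under the operation ``take a finite-rank summand containing a given finite-rank summand and a given element.''
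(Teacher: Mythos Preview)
Your overall architecture matches the paper's proof exactly: build an increasing chain of finite-rank direct summands exhausting $A$, split each inclusion using that $A_{n-1}$ is already a summand of the whole module (the modular-law argument $A_n=A_{n-1}\oplus(A_n\cap A_{n-1}^{\perp})$, which you state more explicitly than the paper does), and collect the complements. The forward direction and the final assembly are fine.

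The one genuine gap is in the recursive construction of the chain. Your justification that a finite-rank direct summand containing both $A_{n-1}$ and $a_n$ exists is not sound: the parenthetical ``(finitely generated, hence pure)'' is doubly wrong---a finite-rank flat module over a Pr\"ufer domain need not be finitely generated (think $\mathbb{Q}$ over $\mathbb{Z}$), and finitely generated does not imply pure in any case---so the hypothesis of the lemma, which only gives summands containing \emph{finite subsets}, does not apply to the submodule generated by $A_{n-1}$ and $a_n$. Your closing ``technical wrinkle'' paragraph recognizes the problem but resolves it circularly, simply asserting the needed closure property. The paper's fix is clean and is the missing ingredient: since $A_{n-1}$ has finite rank, it equals the pure hull $\langle G_{n-1}\rangle_*$ of some \emph{finite} subset $G_{n-1}$; apply the hypothesis to the finite set $G_{n-1}\cup\{a_n\}$ to obtain a finite-rank direct summand $A_n$; being a summand, $A_n$ is pure in $A$, hence contains $\langle G_{n-1}\rangle_*=A_{n-1}$. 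With this one line inserted, your proof is complete and coincides with the paper's.
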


\begin{proof}
Suppose that $A$ is a countable flat module such that every finite subset $F$
of $A$ is contained in a finite-rank direct summand of $A$. Let $\left(
a_{n}\right) $ be an enumeration of $A$. By assumption, there exists a
finite-rank direct summand $A_{1}$ of $A$ containing $a_{1}$. Suppose that
we have defined an ascending chain $A_{1}\subseteq \cdots \subseteq A_{n-1}$
of finite-rank direct summands of $A$ such that $\left\{ a_{1},\ldots
,a_{i}\right\} \subseteq A_{i}$ for $i<n$. Then by assumption there exists a
finite-rank direct summand $A_{n}$ of $A$ that contains $a_{n}$ and a finite
subset $G_{n-1}$ of $A_{n-1}$ such that $A_{n-1}=\langle G_{n-1}\rangle
_{\ast }$. Since $A_{n}$ is a direct summand of $A$, this implies that $%
A_{n-1}\subseteq A_{n}$. This procedure defines an increasing sequence $%
\left( A_{n}\right) $ of finite-rank direct summands of $A$ with union equal
to $A$. For every $n\in \omega $, one has that $A_{n+1}=A_{n}\oplus B_{n+1}$%
, where $B_{n+1}$ has finite rank. Setting $B_{0}=A_{0}$, one has that $%
A=\bigoplus_{n\in \omega }B_{n}$. As the converse implication is obvious,
this concludes the proof.
\end{proof}

\begin{proposition}
\label{Proposition:resolution}Suppose that $C$ is a countable flat module.
Then there exist a short exact sequence%
\begin{equation*}
0\rightarrow A\rightarrow B\rightarrow C\rightarrow 0
\end{equation*}%
in $\mathcal{E}_{1}$ where $A$ and $B$ are countable direct sums of
finite-rank flat modules.
\end{proposition}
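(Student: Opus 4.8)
The plan is to realize $C$ as a sequential colimit of a carefully chosen increasing filtration by finite-rank flat submodules and then take the associated telescope short exact sequence. The subtlety is that the telescope is $\mathcal{E}_{1}$-exact only for the right filtration: a naive filtration of $C$ by finitely generated (or arbitrary) finite-rank submodules produces a telescope that need not lie in $\mathcal{E}_{1}$ — for instance $\mathrm{id}_{R[1/r]}$ need not lift through the telescope of $R[1/r]=\bigcup_{n}r^{-n}R$ — so one must filter $C$ by its \emph{pure} finite-rank submodules.

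The first key observation is that the collection $\mathcal{P}$ of finite-rank pure submodules of $C$ is countable and directed under inclusion. Countability: every $P\in\mathcal{P}$ is the pure hull in $C$ of a finite subset, because choosing a maximal $K$-linearly independent set $\{d_{1},\dots,d_{r}\}\subseteq P$ with $r=\operatorname{rk}(P)<\infty$, the pure hull $\langle d_{1},\dots,d_{r}\rangle_{\ast}$ is contained in $P$ (as $P$ is pure in $C$), has rank $r$ by the rank bound for pure hulls of finite sets (Proposition~\ref{Proposition:characterize-prufer} and the remark following it), is therefore pure of full rank inside $P$, and hence equals $P$ (its quotient in $P$ is torsion-free of rank $0$). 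Since $C$ is countable there are only countably many finite subsets, so $|\mathcal{P}|\le\aleph_{0}$; and $\mathcal{P}$ is directed since $\langle P\cup P'\rangle_{\ast}$ is again finite-rank and pure. Thus I may fix an increasing sequence $P_{0}\subseteq P_{1}\subseteq\cdots$ in $\mathcal{P}$ that is cofinal in $\mathcal{P}$; in particular $\bigcup_{n}P_{n}=C$ (each $c\in C$ lies in $\langle c\rangle_{\ast}\subseteq$ some $P_{n}$), and every finite-rank pure submodule of $C$ is contained in some $P_{n}$.

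Now form the telescope sequence
\[
0\longrightarrow\bigoplus_{n\in\omega}P_{n}\xrightarrow{\ \iota\ }\bigoplus_{n\in\omega}P_{n}\xrightarrow{\ \pi\ }C\longrightarrow 0,
\]
where $\pi$ restricted to the $n$-th summand is the inclusion $P_{n}\hookrightarrow C$, and $\iota$ restricted to the $n$-th summand is (inclusion into the $n$-th summand) minus (the inclusion $P_{n}\hookrightarrow P_{n+1}$ into the $(n{+}1)$-st summand). This is the standard presentation of $C=\varinjlim_{n}P_{n}$: $\iota$ is injective (if $\iota((x_{n}))=0$ then $x_{0}=0$ and inductively $x_{n}=0$), $\pi$ is surjective since $\bigcup_{n}P_{n}=C$, and $\operatorname{im}\iota=\ker\pi$. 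Setting $B:=\bigoplus_{n}P_{n}$ and $A:=\ker\pi$, injectivity of $\iota$ gives $A\cong\bigoplus_{n}P_{n}$, so $A$ and $B$ are countable direct sums of finite-rank modules, which are flat because $C$ is torsion-free and $R$ is Pr\"{u}fer (Proposition~\ref{Proposition:characterize-prufer}); the sequence is a short exact sequence in $\mathbf{Flat}(R)$. It remains to check membership in $\mathcal{E}_{1}$: given any finite-rank flat module $X$ and a homomorphism $\phi\colon X\to C$, the pure hull $\langle\phi(X)\rangle_{\ast}$ is a finite-rank pure submodule of $C$, hence is contained in some $P_{N}$; so $\phi$ factors as $X\to P_{N}\hookrightarrow C$, and composing $X\to P_{N}$ with the inclusion of the $N$-th summand yields a lift $X\to B$ of $\phi$. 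Thus $\operatorname{Hom}(X,B)\to\operatorname{Hom}(X,C)$ is surjective for every $X$ in the generating class $\mathcal{C}_{1}$, which is precisely the condition defining $\mathcal{E}_{1}$. The only genuinely non-routine step is the reduction to pure finite-rank submodules together with the recognition that they form a countable cofinal family; the rest is the standard telescope bookkeeping.
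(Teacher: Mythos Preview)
Your proof is correct and follows essentially the same approach as the paper: both take an increasing sequence of finite-rank \emph{pure} submodules of $C$ with union $C$ and form the associated telescope short exact sequence. You supply more detail than the paper does---in particular the justification that finite-rank pure submodules form a countable directed family, the explicit verification that every map from a finite-rank module into $C$ factors through some $P_{N}$, and the cautionary example showing why purity of the filtration is essential---but the underlying construction is identical.
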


\begin{proof}
Let $\left( C_{n}\right) $ be an increasing sequence of finite-rank pure
submodules of $C$ with union equal to $C$. Define%
\begin{equation*}
A=B=\bigoplus_{n\in \omega }C_{n}\text{.}
\end{equation*}%
Set $\phi :B\rightarrow C$ be the homomorphism induced by the inclusion map $%
C_{n}\rightarrow C$ for $n\in \omega $, and $\psi :A\rightarrow B$ be the
homomorphism induced by the morphisms $C_{i}\rightarrow C_{j}$, for $i,j\in
\omega $, defined by 
\begin{equation*}
x\mapsto \left\{ 
\begin{array}{ll}
x & \text{if }i=j\text{;} \\ 
-x & \text{if }i+1=j\text{;} \\ 
0 & \text{otherwise.}%
\end{array}%
\right.
\end{equation*}%
This defines a short exact sequence $A\rightarrow B\rightarrow C$ in $%
\mathcal{E}_{1}$.
\end{proof}

Recall that a \emph{retraction }in a category is a morphism that has a right
inverse, and a \emph{section }is a morphism that has a left inverse.

\begin{lemma}
Suppose that $r:\boldsymbol{B}\rightarrow \boldsymbol{A}$ is a retraction in
the category of towers of countable modules. Then the induced map $\mathrm{%
lim}^{1}\boldsymbol{B}\rightarrow \mathrm{lim}^{1}\boldsymbol{A}$ maps $%
s_{\alpha }\left( \mathrm{lim}^{1}\boldsymbol{B}\right) $ onto $s_{\alpha
}\left( \mathrm{lim}^{1}\boldsymbol{A}\right) $ for every $\alpha <\omega
_{1}$.
\end{lemma}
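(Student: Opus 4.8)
The plan is to exploit the splitting furnished by the retraction together with the functoriality of Solecki submodules under Borel homomorphisms, rather than computing derived towers. Write $r\colon\boldsymbol{B}\to\boldsymbol{A}$ for the given retraction and fix a right inverse $i\colon\boldsymbol{A}\to\boldsymbol{B}$ with $r\circ i=\mathrm{id}_{\boldsymbol{A}}$, which exists by the very definition of a retraction. Since $\mathrm{lim}^1$ is an additive functor from $\mathrm{pro}(\mathbf{Mod}(R))$ to $\mathrm{Ph}(\boldsymbol{\Pi}(\mathbf{Mod}(R)))$, applying it produces Borel homomorphisms $\pi:=\mathrm{lim}^1 r$ and $\iota:=\mathrm{lim}^1 i$ between the modules with a pro-countable Polish cover $\mathrm{lim}^1\boldsymbol{B}$ and $\mathrm{lim}^1\boldsymbol{A}$, with $\pi\circ\iota=\mathrm{id}_{\mathrm{lim}^1\boldsymbol{A}}$; in particular $\pi$ is surjective.

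The key ingredient is that any Borel homomorphism $\varphi\colon G\to H$ between modules with a pro-countable Polish cover satisfies $\varphi\bigl(s_\alpha(G)\bigr)\subseteq s_\alpha(H)$ for every $\alpha<\omega_1$. Indeed, by Section \ref{Section:modules-polish-cover}, $\varphi^{-1}(s_\alpha(H))$ is a submodule with a pro-countable Polish cover of $G$, and since $s_\alpha(H)$ lies in the complexity class $\boldsymbol{\Pi}^0_{1+\alpha+1}$ in $H$ (respectively $\boldsymbol{\Pi}^0_1$ when $\alpha=0$), the preimage $\varphi^{-1}(s_\alpha(H))$ lies in the same complexity class in $G$; as $s_\alpha(G)$ is by \cite{lupini_looking_2024} the smallest submodule of $G$ of that complexity class, we get $s_\alpha(G)\subseteq\varphi^{-1}(s_\alpha(H))$, i.e.\ $\varphi(s_\alpha(G))\subseteq s_\alpha(H)$.

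Granting this, the conclusion is immediate. Applying the displayed inclusion to $\pi$ gives $\pi\bigl(s_\alpha(\mathrm{lim}^1\boldsymbol{B})\bigr)\subseteq s_\alpha(\mathrm{lim}^1\boldsymbol{A})$. Conversely, for $y\in s_\alpha(\mathrm{lim}^1\boldsymbol{A})$ we have $\iota(y)\in s_\alpha(\mathrm{lim}^1\boldsymbol{B})$ by the inclusion applied to $\iota$, and $\pi(\iota(y))=y$; hence $s_\alpha(\mathrm{lim}^1\boldsymbol{A})\subseteq\pi\bigl(s_\alpha(\mathrm{lim}^1\boldsymbol{B})\bigr)$. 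The two inclusions give $\pi\bigl(s_\alpha(\mathrm{lim}^1\boldsymbol{B})\bigr)=s_\alpha(\mathrm{lim}^1\boldsymbol{A})$, as required.

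There is no genuine obstacle beyond bookkeeping: one must only make sure the cited functoriality of complexity classes from \cite{lupini_complexity_2024} applies in the right generality, namely that $\mathrm{lim}^1 r$ and $\mathrm{lim}^1 i$ are Borel homomorphisms between modules with a \emph{pro-countable} Polish cover, and that $s_\alpha$ of the target is characterized as a smallest submodule of a fixed complexity class. An alternative, more hands-on route would use the identification $s_\alpha(\mathrm{lim}^1\boldsymbol{A})=\mathrm{lim}^1\boldsymbol{A}_\alpha$ from Theorem \ref{Theorem:Solecki-lim1}, together with the observation that a split epimorphism of towers induces a split epimorphism $\boldsymbol{B}_\alpha\to\boldsymbol{A}_\alpha$ of derived towers (derived towers commuting with finite direct sums), followed by applying $\mathrm{lim}^1$; but the functorial argument above avoids having to check that the derived-tower operation respects the splitting.
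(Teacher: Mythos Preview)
Your proof is correct and follows essentially the same approach as the paper's: apply the functor $\mathrm{lim}^1$ to turn the retraction of towers into a retraction of modules with a Polish cover, then use that $G\mapsto s_\alpha(G)$ is a subfunctor of the identity. The paper's proof is a terse two sentences that cites the subfunctoriality of $s_\alpha$ as known, whereas you spell out why Borel homomorphisms preserve Solecki submodules via the smallest-$\boldsymbol{\Pi}^0_{1+\alpha+1}$ characterization; but the underlying argument is identical.
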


\begin{proof}
As $r$ is a retraction, the induced map $\mathrm{lim}^{1}\boldsymbol{B}%
\rightarrow \mathrm{lim}^{1}\boldsymbol{A}$ is also a retraction. Since $%
G\mapsto s_{\alpha }G$ is a subfunctor of the identity, the conclusion
follows.
\end{proof}

It is clear that a countable direct sum of finite-rank flat modules is
projective in $\left( \mathcal{A},\mathcal{E}_{1}\right) $. As a consequence
of this observation and Proposition \ref{Proposition:resolution} we obtain
the following.

\begin{proposition}
\label{Proposition:projectives-1-pure}The idempotent-complete exact category 
$\left( \mathbf{Flat}\left( R\right) ,\mathcal{E}_{1}\right) $ is hereditary
and has enough projectives. Its projective objects are precisely the direct
summands of the countable direct sums of finite-rank flat modules.
\end{proposition}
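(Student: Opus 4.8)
The plan is to deduce this from the general machinery on exact categories with enough projectives, together with Proposition \ref{Proposition:resolution} and the immediately preceding observations. First I would verify that $\left(\mathbf{Flat}\left(R\right),\mathcal{E}_{1}\right)$ is an idempotent-complete exact category: $\mathbf{Flat}\left(R\right)$ is a quasi-abelian category, hence idempotent-complete (kernels of idempotents exist), and $\mathcal{E}_{1}$ is an exact structure by the Proposition asserting that projectively generated exact structures are exact structures. Since the class of kernel-cokernel pairs defining $\mathcal{E}_{1}$ is a subclass of those of the ambient quasi-abelian structure, and idempotent-completeness is a property of the underlying additive category, this is immediate.

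Next I would establish that the category has enough projectives with the stated projective class. By the remark preceding the statement, any countable direct sum of finite-rank flat modules is $\mathcal{E}_{1}$-projective: indeed $\mathrm{Hom}\left(F,-\right)$ sends $\mathcal{E}_{1}$-deflations to surjections by definition of $\mathcal{E}_{1}$ when $F$ is finite-rank, and this passes through countable direct sums. Proposition \ref{Proposition:resolution} then gives, for every countable flat $C$, a short exact sequence $0\rightarrow A\rightarrow B\rightarrow C\rightarrow 0$ in $\mathcal{E}_{1}$ with $A,B$ countable direct sums of finite-rank flat modules; in particular $B$ is $\mathcal{E}_{1}$-projective, so there are enough projectives. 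The class $\mathcal{P}_{1}$ of $\mathcal{E}_{1}$-projectives is closed under direct summands (a standard fact for projectives in any exact category), and contains all countable direct sums of finite-rank flat modules, hence contains all their direct summands; conversely, if $C$ is $\mathcal{E}_{1}$-projective, the sequence from Proposition \ref{Proposition:resolution} splits in $\mathcal{E}_{1}$ (the deflation $B\rightarrow C$ is split by projectivity of $C$), exhibiting $C$ as a direct summand of the direct sum of finite-rank flat modules $B$. This gives the characterization of the projective objects.

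It remains to prove hereditariness, i.e.\ that $\mathrm{Ext}^{n}_{\mathcal{E}_{1}}=0$ for $n\geq 2$. By the discussion in the subsection on injective and projective objects, since $\left(\mathbf{Flat}\left(R\right),\mathcal{E}_{1}\right)$ has enough projectives, it suffices to show that for some (equivalently every) $\mathcal{E}_{1}$-short exact sequence $0\rightarrow A\rightarrow P\rightarrow C\rightarrow 0$ with $P$ an $\mathcal{E}_{1}$-projective, the object $A$ is again $\mathcal{E}_{1}$-projective. Applying this to the sequence produced by Proposition \ref{Proposition:resolution}, the kernel $A$ is a countable direct sum of finite-rank flat modules, hence $\mathcal{E}_{1}$-projective. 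The only subtlety — and I expect this to be the main obstacle — is confirming that this ``syzygy-projective'' criterion for hereditariness, as stated in \cite{kashiwara_categories_2006}, is genuinely insensitive to the choice of projective presentation in the exact (not necessarily abelian) setting; but this is exactly the content of \cite[Exercise 13.8]{kashiwara_categories_2006} combined with the dimension-shifting argument, which goes through verbatim once enough projectives are in hand. With that, all three claims — idempotent-complete exact, hereditary with enough projectives, and the identification of the projectives — are established.
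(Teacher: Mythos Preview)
Your proposal is correct and follows essentially the same route as the paper: the paper states the proposition as an immediate consequence of the observation that countable direct sums of finite-rank flat modules are $\mathcal{E}_{1}$-projective together with Proposition~\ref{Proposition:resolution}, and you have simply spelled out the details (enough projectives, the splitting argument for identifying projectives, and the syzygy criterion for hereditariness). The concern you flag about the syzygy criterion in the exact setting is already addressed earlier in the paper, where it is recorded that having enough projectives makes ``some'' equivalent to ``every'' in that criterion.
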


It follows that the functor $\mathrm{Hom}:\left( \mathbf{Flat}\left(
R\right) ,\mathcal{E}_{1}\right) \times \left( \mathbf{Flat}\left( R\right) ,%
\mathcal{E}_{1}\right) \rightarrow \boldsymbol{\Pi }(\mathbf{Flat}\left(
R\right) )$ has a total right derived functor \textrm{RH}$\mathrm{om}_{%
\mathcal{E}_{1}}^{\bullet }$. We can then define $\mathrm{Ext}_{\mathcal{E}%
_{1}}^{n}:=\mathrm{H}^{n}\circ \mathrm{RHom}_{\mathcal{E}_{1}}^{\bullet }$
for $n\in \mathbb{Z}$. For $n\geq 2$ we have that $\mathrm{Ext}_{\mathcal{E}%
_{1}}^{n}=0$.

\begin{corollary}
\label{Corollary:Ext-and-lim1-2}Suppose that $C$ and $A$ are countable flat
module. If $\left( F_{n}\right) $ is an increasing sequence of finite-rank
pure submodules of $C$ such that $\bigcup_{n\in \omega }F_{n}=C$, then%
\begin{equation*}
\mathrm{Ext}_{\mathcal{E}_{1}}\left( C,A\right) \cong \mathrm{\mathrm{lim}}%
_{n}^{1}\mathrm{Hom}\left( F_{n},A\right)
\end{equation*}
\end{corollary}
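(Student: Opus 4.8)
The plan is to compute $\mathrm{Ext}_{\mathcal{E}_1}(C,A)$ from an explicit projective resolution of $C$ in the hereditary exact category $\left(\mathbf{Flat}(R),\mathcal{E}_1\right)$ and then recognize the resulting cokernel as a $\mathrm{lim}^1$. First I would apply Proposition~\ref{Proposition:resolution} to the given sequence $(F_n)$ itself: this yields a short exact sequence
\begin{equation*}
0\rightarrow \textstyle\bigoplus_{n}F_n\overset{\psi}{\rightarrow}\bigoplus_{n}F_n\overset{\phi}{\rightarrow}C\rightarrow 0
\end{equation*}
in $\mathcal{E}_1$, where (reading off the proof of Proposition~\ref{Proposition:resolution}) $\psi=\mathrm{id}-S$ with $S$ the "shift" map restricting to the inclusion $F_n\hookrightarrow F_{n+1}$ on the $n$-th summand, and $\phi$ is induced by the inclusions $F_n\hookrightarrow C$. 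Since $\bigoplus_n F_n$ is a countable direct sum of finite-rank flat modules, it is $\mathcal{E}_1$-projective by Proposition~\ref{Proposition:projectives-1-pure}, so the two-term complex $P_\bullet:=\bigl[\bigoplus_n F_n\overset{\psi}{\rightarrow}\bigoplus_n F_n\bigr]$ concentrated in degrees $-1,0$ is a complex of projectives quasi-isomorphic to $C$; by Corollary~\ref{Corollary:derived-projectives} it represents $C$ in $\mathrm{D}^b\!\left(\mathbf{Flat}(R),\mathcal{E}_1\right)$.

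Next I would use this resolution to evaluate the derived functor. By the construction of $\mathrm{RHom}^{\bullet}_{\mathcal{E}_1}$ (as recalled after Proposition~\ref{Proposition:projectives-1-pure}, via Proposition~\ref{Proposition:derived-functor}/Corollary~\ref{Corollary:derived-projectives}), $\mathrm{RHom}^{\bullet}_{\mathcal{E}_1}(C,A)\cong\mathrm{Hom}^{\bullet}(P_\bullet,A)$, which is the complex
\begin{equation*}
\mathrm{Hom}\Bigl(\textstyle\bigoplus_n F_n,A\Bigr)\overset{\psi^{\ast}}{\longrightarrow}\mathrm{Hom}\Bigl(\bigoplus_n F_n,A\Bigr)
\end{equation*}
placed in degrees $0$ and $1$. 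Taking $\mathrm{H}^1$ gives $\mathrm{Ext}_{\mathcal{E}_1}(C,A)\cong\mathrm{Coker}(\psi^{\ast})$, the cokernel being taken in $\mathrm{LH}\!\left(\boldsymbol{\Pi}(\mathbf{Mod}(R))\right)$. Finally I would identify this cokernel: the universal property of the coproduct gives $\mathrm{Hom}\bigl(\bigoplus_n F_n,A\bigr)\cong\prod_n\mathrm{Hom}(F_n,A)$ (the product as computed in $\boldsymbol{\Pi}$), and a direct computation shows that under this identification $\psi^{\ast}=\mathrm{id}-S^{\ast}$ sends $(g_n)_n$ to $\bigl(g_n-g_{n+1}|_{F_n}\bigr)_n$. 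This is precisely the differential whose cokernel defines $\mathrm{lim}^1$ of the tower $\bigl(\mathrm{Hom}(F_n,A)\bigr)_n$ with bonding maps given by restriction along $F_n\hookrightarrow F_{n+1}$ (Section~\ref{Subsection:towers}), so $\mathrm{Coker}(\psi^{\ast})\cong\mathrm{lim}^1_n\mathrm{Hom}(F_n,A)$, which is the assertion.

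The only genuinely delicate point is the bookkeeping in the last step: checking that the dual $S^{\ast}$ of the shift map is exactly the $\mathrm{lim}^1$ differential and, in particular, that the cokernel formed in $\mathrm{LH}\!\left(\boldsymbol{\Pi}(\mathbf{Mod}(R))\right)$ coincides with the object $\mathrm{lim}^1$ (which is defined as the cokernel of $(x_n)\mapsto\bigl(x_n-p^{(n,n+1)}(x_{n+1})\bigr)$ with product taken in the quasi-abelian category $\boldsymbol{\Pi}$, as in Section~\ref{Subsection:towers}); both are routine given the earlier material. Alternatively, one can observe that the statement is the special case of Theorem~\ref{Theorem:phantom-Ext} obtained by taking $\mathcal{A}=\left(\mathbf{Flat}(R),\mathcal{E}_1\right)$, $X_i=F_i$, $Y=A$ (the required homotopy-colimit triangle being exactly the short exact sequence above), and then using that $\mathrm{Ext}_{\mathcal{E}_1}(F_n,A)=0$ because each $F_n$ is $\mathcal{E}_1$-projective, so the term $\mathrm{lim}_n\mathrm{Ext}_{\mathcal{E}_1}(F_n,A)$ vanishes and $\mathrm{Ext}_{\mathcal{E}_1}(C,A)\cong\mathrm{PhExt}(C,A)\cong\mathrm{lim}^1_n\mathrm{Hom}(F_n,A)$.
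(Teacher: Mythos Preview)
Your proof is correct and follows essentially the same approach as the paper: the paper's one-line proof (``This follows from the definition of $\mathrm{Ext}_{\mathcal{E}_1}$ by applying Lemma~\ref{Lemma:Ext-and-lim1}'') amounts precisely to computing $\mathrm{Ext}_{\mathcal{E}_1}(C,A)$ from the $\mathcal{E}_1$-projective resolution of Proposition~\ref{Proposition:resolution} and identifying the resulting cokernel as $\mathrm{lim}^1$, which is exactly what you spell out. Your alternative observation via Theorem~\ref{Theorem:phantom-Ext} is also valid and gives the same conclusion.
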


\begin{proof}
This follows from the definition of $\mathrm{Ext}_{\mathcal{E}_{1}}$ by
applying\ Lemma \ref{Lemma:Ext-and-lim1}.
\end{proof}

Suppose that $C$ and $A$ are countable flat modules. Notice that, by
definition, $\mathrm{Ext}_{\mathcal{E}^{1}}\left( C,A\right) $ is the
submodule of $\mathrm{Ext}\left( C,A\right) $ obtained as the intersection
of the kernels of the homomorphisms $\mathrm{Ext}\left( C,A\right)
\rightarrow \mathrm{Ext}\left( B,A\right) $ induced by the inclusion $%
B\rightarrow C$, where $B$ varies among the finite-rank pure submodules of $%
C $.

\begin{lemma}
\label{Lemma:finite-rank}Suppose that $A,C$ are countable flat modules such
that $C$ is finite-rank. Then $\mathrm{Ext}\left( C,A\right) $ has plain
Solecki length at most $1$.
\end{lemma}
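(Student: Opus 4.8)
The plan is to reduce the statement to a $\mathrm{lim}^1$ computation and then invoke the characterization of plain Solecki length at most $1$ in terms of essentially monomorphic towers. First I would write $C$ as a colimit $C \cong \mathrm{colim}_n C_n$ of an increasing sequence $(C_n)_{n\in\omega}$ of \emph{finite} submodules of $C$; since $R$ is a Pr\"ufer domain each $C_n$ is finite flat, hence projective, so $\mathrm{Hom}(C_n, A)$ is a countable flat module. By Roos' Theorem (Theorem~\ref{Lemma:Ext-and-lim1}), there is an exact sequence
\begin{equation*}
0 \rightarrow \mathrm{lim}_n^1 \mathrm{Hom}(C_n, A) \rightarrow \mathrm{Ext}(C, A) \rightarrow \mathrm{lim}_n \mathrm{Ext}(C_n, A) \rightarrow 0,
\end{equation*}
and since each $C_n$ is projective we have $\mathrm{Ext}(C_n, A) = 0$, so $\mathrm{Ext}(C, A) \cong \mathrm{lim}_n^1 \mathrm{Hom}(C_n, A)$. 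Set $\boldsymbol{A}_{\mathrm{t}} := (\mathrm{Hom}(C_n, A))_{n\in\omega}$, which is the relevant tower of countable flat modules; by Lemma~\ref{Lemma:zero-closure} (or Corollary~\ref{Corollary:Ext-sum}) we may as well work with $C$ coreduced, but this is not essential here. It suffices, by Corollary~\ref{Corollary:Solecki-lim1} (equivalently Lemma~\ref{Lemma:monomorphic-tower}), to show that the tower $\boldsymbol{A}_{\mathrm{t}}$ has plain Mittag-Leffler length at most $1$, i.e. is essentially monomorphic.

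The key point is the finite-rank hypothesis. Since $C$ has finite rank, say $\mathrm{rk}(C) = d$, each $C_n$ is a finite flat module of rank at most $d$, so $\mathrm{Hom}(C_n, R)$ has rank at most $d$ as well (by finiteflat duality, Lemma~\ref{Lemma:finiteflat-duality}, $\mathrm{Hom}(C_n, R) = C_n^*$ has the same rank as $C_n$). More to the point, $\mathrm{Hom}(C_n, A)$ embeds into $A^{d}$ via a choice of basis of the pure hull of the image of $C_n$ in $K\otimes C$, so the terms $\mathrm{Hom}(C_n, A)$ are submodules of the fixed finite-rank-over-$A$ module $A^d$, with the bonding maps being the natural inclusions $\mathrm{Hom}(C_{n+1}, A) \hookrightarrow \mathrm{Hom}(C_n, A)$ followed by these embeddings --- more precisely, restriction $\mathrm{Hom}(C_{n+1}, A) \to \mathrm{Hom}(C_n, A)$ is \emph{injective} because $C_n \subseteq C_{n+1}$ generate the same pure hull over $K$ for $n$ large (the rank stabilizes since it is bounded by $d$), and a homomorphism $C_{n+1}\to A$ vanishing on $C_n$ must vanish on the pure hull of $C_n$, hence on all of $C_{n+1}$ once $\langle C_n\rangle_* = \langle C_{n+1}\rangle_*$. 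Therefore, after discarding the initial finitely many indices where the rank is still growing, the tower $\boldsymbol{A}_{\mathrm{t}}$ is monomorphic. This gives plain Mittag-Leffler length at most $1$, and we conclude by Corollary~\ref{Corollary:Solecki-lim1}.

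The main obstacle will be making the injectivity of the bonding maps fully rigorous: one must argue that for $n$ large enough the pure hull of $C_n$ in $C$ stabilizes (this uses that rank is bounded and that pure hulls of finite subsets have bounded rank, Proposition~\ref{Proposition:characterize-prufer} and the ensuing discussion), and then that a homomorphism $C_{n+1}\to A$ killing $C_n$ kills $\langle C_n\rangle_*$ --- for which one uses that $A$ is flat hence torsion-free, so if $r\hat{c}\in C_n$ then $\varphi(r\hat c) = 0$ forces $r\varphi(\hat c)=0$ forces $\varphi(\hat c)=0$, and these $\hat c$ generate $\langle C_n\rangle_* = \langle C_{n+1}\rangle_*= C_{n+1}$. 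Once injectivity of the bonding maps is in hand, the tower is literally monomorphic, and the cited results finish the argument without further computation.
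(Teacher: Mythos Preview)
Your argument is correct and complete. The injectivity of the restriction maps for large $n$ is exactly as you say: once $\mathrm{rk}(C_n)=\mathrm{rk}(C)$, the pure hull $\langle C_n\rangle_*$ is all of $C$, so any $\varphi:C_{n+1}\to A$ vanishing on $C_n$ vanishes on $C_{n+1}$ by torsion-freeness of $A$. The only cosmetic gap is that Lemma~\ref{Lemma:monomorphic-tower} and Corollary~\ref{Corollary:Solecki-lim1} are stated for \emph{reduced} towers; but passing to the quotient by the constant subtower $\boldsymbol{A}_\infty$ (which here is $\mathrm{Hom}(C,A)$ sitting inside every term) preserves monomorphicity and does not change $\mathrm{lim}^1$, so this is harmless.

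The paper takes a different and somewhat shorter route: rather than analyzing the tower $(\mathrm{Hom}(C_n,A))$, it uses the injective resolution $0\to A\to D\to D/A\to 0$ with $D=K\otimes A$ the divisible hull. The resulting long exact sequence gives
\[
\mathrm{Ext}(C,A)\;\cong\;\frac{\mathrm{Hom}(C,D/A)}{\mathrm{Ran}\bigl(\mathrm{Hom}(C,D)\to\mathrm{Hom}(C,D/A)\bigr)},
\]
and the finite-rank hypothesis enters by making $\mathrm{Hom}(C,D)$ \emph{countable} (a homomorphism from $C$ into the $K$-vector space $D$ is determined by the images of finitely many basis vectors of $K\otimes C$). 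Thus $\mathrm{Ext}(C,A)$ is a pro-countable Polish module modulo a countable subgroup, which is visibly $\boldsymbol{\Sigma}_2^0$. Your approach is more in line with the paper's general $\mathrm{lim}^1$ machinery and makes the essentially-monomorphic structure explicit; the paper's approach is a one-line shortcut that avoids invoking the tower lemmas altogether. Both exploit the same underlying finiteness, just on opposite sides of the $\mathrm{Hom}$--$\mathrm{Ext}$ duality.
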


\begin{proof}
Let $D:=K\otimes A$ be the divisible hull of $A$. Thus, $D$ is a divisible
flat module. Consider then the exact sequence%
\begin{equation*}
0\rightarrow \mathrm{Hom}\left( C,A\right) \rightarrow \mathrm{Hom}\left(
C,D\right) \rightarrow \mathrm{Hom}\left( C,D/A\right) \rightarrow \mathrm{%
Ext}\left( C,A\right) \rightarrow 0\text{.}
\end{equation*}%
Thus, we have that\textrm{\ Ext}$\left( C,A\right) $ is isomorphic to%
\begin{equation*}
\frac{\mathrm{Hom}\left( C,D/A\right) }{\mathrm{Ran}(\mathrm{Hom}\left(
C,D\right) \rightarrow \mathrm{Hom}\left( C,D/A\right) )}\text{.}
\end{equation*}%
Notice that $\mathrm{Hom}\left( C,D\right) $ is countable since $C$ has
finite-rank.
\end{proof}

\begin{lemma}
\label{Lemma:dense-PExtJ}Suppose that $C,A$ are countable flat modules. Then 
$\left\{ 0\right\} $ is dense in $\mathrm{Ext}_{\mathcal{E}^{1}}\left(
C,A\right) $.
\end{lemma}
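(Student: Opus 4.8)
The plan is to reduce the statement to the general fact, recorded in Section \ref{Section:lim1}, that $\mathrm{lim}^{1}$ of a tower of pro-countable modules is always a phantom object, and hence automatically has dense trivial submodule. First I would fix an enumeration $(c_{n})_{n\in\omega}$ of the countable module $C$ and set $F_{n}:=\langle c_{0},\dots,c_{n}\rangle_{\ast}$, the pure hull of $\{c_{0},\dots,c_{n}\}$ in $C$. Since $R$ is Pr\"ufer, each $F_{n}$ is a pure submodule of $C$ of rank at most $n+1$, the sequence $(F_{n})$ is increasing, and $\bigcup_{n}F_{n}=C$. Then Corollary \ref{Corollary:Ext-and-lim1-2} applies and yields a canonical isomorphism
\[
\mathrm{Ext}_{\mathcal{E}_{1}}(C,A)\cong\mathrm{lim}^{1}_{n}\mathrm{Hom}(F_{n},A)
\]
in $\mathrm{LH}(\boldsymbol{\Pi}(\mathbf{Mod}(R)))$, where the tower $(\mathrm{Hom}(F_{n},A))_{n\in\omega}$ has bonding maps induced by the inclusions $F_{n}\to F_{n+1}$.

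Next I would verify that this tower actually lies in $\mathrm{pro}(\boldsymbol{\Pi}(\mathbf{Mod}(R)))$: being a finite-rank pure submodule of the countable flat module $C$, each $F_{n}$ is itself a countable flat module, hence countably presented, and since $\mathbf{Mod}_{\aleph_{0}}(R)$ is an abelian $\boldsymbol{\Pi}(\mathbf{Mod}_{\aleph_{0}}(R))$-category, $\mathrm{Hom}(F_{n},A)$ is a pro-countable (indeed pro-countably-presented) Polish module for every $n$. Because $\mathrm{lim}^{1}$ is, by construction, a functor $\mathrm{pro}(\boldsymbol{\Pi}(\mathbf{Mod}(R)))\to\mathrm{Ph}(\boldsymbol{\Pi}(\mathbf{Mod}(R)))$, the module $\mathrm{lim}^{1}_{n}\mathrm{Hom}(F_{n},A)$ is a phantom pro-countable Polish module, so its trivial submodule is dense. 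Since the property of being a phantom object---equivalently, of having dense trivial submodule---is invariant under isomorphism in the left heart, the displayed isomorphism transports density of $\{0\}$ to $\mathrm{Ext}_{\mathcal{E}_{1}}(C,A)$, which finishes the argument.

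I do not anticipate a real obstacle, since the claim is essentially a specialization of the $\mathrm{lim}^{1}$ machinery already developed. The only point that needs a little care is that Corollary \ref{Corollary:Ext-and-lim1-2} be invoked as an isomorphism in $\mathrm{LH}(\boldsymbol{\Pi}(\mathbf{Mod}(R)))$ rather than merely of abstract modules, so that the topological conclusion genuinely transfers; this is how that corollary is obtained there, via Roos' Theorem (Theorem \ref{Lemma:Ext-and-lim1}). As an alternative that sidesteps the $\mathrm{lim}^{1}$ identification, one could instead unwind the definition of $\mathrm{Ext}_{\mathcal{E}_{1}}(C,A)$ as the intersection of the kernels of the surjections $\mathrm{Ext}(C,A)\to\mathrm{Ext}(B,A)$ over finite-rank pure submodules $B\subseteq C$ and argue directly with the cocycle description of Section \ref{Section:cocycles}, but the route through $\mathrm{lim}^{1}$ is shorter and I would prefer it.
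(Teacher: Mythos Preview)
Your argument is correct and is genuinely different from the paper's. You go straight through Corollary~\ref{Corollary:Ext-and-lim1-2} to identify $\mathrm{Ext}_{\mathcal{E}_{1}}(C,A)$ with $\mathrm{lim}^{1}_{n}\mathrm{Hom}(F_{n},A)$ in $\mathrm{LH}(\boldsymbol{\Pi}(\mathbf{Mod}(R)))$, and then invoke the blanket fact from Section~\ref{Section:lim1} that $\mathrm{lim}^{1}$ takes values in $\mathrm{Ph}(\boldsymbol{\Pi}(\mathbf{Mod}(R)))$. The paper instead works kernel-by-kernel: it shows that $\{0\}$ is dense in each $\mathrm{Ker}(\mathrm{Ext}(C,A)\to\mathrm{Ext}(B,A))$ for finite-rank pure $B\subseteq C$, by using the long exact sequence for $B\to C\to C/B$ to exhibit this kernel as a quotient of $\mathrm{Ext}(C/B,A)=\mathrm{PExt}(C/B,A)$, where $\{0\}$ is already known to be dense. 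Your route is shorter and more conceptual; the paper's route is more self-contained at the level of $\mathrm{Ext}$ and makes the mechanism visible without passing through the $\mathrm{lim}^{1}$ isomorphism, which is convenient since essentially the same hands-on analysis is reused immediately afterward in the proof of Theorem~\ref{Theorem:PExt-Solecki}. You were right to flag that the isomorphism of Corollary~\ref{Corollary:Ext-and-lim1-2} must be read in the left heart; it is, and that is exactly what makes your transfer of density legitimate.
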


\begin{proof}
It suffices to prove that $\left\{ 0\right\} $ is dense in $\mathrm{\mathrm{%
Ker}}\left( \mathrm{Ext}\left( C,A\right) \rightarrow \mathrm{Ext}\left(
B,A\right) \right) $ for every finite-rank pure submodule $B$ of $C$. Let $B$
be a finite-rank submodule of $C$. Then we have an exact sequence%
\begin{equation*}
\mathrm{Hom}\left( B,A\right) \rightarrow \mathrm{Ext}\left( C/B,A\right)
\rightarrow \mathrm{Ext}\left( C,A\right) \rightarrow \mathrm{Ext}\left(
B,A\right) \rightarrow 0
\end{equation*}%
Thus, we have an isomorphism 
\begin{equation*}
\mathrm{\mathrm{Ker}}(\mathrm{Ext}\left( C,A\right) \rightarrow \mathrm{Ext}%
\left( B,A\right) )\cong \frac{\mathrm{\mathrm{Ext}}\left( C/B,A\right) }{%
\mathrm{Ran}(\mathrm{Hom}\left( B,A\right) \rightarrow \mathrm{\mathrm{Ext}}%
\left( C/B,A\right) )}
\end{equation*}%
Thus, it suffices to prove that $\mathrm{Ran}(\mathrm{Hom}\left( B,A\right)
\rightarrow \mathrm{\mathrm{Ext}}\left( C/B,A\right) )$ is dense in $\mathrm{%
Ext}\left( C/B,A\right) $. Since $B$ is pure in $C$, $C/B$ is flat and hence 
$\mathrm{Ext}\left( C/B,A\right) =\mathrm{PExt}\left( C/B,A\right) $. Thus, $%
\left\{ 0\right\} $ is dense in $\mathrm{Ext}\left( C/B,A\right) $, and 
\emph{a fortiori} $\mathrm{Ran}(\mathrm{Hom}\left( B,A\right) \rightarrow 
\mathrm{\mathrm{Ext}}\left( C/B,A\right) )$ is dense in $\mathrm{Ext}\left(
C/B,A\right) $.
\end{proof}

\begin{theorem}
\label{Theorem:PExt-Solecki}Suppose that $A,C$ are countable flat modules.
Then $\mathrm{Ext}_{\mathcal{E}_{1}}\left( C,A\right) $ is equal to the
first Solecki submodule of $\mathrm{Ext}\left( C,A\right) $.
\end{theorem}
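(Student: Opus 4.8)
The plan is to identify $\mathrm{Ext}_{\mathcal{E}_1}(C,A)$ with $s_1(\mathrm{Ext}(C,A))$ by invoking the characterization of the first Solecki submodule provided by Lemma~\ref{Lemma:1st-Solecki}. By Corollary~\ref{Corollary:Ext-and-lim1-2}, writing $C$ as the union of an increasing sequence $(F_n)$ of finite-rank pure submodules, we have $\mathrm{Ext}_{\mathcal{E}_1}(C,A)\cong\mathrm{lim}^1_n\mathrm{Hom}(F_n,A)$, realized concretely as the kernel of the natural map $\mathrm{Ext}(C,A)\to\mathrm{lim}_n\mathrm{Ext}(F_n,A)$, i.e.\ as the intersection of the kernels of the restriction maps $\mathrm{Ext}(C,A)\to\mathrm{Ext}(F_n,A)$. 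So $\mathrm{Ext}_{\mathcal{E}_1}(C,A)=\mathrm{lim}^1_n\mathrm{Hom}(F_n,A)$ is a submodule with a pro-countable Polish cover of $\mathrm{Ext}(C,A)=\mathrm{PExt}(C,A)\cong\mathrm{lim}^1_n\mathrm{Hom}(C_n,A)$ (for $(C_n)$ an increasing sequence of \emph{finite} submodules with union $C$). We must verify the three hypotheses of Lemma~\ref{Lemma:1st-Solecki}: that $\mathrm{Ext}_{\mathcal{E}_1}(C,A)$ is $\boldsymbol{\Pi}^0_3$ in $\mathrm{Ext}(C,A)$, that $\{0\}$ is dense in it, and the key "closure-covers-a-neighborhood" condition.

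First, density of $\{0\}$ in $\mathrm{Ext}_{\mathcal{E}_1}(C,A)$ is exactly Lemma~\ref{Lemma:dense-PExtJ}. Second, the $\boldsymbol{\Pi}^0_3$ bound: since each $F_n$ has finite rank, Lemma~\ref{Lemma:finite-rank} gives that $\mathrm{Ext}(F_n,A)$ has plain Solecki length at most $1$, so $\{0\}$ is $\boldsymbol{\Sigma}^0_2$ in $\mathrm{Ext}(F_n,A)$; taking the inverse limit, $\{0\}$ is $\boldsymbol{\Pi}^0_3$ in $\mathrm{lim}_n\mathrm{Ext}(F_n,A)$, and hence $\mathrm{Ext}_{\mathcal{E}_1}(C,A)$, being the preimage of $\{0\}$ under the continuous map $\mathrm{Ext}(C,A)\to\mathrm{lim}_n\mathrm{Ext}(F_n,A)$, is $\boldsymbol{\Pi}^0_3$ in $\mathrm{Ext}(C,A)$. (Alternatively one can argue directly from the cocycle description of $\mathrm{Ext}$ in Section~\ref{Section:cocycles}, as in the proof of Proposition~\ref{Proposition:lim1-Solecki1}.)

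The main obstacle is the third condition: given an open neighborhood $V$ of $0$ in the "coboundaries" $\mathrm{B}^1$ (equivalently, an open neighborhood of $0$ in the Polish cover $\mathrm{Hom}$-part), one must show that its closure $\overline{V}$ inside the "cocycles" $\mathrm{Z}^1$ contains an open neighborhood of $0$ in the Polishable subgroup underlying $\mathrm{Ext}_{\mathcal{E}_1}(C,A)$. This is the analogue, in the present "pure extensions of countable flat modules" setting, of the core approximation step in the proof of Proposition~\ref{Proposition:lim1-Solecki1}, where one recursively builds a sequence of lifts $b_n\in A^{(n)}$ witnessing that a given cocycle lies in $\overline{V}$; here the role of the derived tower $\boldsymbol{A}_1$ is played by the fact that elements of $\mathrm{Ext}_{\mathcal{E}_1}(C,A)$ are represented by cocycles that, when restricted to any finite-rank pure $F_n$, are coboundaries. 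I would set this up by transporting the problem through the isomorphism $\mathrm{PExt}(C,A)\cong\mathrm{lim}^1_n\mathrm{Hom}(C_n,A)$ and $\mathrm{Ext}_{\mathcal{E}_1}(C,A)\cong\mathrm{lim}^1_n\mathrm{Hom}(F_n,A)$ so that it becomes a statement about the tower $\boldsymbol{H}=(\mathrm{Hom}(C_n,A))_n$ and the subtower obtained by restricting along a cofinal subsequence of the $F_n$'s; then the "finite subsets of $C$ lie in finite-rank pure summands" structure (Lemma~\ref{Lemma:finite-rank-decomposable} and purity of the $F_n$) supplies exactly the surjectivity-of-restriction facts needed to carry out the recursive lifting, just as the clause $a_{ij}\in A_1^{(i)}=\bigcap_t p^{(i,t)}(A^{(t)})$ did in Proposition~\ref{Proposition:lim1-Solecki1}.

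Once the three hypotheses are checked, Lemma~\ref{Lemma:1st-Solecki} yields $\mathrm{Ext}_{\mathcal{E}_1}(C,A)=s_1(\mathrm{Ext}(C,A))$, completing the proof. A cleaner alternative, if available, is to observe that the tower $(\mathrm{Hom}(F_n,A))_n$ for the finite-rank pure filtration is (up to isomorphism) the first derived tower $\boldsymbol{H}_1$ of $\boldsymbol{H}=(\mathrm{Hom}(C_n,A))_n$—this is plausible because passing from a finite submodule $C_n$ to the smallest finite-rank pure submodule containing it corresponds exactly to the "eventual image" operation $\bigcap_k p^{(n,n+k)}(\cdot)$ at the level of $\mathrm{Hom}(-,A)$—and then invoke Proposition~\ref{Proposition:lim1-Solecki1} directly: $s_1(\mathrm{lim}^1\boldsymbol{H})=\mathrm{lim}^1\boldsymbol{H}_1=\mathrm{lim}^1_n\mathrm{Hom}(F_n,A)=\mathrm{Ext}_{\mathcal{E}_1}(C,A)$. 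I would attempt this route first and fall back on the direct verification of Lemma~\ref{Lemma:1st-Solecki} if the identification $\boldsymbol{H}_1\cong(\mathrm{Hom}(F_n,A))_n$ requires more care than expected.
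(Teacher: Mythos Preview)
Your overall framework is correct and matches the paper's: both arguments verify the three hypotheses of Lemma~\ref{Lemma:1st-Solecki}, using Lemma~\ref{Lemma:dense-PExtJ} for density and Lemma~\ref{Lemma:finite-rank} for the $\boldsymbol{\Pi}^0_3$ bound. The discrepancy lies entirely in the closure-covers-a-neighborhood step.

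First, your invocation of Lemma~\ref{Lemma:finite-rank-decomposable} is misplaced: that lemma characterizes those $C$ which \emph{are} direct sums of finite-rank modules, and says nothing about a general countable flat $C$. You cannot assume finite subsets of $C$ lie in finite-rank pure \emph{summands}.

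Second, your alternative route is not correct as stated. The derived tower $\boldsymbol{H}_1$ of $\boldsymbol{H}=(\mathrm{Hom}(C_n,A))_n$ has $n$-th term $\bigcap_k \mathrm{Ran}(\mathrm{Hom}(C_{n+k},A)\to\mathrm{Hom}(C_n,A))$, i.e.\ those $\varphi:C_n\to A$ extending to \emph{all} of $C$. The subtower $(\mathrm{Hom}(F_n,A))_n$ (with $F_n=\langle C_n\rangle_*$) consists of those $\varphi$ extending merely to $F_n$. These are genuinely different subtowers, and showing their $\mathrm{lim}^1$'s coincide inside $\mathrm{lim}^1\boldsymbol{H}$ is precisely the content of the theorem, so invoking Proposition~\ref{Proposition:lim1-Solecki1} here is circular.

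What the paper does for the closure step, and what your sketch is missing, is the following concrete extension mechanism. Fix a \emph{free} resolution $S\to F\to C$ and realize $\mathrm{Ext}(C,A)\cong\mathrm{Hom}(S,A)/\mathrm{Hom}(F|S,A)$. For a finite-rank pure $B\subseteq C$, set $F_B=\{x\in F: x+S\in B\}$. Given $\hat\varphi_0:F_B\to A$ vanishing on prescribed $x_0,\ldots,x_n$, and given further points $y_0,\ldots,y_\ell\in S$, choose a finite free summand $F_0$ of $F$ containing all the $x_i,y_j$. The crux is that $\langle F_0,F_B\rangle/F_B$ embeds in $C/B$, which is flat (since $B$ is pure), and is finitely generated; hence over a Pr\"ufer domain it is \emph{projective}, so $\mathrm{Ext}(\langle F_0,F_B\rangle/F_B,A)=0$ and $\hat\varphi_0$ extends to $\langle F_0,F_B\rangle$. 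One then extends by zero on the remaining free generators of $F$. This ``finite flat $=$ projective'' step is the engine of the argument; your $\mathrm{lim}^1$ formulation could be made to work with the same idea (extend $\hat\varphi:F_n\to A$ to $\langle C_m,F_n\rangle$ using that $\langle C_m,F_n\rangle/F_n$ is finite flat, then restrict to $C_m$), but you have not identified it.
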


\begin{proof}
By Lemma \ref{Lemma:finite-rank}, $\left\{ 0\right\} $ is a $\boldsymbol{%
\Sigma }_{2}^{0}$ submodule of $\mathrm{Ext}\left( B,A\right) $ for every
finite-rank submodule $B$ of $C$. Since $\mathrm{Ext}_{\mathcal{E}%
_{1}}\left( C,A\right) $ is the intersection of the kernels of the Borel
homomorphisms $\mathrm{Ext}\left( C,A\right) \rightarrow \mathrm{Ext}\left(
B,A\right) $ where $B$ varies among the finite-rank pure submodules of $C$,
it follows that $\mathrm{Ext}_{\mathcal{E}_{1}}\left( C,A\right) $ is a $%
\boldsymbol{\Pi }_{3}^{0}$ submodule of $\mathrm{Ext}\left( C,A\right) $ as
there are countably many such submodules $B$.

Consider a projective resolution $S\rightarrow F\rightarrow C$ of $C$, where 
$F$ is a free module. Thus, we have a natural isomorphism%
\begin{equation*}
\mathrm{Ext}\left( C,A\right) \cong \frac{\mathrm{Hom}\left( S,A\right) }{%
\mathrm{Hom}\left( F|S,A\right) }\text{,}
\end{equation*}%
where%
\begin{equation*}
\mathrm{Hom}\left( F|S,A\right) =\mathrm{Ran}\left( \mathrm{Hom}\left(
F,A\right) \rightarrow \mathrm{Hom}\left( S,A\right) \right) \text{.}
\end{equation*}%
For a submodule $B$ of $C$, define $F_{B}=\left\{ x\in F:x+S\in B\right\} $.
Under such an isomorphism $\mathrm{Ext}_{\mathcal{E}_{1}}\left( C,A\right) $
corresponds to%
\begin{equation*}
\frac{\mathrm{Hom}^{1}\left( S,A\right) }{\mathrm{Hom}\left( F|S,A\right) }
\end{equation*}%
where $\mathrm{Hom}^{1}\left( S,A\right) $ is the submodule of homomorphisms 
$S\rightarrow A$ that have an extension $F_{B}\rightarrow A$ for every
finite-rank (pure) submodule $B\ $of $C$.

By Lemma \ref{Lemma:1st-Solecki} and Lemma \ref{Lemma:dense-PExtJ}, it
suffices to prove that if $V\subseteq \mathrm{Hom}\left( F|S,A\right) $ is
an open neighborhood of $0$ in $\mathrm{Hom}\left( F|S,A\right) $, then the
closure $\overline{V}^{\mathrm{Hom}\left( S,A\right) }$ of $V\ $inside $%
\mathrm{Hom}\left( S,A\right) $ contains an open neighborhood of $0$ in $%
\mathrm{Hom}^{1}\left( S,A\right) $. Suppose that $V\subseteq \mathrm{Hom}%
\left( F|S,A\right) $ is an open neighborhood of $0$. Thus, there exist $%
x_{0},\ldots ,x_{n}\in F$ such that 
\begin{equation*}
\left\{ \varphi |_{S}:\varphi \in \mathrm{Hom}\left( F,A\right) ,\forall
i\leq n,\varphi \left( x_{i}\right) =0\right\} \subseteq V\text{.}
\end{equation*}%
Let $B$ be a finite-rank pure submodule of $C$ such that $\left\{
x_{0},\ldots ,x_{n}\right\} \subseteq F_{B}$. Thus, we have that%
\begin{equation*}
U=\left\{ \varphi \in \mathrm{Hom}^{1}\left( S,A\right) :\exists \hat{\varphi%
}\in \mathrm{Hom}(F_{B},A),\forall i\leq n,\hat{\varphi}\left( x_{i}\right)
=0,\hat{\varphi}|_{S}=\varphi \right\}
\end{equation*}%
is an open neighborhood of $0$ in $\mathrm{Hom}^{1}\left( S,A\right) $. We
claim that $U\subseteq \overline{V}^{\mathrm{Hom}\left( S,A\right) }$.
Indeed, suppose that $\varphi _{0}\in U$ and let $W$ be an open neighborhood
of $\varphi _{0}$ in $\mathrm{Hom}\left( S,A\right) $. Thus, there exist $%
y_{0},\ldots ,y_{\ell }\in S$ such that%
\begin{equation*}
\left\{ \varphi \in \mathrm{Hom}\left( S,A\right) :\forall i\leq \ell
,\varphi \left( y_{i}\right) =\varphi _{0}\left( y_{i}\right) \right\}
\subseteq W\text{.}
\end{equation*}%
We need to prove that $V\cap W\neq \varnothing $. To this purpose, it
suffices to show that there exists $\varphi \in \mathrm{Hom}\left(
F,A\right) $ such that $\varphi \left( y_{i}\right) =\varphi _{0}\left(
y_{i}\right) $ for $i\leq \ell $ and $\varphi \left( x_{i}\right) =0$ for $%
i\leq n$. Since $\varphi _{0}\in U$, there exists $\hat{\varphi}_{0}\in 
\mathrm{Hom}(F_{B},A)$ such that $\hat{\varphi}_{0}\left( x_{i}\right) =0$
for $i\leq n$ and $\hat{\varphi}_{0}|_{S}=\varphi _{0}$.

Let $\left( z_{k}\right) _{k\in I}$ be a free $R$-basis of $F$. The elements 
$x_{0},\ldots ,x_{n},y_{0},\ldots ,y_{\ell }$ of $F$ are contained in the
submodule $F_{0}$ of $F$ generated by $\left\{ z_{i}:i\in I_{0}\right\} $
for some \emph{finite }$I_{0}\subseteq I$. Letting $\langle
F_{0},F_{B}\rangle $ be the submodule of $F$ generated by $F_{0}$ and $F_{B}$%
, we have that, since $B$ is pure in $C$, 
\begin{equation*}
\langle F_{0},F_{B}\rangle /F_{B}\subseteq F/F_{B}\cong C/B
\end{equation*}%
is finitely-generated and flat, and hence projective. Therefore, $\mathrm{Ext%
}\left( \langle F_{0},F_{B}\rangle /F_{B},A\right) =0$ and there exists $%
\psi \in \mathrm{Hom}(\langle F_{0},F_{B}\rangle ,A)$ that extends $\hat{%
\varphi}_{0}$. One can then define $\varphi \in \mathrm{Hom}\left(
F,A\right) $ by setting%
\begin{equation*}
\varphi \left( z_{i}\right) =\left\{ 
\begin{array}{ll}
\psi (z_{i}) & \text{if }i\in I_{0}\text{,} \\ 
0 & \text{if }i\in I\setminus I_{0}\text{.}%
\end{array}%
\right.
\end{equation*}%
This concludes the proof.
\end{proof}

\begin{corollary}
An extension of countable flat modules is in $\mathcal{E}_{1}$ if and only
if it is phantom of order $1$.
\end{corollary}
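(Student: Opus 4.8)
The plan is to reduce the statement to the identity $\mathrm{Ph}^{1}\mathrm{Ext}\left(C,A\right)=\mathrm{Ext}_{\mathcal{E}_{1}}\left(C,A\right)$ of submodules of $\mathrm{Ext}\left(C,A\right)$, valid for all countable flat modules $C$ and $A$. This suffices, because an extension $A\rightarrow X\rightarrow C$ of countable flat modules lies in $\mathcal{E}_{1}$ exactly when its class in $\mathrm{Ext}\left(C,A\right)$ lies in $\mathrm{Ext}_{\mathcal{E}_{1}}\left(C,A\right)$ --- which, as recorded just before the statement, equals $\bigcap_{B}\mathrm{\mathrm{Ker}}\left(\mathrm{Ext}\left(C,A\right)\rightarrow\mathrm{Ext}\left(B,A\right)\right)$ with $B$ ranging over the finite-rank pure submodules of $C$ --- and it is phantom of order $1$ exactly when its class lies in $\mathrm{Ph}^{1}\mathrm{Ext}\left(C,A\right)$, by the very definition of $\mathrm{Ph}^{1}\mathrm{Ext}$.

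The first ingredient is Theorem~\ref{Theorem:PExt-Solecki}, which identifies $\mathrm{Ext}_{\mathcal{E}_{1}}\left(C,A\right)$ with the first Solecki submodule $s_{1}\left(\mathrm{Ext}\left(C,A\right)\right)$. For the second, I would identify $\mathrm{Ph}^{1}\mathrm{Ext}\left(C,A\right)$ with the same submodule via Theorem~\ref{Theorem:phantom-Ext}. Since $R$ is Pr\"{u}fer and $C$ is flat, $C$ is the union of an increasing chain $\left(C_{n}\right)_{n\in\omega}$ of finitely generated submodules, each of which is projective (being finitely generated and torsion-free over a Pr\"{u}fer domain) and hence finitely-presented, so that $\boldsymbol{X}=\left(C_{n}\right)$ is an inductive system of compact objects of $\mathbf{Thin}\left(R\right)$ with colimit $C$; moreover $A$ is a countable flat module, hence a thin module. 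The countability hypothesis of Theorem~\ref{Theorem:phantom-Ext} holds since $\mathrm{Hom}\left(C_{n},A\right)$ embeds into a finite power of $A$ and is therefore countable. Theorem~\ref{Theorem:phantom-Ext} then provides an isomorphism $\mathrm{Ext}\left(C,A\right)\cong\mathrm{\mathrm{lim}}_{n}^{1}\mathrm{Hom}\left(C_{n},A\right)$ (also obtainable from Roos' Theorem~\ref{Lemma:Ext-and-lim1} and the equality $\mathrm{PExt}\left(C,A\right)=\mathrm{Ext}\left(C,A\right)$ for flat $C$) under which $\mathrm{Ph}^{1}\mathrm{Ext}\left(C,A\right)$ corresponds to $s_{1}\left(\mathrm{\mathrm{lim}}_{n}^{1}\mathrm{Hom}\left(C_{n},A\right)\right)=s_{1}\left(\mathrm{Ext}\left(C,A\right)\right)$. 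Combining the two ingredients yields $\mathrm{Ph}^{1}\mathrm{Ext}\left(C,A\right)=s_{1}\left(\mathrm{Ext}\left(C,A\right)\right)=\mathrm{Ext}_{\mathcal{E}_{1}}\left(C,A\right)$, which is the claim.

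There is no deep obstacle here: the content of the corollary is entirely packaged in Theorems~\ref{Theorem:PExt-Solecki} and~\ref{Theorem:phantom-Ext}, and the proof is essentially the observation that $\mathrm{Ext}_{\mathcal{E}_{1}}\left(C,A\right)$, $\mathrm{Ph}^{1}\mathrm{Ext}\left(C,A\right)$, and $s_{1}\left(\mathrm{Ext}\left(C,A\right)\right)$ are three descriptions of one and the same submodule of $\mathrm{Ext}\left(C,A\right)$. The only points calling for a little care --- and the places where I would be explicit in the writeup --- are the checking of the mild countability hypothesis of Theorem~\ref{Theorem:phantom-Ext} for the chosen inductive system, the choice of $\mathbf{Thin}\left(R\right)$ (equivalently $\mathbf{Mod}\left(R\right)$) as the ambient exact category so that the compact objects are the finitely-presented modules used in $\mathcal{C}_{0}$ and hence the phantom degrees match the Solecki indices of Theorem~\ref{Theorem:PExt-Solecki}, and the remark that both membership of an extension in $\mathcal{E}_{1}$ and its being phantom of order $1$ are properties of (and detected by) its class in $\mathrm{Ext}\left(C,A\right)$.
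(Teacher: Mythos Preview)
Your proposal is correct and follows essentially the same approach as the paper: the corollary is stated without proof immediately after Theorem~\ref{Theorem:PExt-Solecki}, and the intended argument is precisely the one you give, combining that theorem with Theorem~\ref{Theorem:phantom-Ext} to identify both $\mathrm{Ext}_{\mathcal{E}_{1}}\left(C,A\right)$ and $\mathrm{Ph}^{1}\mathrm{Ext}\left(C,A\right)$ with $s_{1}\left(\mathrm{Ext}\left(C,A\right)\right)$. Your explicit verification of the hypotheses of Theorem~\ref{Theorem:phantom-Ext} (compactness of the $C_{n}$, countability of $\mathrm{Hom}\left(C_{n},A\right)$) is a welcome addition that the paper leaves implicit.
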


We now completely characterize the countable flat modules that have (plain)
projective rank at most one.

\begin{theorem}
\label{Theorem:structure-rank-1}Given a countable coreduced flat module $C$,
the following assertions are equivalent:

\begin{enumerate}
\item $C$ has plain $R$-projective length at most one;

\item $C$ has plain projective length at most one;

\item $C$ has finite rank.
\end{enumerate}
\end{theorem}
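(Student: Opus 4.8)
The plan is to prove the three implications separately, with essentially all of the work in $(1)\Rightarrow(3)$. The implication $(2)\Rightarrow(1)$ is immediate, being the instance $A=R$ of the definition of plain projective length. For $(3)\Rightarrow(2)$, if $C$ has finite rank then Lemma~\ref{Lemma:finite-rank} tells us that $\mathrm{Ext}\left(C,A\right)$ has plain Solecki length at most one for \emph{every} countable flat module $A$; by definition this says precisely that $C$ has plain $A$-projective length at most one for every such $A$, that is, $C$ has plain projective length at most one.

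For $(1)\Rightarrow(3)$ the idea is to pass to $\mathrm{lim}^{1}$, apply the monomorphic-tower criterion, and then transport back via duality. Concretely, I would fix an increasing exhaustion $C=\bigcup_{n}C_{n}$ by finitely-generated submodules; since $R$ is Pr\"ufer and $C$ is flat, each $C_{n}$ is finite projective, so $C_{n}^{\ast}:=\mathrm{Hom}\left(C_{n},R\right)$ is finite flat with $\mathrm{rk}\left(C_{n}^{\ast}\right)=\mathrm{rk}\left(C_{n}\right)$, and $\mathrm{rk}\left(C\right)=\sup_{n}\mathrm{rk}\left(C_{n}\right)$ as $K\otimes C$ is the increasing union of the $K\otimes C_{n}$. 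Because $\mathrm{Ext}\left(C_{n},R\right)=0$, Roos' Theorem~\ref{Lemma:Ext-and-lim1} identifies $\mathrm{Ext}\left(C,R\right)$ with $\mathrm{lim}^{1}\boldsymbol{B}$ for the tower $\boldsymbol{B}=\left(C_{n}^{\ast}\right)$, whose bonding maps are the restriction maps dual to the inclusions $C_{n}\hookrightarrow C_{n+1}$; and since $C$ is coreduced, $\mathrm{lim}\,\boldsymbol{B}=\mathrm{Hom}\left(C,R\right)=0$, so $\boldsymbol{B}$ is a \emph{reduced} tower of finite flat modules. Hypothesis $(1)$ says $\mathrm{lim}^{1}\boldsymbol{B}$ has plain Solecki length at most one, so Lemma~\ref{Lemma:monomorphic-tower} provides an index $m$ with $\boldsymbol{B}\cong S^{(m)}\left(\boldsymbol{B}\right)$, where $S^{(m)}\left(\boldsymbol{B}\right)=\left(p^{(m,k)}\left(C_{k}^{\ast}\right)\right)_{k\geq m}$ is the monomorphic tower whose terms $D_{k}:=p^{(m,k)}\left(C_{k}^{\ast}\right)$ are submodules of the \emph{single} finite flat module $C_{m}^{\ast}$, hence all of rank at most $d:=\mathrm{rk}\left(C_{m}\right)<\infty$. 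Finally I would invoke the duality: $C\mapsto\left(\mathrm{Hom}\left(C_{n},R\right)\right)$ is the object part of an equivalence between coreduced countable flat modules and reduced towers of finite flat modules (Corollary~\ref{Corollary:fully-faithful-lim1}), whose quasi-inverse is modelled by ``dualize each term and take the colimit along the dualized transition maps'' (using the finite-flat duality of Lemma~\ref{Lemma:finiteflat-duality} together with $C_{n}\cong C_{n}^{\ast\ast}$). Applying this quasi-inverse to $\boldsymbol{B}\cong S^{(m)}\left(\boldsymbol{B}\right)$ yields $C\cong\mathrm{colim}_{k\geq m}D_{k}^{\ast}$; since each $D_{k}^{\ast}=\mathrm{Hom}\left(D_{k},R\right)$ has rank $\mathrm{rk}\left(D_{k}\right)\leq d$ and a sequential colimit of modules of rank $\leq d$ is an increasing union of submodules of rank $\leq d$ (hence itself of rank $\leq d$), we conclude $\mathrm{rk}\left(C\right)\leq d<\infty$, proving $(3)$.

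The step I expect to be the main obstacle is the careful use of the duality at the very end. A pro-isomorphism of towers does not in general determine the colimit of the dualized terms --- for instance a tower with vanishing bonding maps is pro-trivial yet has arbitrarily large individual terms --- so one must genuinely verify that ``termwise dualize, then colimit'' is functorial on $\mathrm{pro}\left(\mathbf{FinFlat}\left(R\right)\right)$ and that it inverts the Corollary~\ref{Corollary:fully-faithful-lim1} equivalence on the subcategory of reduced towers. It is precisely the reducedness of $\boldsymbol{B}$ (equivalently, the coreducedness of $C$) that makes this work, which is also why the hypothesis ``coreduced'' cannot be dropped: a free module of infinite rank has $\mathrm{Ext}\left(-,R\right)=0$ and hence satisfies $(1)$ but not $(3)$. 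The remaining ingredients --- that $S^{(m)}\left(\boldsymbol{B}\right)$ is literally a tower of submodules of one finite flat module, and the elementary rank inequality for sequential colimits --- are routine.
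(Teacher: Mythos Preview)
Your proof is correct and takes a somewhat different route from the paper for the substantive implication $(1)\Rightarrow(3)$. The paper exhausts $C$ by finite-rank \emph{pure} submodules $C_n$ and works at the level of the tower $(\mathrm{Ext}(C_n,R))$ of phantom modules: since plain Solecki length at most one forces $s_1\mathrm{Ext}(C,R)=0$, one gets $\mathrm{Ext}(C,R)\cong\lim_n\mathrm{Ext}(C_n,R)$, and then that these bonding maps are eventually isomorphisms; the six-term sequence for $C_n\hookrightarrow C_{n+1}$ then gives $\mathrm{Ext}(C_{n+1}/C_n,R)=0$, so each successive quotient is projective and coreducedness forces $C=C_0$. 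You instead exhaust by \emph{finitely-generated} submodules and work with $\boldsymbol{B}=(C_n^{\ast})$, a tower of genuinely finite modules to which Lemma~\ref{Lemma:monomorphic-tower} applies on the nose, and then read off the rank bound via the duality of Corollary~\ref{Corollary:fully-faithful-lim1} rather than by analysing successive quotients. Your route avoids the tower of phantom modules entirely, at the cost of the duality step you rightly flag.

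That step is sound, and your worry is slightly overstated: because finite flat modules are compact, the assignment $\boldsymbol{A}=(A_n)\mapsto\mathrm{colim}_n A_n^{\ast}$ is already a well-defined functor on all of $\mathrm{pro}(\mathbf{FinFlat}(R))$ --- pro-isomorphic towers do have isomorphic dualized colimits, since dualizing is an anti-equivalence to $\mathrm{ind}(\mathbf{FinFlat}(R))$ and the colimit functor from there to countable flat modules is fully faithful. Reducedness of $\boldsymbol{B}$ is needed not for functoriality but so that the colimit lands in the coreduced modules, where the equivalence applies. One small wording point at the end: $\mathrm{colim}_k D_k^{\ast}$ is the increasing union of the \emph{images} of the $D_k^{\ast}$ in the colimit, not of the $D_k^{\ast}$ themselves (the duals of the inclusions $D_{k+1}\hookrightarrow D_k$ need not be injective); since each image has rank at most $d$, your conclusion is unaffected.
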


\begin{proof}
Let $\left( C_{n}\right) $ be an increasing of finite-rank pure submodules
of $C$ with union equal to $C$.

(1)$\Rightarrow $(2) By Corollary \ref{Corollary:Ext-and-lim1-2} and Theorem %
\ref{Theorem:PExt-Solecki}, we have that%
\begin{equation*}
\mathrm{Ext}\left( C,R\right) \cong \mathrm{lim}_{n}\mathrm{Ext}\left(
C_{n},R\right) \text{.}
\end{equation*}%
By Lemma \ref{Lemma:monomorphic-tower}, after passing to a subsequence of $%
\left( C_{n}\right) $ we can assume that $\mathrm{Ext}\left( C_{n},R\right)
\rightarrow \mathrm{Ext}\left( C_{n-1},R\right) $ is an isomorphism for
every $n\geq 1$. For $n\geq 0$ we have%
\begin{equation*}
0=\mathrm{Hom}\left( C_{n},R\right) \rightarrow \mathrm{Ext}\left(
C_{n+1}/C_{n},R\right) \rightarrow \mathrm{Ext}\left( C_{n+1},R\right)
\rightarrow \mathrm{Ext}\left( C_{n},R\right) \rightarrow 0\text{.}
\end{equation*}%
This shows that $\mathrm{Ext}\left( C_{n+1}/C_{n},R\right) =0$ and hence $%
C_{n+1}/C_{n}$ is a projective module by Proposition \ref%
{Proposition:characterize-projectives}. As this holds for every $n\geq 1$,
and since $C$ is coreduced, $C=C_{0}$.

(3)$\Rightarrow $(2) This follows from Lemma \ref{Lemma:finite-rank}.
\end{proof}

\begin{theorem}
\label{Theorem:structure-rank-1-bis}Given a countable coreduced flat module $%
C$, the following assertions are equivalent:

\begin{enumerate}
\item $C$ has $R$-projective length at most one;

\item $C\cong \mathrm{co\mathrm{lim}}_{k}C_{k}$ for some inductive sequence $%
\left( C_{k}\right) $ of coreduced flat modules of finite rank.
\end{enumerate}
\end{theorem}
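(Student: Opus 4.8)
The plan is to prove the two implications separately, in each case using Theorem~\ref{Theorem:PExt-Solecki} to rephrase ``$C$ has $R$-projective length at most one'' as ``$s_{1}\mathrm{Ext}(C,R)=0$'', i.e. ``$\mathrm{Ext}_{\mathcal{E}_{1}}(C,R)=0$'', together with the concrete descriptions from Corollary~\ref{Corollary:Ext-and-lim1-2} and the paragraph following it: $\mathrm{Ext}_{\mathcal{E}_{1}}(C,R)$ is the intersection of the kernels of the restriction maps $\mathrm{Ext}(C,R)\to\mathrm{Ext}(B,R)$ over the finite-rank pure submodules $B$ of $C$, and equals $\mathrm{lim}^{1}_{n}\mathrm{Hom}(F_{n},R)$ for any exhaustion $(F_{n})$ of $C$ by such submodules. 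I will also use that a coreduced flat module $M$ has $\mathrm{Hom}(M,R)=0$ (part of the equivalence underlying Corollary~\ref{Corollary:fully-faithful-lim1}), the self-duality of finite flat modules (Lemma~\ref{Lemma:finiteflat-duality}), and the duality between coreduced flat modules and phantom pro-finiteflat modules (Proposition~\ref{Proposition:duality-Ext}), as well as the fact, already invoked above, that $G\mapsto s_{1}(G)$ is a subfunctor of the identity.

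For the implication (2)$\Rightarrow$(1), write $C\cong\mathrm{colim}_{k}C_{k}$ with each $C_{k}$ coreduced flat of finite rank, let $\bar{C}_{k}\subseteq C$ be the image of $C_{k}$, so that $(\bar{C}_{k})$ is an increasing sequence of finite-rank submodules of $C$ with union $C$, and observe that $\mathrm{Hom}(\bar{C}_{k},R)\hookrightarrow\mathrm{Hom}(C_{k},R)=0$. Roos' Theorem~\ref{Lemma:Ext-and-lim1} then gives $\mathrm{Ext}(C,R)\cong\mathrm{lim}_{k}\mathrm{Ext}(\bar{C}_{k},R)$. Since $\bar{C}_{k}$ has finite rank, Lemma~\ref{Lemma:coreduced-radical} gives $\bar{C}_{k}=\Sigma\bar{C}_{k}\oplus\Phi\bar{C}_{k}$ with $\Sigma\bar{C}_{k}$ coreduced of finite rank and $\Phi\bar{C}_{k}$ projective, so $\mathrm{Ext}(\bar{C}_{k},R)\cong\mathrm{Ext}(\Sigma\bar{C}_{k},R)$ by Proposition~\ref{Proposition:characterize-projectives}, which by Theorem~\ref{Theorem:structure-rank-1} has Solecki length at most one; thus $s_{1}\mathrm{Ext}(\bar{C}_{k},R)=0$ for every $k$. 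As $s_{1}$ is a subfunctor of the identity, the image of $s_{1}\mathrm{Ext}(C,R)$ under every projection $\mathrm{lim}_{k}\mathrm{Ext}(\bar{C}_{k},R)\to\mathrm{Ext}(\bar{C}_{k},R)$ is contained in $s_{1}\mathrm{Ext}(\bar{C}_{k},R)=0$, and since $\mathrm{lim}_{k}\mathrm{Ext}(\bar{C}_{k},R)$ embeds into $\prod_{k}\mathrm{Ext}(\bar{C}_{k},R)$ this forces $s_{1}\mathrm{Ext}(C,R)=0$; that is, $C$ has $R$-projective length at most one.

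For the implication (1)$\Rightarrow$(2), note that since $C$ is coreduced flat, Corollary~\ref{Corollary:fully-faithful-lim1} presents it via a reduced tower $\boldsymbol{A}=(A^{(n)})$ of finite flat modules with $\mathrm{Ext}(C,R)\cong\mathrm{lim}^{1}\boldsymbol{A}$ and $A^{(n)}\cong\mathrm{Hom}(C_{n},R)$ for an increasing exhaustion $(C_{n})$ of $C$ by finite submodules; by Corollary~\ref{Corollary:Solecki-lim1} the hypothesis is equivalent to $\boldsymbol{A}$ having Mittag--Leffler length at most one, i.e. $\boldsymbol{A}_{1}=0$, equivalently $\bigcap_{j}p^{(n,n+j)}(A^{(n+j)})=0$ for every $n$. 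Dualizing termwise (Lemma~\ref{Lemma:finiteflat-duality}), one recovers the obvious presentation $C\cong\mathrm{colim}_{n}C_{n}$, whose pieces are finite flat, hence projective rather than coreduced; the content is to fatten these up. The plan is: for each $n$, enlarge $C_{n}$ inside $C$ to a finite-rank submodule $C'_{n}\supseteq C_{n}$ which is coreduced, by iteratively adjoining, for each nonzero projective direct summand $P$ of the current finite-rank submodule $D$, a finite subset of $C$ witnessing (via the nonvanishing of a cocycle, using that $\Phi C=0$) that $P$ is not a direct summand of $C$; the condition $\boldsymbol{A}_{1}=0$ is exactly what forces this enlargement process to terminate. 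Passing to a cofinal subsequence of $(C'_{n})$ then gives $C\cong\mathrm{colim}_{n}C'_{n}$ with each $C'_{n}$ coreduced of finite rank.

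The main obstacle is this last step. A coreduced finite-rank module can sit as a finite-rank \emph{pure} submodule of a larger coreduced module that is itself projective (for instance $R\cong R(1,1)\subseteq R[1/p]\oplus R[1/q]$), so neither the canonical exhaustion of $C$ by finite-rank pure submodules nor the canonical exhaustion by finite submodules has coreduced terms; one has to choose the finite-rank pieces inside $C$ with enough care that their projective parts are absorbed by the colimit maps, and verifying that the vanishing of the first derived tower $\boldsymbol{A}_{1}$ is precisely the hypothesis under which such a choice exists is where the real work lies.
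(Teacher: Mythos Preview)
Your $(2)\Rightarrow(1)$ is correct, if a bit roundabout (the $\Sigma/\Phi$ decomposition is unneeded: Lemma~\ref{Lemma:finite-rank} already gives $s_1\mathrm{Ext}(\bar C_k,R)=0$ for any finite-rank $\bar C_k$). The paper's version is shorter still: the pure hull of $\bar C_k$ in $C$ remains coreduced (any map from it to $R$ vanishes on $\bar C_k$, hence factors through a torsion quotient, hence is zero), so one directly obtains an exhaustion by coreduced finite-rank \emph{pure} submodules $F_n$, for which Corollary~\ref{Corollary:Ext-and-lim1-2} and Theorem~\ref{Theorem:PExt-Solecki} give $s_1\mathrm{Ext}(C,R)\cong\mathrm{lim}^1_n\mathrm{Hom}(F_n,R)=\mathrm{lim}^1_n 0=0$.

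For $(1)\Rightarrow(2)$ you have worked much harder than necessary, and the iterative fattening you sketch is never actually carried out. The difficulty disappears if you exhaust $C$ by finite-rank \emph{pure} submodules $(F_n)$ rather than by finite ones. Then Corollary~\ref{Corollary:Ext-and-lim1-2} and Theorem~\ref{Theorem:PExt-Solecki} identify $s_1\mathrm{Ext}(C,R)$ with $\mathrm{lim}^1_n\mathrm{Hom}(F_n,R)$, and since also $\mathrm{lim}_n\mathrm{Hom}(F_n,R)=\mathrm{Hom}(C,R)=0$, the hypothesis makes the tower $(\mathrm{Hom}(F_n,R))$ pro-zero: for each $n$ there is $m>n$ with the restriction map $\mathrm{Hom}(F_m,R)\to\mathrm{Hom}(F_n,R)$ equal to zero. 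Writing $F_m=\Sigma F_m\oplus\Phi F_m$ via Lemma~\ref{Lemma:coreduced-radical}, with $\Phi F_m$ finite flat (a finite-rank projective over a Pr\"ufer domain is finite), the injectivity of $\Phi F_m\to(\Phi F_m)^{**}$ from Lemma~\ref{Lemma:finiteflat-duality} shows this is equivalent to $F_n\subseteq\Sigma F_m$. Picking $n_0<n_1<\cdots$ with $F_{n_k}\subseteq\Sigma F_{n_{k+1}}$ and setting $D_k:=\Sigma F_{n_{k+1}}$ yields an increasing exhaustion of $C$ by \emph{coreduced} finite-rank (pure) submodules, which is exactly $(2)$. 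So your legitimate worry in the final paragraph---that a finite-rank pure submodule of a coreduced module need not be coreduced---is handled in one stroke by applying $\Sigma$, not by an enlargement procedure whose termination you could not establish. (Incidentally, length at most $1$ only gives $\boldsymbol A_1\cong 0$ in the pro-category, not $A_1^{(n)}=0$ for every $n$ as you wrote.)
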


\begin{proof}
Let $\left( C_{n}\right) $ be an increasing of finite-rank pure submodules
of $C$ with union equal to $C$. By Theorem \ref{Theorem:PExt-Solecki}, we
have%
\begin{equation*}
s_{1}\mathrm{Ext}\left( C,A\right) \cong \mathrm{lim}^{1}\mathrm{Hom}\left(
C_{n},R\right) \text{.}
\end{equation*}

(2)$\Rightarrow $(1) If $C_{n}$ is coreduced for every $n\in \omega $, then $%
\mathrm{Hom}\left( C_{n},R\right) =0$, whence $s_{1}\mathrm{Ext}\left(
C,A\right) =0$.

(1)$\Rightarrow $(2) If $s_{1}\mathrm{Ext}\left( C,A\right) =0$, then since $%
C$ is coreduced we have that the tower $\left( \mathrm{Hom}\left(
C_{n},R\right) \right) $ of finite flat modules is isomorphic to a trivial
tower. Thus, after passing to a subsequence, we can assume that $\mathrm{Hom}%
\left( C_{n},R\right) =0$ for every $n\in \omega $, i.e., that $C_{n}$ is
coreduced for every $n\in \omega $.
\end{proof}

\begin{theorem}
\label{Theorem:structure-rank-1-tris}Given a countable coreduced flat module 
$C$, the following assertions are equivalent:

\begin{enumerate}
\item $C\mathrm{\ }$has projective length at most one;

\item $C$ has $A$-projective length at most one for every finite-rank
countable flat module $A$;

\item $C$ is a direct summand of a countable direct sum of finite-rank flat
modules.
\end{enumerate}
\end{theorem}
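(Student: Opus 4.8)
The plan is to establish the cycle of implications $(1)\Rightarrow(2)\Rightarrow(3)\Rightarrow(1)$. The implication $(1)\Rightarrow(2)$ is immediate from the definitions: having projective length at most one means precisely that $C$ has $A$-projective length at most one for \emph{every} countable flat module $A$, in particular for every finite-rank one. For $(3)\Rightarrow(1)$, if $C$ is a direct summand of a countable direct sum of finite-rank flat modules, then $C$ is projective in $\left(\mathbf{Flat}\left(R\right),\mathcal{E}_{1}\right)$ by Proposition \ref{Proposition:projectives-1-pure}, so $\mathrm{Ext}_{\mathcal{E}_{1}}\left(C,A\right)=0$ for every countable flat $A$; by Theorem \ref{Theorem:PExt-Solecki} this equals $s_{1}\mathrm{Ext}\left(C,A\right)$, so $\mathrm{Ext}\left(C,A\right)$ has Solecki length at most one for every $A$, i.e.\ $C$ has projective length at most one.

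The substance of the theorem is $(2)\Rightarrow(3)$. By Proposition \ref{Proposition:projectives-1-pure} it suffices to show that $C$ is $\mathcal{E}_{1}$-projective, and since $\left(\mathbf{Flat}\left(R\right),\mathcal{E}_{1}\right)$ is hereditary with enough projectives, this amounts to showing $\mathrm{Ext}_{\mathcal{E}_{1}}\left(C,A\right)=0$ for every countable flat $A$ (equivalently, that the telescope resolution of Proposition \ref{Proposition:resolution} splits). Fix an increasing sequence $\left(F_{n}\right)$ of finite-rank pure submodules of $C$ with $\bigcup_{n}F_{n}=C$; by Corollary \ref{Corollary:Ext-and-lim1-2} we must show $\mathrm{lim}^{1}_{n}\mathrm{Hom}\left(F_{n},A\right)=0$, and hypothesis $(2)$ gives exactly this whenever $A$ has finite rank. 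For a general countable flat $A$, choose an increasing sequence $\left(A_{k}\right)$ of finite-rank pure submodules of $A$ with union $A$ that is cofinal among \emph{all} finite-rank pure submodules of $A$ (possible since $A$ is countable, taking pure hulls of initial segments of an enumeration). Any homomorphism $F_{n}\to A$ has image of rank at most $\mathrm{rank}\left(F_{n}\right)$, hence is contained in some $A_{k}$; thus the tower $\left(\mathrm{Hom}\left(F_{n},A\right)\right)_{n}$ is the directed union of its subtowers $\left(\mathrm{Hom}\left(F_{n},A_{k}\right)\right)_{n}$, each of which has vanishing $\mathrm{lim}^{1}$ by $(2)$.

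Concluding from this that $\mathrm{lim}^{1}$ of the directed union vanishes is the main obstacle, and it does \emph{not} follow formally: a reduced tower of finite-rank flat modules with nonzero $\mathrm{lim}^{1}$ can be exhibited as an increasing union of subtowers each with trivial $\mathrm{lim}^{1}$. The extra leverage is that $(2)$ holds for all finite-rank coefficients at once, which forces uniformity. Concretely I would argue that, fixing $n$, a homomorphism $F_{n}\to A$ that extends to every $F_{i}$ must already extend within a single $A_{k}$: for each $i$ the finite rank of the image of an extension over $F_{i}$ yields a bound $k\left(i\right)$, and using $(2)$ for the finitely many modules $A_{0},\dots,A_{k}$ together with the fact that each tower $\left(\mathrm{Hom}\left(F_{i},A_{k}\right)\right)_{i}$ is essentially epimorphic (Corollary \ref{Corollary:Solecki-lim1}, Lemma \ref{Lemma:monomorphic-tower}), one can arrange --- after passing to a subsequence of $\left(F_{n}\right)$ --- that $k\left(i\right)$ stabilizes. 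This would give essential epimorphy of $\left(\mathrm{Hom}\left(F_{n},A\right)\right)_{n}$ and hence $\mathrm{lim}^{1}_{n}\mathrm{Hom}\left(F_{n},A\right)=0$. I expect the delicate point to be controlling how the ``width'' $k$ must grow with the ``depth'' $i$, since the telescope data only give extensions stage-by-stage.

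As a backup route to $(2)\Rightarrow(3)$ one can instead try to verify the criterion of Lemma \ref{Lemma:finite-rank-decomposable} directly: every finite subset of $C$ lies in the pure hull $F_{0}$ of finitely many of its elements, hypothesis $(2)$ is inherited by the pure quotient $C/F_{0}$ (via the six-term exact sequence for $0\to F_{0}\to C\to C/F_{0}\to0$, using that $\mathrm{Hom}\left(F_{0},A\right)$ is a countable Polish, hence discrete, module when $A$ has finite rank), and one then wants $F_{0}$ to be a direct summand of $C$. Here the same obstacle resurfaces: since $F_{0}$ is pure the class $\left[0\to F_{0}\to C\to C/F_{0}\to0\right]$ lies in $\mathrm{PExt}\left(C/F_{0},F_{0}\right)=\mathrm{Ext}\left(C/F_{0},F_{0}\right)$, which $(2)$ only shows has Solecki length at most one --- not that the class vanishes --- so one must enlarge $F_{0}$ to kill the obstruction, and doing so again requires the uniformity coming from $(2)$ for all finite-rank coefficients. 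In either packaging, the heart of the proof is the passage from control of $\mathrm{Ext}$ with finite-rank coefficients to control with arbitrary countable flat coefficients.
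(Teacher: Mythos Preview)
Your overall architecture matches the paper's exactly: both run the cycle $(1)\Rightarrow(2)\Rightarrow(3)\Rightarrow(1)$, with $(1)\Rightarrow(2)$ trivial and $(3)\Rightarrow(1)$ via Proposition~\ref{Proposition:projectives-1-pure} (or equivalently Lemma~\ref{Lemma:finite-rank}) together with Theorem~\ref{Theorem:PExt-Solecki}. For $(2)\Rightarrow(3)$ both you and the paper take the telescope $\mathcal{E}_{1}$-projective resolution $0\to L\to P\to C\to 0$ of Proposition~\ref{Proposition:resolution} and argue that it splits by showing $\mathrm{Ext}_{\mathcal{E}_{1}}(C,L)=s_{1}\mathrm{Ext}(C,L)=0$.

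The point you correctly isolate as the crux --- passing from ``$s_{1}\mathrm{Ext}(C,A)=0$ for every \emph{finite-rank} $A$'' to ``$s_{1}\mathrm{Ext}(C,L)=0$ for $L=\bigoplus_{n}F_{n}$'' --- is handled in the paper by the single sentence ``The hypothesis implies that $\mathrm{Ext}(C,A)$ has Solecki length at most $1$ for every finite-rank decomposable countable flat module.'' In other words, the paper simply asserts that $(2)$ extends from finite-rank coefficients to countable direct sums of finite-rank coefficients, and offers no justification beyond that line. You, by contrast, recognise that this does not follow formally (a direct sum of Mittag--Leffler towers need not be Mittag--Leffler, exactly as you note), and you try to supply the missing uniformity argument. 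Neither of your two sketches is carried to completion: the ``width versus depth'' stabilisation is only gestured at, and the backup via Lemma~\ref{Lemma:finite-rank-decomposable} runs into the same obstruction you already identified.

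So the gap in your proposal is genuine, but it sits precisely on the step the paper itself passes over without proof. Your diagnosis of where the difficulty lies is sharper than the paper's presentation; what remains is to actually close it --- to show that knowing $(\mathrm{Hom}(F_{n},A'))_{n}$ is Mittag--Leffler for \emph{every} finite-rank $A'$ (not just each summand $A_{k}$ separately) forces the required uniform bound on the stabilisation indices, so that the direct-sum tower $(\bigoplus_{k}\mathrm{Hom}(F_{n},A_{k}))_{n}$ is itself Mittag--Leffler.
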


\begin{proof}
(2)$\Rightarrow $(3) The hypothesis implies that $\mathrm{Ext}\left(
C,A\right) $ has Solecki length at most $1$ for every finite-rank
decomposable countable flat module. By Proposition \ref%
{Proposition:resolution} we have a $1$-pure short exact sequence%
\begin{equation*}
0\rightarrow A\rightarrow B\rightarrow C\rightarrow 0
\end{equation*}%
where $A$ and $B$ are finite-rank decomposable. By\ Theorem \ref%
{Theorem:PExt-Solecki} we have $\mathrm{Ext}_{\mathcal{E}_{1}}\left(
C,A\right) =0$ and hence such a sequence splits. This shows that $C$ is a
direct summand of $B$.

(3)$\Rightarrow $(1) This follows from Lemma \ref{Lemma:finite-rank}.
\end{proof}

Note that a direct summand of a countable direct sum of finite-rank flat
modules is not necessarily a countable direct sum of finite-rank flat
modules; see \cite{corner_note_1969} for an example in the case of $\mathbb{Z%
}$-modules.

\section{Phantom projective resolutions\label{Section:phantom-resolutions}}

For each limit ordinal $\alpha $, we let $\left( \alpha _{n}\right) $ be a
sequence of successor ordinals converging to $\alpha $. When $\alpha $ is a
successor we set $\alpha _{n}=\alpha $ for every $n\in \omega $. In this
section we provide a description of the functor $\mathrm{Ph}^{\alpha }%
\mathrm{Ext}$ on the category $\mathbf{Flat}\left( R\right) $ of countable
flat modules as a derived functor with respect to a canonical exact
structure $\mathcal{E}_{\alpha }$ on $\mathbf{Flat}\left( R\right) $.

\subsection{Wedge sums}

We begin with introducing a natural operation between modules which we term 
\emph{wedge sum}. Recall the sets of indices $I_{\alpha }$ and $I_{\alpha }^{%
\mathrm{plain}}$ from Section \ref{Subsection:indices}. In particular, $%
I_{2}^{\mathrm{plain}}$ is the complete countable rooted tree of height $1$.
A presheaf of flat modules over $I_{2}^{\mathrm{plain}}$ is given by a flat
module $M_{1}$ together with flat modules $M_{\left( i;0\right) }$ for $i\in
\omega $ and module homomorphisms $\psi _{i}:M_{1}\rightarrow M_{\left(
i;0\right) }$ for $i\in \omega $. We define the \emph{wedge sum}%
\begin{equation*}
M:=\bigoplus_{\left( M_{1},\psi _{i}\right) }M_{\left( i;0\right) }
\end{equation*}%
to be its colimit in $\mathbf{Flat}\left( R\right) $. We can identify $M$
with the quotient of the direct sum $M_{\infty }$ of $M_{1}$ and $M_{\left(
i;0\right) }$ for $i\in \omega $ by the pure submodule generated by the
submodule $N$ consisting of elements of the form 
\begin{equation*}
(\sum_{i\in \omega }y_{i},-\psi _{0}(y_{0}),-\psi _{1}\left( y_{1}\right)
,\ldots )
\end{equation*}%
for some $\left( y_{i}\right) \in M_{1}^{\left( \omega \right) }$. We can
also identify $M_{1}$ with a submodule of $M$, via the map that assigns $%
a\in M_{1}$ to the element of $M$ represented by the sequence $\left(
a,0,0,\ldots \right) $. Notice that the sequence%
\begin{equation*}
M_{1}\rightarrow M\rightarrow \bigoplus_{i}\mathrm{Coker}\left( \psi
_{i}:M_{1}\rightarrow M_{\left( i;0\right) }\right)
\end{equation*}%
is exact at $M$.

It is easily verified that, adopting the notation above, if $\psi _{i}$ is
injective with pure image for every $i\in \omega $, then $N$ is pure in $%
M_{\infty }$ and $M_{1}$ is pure in $M$.

\subsection{Tree length}

Recall that a countable flat module $C$ is \emph{finite }if it is
finitely-generated. We say that a countable flat module has plain tree
length $0$ if and only if it is finite. We now define by recursion on $%
\alpha $ the notion of (plain) tree length at most $\alpha $.

\begin{definition}
Let $C$ be a countable flat module, and $\alpha <\omega _{1}$:

\begin{itemize}
\item if $\alpha $ is a successor, then $C$ has plain tree length at most $%
\alpha $ if and only if there exist countable flat modules $M_{\left(
i;0\right) }$ of plain tree length at most $\left( \alpha -1\right) _{i}$, a
finiteflat module $M_{1}$, and injective module homomorphisms $\psi
_{i}:M_{1}\rightarrow M_{\left( i;0\right) }$ such that $C$ is a direct
summand of the wedge sum%
\begin{equation*}
\bigoplus_{\left( M_{1},\psi _{n}\right) }M_{\left( i;0\right) }\text{;}
\end{equation*}

\item $C$ has tree length at most $\alpha $ if and only if $C$ is a direct
summand of a direct sum of modules $C_{n}$ of plain tree length at most $%
\alpha _{n}$ for $n<\omega $.
\end{itemize}
\end{definition}

\begin{definition}
For $\alpha <\omega _{1}$, we let:

\begin{itemize}
\item $\mathcal{S}_{\alpha }$ be the class of countable flat modules of
plain tree length at most $\alpha $;

\item $\mathcal{E}_{\alpha }$ be the exact structure on $\mathbf{Flat}\left(
R\right) $ projectively generated by the union of $\mathcal{S}_{\alpha _{n}}$
for $n\in \omega $;

\item $\mathcal{P}_{\alpha }$ be the class of $\mathcal{E}_{\alpha }$%
-projective countable flat modules.
\end{itemize}
\end{definition}

Considering that colimits commute with colimits, a straightforward induction
on $\alpha $ proves the following characterization of modules of (plain)
tree length at most $\alpha $.

\begin{proposition}
\label{Proposition:tree-presheaf}Suppose that $C$ is a countable flat
module. Then the following assertions are equivalent:

\begin{enumerate}
\item $C$ has plain tree length at most $\alpha $;

\item $C$ is isomorphic to a direct summand of a colimit of a presheaf of
finiteflat modules over $I_{1+\alpha }^{\mathrm{plain}}$;

\item $C$ is isomorphic to a direct summand of a colimit of a presheaf of
finiteflat modules over a countable well-founded tree of rank at most $%
1+\alpha $.
\end{enumerate}

Furthermore, the following assertions are equivalent:

\begin{enumerate}
\item $C$ has tree length at most $\alpha $;

\item $C$ is isomorphic to a direct summand of a colimit of a presheaf of
finiteflat modules over $I_{1+\alpha }$;

\item $C$ is isomorphic to a direct summand of a colimit of a presheaf of
finiteflat modules over a countable well-founded forest of rank at most $%
1+\alpha $.
\end{enumerate}
\end{proposition}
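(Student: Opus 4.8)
The proof of Proposition \ref{Proposition:tree-presheaf} proceeds by a routine transfinite induction on $\alpha$, exploiting the fact that all the operations involved---direct sums, direct summands, colimits of presheaves, and the wedge sum---are special cases of colimits, and that colimits commute with colimits. Both equivalences are proved simultaneously by induction, since the statement for ``tree length at most $\alpha$'' refers to ``plain tree length at most $\alpha_n$'' for $n<\omega$, and the statement for ``plain tree length at most $\alpha$'' (when $\alpha$ is a successor) refers to ``plain tree length at most $(\alpha-1)_i$'' for $i<\omega$.

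For the base case $\alpha=0$: a module of plain tree length $0$ is by definition a finiteflat module, which is a colimit of the trivial presheaf over $I_1^{\mathrm{plain}}$ (the one-node tree), and conversely; the ``tree length at most $0$'' case then says $C$ is a direct summand of a countable direct sum of finiteflat modules, which is exactly a colimit of a presheaf of finiteflat modules over $I_1$ (the countable antichain, i.e.\ the forest consisting of $\omega$-many singleton trees), and by Section \ref{Subsection:indices} every well-founded forest of rank $1$ is a subforest of $I_1$ up to isomorphism. For the successor step $\alpha$: unwinding the definition, $C$ of plain tree length at most $\alpha$ is a direct summand of the wedge sum $\bigoplus_{(M_1,\psi_i)} M_{(i;0)}$ where $M_1$ is finiteflat and each $M_{(i;0)}$ has plain tree length at most $(\alpha-1)_i = (1+\alpha-1)_i$ shifted appropriately. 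By the inductive hypothesis, each $M_{(i;0)}$ is a direct summand of a colimit of a presheaf of finiteflat modules over $I_{1+(\alpha-1)_i}^{\mathrm{plain}}$, and one assembles these presheaves---together with the single node carrying $M_1$ mapping via $\psi_i$ into the root of the $i$-th subtree---into a presheaf over the tree obtained by attaching these subtrees below a common root. By the recursive description of $I_{1+\alpha}^{\mathrm{plain}}$ in Section \ref{Subsection:indices} (its subtrees below the root are exactly the $I_{(1+\alpha-1)_n}^{\mathrm{plain}}$), this is precisely a presheaf over $I_{1+\alpha}^{\mathrm{plain}}$; a direct summand of a colimit of such, together with the ``colimits commute with colimits'' principle and the fact that a colimit over a finite tree of finiteflat modules with injective transition maps reduces to a finiteflat module, gives (2). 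The equivalence of (2) and (3) in both the plain and non-plain versions is then the universality statement already recorded in Section \ref{Subsection:indices}: every countable well-founded rooted tree (resp.\ forest) of rank at most $1+\alpha$ is isomorphic to a subtree (resp.\ subforest) of $I_{1+\alpha}^{\mathrm{plain}}$ (resp.\ $I_{1+\alpha}$), and extending a presheaf on a subtree to one on the whole tree---by assigning the zero module to the missing nodes, or equivalently using that every limit/colimit over a subshape extends to one over the ambient shape with the same value---does not change the colimit.

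The limit case $\lambda$ for ``tree length'' is immediate from the definition and the already-proved successor/zero cases: $C$ has tree length at most $\lambda$ iff it is a direct summand of $\bigoplus_n C_n$ with $C_n$ of plain tree length at most $\lambda_n$, each $C_n$ is then (by the successor case applied to $\lambda_n$) a direct summand of a colimit of a presheaf of finiteflat modules over $I_{1+\lambda_n}^{\mathrm{plain}}$, and gluing these along $\omega$-many disjoint roots yields a presheaf over $I_{1+\lambda}$, whose rank is $1+\lambda$ since it is the disjoint union of trees of ranks $1+\lambda_n \nearrow 1+\lambda$. There is no separate ``plain tree length at most $\lambda$'' for $\lambda$ limit, so nothing further is needed there.

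The only genuinely delicate point---and the step I would write out most carefully---is the bookkeeping in the successor case: one must check that the colimit of the assembled presheaf over $I_{1+\alpha}^{\mathrm{plain}}$ is canonically isomorphic to the wedge sum $\bigoplus_{(M_1,\psi_i)} M_{(i;0)}$ and not merely to some larger colimit, i.e.\ that pushing the module $M_1$ down into each branch and then taking the colimit over the whole tree is the same as first taking the colimits over the branches (producing the $M_{(i;0)}$, up to direct summands) and then taking the wedge. This is where ``colimits commute with colimits'' is invoked, via the decomposition of the shape $I_{1+\alpha}^{\mathrm{plain}}$ as the root together with its branches; I would phrase it as: the colimit over $I_{1+\alpha}^{\mathrm{plain}}$ of a presheaf $F$ equals the colimit of the diagram $\bigl( F(\mathrm{root}) \rightrightarrows \coprod_i \operatorname{colim}_{I^{\mathrm{plain}}_{(1+\alpha-1)_i}} F|_{\text{branch }i}\bigr)$, which for our assembled $F$ is exactly the wedge-sum diagram with spine $M_1$ and ribs the branch-colimits. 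Handling direct summands throughout (a direct summand of a colimit over a shape is again a direct summand of a colimit over that shape, after enlarging, because of the universal property) requires only minor care and is the kind of routine verification I would not spell out in full.
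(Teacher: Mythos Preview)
Your proposal is correct and follows the same approach as the paper, which simply states that the result follows by a straightforward induction on $\alpha$ using that colimits commute with colimits. You have supplied considerably more detail than the paper does, including the explicit bookkeeping for the wedge sum as a colimit over the assembled tree and the handling of direct summands, but the underlying strategy is identical.
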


We now relate the (plain) tree length with the (plain) projective length of
a module.

\begin{lemma}
\label{Lemma:monic-wedge}Suppose that a countable flat module $C$ is a wedge
sum%
\begin{equation*}
\bigoplus_{\left( M_{1},\psi _{i}\right) }M_{\left( i;0\right) }
\end{equation*}%
for some finiteflat module $M_{1}$, flat modules $M_{\left( i;0\right) }$,
and injective module homomorphisms $\psi _{i}:M_{1}\rightarrow M_{\left(
i;0\right) }$. For $n\in \omega $, let $C_{n}$ be the submodule of $C$
corresponding to $M_{1}\oplus M_{\left( i;0\right) }\oplus \cdots \oplus
M_{\left( i;n-1\right) }$. Let $A$ be a countable flat module, and $\alpha
,\beta <\omega _{1}$. For $n<\omega $, set%
\begin{equation*}
\mathrm{Hom}\left( C|C_{n},A\right) :=\mathrm{Ran}\left( \mathrm{Hom}\left(
C,A\right) \rightarrow \mathrm{Hom}\left( C_{n},A\right) \right) \text{.}
\end{equation*}

\begin{enumerate}
\item The tower%
\begin{equation*}
\left( \frac{\mathrm{Hom}\left( C_{n},A\right) }{\mathrm{Hom}\left(
C|C_{n},A\right) }\right) _{n\in \omega }
\end{equation*}%
is monomorphic;

\item If $M_{\left( i;0\right) },\ldots ,M_{\left( i;n\right) }$ have plain $%
A$-projective length at most $\beta $, then $C_{n}$ has plain $A$-projective
length at most $\beta $;

\item If $M_{\left( i;0\right) }$ has plain $A$-projective length at most $%
\left( \alpha -1\right) _{i}$ for every $i\in \omega $, then $C$ has plain $%
A $-projective length at most $\alpha $.
\end{enumerate}
\end{lemma}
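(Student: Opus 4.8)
The plan is to prove (1) by a direct colimit argument and to reduce (2) and (3) to the derived-tower and fishbone machinery of the previous sections. For (1), note that the bonding map of the stated tower is induced by restriction $\mathrm{Hom}(C_{n+1},A)\to\mathrm{Hom}(C_{n},A)$, so monomorphicity is exactly the assertion that every $\phi\in\mathrm{Hom}(C_{n+1},A)$ whose restriction to $C_{n}$ extends to $C$ itself extends to $C$. I would prove this by observing that $C$, being the colimit in $\mathbf{Flat}(R)$ of the star-shaped diagram with centre $M_{1}$ and leaves $M_{(i;0)}$, decomposes as a pushout $C\cong C_{n+1}\amalg_{M_{1}}C'$, where $C'$ is the wedge sub-sum of the leaves $M_{(i;0)}$, $i\ge n+1$, along $M_{1}$; this is the general fact that the colimit of a star-shaped diagram splits along its centre under a partition of the leaf set, and needs no purity of the $\psi_{i}$. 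Given an extension $\psi\colon C\to A$ of $\phi|_{C_{n}}$, its restriction $\psi'\colon C'\to A$ agrees with $\phi$ on the image of $M_{1}$ (both equal, after precomposing with $M_{1}\to C_{n}$, the common map $\psi|_{C_{n}}=\phi|_{C_{n}}$ restricted to $M_{1}$), so $(\phi,\psi')$ glue along $M_{1}$ to an extension of $\phi$ over $C\cong C_{n+1}\amalg_{M_{1}}C'$.

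For (2) and (3) I would first record the standing facts: any finite-rank flat module $N$ (in particular $M_{1}$ and each pure hull $P_{i}:=\langle\psi_{i}(M_{1})\rangle_{\ast}\subseteq M_{(i;0)}$) has $\mathrm{Hom}(N,A)$ countable discrete and $\mathrm{Ext}(N,A)$ of plain Solecki length at most $1$ (Lemma~\ref{Lemma:finite-rank}); the cokernel in $\mathbf{Flat}(R)$ of the leaf $M_{(i;0)}\to C$ is the flat module $\bar M_{(i;0)}:=M_{(i;0)}/P_{i}$, giving pure short exact sequences $0\to P_{i}\to M_{(i;0)}\to\bar M_{(i;0)}\to 0$, $0\to C_{n}\to C_{n+1}\to\bar M_{(n;0)}\to 0$, and $0\to C_{n}\to C\to\bigoplus_{i\ge n}\bar M_{(i;0)}\to 0$. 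Part (2) then goes by induction on $n$ ($C_{0}=M_{1}$ being Lemma~\ref{Lemma:finite-rank}), the inductive step being an application — to the $\mathrm{Hom}$/$\mathrm{Ext}$ long exact sequences of these pure sequences — of the stability principle that in $\mathrm{LH}(\boldsymbol{\Pi}(\mathbf{Mod}(R)))$ an extension $0\to N\to E\to P\to 0$ with $N$ a quotient of a countable discrete module (hence of Solecki length $0$) and $P$ of plain Solecki length at most $\beta$ has $E$ of plain Solecki length at most $\beta$; this I would extract from Theorem~\ref{Theorem:Solecki-lim1} by realizing $E$ as a $\mathrm{lim}^{1}$ of a tower whose derived towers are controlled by those of $P$. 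Combining, $0\to P_{n}\to M_{(n;0)}\to\bar M_{(n;0)}\to 0$ shows $\bar M_{(n;0)}$ has plain $A$-projective length at most $\beta$, and then $0\to C_{n-1}\to C_{n}\to\bar M_{(n-1;0)}\to 0$ with the inductive hypothesis gives the same for $C_{n}$.

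For (3) I would choose compatible filtrations $M_{(i;0)}=\mathrm{colim}_{j}M_{(i;0)}^{(j)}$ by finitely generated projective pure submodules containing $\psi_{i}(M_{1})$ (possible since $M_{1}$ is finitely generated) and form the interleaved filtration $C=\mathrm{colim}_{\ell}X_{\ell}$, where $X_{\ell}$ is the wedge sub-sum of $M_{(0;0)}^{(\ell)},\dots,M_{(\ell-1;0)}^{(1)}$ along $M_{1}$; each $X_{\ell}$ is finitely generated flat over a Prüfer domain, hence finitely generated projective and so compact. By Theorem~\ref{Theorem:phantom-Ext} and Corollary~\ref{Corollary:Solecki-lim1}, $C$ has plain $A$-projective length at most $\alpha$ iff the tower $(\mathrm{Hom}(X_{\ell},A))_{\ell}$ has plain length at most $\alpha$; I would then identify this tower, up to a subtower with the same $\mathrm{lim}^{1}$, as a fishbone tower whose spine is the monomorphic tower from (1) (using that the sub-tower of images $\mathrm{Hom}(C|C_{n},A)$ is epimorphic and so has vanishing $\mathrm{lim}^{1}$) and whose $k$-th rib is $(\mathrm{Hom}(M_{(k;0)}^{(j)},A))_{j}$, whose $\mathrm{lim}^{1}$ is $\mathrm{Ext}(M_{(k;0)},A)$ and which therefore has plain length equal to the plain $A$-projective length of $M_{(k;0)}$, at most $(\alpha-1)_{k}$. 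Checking that this fishbone is straight of length $\alpha$ — the identity $p^{(\ell,0)}(B_{\beta[k]-1}^{(\ell)}[k])+p^{(k,\ell)}(A^{(\ell)})=A^{(k)}$ — amounts dually to the statement that a homomorphism on $C_{k}$ extending over the spine to $C_{\ell}$ can be corrected by something supported on the $k$-th rib to extend to $C$, which follows from the exhaustion of $M_{(k;0)}$ by the $M_{(k;0)}^{(j)}$ together with the monomorphicity of the spine; then Proposition~\ref{Proposition:fishbone-tower}(2) yields plain length $\alpha$.

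I expect the main obstacle to be precisely this last step of (3): arranging the filtration so that the $\mathrm{Hom}$-tower is literally (or up to $\mathrm{lim}^{1}$-isomorphism) a fishbone tower with the right spine and ribs, and verifying the straightness hypothesis of Proposition~\ref{Proposition:fishbone-tower} from the wedge-sum combinatorics — this is where the non-purity of the $\psi_{i}$ forces one to work with the pure hulls $P_{i}$ and with the colimit description of the wedge sum rather than with naive quotients, and where part (1) is used in an essential way. The stability principle invoked in (2) is the other point requiring care, although it should be a routine consequence of the derived-tower description of Solecki submodules.
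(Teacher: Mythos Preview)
Your argument for (1) is correct and essentially the same as the paper's: the piecewise definition $\Phi(x)=\varphi(x)$ for $x\in C_{n+1}$ and $\Phi(x)=\Psi(x)$ otherwise is exactly the gluing along the pushout $C\cong C_{n+1}\amalg_{M_{1}}C'$.

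Your argument for (2) has a genuine gap. The stability principle you state --- an extension $0\to N\to E\to P\to 0$ with $N$ countable and $P$ of plain Solecki length at most $\beta$ yields $E$ of plain Solecki length at most $\beta$ --- is correct (it is Lemma~\ref{Lemma:complexity-exact-sequence}(3)). But in your inductive step, the long exact sequence of $0\to C_{n-1}\to C_{n}\to\bar M_{(n-1;0)}\to 0$ exhibits $\mathrm{Ext}(C_{n},A)$ as an extension of $\mathrm{Ext}(C_{n-1},A)$ by a \emph{quotient of} $\mathrm{Ext}(\bar M_{(n-1;0)},A)$, and the latter is not countable: it has plain Solecki length up to $\beta$. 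Your principle does not apply when \emph{both} pieces have Solecki length $\beta$; extensions of two objects of plain Solecki length $\beta$ can have strictly larger Solecki length (Lemma~\ref{Lemma:complexity-exact-sequence}(1) only gives $\rho(G)\le\rho(G/H)+\rho(H)$). The paper avoids this by using a single, non-inductive presentation
\[
0\longrightarrow M_{1}^{n}\longrightarrow M_{1}\oplus M_{(0;0)}\oplus\cdots\oplus M_{(0;n-1)}\longrightarrow C_{n}\longrightarrow 0
\]
(really the kernel is the pure hull of the image of $M_{1}^{n}$, which still has finite rank). Then $\mathrm{Ext}(C_{n},A)$ is an extension with countable subobject $H_{n}=\mathrm{Ran}\bigl(\mathrm{Hom}(M_{1}^{n},A)\to\mathrm{Ext}(C_{n},A)\bigr)$ and quotient embedding into $\mathrm{Ext}\bigl(\bigoplus_{i<n}M_{(i;0)},A\bigr)$, so a single application of the principle suffices.

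For (3), your fishbone route is far more elaborate than needed, and the straightness verification you flag as the main obstacle is indeed delicate. The paper's argument is short: by (2), each $C_{n}$ has $A$-projective length at most $\alpha-1$, so $\mathrm{Ph}^{\alpha-1}\mathrm{Ext}(C,A)\subseteq\bigcap_{n}\mathrm{Ker}\bigl(\mathrm{Ext}(C,A)\to\mathrm{Ext}(C_{n},A)\bigr)\cong\mathrm{lim}^{1}_{n}\mathrm{Hom}(C_{n},A)$. The epimorphic subtower $\mathrm{Hom}(C|C_{n},A)$ has vanishing $\mathrm{lim}^{1}$, and the quotient tower $\mathrm{Hom}(C_{n},A)/\mathrm{Hom}(C|C_{n},A)$ is monomorphic by (1) and consists of \emph{countable} modules (each being a subquotient of $\mathrm{Hom}(M_{1},A)$, via the pushout description of $C$). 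Hence $\mathrm{lim}^{1}_{n}\mathrm{Hom}(C_{n},A)$ has plain Solecki length at most $1$ by Lemma~\ref{Lemma:monomorphic-tower}, and so does its submodule $\mathrm{Ph}^{\alpha-1}\mathrm{Ext}(C,A)$. No fishbone machinery is required.
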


\begin{proof}
(1) Suppose that $\varphi \in \mathrm{Hom}\left( C_{n+1},A\right) $ is such
that $\varphi |_{C_{n}}\in \mathrm{Hom}\left( C|C_{n},A\right) $. Thus,
there exists a homomorphism $\Psi \in \mathrm{Hom}\left( C,A\right) $ such
that $\Psi |_{C_{n}}=\varphi |_{C_{n}}$. One can then define a homomorphism $%
\Phi \in \mathrm{Hom}\left( C,A\right) $ by setting%
\begin{equation*}
\Phi \left( x\right) :=\left\{ 
\begin{array}{cc}
\varphi \left( x\right) & x\in C_{n+1}\text{;} \\ 
\Psi \left( x\right) & x\notin C_{n+1}\text{.}%
\end{array}%
\right.
\end{equation*}%
Then we have that $\Phi |_{C_{n+1}}=\varphi $.

(2) Fix $n\in \omega $. Notice that we have a short exact sequence%
\begin{equation*}
M_{1}^{n}\rightarrow M_{1}\oplus M_{\left( 0;0\right) }\oplus \cdots \oplus
M_{\left( 0;n-1\right) }\rightarrow C_{n}\text{.}
\end{equation*}%
Since $M_{\left( 0;0\right) },\ldots ,M_{\left( 0;n-1\right) }$ have plain $%
A $-projective length at most $\beta $, the same holds for $M_{1}\oplus
M_{\left( 0;0\right) }\oplus \cdots \oplus M_{\left( 0;n-1\right) }$. Set%
\begin{equation*}
G_{n}:=\mathrm{Ext}\left( C_{n},A\right)
\end{equation*}%
and%
\begin{equation*}
H_{n}:=\mathrm{Ran}\left( \mathrm{Hom}\left( M_{1}^{n},A\right) \rightarrow 
\mathrm{Ext}\left( C_{n},A\right) \right) =\mathrm{\mathrm{Ker}}\left( 
\mathrm{Ext}\left( C_{n},A\right) \rightarrow \mathrm{Ext}\left( M_{1}\oplus
M_{\left( 0;0\right) }\oplus \cdots \oplus M_{\left( 0;n-1\right) },A\right)
\right)
\end{equation*}%
Then $G_{n}/H_{n}$ is isomorphic to a submodule of $\mathrm{Ext}\left(
M_{1}\oplus M_{\left( 0;0\right) }\oplus \cdots \oplus M_{\left(
0;n-1\right) },A\right) $, whence it has plain Solecki length at most $\beta 
$. Since $M_{1}^{n}$ is finite, $H_{n}$ is countable, and hence $G_{n}$ has
plain Solecki length at most $\beta $ by Lemma \ref%
{Lemma:complexity-exact-sequence}(3).

(3) By (2), we have%
\begin{equation*}
\mathrm{Ph}^{\alpha -1}\mathrm{Ext}\left( C,A\right) \subseteq \bigcap_{n\in
\omega }\mathrm{\mathrm{\mathrm{Ker}}}\left( \mathrm{Ext}\left( C,A\right)
\rightarrow \mathrm{Ext}\left( C_{n},A\right) \right) \cong \mathrm{lim}%
_{n}^{1}\mathrm{Hom}\left( C_{n},A\right) \cong \mathrm{lim}_{n}^{1}\frac{%
\mathrm{Hom}\left( C_{n},A\right) }{\mathrm{Hom}\left( C|C_{n},A\right) }
\end{equation*}%
By (1), the tower%
\begin{equation*}
\left( \frac{\mathrm{Hom}\left( C_{n},A\right) }{\mathrm{Hom}\left(
C|C_{n},A\right) }\right) _{n\in \omega }
\end{equation*}%
is monomorphic. Furthermore,%
\begin{equation*}
\mathrm{lim}_{n}^{1}(\frac{\mathrm{Hom}\left( C_{n},A\right) }{\mathrm{Hom}%
\left( C|C_{n},A\right) })\cong \mathrm{\mathrm{li}m}_{n}^{1}\mathrm{Hom}%
\left( C_{n},A\right) \text{.}
\end{equation*}%
It follows that $\mathrm{lim}_{n}^{1}\mathrm{Hom}\left( C_{n},A\right) $ has
plain Solecki length at most $1$. As $\mathrm{Ph}^{\alpha -1}\mathrm{Ext}%
\left( C,A\right) $ is isomorphic to a submodule of $\mathrm{lim}_{n}^{1}%
\mathrm{Hom}\left( C_{n},A\right) $, the same holds for $\mathrm{Ph}^{\alpha
-1}\mathrm{Ext}\left( C,A\right) $. Thus, we have that $\mathrm{Ext}\left(
C,A\right) $ has plain Solecki length at most $\alpha $.
\end{proof}

\begin{lemma}
\label{Lemma:wedge-projective}Suppose that $C$ is a countable flat module,
and $\alpha <\omega _{1}$:

\begin{enumerate}
\item if $C$ has plain tree length at most $\alpha $, then $C$ has plain
projective length at most $\alpha $;

\item if $C$ has tree length at most $\alpha $, then $C$ has projective
length at most $\alpha $.
\end{enumerate}
\end{lemma}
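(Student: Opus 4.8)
The plan is to prove both statements simultaneously by induction on $\alpha$, closely mirroring the recursive structure of the definitions of tree length and of the phantom subfunctors. The base case $\alpha = 0$ (where "plain tree length at most $0$" means "finite") reduces to Lemma \ref{Lemma:finite-rank}, which already shows that $\mathrm{Ext}(C,A)$ has plain Solecki length at most $1$ whenever $C$ has finite rank, hence in particular when $C$ is finite; so a finite module has plain projective length at most $0$ in the appropriate degenerate sense, and one checks directly (via Proposition \ref{Proposition:characterize-projectives}) that a finite flat module is projective, settling the base of the induction. Actually, the cleanest formulation of the base case is simply that $\mathcal{S}_0 \subseteq \mathcal{P}_0$ because finite flat modules are projective.

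For the successor step of (1), suppose $C$ has plain tree length at most $\alpha$, where $\alpha$ is a successor. By the definition of plain tree length, $C$ is a direct summand of a wedge sum $\bigoplus_{(M_1,\psi_i)} M_{(i;0)}$ where $M_1$ is finiteflat, the $M_{(i;0)}$ have plain tree length at most $(\alpha-1)_i$, and each $\psi_i$ is injective. By the inductive hypothesis applied to each $M_{(i;0)}$, this module has plain projective length at most $(\alpha-1)_i$, i.e.\ plain $A$-projective length at most $(\alpha-1)_i$ for every countable flat $A$. Now I would invoke Lemma \ref{Lemma:monic-wedge}(3): since each $M_{(i;0)}$ has plain $A$-projective length at most $(\alpha-1)_i$, the wedge sum $C'$ has plain $A$-projective length at most $\alpha$. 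Finally, plain projective length is inherited by direct summands — this follows because $\mathrm{Ext}(-,A)$ sends a direct sum decomposition $C' \cong C \oplus C''$ to $\mathrm{Ext}(C',A) \cong \mathrm{Ext}(C,A) \oplus \mathrm{Ext}(C'',A)$, and $s_\beta$ commutes with finite direct sums and the "plain" condition $\{0\}\in\boldsymbol{\Sigma}^0_2$ passes to direct summands — so $C$ itself has plain $A$-projective length at most $\alpha$. Since $A$ was arbitrary, $C$ has plain projective length at most $\alpha$.

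For the limit step of (1), and for statement (2) in general: if $C$ has tree length at most $\alpha$, then by definition $C$ is a direct summand of $\bigoplus_{n<\omega} C_n$ where each $C_n$ has plain tree length at most $\alpha_n$. By what we just proved, each $C_n$ has plain projective length at most $\alpha_n$. I would then need that for a fixed countable flat $A$, $\mathrm{Ext}(\bigoplus_n C_n, A) \cong \prod_n \mathrm{Ext}(C_n, A)$ in $\mathrm{LH}(\boldsymbol{\Pi}(\mathbf{Mod}(R)))$, and that the Solecki submodules of a countable product are computed coordinatewise: $s_\beta(\prod_n G_n) = \prod_n s_\beta(G_n)$. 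Granting this, if $\sup_n \alpha_n = \alpha$ (the limit case) then $s_\alpha(\prod_n \mathrm{Ext}(C_n,A)) = \prod_n s_\alpha(\mathrm{Ext}(C_n,A)) = \prod_n 0 = 0$, so $C$ has $A$-projective length at most $\alpha$; and again this passes to the direct summand $C$ and is uniform in $A$. This also handles the limit case of the "plain" statement in (1) when $\alpha_n = \alpha$ for all $n$, using that $\boldsymbol{\Sigma}^0_2$ subgroups are preserved under countable products as in Corollary \ref{Corollary:sum-tower}.

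The main obstacle I anticipate is bookkeeping around the precise meaning of "plain" at limit versus successor stages and making sure the direct-summand and countable-product stability statements for Solecki submodules and for the plainness condition are actually available in the form needed — the coordinatewise behaviour of $s_\beta$ on products, and the interaction of the "$\{0\}$ is $\boldsymbol{\Sigma}^0_2$" condition with products and summands. These should follow from the identification $\mathrm{Ext}(C,A) \cong \mathrm{lim}^1_n \mathrm{Hom}(F_n, A)$ (Roos' Theorem \ref{Lemma:Ext-and-lim1}), Theorem \ref{Theorem:Solecki-lim1} characterizing $s_\alpha\,\mathrm{lim}^1$ via derived towers, and the elementary observation that derived towers of a direct sum of towers are the direct sums of the derived towers (the analogue of the remark preceding Corollary \ref{Corollary:sum-tower} for $\boldsymbol{A}^{\oplus\omega}$). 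The delicate point is purely organizational: threading the quantifier "for every countable flat $A$" through each inductive step so that "plain projective length at most $\alpha$" — an intersection over all $A$ of conditions on $\mathrm{Ext}(C,A)$ — is maintained, rather than accidentally proving only the weaker $R$-projective version. I would state and prove, as a preliminary lemma, the stability of $\mathrm{Ph}^\beta\mathrm{Ext}(-,A)$ and of the plainness of $\mathrm{Ext}(-,A)$ under direct summands and countable direct sums, so that the induction itself becomes a short three-case argument.
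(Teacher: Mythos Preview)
Your proposal is correct and follows essentially the same route as the paper: simultaneous induction on $\alpha$, with the successor case of (1) handled by the inductive hypothesis together with Lemma~\ref{Lemma:monic-wedge}(3), and (2) reduced to (1) for the $\alpha_n$. The paper's proof is two sentences and leaves the direct-summand and countable-direct-sum stability of (plain) projective length implicit; your proposal spells these out, which is fine, but the facts you flag as potential obstacles are immediate once you observe that $s_\beta$ is a subfunctor of the identity on groups with a Polish cover (so $s_\beta(\prod_n G_n)\subseteq\prod_n s_\beta(G_n)$, and retractions carry $s_\beta$ onto $s_\beta$), together with Lemma~\ref{Lemma:compact-phantom2} for the plain condition under images. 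Your hesitation about the base case is a notational wrinkle only: plain tree length $0$ means finite, finite flat modules are projective, and $\mathrm{Ext}(C,A)=0$ trivially has every required Solecki-length property, so the induction starts cleanly without needing to make sense of ``plain projective length $0$''.
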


\begin{proof}
We prove this by induction on $\alpha $. It suffices to notice that (2) for $%
\alpha $ is a consequence of (1) for $\alpha _{n}$ for $n<\omega $, while
(1) for $\alpha $ follows immediately from (1) for $\alpha _{n}$ for $%
n<\omega $ by Lemma \ref{Lemma:monic-wedge}(3).
\end{proof}

\subsection{Resolutions}

Let $C$ be a countable flat module. Write $C=\mathrm{co\mathrm{lim}}%
_{n}C_{n} $ for some finite submodules $C_{n}$ of $C$. We define by
recursion, for every countable successor ordinal $\alpha $, countable flat
modules $P_{n}^{\alpha }C$ of plain length at most $\alpha $ containing $%
C_{n}$ as a pure submodule, together with module homomorphisms $%
P_{n}^{\left( \alpha -1\right) _{n}}C\rightarrow P_{n+1}^{\left( \alpha
-1\right) _{n+1}}C$ and $P_{n}^{\alpha }C\rightarrow C$ that restrict to the
inclusion $C_{n}\rightarrow C$ and yield an isomorphism%
\begin{equation*}
C\cong \mathrm{co\mathrm{lim}}_{n}P_{n}^{\alpha _{n}}C\text{.}
\end{equation*}%
Define $P_{n}^{0}C=C_{n}$ for $n\in \omega $. Suppose that $P_{n}^{\beta
}\left( C\right) $ has been defined for every successor ordinal $\beta
<\alpha $ and $n\in \omega $. Define then $P_{n}^{\alpha }\left( C\right) $
to be the wedge sum of $P_{k}^{\left( \alpha -1\right) _{k}}C$ for $k\geq n$
over $C_{n}$. The inclusion $C_{n}\rightarrow P_{n}^{\alpha }C$ is the
canonical one given by the definition of wedge sum. The homomorphism $%
P_{n}^{\alpha }C\rightarrow C$ is induced by the homomorphisms $%
P_{k}^{\left( \alpha -1\right) _{k}}C\rightarrow C$ for $k\geq n$. By
definition, $P_{n}^{\alpha }C$ has plain tree length at most $\alpha $ for
every $n\in \omega $. Let $\mathfrak{R}^{\alpha }C$ be the short exact
sequence%
\begin{equation*}
0\rightarrow P^{\alpha }C\rightarrow P^{\alpha }C\rightarrow C\rightarrow 0
\end{equation*}%
realizing $C$ as $\mathrm{co\mathrm{lim}}_{n}P_{n}^{\alpha _{n}}C$.

\begin{theorem}
\label{Theorem:phantom-projective}Let $C$ and $A$ be countable flat modules.
Let $\left( C_{n}\right) $ be an increasing sequence of finite submodules of 
$C$ with union equal to $C$. Define%
\begin{equation*}
\mathrm{Hom}\left( C_{n},A\right) :=\mathrm{Hom}_{0}\left( C_{n},A\right)
\end{equation*}%
and recursively%
\begin{equation*}
\mathrm{Hom}_{\alpha }\left( C_{n},A\right) :=\bigcap_{k\geq n}\mathrm{Ran}(%
\mathrm{Hom}_{\alpha _{k}}\left( C_{k},A\right) \rightarrow \mathrm{Hom}%
\left( C,A\right) )\text{.}
\end{equation*}%
Then we have that the tower $\left( \mathrm{Hom}\left( C_{n},A\right)
\right) _{n\in \omega }$ has $(\mathrm{Hom}_{\alpha _{n}}\left(
C_{n},A\right) )_{n\in \omega }$ as its $\alpha $-th derived tower.

\begin{enumerate}
\item for $\alpha <\omega _{1}$ and $n<\omega $%
\begin{equation*}
\mathrm{Hom}_{\alpha }\left( C_{n},A\right) =\mathrm{Hom}\left(
P_{n}^{\alpha }C|C_{n},A\right) :=\mathrm{Ran}\left( \mathrm{Hom}\left(
P_{n}^{\alpha }C,A\right) \rightarrow \mathrm{Hom}\left( C_{n},A\right)
\right) \text{;}
\end{equation*}

\item $P_{n}^{\alpha }\left( C\right) $ has plain projective length at most $%
\alpha $;

\item 
\begin{equation*}
\mathrm{Ph}^{\alpha }\mathrm{Ext}\left( C,A\right) =\bigcap_{n\in \omega }%
\mathrm{\mathrm{Ker}}\left( \mathrm{Ext}\left( C,A\right) \rightarrow 
\mathrm{Ext}(P_{n}^{\alpha _{n}}C,A)\right) \cong \mathrm{lim}_{n}^{1}%
\mathrm{Hom}_{\alpha _{n}}\left( C,A\right)
\end{equation*}

\item a countable flat module has tree length at most $\alpha $ if and only
if it has projective length at most $\alpha $;

\item for every countable flat module $B$ of plain projective length $\alpha 
$, finite submodule $A\subseteq B$, $d_{0}\in \omega $, and homomorphism $%
\varphi :B\rightarrow C$ that maps $A$ to $C_{d_{0}}$, there exist $d\geq
d_{0}$ and a homomorphism $\overline{\varphi }:B\rightarrow P_{d}^{\alpha }C$
that maps $A$ to $C_{d}\subseteq P_{d}^{\alpha }C$ and such $\varphi $ is
equal to the composition of $\overline{\varphi }$ and the canonical
homomorphism $P_{n}^{\alpha }C\rightarrow C$;

\item $\mathfrak{R}^{\alpha }C$ represents an element of $\mathrm{Ph}%
^{\alpha }\mathrm{Ext}\left( C,P^{\alpha }C\right) $ and it is a projective
resolution of $C$ in the exact category $\left( \mathbf{Flat}\left( R\right)
,\mathcal{E}_{\alpha }\right) $;

\item the exact category $\left( \mathbf{Flat}\left( R\right) ,\mathcal{E}%
_{\alpha }\right) $ is hereditary with enough projectives;

\item if $\mathbf{Ext}_{\mathcal{E}_{\alpha }}$ is the total right derived
functor of $\mathrm{Hom}$ on the exact category $\left( \mathbf{Flat}\left(
R\right) ,\mathcal{E}_{\alpha }\right) $, then 
\begin{equation*}
\mathrm{Ph}^{\alpha }\mathrm{Ext}=\mathrm{Ext}_{\mathcal{E}_{\alpha }}^{1}=%
\mathrm{H}^{1}\circ \mathbf{Ext}_{\mathcal{E}_{\alpha }}\text{.}
\end{equation*}
\end{enumerate}
\end{theorem}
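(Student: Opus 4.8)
The plan is to prove all eight assertions simultaneously by transfinite induction on $\alpha$, where at stage $\alpha$ one may use the full statement at every stage $\beta<\alpha$ and, when $\alpha$ is a limit, at the successor stages $\alpha_n<\alpha$ used to build the modules $P_n^{\alpha_n}C$ and the telescope maps $P_n^{(\alpha-1)_n}C\to P_{n+1}^{(\alpha-1)_{n+1}}C$ (whose existence, together with the identification $C\cong\mathrm{colim}_nP_n^{\alpha_n}C$, is itself an instance of the lifting assertion (5) at lower levels, so is part of the inductive package). Within a single stage the logical order is: first (5); then (1); then (2); then (3); then (6); then (7); then (4) and (8).

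First I would establish the \emph{lifting assertion} (5) by a sub-induction mirroring the recursive construction of $P_n^\alpha C$ as the wedge sum of $P_k^{(\alpha-1)_k}C$ ($k\ge n$) over $C_n$. For $\alpha=0$ it is immediate, since a finite module has image in some $C_d$. For the successor step one reduces, using Proposition~\ref{Proposition:tree-presheaf} and the fact that the hypothesis passes to direct summands, to the case $B=\bigoplus_{(M_1,\psi_i)}M_{(i;0)}$ with $M_1$ finite and $M_{(i;0)}$ of plain tree length $\le(\alpha-1)_i$; one sends $M_1$ into a suitable $C_d$, applies the inductive hypothesis to each $\varphi|_{M_{(i;0)}}$ to lift it into some rib $P_{k_i}^{(\alpha-1)_{k_i}}C$ compatibly with the chosen copy of $M_1$, and assembles these lifts via the universal property of the wedge sum. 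I expect the \textbf{main obstacle} to be exactly the bookkeeping that pervades this step and the whole induction: reconciling the fixed cofinal sequence $((\alpha-1)_i)_i$ appearing in the structure of $B$ (and of the telescope maps) with the one appearing in the construction of $P_d^\alpha C$, so that the lifted ribs genuinely land in the ribs of $P_d^\alpha C$. This is purely combinatorial — passing to cofinal subsequences and enlarging $d$ — but it recurs everywhere.

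With (5) available, (1) — that $(\mathrm{Hom}(C_n,A))_n$, with restriction bonding maps, has $\alpha$-th derived tower $(\mathrm{Hom}_{\alpha_n}(C_n,A))_n$ and $\mathrm{Hom}_\alpha(C_n,A)=\mathrm{Hom}(P_n^\alpha C\mid C_n,A)$ — is proved by unwinding the two recursions in parallel: a homomorphism $P_n^\alpha C\to A$ is a homomorphism $C_n\to A$ together with, for each $k\ge n$, an extension to $P_k^{(\alpha-1)_k}C$; restricting to $C_k$ and invoking the inductive form of (1) identifies such extensions with elements of $A^{(k)}_{(\alpha-1)_k}$, and intersecting over $k$ reproduces the defining formula $A^{(n)}_\alpha=\bigcap_k p^{(n,n+k)}(A^{(n+k)}_{(\alpha-1)_k})$. (As in the $\alpha=1$ case of Theorem~\ref{Theorem:PExt-Solecki}, the wedge-sum construction packages ``a map on $C_n$ plus independent extensions to each rib'', so no Mittag--Leffler hypothesis is needed.) Then (2) is immediate: by construction $P_n^\alpha C$ is a colimit of a presheaf of finiteflat modules over a well-founded tree of rank $\le 1+\alpha$, hence of plain tree length $\le\alpha$ by Proposition~\ref{Proposition:tree-presheaf}, hence of plain projective length $\le\alpha$ by Lemma~\ref{Lemma:wedge-projective}. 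For (3) I would combine Theorem~\ref{Theorem:phantom-Ext} (via Theorem~\ref{Theorem:phantom-contravariant-subfunctor}), which identifies $\mathrm{PhExt}(C,A)$ with $\mathrm{lim}^1_n\mathrm{Hom}(C_n,A)$ and $\mathrm{Ph}^\alpha\mathrm{Ext}(C,A)$ with $s_\alpha$ of it; Theorem~\ref{Theorem:Solecki-lim1}, giving $s_\alpha(\mathrm{lim}^1\boldsymbol A)=\mathrm{lim}^1\boldsymbol A_\alpha$; and (1), giving $\boldsymbol A_\alpha=(\mathrm{Hom}_{\alpha_n}(C_n,A))_n$. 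This yields $\mathrm{Ph}^\alpha\mathrm{Ext}(C,A)\cong\mathrm{lim}^1_n\mathrm{Hom}_{\alpha_n}(C_n,A)$. For the description as $\bigcap_n\ker(\mathrm{Ext}(C,A)\to\mathrm{Ext}(P^{\alpha_n}_nC,A))$, apply Roos' Theorem~\ref{Lemma:Ext-and-lim1} to the presentation $C\cong\mathrm{colim}_nP^{\alpha_n}_nC$: this exhibits that intersection as $\mathrm{lim}^1_n\mathrm{Hom}(P^{\alpha_n}_nC,A)$ inside $\mathrm{Ext}(C,A)$, and the map of direct systems $(C_n)\to(P^{\alpha_n}_nC)$ (inducing the identity on $\mathrm{Ext}(C,A)$) together with the factorization $\mathrm{Hom}(P^{\alpha_n}_nC,A)\twoheadrightarrow\mathrm{Hom}(P^{\alpha_n}_nC\mid C_n,A)\hookrightarrow\mathrm{Hom}(C_n,A)$, (1), and Theorem~\ref{Theorem:Solecki-lim1}, shows the resulting submodule is $s_\alpha(\mathrm{Ext}(C,A))=\mathrm{Ph}^\alpha\mathrm{Ext}(C,A)$.

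Finally, (6)–(8) and (4). The class $[\mathfrak R^\alpha C]\in\mathrm{Ext}(C,P^\alpha C)$ is the telescope class of $C\cong\mathrm{colim}_nP^{\alpha_n}_nC$; its pullback along the canonical map from any term splits (via the inclusion of that term into $\bigoplus_mP^{\alpha_m}_mC$), so by (3) it lies in $\mathrm{Ph}^\alpha\mathrm{Ext}(C,P^\alpha C)$. By (2) each $P^{\alpha_n}_nC\in\mathcal S_{\alpha_n}$, so $P^\alpha C=\bigoplus_nP^{\alpha_n}_nC$ is $\mathcal E_\alpha$-projective; and $\mathfrak R^\alpha C$ is $\mathcal E_\alpha$-exact because, by (5) applied to the members of $\bigcup_n\mathcal S_{\alpha_n}$, every homomorphism from such a generator into $C$ lifts through $\bigoplus_nP^{\alpha_n}_nC\to C$ — this is (6). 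Thus every $C$ admits an $\mathcal E_\alpha$-projective resolution of length one with \emph{projective} kernel, so $(\mathbf{Flat}(R),\mathcal E_\alpha)$ has enough projectives and is hereditary — (7). Assertion (4): tree length $\le\alpha$ implies projective length $\le\alpha$ by Lemma~\ref{Lemma:wedge-projective}, and conversely, projective length $\le\alpha$ forces $\mathrm{Ph}^\alpha\mathrm{Ext}(C,P^\alpha C)=0$, so $\mathfrak R^\alpha C$ splits and $C$ is a direct summand of $P^\alpha C$, which has tree length $\le\alpha$ by (2). For (8), by (7) and Proposition~\ref{Proposition:derived-functor} (or Corollary~\ref{Corollary:derived-projectives}) the functor $\mathrm{Ext}^1_{\mathcal E_\alpha}(C,A)=\mathrm H^1\circ\mathbf{Ext}_{\mathcal E_\alpha}(C,A)$ is computed as $\mathrm H^1$ of $\mathrm{Hom}^\bullet$ applied to $\mathfrak R^\alpha C$, i.e. the cokernel of the telescope map on $\prod_n\mathrm{Hom}(P^{\alpha_n}_nC,A)$, which is $\mathrm{lim}^1_n\mathrm{Hom}(P^{\alpha_n}_nC,A)$; by (3) this is the submodule $\mathrm{Ph}^\alpha\mathrm{Ext}(C,A)\subseteq\mathrm{Ext}(C,A)$. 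Since $\mathrm{Ext}^1_{\mathcal E_\alpha}$ and $\mathrm{Ph}^\alpha\mathrm{Ext}$ are thereby realized, naturally in $C$ and $A$, as the same subfunctor of $\mathrm{Ext}$, they coincide, giving (8). The only genuinely delicate points beyond the index bookkeeping flagged above are checking that these identifications are natural (which follows from the naturality in Roos' Theorem and in the construction of $\mathfrak R^\alpha C$) and confirming at each stage that all auxiliary modules — the spines $C_n$, the wedge sums, the kernels of telescope maps — stay inside $\mathbf{Flat}(R)$, which is automatic since finite submodules of flat modules are flat and $\mathbf{Flat}(R)$ is closed under the relevant colimits and summands.
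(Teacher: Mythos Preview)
Your proposal is correct and follows essentially the same transfinite-induction structure as the paper's proof, establishing all eight assertions together at each stage with the same ingredients (the wedge-sum description of $P_n^\alpha C$, Lemma~\ref{Lemma:wedge-projective}, and the identification of Solecki submodules of $\mathrm{lim}^1$ with derived towers). The one point worth noting is your placement of (5) before (4): since (5) is stated for $B$ of plain \emph{projective} length $\alpha$ while your reduction via Proposition~\ref{Proposition:tree-presheaf} presupposes plain \emph{tree} length, what you actually prove first is the tree-length version of (5)---which is all that (6) and (8) require, the generators of $\mathcal E_\alpha$ being $\bigcup_n\mathcal S_{\alpha_n}$---with the full (5) following once (4) is in hand; the paper instead proves (4) before (5) and can therefore invoke (4) directly when asserting that $B$ is (a summand of) a wedge sum.
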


\begin{proof}
By induction on $\alpha $. As the case $\alpha =0$ is obvious, we can assume 
$\alpha >0$.

(1) By definition, we have%
\begin{equation*}
\mathrm{Hom}_{\alpha }\left( C_{n},A\right) =\bigcap_{k\geq n}\mathrm{Hom}%
_{\left( \alpha -1\right) _{k}}\left( C_{k}|C_{n},A\right) \text{.}
\end{equation*}%
By the inductive hypothesis, we have that%
\begin{equation*}
\mathrm{Hom}_{\left( \alpha -1\right) _{k}}\left( C_{k},A\right) =\mathrm{Hom%
}(P_{k}^{\left( \alpha -1\right) _{k}}C|C_{k},A)
\end{equation*}%
and hence%
\begin{equation*}
\mathrm{Hom}_{\alpha }\left( C_{n},A\right) =\bigcap_{k\geq n}\mathrm{Hom}%
(P_{k}^{(\alpha -1)_{k}}C|C_{n},A)\text{.}
\end{equation*}%
Considering that by definition $P_{n}^{\alpha }C$ is the wedge sum of $%
(P_{k}^{(\alpha -1)_{k}}C)$ over $C_{n}$, we obtain%
\begin{equation*}
\mathrm{Hom}_{\alpha }\left( C_{n},A\right) =\mathrm{Hom}\left(
P_{n}^{\alpha }C|C_{n},A\right) \text{.}
\end{equation*}

(2) Since $P_{n}^{\alpha }C$ has plain tree length at most $\alpha $, it
also has plain projective length at most $\alpha $ by Lemma \ref%
{Lemma:wedge-projective}.

(3) When $\alpha $ is limit, we have%
\begin{equation*}
\mathrm{Ph}^{\alpha }\mathrm{Ext}\left( C,A\right) =\bigcap_{n\in \omega }%
\mathrm{Ph}^{\alpha _{n}}\mathrm{Ext}\left( C,A\right) \text{.}
\end{equation*}%
By the inductive hypothesis, we have%
\begin{equation*}
\mathrm{Ph}^{\alpha _{n}}\mathrm{Ext}\left( C,A\right) =\bigcap_{k\geq n}%
\mathrm{\mathrm{Ker}}\left( \mathrm{Ext}\left( C,A\right) \rightarrow 
\mathrm{Ext}(P_{k}^{\alpha _{n}}C,A)\right) \text{.}
\end{equation*}%
Therefore,%
\begin{eqnarray*}
\mathrm{Ph}^{\alpha }\mathrm{Ext}\left( C,A\right) &=&\bigcap_{n\in \omega
}\bigcap_{k\geq n}\mathrm{\mathrm{Ker}}\left( \mathrm{Ext}\left( C,A\right)
\rightarrow \mathrm{Ext}(P_{k}^{\alpha _{n}}C,A)\right) \\
&=&\bigcap_{n\in \omega }\mathrm{\mathrm{Ker}}\left( \mathrm{Ext}\left(
C,A\right) \rightarrow \mathrm{Ext}(P_{n}^{\alpha _{n}}C,A)\right) \\
&\cong &\mathrm{lim}_{n}^{1}\mathrm{Hom}_{\alpha _{n}}\left( C,A\right) 
\text{.}
\end{eqnarray*}%
Suppose now that $\alpha \mathrm{\ }$is a successor. By the inductive
hypothesis, we have%
\begin{eqnarray*}
\mathrm{Ph}^{\alpha -1}\mathrm{Ext}\left( C,A\right) &=&\bigcap_{n\in \omega
}\mathrm{\mathrm{Ker}}\left( \mathrm{Ext}\left( C,A\right) \rightarrow 
\mathrm{Ext}\left( P_{n}^{\alpha -1}C,A\right) \right) \\
&\cong &\mathrm{lim}_{n}^{1}\mathrm{Hom}_{\alpha -1}\left( C_{n},A\right) 
\text{.}
\end{eqnarray*}%
Under such an isomorphism we have that 
\begin{equation*}
\mathrm{Ph}^{\alpha }\mathrm{Ext}\left( C,A\right) =s_{1}\mathrm{Ph}^{\alpha
-1}\mathrm{Ext}\left( C,A\right)
\end{equation*}%
corresponds to%
\begin{equation*}
\mathrm{lim}_{n}^{1}\mathrm{Hom}_{\alpha }\left( C_{n},A\right) =\mathrm{lim}%
_{n}^{1}\mathrm{Ran}(\mathrm{Hom}\left( P_{n}^{\alpha }C,A\right)
\rightarrow \mathrm{Hom}\left( C,A\right) )
\end{equation*}%
which in turn corresponds to%
\begin{equation*}
\bigcap_{n\in \omega }\mathrm{\mathrm{Ker}}\left( \mathrm{Ext}\left(
C,A\right) \rightarrow \mathrm{Ext}\left( P_{n}^{\alpha }C,A\right) \right) 
\text{.}
\end{equation*}

(4) If $C$ has tree length at most $\alpha $, then it has projective length
at most $\alpha $ by Lemma \ref{Lemma:wedge-projective}. Conversely, suppose
that $C$ has projective length at most $\alpha $. Then we have that $\mathrm{%
Ph}^{\alpha }\mathrm{Ext}\left( C,P^{\alpha }C\right) =0$ and hence the
short exact sequence $\mathfrak{R}^{\alpha }C$ splits, and $C$ is a direct
summands of $P^{\alpha }C$. This shows that $C$ has tree length at most $%
\alpha $.

(5) We have that $B$ is the wedge sum%
\begin{equation*}
\bigoplus_{\left( B_{1},\psi _{i}\right) }B_{\left( i;0\right) }
\end{equation*}%
of a sequence $\left( B_{\left( i;0\right) }\right) _{i\in \omega }$ of
modules of plain tree length $\left( \alpha -1\right) _{i}$ over a finite
module $B_{1}$ with respect to homomorphisms $\psi _{i}:B_{1}\rightarrow
B_{\left( i;0\right) }$. After replacing $A$ with the submodule generated by 
$A$ and $B_{1}$, and $d_{0}$ with a larger integer, we can assume that $%
B_{1}\subseteq A$. For every $n\in \omega $ let $\varphi _{n}:B_{\left(
n;0\right) }\rightarrow C$ be the homomorphism induced by $\varphi
:B\rightarrow C$. Then we have that there exists a homomorphism $\overline{%
\varphi }_{n}:B_{\left( n;0\right) }\rightarrow P_{d_{n}}^{\left( \alpha
-1\right) _{n}}C$ for some $d_{n}\geq d_{0}$ that maps $A_{n}:=A\cap
B_{\left( n;0\right) }$ to $C_{d_{n}}$ and such that the $\varphi _{n}$ is
equal to the composition of $\overline{\varphi }_{n}$ with the canonical
homomorphism $P_{d_{n}}^{\left( \alpha -1\right) _{n}}C$. If $n_{0}\in
\omega $ is such that $A_{n}=B_{n}$ for $n\geq n_{0}$, then $\left( 
\overline{\varphi }_{n}\right) $ induce a homomorphism $\overline{\varphi }%
:B\rightarrow P_{d_{n_{0}}}^{\alpha }C$ that maps $A$ to $C_{d_{n_{0}}}$.

(6) The fact that $\mathfrak{R}_{\alpha }C$ represents an element of $%
\mathrm{Ph}^{\alpha }\mathrm{Ext}\left( C,P^{\alpha }C\right) $ follows from
(3). The fact that $\mathfrak{R}_{\alpha }C$ is $\mathcal{E}_{\alpha }$%
-exact follows from (5) and the fact that $P_{n}^{\alpha }C$ has plain tree
length at most $\alpha $ for every $n\in \omega $. By definition, we have
that $P^{\alpha }C$ is $\mathcal{E}_{\alpha }$-projective.

(7) This is an immediate consequence of (6).

(8) Let $\mathfrak{S}$ be the exact sequence $A\rightarrow X\rightarrow C$.\
If $\mathfrak{S}$ represents an element of \textrm{Ph}$^{\alpha }\mathrm{Ext}%
\left( C,A\right) $, then by (3) we have that $\mathfrak{S}$ represents an
element of%
\begin{equation*}
\mathrm{\mathrm{Ker}}\left( \mathrm{Ext}\left( C,A\right) \rightarrow 
\mathrm{Ext}(P_{n}^{\alpha _{n}}C,A)\right) \text{.}
\end{equation*}%
If $B$ has plain tree length at most $\alpha $ and $\varphi :B\rightarrow C$
is a homomorphism, then by (5) $\varphi $ factors through $P_{n}^{\alpha
_{n}}C\rightarrow C$ for some $n\in \omega $. Thus, $\mathfrak{S}$
represents an element of%
\begin{equation*}
\mathrm{\mathrm{Ker}}\left( \mathrm{Ext}\left( C,A\right) \rightarrow 
\mathrm{Ext}\left( B,A\right) \right) \text{.}
\end{equation*}%
As this holds for every countable flat module $B$ of plain tree length at
most $\alpha $, $\mathfrak{S}$ is $\mathcal{E}_{\alpha }$-exact. Conversely,
if $\mathfrak{S}$ is $\mathcal{E}_{\alpha }$-exact, then since $%
P_{n}^{\alpha _{n}}C$ has plain tree length at most $\alpha _{n}$, $%
\mathfrak{S}$ represents an element of%
\begin{equation*}
\mathrm{\mathrm{Ker}}\left( \mathrm{Ext}\left( C,A\right) \rightarrow 
\mathrm{Ext}(P_{n}^{\alpha _{n}}C,A)\right) \text{.}
\end{equation*}%
As this holds for every $n\in \omega $, we have that $\mathfrak{S}$
represents an element of $\mathrm{Ph}^{\alpha }\mathrm{Ext}\left( C,A\right) 
$ by (3).
\end{proof}

\begin{corollary}
\label{Corollary:derived-functor}Fix $\alpha <\omega _{1}$. The functor 
\begin{equation*}
\mathrm{Hom}:\left( \mathbf{Flat}\left( R\right) ,\mathcal{E}_{\alpha
}\right) \times \left( \mathbf{Flat}\left( R\right) ,\mathcal{E}_{\alpha
}\right) \rightarrow \boldsymbol{\Pi }(\mathbf{Flat}\left( R\right) )
\end{equation*}%
has a total right derived functor $\mathrm{RHom}_{\mathcal{E}_{\alpha
}}^{\bullet }$. We can then define $\mathrm{Ext}_{\mathcal{E}_{\alpha
}}^{n}:=\mathrm{H}^{n}\circ \mathrm{RHom}_{\mathcal{E}_{\alpha }}^{\bullet }$
for $n\in \mathbb{Z}$. Then we have that $\mathrm{Ext}_{\mathcal{E}_{\alpha
}}^{0}\cong \mathrm{Hom}$, $\mathrm{Ext}_{\mathcal{E}_{\alpha }}^{1}\cong 
\mathrm{Ph}^{\alpha }\mathrm{Ext}$, and, for $n\in \mathbb{Z}\setminus
\left\{ 0,1\right\} $, $\mathrm{Ext}_{\mathcal{E}_{\alpha }}^{n}=0$.
\end{corollary}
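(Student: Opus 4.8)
The plan is to obtain the statement by feeding the exact category $\left(\mathbf{Flat}\left(R\right),\mathcal{E}_{\alpha}\right)$ into the general derived-functor machinery for $\mathcal{M}$-categories developed in Section~\ref{Subsection:M-categories}, and then reading off the three identifications from Theorem~\ref{Theorem:phantom-projective}. First I would check that $\left(\mathbf{Flat}\left(R\right),\mathcal{E}_{\alpha}\right)$ meets the hypotheses of the proposition producing $\mathrm{RHom}_{\mathcal{A}}^{\bullet}$ for a hereditary idempotent-complete exact $\mathcal{M}$-category with enough projectives. Heredity and enough projectives are exactly Theorem~\ref{Theorem:phantom-projective}(7). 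Idempotent-completeness as an exact category reduces to idempotent-completeness of the underlying additive category, which holds because $\mathbf{Flat}\left(R\right)$ is quasi-abelian. The $\boldsymbol{\Pi}(\mathbf{Flat}\left(R\right))$-category structure is the one already used for the maximal exact structure, namely the identification $\mathrm{Hom}\left(C,A\right)\cong \mathrm{lim}_{n}\mathrm{Hom}\left(C_{n},A\right)$ for an exhausting increasing sequence $\left(C_{n}\right)$ of finite submodules of $C$, which exhibits $\mathrm{Hom}\left(C,A\right)$ as a pro-countableflat Polish module; this enrichment is unchanged by passing from the maximal exact structure to $\mathcal{E}_{\alpha}$, since finite limits and colimits in $\mathbf{Flat}\left(R\right)$ are computed identically and the $\mathcal{E}_{\alpha}$-projectivity of $P$ is precisely the assertion that $\mathrm{Hom}\left(P,-\right)$ carries $\mathcal{E}_{\alpha}$-short-exact sequences to short-exact sequences.

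Granting these verifications, the cited proposition yields the total right derived functor $\mathrm{RHom}_{\mathcal{E}_{\alpha}}^{\bullet}$ on bounded derived categories, and identifies $\mathrm{Ext}_{\mathcal{E}_{\alpha}}^{k}:=\mathrm{H}^{k}\circ \mathrm{RHom}_{\mathcal{E}_{\alpha}}^{\bullet}$ with the group $\mathrm{Ext}^{k}$ computed in $\mathrm{D}^{b}\left(\mathbf{Flat}\left(R\right),\mathcal{E}_{\alpha}\right)$, taking values in $\mathrm{LH}\left(\boldsymbol{\Pi}(\mathbf{Flat}\left(R\right))\right)$. From the general properties of $\mathrm{Ext}^{k}$ recalled in Section~\ref{Subsection:M-categories} one then gets $\mathrm{Ext}_{\mathcal{E}_{\alpha}}^{0}\cong \mathrm{Hom}$ and $\mathrm{Ext}_{\mathcal{E}_{\alpha}}^{n}=0$ for $n<0$; and because $\left(\mathbf{Flat}\left(R\right),\mathcal{E}_{\alpha}\right)$ is hereditary---which by definition means $\mathrm{Ext}^{n}=0$ for $n>1$---one also gets $\mathrm{Ext}_{\mathcal{E}_{\alpha}}^{n}=0$ for $n\geq 2$.

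It remains to identify $\mathrm{Ext}_{\mathcal{E}_{\alpha}}^{1}$ with $\mathrm{Ph}^{\alpha}\mathrm{Ext}$, and this is precisely Theorem~\ref{Theorem:phantom-projective}(8): the functor denoted $\mathbf{Ext}_{\mathcal{E}_{\alpha}}$ there is the total right derived functor of $\mathrm{Hom}$ on $\left(\mathbf{Flat}\left(R\right),\mathcal{E}_{\alpha}\right)$, hence coincides with $\mathrm{RHom}_{\mathcal{E}_{\alpha}}^{\bullet}$ by the essential uniqueness of total derived functors, and Theorem~\ref{Theorem:phantom-projective}(8) asserts $\mathrm{Ph}^{\alpha}\mathrm{Ext}=\mathrm{H}^{1}\circ\mathbf{Ext}_{\mathcal{E}_{\alpha}}$. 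Assembling these points gives the corollary.

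The only genuinely non-formal step is the first one: confirming that $\left(\mathbf{Flat}\left(R\right),\mathcal{E}_{\alpha}\right)$ literally falls under the $\mathcal{M}$-category version of the derived-functor proposition, in particular that the pro-countableflat enrichment is compatible with the exact structure $\mathcal{E}_{\alpha}$ and that the hypothesis "$\mathrm{Hom}\left(P,-\right)$ exact for projective $P$" is satisfied with respect to $\mathcal{E}_{\alpha}$. As noted above, the latter is just a reformulation of $\mathcal{E}_{\alpha}$-projectivity together with left-exactness of $\mathrm{Hom}$, and the former is a routine check that finite biproducts and the formation of $\mathrm{lim}$ of finite-submodule Homs are intrinsic to $\mathbf{Flat}\left(R\right)$; everything after that is a bookkeeping assembly of results already established.
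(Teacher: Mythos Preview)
Your proposal is correct and follows essentially the same route as the paper: the paper's proof is the single line ``This follows from Theorem~\ref{Theorem:phantom-projective} by Proposition~\ref{Proposition:derived-functor},'' and you have simply unpacked this by verifying that Theorem~\ref{Theorem:phantom-projective}(7) supplies the hereditary-with-enough-projectives hypothesis needed to invoke the derived-functor machinery, and that Theorem~\ref{Theorem:phantom-projective}(8) gives the identification $\mathrm{Ext}_{\mathcal{E}_{\alpha}}^{1}\cong\mathrm{Ph}^{\alpha}\mathrm{Ext}$. The additional checks you spell out (idempotent-completeness, compatibility of the $\boldsymbol{\Pi}(\mathbf{Flat}(R))$-enrichment with $\mathcal{E}_{\alpha}$) are the routine verifications the paper leaves implicit.
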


\begin{proof}
This follows from Theorem \ref{Theorem:phantom-projective} by Proposition %
\ref{Proposition:derived-functor}.
\end{proof}

\begin{corollary}
\label{Corollary:alpha-projective}Let $C$ be a countable flat module. The
following assertions are equivalent:

\begin{enumerate}
\item $C$ projective in $\left( \mathbf{Flat}\left( R\right) ,\mathcal{E}%
_{\alpha }\right) $;

\item $C$ has projective length at most $\alpha $;

\item $C$ has tree length at most $\alpha $;

\item $C$ has $A$-projective length at most $\alpha $ for every $n\in \omega 
$ and countable flat module $A$ of plain tree length at most $\alpha _{n}$.
\end{enumerate}
\end{corollary}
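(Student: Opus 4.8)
The plan is to establish the cycle $(1)\Rightarrow(2)\Rightarrow(4)\Rightarrow(3)\Rightarrow(1)$, importing most of the work from Theorem~\ref{Theorem:phantom-projective} and Corollary~\ref{Corollary:derived-functor}. The equivalence $(1)\Leftrightarrow(2)$ is essentially formal: by Corollary~\ref{Corollary:derived-functor} we have $\mathrm{Ext}_{\mathcal{E}_\alpha}^1\cong\mathrm{Ph}^\alpha\mathrm{Ext}$, and in an exact category an object is projective precisely when its first $\mathrm{Ext}$ vanishes against every object, so $C$ is $\mathcal{E}_\alpha$-projective iff $\mathrm{Ph}^\alpha\mathrm{Ext}(C,A)=0$ for every countable flat $A$, which is the definition of projective length at most $\alpha$. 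The implication $(2)\Rightarrow(4)$ is immediate, since the countable flat modules of plain tree length at most $\alpha_n$ for some $n$ form a subclass of all countable flat modules, and "$A$-projective length at most $\alpha$" is exactly "$\mathrm{Ph}^\alpha\mathrm{Ext}(C,A)=0$" by Theorem~\ref{Theorem:phantom-Ext}. For $(3)\Rightarrow(1)$, if $C$ has tree length at most $\alpha$ then by Proposition~\ref{Proposition:tree-presheaf} it is a direct summand of a countable direct sum $\bigoplus_n C_n$ with each $C_n$ of plain tree length at most $\alpha_n$; each $C_n$ lies in $\mathcal{S}_{\alpha_n}$ and is therefore $\mathcal{E}_\alpha$-projective by construction of $\mathcal{E}_\alpha$, and a countable coproduct of $\mathcal{E}_\alpha$-projectives is $\mathcal{E}_\alpha$-projective (as $\mathrm{Hom}(\bigoplus_n C_n,-)=\prod_n\mathrm{Hom}(C_n,-)$ carries $\mathcal{E}_\alpha$-deflations to surjections), whence so is the direct summand $C$.

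The substantive step is $(4)\Rightarrow(3)$. I would use the canonical resolution $\mathfrak{R}^\alpha C\colon 0\to P^\alpha C\to P^\alpha C\to C\to 0$ in $(\mathbf{Flat}(R),\mathcal{E}_\alpha)$ constructed before Theorem~\ref{Theorem:phantom-projective}, where $P^\alpha C=\bigoplus_n P_n^{\alpha_n}C$ with each $P_n^{\alpha_n}C$ of plain tree length at most $\alpha_n$ (Theorem~\ref{Theorem:phantom-projective}(6)): if $\mathfrak{R}^\alpha C$ splits then $C$ is a direct summand of $P^\alpha C$, and $P^\alpha C$ has tree length at most $\alpha$, which is $(3)$. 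Since $\mathfrak{R}^\alpha C$ represents an element of $\mathrm{Ph}^\alpha\mathrm{Ext}(C,P^\alpha C)$, it suffices to prove $\mathrm{Ph}^\alpha\mathrm{Ext}(C,P^\alpha C)=0$. Because $\mathrm{Ph}^\alpha\mathrm{Ext}(C,-)\cong\mathrm{Ext}_{\mathcal{E}_\alpha}^1(C,-)$ is the first derived functor of $\mathrm{Hom}$ in a hereditary exact category with enough projectives (Theorem~\ref{Theorem:phantom-projective}(7)), every $\mathcal{E}_\alpha$-short exact sequence in the second variable yields a six-term exact sequence ending in $0$; hence the class $\mathcal{Z}=\{A:\mathrm{Ph}^\alpha\mathrm{Ext}(C,A)=0\}$ is closed under $\mathcal{E}_\alpha$-extensions, direct summands, and finite direct sums, and by $(4)$ it contains every module of plain tree length at most $\alpha_n$, in particular each $P_n^{\alpha_n}C$ and each finite partial sum $P_0^{\alpha_0}C\oplus\cdots\oplus P_N^{\alpha_N}C$. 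The goal is then to deduce $P^\alpha C=\bigoplus_n P_n^{\alpha_n}C\in\mathcal{Z}$ from the fact that each finite partial sum lies in $\mathcal{Z}$, using that $\bigoplus_n P_n^{\alpha_n}C$ is the $\mathcal{E}_\alpha$-exact colimit (telescope) of those partial sums.

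The hard part is exactly this closure of $\mathcal{Z}$ under the relevant countable colimit, i.e.\ showing that $\mathrm{Ph}^\alpha\mathrm{Ext}(C,-)$ commutes with countable direct sums of modules of bounded plain tree length; this is delicate because $\mathrm{lim}^1$ does not commute with infinite direct sums in general, so the telescope alone is not enough. I expect the argument to proceed through the derived-tower formula $\mathrm{Ph}^\alpha\mathrm{Ext}(C,A)\cong\mathrm{lim}^1_n\mathrm{Hom}(P_n^{\alpha_n}C,A)$ of Theorem~\ref{Theorem:phantom-projective}(3): hypothesis $(4)$ forces, for each fixed $m$, the tower $(\mathrm{Hom}(P_n^{\alpha_n}C,P_m^{\alpha_m}C))_n$ to be essentially epimorphic, and one must check that this Mittag--Leffler behaviour is retained when the target is replaced by $\bigoplus_m P_m^{\alpha_m}C$, exploiting that $\mathrm{Hom}(P_n^{\alpha_n}C,-)$ behaves well on the finitely generated pieces of $P_n^{\alpha_n}C$ together with the monomorphicity statement of Lemma~\ref{Lemma:monic-wedge}(1). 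Once $\mathrm{Ph}^\alpha\mathrm{Ext}(C,P^\alpha C)=0$ is in hand, $\mathfrak{R}^\alpha C$ splits and $(4)\Rightarrow(3)$ follows, closing the cycle; the equivalence $(2)\Leftrightarrow(3)$ recorded in Theorem~\ref{Theorem:phantom-projective}(4) then also drops out as a consequence.
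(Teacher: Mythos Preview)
Your treatment of $(1)\Leftrightarrow(2)\Leftrightarrow(3)$ and $(2)\Rightarrow(4)$ is exactly what the paper has in mind: the paper's entire proof is the single sentence ``This follows from Theorem~\ref{Theorem:phantom-projective}'', and parts (4) and (8) of that theorem give precisely the equivalences you spell out.

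Where you diverge from the paper is in recognizing that $(4)\Rightarrow(3)$ is not an immediate consequence of any item in Theorem~\ref{Theorem:phantom-projective}. The paper's one-line proof does not address this, and your diagnosis is correct: hypothesis (4) only gives $\mathrm{Ph}^{\alpha}\mathrm{Ext}(C,A)=0$ for $A$ of \emph{plain} tree length at most $\alpha_{n}$, whereas splitting $\mathfrak{R}^{\alpha}C$ requires this vanishing for $A=P^{\alpha}C=\bigoplus_{n}P_{n}^{\alpha_{n}}C$, which has tree length at most $\alpha$ but not plain tree length at most $\alpha_{n}$ for any $n$. The same gap is already present in the $\alpha=1$ case (Theorem~\ref{Theorem:structure-rank-1-tris}), where the paper passes without comment from ``finite-rank $A$'' to ``finite-rank decomposable $A$''.

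Your proposed repair is in the right direction but, as you yourself note, incomplete. The derived-tower description does give $\mathrm{Ph}^{\alpha}\mathrm{Ext}(C,A)\cong\mathrm{lim}^{1}_{n}\mathrm{Hom}_{\alpha_{n}}(C_{n},A)$, and since each $C_{n}$ is finitely generated one has $\mathrm{Hom}_{\alpha_{n}}(C_{n},\bigoplus_{m}A_{m})=\bigoplus_{m}\mathrm{Hom}_{\alpha_{n}}(C_{n},A_{m})$ (derived towers commute with direct sums of targets here). But the remaining step, that Mittag--Leffler for each summand tower implies Mittag--Leffler for the direct-sum tower, is false in general: it would require the stabilization index to be uniform in $m$. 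Your appeal to Lemma~\ref{Lemma:monic-wedge}(1) and the specific structure of the $P_{n}^{\alpha_{n}}C$ is plausible as a source of such uniformity, but you have not shown how to extract it. So your proposal correctly locates a genuine gap in the paper's argument and gestures toward a fix, without actually closing it.
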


\begin{proof}
This follows from Theorem \ref{Theorem:phantom-projective}.
\end{proof}

\subsection{Tree* length}

We now define a modified version of the notion of tree length, which we call
tree* length, obtained by replacing finite modules with finite-rank modules.
Thus, we say that a countable flat module has plain tree* length $1$ if and
only if it has finite rank. We now define by recursion on $\alpha $ the
notion of (plain) tree* length at most $\alpha $.

\begin{definition}
Let $C$ be a countable flat module, and $2\leq \alpha <\omega _{1}$:

\begin{itemize}
\item if $\alpha $ is a successor, then $C$ has plain tree* length at most $%
\alpha $ if and only if there exist countable flat modules $M_{\left(
i;0\right) }$ of plain tree* length at most $\left( \alpha -1\right) _{i}$,
a finite-rank module $M_{1}$, and injective module homomorphisms $\psi
_{i}:M_{1}\rightarrow M_{\left( i;0\right) }$ with pure image such that $C$
is isomorphic to the wedge sum%
\begin{equation*}
\bigoplus_{\left( M_{1},\psi _{i}\right) }M_{\left( i;0\right) }\text{;}
\end{equation*}

\item $C$ has tree* length at most $\alpha $ if and only if $C$ is a direct
summand of a direct sum of modules $C_{n}$ of plain tree* length at most $%
\alpha _{n}$ for $n<\omega $.
\end{itemize}
\end{definition}

\begin{definition}
For $\alpha <\omega _{1}$, we let:

\begin{itemize}
\item $\mathcal{S}_{\alpha }^{\ast }$ be the class of countable flat modules
of plain tree* length at most $\alpha $;

\item $\mathcal{E}_{\alpha }^{\ast }$ be the exact structure on $\mathbf{Flat%
}\left( R\right) $ projectively generated $\mathcal{S}_{\alpha _{n}}$ for $%
n\in \omega $.
\end{itemize}
\end{definition}

The same proofs as Proposition \ref{Proposition:tree-presheaf} and Lemma \ref%
{Lemma:wedge-projective} yield the following:

\begin{proposition}
\label{Proposition:tree-presheaf*}Suppose that $C$ is a countable flat
module. Then the following assertions are equivalent:

\begin{enumerate}
\item $C$ has plain tree* length at most $\alpha $;

\item $C$ is isomorphic to a direct summand of a colimit of a presheaf of
finite-rank flat modules over $I_{\alpha }^{\mathrm{plain}}$;

\item $C$ is isomorphic to a direct summand of a colimit of a presheaf of
finite-rank flat modules over a countable well-founded tree of rank at most $%
\alpha $.
\end{enumerate}

Furthermore, the following assertions are equivalent:

\begin{enumerate}
\item $C$ has tree* length at most $\alpha $;

\item $C$ is isomorphic to a direct summand of a colimit of a presheaf of
finite-rank flat modules over $I_{\alpha }$;

\item $C$ is isomorphic to a direct summand of a colimit of a presheaf of
finite-rank flat modules over a countable well-founded forest of rank at
most $\alpha $.
\end{enumerate}
\end{proposition}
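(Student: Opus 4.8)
The statement to prove is Proposition~\ref{Proposition:tree-presheaf*}, the tree* analogue of Proposition~\ref{Proposition:tree-presheaf}. The plan is to mimic, essentially verbatim, the proof of Proposition~\ref{Proposition:tree-presheaf}, which the excerpt says is obtained by ``a straightforward induction on $\alpha$'' using the fact that colimits commute with colimits. The only changes are cosmetic: one replaces the class of finiteflat modules with the class of finite-rank flat modules, the index-tree $I_{1+\alpha}^{\mathrm{plain}}$ with $I_{\alpha}^{\mathrm{plain}}$ (because plain tree* length starts at $1$ for finite-rank modules, whereas plain tree length starts at $0$ for finite modules, so the rank bookkeeping shifts by one), and the connecting homomorphisms $\psi_i$ are required to be injective \emph{with pure image} rather than merely injective. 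So the first step is to recall that in the definition of ``plain tree* length at most $\alpha$'' for a successor $\alpha$, the module $C$ is a wedge sum over $M_1$ of modules $M_{(i;0)}$ of plain tree* length at most $(\alpha-1)_i$, with $M_1$ finite-rank and the $\psi_i$ monic with pure image.

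For the equivalence of the first three assertions I would argue by induction on $\alpha$. The base case $\alpha=1$: a module of plain tree* length $1$ is by definition a finite-rank flat module, which is trivially a colimit (indeed the identity colimit) of a presheaf of finite-rank flat modules over the one-node tree $I_1^{\mathrm{plain}}=\{0\}$, and any countable well-founded tree of rank at most $1$ is a single node. For the successor step $\alpha$, given $C$ of plain tree* length at most $\alpha$, write $C=\bigoplus_{(M_1,\psi_i)}M_{(i;0)}$; by the inductive hypothesis each $M_{(i;0)}$ is a direct summand of a colimit of a presheaf of finite-rank flat modules over $I_{(\alpha-1)_i}^{\mathrm{plain}}$. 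The key combinatorial observation (recorded in Section~\ref{Subsection:indices}) is that $I_\alpha^{\mathrm{plain}}$ is obtained by gluing the trees $I_{(\alpha-1)_n}^{\mathrm{plain}}$ at a common new root, and the root-level module is then the shared $M_1$; since the wedge sum is precisely the colimit of the resulting presheaf over $I_\alpha^{\mathrm{plain}}$ (colimits of colimits are colimits), $C$ is a direct summand of such a colimit. Conversely, a colimit of a presheaf of finite-rank flat modules over $I_\alpha^{\mathrm{plain}}$ decomposes as a wedge sum of the colimits over the subtrees hanging off the root, whence it has plain tree* length at most $\alpha$. The equivalence (2)$\Leftrightarrow$(3) uses the last paragraph of Section~\ref{Subsection:indices}: every countable well-founded rooted tree of rank at most $\alpha$ embeds as a subtree of $I_\alpha^{\mathrm{plain}}$, and a presheaf over a subtree extends to one over $I_\alpha^{\mathrm{plain}}$ with the same colimit (e.g.\ by zero on the missing nodes, or by pulling back along the inclusion). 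The forest versions follow by taking disjoint unions: a countable forest of rank at most $\alpha$ is a subforest of $I_\alpha$, which is the disjoint union of copies of the $I_{\alpha_n}^{\mathrm{plain}}$, and a colimit of a presheaf over it is a direct sum of colimits over the components, i.e.\ a module of tree* length at most $\alpha$.

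The one genuine point requiring a little care—and the step I expect to be the main obstacle—is that in the tree* setting the $\psi_i$ must be monic \emph{with pure image}, so one must check that this purity condition is preserved under the recursive wedge-sum construction and that the colimit manipulations (reindexing across a subtree inclusion, extending a presheaf) still produce presheaves whose structure maps are pure monics. This is exactly the content of the remark after Definition~\ref{Definition:fishbone-tower}'s neighbourhood in the excerpt, namely the observation in the wedge-sum subsection that if each $\psi_i$ is injective with pure image then $N$ is pure in $M_\infty$ and $M_1$ is pure in $M$; combined with the fact (from Proposition~\ref{Proposition:characterize-prufer}, since $R$ is Pr\"ufer) that pure submodules of finite-rank flat modules are finite-rank and that a composite of pure monics is a pure monic, this lets the induction go through. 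Given these ingredients the proof is essentially a transcription of the proof of Proposition~\ref{Proposition:tree-presheaf} with ``finite'' upgraded to ``finite-rank'' and ``$1+\alpha$'' replaced by ``$\alpha$'', so I would simply invoke that parallel and indicate the shift in the rank indexing and the purity bookkeeping rather than reproduce the full induction.
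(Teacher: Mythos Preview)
Your proposal is correct and matches the paper's approach exactly: the paper's proof of Proposition~\ref{Proposition:tree-presheaf*} is simply the sentence ``The same proofs as Proposition~\ref{Proposition:tree-presheaf} and Lemma~\ref{Lemma:wedge-projective} yield the following,'' and you propose precisely that transcription, correctly identifying the cosmetic changes (finiteflat $\to$ finite-rank, the index shift $I_{1+\alpha}^{\mathrm{plain}} \to I_\alpha^{\mathrm{plain}}$, and the pure-image requirement on the $\psi_i$). Your additional remarks on why the purity bookkeeping goes through are more detail than the paper itself provides.
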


\begin{lemma}
\label{Lemma:wedge*-projective}Suppose that $C$ is a countable flat module,
and $\alpha <\omega _{1}$:

\begin{enumerate}
\item if $C$ has plain tree* length at most $\alpha $, then $C$ has plain
projective length at most $\alpha $;

\item if $C$ has tree* length at most $\alpha $, then $C$ has projective
length at most $\alpha $.
\end{enumerate}
\end{lemma}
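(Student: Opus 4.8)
The plan is to mimic, essentially verbatim, the proof of Lemma \ref{Lemma:wedge-projective}, so the real work is to establish the tree* analogue of Lemma \ref{Lemma:monic-wedge}. Concretely: suppose $C$ is a wedge sum $\bigoplus_{(M_1,\psi_i)}M_{(i;0)}$ where $M_1$ has finite rank, the $\psi_i:M_1\to M_{(i;0)}$ are injective with pure image, and each $M_{(i;0)}$ has plain $A$-projective length at most $(\alpha-1)_i$; I claim $C$ has plain $A$-projective length at most $\alpha$. Writing $C_n\subseteq C$ for the submodule corresponding to $M_1\oplus M_{(0;0)}\oplus\cdots\oplus M_{(0;n-1)}$, the argument runs in three steps parallel to Lemma \ref{Lemma:monic-wedge}(1)--(3): (1) the tower $(\mathrm{Hom}(C_n,A)/\mathrm{Hom}(C|C_n,A))_n$ is monomorphic, whose proof uses only the wedge structure and carries over with no change; (2) if $M_{(0;0)},\dots,M_{(0;n-1)}$ have plain $A$-projective length at most $\beta$, then so does $C_n$; (3) the conclusion, obtained from (1) and (2) together with the $\mathrm{lim}^1$ description of $\mathrm{Ph}^{\alpha-1}\mathrm{Ext}$ from Theorem \ref{Theorem:phantom-projective}(3) and Lemma \ref{Lemma:monomorphic-tower}, again verbatim.

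The only place where the hypothesis ``$M_1$ finite'' is actually used in the proof of Lemma \ref{Lemma:monic-wedge}(2) is to guarantee that the relation module in the short exact sequence $M_1^{\oplus k}\to M_1\oplus M_{(0;0)}\oplus\cdots\oplus M_{(0;n-1)}\to C_n$, and hence $H_n=\mathrm{Ran}(\mathrm{Hom}(M_1^{\oplus k},A)\to\mathrm{Ext}(C_n,A))$, is countable, so that Lemma \ref{Lemma:complexity-exact-sequence}(3) applies and $\mathrm{Ext}(C_n,A)$ inherits the plain Solecki-length-$\le\beta$ bound from $\mathrm{Ext}(M_1\oplus M_{(0;0)}\oplus\cdots\oplus M_{(0;n-1)},A)$. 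Two remarks make this go through for finite-rank $M_1$. First, because the $\psi_i$ are required to have pure image, the defining relation submodule $N$ of the wedge sum is already pure in $M_\infty$ (as recorded in the discussion of wedge sums), so the displayed sequence holds with the relation module a finite direct sum of copies of $M_1$, which has finite rank. Second, for any finite-rank flat module $F$ and any countable flat module $A$, the group $\mathrm{Hom}(F,A)$ is countable: an $R$-homomorphism $F\to A$ extends uniquely to a $K$-linear map $K\otimes F\to K\otimes A$, and $\mathrm{Hom}_K(K\otimes F,K\otimes A)$ is a countable-dimensional vector space over the countable field $K$, hence countable. Thus $\mathrm{Hom}(M_1^{\oplus k},A)$ and $H_n$ are countable, and the remainder of the argument is unchanged. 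For the base case, a module of plain tree* length $1$ is precisely a finite-rank module, which has plain projective length at most $1$ by Lemma \ref{Lemma:finite-rank} (equivalently Theorem \ref{Theorem:structure-rank-1-tris}).

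With the tree* analogue of Lemma \ref{Lemma:monic-wedge}(3) in hand, part (1) of the Lemma follows by induction on $\alpha$: if $C$ has plain tree* length at most the successor $\alpha$, then $C$ is a wedge sum of modules $M_{(i;0)}$ of plain tree* length at most $(\alpha-1)_i<\alpha$, which by the inductive hypothesis have plain projective length, hence plain $A$-projective length for every $A$, at most $(\alpha-1)_i$; the analogue of Lemma \ref{Lemma:monic-wedge}(3) then gives plain $A$-projective length at most $\alpha$ for every $A$, i.e.\ plain projective length at most $\alpha$. For part (2), if $C$ has tree* length at most $\alpha$ then $C$ is a direct summand of $\bigoplus_n C_n$ with each $C_n$ of plain tree* length at most $\alpha_n$; by part (1) each $C_n$ has projective length at most $\alpha_n$, hence is $\mathcal{E}_{\alpha_n}$-projective by Corollary \ref{Corollary:alpha-projective}, and therefore $\mathcal{E}_\alpha$-projective since the generating class of $\mathcal{E}_\alpha$ contains that of $\mathcal{E}_{\alpha_n}$, so $\mathcal{E}_\alpha\subseteq\mathcal{E}_{\alpha_n}$. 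As $\mathcal{E}_\alpha$-projectives are closed under countable coproducts and direct summands, $C$ is $\mathcal{E}_\alpha$-projective, i.e.\ has projective length at most $\alpha$.

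The main obstacle is not conceptual but the bookkeeping of checking that every step of the proofs of Lemma \ref{Lemma:monic-wedge} and Lemma \ref{Lemma:wedge-projective} survives the replacement of ``finite module'' by ``finite-rank module with pure embedding''. The one genuinely load-bearing point is the countability of $\mathrm{Hom}(F,A)$ for $F$ of finite rank and $A$ countable flat, which is exactly why the definition of plain tree* length imposes purity of the $\psi_i$: it keeps the correction terms in step (2) countable so that Lemma \ref{Lemma:complexity-exact-sequence}(3) remains applicable.
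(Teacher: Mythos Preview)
Your proposal is correct and follows exactly the approach the paper indicates: the paper's entire proof is the sentence ``the same proofs as \ldots\ Lemma~\ref{Lemma:wedge-projective} yield the following'', and you have carried out that verification, correctly isolating the one nontrivial replacement---countability of $\mathrm{Hom}(F,A)$ for $F$ of finite rank---and supplying its proof via the embedding into $\mathrm{Hom}_K(K\otimes F,K\otimes A)$. Two minor remarks: in step~(3) the relevant $\mathrm{lim}^1$ identification is really Roos' Theorem (Theorem~\ref{Lemma:Ext-and-lim1}) together with the inductive bound from step~(2), not Theorem~\ref{Theorem:phantom-projective}(3) as such; and for part~(2) the paper's route is slightly more direct than yours via $\mathcal{E}_\alpha$-projectivity---plain projective length $\le\alpha_n$ immediately gives projective length $\le\alpha$, and this is stable under countable direct sums and summands since $s_\alpha$ is a subfunctor of the identity and $\mathrm{Ext}(\bigoplus_n C_n,A)\cong\prod_n\mathrm{Ext}(C_n,A)$.
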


Let $C$ be a countable flat module. Write $C=\mathrm{co\mathrm{lim}}%
_{n}C_{n} $ for some \emph{finite-rank} pure submodules $C_{n}$ of $C$. We
define as before by recursion for every countable successor ordinal $\alpha $
countable flat modules $Q_{n}^{\alpha }C$ of plain tree* length at most $%
\alpha $ containing $C_{n}$ as a pure submodule. We then let $\mathfrak{R}%
_{\ast }^{\alpha }C$ be the short exact sequence%
\begin{equation*}
0\rightarrow Q^{\alpha }C\rightarrow Q^{\alpha }C\rightarrow C\rightarrow 0
\end{equation*}%
realizing $C$ as $\mathrm{co\mathrm{lim}}_{n}Q_{n}^{\alpha _{n}}C$. The same
proof as Theorem \ref{Theorem:phantom-projective} yields.

\begin{theorem}
\label{Theorem:phantom-projective*}Let $C$ and $A$ be countable flat
modules. Let $\left( C_{n}\right) $ be an increasing sequence of finite-rank
pure submodules of $C$ with union equal to $C$. Define%
\begin{equation*}
\mathrm{Hom}\left( C_{n},A\right) :=\mathrm{Hom}_{1}\left( C_{n},A\right)
\end{equation*}%
and recursively%
\begin{equation*}
\mathrm{Hom}_{\alpha }\left( C_{n},A\right) :=\bigcap_{k\geq n}\mathrm{Ran}(%
\mathrm{Hom}_{\alpha _{k}}\left( C_{k},A\right) \rightarrow \mathrm{Hom}%
\left( C,A\right) )\text{.}
\end{equation*}%
Then we have that $\left( \mathrm{Hom}\left( C_{n},A\right) \right) $ has $(%
\mathrm{Hom}_{\alpha _{n}}\left( C_{n},A\right) )$ as its $\alpha $-th
derived tower.

\begin{enumerate}
\item for $\alpha <\omega _{1}$ and $n<\omega $%
\begin{equation*}
\mathrm{Hom}_{\alpha }\left( C_{n},A\right) =\mathrm{Hom}\left(
Q_{n}^{\alpha }C|C,A\right) :=\mathrm{Ran}\left( \mathrm{Hom}\left(
Q_{n}^{\alpha }C,A\right) \rightarrow \mathrm{Hom}\left( C,A\right) \right) 
\text{;}
\end{equation*}

\item $Q_{n}^{\alpha }\left( C\right) $ has plain projective length at most $%
\alpha $;

\item 
\begin{equation*}
\mathrm{Ph}^{\alpha }\mathrm{Ext}\left( C,A\right) =\bigcap_{n\in \omega }%
\mathrm{\mathrm{Ker}}\left( \mathrm{Ext}\left( C,A\right) \rightarrow 
\mathrm{Ext}(Q_{n}^{\alpha _{n}}C,A)\right) \cong \mathrm{lim}_{n}^{1}%
\mathrm{Hom}_{\alpha _{n}}\left( C,A\right)
\end{equation*}

\item a countable flat module has tree* length at most $\alpha $ if and only
if it has projective length at most $\alpha $;

\item for every countable flat module $B$ of plain projective length $\alpha 
$ and finite-rank submodule $A\subseteq B$, for every homomorphism $\varphi
:B\rightarrow C$ that maps $A$ to $C_{d_{0}}$ there exists $d\geq d_{0}$ and
a homomorphism $\overline{\varphi }:B\rightarrow Q_{d}^{\alpha }C$ that maps 
$A$ to $C_{d}\subseteq Q_{d}^{\alpha }C$ and such $\varphi $ is equal to the
composition of $\overline{\varphi }$ and the canonical homomorphism $%
Q_{n}^{\alpha }C\rightarrow C$;

\item $\mathfrak{R}_{\ast }^{\alpha }C$ represents an element of $\mathrm{Ph}%
^{\alpha }\mathrm{Ext}\left( C,Q^{\alpha }C\right) $ and it is a projective
resolution of $C$ in the exact category $\left( \mathbf{Flat}\left( R\right)
,\mathcal{E}_{\alpha }^{\ast }\right) $;

\item the exact structures $\mathcal{E}_{\alpha }$ and $\mathcal{E}_{\alpha
}^{\ast }$ coincide.
\end{enumerate}
\end{theorem}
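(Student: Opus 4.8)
The plan is to regard items (1)--(6) as a verbatim transcription of the corresponding items of Theorem~\ref{Theorem:phantom-projective}, with ``finite'' systematically replaced by ``finite rank'', and to isolate item~(7) as the one genuinely new assertion. Concretely: throughout the proof of Theorem~\ref{Theorem:phantom-projective} I would replace each finite submodule $C_n$ by a finite-rank pure submodule, each finiteflat building block by a finite-rank flat module, the wedge-sum resolution $P_n^{\alpha}C$ by $Q_n^{\alpha}C$, the class $\mathcal{S}_{\alpha}$ by $\mathcal{S}_{\alpha}^{\ast}$ and $\mathcal{E}_{\alpha}$ by $\mathcal{E}_{\alpha}^{\ast}$; and I would invoke Proposition~\ref{Proposition:tree-presheaf*}, Lemma~\ref{Lemma:wedge*-projective} and the evident finite-rank analogue of Lemma~\ref{Lemma:monic-wedge} in place of their unstarred counterparts. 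The derived-tower combinatorics and the induction on $\alpha$ are unaffected by these substitutions, so items (1)--(6) go through once three points are checked.

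Those points are: (a) every countable flat module $C$ is the colimit of an increasing chain of finite-rank pure submodules --- the pure hull $\langle F\rangle_{\ast}$ of a finite subset $F$ has finite rank by Proposition~\ref{Proposition:characterize-prufer}, and the union of an increasing chain of pure submodules is pure --- so the resolutions $\mathfrak{R}_{\ast}^{\alpha}C$ and the modules $Q_n^{\alpha}C$ make sense; (b) in the base case the role played for finite modules by the countability of $\mathrm{Ext}$ is played here by Lemma~\ref{Lemma:finite-rank}, which gives that $\mathrm{Ext}(C,A)$ has plain Solecki length at most $1$ when $C$ has finite rank; and (c) the countability step inside the finite-rank analogue of Lemma~\ref{Lemma:monic-wedge}(2) --- ``$H_n$ is countable'' --- now needs the observation that $\mathrm{Hom}(F,A)$ is countable whenever $F$ is a finite-rank flat module and $A$ a countable flat module, which holds because $\mathrm{Hom}(F,A)$ embeds (via $A\hookrightarrow K\otimes A$ and extension of scalars) into $\mathrm{Hom}_K(K\otimes F,K\otimes A)\cong (K\otimes A)^{\mathrm{rk}\,F}$, a countable module. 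With (a)--(c) in hand the proof of Theorem~\ref{Theorem:phantom-projective} applies line by line and yields (1)--(6), together with the starred analogue of Corollary~\ref{Corollary:alpha-projective}: a countable flat module is $\mathcal{E}_{\alpha}^{\ast}$-projective if and only if it has tree* length at most $\alpha$, if and only if (by item~(4)) it has projective length at most $\alpha$.

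It then remains to prove (7). I would argue structurally: for any class $\mathfrak{I}$ of objects in a quasi-abelian category, the exact structure $\mathcal{E}_{\mathfrak{I}}$ projectively generated by $\mathfrak{I}$ coincides with the one projectively generated by the class $\mathcal{Q}$ of $\mathcal{E}_{\mathfrak{I}}$-projective objects, since on one hand every object of $\mathfrak{I}$ is $\mathcal{E}_{\mathfrak{I}}$-projective (so passing from $\mathfrak{I}$ to $\mathcal{Q}$ only shrinks the class of sequences, $\mathcal{E}_{\mathcal{Q}}\subseteq\mathcal{E}_{\mathfrak{I}}$), and on the other hand every projective object of an exact category keeps $\mathrm{Hom}$ exact on that structure (so $\mathcal{E}_{\mathfrak{I}}\subseteq\mathcal{E}_{\mathcal{Q}}$). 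Applying this to $\mathcal{E}_{\alpha}$ and to $\mathcal{E}_{\alpha}^{\ast}$, each exact structure is determined by its own class of projective objects; but by Corollary~\ref{Corollary:alpha-projective} and its starred analogue just established, both of these classes equal the class of countable flat modules of projective length at most $\alpha$. Hence $\mathcal{E}_{\alpha}=\mathcal{E}_{\alpha}^{\ast}$. (Equivalently, by items~(3) and~(5) of the present theorem the $\mathcal{E}_{\alpha}^{\ast}$-exact sequences $A\to X\to C$ of countable flat modules are exactly those representing an element of $\mathrm{Ph}^{\alpha}\mathrm{Ext}(C,A)$, and Theorem~\ref{Theorem:phantom-projective}(8) gives the identical description of the $\mathcal{E}_{\alpha}$-exact ones.)

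The main obstacle is entirely in the transcription of (1)--(6): one must verify that finite-rank pure submodules genuinely play the role of the finite submodules throughout the bookkeeping --- in particular that the tower $(\mathrm{Hom}(C_n,A))_{n\in\omega}$ built from them still has $(\mathrm{Hom}_{\alpha_n}(C_n,A))_{n\in\omega}$ as its $\alpha$-th derived tower --- and that the countability hypotheses feeding Lemma~\ref{Lemma:complexity-exact-sequence} survive the substitution, which is exactly what point~(c) secures. Once items (1)--(6) and the starred form of Corollary~\ref{Corollary:alpha-projective} are available, item~(7) is essentially formal.
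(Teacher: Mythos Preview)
Your proposal is correct and follows the same strategy as the paper, which simply says: ``By induction on $\alpha$. The case $\alpha = 1$ is established in Section~\ref{Subsection:projective-length}. The inductive step is the same as in the proof of Theorem~\ref{Theorem:phantom-projective}.'' You are more explicit than the paper in two useful respects: you isolate the technical points (a)--(c) that must be checked to make the transcription go through (especially the countability of $\mathrm{Hom}(F,A)$ for finite-rank $F$, which replaces the trivial countability used in Lemma~\ref{Lemma:monic-wedge}(2)), and you give a self-contained argument for item~(7), which the paper's terse proof leaves implicit.
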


\begin{proof}
By induction on $\alpha $. The case $\alpha =1$ is established in Section %
\ref{Subsection:projective-length}. The inductive step is the same as in the
proof of Theorem \ref{Theorem:phantom-projective}.
\end{proof}

\begin{corollary}
For every $\alpha <\omega _{1}$, the tree length and the tree* length of a
countable flat module coincide, as they are both equal to the projective
length.
\end{corollary}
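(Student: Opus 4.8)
The plan is to obtain the Corollary as an immediate formal consequence of parts (4) of Theorem~\ref{Theorem:phantom-projective} and Theorem~\ref{Theorem:phantom-projective*}. For a countable flat module $C$, let $\mathrm{tl}(C)$, $\mathrm{tl}^{\ast}(C)$, and $\mathrm{pl}(C)$ denote the least element of $\omega_{1}\cup\{\infty\}$ serving as an upper bound for the respective notions of ``tree length at most $\alpha$'', ``tree* length at most $\alpha$'', and ``projective length at most $\alpha$'' (the value $\infty$ occurring precisely when no countable ordinal works). First I would record that Theorem~\ref{Theorem:phantom-projective}(4) states, for every $\alpha<\omega_{1}$, that $C$ has tree length at most $\alpha$ if and only if $C$ has projective length at most $\alpha$; since this biconditional holds for all countable $\alpha$ simultaneously, the two families of upper bounds coincide, so $\mathrm{tl}(C)=\mathrm{pl}(C)$ (both being $\infty$ if and only if neither condition holds for any $\alpha<\omega_{1}$). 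Symmetrically, Theorem~\ref{Theorem:phantom-projective*}(4) gives that $C$ has tree* length at most $\alpha$ if and only if $C$ has projective length at most $\alpha$, whence $\mathrm{tl}^{\ast}(C)=\mathrm{pl}(C)$. Combining the two equalities yields $\mathrm{tl}(C)=\mathrm{tl}^{\ast}(C)=\mathrm{pl}(C)$, which is the assertion.

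It is worth indicating where the actual content of the two cited biconditionals lies, even though the final deduction is pure bookkeeping. In each case the ``easy'' direction (tree length, resp.\ tree* length, at most $\alpha$ $\Rightarrow$ projective length at most $\alpha$) is Lemma~\ref{Lemma:wedge-projective}, resp.\ Lemma~\ref{Lemma:wedge*-projective}, proved by induction on $\alpha$ using the behavior of $\mathrm{Ph}^{\alpha}\mathrm{Ext}$ under wedge sums from Lemma~\ref{Lemma:monic-wedge}. The converse direction uses the canonical phantom projective resolution $\mathfrak{R}^{\alpha}C$ (resp.\ $\mathfrak{R}^{\alpha}_{\ast}C$) of Theorem~\ref{Theorem:phantom-projective}(6) (resp.\ Theorem~\ref{Theorem:phantom-projective*}(6)): if $C$ has projective length at most $\alpha$ then $\mathrm{Ph}^{\alpha}\mathrm{Ext}(C,P^{\alpha}C)=0$, so this resolution splits and $C$ is a direct summand of $P^{\alpha}C$, which has plain tree length at most $\alpha$; likewise with $Q^{\alpha}C$ in place of $P^{\alpha}C$ and finite-rank pure submodules in place of finite submodules. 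Alternatively, one may invoke Theorem~\ref{Theorem:phantom-projective*}(7), which identifies the exact structures $\mathcal{E}_{\alpha}$ and $\mathcal{E}^{\ast}_{\alpha}$, so that $\mathcal{E}_{\alpha}$-projectivity and $\mathcal{E}^{\ast}_{\alpha}$-projectivity agree, and then apply Corollary~\ref{Corollary:alpha-projective} (and the analogous equivalences for $\mathcal{E}^{\ast}_{\alpha}$ furnished by Theorem~\ref{Theorem:phantom-projective*}) to translate this into the equality of ``tree length at most $\alpha$'' and ``tree* length at most $\alpha$''.

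Since the argument amounts to transporting a quantifier ``for every $\alpha<\omega_{1}$'' through equivalences already proved, there is no genuine obstacle here; the prior two theorems carry all the weight. The only mild point requiring uniform treatment is the case of countable flat modules that are not phantom projectives, i.e.\ for which all three lengths equal $\infty$: this is handled automatically, because the biconditionals in the cited parts hold for every countable ordinal $\alpha$, so ``no $\alpha<\omega_{1}$ works'' transfers between the three notions exactly as ``the least such $\alpha$'' does.
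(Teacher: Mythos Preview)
Your proposal is correct and matches the paper's approach: the corollary is stated without proof in the paper, as it is an immediate consequence of parts (4) of Theorem~\ref{Theorem:phantom-projective} and Theorem~\ref{Theorem:phantom-projective*}. Your additional commentary on where the content of those biconditionals lies is accurate but goes beyond what the paper records.
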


\section{Extractable modules\label{Section:higher}}

In this section we continue to assume that $R$ is a countable Pr\"{u}fer
domain, and all modules are $R$-modules. For each countable limit ordinal $%
\alpha $ we let $\left( \alpha _{k}\right) $ be an increasing sequence of 
\emph{successor} ordinals cofinal in $\alpha $. If $\alpha $ is a successor
ordinal, we let $\alpha _{k}$ be equal to $\alpha $.

\subsection{Derivation}

Suppose that $C$ is a coreduced flat module. We define, by recursion on $%
\alpha <\omega _{1}$, functorial \emph{pure coreduced }submodules $\sigma
_{\alpha }C\subseteq C$ and flat quotients $\partial _{\alpha }C=C/\sigma
_{\alpha }C$, as well as a countable upward-directed family $\mathcal{B}%
_{\alpha }\left( C\right) $ of pure submodules of $C$, which we call the $%
\alpha $-constituents of $C$.

Let $\mathcal{B}_{1}\left( C\right) $ be the family of finite-rank pure
submodules of $C$. For $B\in \mathcal{B}_{1}\left( C\right) $, we can write 
\begin{equation*}
B=\Sigma B\oplus \Phi B
\end{equation*}%
by Lemma \ref{Lemma:coreduced-radical}. We let $\left( B_{1}^{k}C\right) $
be a cofinal increasing sequence in $\mathcal{B}_{1}\left( C\right) $.
Define 
\begin{equation*}
\sigma _{1}C:=\mathrm{co\mathrm{lim}}_{B\in \mathcal{B}_{1}\left( C\right)
}\Sigma B=\mathrm{co\mathrm{lim}}_{k}\Sigma B_{1}^{k}C
\end{equation*}%
and%
\begin{equation*}
\partial _{1}C=C/\sigma _{1}C=\mathrm{co\mathrm{lim}}_{B\in \mathcal{B}%
_{1}\left( C\right) }\Phi B=\mathrm{co\mathrm{lim}}_{k}\Phi B_{1}^{k}C\text{.%
}
\end{equation*}%
Define $\mathcal{B}_{\alpha +1}\left( C\right) $ to be the collection of
preimages of finite-rank pure submodules of $\partial _{\alpha }C$ under the
quotient map $C\rightarrow \partial _{\alpha }C$. Define then 
\begin{equation*}
\sigma _{\alpha +1}C:=\mathrm{co\mathrm{lim}}_{B\in \mathcal{B}_{\alpha
+1}\left( C\right) }\Sigma B=\mathrm{co\mathrm{lim}}_{k}\Sigma B_{\alpha
+1}^{k}C
\end{equation*}%
and%
\begin{equation*}
\partial _{\alpha +1}C:=C/\sigma _{\alpha +1}C\cong \mathrm{co\mathrm{lim}}%
_{B\in \mathcal{B}_{\alpha +1}\left( C\right) }\Phi B\cong \mathrm{co\mathrm{%
lim}}_{k}\Phi B_{\alpha +1}^{k}C\text{.}
\end{equation*}%
Notice that $\sigma _{\alpha +1}C$ is the preimage of $\sigma _{1}\left(
\partial _{\alpha }C\right) $ under the quotient map $C\rightarrow \partial
_{\alpha }C$. If $\alpha $ is limit, define $\mathcal{B}_{\alpha }\left(
C\right) $ to be the union of $\mathcal{B}_{\beta }\left( C\right) $ for $%
\beta <\alpha $, and $\sigma _{\alpha }C$ to be the union of $\sigma _{\beta
}C$ for $\beta <\alpha $. We have that $\left( B_{\alpha _{k}}^{k}C\right) $
is a cofinal increasing sequence in $\mathcal{B}_{\alpha }\left( C\right) $.
Notice that 
\begin{equation*}
\sigma _{\alpha }C=\mathrm{co\mathrm{lim}}_{B\in \mathcal{B}_{\alpha }\left(
C\right) }\Sigma B=\mathrm{co\mathrm{lim}}_{k}\Sigma B_{\alpha _{k}}^{k}C%
\text{.}
\end{equation*}%
We also set%
\begin{equation*}
\partial _{\alpha }C:=C/\sigma _{\alpha }C\cong \mathrm{co\mathrm{lim}}%
_{B\in \mathcal{B}_{\alpha }\left( C\right) }\Phi B\cong \mathrm{co\mathrm{%
lim}}_{k}\Phi B_{\alpha _{k}}^{k}C\text{.}
\end{equation*}%
For an arbitrary $\alpha $, define now $F_{\alpha }C$ to be the direct sum
of $B_{\alpha _{k}}^{k}C$ for $k\in \omega $. Consider the short exact
sequence%
\begin{equation*}
0\rightarrow L_{\alpha }C\overset{\eta _{\alpha }^{C}}{\rightarrow }%
F_{\alpha }C\overset{\pi _{\alpha }^{C}}{\rightarrow }C\rightarrow 0
\end{equation*}%
that realizes $C$ as the colimit $\mathrm{co\mathrm{lim}}_{k\in \omega
}B_{\alpha _{k}}^{k}C$, where the homomorphism $\pi _{\alpha }^{C}:F_{\alpha
}C\rightarrow C$ is induced by the inclusion maps $B_{\alpha
_{k}}^{k}C\rightarrow C$ for each $k\in \omega $. Notice that $L_{\alpha }C$
is also isomorphic to a direct sum of copies $B_{\alpha _{k}}^{k}C$ for $%
k<\omega $.

\subsection{Extractable length}

We define by induction on $\alpha <\omega _{1}$ what it means for a
coreduced countable flat module to have (plain)\emph{\ extractable length }%
at most $\alpha $.

\begin{definition}
A coreduced countably-presented module $C$:

\begin{itemize}
\item has extractable length $0$ if and only if it is trivial;

\item has plain extractable length at most $\alpha +1$ if and only if there
exists a short exact sequence%
\begin{equation*}
0\rightarrow B\rightarrow C\rightarrow A
\end{equation*}%
where $B$ is a countably-presented module of extractable length at most $%
\alpha $ and $A$ is a countably-presented module with $A/A_{\mathrm{t}}$
finite-rank;

\item has extractable length at most $\alpha $ if and only if it is a direct
summand of a countable direct sum of modules of plain extractable length at
most $\alpha $.
\end{itemize}

We say that a countably-presented flat module is \emph{extractable }if it
has extractable length $\alpha $ for some $\alpha <\omega _{1}$.
\end{definition}

\begin{proposition}
\label{Proposition:constructible-s}Suppose that $C$ is a extractable flat
module. Then for every $\alpha <\omega _{1}$, $\sigma _{\alpha }C$ and $%
\partial _{\alpha }C$ are extractable.
\end{proposition}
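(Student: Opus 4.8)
The plan is to prove the statement by induction on $\alpha<\omega_{1}$, treating $\sigma_{\alpha}C$ and $\partial_{\alpha}C$ simultaneously, while freely using three closure properties of the class of extractable flat modules — each read off from the definition, at worst with a short side induction on the extractable length: closure under countable direct sums, under direct summands, and under extensions $0\to B\to M\to A\to 0$ in which $B$ is extractable and $A$ is flat of finite rank (this last being exactly the ``$+1$'' clause). I also use that $\sigma_{1}$ and $\partial_{1}$ are functorial and commute with countable direct sums: every finite-rank pure submodule of $\bigoplus_{n}M_{n}$ is contained, up to pure hull, in a finite sub-sum, so $\mathcal{B}_{1}(\bigoplus_{n}M_{n})$ is cofinal among the $\mathcal{B}_{1}$ of the finite sub-sums, and applying a functor to a direct summand returns a direct summand.

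\emph{Base case $\alpha=1$.} For a coreduced extractable flat module $M$, I want $\sigma_{1}M$ and $\partial_{1}M$ extractable. By the remarks above it suffices to treat $M$ of plain extractable length at most $\gamma+1$, so that there is a short exact sequence $0\to B\to M\to A\to 0$ with $B$ extractable of length at most $\gamma$ and $A$ flat of finite rank ($M$, hence $A$, being flat). Since $B$ is pure in $M$, intersecting with $B$ identifies $\sigma_{1}M\cap B$ with $\sigma_{1}B$ (both are the colimit of the $\Sigma$-parts of the finite-rank pure submodules, and a finite-rank pure submodule of $M$ meets $B$ in a module whose pure hull is one of $B$). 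Thus $\sigma_{1}M$ sits in an extension $0\to\sigma_{1}B\to\sigma_{1}M\to A'\to 0$ with $A'\subseteq A$ of finite rank, and $\sigma_{1}B$ is extractable by the inner induction, so $\sigma_{1}M$ is extractable. Dually, $\partial_{1}M=M/\sigma_{1}M$ sits in $0\to\partial_{1}B\to\partial_{1}M\to A''\to 0$ with $\partial_{1}B$ extractable (inner induction) and $A''$ a finite-rank quotient of $A$, so $\partial_{1}M$ is extractable.

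\emph{Successor step.} Recall that $\sigma_{\alpha+1}C$ is the preimage of $\sigma_{1}(\partial_{\alpha}C)$ under the quotient map $q\colon C\to\partial_{\alpha}C$, so $\partial_{\alpha+1}C\cong\partial_{1}(\partial_{\alpha}C)$ and there is a short exact sequence $0\to\sigma_{\alpha}C\to\sigma_{\alpha+1}C\to\sigma_{1}(\partial_{\alpha}C)\to 0$. By the induction hypothesis $\partial_{\alpha}C$ is extractable (and flat, and coreduced by the definition of the derivation), so the base case applied to $\partial_{\alpha}C$ gives that $\sigma_{1}(\partial_{\alpha}C)$ and $\partial_{\alpha+1}C$ are extractable; since $\sigma_{\alpha}C$ is extractable by the induction hypothesis and $\sigma_{1}(\partial_{\alpha}C)$ is, by the base-case construction, an iterated extension of modules of the type handled at stage $\alpha$ by finite-rank flat modules, the displayed sequence exhibits $\sigma_{\alpha+1}C$ as extractable.

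\emph{Limit step, and the main obstacle.} For $\alpha$ limit one has $\sigma_{\alpha}C=\bigcup_{\beta<\alpha}\sigma_{\beta}C=\mathrm{colim}_{k}\,\Sigma B^{k}_{\alpha_{k}}C$ and $\partial_{\alpha}C=\mathrm{colim}_{k}\,\partial_{\alpha_{k}}C=\mathrm{colim}_{k}\,\Phi B^{k}_{\alpha_{k}}C$, and here the difficulty is genuine: a colimit of extractable modules need not be extractable, so the direct-sum/extension description must be matched against these colimit presentations. The tool is the telescope sequence $0\to L_{\alpha}C\to F_{\alpha}C\xrightarrow{\pi^{C}_{\alpha}}C\to 0$ with $F_{\alpha}C=\bigoplus_{k}B^{k}_{\alpha_{k}}C$ and $L_{\alpha}C$ again a countable direct sum of copies of the $B^{k}_{\alpha_{k}}C$, all extractable (each being an iterated extension of finite-rank flats). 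Splitting $F_{\alpha}C=\bigoplus_{k}\Sigma B^{k}_{\alpha_{k}}C\oplus\bigoplus_{k}\Phi B^{k}_{\alpha_{k}}C$ and taking the $\pi^{C}_{\alpha}$-preimage $G$ of $\sigma_{\alpha}C$ identifies $G$ with $\bigoplus_{k}\Sigma B^{k}_{\alpha_{k}}C\oplus\ker(\bigoplus_{k}\Phi B^{k}_{\alpha_{k}}C\to\partial_{\alpha}C)$; one checks that the induced sequence $0\to L_{\alpha}C\to G\to\sigma_{\alpha}C\to 0$ lies in $\mathcal{E}_{\alpha}$ (it is a pullback of a resolution realizing $C$ as a colimit of modules of tree length at most $\alpha$) and that $G$ is $\mathcal{E}_{\alpha}$-projective, so that, $\sigma_{\alpha}C$ having projective length at most $\alpha$ and hence being $\mathcal{E}_{\alpha}$-projective, the sequence splits and $\sigma_{\alpha}C$ becomes a direct summand of $F_{\alpha}C$, hence extractable; the quotient $\partial_{\alpha}C\cong F_{\alpha}C/G$ is then fed back into the base-case/successor machinery applied to the already-handled $\partial_{\alpha_{k}}C$. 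The hard part throughout is precisely this reconciliation of colimits with direct sums — concretely, verifying that $\sigma_{\alpha}C$ and the auxiliary quotients have projective length at most $\alpha$ and that the telescope sequences are $\mathcal{E}_{\alpha}$-exact, which is where the results on tree length and projective resolutions from Section~\ref{Section:phantom-resolutions} are brought to bear.
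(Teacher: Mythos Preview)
You are inducting on the wrong variable. The paper inducts on the \emph{extractable length of $C$}, handling all $\alpha<\omega_1$ at once, which makes the proof almost immediate: in the extension case $0\to D\to C\to E\to 0$ (with $D$ of smaller extractable length and $E/E_{\mathrm t}$ finite-rank), functoriality of $\sigma_\alpha$ gives $0\to\sigma_\alpha D\to\sigma_\alpha C\to E_\alpha\to 0$ where $E_\alpha$ (as a subquotient of $E$) has finite-rank torsion-free part; then $\sigma_\alpha C$ is extractable by a single application of the ``$+1$'' clause to the inductively extractable $\sigma_\alpha D$. Similarly for $\partial_\alpha$. Direct sums and summands are handled by functoriality, exactly as you note in your preamble.

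Your induction on $\alpha$ runs into real trouble. In the successor step, the extension $0\to\sigma_\alpha C\to\sigma_{\alpha+1}C\to\sigma_1(\partial_\alpha C)\to 0$ has a quotient that is generally \emph{not} finite-rank, so none of your three closure properties applies; your attempted fix (``iterated extension \dots\ by finite-rank flat modules'') just pushes the problem to an increasing union of extractables, which is the very obstacle you flag at the limit step. For the limit step itself you appeal to $\mathcal E_\alpha$-projectivity of $\sigma_\alpha C$ and $\mathcal E_\alpha$-exactness of the telescope sequence $0\to L_\alpha C\to F_\alpha C\to C\to 0$, but these facts are precisely the content of Theorem~\ref{Theorem:structure-1}(7)--(9), whose inductive proof in the paper uses the present proposition (for instance to know that $\sigma_{\alpha-1}C$ is extractable when verifying that the $\alpha$-constituents have plain extractable length at most $\alpha$). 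As stated, your argument is circular.
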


\begin{proof}
We prove that the conclusion holds by induction on the extractable length of 
$C$. If 
\begin{equation*}
C=\bigoplus_{n}C_{n}
\end{equation*}%
and the conclusion holds for every $n\in \omega $, then it clearly holds for 
$C$. The same applies if $C$ is a direct summand of a module $D$ for which
the conclusion applies. Suppose now that there is an extension%
\begin{equation*}
0\rightarrow D\rightarrow C\rightarrow E\rightarrow 0
\end{equation*}%
where $E/E_{\mathrm{t}}$ has finite-rank and for every $\alpha <\omega _{1}$
we have that $\sigma _{\alpha }D$ and $\partial _{\alpha }D$ are
extractable. Then for every $\alpha <\omega _{1}$ we have short exact
sequences%
\begin{equation*}
0\rightarrow \sigma _{\alpha }D\rightarrow \sigma _{\alpha }C\rightarrow
E_{\alpha }\rightarrow 0
\end{equation*}%
and%
\begin{equation*}
0\rightarrow \partial _{\alpha }D\rightarrow \partial _{\alpha }D\rightarrow
E_{\alpha }^{\prime }\rightarrow 0
\end{equation*}%
where $E_{\alpha }/E_{\alpha ,\mathrm{t}}$ and $E_{\alpha }^{\prime
}/E_{\alpha ,\mathrm{t}}^{\prime }$ have finite rank. Thus, $\sigma _{\alpha
}C$ and $\partial _{\alpha }D$ are extractable.
\end{proof}

\begin{proposition}
Suppose that $C$ is a countably-presented module of (plain) extractable
length at most $\alpha $, and $E\subseteq C$ has finite rank. Then $C/E$ has
(plain) extractable length at most $\alpha $.
\end{proposition}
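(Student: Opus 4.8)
The plan is to argue by induction on $\alpha$, treating in parallel the two assertions: $(\mathrm a)$ if $C$ has \emph{plain} extractable length at most $\alpha$ and $E\subseteq C$ has finite rank, then $C/E$ has plain extractable length at most $\alpha$; and $(\mathrm b)$ the same with ``extractable length'' throughout. A preliminary observation used repeatedly is that $C/E$ is again coreduced: if $C/E$ had a nonzero finite projective direct summand $P$, then the composite surjection $C\twoheadrightarrow C/E\twoheadrightarrow P$ onto the projective module $P$ would split, making $P$ a direct summand of $C$, contradicting that $C$ is coreduced. Since $R$ is countable, every countable module is countably presented and $\mathbf{Mod}(R)$ is abelian, so $C/E$, $C\cap B$, and images of $E$ are automatically countably presented; I will not comment on this again.

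The core of the argument is the successor case of $(\mathrm a)$, say $\alpha=\beta+1$. Fix a short exact sequence $0\to B\to C\overset{\pi}{\to}A\to 0$ witnessing plain extractable length $\le\beta+1$, with $B$ of extractable length $\le\beta$ and $A/A_{\mathrm t}$ of finite rank; replacing $A$ by $C/B$ (still with $(C/B)/(C/B)_{\mathrm t}$ of finite rank, since it embeds in $A/A_{\mathrm t}$ and $K\otimes-$ is exact) we may take $\pi$ surjective. Put $E_{B}:=E\cap B$, of finite rank. Then
\begin{equation*}
0\longrightarrow B/E_{B}\longrightarrow C/E\longrightarrow A/\pi(E)\longrightarrow 0
\end{equation*}
is exact: the first map is the canonical isomorphism $B/E_{B}\cong(B+E)/E$ followed by the inclusion $(B+E)/E\hookrightarrow C/E$, the second is induced by $\pi$, and one checks directly that $\ker(C/E\to A/\pi(E))=(B+E)/E$. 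Here $B/E_{B}$ has extractable length $\le\beta$ by $(\mathrm b)$ at stage $\beta$; $\pi(E)$ is a quotient of $E$, hence of finite rank, and $(A/\pi(E))/(A/\pi(E))_{\mathrm t}$ is a torsion-free quotient of $A$, hence a quotient of $A/A_{\mathrm t}$, hence of finite rank; and $C/E$ is coreduced. So this sequence witnesses that $C/E$ has plain extractable length $\le\beta+1$. The base case $\alpha=0$ is trivial, and the limit case of $(\mathrm a)$ reduces to $(\mathrm b)$ at smaller stages.

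For $(\mathrm b)$ at stage $\alpha$, write $C$ as a direct summand of $D:=\bigoplus_{n}C_{n}$ with $C_{n}$ of plain extractable length $\le\alpha_{n}$; since $D/E\cong(C/E)\oplus C'$ for a complement $C'$ of $C$ in $D$, and extractable length $\le\alpha$ is closed under direct summands, it suffices to treat $D/E$. As $E$ has finite rank, $K\otimes E$ is a finite-dimensional subspace of $\bigoplus_{n}(K\otimes C_{n})$, hence lies in $\bigoplus_{n<N}(K\otimes C_{n})$ for some $N$; picking finitely many elements of $E$ whose images span $K\otimes E$ and letting $E_{0}$ be the submodule they generate, we get $E_{0}\subseteq\bigoplus_{n<N}C_{n}$ of finite rank with $E/E_{0}$ torsion. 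Now $D/E_{0}\cong\bigl(\bigoplus_{n<N}C_{n}\bigr)/E_{0}\oplus\bigoplus_{n\ge N}C_{n}$; the finite partial sum $\bigoplus_{n<N}C_{n}$ has plain extractable length $\le\max_{n<N}\alpha_{n}\le\alpha$ (finite direct sums of the defining data, with monotonicity in the index), so by $(\mathrm a)$ at stage $\alpha$ — established above — $\bigl(\bigoplus_{n<N}C_{n}\bigr)/E_{0}$ has extractable length $\le\alpha$, whence $D/E_{0}$ does. Finally $D/E=(D/E_{0})\big/(E/E_{0})$ is the quotient of a module of extractable length $\le\alpha$ by a countably generated \emph{torsion} submodule.

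Thus the one non-formal ingredient, and the main obstacle, is the statement that the class of coreduced countably presented modules of (plain) extractable length $\le\alpha$ is stable under quotients by countably generated torsion submodules. Its plain successor case runs exactly as the diagram chase above, now with $\pi(E)$ torsion (so $(A/\pi(E))/(A/\pi(E))_{\mathrm t}$ is a quotient of $A/A_{\mathrm t}$) and $E\cap B$ a countably generated torsion submodule of $B$, and it uses only the torsion-quotient statement at the previous stage. Its direct-sum case uses that a countably generated torsion submodule $T$ of $\bigoplus_{n}P_{n}$ is an increasing union $T=\bigcup_{m}T_{m}$ of finitely generated ones with $T_{m}\subseteq\bigoplus_{n<M_{m}}P_{n}$, together with the decomposition of the torsion submodule of $\bigoplus_{n}P_{n}$ as $\bigoplus_{n}(P_{n})_{\mathrm t}$, to present $\bigl(\bigoplus_{n}P_{n}\bigr)/T$ as an extension of the torsion-free module $\bigoplus_{n}\bigl(P_{n}/(P_{n})_{\mathrm t}\bigr)$ by the torsion module $\bigl(\bigoplus_n(P_n)_{\mathrm t}\bigr)/T$ and to reduce each piece to the already-settled finite and successor cases. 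Folding this statement into the same simultaneous induction on $\alpha$ closes the argument; controlling the extractable length across the union $T=\bigcup_m T_m$ (equivalently, across the colimit $\mathrm{colim}_{m}\bigl(\bigoplus_{n}P_{n}\bigr)/T_{m}$) is the step that requires genuine care, and the rest is routine diagram chasing.
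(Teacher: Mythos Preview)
Your successor case for the plain assertion is correct and actually a bit tidier than the paper's: you run a single exact sequence $0\to B/E_B\to C/E\to A/\pi(E)\to 0$, whereas the paper first treats $E\subseteq B$ and then reduces to the case $E\cap B=0$. You are also more careful than the paper about passing from a direct summand of $\bigoplus_n C_n$ to the direct sum itself, and about checking that $C/E$ remains coreduced.

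The gap is in your direct-sum case, and it comes from not using that the modules involved are \emph{flat} (torsion-free). Since each $C_n$ is torsion-free, the map $D=\bigoplus_n C_n\hookrightarrow\bigoplus_n(K\otimes C_n)$ is injective; so once you know $K\otimes E\subseteq\bigoplus_{n<N}(K\otimes C_n)$, it follows that $E\subseteq\bigoplus_{n<N}C_n$ outright---for $e=(e_n)\in E$, the component $e_n$ with $n\ge N$ maps to zero in $K\otimes C_n$, hence is torsion, hence is zero. You may therefore take $E_0=E$: the quotient $E/E_0$ vanishes and the entire torsion-quotient detour disappears. This is precisely what the paper does when it writes ``without loss of generality we can assume that $E\subseteq C_0$'' (after grouping finitely many summands into one). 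The difficulty you flag---controlling extractable length across the colimit $\mathrm{colim}_m\bigl(\bigoplus_n P_n\bigr)/T_m$ for an infinite torsion $T$---is a real obstacle in the generality you have set up, and you do not resolve it; but it simply does not arise in the flat setting of the paper, so your argument is complete as soon as you make the observation above.
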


\begin{proof}
By induction on $\alpha $. This is obvious for length $0$. Suppose that this
holds for length $\alpha $. If $C$ has plain extractable length at most $%
\alpha +1$, then we have an extension%
\begin{equation*}
0\rightarrow B\rightarrow C\rightarrow A\rightarrow 0
\end{equation*}%
where $B$ has extractable length at most $\alpha $ and $A/A_{\mathrm{t}}$
has finite rank. If $E\subseteq B$ then we have a short exact sequence%
\begin{equation*}
0\rightarrow B/E\rightarrow C/E\rightarrow A\rightarrow 0
\end{equation*}%
and the conclusion follows from the inductive hypothesis. After replacing $E$
with $E/\left( B\cap E\right) $ and $C$ with $C/\left( B\cap E\right) $ we
can assume that $E\cap B=0$. In this case, we have a short exact sequence%
\begin{equation*}
0\rightarrow B\rightarrow C/E\rightarrow A^{\prime }\rightarrow 0
\end{equation*}%
for some module $A^{\prime }$ such that $A^{\prime }/A_{\mathrm{t}}^{\prime
} $ has finite rank. The conclusion again follows.

Suppose now that the conclusion holds for plain extractable length less than 
$\alpha $. If $C$ has extractable length $\alpha $, then $C$ is isomorphic
to the direct sum of a sequence $\left( C_{n}\right) $ where for every $n\in
\omega $, $C_{n}$ has plain extractable length less than $\alpha $. Without
loss of generality we can assume that $E\subseteq C_{0}$, whence the
conclusion follows from the inductive hypothesis.
\end{proof}

\begin{proposition}
\label{Proposition:projective-less}Suppose that $C$ is a extractable
countable flat module. Then the (plain) projective length of $C$ is less
than or equal to the (plain) extractable length of $C$.
\end{proposition}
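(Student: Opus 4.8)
The plan is to prove this by induction on the (plain) extractable length $\alpha$ of $C$, closely following the structure of the definition of extractable length. For $\alpha = 0$ the module $C$ is trivial and there is nothing to prove. The main work is the successor step, where we exploit the characterization of plain projective length via the phantom subfunctors $\mathrm{Ph}^{\alpha}\mathrm{Ext}$ established in Section \ref{Section:phantom-resolutions} together with the behavior of Solecki length under short exact sequences (Lemma \ref{Lemma:complexity-exact-sequence}, invoked in the proof of Lemma \ref{Lemma:monic-wedge}).

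For the successor step, suppose $C$ has plain extractable length at most $\alpha+1$, so there is a short exact sequence $0 \to B \to C \to A \to 0$ with $B$ of extractable length at most $\alpha$ and $A$ with $A/A_{\mathrm t}$ of finite rank. First, by the inductive hypothesis $B$ has projective length at most $\alpha$, so $\mathrm{Ph}^{\alpha}\mathrm{Ext}(B, -) = 0$. Fix a countable flat module $M$. I would apply $\mathrm{Hom}(-, M)$ to the short exact sequence to obtain, via the long exact sequence for $\mathrm{Ext}$ (and the vanishing of $\mathrm{Ext}^{n}$ for $n \geq 2$ in the hereditary category $\mathbf{Flat}(R)$), an exact sequence
\begin{equation*}
\mathrm{Ext}(A, M) \to \mathrm{Ext}(C, M) \to \mathrm{Ext}(B, M) \to 0
\end{equation*}
in $\mathrm{LH}(\boldsymbol{\Pi}(\mathbf{Mod}(R)))$. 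Now $A$ has a finitely-presented torsion submodule $A_{\mathrm t}$ with $A/A_{\mathrm t}$ flat of finite rank; using Corollary \ref{Corollary:Ext-sum} (or Lemma \ref{Lemma:zero-closure}, noting the map $\mathrm{Ext}(A/A_{\mathrm t}, M) \to \mathrm{Ext}(A, M)$ identifies $\mathrm{PExt}$) together with Lemma \ref{Lemma:finite-rank}, one sees that $\{0\}$ is $\boldsymbol{\Sigma}^{0}_{2}$ in $\mathrm{Ext}(A, M)$, i.e.\ $\mathrm{Ext}(A, M)$ has plain Solecki length at most $1$. The kernel $H$ of $\mathrm{Ext}(C,M) \to \mathrm{Ext}(B,M)$ is a quotient of $\mathrm{Ext}(A,M)$, hence also of plain Solecki length at most $1$, and the quotient $\mathrm{Ext}(C,M)/H \cong \mathrm{Ext}(B,M)$ has plain Solecki length at most $\alpha$. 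By the version of Lemma \ref{Lemma:complexity-exact-sequence} used in the proof of Lemma \ref{Lemma:monic-wedge}(2)--(3), which bounds the Solecki length of an extension in terms of those of kernel and quotient, $\mathrm{Ext}(C, M)$ has plain Solecki length at most $\alpha + 1$. Since $M$ was arbitrary, $C$ has plain projective length at most $\alpha + 1$.

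The limit step and the direct-sum/direct-summand step are more routine. If $C = \bigoplus_{n} C_{n}$ with each $C_{n}$ of plain extractable length at most $\alpha_{n}$, then by the inductive hypothesis each $C_{n}$ has projective length at most $\alpha_{n}$; using Proposition \ref{Proposition:tree-presheaf} (or Corollary \ref{Corollary:alpha-projective}, equating projective length with tree length) together with the fact that tree length is closed under countable direct sums and direct summands by construction, one concludes $C$ has projective length at most $\alpha$. The step for direct summands follows similarly since $\mathcal{P}_{\alpha}$ is closed under direct summands. I expect the main obstacle to be pinning down the precise form of Lemma \ref{Lemma:complexity-exact-sequence} needed in the successor step --- specifically, that if in a short exact sequence $0 \to K \to G \to Q \to 0$ of phantom Polish modules the submodule $K$ has plain Solecki length at most $1$ and $Q$ has (plain) Solecki length at most $\alpha$, then $G$ has (plain) Solecki length at most $\alpha + 1$ --- and checking that the "plain" bookkeeping at the $n=1$ level behaves correctly (this is exactly the content that makes Lemma \ref{Lemma:monic-wedge}(3) work, so it should transfer directly).
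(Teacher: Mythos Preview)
Your proposal is correct and follows essentially the same approach as the paper's proof: induction on $\alpha$, with the successor step using the long exact sequence in $\mathrm{Ext}$ together with the fact that $\mathrm{Ext}(A,M)$ has plain Solecki length at most $1$ when $A/A_{\mathrm t}$ is finite-rank, and the direct-sum step handled by functoriality. The only minor differences are that the paper argues the successor step inline (observing $s_{\alpha-1}\mathrm{Ext}(C,M)\subseteq\mathrm{Ran}(\mathrm{Ext}(A,M)\to\mathrm{Ext}(C,M))$ and invoking Lemma~\ref{Lemma:compact-phantom2}) rather than citing Lemma~\ref{Lemma:complexity-exact-sequence}(4), and the paper handles the direct-sum step directly via $\mathrm{Ext}(\bigoplus C_i,M)\cong\prod\mathrm{Ext}(C_i,M)$ instead of going through tree length; also note that $\mathrm{Ext}(B,M)$ only has Solecki length (not plain Solecki length) at most $\alpha$, but this is exactly what Lemma~\ref{Lemma:complexity-exact-sequence}(4) requires, so your argument still goes through.
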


\begin{proof}
Fix a countable flat module $A$. We prove by induction on $\alpha <\omega
_{1}$ that if $C$ has (plain) extractable length at most $\alpha $, then it
has (plain) $A$-projective length at most $\alpha $. Suppose that $C$ has
plain extractable length at most $\alpha $. Then we have an exact sequence%
\begin{equation*}
0\rightarrow D\rightarrow C\rightarrow E\rightarrow 0
\end{equation*}%
where $E/E_{\mathrm{t}}$ has finite rank and $D$ has extractable length at
most $\alpha -1$. Then for every countable flat module $A$, we have an exact
sequence%
\begin{equation*}
\mathrm{Ext}\left( E,A\right) \rightarrow \mathrm{Ext}\left( C,A\right)
\rightarrow \mathrm{Ext}\left( D,A\right) \rightarrow 0\text{.}
\end{equation*}%
By the inductive hypothesis, we have that $s_{\alpha -1}$\textrm{Ext}$\left(
D,A\right) =0$, whence 
\begin{equation*}
s_{\alpha -1}\mathrm{Ext}\left( C,A\right) \subseteq \mathrm{Ran}(\mathrm{Ext%
}\left( E,A\right) \rightarrow \mathrm{Ext}\left( C,A\right) ).
\end{equation*}%
Since $E/E_{\mathrm{t}}$ has finite rank, $\left\{ 0\right\} \in \boldsymbol{%
\Sigma }_{2}^{0}\left( \mathrm{Ext}\left( E,A\right) \right) $ by Theorem %
\ref{Theorem:structure-rank-1}, and hence 
\begin{equation*}
\left\{ 0\right\} \in \boldsymbol{\Sigma }_{2}^{0}\left( \mathrm{Ran}\left( 
\mathrm{Ext}\left( E,A\right) \rightarrow \mathrm{Ext}\left( C,A\right)
\right) \right)
\end{equation*}%
by Lemma \ref{Lemma:compact-phantom2}. Thus, 
\begin{equation*}
\left\{ 0\right\} \in \mathbf{\Sigma }_{2}^{0}\left( s_{\alpha -1}\mathrm{Ext%
}\left( C,A\right) \right) \text{.}
\end{equation*}%
This shows that $C$ is has plain projective length at most $\alpha $.

Suppose now that $C$ has extractable length at most $\alpha $. Without loss
of generality, we can assume that $C$ is a direct sum of countable coreduced
modules $C_{i}$ for $i\in \omega $ of plain extractable length at most $%
\alpha $.\ Then for every $i\in \omega $ we have that $C_{i}$ has projective
length at most $\alpha $. Therefore the same conclusion holds for $C$.
\end{proof}

\subsection{Extensions by projective modules}

We now characterize the Solecki submodules of $\mathrm{Ext}\left( C,A\right) 
$ in terms of the derivatives of $C$ when $A$ is a projective module.

\begin{theorem}
\label{Theorem:derivation}Let $C$ be a countable flat coreduced module, and $%
A$ be a countable \emph{projective} module. Fix $\alpha <\omega _{1}$.

\begin{enumerate}
\item the homomorphism%
\begin{equation*}
\mathrm{Ext}\left( \partial _{\alpha }C,A\right) \rightarrow \mathrm{Ext}%
\left( C,A\right)
\end{equation*}%
is injective;

\item if $\alpha $ is limit,%
\begin{equation*}
\mathrm{Ext}\left( \partial _{\alpha }C,A\right) =\bigcap_{\beta <\alpha }%
\mathrm{Ext}\left( \partial _{\beta }C,A\right) \text{;}
\end{equation*}

\item for $\alpha $ arbitrary,%
\begin{equation*}
s_{\alpha }\mathrm{Ext}\left( C,A\right) =\mathrm{Ext}\left( \partial
_{\alpha }C,A\right) \text{;}
\end{equation*}

\item $\mathrm{Ext}\left( C,A\right) $ has (plain) Solecki length $\alpha $
if and only if $\partial _{\alpha }C=0$ (and $\partial _{\alpha -1}C$ has
finite rank).
\end{enumerate}
\end{theorem}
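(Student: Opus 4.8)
The plan is to establish all four statements simultaneously by transfinite induction on $\alpha$, proving (1) uniformly up front, letting (3) carry the inductive load (its limit step being precisely (2)), and extracting (4) at the end. The single recurring tool is the observation that if $M$ is a coreduced countable flat module and $A$ is projective then $\mathrm{Hom}(M,A)=0$: indeed $A$ is a direct summand of $R^{(\omega)}$, a coreduced flat module has no nonzero homomorphism to $R$, and hence none to $R^{(\omega)}$ (each coordinate vanishes). Since $\Sigma N$ is coreduced for every $N$ by Lemma \ref{Lemma:coreduced-radical} and every $\sigma_\alpha C$ is coreduced, this makes all the ``$\Sigma$-contributions'' collapse.

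For (1): applying $\mathrm{Hom}(-,A)$ to the pure exact sequence $0\to\sigma_\alpha C\to C\to\partial_\alpha C\to 0$ gives, in $\mathrm{LH}(\boldsymbol{\Pi}(\mathbf{Mod}(R)))$, a six-term exact sequence in which the term $\mathrm{Hom}(\sigma_\alpha C,A)$ vanishes by the observation above; hence the connecting map is zero, $\iota_\alpha\colon\mathrm{Ext}(\partial_\alpha C,A)\to\mathrm{Ext}(C,A)$ is injective, and its image is exactly $\ker\bigl(\mathrm{Ext}(C,A)\to\mathrm{Ext}(\sigma_\alpha C,A)\bigr)$, which is what (3) must produce. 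For the base case $\alpha=1$ of (3): by Theorem \ref{Theorem:PExt-Solecki} and Corollary \ref{Corollary:Ext-and-lim1-2}, $s_1\mathrm{Ext}(C,A)=\mathrm{Ext}_{\mathcal E_1}(C,A)\cong\mathrm{lim}^1_k\mathrm{Hom}(B_1^kC,A)$ over a cofinal increasing chain $(B_1^kC)$ of finite-rank pure submodules. Writing $B_1^kC=\Sigma B_1^kC\oplus\Phi B_1^kC$ and using $\mathrm{Hom}(\Sigma B_1^kC,A)=0$, this tower is isomorphic to $(\mathrm{Hom}(\Phi B_1^kC,A))$, compatibly with the bonding maps because $\Sigma$ is a radical. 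On the other side, $\partial_1C=C/\sigma_1C$ is the increasing union of the images $D_k$ of the $B_1^kC$, each $D_k$ finite projective, so Roos' Theorem \ref{Lemma:Ext-and-lim1} gives $\mathrm{Ext}(\partial_1C,A)\cong\mathrm{lim}^1_k\mathrm{Hom}(D_k,A)$, and the towers $(\mathrm{Hom}(D_k,A))$ and $(\mathrm{Hom}(B_1^kC,A))$ are pro-isomorphic because an ascending chain of pure submodules of a fixed finite-rank module stabilizes; all identifications run along $\iota_1$, giving $s_1\mathrm{Ext}(C,A)=\mathrm{Ext}(\partial_1C,A)$.

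The successor step assumes the statement for all $\beta\le\alpha$ and uses that $\sigma_{\alpha+1}C$ is the preimage of $\sigma_1(\partial_\alpha C)$, so $\partial_{\alpha+1}C=\partial_1(\partial_\alpha C)$; splitting $\partial_\alpha C=\Sigma\partial_\alpha C\oplus\Phi\partial_\alpha C$ and using $\mathrm{Ext}(\mathrm{projective},A)=0$ together with the fact that the radical $\sigma_1$ annihilates projectives, $\partial_{\alpha+1}C=\partial_1(\Sigma\partial_\alpha C)\oplus\Phi\partial_\alpha C$. Then $s_{\alpha+1}\mathrm{Ext}(C,A)=s_1\bigl(s_\alpha\mathrm{Ext}(C,A)\bigr)=s_1\mathrm{Ext}(\partial_\alpha C,A)=s_1\mathrm{Ext}(\Sigma\partial_\alpha C,A)$, which by the base case applied to the coreduced module $\Sigma\partial_\alpha C$ equals $\mathrm{Ext}(\partial_1(\Sigma\partial_\alpha C),A)=\mathrm{Ext}(\partial_{\alpha+1}C,A)$. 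The limit step is (2): for $\lambda$ limit pick $\beta_n\uparrow\lambda$ with $\sigma_\lambda C=\bigcup_n\sigma_{\beta_n}C$, so $s_\lambda\mathrm{Ext}(C,A)=\bigcap_n s_{\beta_n}\mathrm{Ext}(C,A)=\bigcap_n\mathrm{Ext}(\partial_{\beta_n}C,A)$ by induction, and a Roos argument for $\partial_\lambda C=\mathrm{colim}_n\partial_{\beta_n}C$ — again with the $\Sigma$-terms killed by coreducedness — identifies this intersection with $\mathrm{Ext}(\partial_\lambda C,A)$. Finally (4) follows: by (3), $\mathrm{Ext}(C,A)$ has Solecki length $\le\alpha$ iff $\mathrm{Ext}(\partial_\alpha C,A)=0$ iff (reducing to $A=R$ via Corollary \ref{Corollary:Ext-projective} and then invoking Proposition \ref{Proposition:characterize-projectives}) $\partial_\alpha C$ is projective, which, since $\partial_\alpha C$ is coreduced, forces $\partial_\alpha C=0$; the plain refinement replaces ``length $\le\alpha$'' by ``$s_{\alpha-1}\mathrm{Ext}(C,A)=\mathrm{Ext}(\partial_{\alpha-1}C,A)$ has plain Solecki length $\le 1$'', which by Theorem \ref{Theorem:structure-rank-1} holds exactly when $\partial_{\alpha-1}C$ has finite rank.

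The hard part is the base case $\alpha=1$, specifically the passage from the tower $(\mathrm{Hom}(B_1^kC,A))$ computing $s_1\mathrm{Ext}(C,A)$ to the tower $(\mathrm{Hom}(\Phi B_1^kC,A))$ (equivalently $(\mathrm{Hom}(D_k,A))$) computing $\mathrm{Ext}(\partial_1C,A)$: one must show these agree not merely termwise but as pro-objects, so that their $\mathrm{lim}^1$'s coincide, and this is where the finiteness of rank and the radical property of $\Sigma$ are genuinely needed. A secondary point requiring care is that $\partial_\alpha C$ is coreduced — needed only for (4) — which follows by induction from the recursive construction but must be verified; and one should check throughout that each identification of a submodule of $\mathrm{Ext}(C,A)$ is the one induced by the natural quotient $C\to\partial_\alpha C$.
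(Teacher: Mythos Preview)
Your approach is correct and essentially the same as the paper's: both run the induction on $\alpha$ with the successor step computing $s_1\mathrm{Ext}(\partial_{\alpha-1}C,A)$ as $\mathrm{lim}^1_k\mathrm{Hom}(B_1^k(\partial_{\alpha-1}C),A)$ and then killing the $\Sigma$-summand via $\mathrm{Hom}(\text{coreduced},\text{projective})=0$. The paper does not isolate a base case but simply runs this computation uniformly at every successor (the case $\alpha=1$ being $\partial_0 C=C$); your separated base case plus ``apply it to $\Sigma\partial_\alpha C$'' is the same argument.

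One simplification: since $C$ is coreduced, every $\partial_\alpha C$ is coreduced (a nonzero homomorphism $\partial_\alpha C\to R$ would pull back to $C$), so your splitting $\partial_\alpha C=\Sigma\partial_\alpha C\oplus\Phi\partial_\alpha C$ in the successor step is trivial and can be dropped. For the limit step, the paper applies Roos to the increasing union $\sigma_\lambda C=\bigcup_n\sigma_{\alpha_n}C$ rather than to $\partial_\lambda C$; this matches the stated form of Theorem~\ref{Lemma:Ext-and-lim1} more directly, though your version works too since $\partial_\lambda C$ is indeed the colimit of the surjective system $(\partial_{\beta_n}C)$.
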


\begin{proof}
(1) Since $C$ is coreduced, we have that $\mathrm{Hom}\left( \sigma _{\alpha
}C,A\right) =0$.\ Thus, the conclusion follows from the long exact sequence
relating $\mathrm{Hom}$ and $\mathrm{Ext}$;

(2) If $\alpha $ is limit, we have a short exact sequence%
\begin{equation*}
0\rightarrow \mathrm{lim}_{k}^{1}\mathrm{Hom}\left( \sigma _{\alpha
_{k}}C,A\right) \rightarrow \mathrm{Ext}\left( \sigma _{\alpha }C,A\right)
\rightarrow \mathrm{lim}_{k}\mathrm{Ext}\left( \sigma _{\alpha
_{k}}C,A\right) \rightarrow 0
\end{equation*}%
As $\mathrm{lim}_{k}^{1}\mathrm{Hom}\left( \sigma _{\alpha _{k}}C,A\right)
=0 $, this yields an isomorphism%
\begin{equation*}
\mathrm{Ext}\left( \sigma _{\alpha }C,A\right) \cong \mathrm{lim}_{k}\mathrm{%
Ext}\left( \sigma _{\alpha _{k}}C,A\right) \text{.}
\end{equation*}%
Therefore, we have that%
\begin{eqnarray*}
\bigcap_{k\in \omega }\mathrm{Ext}\left( \partial _{\alpha _{k}}C,A\right)
&=&\mathrm{\mathrm{Ker}}\left( \mathrm{Ext}\left( C,A\right) \rightarrow 
\mathrm{lim}_{k}\mathrm{Ext}\left( \sigma _{\alpha _{k}}C,A\right) \right) \\
&=&\mathrm{\mathrm{Ker}}\left( \mathrm{Ext}\left( C,A\right) \rightarrow 
\mathrm{Ext}\left( \sigma _{\alpha }C,A\right) \right) \\
&=&\mathrm{Ran}\left( \mathrm{Ext}\left( \partial _{\alpha }C,A\right)
\rightarrow \mathrm{Ext}\left( C,A\right) \right) \text{.}
\end{eqnarray*}%
(3) We prove that the conclusion holds by induction on $\alpha $. The case
of limits follows from (2). Suppose that $\alpha $ is a successor ordinal.
Then by the inductive hypothesis, we have that%
\begin{equation*}
s_{\alpha -1}\mathrm{Ext}\left( C,A\right) =\mathrm{Ext}\left( \partial
_{\alpha -1}C,A\right) \text{.}
\end{equation*}%
Thus, we have that%
\begin{eqnarray*}
s_{1}\mathrm{Ext}\left( \partial _{\alpha -1}C,A\right) &=&\bigcap_{k\in
\omega }\mathrm{\mathrm{Ker}}\left( \mathrm{Ext}\left( \partial _{\alpha
-1}C,A\right) \rightarrow \mathrm{Ext}\left( B_{1}^{k}\left( \partial
_{\alpha -1}C\right) ,A\right) \right) \\
&\cong &\mathrm{lim}_{k}^{1}\mathrm{Hom}\left( B_{1}^{k}\left( \partial
_{\alpha -1}C\right) ,A\right) \\
&\cong &\mathrm{lim}_{k}^{1}\mathrm{Hom}\left( \Phi \left( B_{\alpha
-1}^{k}C\right) ,A\right) \\
&\cong &\mathrm{Ext}\left( \mathrm{co\mathrm{lim}}_{k}\Phi \left( B_{\alpha
-1}^{k}C\right) ,A\right) \\
&\cong &\mathrm{Ext}\left( \partial _{\alpha }C,A\right) \text{.}
\end{eqnarray*}

(4) This is a consequence of (3) and Theorem \ref%
{Theorem:structure-rank-1-tris}.
\end{proof}

\begin{corollary}
Suppose that $C$ is a countable flat module. If $\alpha <\omega _{1}$, then $%
\sigma _{\alpha }C$ is the largest coreduced pure submodule of $C$ of $R$%
-projective length at most $\alpha $.
\end{corollary}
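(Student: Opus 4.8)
The plan is to deduce this corollary directly from Theorem~\ref{Theorem:derivation} together with the basic properties of the construction of $\sigma_\alpha C$ established in the preceding subsections. The statement has two halves: first, that $\sigma_\alpha C$ is a coreduced pure submodule of $C$ of $R$-projective length at most $\alpha$; second, that it is the \emph{largest} such submodule. Throughout I would work with the coreduced flat module $\Sigma C$ in place of $C$ when necessary, since by Lemma~\ref{Lemma:coreduced-radical} we have $C = \Sigma C \oplus \Phi C$ with $\Phi C$ projective, and the functorial submodules $\sigma_\alpha$ are defined on coreduced flat modules; one checks that $\sigma_\alpha C = \sigma_\alpha(\Sigma C)$ so there is no loss of generality in assuming $C$ coreduced from the start.

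First I would verify that $\sigma_\alpha C$ is pure and coreduced. Purity follows by transfinite induction on $\alpha$: $\sigma_1 C$ is a colimit of the pure submodules $\Sigma B$ over finite-rank pure submodules $B$ (each $\Sigma B$ is pure in $B$, being a direct summand, and $B$ is pure in $C$), successor steps take preimages of $\sigma_1(\partial_\alpha C)$ under the pure quotient map $C \to \partial_\alpha C$, and limit steps take unions of pure submodules, which remain pure. Coreducedness of $\sigma_\alpha C$ follows because it is a colimit of modules of the form $\Sigma B$, each of which has no projective direct summands by Lemma~\ref{Lemma:coreduced-radical}(3), and this property is inherited by the colimit (a projective direct summand of the colimit would be finitely generated, hence land in one of the $\Sigma B$'s). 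For the bound on $R$-projective length: by definition the $R$-projective length of $\sigma_\alpha C$ is the Solecki length of $\mathrm{Ext}(\sigma_\alpha C, R)$, and by Theorem~\ref{Theorem:derivation}(4) applied to the coreduced module $\sigma_\alpha C$ with $A = R$, this length is at most $\alpha$ precisely when $\partial_\alpha(\sigma_\alpha C) = 0$. So the key computation is $\sigma_\alpha(\sigma_\alpha C) = \sigma_\alpha C$, equivalently $\partial_\alpha(\sigma_\alpha C) = 0$, which I would prove by induction: the finite-rank pure submodules of $\sigma_\alpha C$ are cofinal among those contributing to $\sigma_\alpha C$ itself, so the $\alpha$-constituent construction on $\sigma_\alpha C$ stabilizes to all of $\sigma_\alpha C$.

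For maximality, suppose $N \subseteq C$ is a coreduced pure submodule with $R$-projective length at most $\alpha$; I must show $N \subseteq \sigma_\alpha C$. Equivalently, using the pure quotient $C \to \partial_\alpha C$, it suffices to show that the image of $N$ in $\partial_\alpha C$ is zero. Since $N$ is pure in $C$ and $\sigma_\alpha C$ is pure in $C$, the image $\bar N$ of $N$ in $\partial_\alpha C$ is a pure submodule, and it is coreduced as a quotient of a coreduced module by $N \cap \sigma_\alpha C$ (one checks no projective summand appears, as such a summand would lift along a finite-rank portion). Now $\partial_\alpha C$ is coreduced with $\sigma_\alpha(\partial_\alpha C) = 0$, so every nonzero finite-rank pure submodule $B$ of $\partial_\alpha C$ has $\Phi B \neq 0$; intersecting with $\bar N$ and using purity of $\bar N$, if $\bar N \neq 0$ it would contain a nonzero element whose pure hull has nontrivial image under the derivation, contradicting that $\partial_\alpha \bar N = 0$ is forced by $R$-projective length $\le \alpha$ via Theorem~\ref{Theorem:derivation}(4). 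More cleanly: by Theorem~\ref{Theorem:derivation}(3) applied to $N$, $s_\alpha \mathrm{Ext}(N, R) = \mathrm{Ext}(\partial_\alpha N, R)$, which vanishes by hypothesis, forcing $\partial_\alpha N = 0$ (using Proposition~\ref{Proposition:characterize-projectives}, since a coreduced module with trivial $\mathrm{Ext}(-,R)$ after derivation must itself be trivial when coreduced), and then functoriality of the derivation applied to the inclusion $N \hookrightarrow C$ shows $N = \sigma_\alpha N \subseteq \sigma_\alpha C$.

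The main obstacle I anticipate is the functoriality/compatibility bookkeeping in the maximality argument: precisely, showing that the inclusion $N \hookrightarrow C$ of a pure coreduced submodule carries $\sigma_\alpha N$ into $\sigma_\alpha C$ and, conversely, that $N \cap \sigma_\alpha C = \sigma_\alpha N$ (so that $N/\sigma_\alpha N$ embeds purely in $\partial_\alpha C$). This requires knowing that the $\alpha$-constituents of $N$ are, up to cofinality, exactly the finite-rank pure submodules of $C$ contained in $N$, intersected appropriately with the $\sigma_\beta C$ filtration — which should follow by transfinite induction from the inductive definition of $\mathcal{B}_\alpha$ and the fact that purity is transitive, but the limit-ordinal step needs care since $\sigma_\lambda$ is a union and one must confirm the cofinal sequences match up. Once that naturality is in place, combining it with Theorem~\ref{Theorem:derivation}(3)(4) and the coreducedness closure properties yields the result without further difficulty.
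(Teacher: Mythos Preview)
Your proposal is correct and follows the approach the paper intends; the corollary is stated without proof in the paper, and your argument---deducing from Theorem~\ref{Theorem:derivation}(3),(4) that $\sigma_\alpha C$ has $R$-projective length at most $\alpha$ and that any coreduced pure $N\subseteq C$ with this property satisfies $\partial_\alpha N=0$, then using functoriality of $\sigma_\alpha$ on pure inclusions---is the natural unpacking. One small sharpening: in your maximality step, you do not need to know that $\partial_\alpha N$ is coreduced in advance; rather, $\mathrm{Ext}(\partial_\alpha N,R)=0$ gives that $\partial_\alpha N$ is projective by Proposition~\ref{Proposition:characterize-projectives}, so the sequence $0\to\sigma_\alpha N\to N\to\partial_\alpha N\to 0$ splits, and then coreducedness of $N$ forces the projective summand $\partial_\alpha N$ to vanish.
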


The same proof as Theorem \ref{Theorem:derivation} gives the following more
general result.

\begin{theorem}
\label{Theorem:derivation-A}Let $C,A$ are countable flat modules, with $C$
coreduced. Suppose that 
\begin{equation*}
\mathrm{Hom}\left( \sigma _{\beta }C,A\right) =0\text{ and }\mathrm{Hom}%
\left( \Sigma \left( B_{1}^{k}\left( \partial _{\beta }C\right) \right)
,A\right) =0
\end{equation*}%
for every $\beta <\omega _{1}$ and for all but finitely many $k\in \omega $.
Then for every $\alpha <\omega _{1}$, we have:

\begin{enumerate}
\item the homomorphism%
\begin{equation*}
\mathrm{Ext}\left( \partial _{\alpha }C,A\right) \rightarrow \mathrm{Ext}%
\left( C,A\right)
\end{equation*}%
is injective;

\item if $\alpha $ is limit%
\begin{equation*}
\mathrm{Ext}\left( \partial _{\alpha }C,A\right) =\bigcap_{\beta <\alpha }%
\mathrm{Ext}\left( \partial _{\beta }C,A\right) \text{;}
\end{equation*}

\item for arbitrary $\alpha $,%
\begin{equation*}
s_{\alpha }\mathrm{Ext}\left( C,A\right) =\mathrm{Ext}\left( \partial
_{\alpha }C,A\right) \text{;}
\end{equation*}

\item $C$ has (plain) $A$-projective length at most $\alpha $ if and only if 
$\partial _{\alpha }C=0$ (and $\mathrm{Ext}\left( \partial _{\alpha
-1}C,A\right) \rightarrow \mathrm{Ext}\left( B_{1}^{k}\left( \partial
_{\alpha -1}C\right) ,A\right) $ is an isomorphism for some $k\in \omega $).
\end{enumerate}
\end{theorem}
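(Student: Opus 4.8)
The plan is to reproduce the proof of Theorem~\ref{Theorem:derivation} line by line, observing that the hypothesis there that $A$ be projective was used \emph{only} to guarantee the vanishing of $\mathrm{Hom}$-groups out of the coreduced modules $\sigma_\beta C$ and out of the coreduced parts $\Sigma B$ of the finite-rank building blocks $B = B_1^{k}(\partial_\beta C)$; these vanishings are now hypotheses. I work through the four parts in order.

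For part (1), I apply $\mathrm{Hom}(-,A)$ to the short exact sequence $0\to \sigma_\alpha C\to C\to \partial_\alpha C\to 0$ and read off the long exact sequence
\[
0\to \mathrm{Hom}(\partial_\alpha C,A)\to \mathrm{Hom}(C,A)\to \mathrm{Hom}(\sigma_\alpha C,A)\to \mathrm{Ext}(\partial_\alpha C,A)\to \mathrm{Ext}(C,A).
\]
Taking $\beta=\alpha$ in the hypothesis gives $\mathrm{Hom}(\sigma_\alpha C,A)=0$, so $\mathrm{Ext}(\partial_\alpha C,A)\to\mathrm{Ext}(C,A)$ is injective. For part (2) with $\alpha$ limit, I use $\sigma_\alpha C=\mathrm{colim}_k\sigma_{\alpha_k}C$ and Roos' Theorem~\ref{Lemma:Ext-and-lim1} (equivalently the six-term $\mathrm{lim}$--$\mathrm{lim}^1$ sequence) to obtain an exact sequence $0\to\mathrm{lim}^1_k\mathrm{Hom}(\sigma_{\alpha_k}C,A)\to\mathrm{Ext}(\sigma_\alpha C,A)\to\mathrm{lim}_k\mathrm{Ext}(\sigma_{\alpha_k}C,A)\to 0$; since each $\mathrm{Hom}(\sigma_{\alpha_k}C,A)=0$ by the hypothesis (with $\beta=\alpha_k$), the left term vanishes and $\mathrm{Ext}(\sigma_\alpha C,A)\cong\mathrm{lim}_k\mathrm{Ext}(\sigma_{\alpha_k}C,A)$. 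Chasing $\ker(\mathrm{Ext}(C,A)\to\mathrm{Ext}(\sigma_\alpha C,A))$ through these sequences and using part (1) identifies it with $\bigcap_{\beta<\alpha}\mathrm{Ext}(\partial_\beta C,A)$, which is (2).

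Part (3) is proved by induction on $\alpha$, the limit step being (2). For the successor step, the inductive hypothesis gives $s_{\alpha-1}\mathrm{Ext}(C,A)=\mathrm{Ext}(\partial_{\alpha-1}C,A)$, so it remains to show $s_1\mathrm{Ext}(\partial_{\alpha-1}C,A)\cong\mathrm{Ext}(\partial_\alpha C,A)$. By Theorem~\ref{Theorem:PExt-Solecki} and Corollary~\ref{Corollary:Ext-and-lim1-2}, the left side is $\mathrm{lim}^1_k\mathrm{Hom}(B_1^{k}(\partial_{\alpha-1}C),A)$ for a cofinal increasing sequence of finite-rank pure submodules $B_1^{k}(\partial_{\alpha-1}C)$. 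Decomposing $B_1^{k}(\partial_{\alpha-1}C)=\Sigma B_1^{k}(\partial_{\alpha-1}C)\oplus\Phi B_1^{k}(\partial_{\alpha-1}C)$ by Lemma~\ref{Lemma:coreduced-radical} and invoking the hypothesis (with $\beta=\alpha-1$) that $\mathrm{Hom}(\Sigma B_1^{k}(\partial_{\alpha-1}C),A)=0$ for all but finitely many $k$, I may replace each $\mathrm{Hom}(B_1^{k},A)$ by $\mathrm{Hom}(\Phi B_1^{k},A)$ without changing the $\mathrm{lim}^1$, which depends only on a tail of the tower. By the recursive definition of the derivation, $\partial_\alpha C$ is the colimit of the \emph{projective} modules $\Phi B_{\alpha-1}^{k}C$ arising this way; since $\mathrm{Ext}(\Phi B_{\alpha-1}^{k}C,A)=0$, Roos' Theorem~\ref{Lemma:Ext-and-lim1} yields $\mathrm{Ext}(\partial_\alpha C,A)\cong\mathrm{lim}^1_k\mathrm{Hom}(\Phi B_{\alpha-1}^{k}C,A)$, completing the induction. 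Part (4) then follows from (3), (1), and the definition of (plain) Solecki length, together with the characterization of modules of plain Solecki length at most one (Lemma~\ref{Lemma:monomorphic-tower}, Theorem~\ref{Theorem:structure-rank-1}), exactly as in Theorem~\ref{Theorem:derivation}(4).

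The argument introduces no new idea beyond that of Theorem~\ref{Theorem:derivation}; the only real work is bookkeeping. The step I expect to require the most care is the successor case of part (3): one must check that the finite-rank building-block hypothesis is applied to the \emph{correct} family $\{B_1^{k}(\partial_{\alpha-1}C)\}_k$ at each level and that the cofinal identification of $\partial_\alpha C$ with a colimit of the $\Phi$-parts is compatible with the tower maps, so that the chain $s_1\mathrm{Ext}(\partial_{\alpha-1}C,A)\cong\mathrm{lim}^1_k\mathrm{Hom}(B_1^{k}(\partial_{\alpha-1}C),A)\cong\mathrm{lim}^1_k\mathrm{Hom}(\Phi B_{\alpha-1}^{k}C,A)\cong\mathrm{Ext}(\partial_\alpha C,A)$ is genuinely natural. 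Secondarily, one should confirm that the hypotheses — which quantify over all $\beta<\omega_1$ — are exactly what the transfinite recursion consumes: part (1) uses $\beta=\alpha$, part (2) uses $\beta=\alpha_k$, and the successor step of part (3) uses $\beta=\alpha-1$, so across all ordinals every $\beta<\omega_1$ is needed.
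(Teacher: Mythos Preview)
Your proposal is correct and follows essentially the same approach as the paper's own proof: both arguments reproduce the proof of Theorem~\ref{Theorem:derivation} verbatim, noting that the projectivity of $A$ there was only used to obtain the vanishing of $\mathrm{Hom}(\sigma_\beta C,A)$ and of $\mathrm{Hom}(\Sigma B_1^k(\partial_\beta C),A)$, which are now hypotheses. Your bookkeeping of which $\beta$ is consumed at each step and your justification that dropping finitely many terms of the tower does not affect $\mathrm{lim}^1$ are exactly the points the paper leaves implicit.
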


\begin{proof}
(1) This follows from the hypothesis that $\mathrm{Hom}\left( \sigma
_{\alpha }C,A\right) =0$ and the long exact sequence relating $\mathrm{Hom}$
and $\mathrm{Ext}$;

(2) If $\alpha $ is limit, we have a short exact sequence%
\begin{equation*}
0\rightarrow \mathrm{lim}_{k}^{1}\mathrm{Hom}\left( \sigma _{\alpha
_{k}}C,A\right) \rightarrow \mathrm{Ext}\left( \sigma _{\alpha }C,A\right)
\rightarrow \mathrm{lim}_{k}\mathrm{Ext}\left( \sigma _{\alpha
_{k}}C,A\right) \rightarrow 0\text{.}
\end{equation*}%
As $\mathrm{lim}_{k}^{1}\mathrm{Hom}\left( \sigma _{\alpha _{k}}C,A\right)
=0 $, this yields an isomorphism%
\begin{equation*}
\mathrm{Ext}\left( \sigma _{\alpha }C,A\right) \cong \mathrm{lim}_{k}\mathrm{%
Ext}\left( \sigma _{\alpha _{k}}C,A\right) \text{.}
\end{equation*}%
Therefore, we have that%
\begin{eqnarray*}
\bigcap_{k\in \omega }\mathrm{Ext}\left( \partial _{\alpha _{k}}C,A\right)
&=&\mathrm{\mathrm{Ker}}\left( \mathrm{Ext}\left( C,A\right) \rightarrow 
\mathrm{lim}_{k}\mathrm{Ext}\left( \sigma _{\alpha _{k}}C,A\right) \right) \\
&=&\mathrm{\mathrm{Ker}}\left( \mathrm{Ext}\left( C,A\right) \rightarrow 
\mathrm{Ext}\left( \sigma _{\alpha }C,A\right) \right) \\
&=&\mathrm{Ran}\left( \mathrm{Ext}\left( \partial _{\alpha }C,A\right)
\rightarrow \mathrm{Ext}\left( C,A\right) \right) \text{.}
\end{eqnarray*}

(3) We prove that the conclusion holds by induction on $\alpha $. The case
of limits follows from (2). Suppose that $\alpha $ is a successor ordinal.
Then by the inductive hypothesis, we have that%
\begin{equation*}
s_{\alpha -1}\mathrm{Ext}\left( C,A\right) =\mathrm{Ext}\left( \partial
_{\alpha -1}C,A\right) \text{.}
\end{equation*}%
Thus, using that%
\begin{equation*}
\mathrm{Hom}\left( \Sigma \left( B_{1}^{k}\left( \partial _{\alpha
-1}C\right) \right) ,A\right) =0
\end{equation*}%
for all but finitely many $k\in \omega $, we have%
\begin{eqnarray*}
s_{1}\mathrm{Ext}\left( \partial _{\alpha -1}C,A\right) &=&\bigcap_{k\in
\omega }\mathrm{\mathrm{Ker}}\left( \mathrm{Ext}\left( \partial _{\alpha
-1}C,A\right) \rightarrow \mathrm{Ext}\left( B_{1}^{k}\left( \partial
_{\alpha -1}C\right) ,A\right) \right) \\
&\cong &\mathrm{lim}_{k}^{1}\mathrm{Hom}\left( B_{1}^{k}\left( \partial
_{\alpha -1}C\right) ,A\right) \\
&\cong &\mathrm{lim}_{k}^{1}\mathrm{Hom}\left( \Phi \left( B_{\alpha
-1}^{k}C\right) ,A\right) \\
&\cong &\mathrm{Ext}\left( \mathrm{co\mathrm{lim}}_{k}\Phi \left( B_{\alpha
-1}^{k}C\right) ,A\right) \\
&\cong &\mathrm{Ext}\left( \partial _{\alpha }C,A\right) \text{.}
\end{eqnarray*}

(4) This is a consequence of (3) and Lemma \ref{Lemma:monomorphic-tower}.
\end{proof}

\subsection{Structure}

We collect in the following theorem a number of results we will establish by
induction on countable ordinals. As in the inductive step we will need the
inductive hypotheses to include all these statements, we need to run a
single inductive proof of all these results simultaneously.

\begin{theorem}
\label{Theorem:structure-1}For every countable ordinal $\alpha $, and
coreduced countable extractable flat module $C$:

\begin{enumerate}
\item if $C$ has (plain) extractable length at most $\alpha $, then $C$ has
(plain) projective length at most $\alpha $;

\item if $\alpha $ is a successor, then $C$ has plain extractable length at
most $\alpha $ if and only if $C$ has plain $R$-projective length at most $%
\alpha $;

\item if $\alpha $ is a successor, then each $\alpha $-constituent of $C$
has plain extractable length at most $\alpha $;

\item if $E$ is a pure coreduced submodule of $C$ of plain extractable
length at most $\alpha $, then there exists an $\alpha $-constituent $B$ of $%
C$ such that $E\subseteq \Sigma B$;

\item if $\left( E_{k}\right) $ is an increasing sequence of coreduced
submodules of $C$ of plain extractable length at most $\alpha $ containing $%
\sigma _{\alpha }C$ with union equal to $C$, then for every $\alpha $%
-constituent $B$ of $C$ there exists $\ell \in \omega $ such that $\Sigma
B\subseteq E_{\ell }$;

\item for every countable flat module $A$,%
\begin{equation*}
s_{\alpha }\mathrm{Ext}\left( C,A\right) =\bigcap_{B}\mathrm{\mathrm{Ker}}%
\left( \mathrm{Ext}\left( C,A\right) \rightarrow \mathrm{Ext}\left(
B,A\right) \right) =\bigcap_{E}\mathrm{\mathrm{Ker}}\left( \mathrm{Ext}%
\left( C,A\right) \rightarrow \mathrm{Ext}\left( E,A\right) \right)
\end{equation*}%
where $B$ ranges among the $\alpha $-constituents of $C$ and $E$ ranges
among the pure coreduced submodules of $C$ of plain extractable length at
most $\alpha $;

\item we have that%
\begin{equation*}
0\rightarrow L_{\alpha }C\rightarrow F_{\alpha }C\rightarrow C\rightarrow 0
\end{equation*}%
is a phantom exact sequence of order $\alpha $, where $L_{\alpha }C$ and $%
F_{\alpha }C$ have extractable length at most $\alpha $;

\item $C$ has (plain) extractable length at most $\alpha $ if and only if it
has (plain) projective length at most $\alpha $;

\item $C$ has (plain) extractable length at most $\alpha $ if and only if it
has (plain) $R$-projective length at most $\alpha $;

\item for every countable flat module $A$, the short exact sequence 
\begin{equation*}
0\rightarrow L_{\alpha }C\rightarrow F_{\alpha }C\rightarrow C\rightarrow 0
\end{equation*}
induces an isomorphism%
\begin{equation*}
s_{\alpha }\mathrm{Ext}\left( C,A\right) \cong \frac{\mathrm{Hom}\left(
L_{\alpha }C,A\right) }{\mathrm{Hom}\left( F_{\alpha }C|L_{\alpha
}C,A\right) }
\end{equation*}%
where $\mathrm{Hom}\left( F_{\alpha }C|L_{\alpha }C,A\right) $ is the
Polishable submodule of $\mathrm{Hom}\left( L_{\alpha }C,A\right) $
comprising the homomorphisms $L_{\alpha }C\rightarrow A$ that extend to a
homomorphism $F_{\alpha }C\rightarrow A$.
\end{enumerate}
\end{theorem}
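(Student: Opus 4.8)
The plan is to run a single induction on $\alpha<\omega_1$, proving all ten statements simultaneously, since the inductive step for any one of them will rely on the others at smaller (and sometimes the same) ordinals. The case $\alpha=0$ is trivial in each clause: a coreduced module of extractable length $0$ is trivial, and for trivial $C$ all the $\mathrm{Ext}$-groups vanish and the short exact sequence of (7),(9) is the zero sequence. So I would fix $\alpha>0$ and assume all ten statements hold for every $\beta<\alpha$, and, where needed, that the ``earlier'' clauses of the list hold at $\alpha$ before proving ``later'' ones. Concretely I would order the work within the inductive step as follows. First establish (1) at $\alpha$: if $\alpha=\gamma+1$ and $C$ has plain extractable length $\le\alpha$, use the defining short exact sequence $0\to D\to C\to E\to 0$ with $D$ extractable of length $\le\gamma$ and $E/E_{\mathrm t}$ finite-rank, and argue exactly as in Proposition \ref{Proposition:projective-less} — the inductive hypothesis gives $s_\gamma\mathrm{Ext}(D,A)=0$, and Theorem \ref{Theorem:structure-rank-1} together with Lemma \ref{Lemma:compact-phantom2} pushes the ``$\{0\}$ is $\boldsymbol\Sigma_2^0$'' information across the quotient; the non-plain and limit cases follow by taking direct sums and intersections. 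This also immediately yields one half of (8): extractable length $\le\alpha$ implies projective length $\le\alpha$, and $R$-projective length $\le\alpha$ (half of (9)).

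Next I would handle (3), (4), (5): the structural combinatorics of the $\alpha$-constituents. By the recursive definition of $\sigma_\alpha,\partial_\alpha$ and $\mathcal B_\alpha(C)$, an $\alpha$-constituent $B$ is the preimage under $C\to\partial_{\alpha-1}C$ of a finite-rank pure submodule of $\partial_{\alpha-1}C$, so there is a short exact sequence $0\to\sigma_{\alpha-1}C\to B\to(\text{finite-rank})\to 0$ with $\sigma_{\alpha-1}C$ extractable of length $\le\alpha-1$ by Proposition \ref{Proposition:constructible-s} and the inductive hypothesis; that is exactly plain extractable length $\le\alpha$, giving (3). For (4): if $E\subseteq C$ is pure coreduced of plain extractable length $\le\alpha$, decompose $E=\Sigma E\oplus\Phi E$ (Lemma \ref{Lemma:coreduced-radical}) — but $E$ coreduced forces $\Phi E=0$, so $E=\Sigma E$; then push $E$ forward to $\partial_{\alpha-1}C$, where its image has finite rank (using the inductive description of $\partial_{\alpha-1}$ and that finite-rank modules are absorbed), so $E$ sits inside the preimage of a finite-rank pure submodule of $\partial_{\alpha-1}C$, i.e. inside $\Sigma B$ for a constituent $B$. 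Statement (5) is a cofinality/compactness argument: each $\Sigma B$ is generated by $\sigma_{\alpha-1}C$ together with finitely many elements lifting a finite-rank submodule of $\partial_{\alpha-1}C$, and since the $E_k$ are cofinal and contain $\sigma_\alpha C\supseteq\sigma_{\alpha-1}C$, finitely many generators land in some $E_\ell$ — here I would invoke the eventual-constancy/Mittag–Leffler flavour already used in Lemma \ref{Lemma:monic-wedge}(1) and the proof of Theorem \ref{Theorem:phantom-projective}(5).

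With (3)–(5) in hand, (6) follows the template of Theorem \ref{Theorem:derivation}(3) and Theorem \ref{Theorem:derivation-A}(3): writing $C\cong\mathrm{colim}_k B^k_{\alpha_k}C$ over the constituents, Roos' Theorem \ref{Lemma:Ext-and-lim1} identifies $\bigcap_B\mathrm{Ker}(\mathrm{Ext}(C,A)\to\mathrm{Ext}(B,A))$ with $\mathrm{lim}^1_k\mathrm{Hom}(B^k_{\alpha_k}C,A)$, and the inductive hypothesis (6) at $\alpha-1$ plus Proposition \ref{Proposition:lim1-Solecki1}/Theorem \ref{Theorem:Solecki-lim1} identifies the latter with $s_\alpha\mathrm{Ext}(C,A)$; the second equality (ranging over pure coreduced submodules $E$ of plain extractable length $\le\alpha$) then comes from (4),(5), which say the constituents are cofinal among such $E$. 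Clause (6) at $\alpha$ makes (7),(9),(10) immediate: the sequence $0\to L_\alpha C\to F_\alpha C\to C\to 0$ realizes $C$ as $\mathrm{colim}_k B^k_{\alpha_k}C$, hence by (6) and Theorem \ref{Theorem:phantom-projective}(3)/(8) it is $\mathcal E_\alpha$-exact, i.e. phantom of order $\alpha$; $F_\alpha C$ and $L_\alpha C$ are direct sums of constituents, hence extractable of length $\le\alpha$ by (3); and the associated long exact $\mathrm{Hom}$–$\mathrm{Ext}$ sequence, together with $\mathrm{Hom}(F_\alpha C,A)\to\mathrm{Hom}(C,A)$ having image the ``coherently extendable'' homomorphisms, yields the isomorphism $s_\alpha\mathrm{Ext}(C,A)\cong\mathrm{Hom}(L_\alpha C,A)/\mathrm{Hom}(F_\alpha C|L_\alpha C,A)$ of (10) exactly as in Theorem \ref{Theorem:phantom-projective}. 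Finally, the reverse implications in (2),(8),(9) — projective (or $R$-projective) length $\le\alpha$ implies extractable length $\le\alpha$ — use (7): if $C$ has projective length $\le\alpha$ then $\mathrm{Ph}^\alpha\mathrm{Ext}(C,L_\alpha C)=0$, so the phantom sequence of order $\alpha$ splits and $C$ is a direct summand of $F_\alpha C$, which is extractable of length $\le\alpha$; the plain refinement uses Theorem \ref{Theorem:derivation}(4) (or \ref{Theorem:derivation-A}(4)) to control $\partial_{\alpha-1}C$.

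The main obstacle I expect is clause (5) together with the ``coherent extendability'' bookkeeping feeding into (10): one must show the constituents form a cofinal system among pure coreduced submodules of plain extractable length $\le\alpha$ in a way compatible with taking $\mathrm{lim}^1$, i.e. that the tower $\bigl(\mathrm{Hom}(B^k_{\alpha_k}C,A)\bigr)_k$ has the right derived tower and that passing to the subtower $\mathrm{Hom}(F_\alpha C|L_\alpha C,A)$ does not change $\mathrm{lim}^1$ — this is the analogue of Lemma \ref{Lemma:monic-wedge}(1), but now over a transfinitely-built colimit rather than a single wedge sum, so the combinatorics of the index sets $I_\alpha^{\mathrm{plain}}$ from Section \ref{Subsection:indices} and the eventual-stabilization arguments must be threaded carefully through the limit-ordinal case.
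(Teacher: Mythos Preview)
Your inductive scaffolding and the ordering of clauses within each step largely match the paper, and the arguments you sketch for (1)--(5) and (7)--(10) are essentially those the paper gives. The genuine gap is in clause (6), which is the technical heart of the induction.

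You propose to obtain (6) at $\alpha$ by combining Roos' Theorem with Theorem~\ref{Theorem:Solecki-lim1} and the inductive hypothesis (6) at $\alpha-1$. This does not work as stated. First, Theorem~\ref{Theorem:Solecki-lim1} characterizes Solecki submodules of $\mathrm{lim}^1$ only for towers of \emph{countable} modules, but the $\alpha$-constituents $B^k_{\alpha_k}C$ contain $\sigma_{\alpha-1}C$ and are typically of infinite rank, so $\mathrm{Hom}(B^k_{\alpha_k}C,A)$ is merely pro-countable. Second, and more seriously, the hard direction of (6)---that the intersection
\[
G:=\bigcap_{k}\mathrm{Ker}\bigl(\mathrm{Ext}(C,A)\to\mathrm{Ext}(B^k_{\alpha_k}C,A)\bigr)
\]
is \emph{contained in} $s_1\bigl(s_{\alpha-1}\mathrm{Ext}(C,A)\bigr)$---cannot be read off from Roos plus general machinery; the templates you cite (Theorem~\ref{Theorem:derivation}(3), Theorem~\ref{Theorem:derivation-A}(3)) rely on vanishing of certain $\mathrm{Hom}$-groups that are simply unavailable for arbitrary $A$.

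The paper instead feeds the inductive hypothesis (10) at $\alpha-1$ (not (6) at $\alpha-1$) into the proof of (6) at $\alpha$: this yields the concrete presentation
\[
s_{\alpha-1}\mathrm{Ext}(C,A)\;\cong\;\frac{\mathrm{Hom}(L_{\alpha-1}C,A)}{\mathrm{Hom}(F_{\alpha-1}C\mid L_{\alpha-1}C,A)},
\]
and then verifies the criterion of Lemma~\ref{Lemma:1st-Solecki} by a hands-on open-neighbourhood argument, exactly parallel to the proof of Theorem~\ref{Theorem:PExt-Solecki}. The key manoeuvre uses the splitting $F_{\alpha-1}C=F_{\alpha-1}C_{\Sigma}\oplus F_{\alpha-1}C_{\Phi}$ coming from $B^k_{(\alpha-1)_k}C=\Sigma B^k_{(\alpha-1)_k}C\oplus\Phi B^k_{(\alpha-1)_k}C$: since $F_{\alpha-1}C_{\Phi}$ is a direct sum of \emph{finite projective} modules, a homomorphism defined on some $\hat B^k_{\alpha-1}C$ can be extended to all of $F_{\alpha-1}C$ by lifting across a finite flat (hence projective) quotient. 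Your obstacle paragraph senses that there is delicate bookkeeping in this vicinity but locates it in (5) and (10) rather than in (6), and does not identify Lemma~\ref{Lemma:1st-Solecki} together with the $\Sigma/\Phi$-decomposition as the mechanism that closes the gap.
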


\begin{proof}
We can assume without loss of generality that $C$ is coreduced. We prove
that the conclusion holds by induction on $\alpha <\omega _{1}$. Suppose
that the conclusion holds for $\alpha $. We set $S_{\alpha }^{k}C=B_{\alpha
}^{k}C/\sigma _{\alpha -1}C=B_{1}^{k}\partial _{\alpha -1}C$ for a successor
ordinal $\alpha <\omega _{1}$ and $k<\omega $.

(1) This is the content of Proposition \ref{Proposition:projective-less}.

(2) Suppose that $\alpha $ is a successor and $\left\{ 0\right\} \in 
\boldsymbol{\Sigma }_{2}^{0}\left( s_{\alpha -1}\mathrm{Ext}\left(
C,R\right) \right) $. By Theorem \ref{Theorem:derivation}, we have that%
\begin{equation*}
s_{\alpha -1}\mathrm{Ext}\left( C,R\right) =\mathrm{Ext}\left( \partial
_{\alpha -1}C,R\right) \text{.}
\end{equation*}%
Therefore, by Theorem \ref{Theorem:structure-rank-1} we have that $\partial
_{\alpha -1}C$ is finite-rank. Furthermore, we have a short exact sequence%
\begin{equation*}
0\rightarrow \sigma _{\alpha -1}C\rightarrow C\rightarrow \partial _{\alpha
-1}C\rightarrow 0
\end{equation*}%
where $\sigma _{\alpha -1}C$ has extractable length at most $\alpha -1$.
This shows that $C$ has plain extractable length at most $\alpha $. The
converse implication follows from (1).

(3) If $B$ is an $\alpha $-constituent of $C$, then we have an extension%
\begin{equation*}
0\rightarrow \sigma _{\alpha -1}C\rightarrow B\rightarrow B/\sigma _{\alpha
-1}C\rightarrow 0
\end{equation*}%
where $B/\sigma _{\alpha -1}C$ is a finite-rank submodule of $\partial
_{\alpha -1}C$ by definition, and $\sigma _{\alpha -1}C$ has extractable
length at most $\alpha -1$ by the inductive hypothesis;

(4) Suppose that $E\subseteq C$ is a coreduced submodule of plain
extractable length at most $\alpha $. Then we have that $\sigma _{\alpha
-1}E\subseteq \sigma _{\alpha -1}C$ and $\partial _{\alpha -1}E$ is
finite-rank. Thus the inclusion $E\rightarrow C$ induces a map $\partial
_{\alpha }E\rightarrow \partial _{\alpha }C$ whose image is contained in $%
S_{\alpha }^{k}C$ for some $k\in \omega $. This shows that $E$ is contained
in $B_{\alpha }^{k}C$ and hence in $\Sigma \left( B_{\alpha }^{k}C\right) $
(being coreduced).

(5) Suppose that $\left( E_{k}\right) $ is an increasing sequence of
submodules of $C$ of plain projective length at most $\alpha $ with union
equal to $C$ such that $\sigma _{\alpha -1}C\subseteq E_{\ell }$ for every $%
\ell \in \omega $. We have that $\partial _{\alpha -1}C$ is the union of the
images $\partial _{\alpha -1}E_{\ell }$ of the maps $E_{\ell }\rightarrow
\partial _{\alpha -1}C$ for $\ell \in \omega $. As $E_{\ell }$ is coreduced
of plain projective length at most $\alpha $, we have that $\partial
_{\alpha -1}E_{\ell }$ has finite rank for every $\ell \in \omega $.
Therefore, for every $k\in \omega $ there exists $\ell \in \omega $ such
that $S_{\alpha }^{k}C\subseteq \partial _{\alpha -1}E_{\ell }$ and hence $%
B_{\alpha }^{k}C\subseteq E_{\ell }$.

(6) By (3) and (4), the second and third term are equal. Furthermore, by (1)
and (3), we have that $\mathrm{Ext}\left( B_{\alpha _{k}}^{k}C,A\right) $
has Solecki length at most $\alpha _{k}$ for every $k\in \omega $. Whence we
have%
\begin{equation*}
G:=\bigcap_{k\in \omega }\mathrm{\mathrm{Ker}}\left( \mathrm{Ext}\left(
C,A\right) \rightarrow \mathrm{Ext}\left( B_{\alpha _{k}}^{k}C,A\right)
\right)
\end{equation*}%
has Borel rank at most $\alpha $ in $\mathrm{Ext}\left( C,A\right) $, and
contains $s_{\alpha }\mathrm{Ext}\left( C,A\right) =s_{1}s_{\alpha -1}%
\mathrm{Ext}\left( C,A\right) $. We now prove that $G$ is contained in $%
s_{1}s_{\alpha -1}\mathrm{Ext}\left( C,A\right) $. If $\alpha $ is limit,
this follows from the inductive hypothesis, considering that in this case%
\begin{equation*}
s_{\alpha }\mathrm{Ext}\left( C,A\right) =\bigcap_{k}s_{\alpha _{k}}\mathrm{%
Ext}\left( C,A\right) .
\end{equation*}%
Suppose now that $\alpha $ is a successor. By the inductive hypothesis (10)
applied to $\alpha -1$, the short exact sequence%
\begin{equation*}
L_{\alpha -1}C\rightarrow F_{\alpha -1}C\rightarrow C
\end{equation*}%
induces an isomorphism%
\begin{equation*}
s_{\alpha -1}\mathrm{Ext}\left( C,A\right) \cong \frac{\mathrm{Hom}\left(
L_{\alpha -1}C,A\right) }{\mathrm{Hom}\left( F_{\alpha -1}C|L_{\alpha
-1}C,A\right) }
\end{equation*}%
For $k\in \omega $, define 
\begin{equation*}
\hat{B}_{\alpha -1}^{k}C:=\left\{ x\in F_{\alpha -1}C:x+L_{\alpha -1}C\in
B_{\alpha -1}^{k}C\right\} \text{.}
\end{equation*}%
Under such an isomorphism $G$ corresponds to%
\begin{equation*}
\frac{\mathrm{Hom}^{1}\left( L_{\alpha -1}C,A\right) }{\mathrm{Hom}\left(
F_{\alpha -1}C|L_{\alpha -1}C,A\right) }
\end{equation*}%
where $\mathrm{Hom}^{1}\left( L_{\alpha -1}C,A\right) $ is the submodule
comprising the homomorphisms $L_{\alpha -1}C\rightarrow A$ that have an
extension $\hat{B}_{\alpha -1}^{k}C\rightarrow A$ for every $k\in \omega $.\
Therefore, it remains to prove that%
\begin{equation*}
\frac{\mathrm{Hom}^{1}\left( L_{\alpha -1}C,A\right) }{\mathrm{Hom}\left(
F_{\alpha -1}C|L_{\alpha -1}C,A\right) }\subseteq s_{1}\left( \frac{\mathrm{%
Hom}\left( L_{\alpha -1}C,A\right) }{\mathrm{Hom}\left( F_{\alpha
-1}C|L_{\alpha -1}C,A\right) }\right) \text{.}
\end{equation*}%
By Lemma \ref{Lemma:1st-Solecki} and Lemma \ref{Lemma:dense-PExtJ}, it
suffices to prove that if $V\subseteq \mathrm{Hom}\left( F_{\alpha
-1}C|L_{\alpha -1}C,A\right) $ is an open neighborhood of $0$ in $\mathrm{Hom%
}\left( F_{\alpha -1}C|L_{\alpha -1}C,A\right) $, then the closure $%
\overline{V}^{\mathrm{Hom}\left( L_{\alpha -1}C,A\right) }$ of $V\ $inside $%
\mathrm{Hom}\left( L_{\alpha -1}C,A\right) $ contains an open neighborhood
of $0$ in $\mathrm{Hom}^{1}\left( L_{\alpha -1}C,A\right) $. Suppose that $%
V\subseteq \mathrm{Hom}\left( F_{\alpha -1}C|L_{\alpha -1}C,A\right) $ is an
open neighborhood of $0$. Thus, there exist $x_{0},\ldots ,x_{n}\in
F_{\alpha -1}C$ such that 
\begin{equation*}
\left\{ \varphi |_{L_{\alpha -1}C}:\varphi \in \mathrm{Hom}\left( F_{\alpha
-1}C,A\right) \text{ and }\forall i\leq n\text{, }\varphi \left(
x_{i}\right) =0\right\} \subseteq V\text{.}
\end{equation*}%
Let $k\in \omega $ be such that $\left\{ x_{0},\ldots ,x_{n}\right\}
\subseteq \hat{B}_{\alpha -1}^{k}C$. Thus, we have that%
\begin{equation*}
U=\left\{ \varphi \in \mathrm{Hom}^{1}\left( L_{\alpha -1},A\right) :\exists 
\hat{\varphi}\in \mathrm{Hom}(\hat{B}_{\alpha -1}^{k}C,A),\forall i\leq n,%
\hat{\varphi}\left( x_{i}\right) =0,\hat{\varphi}|_{L_{\alpha -1}C}=\varphi
\right\}
\end{equation*}%
is an open neighborhood of $0$ in $\mathrm{Hom}^{1}\left( L_{\alpha
-1}C,A\right) $. We claim that $U\subseteq \overline{V}^{\mathrm{Hom}\left(
L_{\alpha -1}C,A\right) }$. Indeed, suppose that $\varphi _{0}\in U$ and let 
$W$ be an open neighborhood of $\varphi _{0}$ in $\mathrm{Hom}\left(
L_{\alpha -1}C,A\right) $. There exist $y_{0},\ldots ,y_{\ell }\in L_{\alpha
-1}C$ such that%
\begin{equation*}
\left\{ \varphi \in \mathrm{Hom}\left( L_{\alpha -1}C,A\right) :\forall
i\leq \ell \text{, }\varphi \left( y_{i}\right) =\varphi _{0}\left(
y_{i}\right) \right\} \subseteq W\text{.}
\end{equation*}%
Define%
\begin{equation*}
F_{\alpha -1}C_{\Sigma }:=\bigoplus_{k}\Sigma B_{\left( \alpha -1\right)
_{k}}^{k}C
\end{equation*}%
and%
\begin{equation*}
F_{\alpha -1}C_{\Phi }:=\bigoplus_{k}\Phi B_{\left( \alpha -1\right)
_{k}}^{k}C
\end{equation*}%
Then we have that, by definition, 
\begin{equation*}
F_{\alpha -1}C=F_{\alpha -1}C_{\Sigma }\oplus F_{\alpha -1}C_{\Phi }\text{.}
\end{equation*}%
We need to prove that $V\cap W\neq \varnothing $. To this purpose, it
suffices to show that there exists $\varphi \in \mathrm{Hom}(F_{\alpha
-1}C,A)$ such that $\varphi \left( y_{i}\right) =\varphi _{0}\left(
y_{i}\right) $ for $i\leq \ell $ and $\varphi \left( x_{i}\right) =0$ for $%
i\leq n$. Since $\varphi _{0}\in U$, there exists $\hat{\varphi}_{0}\in 
\mathrm{Hom}(\hat{B}_{\alpha -1}^{k}C,A)$ such that $\hat{\varphi}_{0}\left(
x_{i}\right) =0$ for $i\leq n$ and $\hat{\varphi}_{0}|_{L_{\alpha
-1}C}=\varphi _{0}$. For $i\leq \ell $ and $j\leq n$, we can write%
\begin{equation*}
y_{i}=y_{i}^{\Phi }+y_{i}^{\Sigma }
\end{equation*}%
\begin{equation*}
x_{j}=x_{j}^{\Phi }+x_{j}^{\Sigma }
\end{equation*}%
where $x_{j}^{\Phi },y_{i}^{\Phi }\in F_{\alpha -1}C_{\Phi }$ and $%
x_{j}^{\Sigma },y_{i}^{\Sigma }\in F_{\alpha -1}C_{\Sigma }$. Let $L$ be a
finite direct summand of $F_{\alpha -1}C_{\Phi }$ that contains $y_{0}^{\Phi
},\ldots ,y_{\ell }^{\Phi },x_{0}^{\Phi },\ldots ,x_{n}^{\Phi }$ (which
exists since $F_{\alpha -1}C_{\Phi }$ is a sum of finite projective
modules), and write $F_{\alpha -1}C_{\Phi }=L\oplus L^{\prime }$. Let $T$ be
the submodule of $F_{\alpha -1}C_{\Phi }$ generated by $L$ and by $\hat{B}%
_{\alpha -1}^{k}C\cap F_{\alpha -1}C_{\Phi }$. Then we have that%
\begin{equation*}
\frac{T}{\hat{B}_{\alpha -1}^{k}C\cap F_{\alpha -1}C_{\Phi }}\cong \frac{T+%
\hat{B}_{\alpha -1}^{k}}{\hat{B}_{\alpha -1}^{k}}\subseteq \frac{F_{\alpha
-1}C}{\hat{B}_{\alpha -1}^{k}C}\cong C/B_{\alpha -1}^{k}C
\end{equation*}%
is finite (since $L$ is finite) and flat (since $C/B_{\alpha -1}^{k}C$ is
flat), and hence projective. Thus there exists $\psi \in \mathrm{Hom}\left(
T,A\right) $ that extends $\hat{\varphi}_{0}|_{\hat{B}_{\alpha -1}^{k}C\cap
F_{\alpha -1}C_{\Phi }}$. One can define $\varphi \in \mathrm{Hom}\left(
F_{\alpha -1}C,A\right) $ by setting%
\begin{equation*}
\varphi \left( z+w+w^{\prime }\right) =\hat{\varphi}_{0}\left( z\right)
+\psi \left( w\right)
\end{equation*}%
for $z\in F_{\alpha -1}C_{\Sigma }$ and $w\in L$ and $w^{\prime }\in
L^{\prime }$. For $i\leq \ell $ and $j\leq n$ we have%
\begin{equation*}
\varphi \left( y_{i}\right) =\hat{\varphi}_{0}\left( y_{i}^{\Sigma }\right)
+\psi \left( y_{i}^{\Phi }\right) =\hat{\varphi}_{0}\left( y_{i}\right)
=\varphi \left( y_{i}\right)
\end{equation*}%
and%
\begin{equation*}
\varphi \left( x_{j}\right) =\hat{\varphi}_{0}(x_{j}^{\Sigma })+\psi
(x_{j}^{\Phi })=\hat{\varphi}_{0}\left( x_{j}\right) =0\text{.}
\end{equation*}%
This concludes the proof.

(7)\ It follows from (4) that%
\begin{equation*}
0\rightarrow L_{\alpha }C\rightarrow F_{\alpha }C\rightarrow C\rightarrow 0
\end{equation*}%
is a phantom short exact sequence of order $\alpha $, where $L_{\alpha }C$
and $F_{\alpha }C$ have extractable length at most $\alpha $ by definition
and the inductive hypothesis.

(8) If $C$ has (plain) extractable length at most $\alpha $, then $C$ has
(plain) projective length at most $\alpha $ by (1). Conversely, if $C$ has $%
L_{\alpha }$-projective length at most $\alpha $, then the short exact
sequence in (7) splits. Thus, $C$ is a direct summand of $F_{\alpha }C$,
whence it has extractable length at most $\alpha $.

If $C$ has plain projective length at most $\alpha $, then $\partial
_{\alpha -1}C$ has finite rank by Theorem \ref{Theorem:derivation}. By the
inductive hypothesis, $\sigma _{\alpha -1}C$ has extractable length at most $%
\alpha -1$. Considering the short exact sequence $\sigma _{\alpha
-1}C\rightarrow C\rightarrow \partial _{\alpha -1}C$ we conclude that $C$
has plain extractable length at most $\alpha $.

(9) If $C$ has plain $R$-projective length at most $\alpha $, then the proof
of (8) above shows that $C$ has plain extractable length at most $\alpha $.
Suppose now that $C$ has $R$-projective length at most $\alpha $. We claim
that $C$ has extractable length at most $\alpha $. Without loss of
generality, we can assume that $C$ is isomorphic to the direct sum of a
sequence $\left( C_{n}\right) $ of countable flat modules such that $C_{n}$
has plain extractable length $\beta _{n}$ for every $n\in \omega $. Then we
have that%
\begin{equation*}
\mathrm{Ext}\left( C,R\right) \cong \prod_{n}\mathrm{Ext}\left(
C_{n},R\right) \text{.}
\end{equation*}%
Therefore, the projective length of $C$ is equal by the inductive hypothesis
to $\mathrm{sup}_{n}\beta _{n}$. This shows that $\mathrm{sup}_{n}\beta
_{n}\leq \alpha $ and hence $C$ has extractable length at most $\alpha $.

(10) Let $A$ be a countable flat module. By (7) we have that the extension $%
\mathfrak{R}_{C}$%
\begin{equation*}
0\rightarrow L_{\alpha }C\rightarrow F_{\alpha }C\rightarrow C\rightarrow 0
\end{equation*}%
is phantom of order $\alpha $. If $\varphi :L_{\alpha }C\rightarrow A$ is a
homomorphism, then the image of $\varphi $ under the homomorphism%
\begin{equation*}
\mathrm{Hom}\left( L_{\alpha }C,A\right) \rightarrow \mathrm{Ext}\left(
C,A\right)
\end{equation*}%
is the extension obtained via the pushout diagram%
\begin{equation*}
\begin{array}{ccccc}
L_{\alpha }C & \rightarrow & F_{\alpha }C & \rightarrow & C \\ 
\varphi \downarrow &  & \downarrow &  & \downarrow \\ 
A & \rightarrow & X & \rightarrow & C%
\end{array}%
\end{equation*}%
Since $\mathfrak{R}_{C}$ is phantom of order $\alpha $, by (6) the exact
sequence induced by%
\begin{equation*}
0\rightarrow L_{\alpha }C\rightarrow F_{\alpha }C\rightarrow C\rightarrow 0
\end{equation*}%
and the inclusion $B_{\alpha _{k}}^{k}C\rightarrow C$ splits. Thus,t he same
holds for the exact sequence induced by%
\begin{equation*}
0\rightarrow A\rightarrow X\rightarrow C\rightarrow 0
\end{equation*}%
and the inclusion $B_{\alpha _{k}}^{k}C\rightarrow C$. As this holds for
every $k\in \omega $, by (6) again the short exact sequence%
\begin{equation*}
A\rightarrow X\rightarrow C
\end{equation*}%
is phantom of order $\alpha $. This shows that the image of the homomorphism%
\begin{equation*}
\mathrm{Hom}\left( L_{\alpha }C,A\right) \rightarrow \mathrm{Ext}\left(
C,A\right)
\end{equation*}%
is contained in $s_{\alpha }\mathrm{Ext}\left( C,A\right) $.\ Conversely,
suppose that%
\begin{equation*}
A\rightarrow X\rightarrow C
\end{equation*}%
is a phantom short exact sequence of order $\alpha $. Then by (6) we have
that for every $k\in \omega $ the induced short exact sequence%
\begin{equation*}
\begin{array}{ccccc}
A & \rightarrow & X & \rightarrow & C \\ 
\downarrow &  & \downarrow &  & \downarrow \\ 
A & \rightarrow & Y_{k} & \rightarrow & B_{\alpha _{k}}^{k}C%
\end{array}%
\end{equation*}%
splits. Thus, there exists a homomorphism $\psi _{k}:B_{\alpha
_{k}}^{k}C\rightarrow Y_{k}$ that is a right inverse for $Y_{k}\rightarrow
B_{\alpha _{k}}^{k}C$. For every $k\in \omega $, we have that 
\begin{equation*}
\varphi _{k}:=\psi _{k+1}|_{B_{\alpha _{k}}^{k}C}-\psi _{k}
\end{equation*}%
is a homomorphism $B_{\alpha _{k}}^{k}C\rightarrow A$.\ For $k\in \omega $,
these define a homomorphism $\varphi :L_{\alpha }C\rightarrow C$. The image
of $\varphi $ under the homomorphism%
\begin{equation*}
\mathrm{Hom}\left( L_{\alpha }C,A\right) \rightarrow \mathrm{Ext}\left(
C,A\right)
\end{equation*}%
is the extension of $C$ by $A$ represented by 
\begin{equation*}
A\rightarrow X\rightarrow C\text{.}
\end{equation*}%
This concludes the proof.
\end{proof}

\begin{corollary}
\label{Corollary:constructible}Let $C$ be an extractable countable flat
module. Then the extractable length, the $R$-projective length, and the
projective length of $C$ coincide, as well as the extractable rank, the $R$%
-projective rank, and the projective rank.
\end{corollary}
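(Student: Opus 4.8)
The plan is to deduce the statement directly from Theorem~\ref{Theorem:structure-1}, reading off items (8) and (9). First I would recall the relevant definitions: for a coreduced countable extractable flat module $C$, the \emph{extractable length} of $C$ is the least ordinal $\alpha<\omega_1$ for which $C$ has extractable length at most $\alpha$, and similarly the \emph{projective length} and the \emph{$R$-projective length} are the least $\alpha$ for which $C$ has projective length, respectively $R$-projective length, at most $\alpha$; the corresponding \emph{rank}s are the least \emph{successor} ordinals $\alpha<\omega_1$ for which the respective \emph{plain} version of ``length at most $\alpha$'' holds. Since $C$ is extractable it has extractable length at most $\alpha$ for some $\alpha<\omega_1$, so each of the three sets of witnessing ordinals is nonempty and hence has a least element; thus all six invariants are well-defined ordinals, finite or countable.

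The main step is then purely formal. By Theorem~\ref{Theorem:structure-1}(8), for every countable ordinal $\alpha$ the module $C$ has extractable length at most $\alpha$ if and only if it has projective length at most $\alpha$, and (this is the force of the ``(plain)'' clause) the same equivalence holds with ``length'' replaced by ``plain length''. Consequently the two sets of witnessing ordinals coincide, and therefore so do their least elements, which gives at once that the extractable length equals the projective length and that the extractable rank equals the projective rank. I would then apply Theorem~\ref{Theorem:structure-1}(9) in exactly the same way, in both its ordinary and its ``plain'' forms, to conclude that the extractable length equals the $R$-projective length and the extractable rank equals the $R$-projective rank. Chaining the two families of equalities yields that all three lengths coincide and all three ranks coincide.

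I do not expect any real obstacle here beyond citing Theorem~\ref{Theorem:structure-1}: all of the substance --- the simultaneous transfinite induction matching the extractable hierarchy against the $R$-projective and projective hierarchies, via the phantom resolutions $0\to L_\alpha C\to F_\alpha C\to C\to 0$ and the $\alpha$-constituents of $C$ --- lives there. The only points needing (minor) attention are bookkeeping: ``plain length at most $\alpha$'' is defined only for successor $\alpha$, so one checks that the rank invariants are genuinely least \emph{successor} ordinals and that (8) and (9) are being invoked exactly in the range in which they are stated; and one observes that restricting to coreduced $C$ is harmless, since extractable, projective and $R$-projective length are all defined for coreduced modules and an extractable module is coreduced by construction.
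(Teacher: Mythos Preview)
Your proposal is correct and matches the paper's approach exactly: the corollary is stated without proof immediately after Theorem~\ref{Theorem:structure-1}, and is meant to be read off directly from items (8) and (9) of that theorem, precisely as you do. Your interpretation of the ``rank'' invariants as the plain versions is the intended one, and the bookkeeping remarks you make are appropriate.
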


In consideration of Corollary \ref{Corollary:constructible}, we can
formulate the following:

\begin{definition}
We define the (plain) length of a countable \emph{extractable }flat module
to be the common value of its (plain) extractable, projective, and $R$%
-projective lengths.
\end{definition}

\subsection{Independence}

Suppose that $A$ is a countable flat module. If $C$ is an extractable
coreduced flat module of (plain) length $\alpha $, then in general the
(plain) $A$-projective length of $A$ is \emph{at most }$\alpha $. We now
consider a natural condition on $C$, which we term $A$-independence, which
guarantees that the (plain) $A$-projective length of $A$ is indeed equal to $%
\alpha $.

\begin{definition}
Suppose that $C$ is a extractable coreduced countable flat module, and $A$
is a countable flat module. We say that $C$ is $A$-\emph{independent} if%
\begin{equation*}
\mathrm{Hom}\left( \sigma _{\beta }C,A\right) =\mathrm{Hom}\left( \partial
_{\beta }C,A\right) =0
\end{equation*}%
for every $\beta <\omega _{1}$.
\end{definition}

Notice that every extractable coreduced countable flat module is $R$%
-independent. The following lemma is easily established by induction on the
length.

\begin{lemma}
\label{Lemma:A-independent}Suppose that $C\ $is a extractable coreduced
countable flat module. The following assertions are equivalent:

\begin{enumerate}
\item $C$ is $A$-independent;

\item $\mathrm{Hom}\left( \Sigma (B_{1}^{k}(\partial _{\beta }C)),A\right)
=0 $ for every $\beta <\omega _{1}$ and $k\in \omega $.
\end{enumerate}
\end{lemma}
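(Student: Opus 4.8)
The plan is to prove the equivalence by transfinite induction on the common length $\alpha$ of $C$, which is well defined and equal to the extractable, projective, and $R$-projective lengths of $C$ by Corollary \ref{Corollary:constructible}; I keep both implications as the inductive statement so that it may be applied to the auxiliary modules that arise. Throughout I use the recursive description of the derivation from the start of this section, in particular the short exact sequences
\begin{equation*}
0\rightarrow \sigma _{\beta }C\rightarrow \sigma _{\beta +1}C\rightarrow \sigma _{1}(\partial _{\beta }C)\rightarrow 0\text{,}\qquad 0\rightarrow \sigma _{1}(\partial _{\beta }C)\rightarrow \partial _{\beta }C\rightarrow \partial _{\beta +1}C\rightarrow 0\text{,}
\end{equation*}
(the first because $\sigma _{\beta +1}C$ is the preimage of $\sigma _{1}(\partial _{\beta }C)$, the second because $\partial _{\beta +1}C=\partial _{1}(\partial _{\beta }C)$), the colimit presentation $\sigma _{1}(\partial _{\beta }C)=\mathrm{colim}_{k}\Sigma B_{1}^{k}(\partial _{\beta }C)$ with inclusions as transition maps (from functoriality of the radical $\Sigma $), the analogous identities $\sigma _{\lambda }C=\mathrm{colim}_{\beta <\lambda }\sigma _{\beta }C$ and $\partial _{\lambda }C=\mathrm{colim}_{\beta <\lambda }\partial _{\beta }C$ at limits, and the fact that $\mathrm{Hom}(-,A)$ is left exact and carries colimits of modules to limits. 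Finally I use Theorem \ref{Theorem:derivation}(4): since the length of $C$ is the Solecki length of $\mathrm{Ext}(C,R)$ we have $\partial _{\alpha }C=0$, and $\partial _{\alpha -1}C$ has finite rank when $\alpha $ is a successor.

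For $(2)\Rightarrow (1)$, assume $\mathrm{Hom}(\Sigma B_{1}^{k}(\partial _{\beta }C),A)=0$ for all $\beta ,k$. The colimit presentation gives $\mathrm{Hom}(\sigma _{1}(\partial _{\beta }C),A)\cong \lim_{k}\mathrm{Hom}(\Sigma B_{1}^{k}(\partial _{\beta }C),A)=0$ for every $\beta $; transfinite induction along the first displayed sequence then gives $\mathrm{Hom}(\sigma _{\beta }C,A)=0$ for all $\beta $. Substituting into the second displayed sequence shows the restriction maps $\mathrm{Hom}(\partial _{\beta }C,A)\rightarrow \mathrm{Hom}(\partial _{\beta +1}C,A)$ are isomorphisms, so $\mathrm{Hom}(\partial _{\beta }C,A)$ is, compatibly, independent of $\beta $ (limit stages via $\partial _{\lambda }C=\mathrm{colim}_{\beta <\lambda }\partial _{\beta }C$); evaluating at $\beta =\alpha $, where $\partial _{\alpha }C=0$, gives $\mathrm{Hom}(\partial _{\beta }C,A)=0$ for all $\beta $, so $C$ is $A$-independent. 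This last step is where it is essential that the induction is run together with the computation of the length: the apparent circularity $\mathrm{Hom}(C,A)\cong \mathrm{Hom}(\partial _{1}C,A)$ is broken precisely by knowing $\partial _{\alpha }C=0$.

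For $(1)\Rightarrow (2)$, I reduce $\mathrm{Hom}(\Sigma B_{1}^{k}(\partial _{\beta }C),A)$ to the relevant constituent. From $B_{\beta +1}^{k}C/\sigma _{\beta }C\cong B_{1}^{k}(\partial _{\beta }C)$ one gets $0\rightarrow \sigma _{\beta }C\rightarrow B_{\beta +1}^{k}C\rightarrow B_{1}^{k}(\partial _{\beta }C)\rightarrow 0$, and since $\mathrm{Hom}(\sigma _{\beta }C,A)=0$ this gives $\mathrm{Hom}(B_{\beta +1}^{k}C,A)\cong \mathrm{Hom}(B_{1}^{k}(\partial _{\beta }C),A)=\mathrm{Hom}(\Sigma B_{1}^{k}(\partial _{\beta }C),A)\oplus \mathrm{Hom}(\Phi B_{1}^{k}(\partial _{\beta }C),A)$ using $B_{1}^{k}(\partial _{\beta }C)=\Sigma B_{1}^{k}(\partial _{\beta }C)\oplus \Phi B_{1}^{k}(\partial _{\beta }C)$ (Lemma \ref{Lemma:coreduced-radical}). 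Hence it suffices to show $\mathrm{Hom}(B_{\beta +1}^{k}C,A)=0$. The constituent $B_{\beta +1}^{k}C$ is an extractable pure submodule of $C$ of plain length at most $\beta +1\le \alpha $ (Theorem \ref{Theorem:structure-1}(3)); using that the derivation is functorial on pure inclusions (so that the $\sigma $- and $\partial $-derivatives of $B_{\beta +1}^{k}C$ map into those of $C$ and the quotients involved are again flat), together with the hypothesis that $C$ is $A$-independent and the exact sequences above, one shows $B_{\beta +1}^{k}C$ is itself $A$-independent; then the inductive hypothesis applies when $\beta +1<\alpha $, while when $\beta +1=\alpha $ (so that $C$ has successor length and $\partial _{\alpha -1}C$ has finite rank) one concludes directly from the finite-rank case Theorem \ref{Theorem:structure-rank-1}.

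The main obstacle is precisely the verification, in the $(1)\Rightarrow (2)$ direction, that each constituent $B_{\beta +1}^{k}C$ inherits $A$-independence from $C$: since $\mathrm{Hom}(-,A)$ is only left exact, vanishing of $\mathrm{Hom}$ on $C$ and its derivatives does not pass to arbitrary submodules, and one must exploit purity of the inclusion $B_{\beta +1}^{k}C\hookrightarrow C$ and compatibility of the two filtrations. Keeping the transfinite induction well-founded — applying the inductive hypothesis only to $\partial _{\gamma }C$ with $\gamma \ge 1$ and to constituents, all of strictly smaller length except in the successor case, which is handled without the inductive hypothesis — is the other point that must be arranged carefully.
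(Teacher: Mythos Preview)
Your $(2)\Rightarrow(1)$ argument is correct and is exactly the kind of routine unwinding the paper has in mind. The issue is entirely in $(1)\Rightarrow(2)$.

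Your reduction ``it suffices to show $\mathrm{Hom}(B_{\beta+1}^{k}C,A)=0$'' cannot work as written. From the exact sequence you get the isomorphism $\mathrm{Hom}(B_{\beta+1}^{k}C,A)\cong\mathrm{Hom}(B_{1}^{k}(\partial_{\beta}C),A)$, and the right-hand side splits as $\mathrm{Hom}(\Sigma B_{1}^{k}(\partial_{\beta}C),A)\oplus\mathrm{Hom}(\Phi B_{1}^{k}(\partial_{\beta}C),A)$. But $\Phi B_{1}^{k}(\partial_{\beta}C)$ is a nonzero finite projective module in general, so $\mathrm{Hom}(\Phi B_{1}^{k}(\partial_{\beta}C),A)\neq 0$ whenever $A\neq 0$. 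Thus $\mathrm{Hom}(B_{\beta+1}^{k}C,A)=0$ is simply false in general, and your target is too strong. The right object to aim for is $\Sigma B_{\beta+1}^{k}C$, which is coreduced, contains $\sigma_{\beta}C$, and has $\Sigma B_{1}^{k}(\partial_{\beta}C)$ as a direct summand of its quotient by $\sigma_{\beta}C$; vanishing of $\mathrm{Hom}(\Sigma B_{\beta+1}^{k}C,A)$ then does give what you want.

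Even with that correction, your plan to apply the inductive hypothesis by first showing the constituent is $A$-independent is circular: $A$-independence of $D:=\Sigma B_{\beta+1}^{k}C$ already includes the condition $\mathrm{Hom}(\partial_{0}D,A)=\mathrm{Hom}(D,A)=0$, which is precisely what you are after. The induction has to be organized differently. One clean route is to first observe (as you essentially do) that if $C$ is $A$-independent then so are $\partial_{\beta}C$ and $\sigma_{\beta}C$ for every $\beta$, using $\sigma_{\gamma}(\partial_{\beta}C)=\sigma_{\beta+\gamma}C/\sigma_{\beta}C$ and $\partial_{\gamma}(\partial_{\beta}C)=\partial_{\beta+\gamma}C$ together with the left-exact sequences. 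Then, for $C$ of plain length $\alpha$, apply the inductive hypothesis to $\sigma_{\alpha-1}C$ (which is $A$-independent of length $<\alpha$) to get condition (2) for it; since for $\beta<\alpha-1$ one has $B_{1}^{k}(\partial_{\beta}(\sigma_{\alpha-1}C))$ cofinal with $B_{1}^{k}(\partial_{\beta}C)\cap\sigma_{\alpha-1}(\partial_{\beta}C)$, this handles all $\beta<\alpha-1$; the remaining case $\beta=\alpha-1$ is the finite-rank quotient $\partial_{\alpha-1}C$, where $B_{1}^{k}(\partial_{\alpha-1}C)=\partial_{\alpha-1}C$ for large $k$ and $\Sigma\partial_{\alpha-1}C$ is a quotient of $C$, so $\mathrm{Hom}(\Sigma\partial_{\alpha-1}C,A)\hookrightarrow\mathrm{Hom}(C,A)=0$. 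Passage to direct sums and summands is straightforward. This is presumably the ``induction on the length'' the paper alludes to, and it avoids both the overshoot and the circularity in your sketch.
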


\begin{theorem}
\label{Theorem:A-independent}Suppose that $C$ is a extractable coreduced
countable flat module, and $A$ is a countable flat module. Assume that $C$
is $A$-independent. Then:

\begin{enumerate}
\item the (plain) length of $C$ is equal to the (plain) $A$-projective
length of $C$;

\item for every $\beta <\omega _{1}$, the quotient map $M\rightarrow
\partial _{\beta }M$ induces an isomorphism%
\begin{equation*}
\mathrm{Ext}\left( \partial _{\beta }M,A\right) \cong \mathrm{Ph}^{\beta }%
\mathrm{Ext}\left( M,A\right) \text{.}
\end{equation*}
\end{enumerate}
\end{theorem}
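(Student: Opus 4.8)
The plan is to deduce the whole statement from Theorem~\ref{Theorem:derivation-A}, whose hypotheses are precisely what $A$-independence supplies. First I would check those hypotheses for the pair $(C,A)$: the requirement $\mathrm{Hom}(\sigma_\beta C,A)=0$ for all $\beta<\omega_1$ is part of the definition of $A$-independence, and the requirement $\mathrm{Hom}(\Sigma(B_1^k(\partial_\beta C)),A)=0$ for all but finitely many $k$ follows from Lemma~\ref{Lemma:A-independent}, which in fact gives it for every $k$. Note that extractability of $C$ is not used here, only in the final step.

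For part (2), fix $\beta<\omega_1$. The quotient map $q\colon C\to\partial_\beta C$ induces $q^{*}\colon\mathrm{Ext}(\partial_\beta C,A)\to\mathrm{Ext}(C,A)$, which is injective by Theorem~\ref{Theorem:derivation-A}(1) and has image $s_\beta\mathrm{Ext}(C,A)$ by Theorem~\ref{Theorem:derivation-A}(3); thus $q^{*}$ is an isomorphism onto $s_\beta\mathrm{Ext}(C,A)$. It remains to identify $s_\beta\mathrm{Ext}(C,A)$ with $\mathrm{Ph}^\beta\mathrm{Ext}(C,A)$: writing $C=\mathrm{colim}_n C_n$ for an increasing sequence of finite submodules, each $C_n$ is finite flat, hence projective, hence compact in $\mathbf{Flat}(R)$, and $\mathrm{Hom}(C_n,A)$ is countable, so Theorem~\ref{Theorem:phantom-Ext} applies and gives $\mathrm{Ph}^\beta\mathrm{Ext}(C,A)=s_\beta\mathrm{Ext}(C,A)$ as submodules of $\mathrm{Ext}(C,A)$. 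Composing yields the isomorphism $\mathrm{Ext}(\partial_\beta C,A)\cong\mathrm{Ph}^\beta\mathrm{Ext}(C,A)$ induced by $q$.

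For part (1), observe that the submodules $\partial_\alpha C$ do not depend on $A$, so let $\lambda$ be the least ordinal with $\partial_\lambda C=0$. By Theorem~\ref{Theorem:derivation-A}(4) the $A$-projective length of $C$ equals $\lambda$; by Theorem~\ref{Theorem:derivation}(4), applied with the projective module $R$, the $R$-projective length of $C$ also equals $\lambda$; and since $C$ is extractable, Corollary~\ref{Corollary:constructible} identifies both with the length of $C$. This settles the non-plain case. For the plain case, since plain notions only concern successor ordinals one checks easily that for $\alpha<\lambda$ both plain lengths exceed $\alpha$ and for successor $\alpha>\lambda$ both are at most $\alpha$ (as $s_{\alpha-1}\mathrm{Ext}(C,A)\cong\mathrm{Ext}(\partial_{\alpha-1}C,A)=0$ by part (2)), so the only substantive comparison is the extremal case $\alpha=\lambda=\delta+1$: plain length $\le\lambda$ means, by Theorem~\ref{Theorem:derivation}(4), that $\partial_\delta C$ has finite rank, whereas plain $A$-projective length $\le\lambda$ means, by Theorem~\ref{Theorem:derivation-A}(4), that $\mathrm{Ext}(\partial_\delta C,A)\to\mathrm{Ext}(B_1^k(\partial_\delta C),A)$ is an isomorphism for some $k$. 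If $\partial_\delta C$ has finite rank this is immediate, since for large $k$ one has $B_1^k(\partial_\delta C)=\partial_\delta C$ and the map is the identity. The converse is the main obstacle: from the exact sequences $0\to B_1^k(\partial_\delta C)\to B_1^{k+1}(\partial_\delta C)\to Q_k\to0$ together with Roos' sequence (Theorem~\ref{Lemma:Ext-and-lim1}) and Corollary~\ref{Corollary:compact-phantom}, one reduces to showing the $Q_k$ are eventually zero; here $A$-independence is what collapses $\mathrm{Hom}(B_1^k(\partial_\delta C),A)$ to $\mathrm{Hom}(\Phi B_1^k(\partial_\delta C),A)$, which is countable because $\Phi B_1^k(\partial_\delta C)$ is finitely generated projective, forcing $\mathrm{Ext}(Q_k,A)$ to be countable and, with a bookkeeping argument in the spirit of the proof of Theorem~\ref{Theorem:structure-1}, the $Q_k$ to vanish. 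I expect essentially all of the work to be in this last comparison; the rest is a direct assembly of results already in hand.
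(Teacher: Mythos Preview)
Your approach is correct and matches the paper's exactly: the paper's entire proof is the single sentence ``This follows from Theorem~\ref{Theorem:derivation-A} considering that its hypotheses are verified by Lemma~\ref{Lemma:A-independent}.'' Your verification of the hypotheses, your derivation of part~(2) via Theorem~\ref{Theorem:derivation-A}(1),(3) together with the identification $\mathrm{Ph}^\beta=s_\beta$ from Theorem~\ref{Theorem:phantom-Ext}, and your non-plain argument for part~(1) via Theorem~\ref{Theorem:derivation}(4) and Corollary~\ref{Corollary:constructible} are all exactly what the paper intends.

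Where you diverge is in your treatment of the plain case: you flag the converse implication (plain $A$-projective length $\le\alpha$ $\Rightarrow$ plain length $\le\alpha$) as ``the main obstacle'' and sketch a somewhat involved argument to handle it. The paper makes no such distinction and evidently regards the plain case as absorbed in Theorem~\ref{Theorem:derivation-A}(4), whose parenthetical plain condition is derived from Lemma~\ref{Lemma:monomorphic-tower} and so refers to essential monomorphicity of the underlying tower rather than anything requiring a separate bookkeeping argument. Your sketch is not wrong, but you are overestimating the work: once Theorem~\ref{Theorem:derivation-A} is in hand, the paper treats both the plain and non-plain comparisons as immediate, and your expectation that ``essentially all of the work'' lies here is misplaced.
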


\begin{proof}
This follows from Theorem \ref{Theorem:derivation-A} considering that its
hypotheses are verified by Lemma \ref{Lemma:A-independent}.
\end{proof}

\subsection{Rigidity}

Suppose that $M$ is a countable flat module. Let $\mathrm{\mathrm{Aut}}%
\left( M\right) $ be the group of automorphisms of $M$ endowed with the
topology of pointwise convergence. Then we have that $\mathrm{Aut}\left(
M\right) $ is a non-Archimedean Polish group. This means that $\mathrm{Aut}%
\left( M\right) $ is a topological group whose topology is Polish, and
admits a basis of neighborhoods of the identity consisting of open
subgroups. The automorphisms $\mu _{r}:x\mapsto rx$ for $r\in R^{\times }$,
which we call \emph{inner}, form a central closed subgroup isomorphic to $%
R^{\times }$.

Likewise, we have that \emph{endomorphisms }of $M$ form a Polish ring $%
\mathrm{\mathrm{End}}\left( M\right) $. An endomorphism of $M$ is called 
\emph{inner} if it is of the form $x\mapsto rx$ for some $x\in X$. Inner
endomorphisms form a central closed subring isomorphic to $R$. We say that $%
M $ is \emph{rigid} if every endomorphism of $M$ is inner.

We say that two countable flat modules $N$ and $M$ are \emph{orthogonal} if $%
\mathrm{Hom}\left( M,N\right) =0$ and $\mathrm{Hom}\left( N,M\right) =0$. We
say that $\left( N_{i}\right) _{i\in I}$ is an \emph{orthonormal family} if
for every $i\in I$, $N_{i}$ is rigid, and for every distinct $i,j\in I$, $%
N_{i}$ and $N_{j}$ are orthogonal.

\subsection{Minimal elements}

We say that an element of a module $M$ is \emph{minimal }if it generates a
pure submodule. A \emph{pointed module }is a module with a distinguished
element, and a \emph{well-pointed module} is a module with a distinguished
minimal element. Recall that a sequence $\left( \xi _{n}\right) $ of
elements of a module $M$ is $\emph{independent}$ if the submodule $N$
generated by $\left( \xi _{n}\right) $ is the direct sum of the submodules $%
N_{n}$ generated by $\xi _{n}$ for $n\in \omega $.

\begin{definition}
\label{Definition:minimal-family}A family $\left( \xi _{i}\right) _{i\in I}$
of elements of a module $M$ is \emph{minimal }if it is independent and the
submodule $N$ generated by $\left( \xi _{i}\right) $ is \emph{pure}.
\end{definition}

Notice that a family $\left( \xi _{i}\right) _{i\in I}$ in $M$ is minimal if
and only if, letting $N$ be the submodule generated by $\left( \xi
_{i}\right) $, an element of $rM\cap N$ can be written uniquely as%
\begin{equation*}
\sum_{i\in I}\mu _{i}\xi _{i}
\end{equation*}%
for $\mu _{i}\in R$, and in this case one has $\mu _{i}\in rR$ for every $%
i\in I$.

\begin{lemma}
\label{Lemma:minimal}Suppose that $B$ is a countable flat module, $%
A\subseteq B$ is a pure submodule, and $C:=B/A$. Let $\left( \xi _{i}\right)
_{i\in I}$ be family of elements of $B$. Suppose $I_{A}\subseteq I$ is such
that $\left( \xi _{i}\right) _{\in I_{A}}$ is a minimal family of elements
of $A$.\ Set $I_{C}:=I\setminus I_{A}$, and assume that $\left( \xi
_{i}+A\right) _{i\in I_{C}}$ is a minimal family of elements of $C$. Then $%
\left( \xi _{i}\right) _{i\in I}$ is a minimal family of elements of $B$.
\end{lemma}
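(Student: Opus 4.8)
The plan is to verify the two defining properties of a minimal family (Definition~\ref{Definition:minimal-family}) for $(\xi_i)_{i\in I}$ in $B$: that the family is independent, i.e.\ the submodule $N\subseteq B$ it generates is the internal direct sum $\bigoplus_{i\in I}R\xi_i$, and that $N$ is pure in $B$. Write $N_A$ for the submodule of $A$ generated by $(\xi_i)_{i\in I_A}$ and $\bar N_C$ for the submodule of $C$ generated by $(\xi_i+A)_{i\in I_C}$; by hypothesis $N_A=\bigoplus_{i\in I_A}R\xi_i$ is pure in $A$ and $\bar N_C=\bigoplus_{i\in I_C}R(\xi_i+A)$ is pure in $C$. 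Note that $A$ itself is pure in $B$, so $A$ is a flat submodule and $C=B/A$ is flat as well by Lemma~\ref{Lemma:pure-exact-extension}.

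First I would prove independence. Suppose $\sum_{i\in I}\mu_i\xi_i=0$ with $\mu_i\in R$ almost all zero. Reducing mod $A$ gives $\sum_{i\in I_C}\mu_i(\xi_i+A)=0$ in $C$, so by independence of $(\xi_i+A)_{i\in I_C}$ we get $\mu_i=0$ for all $i\in I_C$. Hence $\sum_{i\in I_A}\mu_i\xi_i=0$ in $A$, and independence of $(\xi_i)_{i\in I_A}$ forces $\mu_i=0$ for all $i\in I_A$. This shows $N=\bigoplus_{i\in I}R\xi_i$.

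Next I would prove purity of $N$ in $B$. Fix $r\in R\setminus\{0\}$ and an element $x\in rB\cap N$, say $x=\sum_{i\in I}\mu_i\xi_i=rb$ for some $b\in B$; using the uniqueness from independence I must show each $\mu_i\in rR$. Pass to $C$: $x+A=\sum_{i\in I_C}\mu_i(\xi_i+A)=r(b+A)\in r C\cap \bar N_C$, so purity of $\bar N_C$ in $C$ gives $\mu_i\in rR$ for every $i\in I_C$. Write $\mu_i=r\nu_i$ for $i\in I_C$ and set $x':=x-\sum_{i\in I_C}r\nu_i\xi_i=\sum_{i\in I_A}\mu_i\xi_i$. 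Then $x'\in N_A\subseteq A$, and $x'=rb-\sum_{i\in I_C}r\nu_i\xi_i=r\bigl(b-\sum_{i\in I_C}\nu_i\xi_i\bigr)\in rB$; since $x'\in A$ and $A$ is pure in $B$, we get $x'\in rA$. Now $x'\in rA\cap N_A$, so purity of $N_A$ in $A$ yields $\mu_i\in rR$ for every $i\in I_A$. Combining, $\mu_i\in rR$ for all $i\in I$, proving that $N$ is pure in $B$ and hence that $(\xi_i)_{i\in I}$ is a minimal family.

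The only subtlety — which is the step I would be most careful about — is the purity argument: one must use the purity of $A$ in $B$ (not merely relative divisibility) to descend $x'\in rB\cap A$ to $x'\in rA$, and one must first clear out the $I_C$-part before invoking purity of $N_A$, since $N_A$ is only pure inside $A$, not inside $B$ a priori. Everything else is a routine bookkeeping of coefficients using the uniqueness of representations supplied by independence.
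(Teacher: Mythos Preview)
Your proof is correct and follows essentially the same route as the paper's: independence is shown by reducing modulo $A$ and applying the two hypotheses in turn, and purity is shown by first handling the $I_C$-coefficients via the quotient $C$, then clearing them and using purity of $A$ in $B$ together with purity of $N_A$ in $A$ to handle the $I_A$-coefficients. The only cosmetic difference is that the paper phrases the purity step as ``$rb\in N\Rightarrow b\in N$'' (producing an explicit $a\in A$ with $b-a\in\sum_{i\in I_C}R\xi_i$ and then showing $a\in N_A$), whereas you use the equivalent coefficient formulation ``$\mu_i\in rR$'' stated just before the lemma; the underlying argument is the same.
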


\begin{proof}
Suppose that $\left( \lambda _{i}\right) \in R^{\left( I\right) }$ is such
that 
\begin{equation*}
\sum_{i\in I}\lambda _{i}\xi _{i}=0\text{.}
\end{equation*}%
Then we have that%
\begin{equation*}
\sum_{i\in I_{C}}\lambda _{i}\xi _{i}+A=A
\end{equation*}%
and hence $\lambda _{i}=0$ for $i\in I_{C}$. Hence%
\begin{equation*}
\sum_{i\in I_{A}}\lambda _{i}\xi _{i}=0
\end{equation*}%
and $\lambda _{i}=0$ for $i\in I_{A}$. This shows that $\left( \xi
_{i}\right) $ is independent.

Suppose that $b\in B$ and $r\in R\setminus \left\{ 0\right\} $ are such that 
$rb$ belongs to the submodule $N$ generated by $\left( \xi _{i}\right) $. We
claim that $b\in N$. We can write%
\begin{equation*}
rb=\sum_{i\in I}\lambda _{i}\xi _{i}
\end{equation*}%
for $\left( \xi _{i}\right) \in R^{\left( I\right) }$. Then we have that $%
rb+A$ belongs to the submodule of $C$ generated by $\left( \xi _{i}+A\right)
_{i\in I_{C}}$. As such a submodule is pure, the same holds for $b+A$. Thus,
there exist $\left( \mu _{i}\right) \in R^{\left( I_{C}\right) }$ and $a\in
A $ such that 
\begin{equation*}
b-a=\sum_{i\in I_{C}}\mu _{i}\xi _{i}
\end{equation*}%
and hence%
\begin{equation*}
\sum_{i\in I}\lambda _{i}\xi _{i}-ra=rb-ra=\sum_{i\in I_{C}}r\mu _{i}\xi _{i}
\end{equation*}%
Thus, we have $r\mu _{i}=\lambda _{i}$ for $i\in I_{C}$. Hence,%
\begin{equation*}
\sum_{i\in I_{C}}\left( \lambda _{i}-r\mu _{i}\right) \xi _{i}+\sum_{i\in
I_{A}}\lambda _{i}\xi _{i}=ra\in A\text{.}
\end{equation*}%
Thus, we have%
\begin{equation*}
\sum_{i\in I_{C}}\left( \lambda _{i}-r\mu _{i}\right) \xi _{i}\equiv 0\ 
\mathrm{\mathrm{mod}}A\text{.}
\end{equation*}%
This implies that $\lambda _{i}=r\mu _{i}$ for $i\in I_{C}$. Thus, we have
that%
\begin{equation*}
r(b-\sum_{i\in I_{C}}\mu _{i}\xi _{i})=\sum_{i\in I}\lambda _{i}\xi
_{i}-\sum_{i\in I_{C}}r\mu _{i}\xi _{i}=\sum_{i\in I_{A}}\lambda _{i}\xi _{i}
\end{equation*}%
belongs to the submodule $N_{A}$ of $A$ generated by $\left( \xi _{i}\right)
_{i\in A}$. As such a submodule is pure, we also have%
\begin{equation*}
b-\sum_{i\in I_{C}}\mu _{i}\xi _{i}\in N_{A}\subseteq N\text{.}
\end{equation*}%
Thus, we have%
\begin{equation*}
b=(b-\sum_{i\in I_{C}}\mu _{i}\xi _{i})+\sum_{i\in I_{C}}\mu _{i}\xi _{i}\in
N\text{.}
\end{equation*}%
This concludes the proof.
\end{proof}

\subsection{Types}

In this section we consider the natural generalization of the notion of type
of an element from torsion-free abelian groups to flat modules \cite[Chapter
XIII]{fuchs_infinite_1973}.

\begin{definition}
Define $\mathfrak{I}\left( R\right) $ to be the collection of \emph{finite }%
ideals of $R$ ordered by inclusion. Let $P$ and $P^{\prime }$ be countable
upward-directed ordered sets. Suppose that $\mathfrak{t}:P\rightarrow 
\mathfrak{I}\left( R\right) $ and $\mathfrak{t}^{\prime }:P^{\prime
}\rightarrow \mathfrak{I}\left( R\right) $ are order-reversing functions.
Say that $\mathfrak{t}$ and $\mathfrak{t}^{\prime }$ are \emph{cofinally
equivalent} if for all $p\in P$ there exists $p^{\prime }\in P^{\prime }$
such that $\mathfrak{t}^{\prime }\left( p^{\prime }\right) \subseteq 
\mathfrak{t}\left( p\right) $ and vice versa. A \emph{type }(of finite
ideals) is a cofinal equivalence class of order-reversing functions $%
\mathfrak{t}:P\rightarrow \mathfrak{I}\left( R\right) $, where $P$ is an
upward directed ordered set.
\end{definition}

Clearly, any type admits a representative $\mathfrak{t}$ that is \emph{tower
of ideals}, i.e., a decreasing sequence of ideals. In what follows by abuse
of notation we identify a type with any of its representatives. We define an
order relation among types by setting 
\begin{equation*}
\left( I_{n}\right) \leq \left( J_{n}\right) \Leftrightarrow \forall
n\exists k\text{, }I_{n}\subseteq J_{k}\text{.}
\end{equation*}%
The smallest type $\left( I_{n}\right) $ is such that $I_{n}=0$ for every $%
n\in \omega $, and the largest type $\left( I_{n}\right) $ is such that $%
I_{n}=R$ for every $n\in \omega $. Types form a lattice with%
\begin{equation*}
\left( I_{n}\right) \vee \left( J_{n}\right) :=\left( I_{n}+J_{n}\right)
\end{equation*}%
and meet%
\begin{equation*}
\left( I_{n}\right) \wedge \left( J_{n}\right) :=\left( I_{n}\cap
J_{n}\right) \text{.}
\end{equation*}

\begin{definition}
Two types $\mathfrak{t}$ and $\mathfrak{t}^{\prime }$ are \emph{orthogonal},
in which case we write $\mathfrak{t}\bot \mathfrak{t}^{\prime }$, if their
join is the largest type.
\end{definition}

We can define the type of an element, generalizing the usual notion in the
case of $\mathbb{Z}$-modules; see \cite[Chapter XIII]{fuchs_infinite_1973}.

\begin{definition}
Suppose that $M$ is a coreduced countable flat module, and $a\in M$ is a
nonzero element.\ Fix an increasing sequence $\left( M_{n}\right) $ of
finite submodules of $M$ with $a\in M_{0}$ and union of $\left\{ M_{n}:n\in
\omega \right\} $ equal to $M$. Let 
\begin{equation*}
\boldsymbol{A}=\left( \mathrm{Hom}\left( M_{n},R\right) \right) _{n\in
\omega }
\end{equation*}%
be the corresponding tower of finite flat modules. Given a subtower $%
\boldsymbol{B}$ of $\boldsymbol{A}$, one defines the $\boldsymbol{B}$-type
of $a$ to be%
\begin{equation*}
\boldsymbol{B}[a]=\left( \left\{ \varphi \left( a\right) :\varphi \in
B_{n}\right\} \right) _{n\in \omega }\text{.}
\end{equation*}%
In particular, we define the $\alpha $-type $\mathfrak{t}_{\alpha }\left(
a\right) $ of $a$ as the $\boldsymbol{A}_{\alpha }$-type, where $\boldsymbol{%
A}_{\alpha }$ is the $\alpha $-th derived tower of $\boldsymbol{A}$. The $0$%
-type is also called the type and denoted by $\mathfrak{t}\left( a\right) $.
\end{definition}

Notice that $\left( \mathfrak{t}_{\alpha }\left( a\right) \right) _{\alpha
<\omega _{1}}$ forms a decreasing sequence of types.

\subsection{Straight length}

We now introduce a more stringent notion than the one of extractable module,
which we term straight extractable module.

\begin{definition}
\label{Definition:straight-constructible}Suppose that $M$ is a countable
coreduced flat module. We say that $M$ has plain straight length $1$ if and
only if $M$ is nontrivial and has finite rank. For a countable successor
ordinal $\alpha >1$, $M$ has plain straight length $\alpha $ if and only if
there exist:

\begin{enumerate}
\item a submodule $L\subseteq M$;

\item an inductive sequence $\left( J_{n},\varphi _{n}\right) $ of finite
free modules;

\item bases $\left( e_{n,i}\right) _{i<d_{n}}$ for $J_{n}$ for $n<\omega $
with $\sup_{n}d_{n}<\infty $;

\item elements $f_{n,i}\in M$ for $n<\omega $ and $i<d_{n}$;

\item pointed submodules $\left( L_{n,i},\xi _{n,i}\right) \subseteq L$ for $%
n<\omega $ and $i<d_{n}$;
\end{enumerate}

such that

\begin{itemize}
\item $L_{n,i}$ has plain straight length $\beta \lbrack n]$ and $%
\sup_{n}\beta \lbrack n]=\alpha -1$;

\item $L$ is the direct sum of $L_{n,i}$ for $n<\omega $ and $i<d_{n}$;

\item there exists an isomorphism between the quotient $J$ of $M$ by $L$ and 
$\mathrm{co\mathrm{lim}}_{n}\left( J_{n},\varphi _{n}\right) $, which maps $%
f_{n,i}+L$ to the element of $\mathrm{co\mathrm{lim}}_{n}\left(
J_{n},\varphi _{n}\right) $ corresponding to $e_{n,i}\in J_{n}$;

\item for $n<\omega $ and $i<d_{n}$, if%
\begin{equation*}
\varphi _{n}\left( e_{n,i}\right) =\sum_{j<d_{n+1}}a_{j}e_{n+1,j}
\end{equation*}%
for $a_{j}\in R$, then%
\begin{equation*}
f_{n,i}=\sum_{j<d_{n+1}}a_{j}f_{n+1,j}+\xi _{n,i}
\end{equation*}

\item for $n<\omega $ and $i<d_{n}$,%
\begin{equation*}
\mathfrak{t}_{\alpha _{n}-1}^{L_{n,i}}\left( \xi _{n,i}\right) \bot 
\mathfrak{t}^{J}\left( e_{n,i}\right) \text{.}
\end{equation*}%
We say that $M$ has straight length $\alpha $ if and only if $M$ is
isomorphic to the direct sum of a sequence of modules $M_{n}$ of plain
straight length $\beta \lbrack n]$ with $\sup_{n}\beta \lbrack n]=\alpha $.
\end{itemize}

The module $M$ is \emph{straight extractable} if and only if it has (plain)
straight length $\alpha $ for some $\alpha <\omega _{1}$.
\end{definition}

Any straight extractable module is, in particular, extractable. In
particular, its (plain) $R$-projective and extractable lengths coincide.

\begin{proposition}
\label{Proposition:straight-constructible}Suppose that $\alpha >1$ is a
successor ordinal, $A$ is a countable flat module, and $M$ is an countable
coreduced flat module of plain straight length $\alpha $. Let $%
L_{n,i}\subseteq L\subseteq M$ for $n<\omega $ and $i<d_{n}$ and $J$ be as
in the definition of plain straight length $\alpha $.

\begin{enumerate}
\item $L=\sigma _{\alpha -1}M$ and $J\cong \partial _{\alpha -1}M$;

\item $M$ is $A$-independent if and only if $L_{n,i}$ is $A$-independent for 
$n<\omega $ and $i<d_{n}$ and $\mathrm{Hom}\left( J,A\right) =0$;

\item the plain length of $M$ is $\alpha $.
\end{enumerate}
\end{proposition}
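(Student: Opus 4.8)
The plan is to prove all three statements simultaneously by induction on the successor ordinal $\alpha>1$, exactly in the spirit of the simultaneous induction used for Theorem \ref{Theorem:structure-1}. The base case $\alpha=2$ is where the straight-length data (points $\xi_{n,i}$, the free colimit $J$, the orthogonality $\mathfrak{t}^{L_{n,i}}_{\alpha_n-1}(\xi_{n,i})\bot\mathfrak{t}^J(e_{n,i})$) must be unpacked directly, since then each $L_{n,i}$ has plain straight length $1$, i.e.\ has finite rank. For the inductive step I would assume the statement holds for all plain straight lengths $\beta<\alpha$ and in particular for $\beta[n]$, and use the explicit module-tree/fishbone machinery from Section \ref{Section:lim1} and the description of $\sigma_\bullet$, $\partial_\bullet$ from Section \ref{Section:higher}.

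First I would prove (1). Since $L$ is the direct sum of the $L_{n,i}$, each of which has plain straight length $\beta[n]<\alpha$, the inductive hypothesis gives that each $L_{n,i}$ is extractable of plain length $\beta[n]$, hence (by Corollary \ref{Corollary:constructible}) $\sigma_{\beta[n]}L_{n,i}=L_{n,i}$ and $\partial_{\beta[n]}L_{n,i}=0$; taking the direct sum and using that $\sigma_\bullet,\partial_\bullet$ commute with countable direct sums, $\sigma_{\alpha-1}L=L$ and $\partial_{\alpha-1}L=0$. Next, $J=M/L$ is a colimit of finite \emph{free} modules, hence coreduced with $\partial_1 J\cong J$ and $\sigma_1 J=0$ — here I would invoke Lemma \ref{Lemma:coreduced-radical} and Theorem \ref{Theorem:structure-rank-1-bis} (a colimit of finite-rank coreduced modules has $R$-projective length $\le 1$). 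The content of (1) is then that the short exact sequence $0\to L\to M\to J\to 0$ is exactly the decomposition $0\to\sigma_{\alpha-1}M\to M\to\partial_{\alpha-1}M\to 0$. To see $L=\sigma_{\alpha-1}M$: $L\subseteq\sigma_{\alpha-1}M$ since $L$ is pure (by Lemma \ref{Lemma:minimal} applied inductively to the tower of $L_{n,i}$'s, the $\xi_{n,i}$ form a minimal family, so $L$ is pure in $M$) and coreduced of $R$-projective length $\le\alpha-1$; conversely $\sigma_{\alpha-1}M/L\subseteq\sigma_{\alpha-1}J=\sigma_1 J=0$, using that $\sigma_{\alpha-1}$ of a module of $R$-projective length $\le1$ that is itself coreduced and has no finite-rank summands is trivial. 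This uses the characterization from Theorem \ref{Theorem:structure-1}(9).

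Then (2) follows formally: $M$ is $A$-independent iff $\mathrm{Hom}(\sigma_\beta M,A)=\mathrm{Hom}(\partial_\beta M,A)=0$ for all $\beta$; splitting off the stage $\beta=\alpha-1$ via (1) and the commutation of $\sigma,\partial$ with direct sums reduces this to: each $L_{n,i}$ is $A$-independent and $\mathrm{Hom}(J,A)=0$ (the latter handling all $\beta\ge\alpha-1$, since $\partial_\beta M$ for $\beta\ge\alpha-1$ is a quotient of $J$, and $\sigma_\beta M/L$ is a submodule of $J$ so $\mathrm{Hom}(J,A)=0$ forces $\mathrm{Hom}(\sigma_\beta M,A)=0$ as well given $\mathrm{Hom}(L,A)=0$). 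For $\beta<\alpha-1$ the inductive hypothesis on the $L_{n,i}$ and the description $\sigma_\beta M=\sigma_\beta L$, $\partial_\beta M = (\partial_\beta L)\oplus$-extension-by-$J$ do the job. Finally (3): by (1), $\partial_{\alpha-1}M\cong J$ has finite rank? — no, $J$ need not have finite rank, but the $\sup_n d_n<\infty$ condition on the ranks of the $J_n$ shows $J$ has \emph{finite rank}, hence by Theorem \ref{Theorem:derivation} (or Theorem \ref{Theorem:structure-1}(2)) $M$ has plain $R$-projective length exactly $\alpha$; combined with $\sigma_{\alpha-1}M=L$ having plain length $\alpha-1$ and Corollary \ref{Corollary:constructible}, the plain length of $M$ is $\alpha$.

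The main obstacle I expect is the orthogonality hypothesis $\mathfrak{t}^{L_{n,i}}_{\alpha_n-1}(\xi_{n,i})\bot\mathfrak{t}^J(e_{n,i})$: this is precisely what is needed to guarantee that the derived tower of $\mathrm{Hom}(M_n,A)$ (for $A=R$, computing $\mathrm{Ext}(M,R)$) does not ``collapse'' at stage $\alpha-1$ — i.e.\ that the fishbone built from the $L_{n,i}$ along the spine coming from $J$ is \emph{straight} in the sense of Definition \ref{Definition:fishbone-tower}, so that Proposition \ref{Proposition:fishbone-tower} applies and yields plain length exactly $\alpha$ rather than merely $\le\alpha$. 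Making the translation between the type-orthogonality condition on elements and the straightness condition $p^{(\ell,0)}(B^{(\ell)}_{\beta[k]-1}[k])+p^{(k,\ell)}(A^{(\ell)})=A^{(k)}$ on the dual towers is the technical heart; I would carry it out by dualizing via $M\mapsto(\mathrm{Hom}(M_n,R))$ (Corollary \ref{Corollary:fully-faithful-lim1}) and checking that $\mathfrak{t}_\beta$-orthogonality of the distinguished elements corresponds exactly to the image-sum condition on derived towers, using Lemma \ref{Lemma:lim1-derived-tower} to control $A^{(\ell)}_{\beta}$.
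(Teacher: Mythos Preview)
Your direct argument for (1) breaks at the claim $\sigma_{\alpha-1}J=0$. Since $M$ is coreduced and $M\twoheadrightarrow J$, any homomorphism $J\to R$ pulls back to a nonzero homomorphism $M\to R$, so $J$ is coreduced. Combined with $\sup_n d_n<\infty$ (which you correctly invoke in (3)), $J$ has \emph{finite rank}; for a nonzero finite-rank coreduced module one has $\Sigma J=J$ and hence $\sigma_1 J=J$, so in fact $\sigma_{\alpha-1}J=J$ for every $\alpha\ge 2$. The inclusion $\sigma_{\alpha-1}M/L\hookrightarrow\sigma_{\alpha-1}J=J$ is therefore vacuous and cannot yield $\sigma_{\alpha-1}M=L$. (Your side remark that $J$ is a ``colimit of finite-rank coreduced modules'' is also off: the $J_n$ are finite \emph{free}, hence projective and not coreduced, so Theorem~\ref{Theorem:structure-rank-1-bis} does not apply as stated.)

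The orthogonality hypothesis is thus not a technical verification to be deferred --- it is the only mechanism preventing $\sigma_{\alpha-1}M$ from strictly containing $L$, and the paper's proof is essentially the computation you sketch in your final paragraph. One chooses finite submodules $M_k\subseteq M$ so that the tower $\boldsymbol{C}=(\mathrm{Hom}(M_k,R))$ is the fishbone tower with spine $\boldsymbol{A}=(\mathrm{Hom}(J_n,R))$ and ribs $\boldsymbol{B}[n]=(\mathrm{Hom}(L_k[n],R))$; the condition $\mathfrak{t}_{\beta[n]-1}^{L_{n,i}}(\xi_{n,i})\bot\mathfrak{t}^{J}(e_{n,i})$ is exactly what makes this fishbone \emph{straight} in the sense of Definition~\ref{Definition:fishbone-tower}, so Proposition~\ref{Proposition:fishbone-tower} gives $\boldsymbol{C}_{\alpha-1}\cong\boldsymbol{A}$ and hence $\mathrm{Ph}^{\alpha-1}\mathrm{Ext}(M,R)\cong\mathrm{lim}^1\boldsymbol{A}\cong\mathrm{Ext}(J,R)$. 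Matching this against Theorem~\ref{Theorem:derivation}(3) and invoking the duality of Proposition~\ref{Proposition:duality-Ext} yields $\partial_{\alpha-1}M\cong J$ and $\sigma_{\alpha-1}M=L$. Your reductions of (2) and (3) to (1) are essentially correct, but (1) itself must go through the dual-tower calculation rather than the short-exact-sequence argument you propose.
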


\begin{proof}
We prove that the conclusion holds by induction on $\alpha $. Observe that
(2) and (3) easily follow from (1), so it remains to prove (1).

For $i<d_{n}$ let $\left( L_{n,i,k}\right) _{k\in \omega }$ be an increasing
sequence of finite submodules with union equal to $L_{n,i}$ and $\xi
_{n,i}\in L_{n,i,0}$. Set 
\begin{equation*}
L[n]:=L_{n,0}\oplus \cdots \oplus L_{n,d_{n}-1}
\end{equation*}%
and%
\begin{equation*}
L_{k}[n]:=L_{n,0,k}\oplus \cdots \oplus L_{n,d_{n}-1,k\text{.}}
\end{equation*}%
Define 
\begin{equation*}
M_{k}:=L_{k}[0]\oplus \cdots \oplus L_{k}[k]\oplus L_{k}[-1]
\end{equation*}%
where $L_{k}[-1]$ is the submodule of $M$ generated by $f_{k,i}$ for $%
i<d_{k} $. Then we have that $\left( M_{k}\right) $ is an increasing
sequence of finite submodules of $M$ with union equal to $M$.\ We have that%
\begin{equation*}
\boldsymbol{C}:=\left( \mathrm{Hom}\left( M_{n},A\right) \right) _{n\in
\omega }
\end{equation*}%
is the tower associated with the fishbone of towers $\left( \boldsymbol{A},%
\boldsymbol{B}[n]\right) _{n\in \omega }$, which is straight of length $%
\alpha $; see Definition \ref{Definition:fishbone-tower}. Here $\boldsymbol{A%
}$ is the tower%
\begin{equation*}
\left( \mathrm{Hom}\left( J_{n},A\right) \right) _{n\in \omega }
\end{equation*}%
while $\boldsymbol{B}[n]$ is the tower%
\begin{equation*}
\left( \mathrm{Hom}\left( L_{k}[n],A\right) \right) _{k\in \omega }\text{.}
\end{equation*}%
The isomorphism%
\begin{equation*}
\mathrm{Ext}\left( M,A\right) \cong \mathrm{lim}^{1}\boldsymbol{C}\text{,}
\end{equation*}%
induces an isomorphism%
\begin{equation*}
\mathrm{Ph}^{\beta }\mathrm{Ext}\left( M,A\right) \cong \mathrm{lim}^{1}%
\boldsymbol{C}_{\beta }
\end{equation*}%
for $\beta <\omega _{1}$, considering the description of the Solecki
subgroups of $\mathrm{Ext}$ and $\mathrm{lim}^{1}$; see Theorem \ref%
{Theorem:phantom-Ext} and Theorem \ref{Theorem:Solecki-lim1}.

The description of the derived towers of a fishbone tower given by
Proposition \ref{Proposition:fishbone-tower} yields%
\begin{equation*}
\mathrm{Ph}^{\alpha -1}\mathrm{Ext}\left( M,A\right) \cong \mathrm{lim}^{1}%
\boldsymbol{A}\text{.}
\end{equation*}%
This shows that the canonical projection $M\rightarrow J$ corresponds to $%
M\rightarrow \partial _{\alpha -1}M$ and hence $\sigma _{\alpha -1}M=L$.
\end{proof}

\begin{corollary}
If $C$ is a countable coreduced straight extractable flat module, then its
(plain) straight length equals its (plain) length.
\end{corollary}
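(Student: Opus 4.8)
The plan is to deduce both the ``plain'' and the general version of the statement almost immediately from Proposition~\ref{Proposition:straight-constructible} and Corollary~\ref{Corollary:constructible}, after dealing by hand with the base of the recursion. Throughout, ``length'' of an extractable module will mean the common value provided by Corollary~\ref{Corollary:constructible} (extractable length $=$ $R$-projective length $=$ projective length), so that it suffices to compare straight length with, say, extractable length.

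First I would treat the \emph{plain} statement. Let $C$ be a countable coreduced flat module of plain straight length $\alpha$; by definition $\alpha$ is either $1$ or a successor ordinal $>1$. If $\alpha>1$, then part~(3) of Proposition~\ref{Proposition:straight-constructible} says precisely that the plain length of $C$ equals $\alpha$, and we are done. If $\alpha=1$, then $C$ is nontrivial of finite rank; since $C$ is coreduced (hence torsion-free) the short exact sequence $0\to 0\to C\to C\to 0$ witnesses, via the definition of (plain) extractable length, that $C$ has plain extractable length at most $1$, while nontriviality rules out extractable length $0$; hence the plain extractable length of $C$ is exactly $1$, and by Corollary~\ref{Corollary:constructible} its plain length is $1$. (Theorem~\ref{Theorem:structure-rank-1} gives the same conclusion, identifying finite rank with plain projective length at most one.)

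Next I would deduce the general statement. Suppose $C$ has straight length $\alpha$, so $C\cong\bigoplus_{n}C_{n}$ with $C_{n}$ of plain straight length $\beta[n]$ and $\sup_{n}\beta[n]=\alpha$. By the plain case, each $C_{n}$ has plain length $\beta[n]$. On the one hand, a countable direct sum of modules of plain extractable length $\beta[n]\le\alpha$ has, by definition, extractable length at most $\alpha$. On the other hand, each $C_{n}$ is a direct summand of $C$, so for every countable flat $A$ the module $\mathrm{Ext}(C_{n},A)$ is a direct summand of $\mathrm{Ext}(C,A)$, whence its Solecki length is at most that of $\mathrm{Ext}(C,A)$; thus the ($R$-)projective length of $C$ is at least that of $C_{n}$, i.e.\ at least $\beta[n]$, for every $n$, hence at least $\sup_{n}\beta[n]=\alpha$. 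Combining, $C$ has extractable length exactly $\alpha$, so by Corollary~\ref{Corollary:constructible} its length is $\alpha$. The reverse direction is then automatic: a straight extractable module has \emph{some} (plain) straight length $\gamma$, which by what was just proved equals its (plain) length; in particular the (plain) straight length is well defined and coincides with the (plain) length.

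I do not expect a genuinely hard step here, since all the real work has been front-loaded into Proposition~\ref{Proposition:straight-constructible}. The two points needing mild care are: (i) matching the base case $\alpha=1$ to the convention that a nonzero finite-rank coreduced flat module has plain length $1$, which I would do through the definition of (plain) extractable length together with Corollary~\ref{Corollary:constructible}; and (ii) the fact that the projective (equivalently extractable, equivalently $R$-projective) length of a countable direct sum $\bigoplus_{n}C_{n}$ of coreduced flat modules is $\sup_{n}(\text{length of }C_{n})$, which I would extract from the isomorphism $\mathrm{Ext}(C,R)\cong\prod_{n}\mathrm{Ext}(C_{n},R)$ used in the proof of Theorem~\ref{Theorem:structure-1}(9) and the monotonicity of Solecki length under products and direct summands.
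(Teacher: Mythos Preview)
Your proposal is correct and follows essentially the same approach as the paper, which simply states that the corollary is a consequence of Proposition~\ref{Proposition:straight-constructible}. You are more explicit than the paper about the base case $\alpha=1$ and the passage from the plain case to the direct-sum (non-plain) case, but these are exactly the details one would unpack from the paper's one-line reference.
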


\subsection{Fishbones of modules}

We now define the analogue for modules of the notion of \emph{fishbone} of
towers.

\begin{definition}
A \emph{fishbone }of modules $(\left( L_{n,i},\xi
_{n,i},J_{n},e_{n,i},\varphi _{n}\right) _{i<d_{n}})_{n\in \omega }$ is
given by:

\begin{enumerate}
\item an inductive sequence $\left( J_{n},\varphi _{n}\right) $ of nonzero
finite free modules with $\mathrm{rank}(J_{n})=d_{n}$ and $d_{0}=1$;

\item free bases $\left( e_{n,i}\right) _{i<d_{n}}$ of $J_{n}$ for $n<\omega 
$;

\item pointed countable coreduced flat modules $\left( L_{n,i},\xi
_{n,i}\right) $ for $n\in \omega $ and $i<d_{n}$.
\end{enumerate}

The module%
\begin{equation*}
L_{n}:=L_{n,0}\oplus \cdots \oplus L_{n,d_{n}-1}
\end{equation*}%
is called the $n$-th rib of the fishbone.

We can define the homomorphism $\psi _{n}:J_{n}\rightarrow L_{n}$ by way of
the universal property of the free module by $e_{n,i}\mapsto \xi _{n,i}$.
The \emph{fishbone colimit }$M$ is the colimit of the sequence%
\begin{equation*}
(L_{0}\oplus \cdots \oplus L_{n}\oplus J_{n})
\end{equation*}%
with respect to the morphisms 
\begin{eqnarray*}
L_{0}\oplus \cdots \oplus L_{n}\oplus J_{n} &\rightarrow &L_{0}\oplus \cdots
\oplus L_{n+1}\oplus J_{n+1} \\
\left( x_{0},\ldots ,x_{n},a_{n}\right) &\mapsto &\left( x_{0},\ldots
,x_{n},\psi _{n}(a_{n}),\varphi _{n}\left( a_{n}\right) \right) \text{.}
\end{eqnarray*}
\end{definition}

Notice that by definition $M$ contains the direct sum $L$ of $\left(
L_{n}\right) _{n\in \omega }$. This defines a short exact sequence%
\begin{equation*}
0\rightarrow L\rightarrow M\rightarrow J\rightarrow 0\text{.}
\end{equation*}%
Furthermore, $M$ is isomorphic to the quotient of $L\oplus J$ by the\ (pure)
submodule generated by 
\begin{equation*}
\xi _{n,i}-e_{n,i}
\end{equation*}%
for $n<\omega $ and $i<d_{n}$.

We define the notion of \emph{fishbone length }of a module, capturing the
complexity of describing it in terms of fishbones.

\begin{definition}
\label{Definition:fishbone-length}Let $M$ be a countable coreduced flat
module, and $A$ be a countable flat module.

\begin{itemize}
\item We say that $M$ has plain fishbone length $1$ if and only if it is
nontrivial and finite-rank.

\item The module $M$ has plain fishbone length a successor ordinal $\alpha
>1 $ if and only if there exists a fishbone of modules $(\left( L_{n,i},\xi
_{n,i},J_{n},e_{n,i},\varphi _{n}\right) _{i<d_{n}})_{n\in \omega }$ with
fishbone colimit $M$ such that for every $n\in \omega $ and $i<d_{n}$:

\begin{enumerate}
\item $L_{n,i}$ has plain fishbone length $\beta \lbrack n]$ where $%
\sup_{n}\beta \lbrack n]=\alpha -1$;

\item $\mathfrak{t}_{\beta \lbrack n]-1}^{L_{n,i}}\left( \xi _{n,i}\right)
\bot \mathfrak{t}^{J}\left( e_{n,i}\right) $.
\end{enumerate}

\item The module $M$ has fishbone length $\alpha $ if and only if it is the
direct sum of a sequence of modules $M_{n}$ of plain fishbone length $\beta
\lbrack n]$ with $\sup_{n}\beta \lbrack n]=\alpha $.
\end{itemize}

A module is\emph{\ fishbone extractable }if and only if it has (plain)
fishbone length $\alpha $ for some countable ordinal $\alpha $.
\end{definition}

It is easily shown by induction that any fishbone extractable module is
straight extractable.

\begin{proposition}
\label{Proposition:fishbone-module}Suppose that $M$ is a countable coreduced
flat module of plain fishbone length $\alpha >1$, and let $(\left(
L_{n,i},\xi _{n,i},J_{n},e_{n,i},\varphi _{n}\right) _{i<d_{n}})_{n\in
\omega }$ be a fishbone of modules as in Definition \ref%
{Definition:fishbone-length} with fishbone colimit $L$ witnessing that $M$
has plain fishbone length $\alpha $. Let $L$ be the module 
\begin{equation*}
\mathrm{co\mathrm{lim}}_{n}(L_{n,0}\oplus \cdots \oplus L_{n,d_{n-1}})
\end{equation*}%
identified with a submodule of $M$. Suppose that $A$ is a countable flat
module. Then

\begin{enumerate}
\item $L=\sigma _{\alpha -1}M$ and $\partial _{\alpha -1}M\cong J$;

\item $M$ is $A$-independent if and only if $L_{n,i}$ is $A$-independent for 
$n<\omega $ and $i<\omega $ and $\mathrm{Hom}\left( J,A\right) =0$;

\item $M$ has plain length $\alpha $.
\end{enumerate}
\end{proposition}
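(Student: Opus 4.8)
The plan is to reduce Proposition~\ref{Proposition:fishbone-module} to Proposition~\ref{Proposition:straight-constructible} by observing that a module of plain fishbone length $\alpha$ is, essentially by construction, a module of plain straight length $\alpha$. First I would carry out an induction on $\alpha$ establishing that if $(\left(L_{n,i},\xi_{n,i},J_{n},e_{n,i},\varphi_{n}\right)_{i<d_{n}})_{n\in\omega}$ is a fishbone of modules witnessing that $M$ has plain fishbone length $\alpha>1$, with fishbone colimit $M$, then the very same data $L\subseteq M$ (the direct sum of the ribs), the inductive sequence $\left(J_{n},\varphi_{n}\right)$, the bases $\left(e_{n,i}\right)$, the elements $f_{n,i}\in M$ (images of $e_{n,i}$ under the canonical maps of the colimit), and the pointed submodules $\left(L_{n,i},\xi_{n,i}\right)$ satisfy all the requirements in Definition~\ref{Definition:straight-constructible} of plain straight length $\alpha$. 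The bookkeeping items are routine: the identification of the quotient $J=M/L$ with $\mathrm{co\mathrm{lim}}_{n}\left(J_{n},\varphi_{n}\right)$ and the relation $f_{n,i}=\sum_{j<d_{n+1}}a_{j}f_{n+1,j}+\xi_{n,i}$ are immediate from the definition of the bonding maps $L_{0}\oplus\cdots\oplus L_{n}\oplus J_{n}\rightarrow L_{0}\oplus\cdots\oplus L_{n+1}\oplus J_{n+1}$; and by the inductive hypothesis each $L_{n,i}$ of plain fishbone length $\beta[n]$ has plain straight length $\beta[n]$. The only substantive point is the matching of the orthogonality conditions: in the straight-length definition one requires $\mathfrak{t}_{\alpha_{n}-1}^{L_{n,i}}\left(\xi_{n,i}\right)\bot\mathfrak{t}^{J}\left(e_{n,i}\right)$, while the fishbone-length definition gives $\mathfrak{t}_{\beta[n]-1}^{L_{n,i}}\left(\xi_{n,i}\right)\bot\mathfrak{t}^{J}\left(e_{n,i}\right)$; since $\beta[n]$ is the plain length of $L_{n,i}$ these indices agree (recall $\alpha_{n}$ is the chosen successor ordinal, and for the rib $L_{n,i}$ of plain straight/fishbone length $\beta[n]$ the relevant $\alpha_{n}$-type is precisely the $\beta[n]$-type). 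I would spell out this identification carefully, as it is the place where the two definitions could superficially look different.

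Once this reduction is in hand, conclusions (1), (2), and (3) follow directly: by Proposition~\ref{Proposition:straight-constructible}(1) we get $L=\sigma_{\alpha-1}M$ and $J\cong\partial_{\alpha-1}M$, which is exactly (1); Proposition~\ref{Proposition:straight-constructible}(2) gives that $M$ is $A$-independent iff each $L_{n,i}$ is $A$-independent and $\mathrm{Hom}\left(J,A\right)=0$, which is (2) (using the inductive hypothesis to translate fishbone-$A$-independence of the ribs into straight-$A$-independence, or simply noting both notions coincide with $A$-independence in the sense of Definition~\ref{Definition:minimal-family}'s neighboring definition, i.e.\ $\mathrm{Hom}\left(\sigma_{\beta}L_{n,i},A\right)=\mathrm{Hom}\left(\partial_{\beta}L_{n,i},A\right)=0$ for all $\beta$); and Proposition~\ref{Proposition:straight-constructible}(3) gives that the plain length of $M$ is $\alpha$, which is (3). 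The base case $\alpha=1$ is trivial since plain fishbone length $1$ means nontrivial finite-rank, matching plain straight length $1$ and plain length $1$.

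The main obstacle I anticipate is purely definitional hygiene rather than deep mathematics: one must check that the submodule generated by the elements $\xi_{n,i}-e_{n,i}$ in $L\oplus J$ (whose quotient is $M$) is pure, so that $L$ is pure in $M$ and $J=M/L$ is flat, which is needed for the $\alpha$-types to be well-defined and for the machinery of Section~\ref{Section:higher} to apply; this should follow from Lemma~\ref{Lemma:minimal} applied inductively, using that each $\xi_{n,i}$ generates a pure submodule of $L_{n,i}$ (the $L_{n,i}$ being coreduced flat, and $\xi_{n,i}$ being assumed minimal via the pointed-module structure) together with the fact that $\left(e_{n,i}\right)_{i<d_{n}}$ is a basis of a free direct summand at each finite stage. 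A secondary subtlety is ensuring that the condition $\sup_{n}d_{n}<\infty$ demanded by Definition~\ref{Definition:straight-constructible} (item (3), bounded ranks of the $J_{n}$) is automatic here: in Definition~\ref{Definition:fishbone-length} the ranks $d_{n}=\mathrm{rank}(J_{n})$ need not a priori be bounded, but in fact the fishbone colimit construction yields $\partial_{\alpha-1}M\cong J=\mathrm{co\mathrm{lim}}_{n}J_{n}$, and passing to a cofinal subsequence of $\left(J_{n},\varphi_{n}\right)$ with stabilized image ranks (possible after replacing by an isomorphic inductive sequence, since each $J_{n}$ is finite free) reduces to the bounded-rank case without changing $M$ up to isomorphism; I would insert a short remark to this effect. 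With these two hygiene points addressed, the proof is a clean three-line invocation of Proposition~\ref{Proposition:straight-constructible}.
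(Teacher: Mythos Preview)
Your approach is exactly the paper's: the proof there is the single sentence ``This is a consequence of Proposition~\ref{Proposition:straight-constructible},'' leaning on the remark just before the proposition that any fishbone extractable module is straight extractable.

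One caution on your secondary subtlety. Your proposed fix for the $\sup_{n}d_{n}<\infty$ discrepancy---passing to a cofinal subsequence of $(J_{n},\varphi_{n})$ with stabilized image ranks---does not work in general: if $J=\mathrm{colim}_{n}J_{n}$ has infinite rank then the $d_{n}$ are necessarily unbounded and no cofinal subsequence of finite free modules with colimit $J$ can have bounded rank. The paper does not address this point either. If you inspect the proof of Proposition~\ref{Proposition:straight-constructible}, the condition $\sup_{n}d_{n}<\infty$ is never invoked: the argument reduces to Proposition~\ref{Proposition:fishbone-tower}, which carries no boundedness hypothesis on the spine. So the honest resolution is not to massage the inductive sequence but simply to observe that the boundedness clause in Definition~\ref{Definition:straight-constructible} is inessential for (1) and (2), and that for (3) one needs $\partial_{\alpha-1}M\cong J$ to have finite rank---which, if it is to hold, must already be built into (or follow from the other clauses of) the fishbone definition, since it is exactly the content of plain length $\alpha$ via Theorem~\ref{Theorem:structure-rank-1}.
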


\begin{proof}
This is a consequence of Proposition \ref{Proposition:straight-constructible}%
.
\end{proof}

\begin{corollary}
If $C$ is a countable coreduced fishbone extractable flat module, then its
(plain) fishbone length equals its (plain) length.
\end{corollary}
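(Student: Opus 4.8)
The plan is to deduce the statement from Proposition~\ref{Proposition:fishbone-module}, in exactly the same way the analogous corollary for straight extractable modules was deduced from Proposition~\ref{Proposition:straight-constructible}, and the corresponding statement for extractable modules (Corollary~\ref{Corollary:constructible}) from Theorem~\ref{Theorem:structure-1}. Recall first that by Definition~\ref{Definition:fishbone-length} a fishbone extractable module is straight extractable, hence extractable, so by Corollary~\ref{Corollary:constructible} both its (plain) fishbone length and its (plain) length are defined, the latter being the common value of the (plain) extractable, projective, and $R$-projective lengths.

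First I would treat the plain case. Suppose $C$ is coreduced with plain fishbone length $\alpha$. If $\alpha=1$, then $C$ is nontrivial of finite rank, so by Theorem~\ref{Theorem:structure-rank-1} together with Corollary~\ref{Corollary:constructible} it has plain length $1$. If $\alpha>1$, fix a fishbone of modules witnessing the plain fishbone length, with fishbone colimit $C$; then Proposition~\ref{Proposition:fishbone-module}(3) states directly that $C$ has plain length $\alpha$. Since the plain length is a genuine invariant of $C$ (being the plain Solecki length of $\mathrm{Ext}(C,R)$), in either case the plain fishbone length equals the plain length.

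Then I would handle the general (non-plain) case by the direct-sum argument already used in the proof of Theorem~\ref{Theorem:structure-1}(9). If $C$ has fishbone length $\alpha$, then by Definition~\ref{Definition:fishbone-length} $C\cong\bigoplus_{n\in\omega}C_{n}$ with $C_{n}$ of plain fishbone length $\beta[n]$ and $\sup_{n}\beta[n]=\alpha$; each $C_{n}$ may be taken coreduced since $C$ is. By the plain case just established, $C_{n}$ has plain length $\beta[n]$ for every $n$. Since $\mathrm{Ext}(C,R)\cong\prod_{n}\mathrm{Ext}(C_{n},R)$ and the (plain) Solecki length of a countable product is the supremum of the (plain) Solecki lengths of the factors, $C$ has length $\sup_{n}\beta[n]=\alpha$, which gives the corollary.

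The proof is essentially bookkeeping: all the real content sits in Proposition~\ref{Proposition:fishbone-module} and, behind it, in the simultaneous induction of Theorem~\ref{Theorem:structure-1} and the derived-tower computation for fishbone towers (Proposition~\ref{Proposition:fishbone-tower}). The only delicate point is checking that the ``plain fishbone length $=$ plain length'' identification propagates correctly through the direct-sum clause defining non-plain fishbone length, i.e.\ that the supremum appearing there matches the supremum governing the Solecki length of $\prod_{n}\mathrm{Ext}(C_{n},R)$; this holds because $\mathrm{Ext}(-,R)$ sends countable direct sums to countable products and Solecki length is monotone and continuous at limit ordinals.
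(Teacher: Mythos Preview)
Your proof is correct and follows exactly the approach the paper intends: the corollary is stated without proof in the paper, as an immediate consequence of Proposition~\ref{Proposition:fishbone-module}(3) for the plain case together with the direct-sum argument (as in Theorem~\ref{Theorem:structure-1}(9)) for the non-plain case. You have simply spelled out the details the paper leaves implicit.
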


\section{Constructions for modules over a Dedekind domain\label%
{Section:constructions-Dedekind}}

In this section, we assume that $R$ is not only a Pr\"{u}fer domain but a 
\emph{Dedekind domain}. We let $\mathbb{P}$ be the collection of nonzero 
\emph{prime ideals }of $R$. For $\mathfrak{p}\in \mathbb{P}$, we let $R_{%
\mathfrak{p}}$ be the localization of $R$ at $\mathfrak{p}$, which is a
discrete valuation domain (DVR), namely a PID that is also a local ring. We
furthermore assume that $R$ is countable and not a field, which implies that 
$R$ is countably infinite (as a finite division ring is necessarily a field).

\subsection{Factorization of ideals}

Recall that in a Dedekind domain, every ideal can be written uniquely (up to
the order of the factors) as a product of prime ideals \cite[Corollary 9.4]%
{atiyah_introduction_1969}. Furthermore, it has \emph{Krull dimension }$1$,
whence the nonzero prime ideals are precisely the maximal ideals. Every
ideal is generated by two elements \cite[Proposition C-5.99]%
{rotman_advanced_2015}. Notice that if $\mathfrak{p},\mathfrak{q}$ are
distinct elements of $\mathbb{P}$, then $\mathfrak{p}$ and $\mathfrak{q}$
are \emph{coprime}, namely $\mathfrak{p}+\mathfrak{q}=R$. More generally, if 
$\mathfrak{p}_{0},\ldots ,\mathfrak{p}_{n},\mathfrak{q}_{0},\ldots ,%
\mathfrak{q}_{n}\in \mathbb{P}$ satisfy%
\begin{equation*}
\left\{ \mathfrak{p}_{0},\ldots ,\mathfrak{p}_{n}\right\} \cap \left\{ 
\mathfrak{q}_{0},\ldots ,\mathfrak{q}_{n}\right\} =\varnothing
\end{equation*}%
then $\mathfrak{p}_{0}\cdots \mathfrak{p}_{n}$ and $\mathfrak{q}_{0}\cdots 
\mathfrak{q}_{n}$ are coprime. If $I$ is an ideal of $R$, then by the Krull
Intersection Theorem \cite{anderson_krull_1975}, the sequence $\left(
I^{n}\right) _{n\in \omega }$ has trivial intersection.

\begin{lemma}
\label{Lemma:minimal-combination}Suppose that $B$ is a countable flat
module. Let $\left( \xi _{i}\right) _{i\in \omega }$ be a family of minimal
elements of $B$. Let $N$ be the submodule generated by $\left( \xi
_{i}\right) _{i\in \omega }$. Then $rB\cap N$ is the submodule generated by $%
\left( r\xi _{i}\right) _{i\in \omega }$. Suppose that $a_{n}\in R^{\left(
\omega \right) }$ for $n\in \omega $ are such that, for every $\mathfrak{p}%
\in \mathbb{P}$, the elements $a_{n}+\mathfrak{p}\in \left( R/\mathfrak{p}%
\right) ^{\left( \omega \right) }$ for $n\in \omega $ are linearly
independent. Define $\eta _{n}:=\sum_{i\in \omega }a_{n}\left( i\right) \xi
_{i}$ for $n\in \omega $. Then $\left( \eta _{n}\right) _{n\in \omega }$ is
a family of minimal elements of $B$.
\end{lemma}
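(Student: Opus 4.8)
The plan is to handle the two assertions in turn. The equality for $rB\cap N$ is just a reformulation of purity: since $(\xi_i)_{i\in\omega}$ is a minimal family, the submodule $N$ it generates is pure in $B$ by Definition~\ref{Definition:minimal-family}, hence $rB\cap N=rN$ for every $r\in R$, where $rN:=\{rx:x\in N\}$. And $rN$ is exactly the submodule generated by $(r\xi_i)_{i\in\omega}$: any $x\in N$ is a finite sum $\sum_i\mu_i\xi_i$, so $rx=\sum_i\mu_i(r\xi_i)$, while conversely each $r\xi_i$ lies in $rN$.

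For the second assertion I would first reduce to a statement inside the free module $R^{(\omega)}$. Independence of $(\xi_i)$ gives $N=\bigoplus_i R\xi_i$, so $e_i\mapsto\xi_i$ defines an isomorphism $\theta:R^{(\omega)}\to N$ with $\theta(a_n)=\eta_n$; set $M':=\langle\eta_n:n\in\omega\rangle\subseteq N$ and $M'':=\theta^{-1}(M')=\langle a_n:n\in\omega\rangle\subseteq R^{(\omega)}$, and identify $N$ with $R^{(\omega)}$ along $\theta$, so that $\eta_n$ corresponds to $a_n$ and $M'$ to $M''$. Everything then follows from the single claim: \emph{for every $r\in R$ (allowing $r=0$) and every finitely supported family $(\lambda_n)$ in $R$, if $\sum_n\lambda_n a_n\in rR^{(\omega)}$ then $\lambda_n\in rR$ for all $n$.} Indeed, the case $r=0$ shows $(a_n)$, and hence $(\eta_n)$, is independent; and for $r\neq 0$, any $x\in rB\cap M'$ lies in $M'\subseteq N$, so by purity of $N$ in $B$ it lies in $rN$, say $x=\sum_n\lambda_n a_n\in rR^{(\omega)}$ after identification, whence the claim gives $\lambda_n\in rR$ and $x=r\sum_n(\lambda_n/r)a_n\in rM'$. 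Thus $M'$ is relatively divisible in $B$, and since $B$ is flat over the Pr\"{u}fer domain $R$, $M'$ is pure in $B$ by Proposition~\ref{Proposition:characterize-prufer}; together with independence this says $(\eta_n)$ is a minimal family.

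To prove the claim I would argue prime by prime. In a Dedekind domain $\lambda\in rR$ iff $v_{\mathfrak p}(\lambda)\ge v_{\mathfrak p}(r)$ for all $\mathfrak p\in\mathbb P$, so fix $\mathfrak p$ and pass to the DVR $R_{\mathfrak p}$, with uniformizer $\pi$ and residue field $R_{\mathfrak p}/\mathfrak pR_{\mathfrak p}\cong R/\mathfrak p$, and put $t:=v_{\mathfrak p}(r)\in\omega\cup\{\infty\}$. Assuming not all $\lambda_n$ vanish, let $m:=\min\{v_{\mathfrak p}(\lambda_n):\lambda_n\neq 0\}$ and suppose toward a contradiction that $m<t$. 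Then $\sum_n\lambda_n a_n\in\pi^{t}R_{\mathfrak p}^{(\omega)}$, and dividing by $\pi^m$ (legitimate since $R_{\mathfrak p}^{(\omega)}$ is torsion-free) gives $\sum_n(\lambda_n\pi^{-m})a_n\in\pi^{t-m}R_{\mathfrak p}^{(\omega)}\subseteq\mathfrak pR_{\mathfrak p}^{(\omega)}$. Reducing modulo $\mathfrak pR_{\mathfrak p}^{(\omega)}$, which is canonically $(R/\mathfrak p)^{(\omega)}$ and sends each $a_n$ to $a_n+\mathfrak p$, produces an $(R/\mathfrak p)$-linear relation among the $a_n+\mathfrak p$ with a unit coefficient at any index attaining $m$, hence nontrivial, contradicting the assumed linear independence of the $a_n+\mathfrak p$ over $R/\mathfrak p$. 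Therefore $m\ge t$, i.e. $v_{\mathfrak p}(\lambda_n)\ge v_{\mathfrak p}(r)$ for all $n$; as $\mathfrak p$ was arbitrary, $\lambda_n\in rR$.

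I do not expect a genuine obstacle. The routine points are the transport of purity through $\theta$ and the compatibility of the two reductions of $R^{(\omega)}$ modulo $\mathfrak p$ (via $R/\mathfrak p$ and via $R_{\mathfrak p}/\mathfrak pR_{\mathfrak p}$). The one bookkeeping subtlety worth flagging is to run the valuation comparison uniformly in $r$, so that the case $r=0$ — which yields independence of $(\eta_n)$ — need not be split off as a separate argument.
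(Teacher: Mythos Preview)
Your proof is correct and follows essentially the same approach as the paper's: both reduce the minimality of $(\eta_n)$ to the statement that $\sum_n\lambda_n a_n\in rR^{(\omega)}$ forces $\lambda_n\in rR$, and both deduce this from the assumed linear independence of the $a_n$ modulo each prime. Your presentation is somewhat more explicit—you isolate the claim in $R^{(\omega)}$ via the isomorphism $\theta$ and argue via valuations at each $\mathfrak p$, whereas the paper asserts independence modulo every finite ideal $J$ up front and then works directly in $B$—but the mathematical content is the same.
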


\begin{proof}
Since every finite ideal $J$ of $R$ is a product of prime ideals, we also
have that $\left( a_{n}+J\right) _{n\in \omega }\in \left( R/J\right)
^{\left( \omega \right) }$ are linearly independent.

The assertion about $rB\cap N$ follows from minimality of $\left( \xi
_{i}\right) $. Furthermore, if $\left( \lambda _{n}\right) $ are such that%
\begin{equation*}
\sum_{n}\lambda _{n}\eta _{n}=0
\end{equation*}%
then we have%
\begin{equation*}
\sum_{i}\left( \sum_{n}\lambda _{n}a_{n}\left( i\right) \right) \xi _{i}=0
\end{equation*}%
Hence, for every $i$%
\begin{equation*}
\sum_{n}\lambda _{n}a_{n}\left( i\right) =0
\end{equation*}%
This shows%
\begin{equation*}
\sum_{n}\lambda _{n}a_{n}=0\in R^{\left( \omega \right) }\text{.}
\end{equation*}%
By hypothesis, $\left( a_{n}\right) $ are an independent family, whence $%
\lambda _{n}=0$ for every $n\in \omega $. This shows that $\left( \eta
_{n}\right) $ is an independent family.

Let $M$ be the submodule generated by $\left( \eta _{n}\right) $. Consider $%
x\in rB\cap M\subseteq rB\cap N$. Then we have%
\begin{equation*}
x=\sum_{i}x_{i}\xi _{i}
\end{equation*}%
for some $x_{i}=ry_{i}\in rR$. We also have%
\begin{equation*}
x=\sum_{n}\mu _{n}\eta _{n}
\end{equation*}%
for some $\mu _{n}\in R$. Thus, we have 
\begin{equation*}
\sum_{i}x_{i}\xi _{i}=\sum_{j}\left( \sum_{n}\mu _{n}a_{n}\left( j\right)
\right) \xi _{j}\text{.}
\end{equation*}%
This implies%
\begin{equation*}
ry_{i}=x_{i}=\sum_{n}\mu _{n}a_{n}\left( i\right)
\end{equation*}%
for $i\in \omega $. Hence, we have that $\mu _{n}\in rR$ for every $n\in
\omega $, and $x\in rM$. This concludes the proof.
\end{proof}

\subsection{Completions}

We define the $\mathfrak{p}$-\emph{completion }of $R$ to be $\hat{R}_{%
\mathfrak{p}}:=\mathrm{lim}_{n}R/\mathfrak{p}^{n}R$, with the canonical
embedding $\iota _{\mathfrak{p}}:R\rightarrow \hat{R}_{\mathfrak{p}}$. As
this can be seen as the completion of the DVR $R_{\mathfrak{p}}$, we have
that $\hat{R}_{\mathfrak{p}}$ is a DVR. The $\mathfrak{p}$-completion of a
countable flat module $M$ is the pro-countable Polish flat $\hat{R}_{%
\mathfrak{p}}$-module 
\begin{equation*}
\hat{M}_{\mathfrak{p}}:=\mathrm{lim}_{n}M/\mathfrak{p}^{n}M\text{.}
\end{equation*}%
We let the completion of $R$ to be $\hat{R}:=\mathrm{lim}_{I}R/I$ where $I$
ranges among the ideals of $R$. We identify $R$ with a subring of $\hat{R}$.
By the Chinese Remainder Theorem, we have an isomorphism%
\begin{equation*}
\hat{R}\cong \prod_{\mathfrak{p}\in \mathbb{P}}\hat{R}_{\mathfrak{p}}\text{,}
\end{equation*}%
and under this isomorphism the inclusion $R\rightarrow \hat{R}$ corresponds
to the diagonal embedding%
\begin{equation*}
a\mapsto \left( \iota _{\mathfrak{p}}\left( a\right) \right) _{\mathfrak{p}%
\in \mathbb{P}}\text{.}
\end{equation*}%
We define $\hat{K}_{\mathfrak{p}}$ to be the field of fractions of the DVR $%
\hat{R}_{\mathfrak{p}}$, and set $\hat{K}$ to be the divisible hull $%
K\otimes \hat{R}$ of the $R$-module $\hat{R}$. This can be identified with
the\emph{\ restricted product}%
\begin{equation*}
\prod_{\mathfrak{p}\in \mathbb{P}}(\hat{K}_{\mathfrak{p}}:\hat{R}_{\mathfrak{%
p}})=\left\{ \left( a_{\mathfrak{p}}\right) \in \prod_{\mathfrak{p}\in 
\mathbb{P}}\hat{K}_{\mathfrak{p}}:\left\{ \mathfrak{p}\in \mathbb{P}:a_{%
\mathfrak{p}}\notin \hat{R}_{\mathfrak{p}}\right\} \text{ is finite}\right\} 
\text{.}
\end{equation*}%
If $M$ is a flat module, we let 
\begin{equation*}
\hat{M}:=\mathrm{lim}_{I}M/IM\cong \prod_{\mathfrak{p}\in \mathbb{P}}\hat{M}%
_{\mathfrak{p}}
\end{equation*}%
be its completion, which we can regard as an $\hat{R}$-module. Its divisible
hull $K\otimes \hat{M}$ as an $\hat{R}$-module can be seen as a $\hat{K}$%
-module.

The kernel of the canonical homomorphism $M\rightarrow \hat{M}$ is the
largest divisible submodule $\Delta M$ of $M$, which is a direct summand of $%
M$. We say that $M$ is \emph{reduced }if $\Delta M$ is trivial. In this
case, we can and will identify $M$ with a submodule of its completion.
Notice that if $M$ is a reduced nonzero flat module, then it must be
infinite, whence $\hat{M}$ is uncountable.

For $\mathfrak{p}\in \mathbb{P}$, the kernel of the canonical homomorphism $%
M\rightarrow \hat{M}_{\mathfrak{p}}$ is the intersection $\mathfrak{p}%
^{\infty }M$ of $\mathfrak{p}^{n}M$ for $n\in \mathbb{N}$. Notice that 
\begin{equation*}
\mathfrak{p}^{\infty }M\cap \mathfrak{q}^{\infty }M=\left( \mathfrak{pq}%
\right) ^{\infty }M
\end{equation*}%
for $\mathfrak{p},\mathfrak{q}\in \mathbb{P}$. Furthermore, by the Ideal
Factorization Theorem, 
\begin{equation*}
\bigcap_{\mathfrak{p}\in \mathbb{P}}\mathfrak{p}^{\infty }M=\Delta M\text{.}
\end{equation*}%
We say that $M$ is \emph{completely reduced }if $\mathfrak{p}^{\infty }M=0$
for every $\mathfrak{p}\in \mathbb{P}$. In this case, we can identify $M$
with a submodule of $\hat{M}_{\mathfrak{p}}$ for any $\mathfrak{p}\in 
\mathbb{P}$.

\subsection{Divisibility}

For $\mathfrak{p}\in \mathbb{P}$ and $\lambda \in \hat{K}_{\mathfrak{p}}$,
we define $K_{\mathfrak{p}}\left( \lambda \right) $ to be the (countable)
subfield of $\hat{K}_{\mathfrak{p}}$ generated by $K$ and $\lambda $.

\begin{definition}
\label{Definition:essential-equivalence}Given elements $\lambda $ and $\mu $
of $\hat{R}_{\mathfrak{p}}$, we say that $\lambda $ and $\mu $ are \emph{%
essentially equivalent }if $K_{\mathfrak{p}}\left( \lambda \right) =K_{%
\mathfrak{p}}\left( \mu \right) $.
\end{definition}

Thus, we have that $\lambda $ and $\mu $ are essentially equivalent if there
exist nonzero $s,t\in R$ such that $r\lambda =s\mu $. Notice that the
relation of essential equivalence has countable equivalence classes, since $%
R $ is countable.

\begin{lemma}
\label{Lemnma:countably-dependent}Fix $\mathfrak{p}\in \mathbb{P}$. The set
of $\lambda \in \hat{R}_{\mathfrak{p}}$ such that the elements $\left(
\lambda ^{n}\right) _{n\in \omega }$ of $\hat{K}_{\mathfrak{p}}$ are
linearly dependent over $K$ is \emph{countable}.
\end{lemma}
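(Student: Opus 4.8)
The plan is to recognize the stated condition as algebraicity of $\lambda$ over $K$, and then dispatch the claim by a cardinality count. First I would observe that, for $\lambda \in \hat{R}_{\mathfrak{p}}$, the family $\left( \lambda^{n} \right)_{n \in \omega}$ is linearly dependent over $K$ precisely when there is a nonzero polynomial $p \in K[x]$ with $p(\lambda) = 0$: a nontrivial $K$-linear relation $\sum_{n=0}^{d} c_{n} \lambda^{n} = 0$ is literally the same datum as such a polynomial, and conversely. Since $\iota_{\mathfrak{p}} : R \rightarrow \hat{R}_{\mathfrak{p}}$ is injective (its kernel is $\bigcap_{n} \mathfrak{p}^{n} R = 0$ by the Krull Intersection Theorem) and $\hat{K}_{\mathfrak{p}}$ is the field of fractions of the DVR $\hat{R}_{\mathfrak{p}}$, the field $\hat{K}_{\mathfrak{p}}$ contains a canonical copy of $K$. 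Thus the set in question is exactly $\hat{R}_{\mathfrak{p}} \cap \overline{K}$, where $\overline{K}$ denotes the relative algebraic closure of $K$ inside $\hat{K}_{\mathfrak{p}}$.

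Next I would carry out the count. As $R$ is a countable domain, its field of fractions $K$ is countable, hence so is the polynomial ring $K[x]$. Every nonzero $p \in K[x]$ has at most $\deg(p)$ roots in the field $\hat{K}_{\mathfrak{p}}$, so $\overline{K}$ is a countable union of finite sets and is therefore countable. A fortiori, its subset $\hat{R}_{\mathfrak{p}} \cap \overline{K}$ is countable, which is the assertion of the lemma.

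I do not expect a genuine obstacle here; the argument is essentially the classical fact that the set of elements algebraic over a countable field is countable. The only points that warrant (minor) care are that $\hat{K}_{\mathfrak{p}}$ is honestly a field, so that root-counting is legitimate — this holds because $\hat{R}_{\mathfrak{p}}$ is a DVR — and that the equivalence between linear dependence of the powers $\left( \lambda^{n} \right)_{n \in \omega}$ and the existence of a nonzero annihilating polynomial is genuinely an equivalence, which is immediate from the definition of linear dependence of an infinite family (some finite subfamily, hence some initial segment $1, \lambda, \dots, \lambda^{d}$, is dependent).
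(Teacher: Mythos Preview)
Your proof is correct and follows essentially the same idea as the paper's: both hinge on the countability of $K[x]$ (equivalently $K^{(\omega)}$) together with the fact that a nonzero polynomial over the field $\hat{K}_{\mathfrak{p}}$ has only finitely many roots. The paper phrases this via a pigeonhole contradiction (an uncountable set of such $\lambda$ would force uncountably many of them to satisfy a single nonzero polynomial), whereas you give the direct count, but the underlying argument is the same.
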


\begin{proof}
Suppose by contradiction that there exists an uncountable set $X\subseteq 
\hat{K}_{\mathfrak{p}}$ such that for every $\lambda \in X$, $\left( \lambda
^{n}\right) _{n\in \omega }$ are linearly dependent over $K$. Then for every 
$\lambda \in X$ there exists $a\in K^{\left( \omega \right) }$ such that $%
\sum_{n}a_{n}\lambda ^{n}=0$. Since $X$ is uncountable while $K^{\left(
\omega \right) }$ is countable, after passing to an uncountable subset of $X$
we can assume that there exists $a\in K^{\left( \omega \right) }$ such that $%
\sum_{n}a_{n}\lambda ^{n}=0$ for all $\lambda \in X$. Since $\hat{K}_{p}$ is
infinite, a nonzero polynomial function has only finitely many roots. This
implies that $a=0$.
\end{proof}

We let:

\begin{itemize}
\item for $\mathfrak{p}\in \mathbb{P}$, $J_{\mathfrak{p}}\left( R\right) $
be the (uncountable) set of $\lambda \in \hat{R}_{\mathfrak{p}}\setminus K$
such that $\lambda \equiv 1\ \mathrm{mod}\mathfrak{p}$ and $\left( \lambda
^{n}\right) _{n\in \omega }$ are linearly independent over $K$ in $\hat{K}_{%
\mathfrak{p}}$;

\item for $\mathfrak{p}\in \mathbb{P}$, $\boldsymbol{J}_{\mathfrak{p}}(R)$
be the set of essential equivalence classes of elements of $J_{\mathfrak{p}%
}\left( R\right) $;

\item $J\left( R\right) $ to be the product of $J_{\mathfrak{p}}\left(
R\right) $ for $\mathfrak{p}\in \mathbb{P}$;

\item $\boldsymbol{J}\left( R\right) $ to be the product of $\boldsymbol{J}_{%
\mathfrak{p}}\left( R\right) $ for $\mathfrak{p}\in \mathbb{P}$.
\end{itemize}

If $\lambda \in J_{\mathfrak{p}}\left( R\right) $, we let $[\lambda ]\in 
\boldsymbol{J}_{\mathfrak{p}}\left( R\right) $ be its essential equivalence
class. Likewise, if $\lambda =\left( \lambda _{\mathfrak{p}}\right) \in
J\left( R\right) $, we let $[\lambda ]=([\lambda _{\mathfrak{p}}])_{p\in 
\mathbb{P}}\in \boldsymbol{J}\left( R\right) $.

\begin{definition}
\label{Definition:E}Let $M$ be a countable flat module. For $\mathfrak{p}\in 
\mathbb{P}$, identify $M/\mathfrak{p}^{\infty }M$ with a submodule of $\hat{M%
}_{\mathfrak{p}}$. If $\tau \in J\left( R\right) $, and $\boldsymbol{\tau }%
=[\tau ]\in \boldsymbol{J}\left( R\right) $, we let $E_{\tau }\left(
M\right) =E_{\boldsymbol{\tau }}\left( M\right) $ be the pure submodule of $%
M $ generated by the set of $x\in M$ such that for every $\mathfrak{p}\in 
\mathbb{P}$ there exists $\mu $ essentially equivalent to $\tau _{\mathfrak{p%
}}$ and $z\in M$ such that either 
\begin{equation*}
\mu (x+\mathfrak{p}^{\infty }M)=z+\mathfrak{p}^{\infty }M\in M/\mathfrak{p}%
^{\infty }M\subseteq \hat{M}_{\mathfrak{p}}
\end{equation*}%
or%
\begin{equation*}
\mu \left( z+\mathfrak{p}^{\infty }M\right) =x+\mathfrak{p}^{\infty }M\in M/%
\mathfrak{p}^{\infty }M\subseteq \hat{M}_{\mathfrak{p}}\text{.}
\end{equation*}
\end{definition}

It is clear that $M\mapsto E_{\tau }\left( M\right) $ is a subfunctor of the
identity.

\begin{definition}
\label{Definition:Omega}Suppose that $A$ is countable flat module. For $%
\mathfrak{p}\in \mathbb{P}$, define $\Omega _{\mathfrak{p}}\left( A\right) $
to be the set of $\tau \in J_{\mathfrak{p}}\left( R\right) $ such that for
every $\mu $ essentially equivalent to $\tau $ and for every nonzero $x\in A/%
\mathfrak{p}^{\infty }A$, $\mu x\notin A/\mathfrak{p}^{\infty }A$. Let also $%
\Omega \left( A\right) =\prod_{\mathfrak{p}\in \mathbb{P}}\Omega _{\mathfrak{%
p}}\left( A\right) $.
\end{definition}

\begin{lemma}
\label{Lemma:Omega}Suppose that $A$ is a countable flat module. For every $%
\mathfrak{p}\in \mathbb{P}$, $J_{\mathfrak{p}}(R)\setminus \Omega _{%
\mathfrak{p}}\left( A\right) $ is countable.
\end{lemma}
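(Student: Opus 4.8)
The plan is to exploit that $\hat A_{\mathfrak p}$ is a torsion-free module over the discrete valuation ring $\hat R_{\mathfrak p}$, so that a multiplication relation $\mu x = y$ with $x\neq 0$ determines $\mu$ uniquely, and then to conclude by counting: there are only countably many such relations to consider inside the countable module $A/\mathfrak p^{\infty}A$, and each essential equivalence class is countable.

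First I would record the basic facts. Since $A$ is a countable flat module over the countable Dedekind domain $R$, the quotient $A/\mathfrak p^{\infty}A$ is a countable module, and by the discussion of $\mathfrak p$-completions it embeds as a submodule of $\hat A_{\mathfrak p}=\mathrm{lim}_n A/\mathfrak p^nA$, which is a \emph{flat} $\hat R_{\mathfrak p}$-module. As $\hat R_{\mathfrak p}$ is a discrete valuation ring, flatness gives torsion-freeness (Proposition \ref{Proposition:characterize-prufer}); in particular, for every nonzero $x\in A/\mathfrak p^{\infty}A$ the map $\hat R_{\mathfrak p}\to \hat A_{\mathfrak p}$, $r\mapsto rx$, is injective, so for any $y$ there is at most one $\mu\in\hat R_{\mathfrak p}$ with $\mu x=y$.

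Next I would set up the counting. For each ordered pair $(x,y)$ of nonzero elements of $A/\mathfrak p^{\infty}A$ let $\mu_{x,y}$ denote the unique element of $\hat R_{\mathfrak p}$ with $\mu_{x,y}x=y$ when it exists, and let $S$ be the set of all such $\mu_{x,y}$; since $A/\mathfrak p^{\infty}A$ is countable, $S$ is countable. I then claim $J_{\mathfrak p}(R)\setminus\Omega_{\mathfrak p}(A)$ is contained in the union of the essential equivalence classes of the elements of $S$. Indeed, if $\tau\in J_{\mathfrak p}(R)$ fails to lie in $\Omega_{\mathfrak p}(A)$, then by Definition \ref{Definition:Omega} there are $\mu$ essentially equivalent to $\tau$ and a nonzero $x\in A/\mathfrak p^{\infty}A$ with $\mu x\in A/\mathfrak p^{\infty}A$. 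Because $\tau\in\hat R_{\mathfrak p}\setminus K$ and $K_{\mathfrak p}(\mu)=K_{\mathfrak p}(\tau)$, we must have $\mu\neq 0$; hence by torsion-freeness $y:=\mu x$ is a nonzero element of $A/\mathfrak p^{\infty}A$, and $\mu$ is the unique element of $\hat R_{\mathfrak p}$ sending $x$ to $y$, i.e.\ $\mu=\mu_{x,y}\in S$. Thus $\tau$ is essentially equivalent to an element of $S$.

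Finally, since $R$ is countable, each essential equivalence class is countable, so $J_{\mathfrak p}(R)\setminus\Omega_{\mathfrak p}(A)$ is a subset of a countable union of countable sets and is therefore countable. I do not expect any real obstacle here: the only point that must be invoked rather than reproved is that $\hat A_{\mathfrak p}$ is torsion-free over $\hat R_{\mathfrak p}$, which is already built into the definition of the $\mathfrak p$-completion as a flat $\hat R_{\mathfrak p}$-module, and the rest is bookkeeping with the definitions of $J_{\mathfrak p}(R)$, $\Omega_{\mathfrak p}(A)$, and essential equivalence.
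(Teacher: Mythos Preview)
Your proof is correct and follows essentially the same route as the paper's: both arguments reduce to showing that the set of $\mu\in\hat R_{\mathfrak p}$ for which some nonzero $x\in A/\mathfrak p^{\infty}A$ satisfies $\mu x\in A/\mathfrak p^{\infty}A$ is countable (by torsion-freeness of the flat $\hat R_{\mathfrak p}$-module $\hat A_{\mathfrak p}$), and then invoke countability of essential equivalence classes. You spell out a few details the paper leaves implicit (e.g., why $\mu\neq 0$ and why $y=\mu x\neq 0$), but the strategy is identical.
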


\begin{proof}
If $A$ is $\mathfrak{p}$-divisible there is nothing to prove. If $A$ is not $%
\mathfrak{p}$-divisible, then after replacing $A$ with $A/\mathfrak{p}%
^{\infty }A$ we can assume without loss of generality that $\mathfrak{p}%
^{\infty }A=0$ and hence $A$ can be identified with an $R$-submodule of its $%
\mathfrak{p}$-completion $\hat{A}_{\mathfrak{p}}$. As $\hat{A}_{\mathfrak{p}%
} $ is a flat $\hat{R}_{\mathfrak{p}}$-module, and $A$ is countable, the set
of $\tau \in \hat{R}_{\mathfrak{p}}$ such that there exists a nonzero $x\in
A $ such that $\tau x\in A$ is also countable. This concludes the proof.
\end{proof}

\begin{lemma}
\label{Lemma:Hom(-,A)}Suppose that $A$ is a countable reduced flat module,
and $\tau \in \Omega \left( A\right) $.\ Then $E_{\tau }\left( A\right) =0$.
If $M$ is a countable coreduced flat module, then $\mathrm{Hom}\left(
E_{\tau }\left( M\right) ,A\right) =0$.
\end{lemma}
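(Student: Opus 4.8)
The statement has two parts. First, for $A$ a countable reduced flat module and $\tau \in \Omega(A)$, we want $E_\tau(A) = 0$. Second, for $M$ a countable coreduced flat module, we want $\mathrm{Hom}(E_\tau(M), A) = 0$. The key is to unwind Definition~\ref{Definition:E} of $E_\tau$ and Definition~\ref{Definition:Omega} of $\Omega$, and observe that these are designed to be incompatible: $E_\tau(A)$ is generated by elements $x$ admitting, for each $\mathfrak{p}$, a ``division relation'' by some $\mu$ essentially equivalent to $\tau_\mathfrak{p}$ inside $M/\mathfrak{p}^\infty M \subseteq \hat{M}_\mathfrak{p}$, while $\tau \in \Omega(A)$ says precisely that no such relation of the form $\mu x = z$ with $x \neq 0$ holds in $A/\mathfrak{p}^\infty A$.

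For the first part, I would argue as follows. Let $x \in A$ be one of the generators of $E_\tau(A)$, so for every $\mathfrak{p} \in \mathbb{P}$ there is $\mu$ essentially equivalent to $\tau_\mathfrak{p}$ and $z \in A$ with either $\mu(x + \mathfrak{p}^\infty A) = z + \mathfrak{p}^\infty A$ or $\mu(z + \mathfrak{p}^\infty A) = x + \mathfrak{p}^\infty A$ in $\hat{A}_\mathfrak{p}$. Fix a $\mathfrak{p}$ for which $A$ is not $\mathfrak{p}$-divisible, so that $A/\mathfrak{p}^\infty A \neq 0$ and embeds into $\hat{A}_\mathfrak{p}$ (if $A$ is $\mathfrak{p}$-divisible for all $\mathfrak{p}$, then since $A$ is reduced and $\bigcap_\mathfrak{p}\mathfrak{p}^\infty A = \Delta A = 0$, a small argument shows $A = 0$ and we are done). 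In the first alternative, $\mu \cdot (x + \mathfrak{p}^\infty A) \in A/\mathfrak{p}^\infty A$; since $\tau_\mathfrak{p} \in \Omega_\mathfrak{p}(A)$ this forces $x + \mathfrak{p}^\infty A = 0$. In the second alternative, $\mu(z + \mathfrak{p}^\infty A) = x + \mathfrak{p}^\infty A \in A/\mathfrak{p}^\infty A$; again $\tau_\mathfrak{p} \in \Omega_\mathfrak{p}(A)$ forces $z + \mathfrak{p}^\infty A = 0$, hence $x + \mathfrak{p}^\infty A = 0$ as well. So $x \in \mathfrak{p}^\infty A$ for every such $\mathfrak{p}$. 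For the $\mathfrak{p}$ where $A$ is $\mathfrak{p}$-divisible, $\mathfrak{p}^\infty A$ need not be $0$, but $x$ still lies in $\mathfrak{p}^\infty A$ trivially only if $\mathfrak{p}^\infty A = A$; here I need to be a little careful: actually $\mathfrak{p}$-divisibility gives $\mathfrak{p}^\infty A = A$ when $A$ is $\mathfrak{p}$-divisible and one iterates, which does contain $x$. Combining, $x \in \bigcap_{\mathfrak{p} \in \mathbb{P}} \mathfrak{p}^\infty A = \Delta A = 0$ since $A$ is reduced. As $E_\tau(A)$ is the pure hull of a set of such elements $x$, and the generating set is $\{0\}$, we get $E_\tau(A) = 0$.

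For the second part, since $M \mapsto E_\tau(M)$ is a subfunctor of the identity, the inclusion $E_\tau(M) \hookrightarrow M$ is natural, and for any homomorphism $f: E_\tau(M) \to A$ we want to show $f = 0$. The clean way is functoriality: I would like to say that $f$ maps $E_\tau(M)$ into $E_\tau(A)$. This requires that $E_\tau$ be functorial in a way that is compatible with arbitrary homomorphisms, which is exactly the content of ``$M \mapsto E_\tau(M)$ is a subfunctor of the identity'' noted just after Definition~\ref{Definition:E}. Concretely, given $f : E_\tau(M) \to A$, extend or restrict as needed: any generator $x$ of $E_\tau(M)$ satisfies division relations by $\mu \sim \tau_\mathfrak{p}$ modulo $\mathfrak{p}^\infty M$; applying $f$ (and using that $f$ commutes with multiplication by $\mu$ and with reduction modulo $\mathfrak{p}^\infty$, since $f$ is $R$-linear and $\mathfrak{p}^\infty A \supseteq f(\mathfrak{p}^\infty M \cap E_\tau(M))$) shows $f(x)$ satisfies the corresponding division relations in $A/\mathfrak{p}^\infty A$, i.e.\ $f(x) \in E_\tau(A) = 0$. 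Hence $f$ vanishes on the generating set of $E_\tau(M)$; since $f$ is $R$-linear and continuous and $E_\tau(M)$ is the pure hull of these generators, $f = 0$ on all of $E_\tau(M)$ (purity lets us propagate vanishing: if $rf(y) = 0$ for $y$ in the pure submodule generated and $A$ is torsion-free, then $f(y) = 0$).

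**Main obstacle.** The delicate point is the bookkeeping across the primes where $A$ (resp.\ $M$) is divisible versus reduced at $\mathfrak{p}$, and making the functoriality argument in part two rigorous: one must check that applying an $R$-linear map to a relation ``$\mu x \equiv z \bmod \mathfrak{p}^\infty M$'' genuinely yields ``$\mu f(x) \equiv f(z) \bmod \mathfrak{p}^\infty A$'' in $\hat{A}_\mathfrak{p}$, which requires that $f$ extends $\mathfrak{p}$-adically continuously to the completions—true since $f$ is $R$-linear and the $\mathfrak{p}$-adic topology is functorial. I would treat this by noting $f(\mathfrak{p}^n M \cap E_\tau(M)) \subseteq \mathfrak{p}^n A$ and passing to the limit. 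Everything else is a direct unwinding of the definitions plus the reducedness hypothesis together with $\bigcap_\mathfrak{p} \mathfrak{p}^\infty A = \Delta A$.
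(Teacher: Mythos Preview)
Your proof is correct and follows the same approach as the paper: show $E_\tau(A)=0$ directly from the definitions of $E_\tau$ and $\Omega(A)$, then deduce $\mathrm{Hom}(E_\tau(M),A)=0$ from functoriality of $E_\tau$. Your write-up is in fact more careful than the paper's in distinguishing generators of $E_\tau(A)$ from arbitrary elements of the pure hull.

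One point in Part~2 deserves to be made explicit. You apply $f$ to the relation $\mu\bar x=\bar z$, but $f$ is only defined on $E_\tau(M)$, so you need $z\in E_\tau(M)$. This holds because the defining condition in Definition~\ref{Definition:E} is symmetric in $x$ and $z$: if $x$ is a generator witnessed by $z$, then $z$ is a generator witnessed by $x$, hence $z\in E_\tau(M)$. Once you know this, purity of $E_\tau(M)$ in $M$ gives $\mathfrak{p}^\infty E_\tau(M)=\mathfrak{p}^\infty M\cap E_\tau(M)$ and an injection $\widehat{E_\tau(M)}_{\mathfrak p}\hookrightarrow \hat M_{\mathfrak p}$, so the relation descends to $\widehat{E_\tau(M)}_{\mathfrak p}$ and then pushes forward along $\hat f$ to $\hat A_{\mathfrak p}$ exactly as you outline. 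Your phrase ``extend or restrict as needed'' should be replaced by this observation; no extension of $f$ to $M$ is available or required.
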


\begin{proof}
The second assertion follows from the first one since $E_{\tau }$ is a
subfunctor of the identity.

If $x\in E_{\tau }\left( A\right) $, then we have that for every $\mathfrak{p%
}\in \mathbb{P}$ there exists $\mu $ essentially equivalent to $\tau _{%
\mathfrak{p}}$ and $z\in A$ such that either%
\begin{equation*}
\mu (x+\mathfrak{p}^{\infty }A)=z+\mathfrak{p}^{\infty }A\in A/\mathfrak{p}%
^{\infty }A\subseteq \hat{A}_{\mathfrak{p}}
\end{equation*}%
or%
\begin{equation*}
\mu \left( z+\mathfrak{p}^{\infty }A\right) =x+\mathfrak{p}^{\infty }A\in A/%
\mathfrak{p}^{\infty }A\subseteq \hat{A}_{\mathfrak{p}}\text{.}
\end{equation*}%
Since $\tau _{\mathfrak{p}}\in \Omega _{\mathfrak{p}}\left( A\right) $, this
implies that $x\in \mathfrak{p}^{\infty }A$. As this holds for every $%
\mathfrak{p}\in \mathbb{P}$, $x\in \Delta A$. Hence, $x=0$ since $A$ is
reduced.
\end{proof}

\subsection{The modules $\Xi \left( \protect\tau \right) $\label%
{Subsection:XI}}

Fix an enumeration $\left( \mathfrak{p}_{n}\right) $ of $\mathbb{P}$.
Consider $\tau =\left( \tau _{\mathfrak{p}}\right) _{\mathfrak{p}\in \mathbb{%
P}}\in J\left( R\right) $. For $\mathfrak{p}\in \mathbb{P}$ and $n\in \omega 
$ pick $\tau _{\mathfrak{p},n}\in R$ such that 
\begin{equation*}
\tau _{\mathfrak{p}}\equiv \tau _{\mathfrak{p},n}\ \mathrm{mod}\mathfrak{p}%
^{n+1}\text{.}
\end{equation*}%
For $n\geq 1$, let $\Xi _{n}\left( \tau \right) $ be submodule of $K\oplus K$
generated by $e_{\mathfrak{p},0}:=\left( 1,0\right) =e$, $f_{\mathfrak{p}%
,n}:=\left( 0,1\right) =f$, and 
\begin{equation*}
I_{\mathfrak{p},n}:=\mathfrak{p}^{-n}(e-\tau _{\mathfrak{p},n}f)
\end{equation*}%
for $\mathfrak{p}\in \left\{ \mathfrak{p}_{0},\ldots ,\mathfrak{p}%
_{n}\right\} $. Clearly,%
\begin{equation*}
\mathfrak{p}^{n}I_{\mathfrak{p},n}+R\tau _{\mathfrak{p},n}f=Re
\end{equation*}%
and in particular%
\begin{equation*}
I_{\mathfrak{p},0}+R\tau _{\mathfrak{p},0}=Re\text{.}
\end{equation*}%
Notice that $\Xi _{n}\left( \tau \right) \subseteq \Xi _{n+1}\left( \tau
\right) $ for $n\in \omega $. Indeed, we have for every $\mathfrak{p}\in 
\mathbb{P}$,%
\begin{equation*}
I_{\mathfrak{p},n}=\mathfrak{p}^{-n}\left( e-\tau _{\mathfrak{p},n}f\right)
\subseteq \mathfrak{p}^{-n}\left( e-\tau _{\mathfrak{p},n+1}f+\mathfrak{p}%
^{n}f\right) \subseteq I_{\mathfrak{p},n+1}+Rf\text{.}
\end{equation*}%
Define $\Xi \left( \tau \right) =\mathrm{co\mathrm{lim}}_{n}\Xi _{n}\left(
\tau \right) \subseteq K\oplus K$.

\begin{lemma}
\label{Lemma:rank-one-E}Suppose that $M$ is a countable flat module. If $M$
has rank one, then $E_{\tau }\left( M\right) =\left\{ 0\right\} $ for every $%
\tau \in J\left( R\right) $.
\end{lemma}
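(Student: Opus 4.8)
The plan is to prove the sharper statement that $E_{\tau}(M)\subseteq\Delta M$ for every rank-one flat module $M$; since $M$ has rank one this submodule is either $\{0\}$ or $M$, and it is $\{0\}$ as soon as $M$ is reduced, which is the case of interest (a non-reduced rank-one flat module is isomorphic to the divisible, hence injective, module $K$), so we may and do assume $M$ reduced. Because $E_{\tau}(M)$ is the \emph{pure} submodule generated by the set $S$ of those $x\in M$ satisfying the condition in Definition~\ref{Definition:E}, and $\Delta M$ is a direct summand hence pure, it suffices to show $S\subseteq\Delta M$, i.e.\ that every $x\in S$ lies in $\mathfrak{p}^{\infty}M$ for every $\mathfrak{p}\in\mathbb{P}$; then $x\in\bigcap_{\mathfrak{p}}\mathfrak{p}^{\infty}M=\Delta M=\{0\}$ by the Ideal Factorization Theorem, so $S=\{0\}$ and $E_{\tau}(M)=\{0\}$. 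Fix once and for all an embedding $M\hookrightarrow K$ coming from $K\otimes_{R}M\cong K$.

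\textbf{The per-prime argument.} First I would record that for each $\mathfrak{p}\in\mathbb{P}$ one has $\mathfrak{p}^{\infty}M\in\{0,M\}$: the quotient $M/\mathfrak{p}^{\infty}M$ embeds in the torsion-free $\hat{R}_{\mathfrak{p}}$-module $\hat{M}_{\mathfrak{p}}$, hence is flat by Proposition~\ref{Proposition:characterize-prufer}, so $\mathfrak{p}^{\infty}M$ is pure in $M$ by Lemma~\ref{Lemma:pure-exact-extension}, and a pure submodule of a rank-one module has rank $0$ (so is $0$) or rank $1$ (so has torsion-free rank-zero quotient, hence is $M$). Now fix $x\in S$ and $\mathfrak{p}\in\mathbb{P}$. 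If $\mathfrak{p}^{\infty}M=M$ then $x\in\mathfrak{p}^{\infty}M$ trivially; so assume $\mathfrak{p}^{\infty}M=0$. Then $M_{\mathfrak{p}}$ is a $\mathfrak{p}$-adically separated rank-one module over the DVR $R_{\mathfrak{p}}$, hence $M_{\mathfrak{p}}\cong R_{\mathfrak{p}}$ and $\hat{M}_{\mathfrak{p}}\cong\hat{R}_{\mathfrak{p}}$, giving a compatible chain $M\subseteq K\subseteq\hat{K}_{\mathfrak{p}}$ in which $M/\mathfrak{p}^{\infty}M=M$ sits inside $\hat{M}_{\mathfrak{p}}\subseteq\hat{K}_{\mathfrak{p}}$. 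By hypothesis there are $\mu$ essentially equivalent to $\tau_{\mathfrak{p}}$ and $z\in M$ with $\mu x=z$ or $\mu z=x$, equalities holding in the field $\hat{K}_{\mathfrak{p}}$. Since $\tau_{\mathfrak{p}}\in J_{\mathfrak{p}}(R)$ we have $\tau_{\mathfrak{p}}\notin K$; essential equivalence gives nonzero $s,t\in R$ with $t\mu=s\tau_{\mathfrak{p}}$ (the remark after Definition~\ref{Definition:essential-equivalence}), so $\mu\in K$ would force $\tau_{\mathfrak{p}}=(t/s)\mu\in K$, a contradiction; hence $\mu\notin K$. Suppose $x\neq 0$. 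In the case $\mu x=z$: if $z\neq 0$ then $\mu=z/x$, computed in $\hat{K}_{\mathfrak{p}}$ but with $z,x\in K$, lies in $K$, a contradiction; if $z=0$ then $\mu x=0$ with $\mu\neq 0$ in a field forces $x=0$, a contradiction. The case $\mu z=x$ is symmetric ($z\neq 0$ gives $\mu=x/z\in K$; $z=0$ gives $x=0$). Hence $x=0\in\mathfrak{p}^{\infty}M$, as required.

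\textbf{Conclusion and main obstacle.} Combining the two cases, every $x\in S$ lies in $\mathfrak{p}^{\infty}M$ for all $\mathfrak{p}$, so $S\subseteq\Delta M=\{0\}$ and therefore $E_{\tau}(M)=\{0\}$. The only genuinely delicate point is the bookkeeping in the second paragraph: verifying that, when $\mathfrak{p}^{\infty}M=0$, one has $M_{\mathfrak{p}}\cong R_{\mathfrak{p}}$ and that the embeddings of $M$ into $K$, into $\hat{M}_{\mathfrak{p}}$, and of $K$ and $\hat{R}_{\mathfrak{p}}$ into $\hat{K}_{\mathfrak{p}}$ all agree, so that the module-theoretic relation ``$\mu x=z$ in $M/\mathfrak{p}^{\infty}M\subseteq\hat{M}_{\mathfrak{p}}$'' is literally the field-theoretic relation $\mu=z/x$ in $\hat{K}_{\mathfrak{p}}$ with $z/x\in K$. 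Once this compatibility is nailed down, the contradiction ``$\mu\notin K$ but $\mu$ is a ratio of elements of $K$'' is immediate, and everything else is formal.
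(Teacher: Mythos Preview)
Your proposal is correct and follows essentially the same approach as the paper: embed the rank-one module in $K$ and observe that the relation $\mu x=z$ (or $\mu z=x$) between nonzero elements of $M\subseteq K$ forces $\mu\in K$, contradicting $\mu\notin K$. You supply more bookkeeping than the paper's terse argument---the dichotomy $\mathfrak{p}^{\infty}M\in\{0,M\}$, the compatibility of embeddings into $\hat{K}_{\mathfrak{p}}$, and both directions of the defining condition---but the core idea is identical.
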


\begin{proof}
If $M$ is not reduced, then $M=K$. In this case, if $\mathfrak{p}\in \mathbb{%
P}$, $x,z\in M$, and $r\in R$ are nonzero such that $\tau _{\mathfrak{p}%
}z=rx $ then $\tau _{\mathfrak{p}}\in K$, which is not possible since $\tau
_{\mathfrak{p}}\notin K$.

Assume that $M$ is reduced. Suppose that $\mathfrak{p}\in \mathbb{P}$, $%
x,z\in M$, and $r\in R\setminus \left\{ 0\right\} $ are such that $\tau _{%
\mathfrak{p}}z=rx$. If $z$ is nonzero, then there exists $s\in K$ such that $%
x=sz$ and hence $\tau _{\mathfrak{p}}z=rsz$ and $\tau _{\mathfrak{p}}=rs\in
K $, which is again a contradiction.
\end{proof}

\begin{lemma}
\label{Lemma:pure-completion}Suppose that $R$ is a\ DVR with maximal ideal
generated by $p$ and $M$ is a reduced $R$-module. Then $M$ is a pure
submodule of its completion $\hat{M}$.
\end{lemma}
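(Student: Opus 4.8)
The plan is to verify purity straight from the definition, reducing first to cyclic test modules and then to a one-line lifting argument in a finite-level quotient. First I would record that since $M$ is reduced the canonical homomorphism $M\rightarrow \hat{M}=\mathrm{lim}_{n}M/p^{n}M$ is injective, its kernel being the largest divisible submodule $\Delta M=0$ as noted above; so it is legitimate to regard $M$ as a submodule of $\hat{M}$. To prove $M$ is pure in $\hat{M}$ it suffices, by the definition of pure submodule, to show that $N\otimes_{R}M\rightarrow N\otimes_{R}\hat{M}$ is injective for every $R$-module $N$. Because $-\otimes_{R}M$ and $-\otimes_{R}\hat{M}$ commute with direct limits, every module is the direct limit of its finitely generated submodules, and a filtered colimit of monomorphisms is a monomorphism, it is enough to treat finitely generated $N$; and since $R$ is a DVR (in particular a PID), every such $N$ is a finite direct sum of copies of $R$ and of $R/p^{k}R$ for $k\geq 1$, so it is enough to treat $N=R$ and $N=R/p^{k}R$. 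The case $N=R$ is exactly the injectivity of $M\rightarrow\hat{M}$ just observed.

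For $N=R/p^{k}R$, tensoring identifies the map in question with the reduction $M/p^{k}M\rightarrow \hat{M}/p^{k}\hat{M}$, whose kernel is $(M\cap p^{k}\hat{M})/p^{k}M$. So the whole statement reduces to the inclusion $M\cap p^{k}\hat{M}\subseteq p^{k}M$ for every $k\geq 1$, the reverse inclusion being immediate. This is the only genuine computation, and it is short: given $x\in M$ with $x=p^{k}y$ for some $y\in\hat{M}$, I would take the canonical projection $\pi\colon\hat{M}\rightarrow M/p^{k+1}M$, note that its restriction to $M$ is the canonical surjection $q\colon M\rightarrow M/p^{k+1}M$, choose $y_{0}\in M$ with $q(y_{0})=\pi(y)$, and then observe $q(x)=\pi(x)=p^{k}\pi(y)=q(p^{k}y_{0})$, so that $x-p^{k}y_{0}\in p^{k+1}M$, whence $x\in p^{k}(y_{0}+pM)\subseteq p^{k}M$.

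The main point to watch is bookkeeping rather than any real obstacle: not conflating $p^{k}M$ with $p^{k}\hat{M}$, being explicit that the reduction to cyclic test modules is valid (this rests on the paper's description of pure-exact sequences together with the structure theorem for finitely generated modules over a PID), and keeping straight which maps are the canonical surjections of the inverse system. The statement is essentially the classical fact that a reduced module over a complete-able DVR is pure in its $p$-adic completion, and all the content sits in the displayed lifting at the level of $M/p^{k+1}M$; I expect no step to present a serious difficulty.
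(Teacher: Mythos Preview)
Your proof is correct. Both you and the paper reduce the statement to the divisibility criterion $M\cap p^{k}\hat{M}\subseteq p^{k}M$ and verify it using the inverse-system description of $\hat{M}$, so the overall strategy is the same. The executions differ slightly: the paper first writes $a=p^{d}b$ with $b\notin pM$ (using reducedness to ensure $d<\infty$) and then extracts approximants $x_{n}\in M$ at every level, whereas you work at the single level $M/p^{k+1}M$ and avoid the height decomposition altogether. Your route is a bit longer in the preamble (the reduction via tensor products and cyclic test modules is standard over a PID and could be replaced by one sentence invoking the relative-divisibility characterization of purity), but your core computation is sharper and manifestly independent of any torsion-freeness hypothesis on $M$; the paper's phrasing ``this implies $k=d$'' and ``$x=b\in M$'' leans implicitly on cancellation that is not available in the general reduced case, even though the weaker conclusion $a\in p^{k}M$ needed for purity does follow from its argument.
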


\begin{proof}
Suppose that $x\in \hat{M}$ is such that $p^{k}x=a\in M$ for some $k\in 
\mathbb{N}$. We can write $a=p^{d}b$ where $b\in M$, $d\geq 0$, and $p$ does
not divide $b$. Then for every $n\geq d$ there exists $x_{n}\in M$ such that 
$p^{k}x_{n}\equiv p^{d}b\ \mathrm{mod}p^{n}M$. This implies that $k=d$, and $%
x_{n}\equiv b\ \mathrm{mod}p^{n}M$ for every $n\geq d$. Thus, $x=b\in M$.
\end{proof}

Recall that, for a countable flat $R$-module $M$, we let $M_{\mathfrak{p}}$
be $R_{\mathfrak{p}}$-module obtained as the localization of $M$ at $%
\mathfrak{p}$. Explicitly, $M_{\mathfrak{p}}$ is the $R_{\mathfrak{p}}$%
-submodule of $K\otimes _{R}M$ generated by $M$, thus its elements are of
the form $ax$ for $x\in M$ and $a\in R_{\mathfrak{p}}$.

\begin{lemma}
\label{Lemma:reduce-local}Suppose that $M$ is a countable flat $R$-module,
and $E$ is a submodule of $M$. Then:

\begin{enumerate}
\item $M$ is completely reduced if and only if $M_{\mathfrak{p}}$ is reduced
for every $\mathfrak{p}\in \mathbb{P}$;

\item $E$ is pure in $M$ if and only if $E_{\mathfrak{p}}$ is pure in $M_{%
\mathfrak{p}}$ for every $\mathfrak{p}\in \mathbb{P}$.
\end{enumerate}
\end{lemma}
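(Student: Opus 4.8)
The plan is to reduce both statements to the local descriptions of reducedness and purity over the discrete valuation domains $R_{\mathfrak{p}}$, using the Dedekind hypothesis only through the elementary fact that $\mathfrak{p}^{n}$ is contained in no maximal ideal other than $\mathfrak{p}$, so that (by the Chinese Remainder Theorem) $(s)+\mathfrak{p}^{n}=R$ for every $s\in R\setminus\mathfrak{p}$.

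For (1), I would first use the above to observe that each $M/\mathfrak{p}^{n}M$ is already an $R_{\mathfrak{p}}$-module and that multiplication by any $s\notin\mathfrak{p}$ is injective on it; equivalently $\mathfrak{p}^{n}M=\{y\in M:\exists s\notin\mathfrak{p},\ sy\in\mathfrak{p}^{n}M\}$. This $\mathfrak{p}$-saturation property lets one interchange localization at $\mathfrak{p}$ with the countable intersection, giving $\bigcap_{n}\mathfrak{p}^{n}M_{\mathfrak{p}}=\bigcap_{n}(\mathfrak{p}^{n}M)_{\mathfrak{p}}=(\bigcap_{n}\mathfrak{p}^{n}M)_{\mathfrak{p}}=(\mathfrak{p}^{\infty}M)_{\mathfrak{p}}$. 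Since $R_{\mathfrak{p}}$ is a DVR and $M_{\mathfrak{p}}$ is torsion-free, its largest divisible submodule is exactly $\bigcap_{n}\mathfrak{p}^{n}M_{\mathfrak{p}}$, so $M_{\mathfrak{p}}$ is reduced iff $(\mathfrak{p}^{\infty}M)_{\mathfrak{p}}=0$; and because $\mathfrak{p}^{\infty}M\subseteq M$ is torsion-free it embeds into its localization, so $(\mathfrak{p}^{\infty}M)_{\mathfrak{p}}=0$ iff $\mathfrak{p}^{\infty}M=0$. Quantifying over all $\mathfrak{p}\in\mathbb{P}$ yields (1).

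For (2), the key point is that, as $M$ is flat over the Pr\"{u}fer domain $R$, Lemma \ref{Lemma:pure-exact-extension} identifies purity of $E$ in $M$ with flatness, hence torsion-freeness, of $M/E$; the same holds for $E_{\mathfrak{p}}\subseteq M_{\mathfrak{p}}$ over the (Pr\"{u}fer) domain $R_{\mathfrak{p}}$, so $E_{\mathfrak{p}}$ is pure in $M_{\mathfrak{p}}$ iff $M_{\mathfrak{p}}/E_{\mathfrak{p}}\cong(M/E)_{\mathfrak{p}}$ is torsion-free over $R_{\mathfrak{p}}$, using exactness of localization. It then suffices to prove that a module $Q$ over a domain $R$ is torsion-free iff $Q_{\mathfrak{p}}$ is torsion-free over $R_{\mathfrak{p}}$ for every prime $\mathfrak{p}$: writing $Q_{\mathrm{t}}=\ker(Q\to K\otimes_{R}Q)$ and localizing (again using exactness, and $K\otimes_{R}Q\cong K\otimes_{R_{\mathfrak{p}}}Q_{\mathfrak{p}}$) identifies $(Q_{\mathrm{t}})_{\mathfrak{p}}$ with the torsion submodule of $Q_{\mathfrak{p}}$, and a module is zero iff all of its localizations vanish. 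Applying this with $Q=M/E$ finishes the proof.

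All of the computations involved are standard facts about localization and contain no serious difficulty; the only step requiring any care is the interchange of localization with the infinite intersection $\bigcap_{n}\mathfrak{p}^{n}M$ in part (1), which is precisely why I would isolate the $\mathfrak{p}$-saturation identity $\mathfrak{p}^{n}M=\{y\in M:\exists s\notin\mathfrak{p},\ sy\in\mathfrak{p}^{n}M\}$ at the outset.
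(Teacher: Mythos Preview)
Your proposal is correct. For part (1) your argument and the paper's are close cousins: both rest on the coprimality $(s)+\mathfrak{p}^{n}=R$ for $s\notin\mathfrak{p}$, but you package this as a saturation identity and then commute localization with the intersection $\bigcap_n\mathfrak{p}^nM$, whereas the paper does a direct element-chase (take $y\in\mathfrak{p}^{\infty}M_{\mathfrak{p}}$, find for each $n$ an ideal $J_n$ coprime to $\mathfrak{p}$ with $J_n y\subseteq\mathfrak{p}^nM$, and use $J_n+\mathfrak{p}^n=R$ to force $y\in\mathfrak{p}^nM$). Your formulation is slightly more structural and makes explicit the one nontrivial step, the interchange of localization with the infinite intersection.

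For part (2) the routes genuinely diverge. The paper argues by elements: the forward direction is a short chase, and for the converse it factors $Rr=\mathfrak{p}_1\cdots\mathfrak{p}_n$ and inducts on $n$, using at each step that $E_{\mathfrak{p}}$ pure in $M_{\mathfrak{p}}$ forces $y\in E_{\mathfrak{p}}$, hence $Jy\subseteq E$ for some $J$ coprime to $\mathfrak{p}$, whence $y\in(J+\mathfrak{p})y\subseteq E$. Your approach instead invokes Lemma~\ref{Lemma:pure-exact-extension} to convert purity into torsion-freeness of the quotient, then uses exactness of localization together with the standard local--global principle for torsion-freeness. This is cleaner and avoids the induction entirely; the trade-off is that it relies on the earlier characterization of purity, while the paper's argument is self-contained at the element level.
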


\begin{proof}
(1) Suppose that $\mathfrak{p}\in \mathbb{P}$. If $M_{\mathfrak{p}}$ is
reduced then $\mathfrak{p}^{\infty }M\subseteq \mathfrak{p}^{\infty }M_{%
\mathfrak{p}}=\left\{ 0\right\} $. If $M$ is completely reduced, and $y\in 
\mathfrak{p}^{\infty }M_{\mathfrak{p}}$, then one can write $y=ax$ with $%
a\in R_{\mathfrak{p}}$ and $x\in M$. Since $y\in \mathfrak{p}^{\infty }M_{%
\mathfrak{p}}$, for every $n\in \omega $, $\mathfrak{p}^{-n}y\subseteq M_{%
\mathfrak{p}}$. Thus, there is an ideal $J_{n}$ coprime with $\mathfrak{p}$
such that $\mathfrak{p}^{-n}y\subseteq J_{n}^{-1}M$. Thus, $J_{n}y\subseteq 
\mathfrak{p}^{n}M$. Since $J_{n}$ is coprime with $\mathfrak{p}$ we have $%
J_{n}+\mathfrak{p}^{n}=R$. Thus, we have 
\begin{equation*}
Ry=\left( J_{n}+\mathfrak{p}^{n}\right) y\subseteq J_{n}y+\mathfrak{p}%
^{n}y\subseteq \mathfrak{p}^{n}M\text{.}
\end{equation*}%
As this holds for every $n\in \omega $, we conclude $y\in \mathfrak{p}%
^{\infty }M=0$. This concludes the proof.

(2) Suppose that $E$ is pure in $M$. Then if $y\in M_{\mathfrak{p}}$ and $%
r\in R_{\mathfrak{p}}$ are such that $ry\in E_{\mathfrak{p}}$, there exists
an ideal $J$ coprime with $\mathfrak{p}$ such that $ry\in J^{-1}E$ and $r\in
J^{-1}R$. Thus, we have that $Jry\subseteq E$ and $Jr\subseteq R$. If $s\in
Jr$ then we have $sy\in E$ and hence $y\in E$ since $E$ is pure in $M$.

Conversely, suppose that $E_{\mathfrak{p}}$ is pure in $M_{\mathfrak{p}}$
for every $\mathfrak{p}\in \mathbb{P}$. Suppose that $y\in M$ and $r\in R$
are such that $ry\in E$. Then there exist prime ideals $\mathfrak{p}%
_{1},\ldots ,\mathfrak{p}_{n}$ such that $Rr=\mathfrak{p}_{1}\cdots 
\mathfrak{p}_{n}$. We prove that $y\in E$ by induction on $n$. If $n=1$ then
we have $Rr=\mathfrak{p}$. Thus, we have that $\mathfrak{p}y\subseteq
E\subseteq E_{\mathfrak{p}}$. Since $E_{\mathfrak{p}}$ is pure in $M_{%
\mathfrak{p}}$, this implies that $y\in E_{\mathfrak{p}}$. Thus, there
exists an ideal $J$ coprime with $\mathfrak{p}$ such that $y\in J^{-1}E$.
Thus, we have $Jy\subseteq E$. Hence $Ry=\left( J+\mathfrak{p}\right)
y\subseteq E$ and $y\in E$. This shows that $y\in E$ when $n=1$. If the
conclusion holds for $n-1$, then we have $\mathfrak{p}_{1}\cdots \mathfrak{p}%
_{n-1}y\subseteq E$ by the case $n=1$, and then $y\in E$ by the case $n-1$.
\end{proof}

We now establish some properties of the modules $\Xi \left( \tau \right) $.
Recall the definition of $\Omega \left( A\right) $ for some countable flat
module $A$ from \ref{Definition:Omega}.

\begin{lemma}
\label{Lemma:XI}For $\tau =\left( \tau _{\mathfrak{p}}\right) _{\mathfrak{p}%
\in \mathbb{P}}\in J\left( R\right) $, consider the pointed module $\left(
\Xi \left( \tau \right) ,e\right) $ defined above. Denote by $E$ the
submodule of $\Xi \left( \tau \right) $ generated by $e$. Then:

\begin{enumerate}
\item $\Xi \left( \tau \right) $ is completely reduced;

\item $\left( \Xi \left( \tau \right) ,e\right) $ is a well-pointed module;

\item $E$ is the submodule of $\Xi \left( \tau \right) $ comprising the $%
\tau $-divisible elements;

\item $E$ is a characteristic submodule of $\Xi \left( \tau \right) $;

\item $E_{\tau }\left( \Xi \left( \tau \right) \right) =\Xi \left( \tau
\right) $;

\item $\Xi \left( \tau \right) $ is rigid;

\item for $\lambda \in J\left( R\right) $ not essentially equivalent to $%
\tau $, $E_{\lambda }\left( \Xi \left( \tau \right) \right) =0$;

\item if $A$ is a countable reduced flat module and $\tau \in \Omega \left(
A\right) $, then $\mathrm{Hom}\left( \Xi \left( \tau \right) ,A\right) =0$;
\end{enumerate}
\end{lemma}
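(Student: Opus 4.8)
Everything reduces to a local analysis at each nonzero prime $\mathfrak{p}$ via the localization criteria of Lemma~\ref{Lemma:reduce-local}. The one device I will use repeatedly is the $K$-linear functional
\[
\phi_{\mathfrak{p}}\colon K\oplus K\longrightarrow \hat K_{\mathfrak{p}},\qquad (x_1,x_2)\mapsto x_2+x_1\tau_{\mathfrak{p}}.
\]
Since $\mathfrak{q}^{-m}R_{\mathfrak{p}}=R_{\mathfrak{p}}$ for $\mathfrak{q}\neq\mathfrak{p}$, one has $\Xi(\tau)_{\mathfrak{p}}=R_{\mathfrak{p}}e+R_{\mathfrak{p}}f+\sum_{n}\mathfrak{p}^{-n}R_{\mathfrak{p}}(e-\tau_{\mathfrak{p},n}f)$, and $\phi_{\mathfrak{p}}(e)=\tau_{\mathfrak{p}}$, $\phi_{\mathfrak{p}}(f)=1$, $\phi_{\mathfrak{p}}\bigl(a(e-\tau_{\mathfrak{p},n}f)\bigr)=a(\tau_{\mathfrak{p}}-\tau_{\mathfrak{p},n})\in\mathfrak{p}\hat R_{\mathfrak{p}}$ for $a\in\mathfrak{p}^{-n}R_{\mathfrak{p}}$, so $\phi_{\mathfrak{p}}$ maps $\Xi(\tau)_{\mathfrak{p}}$ into $\hat R_{\mathfrak{p}}$; and $\phi_{\mathfrak{p}}$ is injective on $K\oplus K$ because $x_2+x_1\tau_{\mathfrak{p}}=0$ with $x_1\neq0$ would give $\tau_{\mathfrak{p}}\in K$. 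As $\hat R_{\mathfrak{p}}$ is $\mathfrak{p}$-reduced this proves (1) by Lemma~\ref{Lemma:reduce-local}(1). For (2), $\phi_{\mathfrak{p}}(R_{\mathfrak{p}}e)=R_{\mathfrak{p}}\tau_{\mathfrak{p}}$ is a unit multiple of $R_{\mathfrak{p}}$, hence pure in $\hat R_{\mathfrak{p}}$ and so pure in $\phi_{\mathfrak{p}}(\Xi(\tau)_{\mathfrak{p}})$; thus $R_{\mathfrak{p}}e$ is pure in $\Xi(\tau)_{\mathfrak{p}}$ and Lemma~\ref{Lemma:reduce-local}(2) gives (2). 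The local computation also shows $\Xi(\tau)_{\mathfrak{p}}/R_{\mathfrak{p}}e\cong K$; hence $\Xi(\tau)/Re$ is rank one, torsion-free and divisible, so $\cong K$, and $\Xi(\tau)\cap Ke=\bigcap_{\mathfrak{p}}(\Xi(\tau)_{\mathfrak{p}}\cap Ke)=\bigcap_{\mathfrak{p}}R_{\mathfrak{p}}e=Re$. These two facts (purity of $Re$ and the quotient being $K$) are what separate $Re$ from the rest and will be used throughout.

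\textbf{Rigidity and (4).} An endomorphism $\psi$ of $\Xi(\tau)$ extends $K$-linearly to a matrix $\bigl(\begin{smallmatrix}a&b\\c&d\end{smallmatrix}\bigr)$ on $K\oplus K$, so $\psi(e)=ae+cf$ and $\psi(f)=be+df$. Since $\mathfrak{p}^n\mathfrak{p}^{-n}=R$, one has $e-\tau_{\mathfrak{p},n}f\in\mathfrak{p}^n\Xi(\tau)$ for all large $n$; applying $\psi$ and then $\phi_{\mathfrak{p}}$ (which is $K$-linear and carries $\mathfrak{p}^n\Xi(\tau)_{\mathfrak{p}}$ into $\mathfrak{p}^n\hat R_{\mathfrak{p}}$) gives $(c-\tau_{\mathfrak{p},n}d)+(a-\tau_{\mathfrak{p},n}b)\tau_{\mathfrak{p}}\in\mathfrak{p}^n\hat R_{\mathfrak{p}}$, and letting $n\to\infty$ with $\tau_{\mathfrak{p},n}\to\tau_{\mathfrak{p}}$ forces $c+(a-d)\tau_{\mathfrak{p}}-b\tau_{\mathfrak{p}}^2=0$. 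As $1,\tau_{\mathfrak{p}},\tau_{\mathfrak{p}}^2$ are $K$-linearly independent (part of the definition of $J_{\mathfrak{p}}(R)$) and $a,b,c,d\in K$, we get $b=c=0$, $a=d$, so $\psi$ is multiplication by $a\in K$; then $ae=\psi(e)\in\Xi(\tau)\cap Ke=Re$ gives $a\in R$. Hence $\mathrm{End}(\Xi(\tau))=R$, which proves (6). In particular $\mathrm{End}(\Xi(\tau))$ is a domain, so $\Xi(\tau)$ has no nontrivial idempotent and no projective direct summand, i.e.\ it is coreduced (needed for (8)). Part (4) is then immediate: multiplication by an element of $R$ preserves $E=Re$ (equivalently, (4) follows from (3) once $\tau$-divisibility is recognized as an intrinsic condition).

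\textbf{Parts (5) and (8).} The relation $e-\tau_{\mathfrak{p},n}f\in\mathfrak{p}^n\Xi(\tau)$ together with $\tau_{\mathfrak{p},n}\to\tau_{\mathfrak{p}}$ shows that in the $\mathfrak{p}$-adic completion $\widehat{\Xi(\tau)}_{\mathfrak{p}}$ one has $e=\tau_{\mathfrak{p}}f$. Taking $\mu=\tau_{\mathfrak{p}}$ (essentially equivalent to itself) and $z=f$, this is exactly a witness in Definition~\ref{Definition:E} showing that both $e$ and $f$ belong to the generating set of $E_{\tau}(\Xi(\tau))$; since $Re+Rf$ has full rank $2$ its pure hull is all of $\Xi(\tau)$, giving (5). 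For (8), let $\psi\colon\Xi(\tau)\to A$ with $A$ reduced and $\tau\in\Omega(A)$; completing $\psi$ at $\mathfrak{p}$ and applying it to $e=\tau_{\mathfrak{p}}f$ yields $\psi(e)+\mathfrak{p}^{\infty}A=\tau_{\mathfrak{p}}\bigl(\psi(f)+\mathfrak{p}^{\infty}A\bigr)$ in $A/\mathfrak{p}^{\infty}A\subseteq\hat A_{\mathfrak{p}}$, so the defining property of $\Omega_{\mathfrak{p}}(A)$ forces $\psi(f)\in\mathfrak{p}^{\infty}A$; as this holds for all $\mathfrak{p}$ and $A$ is reduced, $\psi(f)=0$, hence $\psi(e)=0$. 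Then $\psi$ kills the full-rank submodule $Re+Rf$, so it factors through the torsion module $\Xi(\tau)/(Re+Rf)$ and vanishes because $A$ is torsion-free.

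\textbf{Parts (3) and (7), and the main obstacle.} I expect these to be the delicate points. For (3), the construction makes $e$ (and hence every element of $Re$) $\tau$-divisible; for the converse one identifies $\widehat{\Xi(\tau)}_{\mathfrak{p}}\cong\hat R_{\mathfrak{p}}$ with $e\mapsto1$, $f\mapsto\tau_{\mathfrak{p}}^{-1}$ — which follows once one checks $e\notin\mathfrak{p}\Xi(\tau)_{\mathfrak{p}}$ by a valuation count using $\tau_{\mathfrak{p}}-\tau_{\mathfrak{p},n}\in\mathfrak{p}^{n+1}$ — and observes that an element outside $Re$ has nonzero ``$f$-component'', so dividing it by $\tau_{\mathfrak{p}}$ would require a ``$\tau_{\mathfrak{p}}^{-2}$-term'' not present in $\phi_{\mathfrak{p}}(\Xi(\tau)_{\mathfrak{p}})$, again by $K$-linear independence of the powers of $\tau_{\mathfrak{p}}$; this gives $E=Re$, and then (4) alternatively. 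For (7), fix a prime $\mathfrak{q}$ with $[\lambda_{\mathfrak{q}}]\neq[\tau_{\mathfrak{q}}]$. If $x\in\Xi(\tau)$ lies in the generating set of $E_{\lambda}(\Xi(\tau))$, pick $\mu$ essentially equivalent to $\lambda_{\mathfrak{q}}$ and $z\in\Xi(\tau)$ with $\mu x=z$ or $\mu z=x$ in $\widehat{\Xi(\tau)}_{\mathfrak{q}}$; applying the completion $\widehat{\phi}_{\mathfrak{q}}\colon\widehat{\Xi(\tau)}_{\mathfrak{q}}\to\hat R_{\mathfrak{q}}$ of $\phi_{\mathfrak{q}}$ gives $\mu=\phi_{\mathfrak{q}}(z)/\phi_{\mathfrak{q}}(x)$ (or its reciprocal) whenever $x\neq0$ (respectively $z\neq 0$), which is a Möbius expression in $\tau_{\mathfrak{q}}$ lying in $K_{\mathfrak{q}}(\tau_{\mathfrak{q}})$. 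A short case analysis on the coordinates of $x,z$ then shows that either $\mu\in K$ — impossible since $\mu$ is essentially equivalent to $\lambda_{\mathfrak{q}}\notin K$ — or $K_{\mathfrak{q}}(\mu)=K_{\mathfrak{q}}(\tau_{\mathfrak{q}})$, which with $K_{\mathfrak{q}}(\mu)=K_{\mathfrak{q}}(\lambda_{\mathfrak{q}})$ contradicts $[\lambda_{\mathfrak{q}}]\neq[\tau_{\mathfrak{q}}]$; hence $x=0$ and $E_{\lambda}(\Xi(\tau))=0$. The main obstacle is the bookkeeping with the local rings $R_{\mathfrak{p}}$, $\hat R_{\mathfrak{p}}$ and their completions — and, in (3), pinning down the exact sense of ``$\tau$-divisible'' — but once the injective functional $\phi_{\mathfrak{p}}$, the purity of $Re$, the identity $\Xi(\tau)/Re\cong K$, and the $K$-linear independence of the powers of $\tau_{\mathfrak{p}}$ are in hand, each clause is short.
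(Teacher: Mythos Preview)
Your proof is correct and follows a genuinely different organization from the paper's. The paper reduces everything to the DVR case via Lemma~\ref{Lemma:reduce-local} and then argues each item separately: (1) by contradiction using Lemma~\ref{Lemma:rank-one-E} (if not reduced then $\Xi(\tau)\cong K\oplus L$ with $L$ of rank one, contradicting $e\in E_\tau(\Xi(\tau))$), (2) by a direct divisibility computation, (3) by passing to the one-dimensional $K_p(\tau_p)$-vector space $K_p(\tau_p)\otimes\Xi(\tau)$, then (4) from (3), and only then (6) from (4). Your single $K$-linear embedding $\phi_{\mathfrak{p}}\colon\Xi(\tau)_{\mathfrak{p}}\hookrightarrow\hat R_{\mathfrak{p}}$ does the same algebraic work but more uniformly: (1) and (2) fall out immediately from the target being $\mathfrak{p}$-reduced and $\tau_{\mathfrak{p}}$ being a unit, and you obtain rigidity (6) directly by a matrix-coefficient argument, using one prime to kill the off-diagonal entries, without passing through (3). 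The logical route $(6)\Rightarrow(4)$ is cleaner than the paper's $(3)\Rightarrow(4)\Rightarrow(6)$. For (7) both arguments are the same M\"obius observation in $K_{\mathfrak{q}}(\tau_{\mathfrak{q}})$; for (8) you argue directly while the paper invokes (5) together with Lemma~\ref{Lemma:Hom(-,A)}, but the underlying computation is identical. Your sketch of (3) is correct though terse---the paper's version is exactly the same idea: write $x,y$ in the basis $e,f$, use $e=\tau_{\mathfrak{p}}f$ in the completion, and read off the vanishing of coefficients from the $K$-linear independence of $1,\tau_{\mathfrak{p}},\tau_{\mathfrak{p}}^2$, concluding $x\in Ke\cap\Xi(\tau)=Re$.
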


\begin{proof}
We have that (8) is a consequence of (5) in view of Lemma \ref%
{Lemma:Hom(-,A)}.

By Lemma \ref{Lemma:reduce-local}, after localizing at a prime, we can
assume that $R$ is a DVR. In this case, $\mathbb{P}$ contains a single
maximal ideal $\mathfrak{p}$, which is furthermore principal \cite[Example
4.82]{rotman_introduction_2009}. We can thus fix a generator $p$ of $%
\mathfrak{p}$. In the notation below, we will use $p$ for the subscript
instead of $\mathfrak{p}$, to simplify the notation. In this case, the ideal 
$I_{p,n}$ is principal, and generated by $g_{p,n}:=p^{-n}\left( e-\tau
_{p,n}f\right) $. Therefore, we have that $e\equiv \tau _{p}f\ \mathrm{mod}%
p^{n}$ for every $n\in \omega $, and $e\in E_{\tau }\left( \Xi \left( \tau
\right) \right) $.

(1) We need to prove that $\Xi \left( \tau \right) $ is reduced. If $\Xi
\left( \tau \right) $ is not reduced, as it has rank $2$, it must be
isomorphic to $K\oplus L$ where $L$ has rank one. This implies that $E_{\tau
}\left( \Xi \left( \tau \right) \right) =\left\{ 0\right\} $ by Lemma \ref%
{Lemma:rank-one-E}, contradicting the fact that $e\in E_{\tau }\left( \Xi
\left( \tau \right) \right) $.

(2) Suppose that $x\in \Xi \left( \tau \right) $ and $d\geq 0$ are such that 
$p^{d}x=e$. We can write $x=ag_{p,n}+bf$ for some $a,b\in R$ and $n\geq d$.
Thus, we have that%
\begin{eqnarray*}
e &=&p^{d}x=p^{d}ag_{p,n}+p^{d}bf \\
&=&p^{d}a\left( p^{-n}(e-\tau _{p,n}f)\right) +p^{d}bf \\
&=&p^{d-n}ae+\left( p^{d}b-p^{d-n}a\tau _{p,n}\right) f
\end{eqnarray*}%
This implies that%
\begin{equation*}
p^{d-n}a=1
\end{equation*}%
and%
\begin{equation*}
p^{d}b=p^{d-n}a\tau _{p,n}=\tau _{p,n}\text{.}
\end{equation*}%
This shows that $p^{d}$ divides $\tau _{p}$. Since $\tau _{p}\equiv 1\ 
\mathrm{mod}p$ this implies that $d=0$;

(3) Suppose that $x\in \Xi \left( \tau \right) $ is $\tau _{p}$-divisible,
whence $x=\tau _{p}y$ for some $y\in \Xi \left( \tau _{p}\right) $.\ We work
in $K_{p}\left( \tau _{p}\right) \otimes \Xi \left( \tau \right) $, which is
a $1$-dimensional $K_{p}\left( \tau _{p}\right) $-vector space. We can write 
$x=se+tf$ and $y=s^{\prime }e+t^{\prime }f$ for some $s,s^{\prime
},t,t^{\prime }\in K$. Thus, we have%
\begin{equation*}
s\tau _{p}f+tf=x=\tau _{p}y=s^{\prime }\tau _{p}^{2}f+t^{\prime }\tau _{p}f
\end{equation*}%
This implies that%
\begin{equation*}
s^{\prime }\tau _{p}^{2}+\left( t^{\prime }-s\right) \tau _{p}-t=0\text{.}
\end{equation*}%
Since $1$, $\tau _{p}$, $\tau _{p}^{2}$ are linearly independent over $K$ by
the choice of $\tau _{p}$, this implies $t=s^{\prime }=0$ and $t^{\prime }=s$%
. Thus, we have $x\in E$.

(4) This is a consequence of (3).

(5) This follows from the fact that $e,f\in E_{\tau }\left( \Xi \left( \tau
\right) \right) $.

(6) If $\varphi $ is an endomorphism of $\Xi \left( \tau \right) $, we have $%
\varphi \left( e\right) =re$ for some $r\in R$ by (4). Thus, since $e=\tau f$
we also have%
\begin{equation*}
\tau rf=re=\varphi \left( \tau f\right) =\tau \varphi \left( f\right)
\end{equation*}%
and hence $\varphi \left( f\right) =rf$.\ Since the pure submodule of $\Xi
\left( \tau \right) $ generated by $e$ and $f$ is $\Xi \left( \tau \right) $%
, this shows that $\varphi $ is the endomorphism $x\mapsto rx$.

(7) Suppose that $\lambda \in J(R)$ and $x,y\in \Xi \left( \tau \right) $
are such that $\lambda _{p}y=x$. Consider the subfield $K\left( \tau
_{p},\lambda _{p}\right) $ of $\hat{K}_{p}$ generated by $\tau _{p},\lambda
_{p}$. We work in $K\left( \tau _{p},\lambda _{p}\right) \otimes \Xi \left(
\tau \right) $. We can write $x=se+tf$ and $y=s^{\prime }e+t^{\prime }f$ for
some $s,s^{\prime },t,t^{\prime }\in K$. Then we have that%
\begin{equation*}
\left( s\tau _{p}+t\right) f=se+tf=\lambda _{p}y=\lambda _{p}\left(
s^{\prime }\tau _{p}+t^{\prime }\right) f
\end{equation*}%
and hence%
\begin{equation*}
\left( s\tau _{p}+t\right) =\lambda _{p}\left( s^{\prime }\tau
_{p}+t^{\prime }\right) \text{.}
\end{equation*}%
This shows that $\lambda _{p}\in K\left( \tau _{p}\right) $ and hence $%
\lambda _{p}$, $\tau _{p}$ are essentially equivalent.
\end{proof}

\subsection{$\Xi $-extractable modules}

We define what it means for a countable flat module to have (plain) $\Xi $-%
\emph{extractable} length $\alpha $. We also define the $\Xi $-invariant $%
\tau _{M}$ and the minimal family $\xi _{M}$ of $M$.\ Recall the definition
of the sets of indices $I_{\alpha }$ and $I_{\alpha }^{\mathrm{plain}}$ for $%
\alpha <\omega _{1}$.

\begin{definition}
Let $M$ be a coreduced countable flat module.

\begin{itemize}
\item $M$ has plain $\Xi $-length $1$, $\Xi $-invariant $\boldsymbol{\tau }%
\in M^{I_{1}^{\mathrm{plain}}}$ and minimal family $\xi \in M^{I_{1}^{%
\mathrm{plain}}}$ if, for some $\tau \in J\left( R\right) $, 
\begin{equation*}
M=\Xi \left( \tau \right) \text{,}
\end{equation*}
\begin{equation*}
\xi \left( 0\right) =e_{\Xi \left( \tau \right) }\text{,}
\end{equation*}
\begin{equation*}
\boldsymbol{\tau }\left( 0\right) =[\tau ]\text{;}
\end{equation*}

\item $M$ has $\Xi $-length $\alpha $, $\Xi $-invariant $\boldsymbol{\tau }%
\in M^{I_{\alpha }}$ and minimal family $\xi \in M^{I_{\alpha }}$ if%
\begin{equation*}
M=\bigoplus_{n}M_{n}
\end{equation*}%
\begin{equation*}
\boldsymbol{\tau }\left( -;n\right) =\boldsymbol{\tau }_{n}
\end{equation*}%
\begin{equation*}
\xi \left( -;n\right) =\xi _{n}
\end{equation*}%
where $M_{n}$ has plain $\Xi $-length $\alpha _{n}$, $\Xi $-invariant $%
\boldsymbol{\tau }_{n}\in M_{n}^{I_{\alpha _{n}}^{\mathrm{plain}}}$, and
minimal family $\xi \in M^{I_{\alpha _{n}}^{\mathrm{plain}}}$, and the
function $\boldsymbol{\tau }$ is injective;

\item $M$ has plain $\Xi $-length $\alpha $, $\Xi $-invariant $\boldsymbol{%
\tau }\in M^{I_{\alpha }^{\mathrm{plain}}}$, and minimal family $\xi \in
M^{I_{\alpha }^{\mathrm{plain}}}$ if there exists a fishbone of modules 
\begin{equation*}
(\left( L_{n,i},\xi _{n,i}\left( \left( \alpha -1\right) _{n}-1\right)
,J_{n},e_{n,i},\varphi _{n}\right) _{i<d_{n}})_{n\in \omega }
\end{equation*}%
with fishbone colimit $M$ and $\lambda \in J\left( R\right) $ such that:

\begin{enumerate}
\item for every $n\in \omega $ and $i<d_{n}$, $L_{n,i}$ has plain $\Xi $%
-length $\left( \alpha -1\right) _{n}$, $\Xi $-invariant $\boldsymbol{\tau }%
_{n,i}$, and minimal family $\xi _{n,i}$;

\item if $L$ is the submodule of the fishbone colimit $M$ corresponding to 
\begin{equation*}
\mathrm{co\mathrm{lim}}_{n}\left( L_{n,0}\oplus \cdots \oplus
L_{n,d_{n}-1}\right) \text{,}
\end{equation*}%
then there is an isomorphism between the quotient of $M$ by $L$ and $\Xi
\left( \lambda \right) $ that maps the element corresponding to $\xi \left(
\alpha -1\right) $ to $e_{\Xi \left( \lambda \right) }$;

\item $\boldsymbol{\tau }\left( \alpha -1\right) =[\lambda ]$;

\item $\boldsymbol{\tau }\left( -;d_{0}+\cdots +d_{n-1}+i\right) =%
\boldsymbol{\tau }_{n,i}$ for $n<\omega $ and $i<d_{n}$;

\item $\xi \left( -;d_{0}+\cdots +d_{n-1}+i\right) =\xi _{n,i}$ for $%
n<\omega $ and $i<d_{n}$;

\item $\boldsymbol{\tau }$ is injective.
\end{enumerate}
\end{itemize}

The module $M$ is $\Xi $-extractable if it has (plain) $\Xi $-length $\alpha 
$ for some $\alpha <\omega _{1}$.
\end{definition}

It follows easily by induction that the (plain) fishbone length of a module
is less than or equal to the (plain) $\Xi $-length. Thus, any $\Xi $%
-extractable module is fishbone extractable. Furthermore, the minimal family
of a $\Xi $-extractable module is indeed a minimal family in the sense of
Definition \ref{Definition:minimal-family}.

\begin{proposition}
\label{Proposition:existence-Xi}For every $\alpha <\omega _{1}$ and infinite 
$S\subseteq \boldsymbol{J}\left( R\right) $ there exists a well-pointed
countable flat module $M$ that has plain $\Xi $-length $\alpha $ and $\Xi $%
-invariant $\boldsymbol{\lambda }$ with image contained in $S$.
\end{proposition}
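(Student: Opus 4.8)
The plan is to argue by transfinite induction on $\alpha$, where $\alpha$ ranges over $1$ together with the successor ordinals below $\omega_1$; these are exactly the ordinals for which plain $\Xi$-length is defined, and the induction is self-contained because in the inductive step for $\alpha=\beta+1$ the relevant smaller ordinals $(\alpha-1)_n=\beta_n$ are again successors or $1$ (recall $\beta_n$ is by convention a successor ordinal when $\beta$ is a limit, and equals $\beta$ otherwise). For limit $\alpha$ one reads ``$\Xi$-length $\alpha$'' in the statement and builds $M$ as a direct sum of modules of plain $\Xi$-length $\alpha_n$ obtained from the inductive hypothesis applied to pairwise disjoint infinite pieces of $S$, the disjointness giving injectivity of the resulting $\Xi$-invariant.

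For the base case $\alpha=1$ we have $I_1^{\mathrm{plain}}=\{0\}$, so the $\Xi$-invariant is a single value; pick $[\tau]\in S$ and a representative $\tau\in J(R)$, set $M=\Xi(\tau)$ as in Section~\ref{Subsection:XI}, with distinguished element $\xi(0)=e_{\Xi(\tau)}$ and $\boldsymbol{\lambda}(0)=[\tau]$. By Lemma~\ref{Lemma:XI}, $\Xi(\tau)$ is completely reduced, rigid (hence coreduced), the pair $(\Xi(\tau),e_{\Xi(\tau)})$ is well-pointed, and $E_\tau(\Xi(\tau))=\Xi(\tau)$; this exhibits plain $\Xi$-length $1$ with invariant in $S$.

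For the inductive step $\alpha=\beta+1$, first fix combinatorial data: a rank function $n\mapsto d_n$ with $d_0=1$, an inductive sequence $(J_n,\varphi_n)$ of finite free modules with bases $(e_{n,i})_{i<d_n}$, realizing $\mathrm{colim}_n(J_n,\varphi_n)$ as an isomorphic copy of $\Xi(\lambda)$ for a parameter $\lambda$ to be chosen and with $e_{0,0}$ mapping to $e_{\Xi(\lambda)}$. Partition $S$ into infinitely many infinite pieces $S=S_{\mathrm{root}}\sqcup\bigsqcup_{n<\omega,\,i<d_n}S_{n,i}$, apply the inductive hypothesis to $\beta_n=(\alpha-1)_n$ and $S_{n,i}$ to obtain a well-pointed coreduced flat module $L_{n,i}$ of plain $\Xi$-length $\beta_n$ with $\Xi$-invariant $\boldsymbol{\tau}_{n,i}$ ranging in $S_{n,i}$ and minimal family $\xi_{n,i}$, and choose $\lambda\in J(R)$ with $[\lambda]\in S_{\mathrm{root}}$. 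Then form the fishbone of modules $((L_{n,i},\xi_{n,i}(\beta_n-1),J_n,e_{n,i},\varphi_n)_{i<d_n})_{n<\omega}$, with gluing maps $\psi_n$ determined by $e_{n,i}\mapsto\xi_{n,i}$, and let $M$ be its fishbone colimit; set $\boldsymbol{\lambda}(\alpha-1)=[\lambda]$ and $\boldsymbol{\lambda}=\boldsymbol{\tau}_{n,i}$ on the subtree indexed by $(n,i)$. Injectivity of $\boldsymbol{\lambda}$ is immediate from pairwise disjointness of the pieces and injectivity of each $\boldsymbol{\tau}_{n,i}$; minimality of the distinguished element $\xi(\alpha-1)$ (the lift of the minimal element $e_{\Xi(\lambda)}$) follows from Lemma~\ref{Lemma:minimal} with empty $A$-part, and minimality of the whole family follows by a further induction using Lemmas~\ref{Lemma:minimal} and~\ref{Lemma:minimal-combination}; that $M$ is coreduced, has plain length exactly $\alpha$, with $\sigma_{\alpha-1}M=L$ and $\partial_{\alpha-1}M\cong\Xi(\lambda)$, is Proposition~\ref{Proposition:fishbone-module} (via Proposition~\ref{Proposition:straight-constructible} and the fishbone-tower computation of Proposition~\ref{Proposition:fishbone-tower}).

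The remaining, and principal, obstacle is the orthogonality requirement of Definition~\ref{Definition:fishbone-length}: $\mathfrak{t}_{\beta_n-1}^{L_{n,i}}(\xi_{n,i})\bot\mathfrak{t}^{J}(e_{n,i})$ for all $n,i$, with $J\cong\Xi(\lambda)$. The key point is that $\mathfrak{t}_{\beta_n-1}^{L_{n,i}}(\xi_{n,i})$ is, after passing to the $(\beta_n-1)$-st derived structure, the type carried by $e$ in $\partial_{\beta_n-1}L_{n,i}\cong\Xi(\mu_{n,i})$, where $\mu_{n,i}$ represents the root value $\boldsymbol{\tau}_{n,i}(\beta_n-1)\in S_{n,i}$, whereas $\mathfrak{t}^{J}(e_{n,i})$ is governed by $\lambda$; since $[\lambda]\in S_{\mathrm{root}}$ is not essentially equivalent to any $\mu_{n,i}$ by disjointness, Lemma~\ref{Lemma:XI}(7) yields $E_\lambda(L_{n,i})=0$, from which the desired orthogonality is read off. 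Making this precise — choosing the maps $\varphi_n$ and the basis elements $e_{n,i}$ so that the $J$-types are the essentially-$\lambda$ types on the divisible part and trivial elsewhere, and carrying the type computation through the recursion with the help of Lemma~\ref{Lemma:minimal-combination} and the type arithmetic of Section~\ref{Section:constructions-Dedekind} — is where I expect the real work to lie; it is bookkeeping rather than a single estimate that is delicate here. (For the downstream applications one takes $S$ to consist of the essential classes represented in $\Omega(A)$, which is co-countable by Lemma~\ref{Lemma:Omega}, so that the module produced is moreover $A$-independent by Lemmas~\ref{Lemma:Hom(-,A)} and~\ref{Lemma:A-independent}.)
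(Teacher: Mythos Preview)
Your overall architecture---transfinite induction, base case $\Xi(\tau)$, inductive step via a fishbone colimit with the $L_{n,i}$ built from pairwise disjoint infinite pieces of $S$---is exactly the paper's proof, which is correspondingly brief.

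The one substantive misstep is that you have conflated two definitions. The orthogonality condition $\mathfrak{t}_{\beta[n]-1}^{L_{n,i}}(\xi_{n,i})\bot\mathfrak{t}^{J}(e_{n,i})$ that you call ``the principal obstacle'' is a clause of Definition~\ref{Definition:fishbone-length} (plain \emph{fishbone} length), not of the definition of plain $\Xi$-length. Re-read the $\Xi$-length definition: its six clauses require only that the ribs $L_{n,i}$ have the correct plain $\Xi$-length, that $M/L\cong\Xi(\lambda)$, and that the $\Xi$-invariant assembled from $[\lambda]$ and the $\boldsymbol{\tau}_{n,i}$ be injective. No orthogonality appears. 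The paper has deliberately layered the definitions so that $\Xi$-length is the most restrictive notion and the easiest to \emph{construct}, while the implication ``$\Xi$-extractable $\Rightarrow$ fishbone extractable'' (where orthogonality enters) is asserted separately, just after the $\Xi$-length definition, and is not the burden of Proposition~\ref{Proposition:existence-Xi}. Once you drop that worry, the inductive step is immediate: partition $S$ into $\{[\lambda]\}$ and infinitely many infinite pieces, apply the inductive hypothesis to each piece, and assemble; injectivity of $\boldsymbol{\lambda}$ follows from disjointness, and nothing else needs to be checked. Your discussion of the limit case is also slightly off: plain $\Xi$-length is only defined for $\alpha=1$ and successor $\alpha$, so the proposition is implicitly restricted to those, and the paper's induction treats only the successor step.
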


\begin{proof}
By induction on $\alpha $. For $\alpha =1$ define $M$ to be $\Xi \left(
\lambda \right) $ for some $\lambda \in S$.

Suppose that this holds for $\beta <\alpha $. Pick $\lambda \in S$ and
define $J:=\Xi \left( \lambda \right) $ with $e=e_{\Xi \left( \lambda
\right) }$ as a distinguished element. Write $J=\mathrm{co\mathrm{lim}}%
_{n}J_{n}$ for finite submodules $J_{n}\subseteq J$ with $\dim \left(
J_{n}\right) =d_{n}$ and $d_{0}=1$. Let $\left( e_{n,i}\right) _{i<d_{n}}$
be a free basis of $J_{n}$. By the inductive hypothesis, for $n<\omega $ and 
$i<d_{n}$ there exists a countable flat module $M_{n,i}$ of plain $\Xi $%
-length $\left( \alpha -1\right) _{n}$, $\Xi $-invariant $\boldsymbol{%
\lambda }_{n}$, and minimal family $\xi _{n}$ such that $\left( \boldsymbol{%
\lambda }_{n}\right) _{n\in \omega }$ have images that are pairwise disjoint
and contained in $S\setminus \left\{ \lambda \right\} $. Consider the
fishbone of modules 
\begin{equation*}
\left( M_{n,i},\xi _{n,i}\left( \left( \alpha -1\right) _{n}-1\right)
,J_{n},\varphi _{n},e_{n,i}\right)
\end{equation*}%
Then its corresponding fishbone colimit has plain $\Xi $-length $\alpha $,
and $\Xi $-invariant $\boldsymbol{\lambda }$ with image contained in $S$.
\end{proof}

The following proposition is easily established by induction on $\alpha $.
Recall the definition of $\Omega \left( A\right) $ for some countable flat
module $A$ from \ref{Definition:Omega}.

\begin{proposition}
\label{Proposition:XI-independent}Let $M$ be a coreduced countable flat
module of (plain) $\Xi $-length $\alpha $ and $\Xi $-invariant $\boldsymbol{%
\lambda }$. Let also $A$ be a countable flat module with $\Omega \left(
A\right) $ disjoint from the image of $\boldsymbol{\lambda }$. Then $M$ is
extractable of (plain) length $\alpha $ and $A$-independent.
\end{proposition}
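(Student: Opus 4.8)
The plan is to run a single induction on $\alpha$ establishing simultaneously that $M$ is $A$-independent and that $M$ is extractable of (plain) length exactly $\alpha$; these two facts feed into each other through the derivation operators $\sigma_\bullet$ and $\partial_\bullet$, so they must be proved together. The key input is that a $\Xi$-extractable module is fishbone extractable (remarked after the definition), so we may invoke Proposition~\ref{Proposition:fishbone-module}, which identifies $\sigma_{\alpha-1}M$ and $\partial_{\alpha-1}M$ from a witnessing fishbone; combined with Theorem~\ref{Theorem:A-independent} and Lemma~\ref{Lemma:A-independent}, $A$-independence will reduce the $A$-projective length to the length. The other key inputs are Lemma~\ref{Lemma:XI}(8), which says $\mathrm{Hom}\left(\Xi(\tau),A\right)=0$ whenever $A$ is reduced and $\tau\in\Omega(A)$, and Lemma~\ref{Lemma:Hom(-,A)}. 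Note we may assume $A$ is reduced: $A=\Delta A\oplus A_{\mathrm{red}}$ with $\Delta A$ divisible hence injective, so $\mathrm{Hom}\left(-,A\right)$-vanishing statements and $\Omega$ are governed by $A_{\mathrm{red}}$, and $\mathrm{Ext}\left(-,\Delta A\right)=0$ by Proposition~\ref{Proposition:characterize-injectives}, so $A$-projective length is computed against $A_{\mathrm{red}}$.

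First I would treat the base case $\alpha=1$ (and the direct-sum case, which is uniform across the induction). If $M$ has plain $\Xi$-length $1$ then $M\cong\Xi(\lambda(0))$ with $[\lambda(0)]$ in the image of $\boldsymbol\lambda$, hence disjoint from $\Omega(A)$; so in particular $\lambda(0)\in\Omega(A)$ and Lemma~\ref{Lemma:XI}(8) gives $\mathrm{Hom}\left(M,A\right)=0$. Since $M$ has rank~$2$ it is finite-rank, hence extractable of plain length~$1$ by definition, and has plain $R$-projective length~$1$ by Theorem~\ref{Theorem:structure-rank-1}; to upgrade to $A$-independence I must check $\mathrm{Hom}\left(\sigma_\beta M,A\right)=\mathrm{Hom}\left(\partial_\beta M,A\right)=0$ for all $\beta$, but for $\beta\ge 1$ we have $\sigma_\beta M=\Sigma M=M$ (being coreduced of finite rank, with $\Phi M=0$) and $\partial_\beta M=0$, and $\mathrm{Hom}\left(M,A\right)=0$ by the above; for $\beta=0$, $\sigma_0 M$ is trivial and $\partial_0 M=M$ again gives $0$. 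For the case $M=\bigoplus_n M_n$ of plain $\Xi$-length $\alpha_n$ with $\boldsymbol\tau$ injective: by the inductive hypothesis each $M_n$ is $A$-independent and extractable of plain length $\alpha_n$, and $\sigma_\beta$, $\partial_\beta$ commute with countable direct sums, so $\mathrm{Hom}\left(\sigma_\beta M,A\right)=\prod_n\mathrm{Hom}\left(\sigma_\beta M_n,A\right)=0$ and similarly for $\partial_\beta$; extractability of length at most $\alpha$ is immediate, and length exactly $\alpha$ follows from Theorem~\ref{Theorem:A-independent}(1) once $A$-independence is known (or directly from $\sup_n\alpha_n=\alpha$ via Theorem~\ref{Theorem:structure-1}(9)).

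The successor step is the heart. Let $M$ have plain $\Xi$-length $\alpha>1$ witnessed by a fishbone $(\left(L_{n,i},\xi_{n,i},J_n,e_{n,i},\varphi_n\right)_{i<d_n})_{n<\omega}$ with the quotient $M/L\cong\Xi(\lambda)$ where $[\lambda]=\boldsymbol\tau(\alpha-1)$, and each $L_{n,i}$ of plain $\Xi$-length $(\alpha-1)_n$ with $\Xi$-invariant $\boldsymbol\tau_{n,i}$. By Proposition~\ref{Proposition:fishbone-module}, $L=\sigma_{\alpha-1}M$ and $\partial_{\alpha-1}M\cong\Xi(\lambda)$; by the inductive hypothesis each $L_{n,i}$ is $A$-independent, and since $[\lambda]$ lies in the image of $\boldsymbol\tau$ we get $\lambda\in\Omega(A)$, hence $\mathrm{Hom}\left(\Xi(\lambda),A\right)=0$ by Lemma~\ref{Lemma:XI}(8). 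So Proposition~\ref{Proposition:fishbone-module}(2) yields that $M$ is $A$-independent, and then Theorem~\ref{Theorem:A-independent} (whose hypotheses hold by Lemma~\ref{Lemma:A-independent}) gives that the plain $A$-projective length of $M$ equals its plain length, which by Proposition~\ref{Proposition:fishbone-module}(3) is $\alpha$; in particular $M$ is extractable of plain length $\alpha$ by Theorem~\ref{Theorem:structure-1}(8). The one point requiring care—and the step I expect to be the main obstacle—is verifying that $A$-independence is preserved in the non-successor bookkeeping: specifically, that $\mathrm{Hom}\left(\sigma_\beta M,A\right)=\mathrm{Hom}\left(\partial_\beta M,A\right)=0$ for \emph{all} $\beta<\omega_1$ and not merely $\beta=\alpha-1$. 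For $\beta\ge\alpha-1$ this follows once we know $\partial_{\alpha-1}M\cong\Xi(\lambda)$ has $\Xi$-length $1$ and is itself $A$-independent (its $\Xi$-invariant being $[\lambda]$, still in the forbidden set), plus the inductive hypothesis applied to the $L_{n,i}$ governing $\sigma_\beta M$ for $\beta<\alpha-1$; assembling these via the short exact sequences $0\to\sigma_\beta(\sigma_{\alpha-1}M)\to\sigma_\beta M\to(\text{finite-rank})\to 0$ and the long exact sequence relating $\mathrm{Hom}$ and $\mathrm{Ext}$, together with $\mathrm{Hom}$ of a finite-rank coreduced module into $A$ vanishing because its type is forbidden, closes the argument. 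I would organize this exactly as in the proof of Theorem~\ref{Theorem:structure-1}, citing that proof's structure rather than repeating it.
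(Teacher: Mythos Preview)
Your inductive scheme via Proposition~\ref{Proposition:fishbone-module} is exactly what the paper intends (its own proof is a one-liner indicating induction on $\alpha$), and the structure---base case via Lemma~\ref{Lemma:XI}(8), direct sums by commutation of $\sigma_\beta,\partial_\beta$ with $\bigoplus$, successor step via Proposition~\ref{Proposition:fishbone-module}(2)(3)---is correct.

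There is one error: the reduction to $A$ reduced is invalid for the $A$-independence conclusion. You assert that ``$\mathrm{Hom}(-,A)$-vanishing statements are governed by $A_{\mathrm{red}}$'', but if $\Delta A\ne 0$ then $\Delta A$ contains a copy of $K$, and any nonzero flat $X$ admits a nonzero map $X\hookrightarrow K\otimes X\twoheadrightarrow K\subseteq\Delta A$; so for non-reduced $A$ no nontrivial $M$ can be $A$-independent, and the proposition as literally stated fails. The paper is silent on this, but Lemma~\ref{Lemma:XI}(8) already requires $A$ reduced, so that hypothesis should be read as implicit; your reduction \emph{is} correct for the $A$-projective-length consequence, which is all the applications need. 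Two minor points: your sentence ``hence disjoint from $\Omega(A)$; so in particular $\lambda(0)\in\Omega(A)$'' is self-contradictory as written---the paper's hypothesis is evidently mis-stated and should say that the image of $\boldsymbol{\lambda}$ lies \emph{in} $\Omega(A)$, which is how you correctly use it; and your final paragraph of extra bookkeeping is unnecessary, since Proposition~\ref{Proposition:fishbone-module}(2) already delivers $A$-independence for all $\beta$ in one stroke.
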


\subsection{Zipping}

Let $\alpha $ be a countable ordinal. We define a \emph{successor} function $%
s_{\alpha }:I_{\alpha }\rightarrow I_{\alpha }$ by induction on $\alpha $.
For $\alpha =1$ define%
\begin{equation*}
s_{\alpha }\left( 0;n\right) :=\left( 0;n+1\right) \text{.}
\end{equation*}%
If it has been defined for ordinals less than $\alpha $, define%
\begin{equation*}
s_{\alpha }\left( \left( \alpha -1\right) _{n}\right) :=\left( \alpha
-1\right) _{n+1}
\end{equation*}%
and%
\begin{equation*}
s_{\alpha }\left( i;n\right) =\left( s_{\alpha _{n}}\left( i\right) ;n\right)
\end{equation*}%
for $i\in I_{\left( \alpha -1\right) _{n}}$ and $n\in \omega $. This is
indeed the successor function when $I_{\alpha }$ is regarded as a linear
order isomorphic to $\omega ^{\alpha }$ as described in\ Section \ref%
{Subsection:indices}.

Given a module and a function $\xi \in M^{I_{\alpha }}$, we define $\delta
\xi \in M^{I_{\alpha }}$ to be the function $\xi -\xi \circ s_{\alpha }$.
Suppose now that $M$ is a coreduced and completely reduced $\Xi $%
-extractable module of length $\alpha \geq 1$ with $\Xi $-invariant $%
\boldsymbol{\tau }$ and minimal family $\xi $. Let also $\boldsymbol{\lambda 
}\in \boldsymbol{J}\left( R\right) ^{I_{\alpha }}$ be an injective function
whose image is disjoint from the image of $\tau $. For each $i\in I_{\alpha
} $ we let $\lambda \left( i\right) \in J\left( R\right) $ be such that $%
[\lambda \left( i\right) ]=\boldsymbol{\lambda }\left( i\right) $. Let $S$
be the direct sum of $\Xi \left( \lambda \left( i\right) \right) $ for $i\in
I_{\alpha }$. Define then the \emph{zipping} $Z\left( M,\boldsymbol{\lambda }%
\right) $ of $M$ along $\boldsymbol{\lambda }$ to be the quotient of $%
M\oplus S$ by the submodule $N$ generated by $\left( \delta \xi \right)
\left( i\right) -e_{\lambda \left( i\right) }$ for $i\in I_{\alpha }$. We
can identify $M$ and $S$ as submodules of $Z\left( M,\boldsymbol{\lambda }%
\right) $.

\begin{lemma}
\label{Lemma:zipping}Adopt the notation above. Then we have that:

\begin{enumerate}
\item $\left( \left( \delta \xi \right) \left( i\right) -e_{\lambda \left(
i\right) }\right) _{i\in I_{\alpha }}$ is a minimal family in $M\oplus S$;

\item the submodule $N$ of $M\oplus S$ generated by $\left( \left( \delta
\xi \right) \left( i\right) -e_{\lambda \left( i\right) }\right) _{i\in
I_{\alpha }}$ is pure;

\item $Z\left( M,\boldsymbol{\lambda }\right) $ is a countable flat module;

\item $\sigma _{\alpha }Z\left( M,\boldsymbol{\lambda }\right) =Z\left(
\sigma _{\alpha }M,\boldsymbol{\lambda }\right) $ for $0\leq \alpha <\omega
_{1}$;

\item $\partial _{\alpha }Z\left( M,\boldsymbol{\lambda }\right) \cong
\partial _{\alpha }M$ for $0\leq \alpha <\omega _{1}$;

\item if $A$ is a countable flat module, $E_{\lambda \left( i\right) }\left(
A\right) =0$ for every $i\in I_{\alpha }$, and $M$ is $A$-independent, then $%
Z\left( M,\boldsymbol{\lambda }\right) $ has $A$-projective length $\alpha $%
, and 
\begin{equation*}
s_{\beta }\mathrm{Ext}\left( Z\left( M,\boldsymbol{\lambda }\right)
,A\right) \cong \mathrm{Ext}\left( \partial _{\beta }M,A\right)
\end{equation*}%
for $\beta \geq 1$;

\item $M$ is a characteristic submodule of $Z\left( M,\boldsymbol{\lambda }%
\right) $;

\item $Z\left( M,\boldsymbol{\lambda }\right) $ is rigid;

\item $Z\left( M,\boldsymbol{\lambda }\right) $ is completely coreduced.
\end{enumerate}
\end{lemma}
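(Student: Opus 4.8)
The plan is to recognise $Z(M,\boldsymbol{\lambda})$ as a pushout of two pure inclusions out of a free module, and to read everything off from that description. For (1): each $e_{\lambda(i)}$ is a minimal element of $\Xi(\lambda(i))$ by Lemma~\ref{Lemma:XI}(2), and minimality of a family is preserved under direct sums, so $(e_{\lambda(i)})_{i\in I_{\alpha}}$ is minimal in $S=\bigoplus_{i}\Xi(\lambda(i))$; since the $S$-component of $\sum_{i}c_{i}\bigl((\delta\xi)(i)-e_{\lambda(i)}\bigr)$ is $-\sum_{i}c_{i}e_{\lambda(i)}$, this already forces $c_{i}\in rR$ whenever the sum lies in $r(M\oplus S)$ and forces $c_{i}=0$ whenever it vanishes. (One can equally invoke that $(\delta\xi)$ is minimal in $M$: $\xi$ is minimal, and $e_{i}\mapsto e_{i}-e_{s_{\alpha}(i)}$ is injective on $R^{(I_{\alpha})}$ modulo every prime because the successor function $s_{\alpha}$ on the well-ordered set $I_{\alpha}$ of Section~\ref{Subsection:indices} has no infinite descending chains, so Lemma~\ref{Lemma:minimal-combination} applies.) Part (2) is then immediate from Definition~\ref{Definition:minimal-family}, and part (3) follows because $M\oplus S$ is a countable flat module and the quotient of a flat module by a pure submodule is flat (Lemma~\ref{Lemma:pure-exact-extension}). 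Thus $Z(M,\boldsymbol{\lambda})$ is the pushout of the pure inclusions $R^{(I_{\alpha})}\hookrightarrow M$, $e_{i}\mapsto(\delta\xi)(i)$, and $R^{(I_{\alpha})}\hookrightarrow S$, $e_{i}\mapsto e_{\lambda(i)}$, and $M$ and $S$ embed in $Z(M,\boldsymbol{\lambda})$ as pure submodules with
\[
0\to M\to Z(M,\boldsymbol{\lambda})\to S/\langle e_{\lambda(i)}:i\rangle\to 0,\qquad 0\to S\to Z(M,\boldsymbol{\lambda})\to M/\langle(\delta\xi)(i):i\rangle\to 0
\]
pure exact. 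A prime-by-prime computation (localising via Lemma~\ref{Lemma:reduce-local}; each $\Xi(\lambda(i))/Re_{\lambda(i)}$ is a rank-one torsion-free module realised as a colimit of copies of $R$ along multiplications by a uniformiser times a unit, hence $\cong K$) identifies the first cokernel with the divisible module $K^{(I_{\alpha})}$.

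For (4) and (5) I would induct on $\beta$ along the recursive definitions of $\sigma_{\beta}$, $\partial_{\beta}$ and of the constituent families $\mathcal{B}_{\beta}$. The key facts are that any finite-rank pure submodule of $Z(M,\boldsymbol{\lambda})$ meets only finitely many $\Xi(\lambda(i))$, each of which is rigid (Lemma~\ref{Lemma:XI}(6)) hence coreduced and so contributes to the functor $\Sigma$, and that the glued-in quotient $K^{(I_{\alpha})}$ is divisible, hence absorbed into $\sigma_{1}$ and annihilated by $\partial_{\beta}$ for $\beta\ge 1$. Hence the $\beta$-constituents of $Z(M,\boldsymbol{\lambda})$ are precisely the zippings of those of $M$ along the corresponding restriction of $\boldsymbol{\lambda}$; passing to colimits gives $\sigma_{\beta}Z(M,\boldsymbol{\lambda})=Z(\sigma_{\beta}M,\boldsymbol{\lambda})$, and taking quotients gives $\partial_{\beta}Z(M,\boldsymbol{\lambda})\cong M/\sigma_{\beta}M=\partial_{\beta}M$.

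For (6) I would verify the hypotheses of Theorem~\ref{Theorem:derivation-A} for $(Z(M,\boldsymbol{\lambda}),A)$. By (4), $\sigma_{\gamma}Z(M,\boldsymbol{\lambda})=Z(\sigma_{\gamma}M,\boldsymbol{\lambda})$ is generated by $\sigma_{\gamma}M$ together with $S$; a homomorphism to $A$ vanishes on $\sigma_{\gamma}M$ because $M$ is $A$-independent, and on $S$ because $\mathrm{Hom}(\Xi(\lambda(i)),A)=\mathrm{Hom}(E_{\lambda(i)}(\Xi(\lambda(i))),A)=0$ since $E_{\lambda(i)}(A)=0$ and $E_{\lambda(i)}$ is a subfunctor of the identity (cf.\ Lemma~\ref{Lemma:XI}(5),(8) and Lemma~\ref{Lemma:Hom(-,A)}); thus $\mathrm{Hom}(\sigma_{\gamma}Z(M,\boldsymbol{\lambda}),A)=0$. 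By (5), $\partial_{\gamma}Z(M,\boldsymbol{\lambda})\cong\partial_{\gamma}M$, so $\mathrm{Hom}(\Sigma(B_{1}^{k}(\partial_{\gamma}Z(M,\boldsymbol{\lambda}))),A)=0$ for cofinitely many $k$ by Lemma~\ref{Lemma:A-independent}. Theorem~\ref{Theorem:derivation-A}(3) then yields $s_{\beta}\mathrm{Ext}(Z(M,\boldsymbol{\lambda}),A)\cong\mathrm{Ext}(\partial_{\beta}M,A)$ for every $\beta\ge 1$; since $M$ is $A$-independent of length $\alpha$ the least such vanishing $\beta$ is $\alpha$, so $Z(M,\boldsymbol{\lambda})$ has $A$-projective length $\alpha$. (Alternatively, feed the first sequence of the display through the long exact sequence using $\mathrm{Hom}(M,A)=0$ and the fact, from Corollary~\ref{Corollary:alpha-projective} and Theorem~\ref{Theorem:structure-rank-1-tris}, that the direct sum of rank-one modules $K^{(I_{\alpha})}$ has projective length at most $1$.)

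For (7), (8) and (9) I would run a rigidity argument keyed on the subfunctors $E_{\lambda(i)}$. Since $M$ is $\Xi$-extractable with $\Xi$-invariant $\boldsymbol{\tau}$ whose image is disjoint from that of $\boldsymbol{\lambda}$, an induction on the $\Xi$-length (the analogue of Lemma~\ref{Lemma:XI}(7)) gives $E_{\lambda(i)}(M)=0$; together with $E_{\lambda(i)}(\Xi(\lambda(i)))=\Xi(\lambda(i))$ and $E_{\lambda(i)}(\Xi(\lambda(j)))=0$ for $j\neq i$ (Lemma~\ref{Lemma:XI}(5),(7), using injectivity of $\boldsymbol{\lambda}$), this identifies $E_{\lambda(i)}(Z(M,\boldsymbol{\lambda}))$ with $\Xi(\lambda(i))$. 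Any endomorphism of $Z(M,\boldsymbol{\lambda})$ preserves each $E_{\lambda(i)}$, hence restricts to an endomorphism of $\Xi(\lambda(i))$, which is inner by Lemma~\ref{Lemma:XI}(6); the minimality of $(\delta\xi)$ and the fact that these elements generate $M$ modulo a piece controlled by the $\Xi$-extractable structure then force the endomorphism to be inner on $M$ as well, giving that $M$ is characteristic (7) and $Z(M,\boldsymbol{\lambda})$ rigid (8). In particular $Z(M,\boldsymbol{\lambda})$ is coreduced, and complete reducedness (hence (9)) follows by localising at each prime (Lemma~\ref{Lemma:reduce-local}) and running the rank argument of Lemma~\ref{Lemma:XI}(1): the divisible submodule meets neither $M$ nor $S$, both reduced, and the minimality of the glued family pins down the rank so that it is $0$. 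The main obstacle is the bookkeeping in the inductive proof of (4)–(5)—checking that zipping commutes with the recursive passage to constituents, which is where the hypotheses on $\boldsymbol{\lambda}$ and the structure theory of $\Xi$-extractable modules are really used—with the identification of $E_{\lambda(i)}(Z(M,\boldsymbol{\lambda}))$ in (7)–(8) the other delicate point.
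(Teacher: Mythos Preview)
Your argument for (1) is actually cleaner than the paper's: you observe that minimality can be read off the $S$-component alone, since $(e_{\lambda(i)})_{i\in I_\alpha}$ is already a minimal family in $S=\bigoplus_i\Xi(\lambda(i))$. The paper instead runs an induction on $\alpha$, using the decomposition $M\cong\bigoplus_n M_n$ with each $M_n$ of plain length $\alpha_n$, the extension $L\to M\to J$ coming from the fishbone structure, and Lemma~\ref{Lemma:minimal} to splice together minimality on $L$ (inductive hypothesis) and on $J$ (the $\alpha=1$ case). Your shortcut is valid and buys a simpler proof; the paper's route has the side benefit of setting up the inductive framework reused in (8).

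For (4)--(5) the approaches diverge more substantially, and the paper's is considerably lighter. You propose to track the constituent families $\mathcal{B}_\beta$ directly and check that zipping commutes with the recursive passage $B\mapsto\Sigma B$; you correctly flag this bookkeeping as the main obstacle, and indeed verifying that $\Sigma$ of a finite-rank pure submodule of $Z(M,\boldsymbol{\lambda})$ decomposes compatibly with the zipping is delicate. The paper sidesteps this entirely: writing $M=\mathrm{colim}_n M_n$ for finite-rank pure $M_n$ and letting $Z(M_n,\boldsymbol{\lambda})$ be the submodule of $Z(M,\boldsymbol{\lambda})$ generated by $M_n$ and the $\Xi(\lambda(i))$ for $i\le n$, coreducedness of each $\Xi(\lambda(i))$ gives $\mathrm{Hom}(Z(M_n,\boldsymbol{\lambda}),R)\cong\mathrm{Hom}(M_n,R)$. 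Passing to $\mathrm{lim}^1$ and using Theorem~\ref{Theorem:derivation} yields $\mathrm{Ext}(\partial_1 Z,R)\cong\mathrm{Ext}(\partial_1 M,R)$, whence $\partial_1 Z\cong\partial_1 M$ by the duality of Proposition~\ref{Proposition:duality-Ext}; the rest is induction. The same device handles (6) with $A$ in place of $R$, using $\mathrm{Hom}(\Xi(\lambda(i)),A)=0$. Your appeal to Theorem~\ref{Theorem:derivation-A} for (6) is correct but presupposes (4)--(5), so the real saving is in the argument for those.

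For (7)--(9) the two approaches are close. The paper uses (4) to reduce (7) to the case $\alpha=1$, where $M=E_{\boldsymbol{\tau}}(Z(M,\boldsymbol{\lambda}))$ directly; for (8) it runs the same induction on $\alpha$ via the fishbone decomposition, pinning down the scalar on each summand using both $E_{\tau(\cdot)}$ and $E_{\lambda(\cdot)}$ to propagate the scalar across the successor map $s_\alpha$. Your outline captures this, though the paper's use of the $E_{\tau}$ functors (not only the $E_{\lambda}$'s) to tie the scalars together is the key mechanism you should make explicit.
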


\begin{proof}
We prove that the conclusion holds by induction on $\alpha $. Define $\eta
_{i}:=\left( \delta \xi \right) \left( i\right) -e_{\lambda \left( i\right)
} $ for $i\in I_{\alpha }$.

(1) For $\alpha =1$, this follows from the fact that $\left( \delta \xi
\left( i\right) \right) _{i\in I_{1}}$ is the image of $\left( \xi \left(
i\right) \right) _{i\in I_{1}}$ under the isomorphism $M\rightarrow M$ that
maps $\left( x_{i}\right) $ to $\left( x_{i}-x_{i+1}\right) $. Suppose that
the conclusion holds for $\beta <\alpha $. Then we can write%
\begin{equation*}
M\cong \bigoplus_{n}M_{n}
\end{equation*}%
where for $n\in \omega $, $M_{n}$ is $\Xi $-extractable of plain length $%
\alpha _{n}$. Define 
\begin{equation*}
L:=\bigoplus_{n}\sigma _{\alpha _{n}-1}M_{n}\subseteq M
\end{equation*}%
and%
\begin{equation*}
J:=\bigoplus_{n}\partial _{\alpha _{n}-1}M_{n}\text{.}
\end{equation*}%
We have an exact sequence%
\begin{equation*}
L\rightarrow M\rightarrow J\text{.}
\end{equation*}%
For $n\in \omega $, $\sigma _{\alpha _{n}-1}M_{n}$ is extractable of length $%
\alpha _{n}-1$. Thus, by the inductive hypothesis, $\left( \eta _{i}\right)
_{i\in I_{\alpha _{n}-1}}$ is minimal. By the case $\alpha =1$, the family 
\begin{equation*}
\left( \eta _{\left( \alpha _{n}-1,n\right) }+L\right) _{n\in \omega }
\end{equation*}%
is minimal in $J$. It follows that $\left( \eta _{i}\right) _{i\in I}$ is
minimal in $M$ by Lemma \ref{Lemma:minimal}.

(2) and (3) are consequences of (1).

(4) and (5) Write $M=\mathrm{co\mathrm{lim}}_{n}M_{n}$ for some finite-rank
pure submodules $M_{n}\subseteq M$. Define $Z\left( M_{n},\boldsymbol{%
\lambda }\right) $ to be the submodule of $Z\left( M,\boldsymbol{\lambda }%
\right) $ generated by $M_{n}$ and $\Xi \left( \lambda \left( i\right)
\right) $ for $i\leq n$. Then $Z\left( M,\boldsymbol{\lambda }\right) =%
\mathrm{co\mathrm{lim}}_{n}{}Z\left( M_{n},\boldsymbol{\lambda }\right) $.
Since $\Xi \left( \lambda \left( i\right) \right) $ is coreduced for every $%
i\in \omega $, the inclusion $M_{n}\rightarrow Z\left( M_{n},\boldsymbol{%
\lambda }\right) $ induces an isomorphism%
\begin{equation*}
\mathrm{Hom}\left( M_{n},R\right) \cong \mathrm{Hom}\left( Z\left( M_{n},%
\boldsymbol{\lambda }\right) ,R\right) \text{.}
\end{equation*}%
Thus, the inclusion $M\rightarrow Z\left( M,\boldsymbol{\lambda }\right) $
induces an isomorphism%
\begin{eqnarray*}
\mathrm{Ext}\left( \partial _{1}M,R\right) &\cong &\mathrm{Ph}^{1}\mathrm{Ext%
}\left( M,R\right) \\
&\cong &\mathrm{\mathrm{lim}}^{1}\mathrm{Hom}\left( M_{n},R\right) \\
&\cong &\mathrm{lim}^{1}\mathrm{Hom}\left( Z\left( M_{n},\boldsymbol{\lambda 
}\right) ,R\right) \\
&\cong &\mathrm{Ph}^{1}\mathrm{Ext}\left( Z\left( M,\boldsymbol{\lambda }%
\right) ,R\right) \\
&\cong &\mathrm{Ext}\left( \partial _{1}Z\left( M,\boldsymbol{\lambda }%
\right) ,R\right) \text{.}
\end{eqnarray*}%
The conclusion thus follows by induction.

(6) Since $M$ is $A$-independent, it has projective $A$-rank $\alpha $ by
Theorem \ref{Theorem:A-independent}. Since $E_{\lambda \left( i\right)
}\left( A\right) =0$ for every $i\in I_{\alpha }$, we have $\mathrm{Hom}%
\left( \Xi \left( \lambda \left( i\right) \right) ,A\right) =0$ for every $%
i\in I_{\alpha }$ by Lemma \ref{Lemma:Hom(-,A)}. The conclusion thus can be
obtained as in (4) and (5) above.

(7) We have that $\sigma _{1}{}Z\left( M,\boldsymbol{\lambda }\right)
=Z\left( \sigma _{1}M,\boldsymbol{\lambda }\right) $ is a characteristic
submodule of $Z\left( M,\boldsymbol{\lambda }\right) $. Thus, it suffices to
consider the case when $\alpha =1$. In this case, we have that $M$ is the
submodule $E_{\boldsymbol{\tau }}\left( Z\left( M,\boldsymbol{\lambda }%
\right) \right) $ and hence it is characteristic.

(8) By induction on $\alpha $. For an element $a$ of $Z\left( M,\boldsymbol{%
\lambda }\right) $, we denote by $\langle a\rangle $ the submodule of $%
Z\left( M,\boldsymbol{\lambda }\right) $ generated by $a$. For $\alpha =1$,
we have%
\begin{equation*}
E_{\lambda \left( k\right) }\left( Z\left( M,\boldsymbol{\lambda }\right)
\right) =\langle \xi _{\left( 0;k\right) }-\xi _{\left( 0;k+1\right) }\rangle
\end{equation*}%
and%
\begin{equation*}
E_{\tau \left( 0;k\right) }\left( Z\left( M,\boldsymbol{\lambda }\right)
\right) =\langle \xi _{\left( 0;k\right) }\rangle
\end{equation*}%
for every $k\in \omega $. This implies that for every $k\in \omega $ there
exists $r_{k},s_{k}\in R$ such that%
\begin{equation*}
\varphi \left( \xi _{\left( 0;k\right) }\right) =r_{k}\xi _{\left(
0;k\right) }
\end{equation*}%
and%
\begin{equation*}
\varphi \left( \xi _{\left( 0;k\right) }-\xi _{\left( 0;k+1\right) }\right)
=s_{k}\xi _{\left( 0;k\right) }-s_{k}\xi _{\left( 0;k+1\right) }\text{.}
\end{equation*}%
In turn, this implies that $r_{k}=s_{k}=r_{0}$ for every $k\in \omega $.
Thus, $\varphi $ is the endomorphism $x\mapsto r_{0}x$.

Assume the conclusion holds for $\beta <\alpha $. As before, we can write%
\begin{equation*}
M\cong \bigoplus_{n}M_{n}
\end{equation*}%
where for $n\in \omega $, $M_{n}$ is $\Xi $-extractable of plain rank $%
\alpha _{n}$. Again, define 
\begin{equation*}
L:=\bigoplus_{n}\sigma _{\alpha _{n}}M_{n}\text{,}
\end{equation*}%
\begin{equation*}
J:=\bigoplus_{n}\partial _{\alpha _{n}}M_{n}\text{,}
\end{equation*}%
and consider the short exact sequence%
\begin{equation*}
L\rightarrow M\rightarrow J\text{.}
\end{equation*}%
By the inductive hypothesis, for every $n\in \omega $ there exists $r_{n}\in
R$ such that $\varphi |_{M_{n}}:x\mapsto r_{n}x$. Furthermore, we have that 
\begin{equation*}
E_{\tau \left( \alpha _{n}-1;n\right) }\left( Z\left( M,\boldsymbol{\lambda }%
\right) \right) =\langle \xi _{\left( \alpha _{n}-1;n\right) }\rangle
\end{equation*}%
and%
\begin{equation*}
E_{\lambda \left( \alpha _{n}-1;n\right) }\left( Z\left( M,\boldsymbol{%
\lambda }\right) \right) =\langle \xi _{\left( \alpha _{n}-1;n\right) }-\xi
_{\left( \alpha _{n+1}-1;n+1\right) }\rangle \text{.}
\end{equation*}%
As above, this implies that $r_{n}=r_{0}$ for every $n\in \omega $ and hence 
$\varphi $ is the endomorphism $x\mapsto r_{0}x$.

(9) After localization one can assume that $R$ is a DVR. The conclusion is
then obtained by induction on $\alpha $. For $\alpha =1$ it is an easy a
calculation.\ The case of $\alpha $ follows from the cases of $1$ and $%
\alpha _{n}-1$ for $n\in \omega $ and considering the extension%
\begin{equation*}
L\rightarrow M\rightarrow J
\end{equation*}%
as above.
\end{proof}

\subsection{Dichotomy}

Summarizing the results established above, we obtained the following
Dichotomy Theorem for modules over a countable Dedekind domain.

\begin{theorem}
\label{Theorem:dichotomy}Suppose that $R$ is a countable Dedekind domain,
and $A$ is a countable flat module. If $A$ is divisible, then $\mathrm{Ext}%
\left( -,A\right) =0$. If $A$ is not divisible, then for every $\alpha
<\omega _{1}$:

\begin{enumerate}
\item there exists an extractable $A$-independent countable coreduced and
completely reduced module of (plain) length $\alpha $ and (plain) $A$%
-projective length $\alpha $;

\item there exists a rigid countable coreduced and completely reduced module
of $R$-projective and $A$-projective length $\alpha $.
\end{enumerate}
\end{theorem}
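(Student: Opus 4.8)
The statement compiles the constructions of this section, so the proof is a matter of assembling them. The plan is the following. For the divisible case, Proposition~\ref{Proposition:characterize-injectives} says that a divisible countable flat module is injective in $\mathbf{Flat}(R)$, so $\mathrm{Hom}(-,A)$ is exact on $\mathbf{Flat}(R)$ and $\mathrm{Ext}(-,A)=0$; conversely, $A$ is divisible iff it is injective by the same proposition. So assume $A$ is not divisible. Splitting off the largest divisible submodule $\Delta A$ (which is injective by Proposition~\ref{Proposition:characterize-injectives}, hence contributes nothing to $\mathrm{Ext}$ or to the relevant $\mathrm{Hom}$ groups beyond what $A/\Delta A$ does), we may replace $A$ by the nonzero \emph{reduced} module $A/\Delta A$. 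Fix $\alpha<\omega_{1}$.

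For part (1), use that by Lemma~\ref{Lemma:Omega} the complement $J_{\mathfrak{p}}(R)\setminus\Omega_{\mathfrak{p}}(A)$ is countable for each $\mathfrak{p}\in\mathbb{P}$, so any countable family of prescribed values can be taken inside $\Omega(A)$; since $I_{\alpha}^{\mathrm{plain}}$ is countable, Proposition~\ref{Proposition:existence-Xi} produces a well-pointed countable flat module $M$ of plain $\Xi$-length $\alpha$ whose $\Xi$-invariant has image inside $\Omega(A)$. Then Proposition~\ref{Proposition:XI-independent} gives that $M$ is extractable of plain length $\alpha$ and $A$-independent, and Theorem~\ref{Theorem:A-independent}(1) upgrades this to plain $A$-projective length $\alpha$. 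It remains to observe that $M$ is coreduced and completely reduced, which one proves by induction on $\alpha$: the building blocks $\Xi(\tau)$ are completely reduced and rigid (Lemma~\ref{Lemma:XI}(1),(6)), hence have no nonzero finite projective direct summand (and by Lemma~\ref{Lemma:XI}(7) the same holds for fishbone colimits of modules with pairwise non-essentially-equivalent $\Xi$-invariants), while complete reducedness passes through the defining extensions by pure submodules by Lemma~\ref{Lemma:reduce-local}.

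For part (2), start from the module $M$ of part (1), with $\Xi$-invariant $\boldsymbol{\tau}$, and choose an injective $\boldsymbol{\lambda}\in\boldsymbol{J}(R)^{I_{\alpha}}$ with image inside $\Omega(A)$ and disjoint from the image of $\boldsymbol{\tau}$ --- again possible by Lemma~\ref{Lemma:Omega} against the countability of $I_{\alpha}$. Form the zipping $Z:=Z(M,\boldsymbol{\lambda})$. By Lemma~\ref{Lemma:zipping}(8),(9) it is rigid and completely coreduced (in particular coreduced), and completely reduced because it is a quotient of the completely reduced module $M\oplus\bigoplus_{i}\Xi(\lambda(i))$ (Lemma~\ref{Lemma:XI}(1)) by the \emph{pure} submodule generated by the zipping relations. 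Since $A$ is reduced and $\boldsymbol{\lambda}$ has image in $\Omega(A)$, one gets $E_{\lambda(i)}(A)=0$ for every $i$ (Lemma~\ref{Lemma:Hom(-,A)}); combined with the $A$-independence of $M$, Lemma~\ref{Lemma:zipping}(6) yields that $Z$ has $A$-projective length $\alpha$. Finally, $\partial_{\beta}Z\cong\partial_{\beta}M$ for every $\beta$ by Lemma~\ref{Lemma:zipping}(5); since $M$ has $R$-projective length exactly $\alpha$ (Corollary~\ref{Corollary:constructible}), Theorem~\ref{Theorem:derivation}(3)--(4) gives that $\mathrm{Ext}(\partial_{\beta}Z,R)=s_{\beta}\mathrm{Ext}(Z,R)$ is nonzero for $\beta<\alpha$ while $s_{\alpha}\mathrm{Ext}(Z,R)=\mathrm{Ext}(0,R)=0$, so the $R$-projective length of $Z$ is also $\alpha$.

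The serious mathematics is all in the cited results, so the theorem itself is bookkeeping; the points needing care are (i) the simultaneous choice of $\boldsymbol{\lambda}$ meeting all constraints at once --- injective, valued in $\Omega(A)$, and avoiding the countable image of the $\Xi$-invariant of $M$ --- which is exactly where the co-countability of $\Omega_{\mathfrak{p}}(A)$ from Lemma~\ref{Lemma:Omega} is played off against the countability of $I_{\alpha}$, and (ii) the propagation of ``coreduced'' and ``completely reduced'' through the inductive $\Xi$-extractable construction, which I expect to be routine given Lemma~\ref{Lemma:XI} and Lemma~\ref{Lemma:reduce-local} but is not isolated as a single citable statement.
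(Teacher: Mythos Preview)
Your proof follows essentially the same approach as the paper's: Proposition~\ref{Proposition:characterize-injectives} for the divisible case, Propositions~\ref{Proposition:existence-Xi} and~\ref{Proposition:XI-independent} together with Theorem~\ref{Theorem:A-independent} for part~(1), and Lemma~\ref{Lemma:zipping} for part~(2). One small correction: your freestanding argument that $Z(M,\boldsymbol{\lambda})$ is completely reduced ``because it is a quotient of a completely reduced module by a pure submodule'' is not valid in general (such a quotient can be divisible---e.g., $\Xi(\tau)$ modulo a rank-one pure submodule); rely directly on Lemma~\ref{Lemma:zipping}(9), which you already cite modulo the paper's typo ``completely coreduced'' for ``completely reduced''.
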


\begin{proof}
If $A$ is divisible, then $\mathrm{Ext}\left( -,A\right) =0$ by Proposition %
\ref{Proposition:characterize-injectives}. If $A$ is not divisible, then by
Proposition \ref{Proposition:XI-independent} for every $\alpha <\omega _{1}$
there exists a extractable $A$-independent countable coreduced flat module $%
M $ of (plain) length $\alpha $. By Theorem \ref{Theorem:A-independent}, $M$
has (plain) $A$-projective length $\alpha $.

Considering such an $M$ of length $\alpha $, we can obtain by Lemma \ref%
{Lemma:zipping} a new \emph{rigid} module which is still countable,
coreduced, completely reduced, and of $R$-projective and $A$-projective
length $\alpha $.
\end{proof}

\begin{corollary}
\label{Corollary:chain-exact}Suppose that $R$ is a countable Dedekind
domain. Let $\mathcal{E}_{\alpha }$ be the exact structure projectively
generated by the countable flat module of plain tree length at most $\alpha
_{n}$ for $n\in \omega $. Then $\left( \mathcal{E}_{\alpha }\right) _{\alpha
<\omega _{1}}$ is a strictly decreasing chain of nontrivial exact structures
on the category $\mathbf{Flat}\left( R\right) $ of countable flat modules
over $R$.
\end{corollary}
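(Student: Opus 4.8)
The plan is to reduce the statement to the dictionary between $\mathcal{E}_\alpha$-extensions and the phantom subfunctors $\mathrm{Ph}^\alpha\mathrm{Ext}$, and then feed in the modules supplied by the Dichotomy Theorem \ref{Theorem:dichotomy}. First I would record that each $(\mathbf{Flat}(R),\mathcal{E}_\alpha)$ is already known to be an (idempotent-complete, hereditary) exact category by Theorem \ref{Theorem:phantom-projective}(7), so the work is entirely in comparing the $\mathcal{E}_\alpha$ for different $\alpha$ and in checking nontriviality. The key observation, from Theorem \ref{Theorem:phantom-projective}(3) together with the argument proving Theorem \ref{Theorem:phantom-projective}(8) (equivalently, the identification $\mathrm{Ext}^1_{\mathcal{E}_\alpha}\cong\mathrm{Ph}^\alpha\mathrm{Ext}$ of Corollary \ref{Corollary:derived-functor}), is that for countable flat modules $A,C$ a short exact sequence $\mathfrak{S}\colon A\rightarrow X\rightarrow C$ in $\mathbf{Flat}(R)$ lies in $\mathcal{E}_\alpha$ precisely when the class it represents in $\mathrm{Ext}(C,A)$ belongs to $\mathrm{Ph}^\alpha\mathrm{Ext}(C,A)$, which by Theorem \ref{Theorem:phantom-Ext} coincides with the Solecki submodule $s_\alpha(\mathrm{Ext}(C,A))$. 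Since the Solecki submodules form a decreasing chain, this immediately gives $\mathcal{E}_\alpha\subseteq\mathcal{E}_\beta$ for $\beta\leq\alpha$, so $(\mathcal{E}_\alpha)_{\alpha<\omega_1}$ is decreasing; under this dictionary the split exact structure corresponds to the trivial submodule $\{0\}$ of every $\mathrm{Ext}(C,A)$.

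For nontriviality and strict decrease I would invoke Theorem \ref{Theorem:dichotomy} with $A=R$, which applies because a countable Dedekind domain that is not a field is not divisible. For nontriviality of a fixed $\mathcal{E}_\alpha$, I would take by Theorem \ref{Theorem:dichotomy}(2) a countable flat module $C$ of $R$-projective length $\alpha+1$, so that $s_\alpha(\mathrm{Ext}(C,R))=\mathrm{Ph}^\alpha\mathrm{Ext}(C,R)$ is nonzero; any nonzero element is represented by a non-split sequence $R\rightarrow X\rightarrow C$ that lies in $\mathcal{E}_\alpha$, witnessing that $\mathcal{E}_\alpha$ strictly contains the split structure. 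For strict monotonicity, given $\beta<\alpha<\omega_1$ I would use that $\beta+1\leq\alpha$ and pick, again by Theorem \ref{Theorem:dichotomy}(2), a module $C$ of $R$-projective length exactly $\beta+1$; then $s_\beta(\mathrm{Ext}(C,R))\neq\{0\}$ while $s_{\beta+1}(\mathrm{Ext}(C,R))=\{0\}$, hence $s_\alpha(\mathrm{Ext}(C,R))\subseteq s_{\beta+1}(\mathrm{Ext}(C,R))=\{0\}$. Choosing $0\neq\eta\in s_\beta(\mathrm{Ext}(C,R))=\mathrm{Ph}^\beta\mathrm{Ext}(C,R)$, the dictionary gives a short exact sequence $\mathfrak{S}_\eta\in\mathcal{E}_\beta$ representing $\eta$; if $\mathfrak{S}_\eta$ also lay in $\mathcal{E}_\alpha$ then $\eta$ would belong to $s_\alpha(\mathrm{Ext}(C,R))=\{0\}$, a contradiction, so $\mathfrak{S}_\eta\in\mathcal{E}_\beta\setminus\mathcal{E}_\alpha$ and $\mathcal{E}_\alpha\subsetneq\mathcal{E}_\beta$.

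The only substantive ingredient is the existence, for each $\gamma<\omega_1$, of a countable flat module of $R$-projective length exactly $\gamma$, which is exactly the content of the Dichotomy Theorem \ref{Theorem:dichotomy}; everything else is bookkeeping with the phantom/Solecki dictionary. The closest thing to an obstacle is making sure this dictionary and the Dichotomy Theorem are applied with the correct target module $A=R$ in their a priori $A$-dependent formulations, and that the proof of Theorem \ref{Theorem:phantom-projective}(8) genuinely yields the biconditional ``$\mathfrak{S}$ is $\mathcal{E}_\alpha$-exact iff $\mathfrak{S}$ represents an element of $\mathrm{Ph}^\alpha\mathrm{Ext}(C,A)$'' rather than merely the derived-functor identification; since this is what that proof in fact establishes, I would isolate it as an explicit lemma before using it, and the remainder of the argument then proceeds as above without any further computation.
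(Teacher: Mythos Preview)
Your proposal is correct and follows the same approach the paper intends: the paper's own proof is the single line ``This follows from Theorem \ref{Theorem:dichotomy} and Theorem \ref{Theorem:phantom-projective},'' and your argument is exactly the unpacking of that reference---the biconditional you extract from the proof of Theorem \ref{Theorem:phantom-projective}(8) (that a short exact sequence lies in $\mathcal{E}_\alpha$ iff its class lies in $\mathrm{Ph}^\alpha\mathrm{Ext}(C,A)=s_\alpha\mathrm{Ext}(C,A)$), combined with the modules of prescribed $R$-projective length furnished by Theorem \ref{Theorem:dichotomy}. Your suggestion to state that biconditional as a separate lemma is sensible but not strictly necessary, since it is already the content of that proof.
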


\begin{proof}
This follows from Theorem \ref{Theorem:dichotomy} and Theorem \ref%
{Theorem:phantom-projective}.
\end{proof}

\subsection{Modules that are not phantom projective}

While the modules we have considered so far are extractable, there exist
modules that fail to be so. Indeed, we now provide examples of modules of
that are not phantom projective, which necessarily are not extractable.

\begin{theorem}
\label{Theorem:not-constructible}Suppose that $R$ is a countable Dedekind
domain. Then there exists a countable module $C$ of $R$-projective length $1$
such that:

\begin{enumerate}
\item for every phantom pro-finiteflat module $\boldsymbol{M}$ there exists
a countable flat module $A$ such that $\mathrm{Ph}^{1}\mathrm{Ext}\left(
C,A\right) \cong \boldsymbol{M}$;

\item $C$ is not a phantom projective.
\end{enumerate}
\end{theorem}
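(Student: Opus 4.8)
The plan is to reduce statement (1) to a universality statement about the functor $\mathrm{lim}^{1}$ on towers of finite flat modules, build $C$ as a ``generic'' colimit of finite-rank coreduced flat modules realizing that universality, and then deduce (2) formally from (1).

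\textbf{Step 1 (reformulation).} Present $C$ as an increasing union $C=\bigcup_{n}C_{n}$ of finite-rank pure submodules. By Theorem~\ref{Theorem:structure-rank-1-bis}, if each $C_{n}$ is coreduced of finite rank then $C$ has $R$-projective length at most $1$; if moreover $C$ is coreduced and nonzero, then $\mathrm{Ext}(C,R)\neq0$ by Proposition~\ref{Proposition:characterize-projectives}, so the $R$-projective length is exactly $1$. By Theorem~\ref{Theorem:phantom-Ext}, together with Theorem~\ref{Theorem:PExt-Solecki} and Corollary~\ref{Corollary:Ext-and-lim1-2},
\[
\mathrm{Ph}^{1}\mathrm{Ext}(C,A)=s_{1}\mathrm{Ext}(C,A)\cong\mathrm{lim}^{1}_{n}\mathrm{Hom}(C_{n},A)
\]
for every countable flat $A$. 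By Corollary~\ref{Corollary:fully-faithful-lim1}, the phantom pro-finiteflat modules are precisely the $\mathrm{lim}^{1}\boldsymbol{B}$ for $\boldsymbol{B}=(B^{(n)})$ a reduced tower of finite flat modules. Hence (1) is equivalent to: there is a chain $(C_{n})$ of finite-rank coreduced flat modules such that for every reduced tower $\boldsymbol{B}$ of finite flat modules there is a countable flat $A$ with $\big(\mathrm{Hom}(C_{n},A)\big)_{n}$ pro-isomorphic to $\boldsymbol{B}$.

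\textbf{Step 2 (the universal $C$).} I would construct $C=\mathrm{colim}_{n}C_{n}$ by a back-and-forth/bookkeeping argument making the direct system $(C_{n})$ of finite-rank coreduced flat modules \emph{homogeneous}: the transition maps $C_{n}\hookrightarrow C_{n+1}$ are pure and non-split, and every finite partial datum — a finite flat module together with compatible maps to finitely many stages — can be continued through the system. The rigid rank-$2$ blocks $\Xi(\tau)$ of Section~\ref{Subsection:XI}, together with the zipping operation of Lemma~\ref{Lemma:zipping}, are used to keep $C$ coreduced (indeed completely coreduced) and to suppress spurious endomorphisms. Given a reduced tower $\boldsymbol{B}=(B^{(n)},q^{(n,n+1)})$, one then manufactures a countable flat $A$ from $\boldsymbol{B}$ by a dual-and-colimit construction (gluing realizations of the finite flat modules $(B^{(n)})^{*}$ along the duals of the $q^{(n,n+1)}$), and uses homogeneity of $(C_{n})$ together with the reflexivity $F^{\ast\ast}\cong F$ of finite flat modules (Lemma~\ref{Lemma:finiteflat-duality}) to produce mutually inverse pro-morphisms between $\big(\mathrm{Hom}(C_{n},A)\big)_{n}$ and $\boldsymbol{B}^{\ast\ast}\cong\boldsymbol{B}$. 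Then $\mathrm{lim}^{1}\mathrm{Hom}(C_{n},A)\cong\mathrm{lim}^{1}\boldsymbol{B}\cong\boldsymbol{M}$, which is (1).

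\textbf{Step 3 (non-phantom-projectivity).} Suppose $C$ is a phantom projective, say of projective length at most $\alpha<\omega_{1}$; then $s_{\alpha}\mathrm{Ext}(C,A)=\mathrm{Ph}^{\alpha}\mathrm{Ext}(C,A)=0$ for every countable flat $A$, and $\alpha\geq1$ since $C$ is coreduced and nonzero. As $R$ is a countable Dedekind domain that is not a field, it is not divisible, so by the Dichotomy Theorem~\ref{Theorem:dichotomy}(2) with $A=R$ there is a coreduced flat module $M$ of $R$-projective length $\alpha+1$; writing $M$ as the union of its finite submodules $M_{n}$, the module $\boldsymbol{M}:=\mathrm{Ext}(M,R)\cong\mathrm{lim}^{1}_{n}\mathrm{Hom}(M_{n},R)$ is a phantom pro-finiteflat module of Solecki length $\alpha+1$. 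By (1) there is a countable flat $A$ with $s_{1}\mathrm{Ext}(C,A)\cong\boldsymbol{M}$; since $\mathrm{Ext}(C,A)$ is a phantom module, the recursive definition of the Solecki submodules gives $s_{1+\beta}\mathrm{Ext}(C,A)=s_{\beta}\big(s_{1}\mathrm{Ext}(C,A)\big)\cong s_{\beta}\boldsymbol{M}$ for all $\beta$ (this is also the content of Theorem~\ref{Theorem:Solecki-lim1}), so $\mathrm{Ext}(C,A)$ has Solecki length $1+(\alpha+1)>\alpha$ and hence $s_{\alpha}\mathrm{Ext}(C,A)\neq0$, a contradiction. Therefore $C$ has no projective length, i.e.\ it is not a phantom projective.

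\textbf{Main obstacle.} The delicate point is Step~2: a single countable coreduced flat module must be universal, by varying the second argument of $\mathrm{Ext}$, for the continuum-sized family of all phantom pro-finiteflat modules, while its finite-rank-submodule presentation keeps the $R$-projective length equal to $1$ and $C$ stays coreduced. Getting the homogeneity construction of $(C_{n})$, the assignment $\boldsymbol{B}\mapsto A$, and the verification of the pro-isomorphism to cohere — in particular coping with the fact that $\mathrm{Hom}(C_{n},-)$ need not commute with the filtered colimits used to build $A$, so one has to manipulate the $\mathrm{lim}^{1}$ of the derived tower directly — is where the real work lies.
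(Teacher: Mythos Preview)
Your Steps 1 and 3 are correct; in particular Step 3 is spelled out more explicitly than the paper's one-line ``(2) is a consequence of (1)''. The gap is exactly where you place it, in Step 2, and the paper's argument there is quite different from your homogeneity/back-and-forth sketch.

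The paper does not try to make the chain $(C_n)$ generically homogeneous. Instead it builds $C$ so that the \emph{building blocks themselves are indexed by the data of all possible towers}. After localizing to a DVR, the construction is: for every $n\ge 2$, every homomorphism $\varphi\in\mathrm{Hom}(R^{n},R^{n-1})$, every $i<n$, and every $\psi\in\mathrm{Hom}(R^{n-1},R^{n-2})$, pick pairwise non-essentially-equivalent parameters in $J(R)$ and form the corresponding rigid rank-$2$ modules $B(\varphi,i):=\Xi(\mu(\varphi,i))$ and $B(\varphi,i,\psi):=\Xi(\lambda(\varphi,i,\psi))$, each with its distinguished minimal element $e(-)$. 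Let $B$ be the direct sum of all of these, and let $C$ be the quotient of $B$ by the pure submodule generated by the relations
\[
e(\varphi,i)\;-\;\Big(\sum_{j<n-1}a_\varphi(j,i)\,e(\psi,j)\;+\;e(\varphi,i,\psi)\Big),
\]
where $(a_\varphi(j,i))$ is the matrix of $\varphi$ in the standard bases. These relations hard-code the composition law of arbitrary towers of free modules.

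Given this $C$, the assignment $\boldsymbol{M}\mapsto A$ is completely explicit: write $\boldsymbol{M}\cong\mathrm{lim}^{1}(R^{n},\varphi_{n})$ for some sequence $\varphi_{n}\in\mathrm{Hom}(R^{n},R^{n-1})$, and set
\[
A:=\bigoplus_{n}\bigoplus_{i<n}B(\varphi_{n},i).
\]
Because the $\Xi(\tau)$ are rigid and pairwise orthogonal (Lemma~\ref{Lemma:XI}), $\mathrm{Hom}(\Xi(\tau),A)$ is $R$ when $\tau$ is one of the chosen $\mu(\varphi_{n},i)$ and $0$ otherwise. One then takes for $C_{k}$ the pure submodule generated by the $c(\psi_{n,i},j)$ for $n,i\le k$ and $j\le n$ (with $\psi_{n,0}=\varphi_n$), reads off $\mathrm{Hom}(C_{k},A)\cong R^{k}$, and the relations above force the induced maps to be exactly the $\varphi_{k}$. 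No back-and-forth is needed: the indexing by $\varphi$'s and the orthogonality of the $\Xi$-blocks do all the work of ``selecting'' the desired tower.

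Your dual-and-colimit idea for $A$ and your appeal to reflexivity of finite flat modules do not by themselves produce this: you need a mechanism that makes $\mathrm{Hom}(C_{n},A)$ \emph{small} (namely $\cong R^{n}$) rather than large, and that is precisely what the rigidity/orthogonality of the $\Xi(\tau)$ together with the explicit indexing by homomorphisms provides. A generic homogeneous $(C_{n})$ would tend to have $\mathrm{Hom}(C_{n},A)$ too large to control.
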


\begin{proof}
After replacing $R$ with the localization of $R$ at a maximal ideal, we can
assume that $R$ is a DVR with maximal ideal generated by $p\in R$. Let $%
\omega ^{<\omega }$ be the set of finite tuples of elements of $\omega $.
The idea is to construct a module with generators and relations that code
any possible tower of finite flat modules. Let $\mu \left( \varphi ,i\right) 
$ and $\lambda \left( \varphi ,i,\psi \right) $ be pairwise not essentially
equivalent elements of $J\left( R\right) $ for $n\in \omega $, $\varphi \in 
\mathrm{Hom}\left( R^{n},R^{n-1}\right) $, and $\psi \in \mathrm{Hom}\left(
R^{n-1},R^{n-2}\right) $, where by convention $\mathrm{Hom}\left(
R^{n},R^{n-1}\right) =\varnothing $ for $n\in \left\{ 0,1\right\} $. Set%
\begin{equation*}
B\left( \varphi ,i\right) :=\Xi \left( \mu \left( \varphi ,i\right) \right)
\end{equation*}%
and%
\begin{equation*}
e\left( \varphi ,i\right) :=e_{\Xi \left( \mu \left( \varphi ,i\right)
\right) }\in \Xi \left( \mu \left( \varphi ,i\right) \right) =B\left(
\varphi ,i\right) \text{.}
\end{equation*}%
Likewise we define $B\left( \varphi ,i,\psi \right) $ and $e\left( \varphi
,i,\psi \right) $. Let $B$ be the direct sum of $B\left( \varphi ,i\right) $
and $B\left( \varphi ,i,\psi \right) $ as above. Thus, $B$ is the pure
submodule of itself generated by $e\left( \varphi ,i\right) $ and $e\left(
\varphi ,i,\psi \right) $. Fix $\varphi \in \mathrm{Hom}\left(
R^{n},R^{n-1}\right) $, and let $a_{\varphi }\left( j,i\right) \in R$ such
that%
\begin{equation*}
\varphi \left( \delta _{i}\right) =\sum_{j<n-1}a_{\varphi }\left( j,i\right)
\delta _{j}
\end{equation*}%
for $i<n$, where $\left( \delta _{i}\right) _{i<n}$ is the canonical basis
of $R^{n}$. We want to add the relation%
\begin{equation*}
e\left( \varphi ,i\right) =\sum_{j<n-1}a_{\varphi }\left( j,i\right) e\left(
\psi ,j\right) +e\left( \varphi ,i,\psi \right)
\end{equation*}%
for any $\varphi \in \mathrm{Hom}\left( R^{n},R^{n-1}\right) $, $\psi \in 
\mathrm{Hom}\left( R^{n-1},R^{n-2}\right) $, and $i<n$. We therefore
consider the submodule $N$ of $B$ generated by the elements%
\begin{equation*}
\rho \left( \varphi ,i,\psi \right) :=e\left( \varphi ,i\right) -\left(
\sum_{j<n-1}a_{\varphi }\left( j,i\right) e\left( \psi ,j\right) +e\left(
\varphi ,i,\psi \right) \right) \text{.}
\end{equation*}%
It follows easily from Lemma \ref{Lemma:minimal-combination} that $\left(
\rho \left( \varphi ,i,\psi \right) \right) $ is a minimal family in $B$.\
Thus, $N$ is a pure submodule of $B$. We let $c\left( \varphi ,i\right) $ be
the element $e\left( \varphi ,i\right) +N\in C$, and likewise $c\left(
\varphi ,i,\psi \right) $ be the element $\rho \left( \varphi ,i,\psi
\right) +N\in C$.

(1) Given a phantom pro-finiteflat module $\boldsymbol{M}$, consider a tower 
$\left( M_{n}\right) $ of finiteflat modules such that $\mathrm{lim}%
^{1}M_{n}\cong \boldsymbol{M}$. We can assume that $M_{n}\cong R^{n}$ and
the bonding map $M_{n+1}\rightarrow M_{n}$ corresponds to a homomorphism $%
\varphi _{n+1}:R^{n+1}\rightarrow R^{n}$. Now consider the countable flat
module $A$ obtained as follows. Set $\varphi _{0}=\varnothing $. Consider%
\begin{equation*}
A=\bigoplus_{n\in \omega }\bigoplus_{i<n}B\left( \varphi _{n},i\right) \text{%
.}
\end{equation*}%
Then it is easily seen that one can write an increasing sequence $\left(
C_{k}\right) $ of finite-rank pure submodules of $C$ with $C=\mathrm{co%
\mathrm{lim}}_{k}C_{k}$ such that $\left( \mathrm{Hom}\left( C_{k},A\right)
\right) \cong \left( M_{k}\right) $. Indeed, let for every $n\in \omega $, $%
\left( \psi _{n,i}\right) $ be an enumeration of $\mathrm{Hom}\left(
R^{n},R^{n-1}\right) $ with $\psi _{n,0}=\varphi _{n}$. Define $C_{k}$ to be
the pure submodule of $C$ generated by $c\left( \psi _{n,i},j\right) $ for $%
n,i\leq k$ and $j\leq n$. Then we have%
\begin{equation*}
\mathrm{Hom}\left( C_{k},A\right) \cong R^{k}
\end{equation*}%
and under such an isomorphism the inclusion $C_{k-1}\rightarrow C_{k}$
induces the map $\varphi _{k}:R^{k}\rightarrow R^{k-1}$. Thus, $s_{1}\mathrm{%
Ext}\left( C,A\right) \cong \mathrm{lim}_{k}^{1}\mathrm{Hom}\left(
C_{k},A\right) \cong \boldsymbol{M}$.

(2) is a consequence of (1).
\end{proof}

\subsection{Phantom projective modules that are not extractable}

We now prove that when $R$ is a Dedekind domain, there exist phantom
projective modules that are not extractable. Again, after replacing $R$ with
its localization at a prime, we can assume that $R$ is a DVR with maximal
ideal generated by $p$. We recall the definition of the module $\Xi \left(
\tau \right) $ for $\tau \in J\left( R\right) $ in this case. For $n\in
\omega $ pick $\tau _{n}\in R$ such that $\tau \equiv \tau _{n}\ \mathrm{mod}%
p^{n}$. For $n\geq 1$, let $\Xi _{n}\left( \tau \right) $ be submodule of $%
K\oplus K$ generated by $e_{0}:=\left( 1,0\right) =e$, $f_{n}:=\left(
0,1\right) =f$, and 
\begin{equation*}
g_{n}:=p^{-n}(e-\tau _{n}f)\text{.}
\end{equation*}%
Then%
\begin{equation*}
p^{n}g_{n}+\tau _{n}f=e
\end{equation*}%
for every $n\in \omega $. Notice that $\Xi _{n}\left( \tau \right) \subseteq
\Xi _{n+1}\left( \tau \right) $ for $n\in \omega $. Define $\Xi \left( \tau
\right) =\mathrm{co\mathrm{lim}}_{n}\Xi _{n}\left( \tau \right) \subseteq
K\oplus K$. Then we have that $\Xi \left( \tau \right) $ is a rigid module.

Notice that if $M$ is a countable coreduced flat module of plain tree length 
$1$, then it has plain projective length $1$. This implies that $M$ has
finite rank, and $M$ has also plain $R$-projective length $1$. We now show
that this fails for $2$.

\begin{proposition}
Let $R$ be a countable Dedekind domain. Then there exists a countable flat
module that has plain tree length $2$ and $R$-projective length $1$. In
particular, $M$ is a phantom projective module that is not extractable.
\end{proposition}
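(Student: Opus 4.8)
The plan is to construct $M$ explicitly as a wedge sum of copies of the modules $\Xi(\tau)$ from Subsection~\ref{Subsection:XI}. After replacing $R$ by its localization at a maximal ideal (as in the sentence preceding the statement) we may assume $R$ is a DVR with maximal ideal $(p)$. I would fix a sequence $(\tau_i)_{i\in\omega}$ in $J(R)$ that is algebraically independent over $K$ (possible since $J_{\mathfrak p}(R)$ is uncountable while $K$ is countable), so in particular the $\tau_i$ are pairwise not essentially equivalent. Writing $e_i=e_{\Xi(\tau_i)}$, I would let $M$ be the wedge sum of the $\Xi(\tau_i)$ over $M_1=R$ along the maps $\psi_i\colon R\to\Xi(\tau_i)$, $1\mapsto e_i$; each $\psi_i$ is injective with pure image (because $e_i$ is minimal, by Lemma~\ref{Lemma:XI}), so for each $n$ the submodule $C_n\subseteq M$ spanned by $M_1$ together with $\Xi(\tau_0),\dots,\Xi(\tau_{n-1})$ is a pure finite–rank submodule of $M$, and $M=\mathrm{co\mathrm{lim}}_n C_n$.

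The easy verifications come first. By the very definition of plain tree length, $M$ has plain tree length at most $2$, being a wedge sum of the finite–rank modules $\Xi(\tau_i)$ (which have plain tree length $\le1$) over the finiteflat module $R$. Next, $\mathrm{Hom}(\Xi(\tau_i),R)=0$: a homomorphism $\varphi$ sends $e_i$ to an element of $R$ which is $\tau_i$–divisible in $R$, hence $0$ since $\tau_i\notin K$, so $\varphi$ vanishes on $\langle e_i,f_i\rangle$ and thus on $\Xi(\tau_i)$. Consequently $\mathrm{Hom}(M,R)=\mathrm{lim}_n\mathrm{Hom}(C_n,R)=0$, so $M$ has no projective summand, i.e.\ $M$ is coreduced; and $M$ has infinite rank, so by Theorem~\ref{Theorem:structure-rank-1} (plain projective length $\le1$ forces finite rank) together with Lemma~\ref{Lemma:wedge-projective}, $M$ has plain tree length exactly $2$. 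Also $M$ has $R$–projective length $1$: by Corollary~\ref{Corollary:Ext-and-lim1-2} and Theorem~\ref{Theorem:PExt-Solecki}, $s_1\mathrm{Ext}(M,R)\cong\mathrm{lim}^1_n\mathrm{Hom}(C_n,R)$, and since $C_n$ is spanned by the images of $\Xi(\tau_0),\dots,\Xi(\tau_{n-1})$ and each $\mathrm{Hom}(\Xi(\tau_j),R)=0$, the tower is zero, so $s_1\mathrm{Ext}(M,R)=0$; on the other hand the inclusion $\Xi(\tau_0)\hookrightarrow M$ yields a surjection $\mathrm{Ext}(M,R)\twoheadrightarrow\mathrm{Ext}(\Xi(\tau_0),R)$, and $\mathrm{Ext}(\Xi(\tau_0),R)\ne0$ because $\Xi(\tau_0)$ is reduced of rank $2$ and rigid, hence not free, hence not projective (Proposition~\ref{Proposition:characterize-projectives}). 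So the Solecki length of $\mathrm{Ext}(M,R)$ is exactly $1$.

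The heart of the argument — and the step I expect to be the main obstacle — is to show that $M$ has projective length strictly greater than $1$, equivalently to produce a countable flat module $A$ with $\mathrm{Ph}^1\mathrm{Ext}(M,A)\ne0$. For this I would take $A:=\bigl(\langle b_0\rangle\oplus\bigoplus_i\Xi(\tau_i)\bigr)/N$, where $\langle b_0\rangle\cong R$ and $N$ is the submodule generated by $(e_i-p^i b_0)_{i\in\omega}$; this family is minimal by Lemma~\ref{Lemma:minimal-combination} (the $e_i$ being minimal in $\Xi(\tau_i)$ and $b_0$ in $\langle b_0\rangle$), so $N$ is pure and $A$ is a countable flat module (Lemma~\ref{Lemma:pure-exact-extension}). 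The delicate point — the analogue for this gluing of the computations in Subsection~\ref{Subsection:XI} and of Lemma~\ref{Lemma:zipping} — is to show that $A$ is (completely) reduced and that the set of $\tau_j$–divisible elements of $A$ is exactly $\langle e_j\rangle_A=p^j\langle b_0\rangle$; this uses the algebraic independence of the $\tau_i$ (so that in each $\widehat{\Xi(\tau_i)}_{\mathfrak p}\cong\widehat R_{\mathfrak p}$ there is no $\tau_j$–divisibility for $j\neq i$, cf.\ Lemma~\ref{Lemma:XI}(7)), together with purity of $N$ and of $A$ inside $\widehat A_{\mathfrak p}$ (Lemma~\ref{Lemma:pure-completion}). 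Granting this, since $A$ is reduced and $\Xi(\tau_j)/\langle e_j\rangle\cong K$ is divisible, evaluation at $e_j$ identifies $\mathrm{Hom}(\Xi(\tau_j),A)$ with $p^j\langle b_0\rangle$, whence $\mathrm{Hom}(C_n,A)=\{(\varphi_j)_{j<n}:\varphi_j(e_j)\ \text{independent of}\ j\}\cong\bigcap_{j<n}p^j\langle b_0\rangle=p^{n-1}\langle b_0\rangle$, the transition maps being the inclusions $p^n\langle b_0\rangle\hookrightarrow p^{n-1}\langle b_0\rangle$. This tower is isomorphic to $(R,\cdot p)$, so by Corollary~\ref{Corollary:Ext-and-lim1-2} and Theorem~\ref{Theorem:PExt-Solecki},
\[
\mathrm{Ph}^1\mathrm{Ext}(M,A)\cong s_1\mathrm{Ext}(M,A)\cong\mathrm{lim}^1_n\mathrm{Hom}(C_n,A)\cong\widehat R_{\mathfrak p}/R\neq0,
\]
so the projective length of $M$ is at least $2$; combined with plain tree length $\le2$ (hence projective length $\le2$ via Lemma~\ref{Lemma:wedge-projective}), it equals $2$.

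Finally, the remaining assertions are immediate. $M$ is a phantom projective since it has tree, hence projective, length $\le2<\omega_1$ (Theorem~\ref{Theorem:phantom-projective}). And $M$ is not extractable: if it were, then by Corollary~\ref{Corollary:constructible} its extractable, $R$–projective, and projective lengths would all coincide, contradicting that its $R$–projective length is $1$ while its projective length is $2$.
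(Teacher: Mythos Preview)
Your overall strategy is sound, and your module $M$ does have $R$-projective length $1$ and plain tree length $2$. However, your route diverges from the paper's at a key point, and the step you flag as the ``delicate point'' is a genuine gap that would take real work to close. To show that the $\tau_j$-divisible elements of your glued module $A$ are exactly $p^j R\,b_0$, you would have to analyze $\tau_j$-divisibility across all the summands simultaneously in $\hat A_{\mathfrak p}$; note that each $\widehat{\Xi(\tau_i)}_{\mathfrak p}$ collapses to a rank-one $\hat R_{\mathfrak p}$-module (since $e_i=\tau_i f_i$ there), so $\hat A_{\mathfrak p}$ is an infinite product with the relations $\tau_i f_i=p^i b_0$ imposed, and isolating the $\tau_j$-divisible part of the countable submodule $A$ inside this is not covered by Lemma~\ref{Lemma:XI}(7) or Lemma~\ref{Lemma:pure-completion} alone. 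The algebraic independence of the $\tau_i$ is the right ingredient, but you have not written the argument.

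The paper bypasses this entirely by a different choice of $M$: it uses a \emph{single} $\tau$ for every copy, $C_{(i;0)}=\Xi(\tau)$, but takes $\psi_i\colon 1\mapsto p^i e$ rather than $1\mapsto e$. Then one may simply take $A=\Xi(\tau)$. By rigidity of $\Xi(\tau)$ (Lemma~\ref{Lemma:XI}(6)), any homomorphism $M_n\to\Xi(\tau)$ restricts to $r_i\cdot\mathrm{id}$ on the $i$-th copy, and the compatibility $r_0 e=r_1 p e=\cdots=r_{n-1}p^{n-1}e$ forces $r_0\in p^{n-1}R$; thus the tower $(\mathrm{Hom}(M_n,\Xi(\tau)))_n$ is $(p^{n}R)$ with inclusion maps, and $\mathrm{Ph}^1\mathrm{Ext}(M,\Xi(\tau))\cong\hat R/R\neq 0$ with no further verification needed. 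So the shortcut is to put the powers of $p$ into the wedge maps $\psi_i$ rather than into the target, which lets the already-established rigidity do all the work.
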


\begin{proof}
As noticed above, we can assume without loss of generality that $R$ is a
DVR. Fix an element $\tau $ of $J\left( R\right) $. We define $M$ as a wedge
sum 
\begin{equation*}
\bigoplus_{\left( C_{1},\varphi _{i}\right) }C_{\left( i;0\right) }
\end{equation*}%
where $C_{1}=R$, $C_{\left( i;0\right) }=\Xi \left( \tau \right) $, and $%
\psi _{i}:R\rightarrow C_{\left( i;0\right) }$, $1\mapsto p^{i}e$. Let $%
M_{n} $ be the submodule of $M$ corresponding to $C_{1}\oplus C_{\left(
0;0\right) }\cdots \oplus C_{\left( 0;n-1\right) }$. Then we have that%
\begin{equation*}
\mathrm{Ph}^{1}\mathrm{Ext}\left( M,R\right) \cong \mathrm{lim}^{1}\mathrm{%
Hom}\left( M_{n}|C_{1},R\right) =0\text{.}
\end{equation*}%
On the other hand,%
\begin{equation*}
\mathrm{Ph}^{1}\mathrm{Ext}\left( M,\Xi \left( \tau \right) \right) \cong 
\mathrm{lim}^{1}\mathrm{Hom}\left( M_{n}|C_{1},\Xi \left( \tau \right)
\right) \cong \mathrm{lim}^{1}\left( p^{n}R\right) \cong \hat{R}/R\text{.}
\end{equation*}%
This shows that $M$ has plain tree length $2$ and plain projective length $2$%
, but $R$-projective length $1$. In particular, $M$ is not extractable by
Corollary \ref{Corollary:constructible}.
\end{proof}

\section{Complexity of extensions\label{Section:complexity}}

In this last section we recall some fundamental notions from descriptive set
theory, and isolate some complexity-theoretic consequence of the main
results of the previous sections.

\subsection{Complexity of classification problems}

\emph{Invariant complexity theory }provides a framework to compare the
complexity of classification problems in mathematics. Every (reasonable)
classification problem in mathematics can be regarded (perhaps after a
suitable parametrization) as an analytic equivalence relation $E$ on a
Polish space $\boldsymbol{X}$. Recall that a subset $A$ of a Polish space $%
\boldsymbol{X}$ is \emph{analytic} or $\boldsymbol{\Sigma }_{1}^{1}$ if it
is the image of a Borel function $\boldsymbol{Z}\rightarrow \boldsymbol{X}$
for some standard Borel space $\boldsymbol{Z}$. Every Borel set is analytic,
but the converse does not hold. An equivalence relation $E$ on $\boldsymbol{X%
}$ is analytic if it is analytic as a subset of $\boldsymbol{X}\times 
\boldsymbol{X}$.

The notion of \emph{Borel reducibility }allows one to compare the complexity
of different classification problems.

\begin{definition}
Let $E,F$ be analytic equivalence relations on Polish spaces $\boldsymbol{X},%
\boldsymbol{Y}$, respectively. The equivalence relation $E$ is \emph{Borel
reducible }to $F$ if there exists a Borel function $f:\boldsymbol{X}%
\rightarrow \boldsymbol{Y}$ such that, for $x,x^{\prime }\in \boldsymbol{X}$,%
\begin{equation*}
xEx^{\prime }\Leftrightarrow f\left( x\right) Ff\left( x^{\prime }\right) 
\text{.}
\end{equation*}%
The relations $E,F$ are Borel bireducible if both $E$ is Borel reducible to $%
F$ and $F$ is Borel reducible to $E$.
\end{definition}

Special equivalence relations are used as\emph{\ benchmarks }to assess the
complexity of classification problems.

If $G$ is a Polish group and $X$ is a Polish $G$-space, the one can consider
the corresponding \emph{orbit equivalence relation }$E_{G}^{X}$ on $X$. An
equivalence relation is\emph{\ classifiable by orbits} if it is Borel
reducible to $E_{G}^{X}$ for some Polish group $G$ and Polish $G$-space $X$.
By restricting $G$ to smaller classes of Polish groups one obtains stronger
notions of classifiability. Recall that a Polish group $G$ is \emph{%
non-Archimedean} if it has a basis of neighborhoods of the identity
consisting of open subgroups. This is equivalent to the assertion that $G$
is isomorphic to a closed subgroup of the Polish group $S_{\infty }$ of
permutations of $\mathbb{N}$.

\begin{definition}
An equivalence relation $E$ is \emph{classifiable by countable structures}
if it is Borel reducible to the orbit equivalence relation of a Polish $G$%
-space for some non-Archimedean Polish group $G$.
\end{definition}

In fact, one can always take $G$ to be the Polish group $S_{\infty }$ by 
\cite[Theorem 3.5.2]{gao_invariant_2009}. If $\mathcal{L}$ is a countable
first-order language, then the space of countable $\mathcal{L}$-structures
can be regarded as a Polish space $\mathrm{Mod}\left( \mathcal{L}\right) $.
An equivalence relation $E$ is classifiable by countable structures if and
only if it is Borel reducible to the relation of \emph{isomorphism }of
countable $\mathcal{L}$-structures for some countable language $\mathcal{L}$%
, thus justifying the name.

One obtains other benchmarks of complexity by considering distinguished
equivalence relations. We let $\wp \left( \mathbb{N}\right) $ be the
collection of subsets of $\mathbb{N}$. This is a Polish space when
identified with the product $\left\{ 0,1\right\} ^{\mathbb{N}}$. As any
other uncountable Polish space, it admits a Borel bijection with $\mathbb{R}$%
, whence its elements are also called reals. The relations of equality in $%
\wp \left( \mathbb{N}\right) $, $\mathbb{R}$, or any other uncountable
Polish space are all Borel bireducible.

\begin{definition}
An equivalence relation $E$ is smooth or concretely classifiable if it is
Borel reducible to the relation of equality of real numbers.
\end{definition}

A more generous notion of classifiability is obtained by considering on the
space $\left\{ 0,1\right\} ^{\mathbb{N}}$ of binary sequences a coarser
relation than equality.

\begin{definition}
An equivalence relation $E$ is \emph{essentially hyperfinite} if it is Borel
reducible to the relation of \emph{tail equivalence }of binary sequences.
\end{definition}

The relation of tail equivalence of binary sequences is the orbit
equivalence relation associated with a Borel action of a \emph{countable }%
group on $\left\{ 0,1\right\} ^{\mathbb{N}}$. More generally, a Borel
equivalence relation on a Polish space is called \emph{countable} if it is
the orbit equivalence relation associated with a Borel action of a countable
group or, equivalently, if its equivalence classes are countable.

\begin{definition}
An equivalence relation $E$ is \emph{essentially countable }if it is Borel
reducible to a countable Borel equivalence relations.
\end{definition}

Every essentially hyperfinite equivalence relation is essentially countable.
The converse holds for orbit equivalence relations associated with actions
of \emph{abelian }non-Archimedean Polish groups on Polish spaces; see \cite[%
Corollary 6.3]{ding_non-archimedean_2017}. Among the (essentially) countable
Borel equivalence relations there exists one of maximum complexity, usually
denoted by $E_{\infty }$.

We will consider an equivalent presentation of the study of Borel complexity
of classification problems, in terms of Borel-definable and $\boldsymbol{%
\Sigma }_{1}^{1}$-definable sets.

\subsection{Definable and semidefinable set}

By a $\boldsymbol{\Sigma }_{1}^{1}$-definable set we mean a pair $(%
\boldsymbol{X},E)$ where $\boldsymbol{X}$ is a Polish space and $E$ is an
analytic equivalence relation on $\boldsymbol{X}$. We think of the pair $%
\left( \boldsymbol{X},E\right) $ as an explicit presentation of the space $X=%
\boldsymbol{X}/E$ of $E$-equivalence classes. A subset $Z$ of $X$ is Borel
(respectively, analytic) if its \emph{lift }$\boldsymbol{Z}:=\left\{ z\in 
\boldsymbol{X}:[z]_{E}\in Z\right\} $ is a Borel (respectively, analytic)
subset of $\boldsymbol{X}$. A function $f:X\rightarrow Y$ between $%
\boldsymbol{\Sigma }_{1}^{1}$-definable sets is Borel if it is induced by a
Borel function $\boldsymbol{X}\rightarrow \boldsymbol{Y}$. We consider $%
\boldsymbol{\Sigma }_{1}^{1}$-definable sets as objects of a category with
Borel functions as morphisms. The more restrictive notion of Borel-definable
set $X:=\boldsymbol{X}/E$ is obtained by requiring that $E$ be an
equivalence relation on $\boldsymbol{X}$ that satisfies stronger
definability requirements.

We recall the notion of idealistic equivalence relation, which we present in
a slightly stronger formulation than in \cite[Definition 3.1]%
{bergfalk_definable_2024}. Given an equivalence relation $E$ on $\boldsymbol{%
X}$ we define the equivalence relation $E^{\mathbb{N}}$ on $\boldsymbol{X}^{%
\mathbb{N}}$ by setting%
\begin{equation*}
\left( x_{n}\right) E^{\mathbb{N}}\left( y_{n}\right) \Leftrightarrow
\forall n\in \mathbb{N}\text{, }x_{n}Ey_{n}\text{.}
\end{equation*}

\begin{definition}
An equivalence relation $E$ on a Polish space $X$ is \emph{idealistic }if
there is a Borel function $s:X\rightarrow X$ such that $xEs\left( x\right) $
for every $x\in X$, and an assignment $[x]_{E}\mapsto \mathcal{F}_{[x]_{E}}$
of a nontrivial $\sigma $-filter on $[x]_{E}$ to each $E$-class $[x]_{E}$,
that is Borel in the sense that for every Polish space $Z$ and every Borel
set $A\subseteq X\times Z\times X$ the set%
\begin{equation*}
\left\{ \left( x,z\right) \in X\times Z:\mathcal{F}_{[x]_{E}}y\text{, }%
\left( s\left( x\right) ,z,y\right) \in A\right\}
\end{equation*}%
is Borel, where%
\begin{equation*}
\mathcal{F}_{[x]_{E}}y\text{, }\left( s\left( x\right) ,z,y\right) \in
A\Leftrightarrow \left\{ y\in \lbrack x]_{E}:\left( s\left( x\right)
,z,y\right) \in E\right\} \in \mathcal{F}_{[x]_{E}}\text{.}
\end{equation*}%
We say that $E$ is \emph{stably idealistic }if the equivalence relation $E^{%
\mathbb{N}}$ on $X^{\mathbb{N}}$ defined by is idealistic.
\end{definition}

\begin{definition}
A \emph{Borel-definable set }is a $\boldsymbol{\Sigma }_{1}^{1}$-definable
set $X:=\boldsymbol{X}/E$ where $E$ is Borel and stably idealistic.
\end{definition}

It is proved in \cite[Section 3]{bergfalk_definable_2024} that
Borel-definable sets form an isomorphism-invariant full subcategory of $%
\boldsymbol{\Sigma }_{1}^{1}$-definable sets, which satisfies natural
properties that can be seen as analogues of corresponding elementary facts
concerning the category of sets. Furthermore, a function $f:X\rightarrow Y$
between Borel-definable sets is Borel if and only if its graph is a Borel
subset of $X\times Y$.

Every group with a Polish cover yields a Borel-definable set, as the
orbit-equivalence relation associated with a continuous action of a Polish
group on a Polish space is stably idealistic \cite[Proposition 5.4.19]%
{gao_invariant_2009}. The notion of\emph{\ complexity class} of the trivial
subgroup of a group with a Polish cover can be extended naturally to
Borel-definable sets.

\begin{definition}
Let $\Gamma $ be a complexity class of Borel subsets of Polish spaces. The
Borel-definable set $X$ is $\Gamma $-definable or potentially $\Gamma $ if
it is isomorphic in the category of Borel-definable sets to $Y=\boldsymbol{Y}%
/E$ where $E$ is an equivalence relation on $\boldsymbol{Y}$ such that $E\in
\Gamma \left( \boldsymbol{Y}\times \boldsymbol{Y}\right) $.
\end{definition}

If $\Gamma $ is a complexity class of Borel subsets of a Polish space, then
its \emph{dual class }$\check{\Gamma}$ is defined by letting, for every
Polish space $X$, $\check{\Gamma}\left( X\right) $ be the collection of 
\emph{complements }of elements of $\Gamma \left( X\right) $. The class $%
\Gamma $ is self-dual if $\Gamma =\check{\Gamma}$. If $\Gamma $ is \emph{not 
}self-dual, then we say that $\Gamma $ is the complexity class of a
Borel-definable set $X$ if $X$ is $\Gamma $-definable and not $\check{\Gamma}
$-definable.

\subsection{Orbit spaces}

Suppose that $G$ is a Polish group. A Polish $G$-space is a Polish space $%
\boldsymbol{X}$ endowed with a continuous action $G\curvearrowright X$, $%
\left( g,x\right) \mapsto g\cdot x$. The corresponding orbit equivalence
relation $E_{G}^{X}$ is always stably idealistic, although it might fail to
be Borel. Therefore, the space of $G$-orbits $\boldsymbol{X}/G$ is a $%
\boldsymbol{\Sigma }_{1}^{1}$-definable set, which is Borel-definable if and
only if $E_{G}^{X}$ is Borel. This holds if and only if there exists $\alpha
<\omega _{1}$ such that every $G$-orbit $Gx$ is $\boldsymbol{\Pi }_{\alpha
}^{0}$; see \cite[Therem 8.2.2]{gao_invariant_2009}.

More generally, suppose that $X=\boldsymbol{X}/E$ is a Borel-definable set,
and $G$ is a Polish group. A Borel action of $G$ on $X$ is an action that is
defined by a Borel function $G\times X\rightarrow X$. The corresponding
orbit equivalence relation $E_{G}^{X}$ is the equivalence relation on $%
\boldsymbol{X}$ defined by $xE_{G}^{X}y\Leftrightarrow g\cdot \lbrack
x]_{E}=[y]_{E}$. In this case, we say that $X$ is a Borel-definable $G$%
-space. The orbit space is the $\boldsymbol{\Sigma }_{1}^{1}$-definable set $%
X/G:=\boldsymbol{X}/E_{G}^{X}$. A similar proof as \cite[Therem 8.2.2]%
{gao_invariant_2009} gives the following.

\begin{proposition}
\label{Proposition:orbit-space}Suppose that $G$ is a Polish group, and $X$
is a Borel-definable $G$-space. Then the orbit equivalence relation $%
E_{G}^{X}$ is stably idealistic. Thus, if $E_{G}^{X}$ is also Borel, then $%
X/G$ is a Borel-definable set.
\end{proposition}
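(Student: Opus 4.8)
The plan is to extend the classical fact that the orbit equivalence relation of a continuous action of a Polish group on a Polish space is idealistic (\cite[Section 5.4]{gao_invariant_2009}, and the proof of \cite[Theorem 8.2.2]{gao_invariant_2009}) to the present situation, where the action is only a Borel near-action on a Borel-definable set. Throughout, write $X=\boldsymbol{X}/E$ with $E$ a Borel and stably idealistic equivalence relation on the Polish space $\boldsymbol{X}$, and let $a\colon G\times\boldsymbol{X}\to\boldsymbol{X}$ be a Borel map descending to the given $G$-action on $X$; then $E$ is $a$-invariant, $E_{G}^{X}$ is the honest equivalence relation on $\boldsymbol{X}$ given by $x\mathrel{E_{G}^{X}}y\Leftrightarrow\exists g\in G\,(a(g,x)\mathrel{E}y)$, and $E_{G}^{X}$ is analytic, so that $X/G=\boldsymbol{X}/E_{G}^{X}$ is a priori a $\boldsymbol{\Sigma}_{1}^{1}$-definable set.

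First I would reduce to proving that $E_{G}^{X}$ is \emph{idealistic}, for an arbitrary Polish group $G$ and Borel-definable $G$-space $X$. Indeed, $(E_{G}^{X})^{\mathbb{N}}$ is precisely the orbit equivalence relation of the coordinatewise Borel near-action of the Polish group $G^{\mathbb{N}}$ on the Borel-definable set $X^{\mathbb{N}}=\boldsymbol{X}^{\mathbb{N}}/E^{\mathbb{N}}$, where $E^{\mathbb{N}}$ is Borel and idealistic because $E$ is stably idealistic; so the idealistic statement applied to $G^{\mathbb{N}}$ and $X^{\mathbb{N}}$ shows that $(E_{G}^{X})^{\mathbb{N}}$ is idealistic, i.e. $E_{G}^{X}$ is stably idealistic. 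The final clause of the Proposition is then immediate: if in addition $E_{G}^{X}$ is Borel, then $X/G=\boldsymbol{X}/E_{G}^{X}$ is Borel and stably idealistic, hence a Borel-definable set by definition.

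To prove idealistic-ness I would equip each $E_{G}^{X}$-class with the $\sigma$-filter obtained by composing the Baire-category $\sigma$-filter on $G$ with the $\sigma$-filters witnessing that $E$ is idealistic. Fix a Borel selector $t\colon\boldsymbol{X}\to\boldsymbol{X}$ with $x\mathrel{E_{G}^{X}}t(x)$, and let $s$ and $[x]_{E}\mapsto\mathcal{F}^{E}_{[x]_{E}}$ be the data witnessing that $E$ is idealistic. For an $E_{G}^{X}$-class $C=[x]_{E_{G}^{X}}$, declare $A\subseteq C$ to belong to $\mathcal{F}_{C}$ precisely when
\[
\{\,g\in G:\ \{\,y\in[a(g,t(x))]_{E}:\ y\in A\,\}\in\mathcal{F}^{E}_{[a(g,t(x))]_{E}}\,\}
\]
is comeager in $G$. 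This is a nontrivial $\sigma$-filter on $C$: closure under countable intersections holds because both ``comeager in $G$'' and membership in an $\mathcal{F}^{E}$-filter are; $C\in\mathcal{F}_{C}$ because the displayed set of $g$'s is then all of $G$; and $\varnothing\notin\mathcal{F}_{C}$ because its preimage is empty. One should also check that $\mathcal{F}_{C}$ does not depend on the chosen representative $x$ (nor on the coset structure), which follows from $a$-invariance of $E$ together with left-invariance of the class of comeager subsets of $G$.

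The main work, and the step I expect to be the crux, is verifying the Borelness clause in the definition of idealistic for the assignment $C\mapsto\mathcal{F}_{C}$: for every Polish space $Z$ and every Borel $A\subseteq\boldsymbol{X}\times Z\times\boldsymbol{X}$, the set of $(x,z)$ for which $\mathcal{F}_{[x]_{E_{G}^{X}}}$-many $y$ satisfy $(t(x),z,y)\in A$ should be Borel. Here all three hypotheses enter: rewriting ``$y\in[x]_{E_{G}^{X}}$'' as a pair $(g,y')$ with $y'\in[a(g,t(x))]_{E}$ and using that $a$ is Borel, the condition becomes ``$\{g:\ \{y':(a(g,t(x)),z,y')\in A'\}\in\mathcal{F}^{E}_{[a(g,t(x))]_{E}}\}$ is comeager in $g$'' for a Borel set $A'$ built from $A$ and $a$; the inner quantifier over $y'$ is Borel by the Borelness clause for $E$ applied with parameter space $G\times Z$, and the outer Baire-category quantifier over $g$ preserves Borelness by the Montgomery--Novikov theorem (see \cite{kechris_classical_1995}). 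Assembling these, together with the routine verifications mirroring the proof of \cite[Theorem 8.2.2]{gao_invariant_2009} and of the idealistic structure on Polish $G$-spaces in \cite{gao_invariant_2009}, yields that $E_{G}^{X}$ is idealistic, hence by the reduction above stably idealistic, completing the proof.
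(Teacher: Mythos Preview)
Your proposal is correct and follows precisely the approach the paper indicates: the paper gives no proof of this proposition, only the remark ``A similar proof as \cite[Theorem 8.2.2]{gao_invariant_2009} gives the following,'' and your sketch supplies exactly that adaptation---composing the Baire-category filter on $G$ with the idealistic filters on $E$-classes, then using the category quantifier theorem for Borelness. Your reduction of stable idealisticity to idealisticity via the diagonal $G^{\mathbb{N}}$-action on $X^{\mathbb{N}}$ is also the natural move and works because $E$ is assumed stably idealistic.
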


For example, given a Borel-definable set $X=\boldsymbol{X}/E$, one can
consider the Borel-definable set $\wp _{\aleph _{0}}\left( X\right) $ of
countable subsets of $X$. This can be seen as the orbit space $Y/S_{\infty }$%
, where $Y\subseteq X^{\mathbb{N}}$ is the $S_{\infty }$-invariant Borel set
of sequences that are injective modulo $E$ and the action $S_{\infty
}\curvearrowright X^{\mathbb{N}}$ is the shift. The corresponding
equivalence relation $F$ is the \emph{Friedman--Stanley jump }of the
equivalence relation $E$ \cite[Definition 8.3.1]{gao_invariant_2009}. In
particular, when $X=\mathbb{R}$ (or any other uncountable Polish space),
then $\wp _{\aleph _{0}}\left( \mathbb{R}\right) $ can be seen as the
definable set of \emph{countable sets of reals}.

\begin{definition}
A definable set $X$ can be \emph{parametrized by countable sets of reals} if
it admits a Borel bijection onto a Borel subset of $\wp _{\aleph _{0}}\left( 
\mathbb{R}\right) $.
\end{definition}

\subsection{Homogenous spaces with a Polish cover}

In Section \ref{Section:pro-countable} we have introduced the notion of a
module with a Polish cover. This is a module $M$ explicitly presented as a
quotient $M=\hat{M}/N$ where $\hat{M}$ is a Polish module and $N$ is a
Polishable submodule of $\hat{M}$ (not necessarily closed in $\hat{M}$).
More generally, one can consider a Borel-definable set $X$ presented as the
quotient $G/N$ of a (not necessarily commutative) Polish group $G$ by a (not
necessarily closed nor normal) Polishable subgroup $N$ of $G$. The fact that 
$N$ is a Polishable subgroup of $G$ means that $N$ is itself a Polish group
with respect to a topology that makes the inclusion $N\rightarrow G$
continuous. Such a Polish group topology on $N$ is unique, and it is
characterized by the fact that is Borel sets are precisely the Borel subsets
of $G$ contained in $N$. (In this sense, the Polish topology on $N$ is
\textquotedblleft induced\textquotedblright\ by the one of $G$, albeit not
in the sense of the subspace topology.) In particular, $N$ is a Borel subset
of $G$, whence the orbit equivalence relation associated with the left
translation action $N\curvearrowright G$ is Borel and stably idealistic.

Classically, a homogeneous space is a space $X$ endowed with a transitive a
action of a group $G$. Letting $N$ be the stabilizer of a fixed point of $X$%
, one obtains an identification of $X$ with the space $G/N$ of left $N$%
-cosets of $G$. This motivates the following definition:

\begin{definition}
A \emph{homogeneous space with a Polish cover} is a Borel-definable set $X$
of the form $\boldsymbol{X}/E$ where $\boldsymbol{X}$ is a Polish group $G$
and $E$ is the left coset relation of a Polishable subgroup $N$ of $G$. In
this case, we denote $X$ by $G/N$.

A\emph{\ phantom homogeneous Polish space} is a homogeneous space with a
Polish cover $G/N$ where $N$ is dense in $G$.
\end{definition}

If $G/N$ is a homogeneous space with a Polish cover, then a homogeneous
subspace with a Polish cover of $G/N$ is a subspace of the form $H/N$ where $%
H$ is a Polishable subgroup of $G$ containing $N$. Clearly, $H/N$ is itself
a homogeneous space with a Polish cover, such that the inclusion $%
H/N\rightarrow G/N$ is Borel. Likewise, the quotient $G/H=(G/N)/(H/N)$ is a
homogeneous space with a Polish cover. When $H$ is \emph{normal }in $G$, $%
G/H $ is actually a group, and in this case we call it a group with a Polish
cover. Plainly, every module with a Polish cover is, in particular, a group
with a Polish cover.

The results about complexity of modules with a Polish cover from Section \ref%
{Section:modules-with-polish-cover} holds more generally in the context of
homogeneous spaces with a Polish cover; see \cite{lupini_complexity_2024}.
The same holds for the notion and properties of Solecki submodules, which in
the context of homogeneous spaces with a Polish cover will be called Solecki
subspaces; see \cite{lupini_complexity_2024}. The notion of (plain) Solecki
length of a module with a Polish cover can be defined for homogeneous spaces
with a Polish cover in the same manner. Notice in particular that a
homogeneous space with a Polish cover $G/N$ has Solecki length $0$ if and
only if $N$ is closed in $G$, which is equivalent to the assertion that $G/N$
is a Polish space (with the quotient topology). Thus, the\emph{\ }%
homogeneous \emph{Polish }spaces can be thought of as the homogeneous spaces
with a Polish cover of Solecki length zero. If $G/N$ is a homogeneous space
with a Polish cover, and $H$ is the closure of $N$ in $G$, then $H/N$ is a
phantom homogeneous Polish space, and $G/H$ is a homogeneous Polish space.
We let $\rho \left( G/N\right) $ be the Solecki length of $G/N$, which is
equal to the Solecki length of $H/N$. If $\alpha :=\rho \left( G/N\right) $,
then we say that $G/N$ is:

\begin{itemize}
\item \emph{semiplain} if $\alpha $ is a successor and $s_{\alpha -1}\left(
G/N\right) $ is $D(\boldsymbol{\Sigma }_{2}^{0})$-definable, which is
equivalent to the assertion that it is $\boldsymbol{\Sigma }_{3}^{0}$%
-definable as shown in \cite{lupini_complexity_2024};

\item \emph{plain }if $\alpha $ is a successor and $s_{\alpha -1}\left(
G/N\right) $ is $\boldsymbol{\Sigma }_{2}^{0}$-definable.
\end{itemize}

Clearly, if $G/N$ is semiplain, then it is plain. The converse holds when $N$
is non-Archimedean a shown in \cite{lupini_complexity_2024}. Let us recall
the definition of the complexity classes $\Gamma _{\alpha }$, $\Gamma
_{\alpha }^{\mathrm{semiplain}}$, and $\Gamma _{\alpha }^{\mathrm{plain}}$
for $\alpha <\omega _{1}$ from Section \ref{Section:solecki-submodules}:%
\begin{equation*}
\Gamma _{\lambda +n}:=\left\{ 
\begin{array}{ll}
\boldsymbol{\Pi }_{1}^{0} & \text{for }\lambda =n=0\text{;} \\ 
\boldsymbol{\Pi }_{\lambda } & \text{for }n=0\text{ and }\lambda >0\text{;}
\\ 
\boldsymbol{\Pi }_{1+\lambda +n+1}^{0} & \text{for }\lambda >0\text{ and }n>0%
\text{;}%
\end{array}%
\right.
\end{equation*}%
and%
\begin{equation*}
\Gamma _{\lambda +n}^{\mathrm{semiplain}}:=\left\{ 
\begin{array}{ll}
\Gamma _{\lambda +n} & \text{for }n=0\text{;} \\ 
D(\boldsymbol{\Pi }_{1+\lambda +n}^{0}) & \text{for }n\geq 1\text{;}%
\end{array}%
\right.
\end{equation*}%
and%
\begin{equation*}
\Gamma _{\lambda +n}^{\mathrm{plain}}:=\left\{ 
\begin{array}{ll}
\Gamma _{\lambda +n}^{\mathrm{semiplain}} & \text{for }n\neq 1\text{;} \\ 
\boldsymbol{\Sigma }_{1+\lambda +1}^{0} & \text{for }n=1\text{.}%
\end{array}%
\right.
\end{equation*}
The following is a reformulation of \cite[Theorem 6.1]%
{lupini_complexity_2024}.

\begin{theorem}
\label{Theorem:complexity2}Suppose that $X$ is phantom homogeneous Polish
space.\ Let $\alpha $ be the Solecki length of $X$.

\begin{enumerate}
\item If $X$ is plain, then $\Gamma _{\alpha }^{\mathrm{plain}}$ is the
complexity class of $X$;

\item If $X$ is semiplain and not plain, then $\Gamma _{\alpha }^{\mathrm{%
semiplain}}$ is the complexity class of $X$;

\item If $X$ is not semiplain, then $\Gamma _{\alpha }$ is the complexity
class of $X$.
\end{enumerate}
\end{theorem}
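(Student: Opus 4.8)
The plan is to read the statement off from \cite[Theorem 6.1]{lupini_complexity_2024} after unwinding the relevant definitions, in the same way that Theorem \ref{Theorem:complexity} was obtained in the pro-countable case. First I would recall that a phantom homogeneous Polish space $X$ is, by definition, a Borel-definable set of the form $G/N$ where $\boldsymbol{X}=G$ is a Polish group, $N$ is a Polishable subgroup of $G$ that is dense in $G$, and the defining equivalence relation is the left-coset relation $E_{N}^{G}$. Since $E_{N}^{G}$ is the orbit equivalence relation of the continuous left-translation action $N\curvearrowright G$, it is stably idealistic by \cite[Proposition 5.4.19]{gao_invariant_2009}, so $X$ really is a Borel-definable set, and by definition its complexity class is the one determined by the potential complexity of $E_{N}^{G}$ as a subset of $G\times G$.

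Next I would invoke the transfer between the complexity of a Polishable subgroup and the potential complexity of its coset relation: this is the homogeneous-space analogue of the second bullet point recalled in Section \ref{Section:modules-polish-cover} (stated there for modules with a pro-countable Polish cover), which is established in \cite{lupini_complexity_2024} for the classes $\boldsymbol{\Sigma}_{\beta}^{0}$, $\boldsymbol{\Pi}_{\beta}^{0}$, and $D(\boldsymbol{\Pi}_{\beta}^{0})$. It gives, for each such $\Gamma$, that $E_{N}^{G}$ is potentially $\Gamma$ exactly when $N\in\Gamma(G)$. Hence the complexity class of the Borel-definable set $X=G/N$ coincides with the complexity class of the trivial subspace $\{0\}$ of $X$, i.e. with the complexity class of the trivial coset $N$ inside $G$, in the terminology of Section \ref{Section:solecki-submodules} as extended to homogeneous spaces with a Polish cover in \cite{lupini_complexity_2024}. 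At this point one also uses the Solecki-subspace machinery recalled above: the Solecki length $\alpha$ of $X$ is exactly the least ordinal with $s_{\alpha}(X)=\{0\}$, $s_{\alpha}(X)$ is the smallest $\boldsymbol{\Pi}_{1+\alpha+1}^{0}$ subspace, and the plain/semiplain dichotomy records whether $s_{\alpha-1}(X)$ is $\boldsymbol{\Sigma}_{2}^{0}$-definable, respectively $D(\boldsymbol{\Sigma}_{2}^{0})$-definable, when $\alpha$ is a successor.

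Finally I would apply \cite[Theorem 6.1]{lupini_complexity_2024}, which computes precisely the complexity class of the trivial coset $N$ in $G$ for a phantom homogeneous Polish space $G/N$ in terms of this data: the upper bound ($X$ is $\Gamma_{\alpha}^{\mathrm{plain}}$-, resp. $\Gamma_{\alpha}^{\mathrm{semiplain}}$-, resp. $\Gamma_{\alpha}$-definable) is produced directly from the Solecki filtration of $X$, and the matching lower bound — that no further simplification is possible, so $X$ is not $\check{\Gamma}$-definable for the relevant non-self-dual $\Gamma$ — is the substantial content. Reading off the trichotomy (plain; semiplain but not plain; not semiplain) yields the three cases of the statement; note that when $N$ is non-Archimedean one has plain $\iff$ semiplain, which is why the pro-countable specialization Theorem \ref{Theorem:complexity} has only two cases. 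The main obstacle is not in this deduction — which is essentially definition-chasing plus the citation — but in the duality bookkeeping hidden in the transfer step and in the lower-bound half of \cite[Theorem 6.1]{lupini_complexity_2024}; the only thing to check here is that the hypothesis ``phantom homogeneous Polish space'' and the notions of plain and semiplain introduced above match verbatim the hypotheses and notions of \cite[Theorem 6.1 and Section 6]{lupini_complexity_2024}.
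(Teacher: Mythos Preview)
Your proposal is correct and matches the paper's approach: the paper simply states the theorem as a reformulation of \cite[Theorem 6.1]{lupini_complexity_2024} without further proof, and your definition-chasing plus the transfer between the complexity of $N$ in $G$ and the potential complexity of the coset relation is exactly what that reformulation amounts to.
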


\begin{lemma}
\label{Lemma:plain-extension}Suppose that Suppose that $G=\hat{G}/N$ is a
homogeneous space with a Polish cover and $H=\hat{H}/N$ a homogeneous
subspace with a Polish cover of $G$.

\begin{enumerate}
\item If $H$ has length at most $1$ and $G/H$ has plain length at most $1$,
then $G$ has length at most length $1$;

\item If $H$ has plain length at most $1$ and $G/H$ has plain length at most 
$1$, then $G$ has plain length at most $1$;

\item If $H$ has plain length at most $1$ and $G/H$ has semiplain length at
most $1$, then $G$ has semiplain length at most $1$.
\end{enumerate}
\end{lemma}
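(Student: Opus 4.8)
The plan is to translate the statement into assertions about the Borel complexity of Polishable subgroups, then dispatch each clause: the second by an elementary covering argument and the first (and, analogously, the third) by a Solecki-type computation. First I would set up the dictionary: for a homogeneous space with a Polish cover $K/M$, having Solecki length at most $1$ means $s_{1}(K/M)=0$, equivalently $M$ is $\boldsymbol{\Pi}_{3}^{0}$ in $\hat{K}$; having \emph{plain} length at most $1$ means $M$ is $\boldsymbol{\Sigma}_{2}^{0}$ in $\hat{K}$; and having \emph{semiplain} length at most $1$ means $M$ is $D(\boldsymbol{\Pi}_{2}^{0})$ in $\hat{K}$ (Theorem~\ref{Theorem:complexity}, the discussion in Sections~\ref{Section:solecki-submodules} and~\ref{Section:modules-polish-cover}, and \cite{lupini_complexity_2024}). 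Applying this to $G=\hat{G}/N$, to $H=\hat{H}/N$, and to $G/H=\hat{G}/\hat{H}$, the lemma becomes: transfer complexity information about $N$ inside $\hat{H}$ to complexity information about $N$ inside $\hat{G}$, given complexity information about $\hat{H}$ inside $\hat{G}$. The recurring tool is the fact already used in the proof of Lemma~\ref{Lemma:compact-phantom2}: a $\boldsymbol{\Sigma}_{2}^{0}$ Polishable subgroup $L$ of a Polish group $P$ has an open subgroup $L_{0}$ closed in $P$; moreover such an $L_{0}$ is clopen in $L$, and its Polish topology agrees with the subspace topology from $P$ (by uniqueness of the Polish group topology compatible with a standard Borel structure), so $\overline{V}^{\,P}=\overline{V}^{\,L}$ for $V\subseteq L_{0}$.

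For clause (2) I would argue directly. Since $H$ has plain length at most $1$, $N$ has an open subgroup $N'$ closed in $\hat{H}$; since $G/H$ has plain length at most $1$, $\hat{H}$ has an open subgroup $\hat{H}_{0}$ closed in $\hat{G}$ on which the two topologies agree. Then $N'':=N'\cap\hat{H}_{0}$ is open in $N$ and closed in $\hat{G}$ (it is closed in $\hat{H}$ and contained in $\hat{H}_{0}$, hence closed in the closed set $\hat{H}_{0}$, hence in $\hat{G}$). As $N''$ has countable index in $N$, $N$ is the union of countably many $\hat{G}$-translates of $N''$, hence $\boldsymbol{\Sigma}_{2}^{0}$ in $\hat{G}$, so $G$ has plain length at most $1$.

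For clause (1) I would first reduce to showing $s_{1}(G)\subseteq H$: the quotient $\pi\colon G\to G/H$ is a Borel homomorphism and $s_{1}$ is a subfunctor of the identity, so $\pi(s_{1}G)\subseteq s_{1}(G/H)=0$ (as $G/H$ has length at most $1$), whence $s_{1}(G)\subseteq\ker\pi=H$, i.e.\ $\hat{G}_{1}\subseteq\hat{H}$, where $s_{1}(G)=\hat{G}_{1}/N$ with $\hat{G}_{1}$ Solecki's subgroup (Lemma~\ref{Lemma:1st-Solecki}, Section~\ref{Section:solecki-submodules}). Since $N\subseteq\hat{G}_{1}$ always, it remains to prove $\hat{G}_{1}\subseteq N$. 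Let $\hat{H}_{0}$ be an open subgroup of $\hat{H}$ closed in $\hat{G}$ (here I use that $G/H$ has \emph{plain} length at most $1$) and set $N_{0}:=N\cap\hat{H}_{0}$, open in $N$. Then, for $x\in\hat{G}_{1}\subseteq\hat{H}$: for every neighbourhood $V\subseteq N_{0}$ of $0$ in $N$ there is $z_{V}\in N$ with $x+z_{V}\in\overline{V}^{\,\hat{G}}\subseteq\hat{H}_{0}$ and $\overline{V}^{\,\hat{G}}=\overline{V}^{\,\hat{H}}$; comparing two such $V,V'$ forces $z_{V}-z_{V'}\in\hat{H}_{0}\cap N=N_{0}$, so all $z_{V}$ lie in a single coset $z^{\ast}+N_{0}$. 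Putting $c:=x+z^{\ast}\in\hat{H}$, for every neighbourhood $W$ of $0$ in $N$ the choice $V:=W\cap N_{0}$ gives $c+(z_{V}-z^{\ast})=x+z_{V}\in\overline{V}^{\,\hat{H}}\subseteq\overline{W}^{\,\hat{H}}$ with $z_{V}-z^{\ast}\in N$; by Solecki's formula this means $c$ lies in the subgroup $\hat{H}'_{1}$ with $\hat{H}'_{1}/N=s_{1}(\hat{H}/N)$. But $H=\hat{H}/N$ has length at most $1$, so $\hat{H}'_{1}=N$, hence $c\in N$ and $x=c-z^{\ast}\in N$. Thus $\hat{G}_{1}=N$, $s_{1}(G)=0$, and $G$ has length at most $1$.

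For clause (3) I would run the argument of (1) with the class $\boldsymbol{\Pi}_{3}^{0}$ and Solecki's first subgroup replaced by $D(\boldsymbol{\Pi}_{2}^{0})$ and the corresponding refined subgroup from \cite{lupini_complexity_2024}: $N$ has an open subgroup closed in $\hat{H}$ (as $H$ is plain), $\hat{H}$ is $D(\boldsymbol{\Pi}_{2}^{0})$ in $\hat{G}$ (as $G/H$ is semiplain), and one transfers this to $N$ being $D(\boldsymbol{\Pi}_{2}^{0})$ in $\hat{G}$ through the same passage to a closed open subgroup together with the closure comparison. The hard part — and the main obstacle throughout — is that the Polish topology of $\hat{H}$ is strictly finer than the subspace topology from $\hat{G}$, so the naive reassembly of $N$ from its countably many cosets of $N_{0}$ loses a level of the Borel hierarchy (a countable union of $\boldsymbol{\Pi}_{3}^{0}$, resp.\ $D(\boldsymbol{\Pi}_{2}^{0})$, sets is only $\boldsymbol{\Sigma}_{4}^{0}$, resp.\ $\boldsymbol{\Sigma}_{3}^{0}$); the closed open subgroup $\hat{H}_{0}$ and Solecki's formula are exactly what circumvent this, and producing the right such device at the $D(\boldsymbol{\Pi}_{2}^{0})$ level — rather than the easier $\boldsymbol{\Sigma}_{2}^{0}$ level of clause (2) — is where the argument in (3) needs the structural machinery of \cite{lupini_complexity_2024}.
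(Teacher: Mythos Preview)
Your argument is correct, and for clause~(2) it is essentially identical to the paper's. For clause~(1) the paper takes a shorter route: rather than computing $s_{1}(G)$ via Solecki's explicit formula, it invokes directly the neighborhood-basis characterization of Solecki length at most $1$ (that $N$ has a basis of identity neighborhoods $U$ with $\overline{U}^{\hat H}\cap N\subseteq U$), and then observes that for $U$ contained in the closed-in-$\hat G$ neighborhood $V\subseteq\hat H$ one has $\overline{U}^{\hat G}=\overline{U}^{\hat H}$, so the same $U$'s witness length at most $1$ for $G$. The underlying idea---transferring closures through the closed piece of $\hat H$---is the same as yours; you just unpack it through the element-by-element description of $\hat G_{1}$ instead.

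For clause~(3) the paper's route is genuinely different, and more direct than what you propose: it argues in the style of~(2), not~(1). Since $H$ is plain, $N$ has an identity neighborhood $W$ closed in $\hat H$; since $G/H$ is semiplain, $\hat H$ is $D(\boldsymbol\Pi_{2}^{0})$ in $\hat G$; and a lemma from \cite{lupini_complexity_2024} (their Lemma~5.1) gives that a closed subset of a $D(\boldsymbol\Pi_{2}^{0})$ Polishable subgroup is itself $D(\boldsymbol\Pi_{2}^{0})$. Hence $W$ is $D(\boldsymbol\Pi_{2}^{0})$ in $\hat G$, so $N$ (a countable union of translates of $W$) is $\boldsymbol\Sigma_{3}^{0}$ in $\hat G$, and then the complexity classification (Theorem~\ref{Theorem:complexity2}) forces $N$ to be $D(\boldsymbol\Pi_{2}^{0})$. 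This avoids having to identify and work with a $D(\boldsymbol\Pi_{2}^{0})$-level analogue of the first Solecki subgroup; your sketch for~(3) points in a workable direction but leaves that identification unspecified, whereas the paper sidesteps it entirely.
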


\begin{proof}
(1) By hypothesis, we have that $\hat{H}$ contains a closed neighborhood $V$
of the identity that is closed in $\hat{G}$. Since $H$ has length at most $1$%
, we have that $\hat{H}$ has a basis of neighborhoods of the identity $U$
contained in $V$ such that $\overline{U}^{\hat{H}}\cap N\subseteq U$. For
such a set $U$, since $U\subseteq V$ and $V$ is closed in $\hat{H}$, we have
that $\overline{U}^{\hat{H}}=\overline{U}^{\hat{G}}$, and hence $\overline{U}%
^{\hat{G}}\cap N\subseteq U$. This shows that $\hat{G}$ has length at most $%
1 $.

(2) By hypothesis, we have that $\hat{H}$ contains a closed neighborhood $V$
of the identity that is closed in $\hat{G}$. Likewise, $N$ contains a closed
neighborhood $W$ of the identity contained in $V$ that is closed in $\hat{H}$%
. This implies that $W$ is closed in $\hat{G}$, whence $G$ has plain length $%
1$.

(3) As in (2), $N$ contains a neighborhood $W$ of the identity that is
closed in $\hat{H}$. By \cite[Lemma 5.1]{lupini_complexity_2024}, since $%
\hat{H}$ is $D(\boldsymbol{\Pi }_{2}^{0})$ in $G$, we conclude that $W$ is $%
D(\boldsymbol{\Pi }_{2}^{0})$ in $\hat{G}$. Thus, $N$ is $\boldsymbol{\Sigma 
}_{3}^{0}$ in $\hat{G}$, and hence also $D(\boldsymbol{\Pi }_{2}^{0})$ by
Theorem \ref{Theorem:complexity2}.
\end{proof}

\begin{lemma}
\label{Lemma:complexity-exact-sequence}Suppose that $G=\hat{G}/N$ is a
homogeneous space with a Polish cover and $H=\hat{H}/N$ a homogeneous
subspace with a Polish cover of $G$.

\begin{enumerate}
\item We have that 
\begin{equation*}
\rho \left( G\right) \leq \rho \left( G/H\right) +\rho \left( H\right) \text{%
.}
\end{equation*}

\item If $\rho \left( G/H\right) $ is successor, and $G/H$ is plain, then%
\begin{equation*}
\rho \left( G\right) \leq \left( \rho \left( G/H\right) -1\right) +\rho
\left( H\right) \text{.}
\end{equation*}

\item If $H$ has plain length $1$ and $G/H$ has (semi)plain length at most $%
\gamma $, then $G$ has (semi)plain length at most $\gamma $;

\item If $H$ has (semi)plain length $1$ and $G/H$ has length at most $\gamma 
$, then $G$ has (semi)plain length at most $\gamma +1$;

\item If $H$ is countable, then $\left\{ 0\right\} $ is $\boldsymbol{\Sigma }%
_{1+\gamma }^{0}$ in $G$ if and only if $H$ is $\boldsymbol{\Sigma }%
_{1+\gamma }^{0}$ in $G$.
\end{enumerate}
\end{lemma}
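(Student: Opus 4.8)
The plan is to deduce all five items from two soft structural principles about Solecki subspaces, the base‑case Lemma~\ref{Lemma:plain-extension}, and the complexity dichotomy of Theorem~\ref{Theorem:complexity2}, organizing the successor cases around a single reduction and isolating the genuinely analytic content in the limit and plain base cases. The two principles are: (i) the \emph{semigroup identity} $s_{\alpha+\beta}(G)=s_{\beta}(s_{\alpha}(G))$, which I would prove by transfinite induction on $\beta$ — the case $\beta=0$ uses that each $s_{\alpha}(G)$ is a phantom space, so $s_{0}(s_{\alpha}(G))=\overline{\{0\}}^{s_{\alpha}(G)}=s_{\alpha}(G)$ (see \cite{solecki_polish_1999,lupini_looking_2024}); the successor step is $s_{\delta+1}=s_{1}\circ s_{\delta}$; and the limit step uses cofinality of $\{\alpha+\eta:\eta<\lambda\}$ in $\alpha+\lambda$ — and (ii) the fact that each $s_{\alpha}$ is a subfunctor of the identity on homogeneous spaces with a Polish cover. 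Applied to the Borel quotient $q\colon G\to G/H$, (ii) gives $q(s_{\alpha}(G))\subseteq s_{\alpha}(G/H)$, hence $s_{\rho(G/H)}(G)\subseteq\ker q=H$; applied to a Borel inclusion $K\hookrightarrow L$ of subspaces it gives $s_{\rho(L)}(K)\subseteq s_{\rho(L)}(L)=0$, i.e.\ $\rho(K)\le\rho(L)$; and pulling back a $\boldsymbol{\Sigma}_{2}^{0}$ (resp.\ $D(\boldsymbol{\Pi}_{2}^{0})$) witness along the continuous inclusion of covers shows that a subspace of a space of (semi)plain length $\le1$ again has (semi)plain length $\le1$.

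With these in hand, item~(1) is immediate: $s_{\rho(G/H)+\rho(H)}(G)=s_{\rho(H)}\bigl(s_{\rho(G/H)}(G)\bigr)\subseteq s_{\rho(H)}(H)=0$. For items~(3) and~(4) with $\gamma=\rho(G/H)$ a successor, I would pass to the preimage subspace $\widetilde K:=q^{-1}(s_{\gamma-1}(G/H))$, which contains $s_{\gamma-1}(G)$ and fits in an extension $H\to\widetilde K\to s_{\gamma-1}(G/H)$. In~(3), $s_{\gamma-1}(G/H)$ has (semi)plain length $\le1$ since $G/H$ does, and $H$ has plain length $\le1$, so Lemma~\ref{Lemma:plain-extension}(2),(3) gives that $\widetilde K$ has (semi)plain length $\le1$; then $s_{\gamma-1}(G)\subseteq\widetilde K$ forces $\rho(s_{\gamma-1}(G))\le1$, whence $s_{\gamma}(G)=s_{1}(s_{\gamma-1}(G))=0$, and $s_{\gamma-1}(G)$ inherits the (semi)plainness of $\widetilde K$, so Theorem~\ref{Theorem:complexity2} identifies $G$ as having (semi)plain length $\le\gamma$. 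In~(4), item~(1) already gives $\rho(G)\le\gamma+1$, while $s_{\gamma}(G)\subseteq H$ inherits (semi)plain length $\le1$ from $H$, so Theorem~\ref{Theorem:complexity2} gives (semi)plain length $\le\gamma+1$. The base case $\gamma\le1$ is Lemma~\ref{Lemma:plain-extension} together with item~(1).

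For item~(2), with $\gamma=\rho(G/H)=\delta+1$ and $G/H$ plain, $q(s_{\delta}(G))$ is a subspace of the plain‑length‑$\le1$ space $s_{\delta}(G/H)$ and $\ker(q|_{s_{\delta}(G)})=s_{\delta}(G)\cap H$ has length $\le\rho(H)$, so the case $\gamma=1$ of~(2) applied to $s_{\delta}(G)\cap H\to s_{\delta}(G)\to q(s_{\delta}(G))$ gives $\rho(s_{\delta}(G))\le\rho(H)$, hence $s_{\delta+\rho(H)}(G)=s_{\rho(H)}(s_{\delta}(G))=0$, i.e.\ $\rho(G)\le(\gamma-1)+\rho(H)$. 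The case $\gamma=1$ of~(2) — that an extension of a space of plain length $\le1$ by a space of length $\beta$ again has length $\le\beta$ — I would prove by induction on $\beta$, the base $\beta\le1$ being contained in Lemma~\ref{Lemma:plain-extension}, and the step using the Baire–Pettis extraction (as in the proof of Lemma~\ref{Lemma:plain-extension}) of an open subgroup of the cover of the sub that is closed in the cover of the total space, so that adjoining it does not raise the Borel rank. For item~(5): $H=\hat H/N$ countable means $N$ has countable index in the Polish group $\hat H$, hence $N$ is clopen in the intrinsic topology of $\hat H$ and $\hat H=\bigcup_{i}(h_{i}+N)$ with $h_{i}\in\hat G$ and countably many cosets; since $\boldsymbol{\Sigma}_{1+\gamma}^{0}$ is invariant under the homeomorphisms $x\mapsto h_{i}+x$ of $\hat G$ and closed under countable unions, $N\in\boldsymbol{\Sigma}_{1+\gamma}^{0}(\hat G)$ implies $\hat H\in\boldsymbol{\Sigma}_{1+\gamma}^{0}(\hat G)$, and the converse follows from the transfer of relative complexity through the Polishable‑subgroup topology of \cite[Lemma~5.1]{lupini_complexity_2024} and its ambient results.

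I expect the genuinely hard part to be the limit cases of~(3) and the base case $\gamma=1$ of~(2), together with the reverse implication of~(5): there the subfunctor/semigroup bookkeeping no longer decreases the relevant ordinal, and one must control the Borel complexity of a subset measured in the \emph{intrinsic} Polish topology of a Polishable subgroup — exactly the transfer‑of‑complexity analysis underlying Lemma~\ref{Lemma:plain-extension} and \cite{lupini_complexity_2024}. In particular, additively indecomposable limits such as $\gamma=\omega$ cannot be reduced to smaller ordinals by the preimage trick and will require the transfer lemma applied directly to the nested pair $N\subseteq\hat H\subseteq\hat G$; making that step precise, with the bookkeeping of the classes $\Gamma_{\alpha}$, $\Gamma_{\alpha}^{\mathrm{semiplain}}$, $\Gamma_{\alpha}^{\mathrm{plain}}$, is where the real work lies.
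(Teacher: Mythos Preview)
Your architecture is the paper's. Items~(1), (3), (4) match almost verbatim: the paper uses the same semigroup identity $s_{\alpha+\beta}=s_{\beta}\circ s_{\alpha}$ for~(1), the same preimage $K=q^{-1}(s_{\gamma-1}(G/H))$ together with Lemma~\ref{Lemma:plain-extension}(2),(3) for~(3), and the same inclusion $s_{\gamma}(G)\subseteq H$ for~(4). For~(2) the paper likewise reduces to the case $\alpha=1$ and inducts on $\beta$ with base case Lemma~\ref{Lemma:plain-extension}(1); its reduction for general $\alpha$ goes through the preimage $L=q^{-1}(s_{\alpha-1}(G/H))$ and item~(1) rather than your direct analysis of $s_{\delta}(G)$, but the two reductions are equivalent.

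The one genuine divergence is the converse of~(5). You invoke the transfer lemma \cite[Lemma~5.1]{lupini_complexity_2024}; the paper instead runs a self-contained case split on $\gamma=\lambda+n$: for $n=0$ it uses Theorem~\ref{Theorem:complexity2} to force $\rho(G/H)<\lambda$ and applies~(1); for $n=1$ it applies~(3); for $n\ge 2$ it uses Theorem~\ref{Theorem:complexity2} to upgrade $\boldsymbol{\Sigma}_{1+\lambda+n}^{0}$ to $\boldsymbol{\Pi}_{1+\lambda+n-1}^{0}$ and feeds this back into the earlier items. The paper's route is internal to the lemma; yours is shorter provided the cited transfer result indeed covers the classes $\boldsymbol{\Sigma}_{1+\gamma}^{0}$ (in this paper it is only invoked for $D(\boldsymbol{\Pi}_{2}^{0})$, so that applicability deserves a line of justification).

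Your worry about ``limit cases of~(3)'' is misplaced: the (semi)plain qualifier is only defined at successor length, so item~(3) is a statement about successor $\gamma$, and the paper's proof treats only that case. The preimage reduction you describe is already the complete argument; there is no separate limit analysis to carry out.
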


\begin{proof}
Set $\alpha :=\rho \left( G/H\right) $ and $\beta :=\rho \left( H\right) $.

(1) We have that $H\in \Gamma _{\alpha }\left( G\right) $. Thus, $s_{\alpha
}\left( G\right) \subseteq H$. We also have $\left\{ 0\right\} \in \Gamma
_{\beta }\left( H\right) $. Thus, $\left\{ 0\right\} \in \Gamma _{\beta
}\left( s_{\alpha }\left( G\right) \right) $. Hence, $\left\{ 0\right\}
=s_{\beta }\left( s_{\alpha }\left( G\right) \right) =s_{\alpha +\beta
}\left( G\right) $. Thus shows that $\rho \left( G\right) \leq \alpha +\beta 
$.

(2) For $\alpha =1$ and $\beta =1$ the conclusion follows from Lemma \ref%
{Lemma:plain-extension}(1) By induction on $\beta $ one can then show that
the conclusion holds for $\alpha =1$ and every $\beta <\omega _{1}$. Let $L$
be the preimage of $s_{\alpha -1}\left( G/H\right) $ under the quotient map $%
G\rightarrow G/H$. The conclusion for arbitrary $\alpha $ follows from the
case $\alpha =1$ and Part (1) applied to the exact sequences%
\begin{equation*}
H\rightarrow L\rightarrow s_{\alpha -1}\left( G/H\right)
\end{equation*}%
and%
\begin{equation*}
L\rightarrow G\rightarrow G/L=\frac{G/H}{s_{\alpha -1}\left( G/H\right) }%
\text{.}
\end{equation*}

(3) Since $G/H$ has (semi)plain length at most $\gamma $, there is $%
H\subseteq K\subseteq G$ such that $G/K$ has length at most $\gamma -1$ and $%
K/H$ has (semi)plain length at most $1$. Thus, by Lemma \ref%
{Lemma:plain-extension}(2) and Lemma \ref{Lemma:plain-extension}(3), since $%
H $ has plain length $1$, $K$ has (semi)plain length at most $1$. Thus shows
that $G$ has (semi)plain length at most $\gamma $.

(4) Suppose that $H$ has plain length $1$, whence $\left\{ 0\right\} $ is $%
\boldsymbol{\Sigma }_{2}^{0}$ in $H$. Since $G/H$ has length at most $\gamma 
$, we have $s_{\gamma }\left( G\right) \subseteq H$. Thus, $\left\{
0\right\} $ is $\boldsymbol{\Sigma }_{2}^{0}$ in $s_{\gamma }\left( G\right) 
$, and $G$ has plain length at most $1$. The same proof shows that if $H$
has semiplain length $1$, then $G$ has semiplain length at most $1$.

(5) If $\left\{ 0\right\} $ is $\boldsymbol{\Sigma }_{1+\gamma }^{0}$ in $G$%
, then $H$ is $\boldsymbol{\Sigma }_{1+\gamma }^{0}$ in $G$ as a countable
union of translates of $\left\{ 0\right\} $. Conversely, suppose that $H$ is 
$\boldsymbol{\Sigma }_{1+\gamma }^{0}$ in $G$. We can write $\gamma =\lambda
+n$ for a limit ordinal $\lambda $ and $n<\omega $. If $n=0$ then we must
have that $G/H$ has length less than $\lambda $, whence the same holds for $%
G $. This implies that $\left\{ 0\right\} $ is $\boldsymbol{\Pi }_{1+\beta
}^{0}$ for some $\beta <\lambda $ and hence $\boldsymbol{\Sigma }_{1+\lambda
}^{0}$. If $n=1$, then we have that $G/H$ has plain length at most $\lambda
+1$, whence the same holds for $G$ by (3). Thus, we have that $\left\{
0\right\} $ is $\boldsymbol{\Sigma }_{1+\lambda +1}^{0}$. If $n\geq 2$ then
we have that $H$ is $\boldsymbol{\Pi }_{1+\lambda +n-1}^{0}$ in $G$, and $%
G/H $ has length at most $1+\lambda +n-2$. Therefore, we have that $G$ has
plain length at most $1+\lambda +n-1$.\ This shows that $\left\{ 0\right\} $
is $D(\boldsymbol{\Pi }_{1+\lambda +n-1}^{0})$ and hence also $\boldsymbol{%
\Sigma }_{1+\lambda +n}^{0}$ in $G$. This concludes the proof.
\end{proof}

\subsection{Strict actions}

Given a homogeneous space with a Polish cover $G/H$ and a Polish group $%
\Lambda $, we define a notion of left \emph{strict action }of $\Lambda $ on $%
G/H$.

\begin{definition}
A left strict action $\alpha $ of a Polish group $\Lambda $ on a homogeneous
space with a Polish cover $X=G/H$ is a given by a continuous action $\alpha $
of $\Lambda $ on $G$ that leaves $H$ setwise invariant. We define the space $%
X/\Lambda =\mathrm{Out}\left( \Lambda \curvearrowright G/H\right) $ of
orbits of the action to be the $\boldsymbol{\Sigma }_{1}^{1}$-definable set $%
X=\boldsymbol{X}/E$ where $\boldsymbol{X}=G$ and 
\begin{equation*}
E=\left\{ \left( g,g^{\prime }\right) \in G\times G:\exists \lambda \in
\Lambda \text{, }\lambda Hg=Hg^{\prime }\right\} \text{.}
\end{equation*}
\end{definition}

Suppose that $M$ is a countable $R$-module. Let $\mathrm{\mathrm{Aut}}\left(
M\right) $ be the group of automorphisms of $M$ endowed with the topology of
pointwise convergence. Then we have that $\mathrm{Aut}\left( M\right) $ is a
non-Archimedean Polish group. The automorphisms $\mu _{r}:x\mapsto rx$ for $%
r\in R^{\times }$, which we call \emph{inner}, form a central closed subgroup%
\emph{\ }denoted by $\mathrm{Inn}\left( M\right) $.

Suppose now that $C$ and $A$ are countable thin modules. Let $\mathrm{Aut}%
\left( C\right) $ be the automorphism group of $C$, which is a Polish group
with respect to the topology of pointwise convergence. Then one can define
the canonical right action $\mathrm{Ext}\left( C,A\right) \curvearrowleft 
\mathrm{\mathrm{\mathrm{Au}}t}\left( C\right) $ as follows. Up to
isomorphism of modules with a Polish cover, we can write $\mathrm{Ext}\left(
C,A\right) $ as $\mathrm{Z}\left( C,A\right) /\mathrm{B}\left( C,A\right) $.
Here, as in Section \ref{Section:pro-countable}, $\mathrm{Z}\left(
C,A\right) $ is the Polish group of cocycles on $C$ with values in $A$,
while $\mathrm{B}\left( C,A\right) $ is the Polishable subgroup of cocycles
that are trivial. We have a canonical action $\mathrm{Z}\left( C,A\right)
\curvearrowleft \mathrm{\mathrm{\mathrm{Au}}t}\left( C\right) $ by
automorphisms that leave $\mathrm{B}\left( C,A\right) $ (setwise) invariant.
This defines a strict right action $\mathrm{Ext}\left( C,A\right)
\curvearrowleft \mathrm{\mathrm{Aut}}\left( C\right) $. In the same fashion,
one can define a strict left action of $\mathrm{\mathrm{Aut}}\left( A\right) 
$ on $\mathrm{Ext}\left( C,A\right) $.

Notice that if $M$ is a phantom pro-finiteflat module, then by Proposition %
\ref{Proposition:duality-Ext} we have that $M\cong \mathrm{Ext}\left(
M^{\vee },R\right) $ where $M^{\vee }:=\mathrm{Ext}\left( M,R\right) $ is a
countable flat module. Furthermore, $\mathrm{\mathrm{Aut}}\left( M\right) $
is a Polish group canonically isomorphic to $\mathrm{Aut}\left( M^{\vee
}\right) ^{\mathrm{op}}$, in such a way that the canonical left action $%
\mathrm{Aut}\left( M\right) \curvearrowright M$ corresponds to the canonical
right action $\mathrm{Ext}\left( M^{\vee },R\right) \curvearrowleft \mathrm{%
\mathrm{Aut}}\left( M^{\vee }\right) $. Thus, the action $\mathrm{Aut}\left(
M\right) \curvearrowright M$ can be seen as a strict action of the Polish
group $\mathrm{Aut}\left( M\right) $ on the homogeneous space with a Polish
cover $M$. Incidentally, this also shows that every phantom pro-finiteflat
module is \emph{rigid }in the sense of \cite[Definition 4.2]%
{bergfalk_definable_2024}.

\begin{lemma}
\label{Lemma:complexity-class-countable-action}Suppose that $M=G/N$ is a
module with a Polish cover. For $\alpha <\omega _{1}$ and $g\in G$, the
following assertions are equivalent:

\begin{enumerate}
\item $N$ is $\boldsymbol{\Sigma }_{1+\alpha }^{0}$ in $G$;

\item the relation 
\begin{equation*}
\left\{ \left( x,y\right) \in G:\exists r\in R^{\times },rx\equiv y\ \mathrm{%
mod}N\right\}
\end{equation*}%
is potentially $\boldsymbol{\Sigma }_{1+\alpha }^{0}$;

\item $N+R^{\times }g$ is $\boldsymbol{\Sigma }_{1+\alpha }^{0}$ in $G$ for
every $g\in G\setminus N$;

\item $N+Rg$ is $\boldsymbol{\Sigma }_{1+\alpha }^{0}$ in $G$ for some $g\in
G$;
\end{enumerate}
\end{lemma}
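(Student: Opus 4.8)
The plan is to relate all four conditions to the orbit equivalence relation of the countable group $\mathrm{Inn}(M)\cong R^{\times}$ acting on $M=G/N$ by the strict action $\mu_{r}\colon x\mapsto rx$. Write $E$ for the relation appearing in~(2). Since $N$ is stable under each $\mu_{r}$ and $\mu_{r}^{-1}=\mu_{r^{-1}}$, one has $\mu_{r}^{-1}(y+N)=r^{-1}y+N$, so $E$ is exactly the pullback to $G$ of the $\mathrm{Inn}(M)$-orbit relation on $M$; its vertical section at a point $y$ is $\{x:(x,y)\in E\}=N+R^{\times}y$; and—crucially—because multiplication by a unit is \emph{injective} ($rx\in N$ forces $x=r^{-1}(rx)\in N$), the section at $0$ is $N$ itself, while for $g\in G\setminus N$ the section $N+R^{\times}g$ is disjoint from $N$. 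Moreover $N+Rg$ is a Polishable subgroup of $G$ with $(N+Rg)/N\cong Rg/(Rg\cap N)$ a countable module ($R$ being countable), hence a countable submodule with a Polish cover of $M$.

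From this the implications out of (1) are routine. If $N\in\boldsymbol{\Sigma}^{0}_{1+\alpha}(G)$, then $N+R^{\times}g=\bigcup_{r\in R^{\times}}(rg+N)$ and $N+Rg=\bigcup_{s\in R}(sg+N)$ are countable unions of translates of $N$, hence in $\boldsymbol{\Sigma}^{0}_{1+\alpha}(G)$ (translations are homeomorphisms of $G$, and $\boldsymbol{\Sigma}^{0}_{1+\alpha}$ is closed under countable unions); and $E=\bigcup_{r\in R^{\times}}\{(x,y):rx-y\in N\}$ is a countable union of preimages of $N$ under continuous maps $G\times G\to G$, so $E\in\boldsymbol{\Sigma}^{0}_{1+\alpha}(G\times G)$, in particular potentially $\boldsymbol{\Sigma}^{0}_{1+\alpha}$. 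Thus (1) gives (2), (3) and (4) (for (4) take $g=0$). For (4)$\Rightarrow$(1) I would apply Lemma~\ref{Lemma:complexity-exact-sequence}(5) to the homogeneous space $M=G/N$ and the countable submodule $H=(N+Rg)/N$ with cover $N+Rg$: it yields precisely that $\{0\}$ (that is, $N$) is $\boldsymbol{\Sigma}^{0}_{1+\alpha}$ in $G$ iff $N+Rg$ is, giving (1)$\Leftrightarrow$(4).

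For (2)$\Rightarrow$(1) I would use the coset relation $E_{N}$ of $N$ in $G$. One has $E_{N}\subseteq E$, and every $E$-class is a union of at most $|R^{\times}|\le\aleph_{0}$ cosets of $N$; by a standard selection argument (Lusin--Novikov) there is then a Borel $c\colon G\to\mathbb{N}$ with $x\,E_{N}\,y\iff(x\,E\,y\wedge c(x)=c(y))$. If $E$ is potentially $\boldsymbol{\Sigma}^{0}_{1+\alpha}$, say a Borel $f\colon G\to X$ reduces $E$ to $E'\in\boldsymbol{\Sigma}^{0}_{1+\alpha}(X\times X)$, then $x\mapsto(f(x),c(x))$ reduces $E_{N}$ to the relation $E''$ on $X\times\mathbb{N}$ given by $(a,m)\,E''\,(b,n)\iff(a\,E'\,b\wedge m=n)$, which is $\boldsymbol{\Sigma}^{0}_{1+\alpha}$ (a continuous preimage of $E'$ intersected with a clopen set); hence $E_{N}$ is potentially $\boldsymbol{\Sigma}^{0}_{1+\alpha}$, and by the characterization of $\boldsymbol{\Sigma}^{0}_{1+\alpha}$-submodules in terms of the potential complexity of their coset relation (\cite{lupini_complexity_2024}; see Section~\ref{Section:modules-polish-cover}) this forces $N\in\boldsymbol{\Sigma}^{0}_{1+\alpha}(G)$, so (1)$\Leftrightarrow$(2).

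It remains to treat (3)$\Rightarrow$(1), which I expect to be the main obstacle. The natural strategy is to reduce it to (4)$\Leftrightarrow$(1) by contraposition: if $N\notin\boldsymbol{\Sigma}^{0}_{1+\alpha}(G)$ then $N+Rg\notin\boldsymbol{\Sigma}^{0}_{1+\alpha}(G)$ for any $[g]\neq0$, while $N+Rg$ is assembled by a countable union from the translate $g+N$ of $N$ and the sets $N+R^{\times}(sg)$ ($s\in R$, $sg\notin N$); if all of (3) held these would be $\boldsymbol{\Sigma}^{0}_{1+\alpha}$, so the obstruction must already sit in some $N+R^{\times}g$. The delicate point—and the one with no elementary proof—is making this precise: extracting the complexity of a single coset of $N$ out of the $R^{\times}$-saturated set $N+R^{\times}g$, which is \emph{not} a subgroup and is disjoint from $N$. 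I anticipate that, exactly as in the proof of Lemma~\ref{Lemma:complexity-exact-sequence}(5), this requires a case analysis through the complexity dichotomy of Corollary~\ref{Corollary:complexity} and the exact-sequence length estimates of Lemma~\ref{Lemma:complexity-exact-sequence}(1)--(4), with careful bookkeeping of the classes $\Gamma_{\beta}$, $\Gamma_{\beta}^{\mathrm{semiplain}}$, $\Gamma_{\beta}^{\mathrm{plain}}$ (and possibly the Lusin--Novikov selection again), rather than any direct union/intersection manipulation.
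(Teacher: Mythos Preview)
Your treatment of (1)$\Rightarrow$(2), (1)$\Rightarrow$(3), (1)$\Rightarrow$(4), and (4)$\Leftrightarrow$(1) via Lemma~\ref{Lemma:complexity-exact-sequence}(5) is correct and matches the paper. Your (2)$\Rightarrow$(1) via a Lusin--Novikov coloring is a plausible alternative to the paper's route (the paper instead cites \cite[Proposition~3.1]{lupini_complexity_2024} for (2)$\Rightarrow$(3)), though the existence of your Borel $c$ separating $E_{N}$-classes within $E$-classes deserves a bit more care than you give it.

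The real gap is your handling of (3). You identify (3)$\Rightarrow$(1) as the hard step and propose a case analysis through the complexity dichotomy of Corollary~\ref{Corollary:complexity} and the length estimates of Lemma~\ref{Lemma:complexity-exact-sequence}; this is far heavier than needed and is not what the paper does. The paper simply declares (3)$\Rightarrow$(4) obvious, and here is the one-line argument you missed. If $G=N$ take $g=0$; otherwise pick any $g\in G\setminus N$ and observe that
\[
N+Rg \;=\; \bigcup_{s\in R}\bigl[(s-1)g + (N+R^{\times}g)\bigr],
\]
a countable union of translates of the single $\boldsymbol{\Sigma}^{0}_{1+\alpha}$ set $N+R^{\times}g$ supplied by (3). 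Indeed each term lies in $N+Rg$, and conversely every element $n+sg$ (with $n\in N$, $s\in R$) sits in the $s$-th term since $n+sg=(s-1)g+(n+1\cdot g)$ and $1\in R^{\times}$; in particular the coset $N$ itself is covered. Hence $N+Rg\in\boldsymbol{\Sigma}^{0}_{1+\alpha}(G)$, which is (4), and your own (4)$\Rightarrow$(1) closes the cycle. No complexity-class bookkeeping, selection argument, or appeal to the Solecki filtration is needed for this implication.
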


\begin{proof}
(1)$\Rightarrow $(2) and (3)$\Rightarrow $(4) are obvious. (2)$\Rightarrow $%
(3) follows from \cite[Proposition 3.1]{lupini_complexity_2024}. The
equivalence of (1) and (4) follows from Lemma \ref%
{Lemma:complexity-exact-sequence}(5).
\end{proof}

\subsection{Parametrization by hereditarily countable sets}

The following lemmas are essentially \cite[Lemma 3.7]{kechris_borel_2016}
and \cite[Proposition 2.3]{motto_ros_complexity_2012}.

\begin{lemma}[Kechris--Macdonald]
\label{Lemma:Kechris-Macdonald}Suppose that $E,F$ are equivalence relations
on Polish spaces $\boldsymbol{X},\boldsymbol{Y}$, respectively. If $E$ is
idealistic, $F$ is Borel, and $E$ is Borel reducible to $F$, then there
exists a Borel $F$-invariant subset $\boldsymbol{A}$ of $\boldsymbol{Y}$
such that $E$ is Borel bireducible to $F|_{\boldsymbol{A}}$.
\end{lemma}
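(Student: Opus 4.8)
Suppose that $E,F$ are equivalence relations on Polish spaces $\boldsymbol{X},\boldsymbol{Y}$, respectively. If $E$ is idealistic, $F$ is Borel, and $E$ is Borel reducible to $F$, then there exists a Borel $F$-invariant subset $\boldsymbol{A}$ of $\boldsymbol{Y}$ such that $E$ is Borel bireducible to $F|_{\boldsymbol{A}}$.

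The plan is to take for $\boldsymbol{A}$ the only set that can possibly work, namely the $F$-saturation $\boldsymbol{A}:=[f(\boldsymbol{X})]_{F}$ of the range of the given Borel reduction $f\colon\boldsymbol{X}\rightarrow\boldsymbol{Y}$ of $E$ to $F$; indeed any $F$-invariant set containing $f(\boldsymbol{X})$ must contain $\boldsymbol{A}$, and for the forward reduction we need $f$ to take values in $\boldsymbol{A}$, so there is no freedom in the choice. With this choice the forward direction is immediate: $f$ is a Borel map $\boldsymbol{X}\rightarrow\boldsymbol{A}$ and $x\mathrel{E}x'\Leftrightarrow f(x)\mathrel{F}f(x')$ by hypothesis, so $E$ is Borel reducible to $F|_{\boldsymbol{A}}$ (a Borel equivalence relation on the standard Borel space $\boldsymbol{A}$). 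Everything therefore reduces to two points: (i) $\boldsymbol{A}$ is Borel, and (ii) there is a Borel map $g\colon\boldsymbol{A}\rightarrow\boldsymbol{X}$ with $g(y)\in B_{y}:=f^{-1}([y]_{F})$ for all $y\in\boldsymbol{A}$ that depends only on the $F$-class of $y$.

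Given (i) and (ii), the backward reduction follows formally. Each $B_{y}$ is Borel uniformly in $y$ (it is the $y$-section of $\{(y,x):f(x)\mathrel{F}y\}$, the preimage of the Borel set $F$ under $(y,x)\mapsto(y,f(x))$), it is nonempty exactly when $y\in\boldsymbol{A}$, it is $E$-invariant because $f$ is a reduction, and $B_{y}=B_{y'}$ when $y\mathrel{F}y'$ while $B_{y}\cap B_{y'}=\varnothing$ when $\neg(y\mathrel{F}y')$. Hence if $g$ is as in (ii) then $y\mathrel{F}y'$ forces $g(y)\mathrel{E}g(y')$, while $\neg(y\mathrel{F}y')$ puts $g(y)$ and $g(y')$ into disjoint $E$-invariant sets, so $\neg(g(y)\mathrel{E}g(y'))$; thus $g$ is a Borel reduction of $F|_{\boldsymbol{A}}$ to $E$, and together with $f$ this gives the asserted Borel bireducibility.

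It remains to extract (i) and (ii) from the idealistic structure of $E$, and this is where all the content lies. The plan is to use the Borel selector $s$ and the Borel assignment $C\mapsto\mathcal{F}_{C}$ of $\sigma$-filters to the $E$-classes to perform an ``idealistic uniformization'' of the Borel set $B:=\{(y,x)\in\boldsymbol{Y}\times\boldsymbol{X}:f(x)\mathrel{F}y\}$, all of whose sections $B_{y}$ are $E$-invariant: one Borel-selects from each nonempty section a single $E$-class, using the $\sigma$-filter to make the selection genuinely Borel, and this selection is automatically $F$-invariant because $B_{y}$ depends only on $[y]_{F}$; composing with $s$ lands one on an actual point and yields $g$ on $\boldsymbol{A}$. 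The same construction simultaneously exhibits $\boldsymbol{A}=\{y:B_{y}\neq\varnothing\}$ as Borel, since membership in $\boldsymbol{A}$ becomes the Borel condition that the $\mathcal{F}$-selection step succeeds. The main obstacle is precisely this step: naive Jankov--von Neumann uniformization of $B$ only produces a $\sigma(\boldsymbol{\Sigma}^{1}_{1})$-measurable map and leaves $\boldsymbol{A}$ merely analytic, and since $E$-classes need not be countable one cannot fall back on a Lusin--Novikov argument; the whole point of the idealistic hypothesis is to replace these by a genuinely Borel class-selection through the $\sigma$-filter, as carried out in \cite{kechris_borel_2016,motto_ros_complexity_2012}. (Since here only the selection of a single Borel set $B$ is needed, the plain ``idealistic'' hypothesis, as opposed to ``stably idealistic'', suffices.)
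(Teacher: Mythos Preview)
The paper does not give its own proof of this lemma: it merely states it and attributes it to \cite[Lemma 3.7]{kechris_borel_2016} and \cite[Proposition 2.3]{motto_ros_complexity_2012}. Your outline is the standard argument from those references and is correct; in particular your choice $\boldsymbol{A}=[f(\boldsymbol{X})]_{F}$ and the reduction of the problem to an ``idealistic uniformization'' of $B=\{(y,x):f(x)\,F\,y\}$ are exactly right.

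One small simplification you may want to note: because $f$ is a \emph{reduction} (not merely a homomorphism), each nonempty section $B_{y}=f^{-1}([y]_{F})$ is already a single $E$-class, not a union of several; indeed $x,x'\in B_{y}$ gives $f(x)\,F\,f(x')$ and hence $x\,E\,x'$. So the ``select a single $E$-class from $B_{y}$'' step is vacuous, and the dependence of $g$ on $[y]_{F}$ alone is automatic once $g(y)\in B_{y}$. The genuine content is exactly what you isolate: showing that $\{y:B_{y}\neq\varnothing\}$ is Borel and producing a Borel choice $y\mapsto g(y)\in B_{y}$ when the sections are $E$-classes. This is precisely the large-section uniformization that the idealistic hypothesis (Borel $\sigma$-filter assignment together with the Borel selector $s$) is designed to deliver, as carried out in the cited papers.
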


In particular, it follows that a Borel injection $X\rightarrow Y$ between
Borel-definable sets has Borel image.

\begin{lemma}[Motto Ros]
Suppose that $X$ and $Y$ are Borel-definable sets.\ If there exist a
Borel-injection $X\rightarrow Y$ and a Borel-injection $Y\rightarrow X$,
then there exists a Borel bijection between $X$ and $Y$.
\end{lemma}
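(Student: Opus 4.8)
The plan is to transplant the classical Banach (Cantor--Schr\"{o}der--Bernstein) decomposition argument into the category of Borel-definable sets, the only work being to check that every set and map produced along the way is again Borel in the appropriate sense. Write $X=\mathbf{X}/E$ and $Y=\mathbf{Y}/F$, and let $f\colon X\to Y$ and $g\colon Y\to X$ be the given Borel injections. First I would assemble the three facts that drive the argument. (i) By Lemma~\ref{Lemma:Kechris-Macdonald} and the remark immediately following it, a Borel injection between Borel-definable sets has Borel image; so $Y_{0}:=f(X)$ and $X_{0}:=g(Y)$ are Borel subsets, hence themselves Borel-definable sets. (ii) Viewing $f$ as a map $X\to Y_{0}$ and $g$ as a map $Y\to X_{0}$ gives Borel bijections; since a map of Borel-definable sets is Borel exactly when its graph is Borel, the inverses $f^{-1}\colon Y_{0}\to X$ and $g^{-1}\colon X_{0}\to Y$ are again Borel, their graphs being the transposes of Borel graphs under the coordinate-swap isomorphism. (iii) Consequently, for a Borel subset $B\subseteq X$ the forward image $f(B)=(f^{-1})^{-1}(B)\subseteq Y_{0}$ is Borel, and likewise $g$ carries Borel subsets of $Y$ to Borel subsets of $X_{0}$; I would also use here that a Borel subset of a Borel-definable set is Borel-definable, which is part of the established formalism.

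Next I would form the Banach decomposition on the level of the quotient sets. Put $C_{0}:=X\setminus X_{0}$, recursively $C_{n+1}:=g\bigl(f(C_{n})\bigr)$, and $C:=\bigcup_{n\in\omega}C_{n}$. By (iii) and induction on $n$, each $C_{n}$ is a Borel subset of $X$, so $C$ is Borel; and since $C_{0}\subseteq C$ we have $X\setminus C\subseteq X_{0}$, so $g^{-1}$ is defined on $X\setminus C$. Define $\psi\colon X\to Y$ by $\psi(x)=f(x)$ for $x\in C$ and $\psi(x)=g^{-1}(x)$ for $x\in X\setminus C$. The graph of $\psi$ is the union of the Borel set $\{(x,y)\in X\times Y: x\in C,\ y=f(x)\}$ with the Borel set $\{(x,y)\in X\times Y: x\in X\setminus C,\ y=g^{-1}(x)\}$, hence Borel, so $\psi$ is a Borel map of Borel-definable sets; its inverse likewise has Borel graph, hence is Borel.

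Finally I would verify, by the usual purely set-theoretic argument applied to the sets of $E$- and $F$-classes, that $\psi$ is a bijection: injectivity reduces to noting that if $x\in C_{n}$ then $g(f(x))\in C_{n+1}\subseteq C$, which rules out the mixed case; surjectivity reduces to noting that if $g(y)\in C_{n+1}$ then $g(y)=g(f(x))$ for some $x\in C_{n}$, whence $y=f(x)=\psi(x)$, while if $g(y)\notin C$ then $\psi(g(y))=y$. Combining with the preceding paragraph, $\psi$ is a Borel bijection from $X$ to $Y$. I do not anticipate a real obstacle: all the subtlety is concentrated in items (i)--(iii), i.e.\ in keeping the construction inside the category of Borel-definable sets, and these are exactly supplied by Lemma~\ref{Lemma:Kechris-Macdonald} together with the graph characterization of Borel maps recalled in the excerpt; the classical decomposition then goes through verbatim.
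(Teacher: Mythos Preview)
Your argument is correct. The paper does not give its own proof of this lemma: it simply attributes the result to Motto Ros and cites \cite[Proposition 2.3]{motto_ros_complexity_2012}, so there is nothing to compare against beyond the original reference. Your transplantation of the Banach decomposition is exactly the expected approach, and your bookkeeping---that Borel injections have Borel image via Lemma~\ref{Lemma:Kechris-Macdonald}, that inverses of Borel bijections are Borel via the graph characterization, and that the class of Borel subsets is closed under countable union, complement, and forward image along a Borel injection---is precisely what is needed to make the classical proof go through in this category.
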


Combining these with \cite[Theorem 1.5]{friedman_borel_2000} we obtain the
following.

\begin{lemma}[Friedman]
\label{Lemma:Friedman}Suppose that $X=\boldsymbol{X}/E$ is a Borel-definable
set. Then $E$ is classifiable by countable structures if and only if $X$ is
Borel-isomorphic to an orbit space $\boldsymbol{Y}/G$ for some
non-Archimedean Polish group $G$ and Polish $G$-space $\boldsymbol{Y}$. In
fact, one can take $G=S_{\infty }$.
\end{lemma}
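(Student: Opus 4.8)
The plan is to prove both implications, disposing of the reverse direction first since it is essentially formal. Suppose $X$ is Borel-isomorphic to an orbit space $\boldsymbol{Y}/G$ for a non-Archimedean Polish group $G$ and a Polish $G$-space $\boldsymbol{Y}$. A Borel isomorphism of Borel-definable sets is in particular a Borel reduction of $E$ to the orbit equivalence relation $E_{G}^{\boldsymbol{Y}}$; since $E_{G}^{\boldsymbol{Y}}$ is, by definition, classifiable by countable structures and Borel reducibility is transitive, $E$ is classifiable by countable structures as well. This direction also shows there is no loss of generality in asking for $G=S_{\infty}$ in the conclusion, using \cite[Theorem 3.5.2]{gao_invariant_2009} together with the fact that orbit equivalence relations of Polish $S_{\infty}$-spaces are precisely (up to Borel bireducibility) the isomorphism relations on Borel invariant classes of countable $\mathcal{L}$-structures.

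For the forward direction, assume $E$ is classifiable by countable structures. First I would invoke \cite[Theorem 3.5.2]{gao_invariant_2009} to reduce to the case where the witnessing orbit equivalence relation comes from a Polish $S_{\infty}$-space; this reduction a priori lands in an orbit equivalence relation that need not be Borel, which is where \cite[Theorem 1.5]{friedman_borel_2000} enters. Since $E$ is itself a Borel equivalence relation (as $X=\boldsymbol{X}/E$ is Borel-definable), Friedman's theorem upgrades the reduction to a Polish $S_{\infty}$-space $\boldsymbol{Z}$ whose orbit equivalence relation $F:=E_{S_{\infty}}^{\boldsymbol{Z}}$ is Borel and Borel bireducible with $E$. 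Now $E$ is idealistic — indeed stably idealistic, since $X$ is Borel-definable — and $F$ is Borel, so Lemma~\ref{Lemma:Kechris-Macdonald} produces a Borel $F$-invariant set $\boldsymbol{A}\subseteq\boldsymbol{Z}$ such that $E$ is Borel bireducible with $F|_{\boldsymbol{A}}=E_{S_{\infty}}^{\boldsymbol{A}}$. After a standard change of topology (making the $S_{\infty}$-action on the Borel invariant set $\boldsymbol{A}$ continuous for a finer Polish topology with the same Borel structure), $\boldsymbol{A}/S_{\infty}$ is the orbit space of a Polish $S_{\infty}$-space, hence a Borel-definable set. The two Borel reductions realizing the bireducibility can be taken injective on classes and therefore induce Borel injections $X\to\boldsymbol{A}/S_{\infty}$ and $\boldsymbol{A}/S_{\infty}\to X$ of Borel-definable sets, by the remark following Lemma~\ref{Lemma:Kechris-Macdonald} that such injections have Borel image. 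Finally the Motto Ros lemma (\cite[Proposition 2.3]{motto_ros_complexity_2012}) yields a Borel bijection between $X$ and $\boldsymbol{A}/S_{\infty}$, which is the required Borel isomorphism, with $G=S_{\infty}$ non-Archimedean.

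The main obstacle is precisely the step just flagged: passing from a bare Borel reduction of $E$ into a possibly non-Borel orbit equivalence relation to a Borel bireducibility with a \emph{Borel} orbit equivalence relation of a Polish $S_{\infty}$-space. Without this, Lemma~\ref{Lemma:Kechris-Macdonald} cannot be applied, since it requires the target equivalence relation to be Borel. This is exactly the content we are importing from \cite[Theorem 1.5]{friedman_borel_2000}; once it is in hand, the remaining ingredients — idealisticity of $E$ to cut down to a Borel invariant piece, and a Schröder–Bernstein argument for Borel-definable sets via the Motto Ros lemma — are routine, and I expect the only other care needed is in checking that the restricted orbit equivalence relation $F|_{\boldsymbol{A}}$ remains stably idealistic, which follows from the topology-realization theorem for Borel invariant sets under Polish group actions.
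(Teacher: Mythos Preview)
Your proposal is correct and follows exactly the approach the paper indicates: the paper does not give an explicit proof but simply states that the lemma is obtained by combining Lemma~\ref{Lemma:Kechris-Macdonald}, the Motto Ros Schr\"oder--Bernstein lemma, and \cite[Theorem 1.5]{friedman_borel_2000}, and you have spelled out precisely this combination. The only minor redundancy is the appeal to the ``Borel image'' remark after Lemma~\ref{Lemma:Kechris-Macdonald}; once both $X$ and $\boldsymbol{A}/S_{\infty}$ are known to be Borel-definable and you have Borel reductions in each direction, the Motto Ros lemma applies directly.
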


A canonical hierarchy of Borel $S_{\infty }$-orbit spaces is considered in 
\cite{hjorth_borel_1998}. One defines recursively $\wp ^{0}\left( \mathbb{N}%
\right) :=\mathbb{N}$, $\wp ^{<\alpha }\left( \mathbb{N}\right) $ to be the
union of $\wp ^{\beta }\left( \mathbb{N}\right) $ for $\beta <\alpha $, and $%
\wp ^{\alpha }\left( \mathbb{N}\right) $ to be the collection of countable
subsets of $\wp ^{<\alpha }\left( \mathbb{N}\right) $.\ Thus for example $%
\wp ^{1}\left( \mathbb{N}\right) $ comprises all the subsets of $\mathbb{N}$%
, whence $\wp ^{1}\left( \mathbb{N}\right) \cong \mathbb{R}$. In turn, $\wp
^{2}\left( \mathbb{N}\right) \cong \wp _{\aleph _{0}}\left( \mathbb{R}%
\right) $ is the collection of all countable sets of reals. More generally,
one can think of $\wp ^{\alpha }\left( \mathbb{N}\right) $ as the collection
of \emph{hereditarily countable sets of rank} $\alpha $. These can be
parametrized by the isomorphism classes of the countable models of some
infinitary sentence $\sigma _{\alpha }$ in a suitable countable language $%
\mathcal{L}_{\alpha }$. One lets $\cong _{\alpha }$ be the relation of
isomorphism of such countable $\mathcal{L}_{\alpha }$-structures, which is a
Borel equivalence relation \cite[Section 1]{hjorth_borel_1998}. The relation 
$\cong _{1}$ recovers the relation of equality of reals, while $\cong _{2}$
coincides with the Friedman--Stanly jump of $\cong _{1}$. Thus,

\begin{lemma}
Let $X$ be a $\boldsymbol{\Sigma }_{1}^{1}$-definable set. We say that $X$
is parametrized by\emph{\ hereditarily countable sets of rank }$\alpha $ if
it Borel-isomorphic to a Borel subset of $\wp ^{\alpha }\left( \mathbb{N}%
\right) $.
\end{lemma}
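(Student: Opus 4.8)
Read literally, the final statement is a \emph{definition}: it fixes terminology, declaring a $\boldsymbol{\Sigma }_{1}^{1}$-definable set $X$ to be parametrized by hereditarily countable sets of rank $\alpha $ when $X$ admits a Borel isomorphism onto a Borel subset of $\wp ^{\alpha }\left( \mathbb{N}\right) $. So the substantive content to be verified---and the part a proof should address---is that this notion is \emph{well posed} within the categories introduced earlier: that $\wp ^{\alpha }\left( \mathbb{N}\right) $ is itself a Borel-definable set, so that the phrase ``Borel subset of $\wp ^{\alpha }\left( \mathbb{N}\right) $'' has meaning, and that the property so defined is invariant under Borel isomorphism of $\boldsymbol{\Sigma }_{1}^{1}$-definable sets, hence independent of the chosen presentation $X=\boldsymbol{X}/E$.

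The plan is to treat well-posedness first. By the parametrization recalled just above the statement, $\wp ^{\alpha }\left( \mathbb{N}\right) $ is identified with the orbit space $\mathrm{Mod}\left( \sigma _{\alpha }\right) /{\cong _{\alpha }}$, where $\cong _{\alpha }$ is the isomorphism relation of the countable models of the infinitary $\mathcal{L}_{\alpha }$-sentence $\sigma _{\alpha }$; by \cite[Section 1]{hjorth_borel_1998} this $\cong _{\alpha }$ is a Borel equivalence relation on the Polish space $\mathrm{Mod}\left( \mathcal{L}_{\alpha }\right) $ of countable $\mathcal{L}_{\alpha }$-structures. Being the isomorphism relation of countable structures, $\cong _{\alpha }$ is the orbit equivalence relation of the continuous logic action of the non-Archimedean Polish group $S_{\infty }$ on $\mathrm{Mod}\left( \mathcal{L}_{\alpha }\right) $, hence stably idealistic by \cite[Proposition 5.4.19]{gao_invariant_2009}; thus $\wp ^{\alpha }\left( \mathbb{N}\right) $ is a Borel-definable set in the sense of \cite[Section 3]{bergfalk_definable_2024}, compatibly with Lemma \ref{Lemma:Friedman}. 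Consequently ``Borel subset of $\wp ^{\alpha }\left( \mathbb{N}\right) $'' is unambiguous: a subset is Borel precisely when its lift to $\mathrm{Mod}\left( \sigma _{\alpha }\right) $ is Borel, and any such set has the form $W/{\cong _{\alpha }}$ for a $\cong _{\alpha }$-invariant Borel set $W\subseteq \mathrm{Mod}\left( \sigma _{\alpha }\right) $, with defining relation $\cong _{\alpha }|_{W}$ again Borel and stably idealistic. The one point requiring care here is exactly this bookkeeping---that a Borel subset of a Borel-definable set is again Borel-definable---together with the stable idealisticness of $\cong _{\alpha }$; I expect this to be the main, and quite mild, obstacle.

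For invariance, suppose $\varphi :X'\rightarrow X$ is a Borel isomorphism of $\boldsymbol{\Sigma }_{1}^{1}$-definable sets and $\psi :X\rightarrow B$ is a Borel isomorphism onto a Borel subset $B$ of $\wp ^{\alpha }\left( \mathbb{N}\right) $. Then $\psi \circ \varphi $ witnesses that $X'$ too is parametrized by hereditarily countable sets of rank $\alpha $; moreover the pullback of the Borel relation $\cong _{\alpha }$ along the Borel map underlying $\psi \circ \varphi $ is Borel, so $X'$ is in fact Borel-definable. I would also record the harmless relaxations of the definition: by Lemma \ref{Lemma:Kechris-Macdonald} a mere Borel injection $X\rightarrow \wp ^{\alpha }\left( \mathbb{N}\right) $ already has $\cong _{\alpha }$-invariant Borel image and hence witnesses the property, and by the lemma of Motto Ros stated above, if $X'$ and $X$ admit Borel injections into one another then they are Borel-isomorphic. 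Assembling these observations yields the lemma.
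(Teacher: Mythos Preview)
You correctly identify that the statement is a definition rather than a proposition, and the paper treats it exactly that way: it offers no proof, only the one-line remark immediately afterward that the condition forces $X$ to be a Borel-definable set. Your well-posedness verification---that $\wp^{\alpha}(\mathbb{N})$ is a Borel-definable set because $\cong_{\alpha}$ is a Borel $S_{\infty}$-orbit equivalence relation, hence stably idealistic, and that the notion is invariant under Borel isomorphism---is correct and in fact supplies the justification for that remark, which the paper leaves implicit. So your approach is not different from the paper's so much as strictly more detailed: the paper takes the well-posedness as routine and records only the consequence, whereas you spell out why the consequence holds. The additional observations you make via Lemma~\ref{Lemma:Kechris-Macdonald} (that a Borel injection already suffices) are correct and harmless, though the paper does not mention them at this point.
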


Note that this implies that $X$ is in fact a Borel-definable set.

In \cite{hjorth_borel_1998} it is also defined for a countable successor
ordinal $\alpha \geq 3$ a Borel subset $\wp _{\ast }^{\alpha }\left( \mathbb{%
N}\right) $ of $\wp ^{\alpha }\left( \mathbb{N}\right) $. Again, $\wp _{\ast
}^{a}\left( \mathbb{N}\right) $ can be parametrized as the isomorphism
classes of countable modules of some infinitary $\mathcal{L}_{\alpha }$%
-sentence $\sigma _{\alpha }^{\ast }$. One then lets $\cong _{\alpha }^{\ast
}$ be the corresponding isomorphism relation, which can be regarded as a
subequivalence relation of $\cong _{\alpha }$. We say that $\wp _{\ast
}^{\alpha }\left( \mathbb{N}\right) $ comprises the\emph{\ }hereditary
countable sets of \emph{plain }rank $\alpha $. For $\alpha =2$ we let $\wp
_{\ast }^{2}\left( \mathbb{N}\right) $ be the Borel-definable set associated
with the countable Borel equivalence relation of maximum complexity, while
for $\alpha =1$ we let $\wp _{\ast }^{1}\left( \mathbb{N}\right) =\wp
^{1}\left( \mathbb{N}\right) $. For a limit ordinal $\lambda $ we define $%
\wp _{\ast }^{\lambda }\left( \mathbb{N}\right) =\wp ^{\lambda }\left( 
\mathbb{N}\right) $. Combining Lemma \ref{Lemma:Friedman} with \cite[Theorem
2 and Theorem 3]{hjorth_borel_1998} we obtain the following.

\begin{theorem}[Hjorth--Kechris--Louveau]
\label{Theorem:HKL}Let $X:=\boldsymbol{X}/E$ be a $\boldsymbol{\Sigma }%
_{1}^{1}$-definable set, where $\boldsymbol{X}$ is a Polish space $E$ be an
idealistic equivalence relation classifiable by countable structures. Then $%
X $ is a Borel-definable set if and only if there exists $\alpha <\omega
_{1} $ such that $X\ $is parametrized by countable sets of rank $\alpha $
for some $\alpha <\omega _{1}$.

Furthermore, for every $\alpha <\omega _{1}$ the following assertions are
equivalent:

\begin{enumerate}
\item $X$ is $\Gamma _{\alpha }$-definable if and only if it is parametrized
by hereditarily countable sets of rank $1+\alpha $;

\item $X$ is $\Gamma _{\alpha }^{\mathrm{plain}}$-definable if and only if
it is parametrized by hereditarily countable sets of plain rank $1+\alpha $.
\end{enumerate}
\end{theorem}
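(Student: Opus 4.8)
The plan is to reduce the statement, via the machinery of homogeneous spaces with a Polish cover and the results of Section 13 on complexity, to the classical Hjorth--Kechris--Louveau analysis of the hierarchy $\wp^{\alpha}(\mathbb{N})$ together with Friedman's theorem (Lemma \ref{Lemma:Friedman}). First I would observe that the first sentence is a direct consequence of Lemma \ref{Lemma:Friedman} and the original Hjorth--Kechris--Louveau results \cite[Theorem 1.5]{friedman_borel_2000}, \cite[Theorem 2 and Theorem 3]{hjorth_borel_1998}: if $E$ is idealistic and classifiable by countable structures, then by Lemma \ref{Lemma:Friedman} we may realize $X$ as an orbit space $\boldsymbol{Y}/S_{\infty}$ for a Borel $S_{\infty}$-space $\boldsymbol{Y}$; conversely any such orbit space, being Borel-definable by Proposition \ref{Proposition:orbit-space}, is classifiable by countable structures. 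The Hjorth--Kechris--Louveau theorem \cite[Theorem 2]{hjorth_borel_1998} then says that a Borel $S_{\infty}$-orbit space is Borel-isomorphic to a Borel subset of some $\wp^{\alpha}(\mathbb{N})$ precisely when the orbit equivalence relation is Borel, i.e. precisely when $X$ is Borel-definable. So the first equivalence follows once one checks that "classifiable by countable structures" and "$E$ idealistic, $X$ Borel-definable" match up the hypotheses of the cited results, which is routine bookkeeping.

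For the finer statements, the strategy is to pass through the complexity classes $\Gamma_{\alpha}$ and $\Gamma_{\alpha}^{\mathrm{plain}}$ and the explicit "master" objects $\cong_{1+\alpha}$ and $\cong_{1+\alpha}^{*}$ parametrizing $\wp^{1+\alpha}(\mathbb{N})$ and $\wp_{*}^{1+\alpha}(\mathbb{N})$ respectively. One direction is the "upper bound": I would show that if $X$ is parametrized by hereditarily countable sets of rank $1+\alpha$, then $X$ is $\Gamma_{\alpha}$-definable. This amounts to computing the potential complexity of the equivalence relation $\cong_{1+\alpha}$, and the content is exactly that the $\alpha$-th Friedman--Stanley-type jump raises potential complexity by one step in the Solecki hierarchy; this is the statement that $\cong_{1+\alpha}$ is potentially $\Gamma_{\alpha}$, which is built into the definition of $\wp^{1+\alpha}(\mathbb{N})$ in \cite[Section 1]{hjorth_borel_1998} and can also be read off from the complexity computations for homogeneous spaces with a Polish cover in Theorem \ref{Theorem:complexity2} applied to the group-with-a-Polish-cover presentation of $\cong_{1+\alpha}$. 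For the plain version one uses instead $\cong_{1+\alpha}^{*}$ and the analogous computation landing in $\boldsymbol{\Sigma}_{1+\lambda+1}^{0}$ in the relevant successor-over-limit case, matching the definition of $\Gamma_{\alpha}^{\mathrm{plain}}$ from Section \ref{Section:solecki-submodules}.

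The other direction is the "completeness" or "lower bound": if $X$ is $\Gamma_{\alpha}$-definable and classifiable by countable structures, then $X$ embeds (Borel-isomorphically onto a Borel subset) into $\wp^{1+\alpha}(\mathbb{N})$. Here I would invoke Lemma \ref{Lemma:Friedman} to get $X \cong \boldsymbol{Y}/S_{\infty}$, then use the Hjorth--Kechris--Louveau classification of $S_{\infty}$-orbit spaces by Borel potential complexity class \cite[Theorem 3]{hjorth_borel_1998}: among Borel $S_{\infty}$-orbit spaces of a fixed potential complexity class $\Gamma_{\alpha}$ there is a universal one, namely $\wp^{1+\alpha}(\mathbb{N})$, and every such orbit space Borel-reduces to it by an injection; combining with the Kechris--Macdonald lemma (Lemma \ref{Lemma:Kechris-Macdonald}) and Motto Ros's Cantor--Bernstein-type lemma one upgrades the Borel injection to a Borel bijection onto a Borel subset. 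The plain case is entirely parallel with $\wp_{*}^{1+\alpha}(\mathbb{N})$ and $\cong_{1+\alpha}^{*}$, using the additional fact (valid since the groups involved are non-Archimedean) that semiplain equals plain, so that the class $\Gamma_{\alpha}^{\mathrm{plain}}$ is the correct parametrizing class.

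The main obstacle I anticipate is not any single deep new argument but rather the careful alignment of three separate calibrations of the same hierarchy: the Solecki-length complexity classes $\Gamma_{\alpha}$, $\Gamma_{\alpha}^{\mathrm{plain}}$ of Section \ref{Section:solecki-submodules}; the potential Borel complexity classes used by Hjorth--Kechris--Louveau; and the ranks $1+\alpha$ of the hereditarily-countable-sets hierarchy. In particular one must verify that the shift by $1$ (rank $1+\alpha$ corresponding to complexity $\Gamma_{\alpha}$) and the special behaviour at successors-of-limits (where $\Gamma_{\lambda+1}^{\mathrm{plain}} = \boldsymbol{\Sigma}_{1+\lambda+1}^{0}$ rather than a $D(\boldsymbol{\Pi})$ class) exactly matches the structure of $\wp_{*}^{\lambda+1}(\mathbb{N})$, and that the base cases $\alpha=0$ (equality of reals, $\wp^{1}(\mathbb{N})$) and the "plain rank $2$" case (the universal countable Borel equivalence relation $E_{\infty}$, $\wp_{*}^{2}(\mathbb{N})$) are handled by the degenerate clauses in the definitions. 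Once this dictionary is nailed down, the theorem is a combination of Lemma \ref{Lemma:Friedman}, Lemma \ref{Lemma:Kechris-Macdonald}, Motto Ros's lemma, Theorem \ref{Theorem:complexity2}, and the cited Hjorth--Kechris--Louveau results, with no further genuinely new ingredient.
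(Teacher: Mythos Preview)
Your proposal is correct and follows essentially the same route as the paper: the paper presents this theorem simply as the combination of Lemma~\ref{Lemma:Friedman} with \cite[Theorem~2 and Theorem~3]{hjorth_borel_1998}, with no further argument. Your write-up is considerably more detailed than the paper's one-line derivation, and your invocation of Theorem~\ref{Theorem:complexity2} for the upper-bound direction is extraneous (the relations $\cong_{1+\alpha}$ are not themselves coset relations of Polishable subgroups, so that theorem does not apply directly; the complexity upper bound comes straight from the HKL analysis), but the core reduction is the same.
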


\subsection{The classification problem for extensions}

We conclude by isolating the complexity-theoretic content of our main
results, in terms of the classification problem for extensions of countable
flat modules.

\begin{theorem}
\label{Theorem:parametrize-extensions}Let $R$ be a countable Dedekind domain
and $A$ be a countable flat module. Then for every $\alpha <\omega _{1}$
there exist countable flat modules $C_{\alpha }$ and $C_{\alpha }^{\mathrm{%
plain}}$ such that the potential Borel complexity class of the isomorphism
relation of extension of $C_{\alpha }$ (respectively, $C_{\alpha }^{\mathrm{%
plain}}$) by $A$ is $\Gamma _{\alpha }$ (respectively, $\Gamma _{\alpha }^{%
\mathrm{plain}}$). Furthermore, for every $\lambda <\omega _{1}$ either zero
or limit and for every $n<\omega $ we have that:

\begin{itemize}
\item for $n=0$, isomorphism classes of extensions of $C_{\lambda }$ by $A$
can be parametrized by hereditarily countable sets of rank $1+\lambda $ but
not by hereditarily countable sets of rank $1+\beta $ for any $\beta
<\lambda $;

\item for $n\geq 1$, isomorphism classes of extensions of $C_{\lambda +n}$
by $A$ for $n\geq 1$ can be parametrized by hereditarily countable sets of
rank $1+\lambda +n$ but not by hereditarily countable sets of \emph{plain}
rank $1+\lambda +n$;

\item for $n\geq 1$, isomorphism classes of extensions of $C_{\lambda +n}^{%
\mathrm{plain}}$ by $A$ for $n\geq 1$ can be parametrized by hereditarily
countable sets of \emph{plain} rank $1+\lambda +n$ but not by hereditarily
countable sets of rank $1+\lambda +n-1$.
\end{itemize}
\end{theorem}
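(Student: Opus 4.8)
The plan is to reduce everything to the complexity of the phantom pro\-finite\-flat module $\mathrm{Ext}(C,A)$ and then feed the Solecki\-length computations of Sections~\ref{Section:constructions-Dedekind} and~\ref{Section:higher} into the dictionary recalled above. We may assume $A$ is not divisible (otherwise $\mathrm{Ext}(-,A)=0$ by Proposition~\ref{Proposition:characterize-injectives} and every statement is vacuous or trivial). Recall from the subsection on strict actions that, up to Borel isomorphism, the isomorphism relation of extensions of $C$ by $A$ is the orbit equivalence relation of the strict action of $\mathrm{Aut}(A)\times\mathrm{Aut}(C)$ on $\mathrm{Ext}(C,A)=\mathrm{Z}(C,A)/\mathrm{B}(C,A)$; when $C$ is \emph{rigid} this reduces the $\mathrm{Aut}(C)$-factor to the inner (scalar) action of $\mathrm{Inn}(C)\cong R^{\times}$, a countable group.

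First I would build the modules. Fix $\alpha<\omega_1$. Using Theorem~\ref{Theorem:dichotomy} together with the zipping operation of Lemma~\ref{Lemma:zipping}, produce a countable, coreduced, completely reduced, \emph{rigid}, $A$-independent flat module $C_\alpha^{\mathrm{plain}}$ of plain length $\alpha$; by Theorem~\ref{Theorem:A-independent} it has plain $A$-projective length $\alpha$, so by Theorem~\ref{Theorem:phantom-Ext} the module with a pro\-finite\-flat Polish cover $\mathrm{Ext}(C_\alpha^{\mathrm{plain}},A)$ has plain Solecki length $\alpha$; since $\partial_{\alpha-1}C_\alpha^{\mathrm{plain}}$ has finite rank, Theorem~\ref{Theorem:structure-rank-1} shows $\{0\}\in\boldsymbol{\Sigma}_2^0(s_{\alpha-1}\mathrm{Ext}(C_\alpha^{\mathrm{plain}},A))$, confirming plainness. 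For $\alpha=\lambda+n$ with $n\ge 1$ I would then take $C_\alpha$ to be (the zipping of) a countably infinite direct sum $\bigoplus_k M_k$ of mutually $A$-independent extractable modules $M_k$ each of plain length $\alpha$; then $\partial_{\alpha-1}C_\alpha\cong\bigoplus_k\partial_{\alpha-1}M_k$ has infinite rank, $C_\alpha$ still has $A$-projective length $\alpha$, and $s_{\alpha-1}\mathrm{Ext}(C_\alpha,A)\cong\prod_k\mathrm{Ext}(\partial_{\alpha-1}M_k,A)$ has $\{0\}$ properly $\boldsymbol{\Pi}_3^0$, i.e.\ $\mathrm{Ext}(C_\alpha,A)$ has Solecki length $\alpha$ but is \emph{not} plain (indeed not semiplain). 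For $\alpha=\lambda$ a limit (or $\alpha=0$) there is only the one case, with $C_\lambda$ as in the first construction. In all cases rigidity and $A$-independence are preserved by the zipping, using the existence of suitable $\Xi$-pieces avoiding $\Omega(A)$ (Lemmas~\ref{Lemma:Omega}, \ref{Lemma:XI} and Proposition~\ref{Proposition:XI-independent}).

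Next I would pin down the complexity class. By Theorem~\ref{Theorem:complexity} (equivalently Corollary~\ref{Corollary:complexity}) the trivial submodule $\mathrm{B}(C_\alpha,A)$ of $\mathrm{Ext}(C_\alpha,A)$ has complexity class exactly $\Gamma_\alpha$, and $\mathrm{B}(C_\alpha^{\mathrm{plain}},A)$ has complexity class exactly $\Gamma_\alpha^{\mathrm{plain}}$. To pass from this to the orbit equivalence relation $E$ of the strict $\mathrm{Aut}(A)\times\mathrm{Inn}(C_\alpha)$-action: the lower bound is immediate because the $E$-class of the split extension is precisely $\mathrm{B}(C_\alpha,A)$ (any pushout/pullback along an automorphism fixes the split extension), so $E$ has a single class of complexity $\Gamma_\alpha$ (resp.\ $\Gamma_\alpha^{\mathrm{plain}}$), hence $E$ is not $\check\Gamma_\alpha$-definable (resp.\ not $\check\Gamma_\alpha^{\mathrm{plain}}$-definable); for the upper bound one views $\mathrm{Ext}(C_\alpha,A)/\big(\mathrm{Aut}(A)\times\mathrm{Inn}(C_\alpha)\big)$ as a phantom homogeneous Polish space arising from a strict action of a non-Archimedean Polish group and invokes Theorem~\ref{Theorem:complexity2}, having bounded its Solecki length by $\alpha$. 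The Solecki-length bound for this orbit space is where Lemma~\ref{Lemma:complexity-class-countable-action} handles the countable scalar factor $\mathrm{Inn}(C_\alpha)$, and the $\mathrm{Aut}(A)$-factor must be dealt with by observing that it acts by post-composition on the countable levels $\mathrm{Hom}((C_\alpha)_n,A)$ of the defining tower, so at each level the orbits are countable and the induced action on $\lim^1$ does not leave the $\{\Gamma_\beta,\Gamma_\beta^{\mathrm{plain}},\Gamma_\beta^{\mathrm{semiplain}}\}$-hierarchy; combined with the lower bound this forces complexity class exactly $\Gamma_\alpha$ (resp.\ $\Gamma_\alpha^{\mathrm{plain}}$).

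Finally I would translate into the language of hereditarily countable sets. Since $\mathrm{Aut}(A)\times\mathrm{Inn}(C_\alpha)$ is non-Archimedean, $E$ is classifiable by countable structures, so Theorem~\ref{Theorem:HKL} applies: being $\Gamma_\alpha$-definable is equivalent to being parametrizable by hereditarily countable sets of rank $1+\alpha$, and being $\Gamma_\alpha^{\mathrm{plain}}$-definable to plain rank $1+\alpha$. Knowing the complexity class is \emph{exactly} $\Gamma_\alpha$ (resp.\ $\Gamma_\alpha^{\mathrm{plain}}$) then yields both the positive parametrization statements and the negative ones: for $\alpha=\lambda$ the relation is parametrizable by rank $1+\lambda$ but, being properly $\boldsymbol{\Pi}_\lambda=\Gamma_\lambda$, not by rank $1+\beta$ for any $\beta<\lambda$; for $\alpha=\lambda+n$ with $n\ge 1$ the dichotomy between $\Gamma_{\lambda+n}$ and $\Gamma_{\lambda+n}^{\mathrm{plain}}$ gives exactly the asserted gap between rank $1+\lambda+n$ and plain rank $1+\lambda+n$ (for $C_{\lambda+n}$) and between plain rank $1+\lambda+n$ and rank $1+\lambda+n-1$ (for $C_{\lambda+n}^{\mathrm{plain}}$). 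The main obstacle I expect is the upper bound for the $\mathrm{Aut}(A)$-orbit step: one has to show that enlarging the acting group from $\mathrm{Inn}(C_\alpha)$ to $\mathrm{Aut}(A)\times\mathrm{Inn}(C_\alpha)$ does not raise the potential complexity class of the orbit space, which requires either a strengthening of Lemma~\ref{Lemma:complexity-class-countable-action} to arbitrary strict actions of non-Archimedean Polish groups on homogeneous spaces with a pro-countable cover, or a direct levelwise analysis of the $\mathrm{Aut}(A)$-action through the derived-tower description of $\mathrm{Ext}(C_\alpha,A)$ combined with the $A$-independence of the building blocks.
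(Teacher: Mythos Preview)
You have misread what ``isomorphism relation of extensions of $C$ by $A$'' means in this paper. Here it is simply the coset equivalence relation of $\mathrm{B}(C,A)$ on the Polish space $\mathrm{Z}(C,A)$: two cocycles are equivalent iff they represent the same element of $\mathrm{Ext}(C,A)$. As recalled in Section~\ref{Section:modules-polish-cover}, the potential complexity class of this coset relation is exactly the complexity class of $\{0\}$ in the module with Polish cover $\mathrm{Ext}(C,A)$. The paper treats the orbit relation under $\mathrm{Aut}(C)$ as a \emph{separate} result (the theorem immediately following this one, about $\mathrm{Out}(\mathrm{Ext}(C,A)\curvearrowleft\mathrm{Aut}(C))$), and never brings $\mathrm{Aut}(A)$ into either statement.

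With the correct reading, the paper's proof is a direct three-step citation: Theorem~\ref{Theorem:dichotomy} already supplies, for each $\alpha$, $A$-independent extractable modules of length $\alpha$ and of plain length $\alpha$ (so your separate direct-sum construction for the non-plain case is unnecessary); by definition of (plain) $A$-projective length together with Theorem~\ref{Theorem:complexity2}, the complexity class of $\{0\}$ in $\mathrm{Ext}(C_\alpha,A)$ is $\Gamma_\alpha$ and in $\mathrm{Ext}(C_\alpha^{\mathrm{plain}},A)$ is $\Gamma_\alpha^{\mathrm{plain}}$; and Theorem~\ref{Theorem:HKL} converts these into the stated parametrizations by hereditarily countable sets. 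Rigidity, the zipping operation, and the entire $\mathrm{Aut}(A)$-orbit analysis you worry about are not needed, and the ``main obstacle'' you flag simply does not arise.
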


\begin{proof}
This follows from Theorem \ref{Theorem:complexity2}, Theorem \ref%
{Theorem:dichotomy}, and Theorem \ref{Theorem:HKL}.
\end{proof}

\begin{corollary}
\label{Theorem:dichotomy-extensions}Let $R$ be a countable Dedekind domain.
Suppose that $A$ is a countable flat module. Then either:

\begin{enumerate}
\item for all countable flat module $C$, the relation of isomorphisms of
extensions of $C$ by $A$ is trivial (which happens precisely when $A$ is
divisible), or

\item there exist countable flat modules $C$ such that the relation of
isomorphisms of extensions of $C$ by $A$ has arbitrarily high potential
Borel complexity.
\end{enumerate}
\end{corollary}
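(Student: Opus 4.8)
The statement is a packaging of two pillars already in place: the module-theoretic Dichotomy (Theorem~\ref{Theorem:dichotomy}) and the complexity dichotomy for phantom pro-countable Polish modules (Theorem~\ref{Theorem:complexity}/Corollary~\ref{Corollary:complexity}), together with the characterization of injective flat modules (Proposition~\ref{Proposition:characterize-injectives}). First I would dispose of alternative (1): if $A$ is divisible, then by Proposition~\ref{Proposition:characterize-injectives} it is injective in $\mathbf{Flat}(R)$, so $\mathrm{Ext}(-,A)=0$; hence for every countable flat module $C$ every extension of $C$ by $A$ splits, there is a single isomorphism class of such extensions, and the relation of isomorphism of extensions of $C$ by $A$ is trivial. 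Conversely, by the same proposition $\mathrm{Ext}(-,A)=0$ \emph{only} when $A$ is divisible, so once alternative (2) is established the parenthetical ``precisely when $A$ is divisible'' follows (taking any module of $A$-projective length $\geq 1$ witnesses that the relation is not trivial when $A$ is not divisible).

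For alternative (2) the one substantive translation is to pass from ``complexity of the relation of isomorphism of extensions of $C$ by $A$'' to ``complexity class of $\{0\}$ in the module with a Polish cover $\mathrm{Ext}(C,A)$''. I would recall from Section~\ref{Section:cocycles} that $\mathrm{Ext}(C,A)\cong \mathrm{Ext}_{\mathrm{Yon}}(C,A)$ is presented as $\mathrm{Z}(C,A)/\mathrm{B}(C,A)$, with $\mathrm{Z}(C,A)$ the Polish group of continuous cocycles and $\mathrm{B}(C,A)$ its Polishable subgroup of coboundaries, and that the relation of isomorphism of extensions of $C$ by $A$ (as extensions) is exactly the coset equivalence relation of $\mathrm{B}(C,A)$ in $\mathrm{Z}(C,A)$; allowing in addition the automorphisms of $C$ and $A$ gives the associated strict action of $\mathrm{Aut}(C)\times\mathrm{Aut}(A)$ in the sense of Section~\ref{Section:complexity}. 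By the transfer results recalled in Section~\ref{Section:modules-polish-cover}, the potential Borel complexity class of this coset relation coincides with the complexity class of $\{0\}$ in $\mathrm{Ext}(C,A)$. Since $C$ is flat, $\{0\}$ is dense in $\mathrm{Ext}(C,A)=\mathrm{PExt}(C,A)$ (Lemma~\ref{Lemma:zero-closure}), so $\mathrm{Ext}(C,A)$ is a phantom pro-countable Polish module, and Corollary~\ref{Corollary:complexity} identifies the complexity class of $\{0\}$ in it as $\Gamma_{\alpha}$ or $\Gamma_{\alpha}^{\mathrm{plain}}$, where $\alpha$ is the Solecki length of $\mathrm{Ext}(C,A)$, i.e. the $A$-projective length of $C$ (Subsection~\ref{Subsection:projective-length}).

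Finally I would invoke Theorem~\ref{Theorem:dichotomy}: since $A$ is not divisible, for every $\alpha<\omega_1$ there exists a countable coreduced, completely reduced flat module $C_{\alpha}$ of $A$-projective length exactly $\alpha$ (one may even take it rigid, or $A$-independent extractable). By the previous paragraph the relation of isomorphism of extensions of $C_{\alpha}$ by $A$ has potential Borel complexity class $\Gamma_{\alpha}$ or $\Gamma_{\alpha}^{\mathrm{plain}}$; these classes are unbounded in the Borel hierarchy as $\alpha\to\omega_1$ (already $\Gamma_{\lambda+n}=\boldsymbol{\Pi}^{0}_{1+\lambda+n+1}$ for $n\geq 1$), so the potential Borel complexity is arbitrarily high, which is alternative (2). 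The sharper assertion pinning down exactly which class is realized is Theorem~\ref{Theorem:parametrize-extensions} and requires nothing beyond this.

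\textbf{Main obstacle.} There is no genuine difficulty left: the weight of the argument is carried by Theorem~\ref{Theorem:dichotomy} (and, behind it, the zipping construction producing rigid modules of prescribed $A$-projective length) and by the complexity dichotomy of Theorem~\ref{Theorem:complexity}. The only point needing care is the identification in the second paragraph: one must be explicit about which notion of ``isomorphism of extensions'' is meant --- the strict one, with identity maps on $C$ and $A$, for which the coset-relation identification is immediate, versus isomorphism up to automorphisms of $C$ and $A$, for which one must invoke the strict-action framework of Section~\ref{Section:complexity} and use rigidity of the witnessing modules together with Lemma~\ref{Lemma:complexity-class-countable-action} to check that the extra automorphisms do not lower the complexity. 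Since Theorem~\ref{Theorem:dichotomy}(2) supplies rigid witnesses, both readings yield the conclusion.
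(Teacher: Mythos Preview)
Your proposal is correct and follows essentially the same approach as the paper, which simply records the corollary as a consequence of Theorem~\ref{Theorem:complexity2} and Theorem~\ref{Theorem:dichotomy}. You supply considerably more detail than the paper's one-line proof, in particular spelling out the identification of the isomorphism relation with the coset relation of $\mathrm{B}(C,A)$ in $\mathrm{Z}(C,A)$ and flagging the distinction between strict isomorphism of extensions and isomorphism up to automorphisms; this is all sound and in the spirit of the paper's framework.
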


\begin{proof}
This is a consequence of Theorem \ref{Theorem:complexity2} and Theorem \ref%
{Theorem:dichotomy}.
\end{proof}

\begin{theorem}
Suppose that $R$ is a countable Dedekind domain and $A$ is a countable flat
module that is not divisible. Then for every $\lambda +n<\omega _{1}$ with $%
\lambda $ limit and $n<\omega $ there exists a rigid countable flat module $%
C $ of plain extractable length and plain $A$-projective length $\lambda +n$%
. For such a module $C$, we have that%
\begin{equation*}
\mathrm{Out}\left( \mathrm{Ext}\left( C,A\right) \curvearrowleft \mathrm{Aut}%
\left( C\right) \right)
\end{equation*}%
is $\boldsymbol{\Sigma }_{\lambda +1}^{0}$-definable and not $\boldsymbol{%
\Sigma }_{\lambda }^{0}$-definable for $n=0$, and $\boldsymbol{\Sigma }%
_{1+\lambda +n+2}^{0}$-definable and not $\boldsymbol{\Sigma }_{1+\lambda
+n+1}^{0}$-definable for $n>0$.
\end{theorem}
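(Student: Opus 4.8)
The proof splits into three moves: first produce the module $C$ by the explicit machinery of Section~\ref{Section:constructions-Dedekind}; then use rigidity of $C$ to identify $\mathrm{Out}\!\left(\mathrm{Ext}\left(C,A\right)\curvearrowleft\mathrm{Aut}\left(C\right)\right)$ with a countable-group quotient of the module with a Polish cover $\mathrm{Ext}\left(C,A\right)$, and invoke Lemma~\ref{Lemma:complexity-class-countable-action} to reduce its definable complexity to the Borel complexity of the trivial submodule of $\mathrm{Ext}\left(C,A\right)$; finally read that complexity off from the $A$-projective length of $C$ via Theorem~\ref{Theorem:complexity2}.

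\textbf{Existence of $C$.} I would not merely quote Theorem~\ref{Theorem:dichotomy}(2) but build $C$ concretely so as to control its length and complexity class simultaneously, exactly as in the proof of Theorem~\ref{Theorem:parametrize-extensions}. By Lemma~\ref{Lemma:Omega} (this is where non-divisibility of $A$ enters) the set $\boldsymbol{J}\left(R\right)\setminus\Omega\left(A\right)$ is countable; choose an infinite $S\subseteq\boldsymbol{J}\left(R\right)$ disjoint from it (and from $\boldsymbol{J}\left(R\right)\setminus\Omega\left(R\right)$). By Proposition~\ref{Proposition:existence-Xi} there is a well-pointed $\Xi$-extractable module $M$ of $\Xi$-length $\lambda+n$ whose $\Xi$-invariant has image inside $S$; by Proposition~\ref{Proposition:XI-independent} this $M$ is extractable of length $\lambda+n$ and $A$-independent. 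Then zip $M$ along a second injective family $\boldsymbol{\mu}\in\boldsymbol{J}\left(R\right)^{I_{\lambda+n}}$ whose image avoids that of the $\Xi$-invariant of $M$ and avoids $\boldsymbol{J}\left(R\right)\setminus\Omega\left(A\right)$: by Lemma~\ref{Lemma:zipping} the module $C:=Z\!\left(M,\boldsymbol{\mu}\right)$ is rigid and completely coreduced, satisfies $\partial_\beta C\cong\partial_\beta M$ for all $\beta$, and has the same $A$-projective (and $R$-projective) length $\lambda+n$ as $M$; by Theorem~\ref{Theorem:A-independent}, $C$ remains $A$-independent and its $A$-projective length equals its extractable length. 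The construction can be arranged (by keeping the top derived tower $\partial_{\lambda+n-1}C$ of infinite rank, i.e. not essentially monomorphic at the top) so that $\mathrm{Ext}\left(C,A\right)$ is not plain; this is the shape needed below.

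\textbf{From the orbit space to $\mathrm{Ext}\left(C,A\right)$.} Since $C$ is rigid, $\mathrm{End}\left(C\right)=R$, hence $\mathrm{Aut}\left(C\right)=R^{\times}$, and by functoriality of $\mathrm{Ext}\left(-,A\right)$ the strict action of $\mathrm{Aut}\left(C\right)$ on $\mathrm{Ext}\left(C,A\right)=\hat G/N$ is $x\mapsto rx$ for $r\in R^{\times}$. Thus $\mathrm{Out}\!\left(\mathrm{Ext}\left(C,A\right)\curvearrowleft\mathrm{Aut}\left(C\right)\right)$ is the $\boldsymbol{\Sigma}_1^1$-definable set $\hat G/E$ with $xEy\iff\exists r\in R^{\times},\ rx\equiv y\ \mathrm{mod}\ N$, and Lemma~\ref{Lemma:complexity-class-countable-action} says precisely that this $E$ is potentially $\boldsymbol{\Sigma}_{1+\gamma}^{0}$ if and only if $N$ is $\boldsymbol{\Sigma}_{1+\gamma}^{0}$ in $\hat G$, i.e. if and only if $\left\{0\right\}$ is $\boldsymbol{\Sigma}_{1+\gamma}^{0}$ in $\mathrm{Ext}\left(C,A\right)$. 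So the definable complexity of the orbit space is exactly the $\boldsymbol{\Sigma}^{0}$-complexity of the trivial submodule of $\mathrm{Ext}\left(C,A\right)$. Now, because $C$ is flat every extension of $C$ is pure, so $\left\{0\right\}$ is dense in $\mathrm{Ext}\left(C,A\right)$; because $C$ is $A$-independent, $\mathrm{Hom}\left(C,A\right)=0$, so by Roos' Theorem~\ref{Lemma:Ext-and-lim1} $\mathrm{Ext}\left(C,A\right)$ is a phantom pro-countable Polish module, of Solecki length equal to the $A$-projective length $\lambda+n$ of $C$ (Theorem~\ref{Theorem:A-independent}). By Theorem~\ref{Theorem:complexity2} the complexity class of $\left\{0\right\}$ is $\Gamma_{\lambda}$ for $n=0$ and $\Gamma_{\lambda+n}=\boldsymbol{\Pi}_{1+\lambda+n+1}^{0}$ for $n\geq1$; since a $\boldsymbol{\Pi}_{\beta}^{0}$-complete set lies in $\boldsymbol{\Sigma}_{\beta+1}^{0}\setminus\boldsymbol{\Sigma}_{\beta}^{0}$, Lemma~\ref{Lemma:complexity-exact-sequence}(5) then yields that $N$, hence the orbit space, is $\boldsymbol{\Sigma}_{\lambda+1}^{0}$-definable but not $\boldsymbol{\Sigma}_{\lambda}^{0}$-definable when $n=0$, and $\boldsymbol{\Sigma}_{1+\lambda+n+2}^{0}$-definable but not $\boldsymbol{\Sigma}_{1+\lambda+n+1}^{0}$-definable when $n>0$.

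\textbf{Main obstacle.} The routine part is the complexity arithmetic; the delicate part is arranging the module $C$ so that $\mathrm{Ext}\left(C,A\right)$ realizes \emph{exactly} the complexity class $\Gamma_{\lambda+n}$ (Solecki length $\lambda+n$, and neither plain nor semiplain), which is what forces the lower bounds "not $\boldsymbol{\Sigma}_{1+\lambda+n+1}^{0}$-definable". This is where the $\Xi$-extractable/fishbone/zipping apparatus of Section~\ref{Section:constructions-Dedekind} is essential — one needs $A$-independence (so that $\mathrm{Ext}\left(C,A\right)$ behaves like $\mathrm{Ext}\left(-,R\right)$ on the derived filtration, via Theorem~\ref{Theorem:A-independent}) together with control of the rank of the top derived quotient to pin the class down — and where the hypothesis that $A$ is not divisible is used, through Lemma~\ref{Lemma:Omega}. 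The remaining care is bookkeeping the $1+\lambda$-arithmetic and the break between the $n=0$, $n=1$ and $n\geq2$ cases when translating $\Gamma$-completeness into the precise $\boldsymbol{\Sigma}^{0}$-level.
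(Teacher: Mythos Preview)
Your approach is the same as the paper's: existence of the rigid $C$ via Theorem~\ref{Theorem:dichotomy}, rigidity forces $\mathrm{Aut}(C)=R^{\times}$ so that Lemma~\ref{Lemma:complexity-class-countable-action} applies, and Theorem~\ref{Theorem:complexity2} supplies the complexity class of $\{0\}$ in $\mathrm{Ext}(C,A)$.

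Two small remarks on your unpacking. First, you are right that the lower bounds (``not $\boldsymbol{\Sigma}^0_{1+\lambda+n+1}$-definable'' for $n>0$) force $\mathrm{Ext}(C,A)$ to have complexity class $\Gamma_{\lambda+n}=\boldsymbol{\Pi}^0_{1+\lambda+n+1}$ rather than $\Gamma_{\lambda+n}^{\mathrm{plain}}$, i.e.\ $C$ must have \emph{non}-plain $A$-projective length $\lambda+n$; since Proposition~\ref{Proposition:existence-Xi} only produces modules of \emph{plain} $\Xi$-length, an explicit countable-direct-sum step is needed before invoking Lemma~\ref{Lemma:zipping}, so that $\partial_{\lambda+n-1}M$ becomes infinite-rank. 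Second, the appeal to Lemma~\ref{Lemma:complexity-exact-sequence}(5) at the end is superfluous: Lemma~\ref{Lemma:complexity-class-countable-action} already gives the exact equivalence between the $\boldsymbol{\Sigma}^0$-level of $N$ in $\hat G$ and the potential $\boldsymbol{\Sigma}^0$-class of the orbit relation, so once Theorem~\ref{Theorem:complexity2} pins down that $\{0\}$ lies in $\boldsymbol{\Pi}^0_{1+\lambda+n+1}\setminus\boldsymbol{\Sigma}^0_{1+\lambda+n+1}$ (hence in $\boldsymbol{\Sigma}^0_{1+\lambda+n+2}\setminus\boldsymbol{\Sigma}^0_{1+\lambda+n+1}$), you are done.
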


\begin{proof}
This follows from Theorem \ref{Theorem:complexity2}, Theorem \ref%
{Theorem:dichotomy}, and Lemma \ref{Lemma:complexity-class-countable-action}.
\end{proof}

\begin{theorem}
Suppose that $R$ is a countable Dedekind domain. Then there exists a
countable flat module $C$ such that the isomorphism classes of extensions of 
$C$ by $R$ can be parametrized by countable sets of reals, and for every $%
\alpha <\omega _{1}$ there exist countable flat modules $A_{\alpha }$ and $%
A_{\alpha }^{\mathrm{plain}}$ such that the potential Borel complexity class
of the isomorphism relation of extension of $C$ by $A_{\alpha }$
(respectively, by $A_{\alpha }^{\mathrm{plain}}$) is $\Gamma _{\alpha }$
(respectively, $\Gamma _{\alpha }^{\mathrm{plain}}$).
\end{theorem}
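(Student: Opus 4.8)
The plan is to take $C$ to be the countable flat module of $R$-projective length $1$ produced in Theorem \ref{Theorem:not-constructible}, and to exploit its defining property, namely that $\mathrm{Ph}^{1}\mathrm{Ext}\left( C,-\right) $ realizes every phantom pro-finiteflat module. Replacing $C$ by $\Sigma C$ if necessary (this changes neither $\mathrm{Ext}\left( C,A\right) $, since $\mathrm{Ext}\left( \Phi C,A\right) =0$ by Proposition \ref{Proposition:characterize-projectives} and Lemma \ref{Lemma:coreduced-radical}, nor the projective lengths), I may assume $C$ is coreduced; note also that $C$ has infinite rank and, being not phantom projective, is in particular not projective.

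First I would dispose of the parametrization claim. Since the $R$-projective length of $C$ is $1$, the module with a pro-countable Polish cover $\mathrm{Ext}\left( C,R\right) $ has Solecki length at most $1$; equivalently its trivial submodule is $\boldsymbol{\Pi }_{3}^{0}$, so $\mathrm{Ext}\left( C,R\right) $ is Borel-definable and, the cover being pro-countable, classifiable by countable structures and $\Gamma _{1}$-definable. By Theorem \ref{Theorem:HKL} it is therefore parametrized by hereditarily countable sets of rank $1+1=2$, i.e.\ by $\wp ^{2}\left( \mathbb{N}\right) \cong \wp _{\aleph _{0}}\left( \mathbb{R}\right) $. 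Since two extensions of $C$ by $R$ are isomorphic exactly when they represent the same element of $\mathrm{Ext}\left( C,R\right) $, this is precisely the assertion that the isomorphism classes of extensions of $C$ by $R$ can be parametrized by countable sets of reals.

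For the second claim, fix $\alpha <\omega _{1}$ and let $\beta $ be the ordinal with $1+\beta =\alpha $ (so $\beta =\alpha -1$ when $\alpha $ is finite and $\beta =\alpha $ when $\alpha $ is infinite). By Theorem \ref{Theorem:parametrize-extensions} applied with target $R$ (equivalently Theorem \ref{Theorem:dichotomy} together with Corollary \ref{Corollary:fully-faithful-lim1}), there are countable coreduced flat modules whose $\mathrm{Ext}\left( -,R\right) $ are phantom pro-finiteflat modules $\boldsymbol{M}_{\alpha }$ and $\boldsymbol{M}_{\alpha }^{\mathrm{plain}}$, both of Solecki length $\beta $, with $\boldsymbol{M}_{\alpha }$ not semiplain and $\boldsymbol{M}_{\alpha }^{\mathrm{plain}}$ plain. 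Theorem \ref{Theorem:not-constructible} then provides countable flat modules $A_{\alpha }$ and $A_{\alpha }^{\mathrm{plain}}$ with $\mathrm{Ph}^{1}\mathrm{Ext}\left( C,A_{\alpha }\right) \cong \boldsymbol{M}_{\alpha }$ and $\mathrm{Ph}^{1}\mathrm{Ext}\left( C,A_{\alpha }^{\mathrm{plain}}\right) \cong \boldsymbol{M}_{\alpha }^{\mathrm{plain}}$. Because $C$ is flat, every extension of $C$ is pure, so $\mathrm{Ext}\left( C,A\right) =\mathrm{PExt}\left( C,A\right) =\overline{\left\{ 0\right\} }$; hence $s_{0}\mathrm{Ext}\left( C,A\right) =\mathrm{Ext}\left( C,A\right) $, while $s_{1}\mathrm{Ext}\left( C,A\right) =\mathrm{Ph}^{1}\mathrm{Ext}\left( C,A\right) $ by Theorem \ref{Theorem:phantom-Ext}. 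Iterating the identity $s_{1+\gamma }\left( G\right) =s_{\gamma }\left( s_{1}\left( G\right) \right) $ shows that for $\alpha \geq 2$ the module $\mathrm{Ext}\left( C,A_{\alpha }\right) $ has Solecki length $1+\rho \left( \boldsymbol{M}_{\alpha }\right) =1+\beta =\alpha $, that $s_{\alpha -1}\mathrm{Ext}\left( C,A_{\alpha }\right) =s_{\beta -1}\boldsymbol{M}_{\alpha }$ when $\alpha $ is a successor, and hence that $\mathrm{Ext}\left( C,A_{\alpha }\right) $ is not semiplain while $\mathrm{Ext}\left( C,A_{\alpha }^{\mathrm{plain}}\right) $ is plain. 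By Theorem \ref{Theorem:complexity} the relation of isomorphism of extensions of $C$ by $A_{\alpha }$ --- which is the coset relation of $\mathrm{Ext}\left( C,A_{\alpha }\right) $, whose potential Borel complexity class equals the complexity class of $\left\{ 0\right\} $ in $\mathrm{Ext}\left( C,A_{\alpha }\right) $ --- has complexity class $\Gamma _{\alpha }$, and similarly for $A_{\alpha }^{\mathrm{plain}}$ and $\Gamma _{\alpha }^{\mathrm{plain}}$. The cases $\alpha =0$ and $\alpha =1$ are handled by hand: for $\alpha =0$ take $A_{0}=A_{0}^{\mathrm{plain}}=K$, so $\mathrm{Ext}\left( C,A_{0}\right) =0$; for $\alpha =1$ take $A_{1}=R$ (its $\mathrm{Ext}$ has Solecki length exactly $1$, since $C$ is not projective, and is not plain, since $C$ has infinite rank, by Theorem \ref{Theorem:structure-rank-1}), and take $A_{1}^{\mathrm{plain}}$ to be the module furnished by Theorem \ref{Theorem:not-constructible} for $\boldsymbol{M}=0$, arranged so that $\mathrm{Ext}\left( C,A_{1}^{\mathrm{plain}}\right) $ is nonzero and plain.

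The principal obstacle is the bookkeeping just indicated: Theorem \ref{Theorem:not-constructible} only controls the first Solecki submodule $s_{1}\mathrm{Ext}\left( C,A\right) =\mathrm{Ph}^{1}\mathrm{Ext}\left( C,A\right) $, so one must show that prescribing the Solecki length and the (non-)plainness of this submodule pins down the full complexity class of $\left\{ 0\right\} $ in $\mathrm{Ext}\left( C,A\right) $ --- i.e.\ that the "Polish remainder'' $\mathrm{Ext}\left( C,A\right) /s_{1}\mathrm{Ext}\left( C,A\right) $ contributes nothing beyond the initial shift $\beta \mapsto 1+\beta $. This is exactly what the identity $s_{1+\gamma }=s_{\gamma }\circ s_{1}$ gives, together with Corollary \ref{Corollary:complexity}, which restricts the possible complexity classes to the $\Gamma _{\delta }/\Gamma _{\delta }^{\mathrm{plain}}$ dichotomy available for pro-finiteflat (hence non-Archimedean) covers, so that "not plain'' already implies "not semiplain''. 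The secondary nuisance is the boundary ordinals $\alpha \in \left\{ 0,1\right\} $, where $1+\beta =\alpha $ forces $\boldsymbol{M}=0$ and the required complexity must be produced directly rather than transported from a pro-finiteflat module.
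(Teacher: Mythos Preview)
Your approach is essentially the paper's: take $C$ from Theorem~\ref{Theorem:not-constructible}, use its universality to realize any phantom pro-finiteflat module as $s_{1}\mathrm{Ext}(C,A)$, and then invoke the complexity classification (Theorem~\ref{Theorem:complexity2}). The paper's proof is the one-liner ``This follows from Theorem~\ref{Theorem:not-constructible} and Theorem~\ref{Theorem:complexity2}''; you supply the ordinal bookkeeping ($1+\beta=\alpha$, the identity $s_{1+\gamma}=s_{\gamma}\circ s_{1}$) and the observation that for pro-countable covers ``not plain'' already forces ``not semiplain'', all of which is correct and fills in what the paper leaves implicit. Your use of Theorem~\ref{Theorem:parametrize-extensions} with target $R$ to manufacture phantom pro-finiteflat modules of each prescribed complexity is exactly right, since $\mathrm{Ext}(D,R)$ is phantom pro-finiteflat for coreduced $D$ by Proposition~\ref{Proposition:duality-Ext}.

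One point deserves more care. For $\alpha=1$ in the plain case you write ``take $A_{1}^{\mathrm{plain}}$ to be the module furnished by Theorem~\ref{Theorem:not-constructible} for $\boldsymbol{M}=0$, arranged so that $\mathrm{Ext}(C,A_{1}^{\mathrm{plain}})$ is nonzero and plain''. But Theorem~\ref{Theorem:not-constructible} only controls $s_{1}\mathrm{Ext}(C,A)$; when $\boldsymbol{M}=0$ the construction literally yields $A=0$, hence $\mathrm{Ext}(C,A)=0$, which has Solecki length $0$, not $1$. To hit $\Gamma_{1}^{\mathrm{plain}}=\boldsymbol{\Sigma}_{2}^{0}$ you must exhibit a specific $A$ for which the tower $(\mathrm{Hom}(C'_{n},A))_{n}$ over \emph{finite} submodules $C'_{n}$ is essentially monomorphic and has nontrivial $\mathrm{lim}^{1}$ (Lemma~\ref{Lemma:monomorphic-tower}); the phrase ``arranged so that'' does not do this. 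The paper's one-line proof is equally silent here, so this is a shared lacuna rather than a divergence, but it is the one place where an explicit construction is still owed.
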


\begin{proof}
This follows from Theorem \ref{Theorem:not-constructible} and Theorem \ref%
{Theorem:complexity2}.
\end{proof}

\subsection{Complexity of classes of modules}

We have defined $\mathbf{Flat}\left( R\right) $ to be the category of
countable flat modules. The category $\mathbf{Flat}\left( R\right) $ is
(equivalent to) a category whose objects can be seen as points of a Polish
space $\mathrm{Flat}\left( R\right) $. Indeed, a countable flat module is
isomorphic to one whose set of elements is the set $\mathbb{N}$ of natural
numbers. Its module operation is then a subset of $\mathbb{N}\times \mathbb{N%
}$, whence the space of such modules can be seen as a subspace of $2^{%
\mathbb{N}\times \mathbb{N}}$. This subspace is easily seen to be $G_{\delta
}$, whence Polish with the subspace topology \cite[Theorem 3.11]%
{kechris_classical_1995}.

It is therefore meaningful to talk about the Borel complexity of
isomorphism-invariant classes of countable flat modules, identified as
subspaces of $\mathrm{Flat}\left( R\right) $.

\begin{theorem}
\label{Therem:complexity-classes}Let $R$ be a countable Dededind domain.

\begin{enumerate}
\item The class of coreduced countable flat modules is co-analytic and not
Borel;

\item For every $\alpha <\omega _{1}$, the class $\mathcal{P}_{\alpha }$ of
countable flat modules of projective length at most $\alpha $ is Borel;

\item The class of $\mathcal{P}_{\infty }$ of countable flat phantom
projective modules, which is the union of $\mathcal{P}_{\alpha }$ for $%
\alpha <\omega _{1}$, is co-analytic not Borel.
\end{enumerate}
\end{theorem}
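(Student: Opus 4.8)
The plan is to deduce all three parts from a single co-analytic rank together with the boundedness theorem for such ranks (Proposition~\ref{Proposition:bounded-rank}). For $C\in\mathrm{Flat}(R)$ let $\psi(C)$ be the Solecki length of $\mathrm{Ext}(C,R)$. Choosing, Borel-uniformly in $C$, an increasing exhaustion $(C_n)_n$ of $C$ by finitely generated (hence projective) submodules, Roos' Theorem (Theorem~\ref{Lemma:Ext-and-lim1}) identifies $\mathrm{Ext}(C,R)$ with $\mathrm{lim}^1_n\mathrm{Hom}(C_n,R)$, and then Theorem~\ref{Theorem:Solecki-lim1} together with Corollary~\ref{Corollary:Solecki-lim1} shows that $\psi(C)$ equals the Mittag--Leffler length of the tower of countable modules $\boldsymbol{A}(C):=(\mathrm{Hom}(C_n,R))_n$. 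Since the Mittag--Leffler length is a co-analytic rank on towers of countable modules (witnessed by the tree $T_{\boldsymbol{A}}$, with length $\le\alpha$ iff $T_{\boldsymbol{A}}^{(\omega\alpha)}$ is finite), $\psi$ is its pullback along the Borel map $C\mapsto\boldsymbol{A}(C)$, hence a co-analytic rank on $\mathrm{Flat}(R)$. Moreover $\psi(C)<\infty$ iff $\boldsymbol{A}(C)$ is reduced iff $\mathrm{Hom}(C,R)=0$, which over the Dedekind domain $R$ is equivalent to $C$ being coreduced (the image of a nonzero map $C\to R$ is a finitely generated, hence projective, ideal, onto which the map splits); and for coreduced $C$, $\psi(C)$ is by definition the $R$-projective length of $C$.

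For part~(2), fix $\alpha<\omega_1$. The canonical resolution $\mathfrak{R}^\alpha C\colon 0\to P^\alpha C\to P^\alpha C\to C\to 0$ of Theorem~\ref{Theorem:phantom-projective} is obtained from $C$ by a recursion of length $\alpha$ whose steps (enumerations, pure hulls, wedge sums) are Borel, so $C\mapsto P^\alpha C$ and $C\mapsto(\mathrm{Hom}(C_n,P^\alpha C))_n$ are Borel maps. By Corollary~\ref{Corollary:alpha-projective} and Theorem~\ref{Theorem:phantom-projective}, $C\in\mathcal{P}_\alpha$ iff $\mathfrak{R}^\alpha C$ splits iff $\mathrm{Ph}^\alpha\mathrm{Ext}(C,P^\alpha C)=0$; by Theorem~\ref{Theorem:phantom-Ext}, Theorem~\ref{Theorem:Solecki-lim1} and Roos' Theorem this says that the $\omega\alpha$-th derivative of the tree attached to the tower $(\mathrm{Hom}(C_n,P^\alpha C))_n$ is finite, which is a Borel condition on $C$ for each fixed countable $\alpha$. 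Hence $\mathcal{P}_\alpha$ is Borel. (Alternatively: $\mathcal{P}_\alpha$ is analytic by the Structure Theorem, Proposition~\ref{Proposition:tree-presheaf}, and co-analytic because ``$\forall A\in\mathrm{Flat}(R),\ \mathrm{Ph}^\alpha\mathrm{Ext}(C,A)=0$'', the inner condition being Borel in $(C,A)$ by the same tree argument; an analytic and co-analytic set is Borel.)

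For the co-analyticity assertions in~(1) and~(3): a countable flat module fails to be coreduced exactly when it admits a nonzero idempotent endomorphism with finitely generated image; the witness ranges over a Polish space and the relevant conditions are Borel, so $\{C:\text{$C$ not coreduced}\}$ is analytic and $\{C:\text{$C$ coreduced}\}$ is co-analytic. For $\mathcal{P}_\infty$ I would first invoke the structure theory of Section~\ref{Section:higher}: by Corollaries~\ref{Corollary:alpha-projective} and~\ref{Corollary:constructible} and Proposition~\ref{Proposition:projective-less}, $\mathcal{P}_\infty$ is precisely the class of \emph{extractable} countable flat modules (each $P^\alpha C$ is extractable, and extractable modules are closed under direct summands), and on it the projective, $R$-projective and extractable lengths coincide. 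The class of extractable modules is defined by a monotone inductive scheme (iterated countable direct sums, direct summands, and extensions with finite-rank quotient), and ``$C$ is extractable'' can be recast as ``a canonically associated labelled tree $T(C)$ is well-founded''; since well-foundedness of a Borel-coded tree is co-analytic, $\mathcal{P}_\infty$ is co-analytic, and the rank of $T(C)$ is a co-analytic rank on $\mathcal{P}_\infty$ whose value is the extractable length.

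The non-Borel halves of~(1) and~(3) then follow from boundedness and the Dichotomy Theorem~\ref{Theorem:dichotomy}. Since $R$ is a countable Dedekind domain and not a field, $R$ is not divisible over itself, so Theorem~\ref{Theorem:dichotomy}(1) applies with $A=R$ (every coreduced extractable module being $R$-independent): for every $\alpha<\omega_1$ there is an extractable, countable, coreduced, completely reduced flat module $C_\alpha$ of length $\alpha$, hence with $\psi(C_\alpha)=\alpha$ and $C_\alpha\in\mathcal{P}_\infty$. If the class of coreduced modules were Borel, then $\psi$ restricted to it would be a co-analytic rank taking only countable values on a Borel set, hence bounded by some $\alpha_0<\omega_1$, contradicting $\psi(C_\alpha)=\alpha$ for $\alpha\ge\alpha_0$; likewise, if $\mathcal{P}_\infty$ were Borel, the extractable-length co-analytic rank on $\mathcal{P}_\infty$ would be bounded on it, again contradicted by the $C_\alpha$. (Theorem~\ref{Theorem:not-constructible} may also be cited, furnishing explicit modules outside $\mathcal{P}_\infty$.) The main obstacle is the co-analyticity of $\mathcal{P}_\infty$: the face-value description ``$C$ is a direct summand of a colimit of a presheaf of finite flat modules over \emph{some} countable well-founded forest'' is only $\boldsymbol{\Sigma}_{2}^{1}$, since the forest is existentially quantified over a co-analytic set, and the $\sigma_\alpha$-derivation alone does not characterize extractability (Theorem~\ref{Theorem:not-constructible} gives $C$ with $\partial_1 C=0$ that is not phantom projective). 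What makes it work is the identification of $\mathcal{P}_\infty$ with the extractable modules and the fact that the construction of an extractable module — which must track the $\Xi(\tau)$-type data of Section~\ref{Section:higher}, not merely the $R$-projective derivation — organizes into a genuinely $\boldsymbol{\Pi}_{1}^{1}$ inductive definition; verifying that $C\mapsto P^\alpha C$ and the associated towers are Borel uniformly in $C$ for fixed $\alpha$ is routine but must be carried out with care.
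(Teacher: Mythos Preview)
Your arguments for parts~(1) and~(2), and for the non-Borel halves of~(1) and~(3), are essentially the paper's: a co-analytic rank built from the tree attached to a tower of $\mathrm{Hom}$-groups, unboundedness via Theorem~\ref{Theorem:dichotomy}, and for~(2) the analytic/co-analytic sandwich (your ``alternative'' is exactly what the paper does). These parts are fine.

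The genuine gap is in your proof that $\mathcal{P}_\infty$ is co-analytic. Your key claim, that $\mathcal{P}_\infty$ coincides with the class of extractable modules, is false, and the paper constructs an explicit counterexample in the subsection ``Phantom projective modules that are not extractable'': a wedge sum $M$ of copies of $\Xi(\tau)$ over $R$ via the maps $1\mapsto p^i e$ has plain tree length~$2$ (so $M\in\mathcal{P}_2\subseteq\mathcal{P}_\infty$) but $R$-projective length~$1$ while projective length~$2$, hence is not extractable by Corollary~\ref{Corollary:constructible}. Your supporting claim that ``each $P^\alpha C$ is extractable'' is unjustified and in fact wrong for the same reason: the wedge-sum construction places the \emph{finite} module as a submodule with large quotient, whereas extractability requires extensions with \emph{finite-rank quotient}. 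So the inductive scheme you propose does not define $\mathcal{P}_\infty$, and the $\boldsymbol{\Pi}_1^1$ argument built on it collapses.

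The paper's route is different and avoids any structural characterisation of $\mathcal{P}_\infty$: it shows directly that the \emph{complement} is analytic. A module $C$ lies outside $\mathcal{P}_\infty$ exactly when for every $\alpha<\omega_1$ there exist a countable flat $A$ and a nontrivial phantom extension of $C$ by $A$ of order~$\alpha$; equivalently, the co-analytic rank $C\mapsto\sup_A(\text{$A$-projective length of }C)$ is unbounded. Proposition~\ref{Proposition:rank} (the game reformulation of tree rank) lets one express ``$\alpha_t\le\varphi(C)$'' as an analytic relation in $(t,C)$, whence ``$\varphi(C)=\infty$'' is analytic. Concretely, one assigns to each pair $(C,A)$ the tree $T_{(\mathrm{Hom}(C_n,A))_n}$ and takes the disjoint union over~$A$; ill-foundedness of the derived-tower rank, uniformly in $A$, is then a single analytic condition on~$C$. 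This is what you should replace your extractability argument with.
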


\begin{proof}
(1) Consider the co-analytic rank $\varphi $ on $\mathrm{Flat}\left(
R\right) $ defined by letting $\varphi \left( C\right) $ be the least $%
\alpha $ such that $\partial _{\alpha }C=0$ if it exists, and $\infty $
otherwise. Then we have that $\varphi $ is a co-analytic rank that is
unbounded on the class of coreduced countable flat modules by Theorem \ref%
{Theorem:dichotomy}. The conclusion follows from the Boundedness Theorem for
Coanalytic Ranks on Borel sets; see Proposition \ref%
{Proposition:bounded-rank}.

(2) A countable flat module $C$ has projective length at most $\alpha $ if
and only if $\mathrm{Ph}^{\alpha }\mathrm{Ext}\left( C,A\right) =0$ for
every countable flat module $A$. This shows that $\mathcal{P}_{\alpha }$ is
co-analytic. We also have that $C$ has projective length at most $\alpha $
if and only if it has tree length at most $\alpha $; see Corollary \ref%
{Corollary:alpha-projective}. The class of such modules is easily seen to be
analytic by induction on $\alpha $. Being both co-analytic and analytic, $%
\mathcal{P}_{\alpha }$ is Borel.

(3) A countable flat module $C$ has projective length at most $\alpha $ if
and only if $\mathrm{Ph}^{\alpha }\mathrm{Ext}\left( C,A\right) =0$ for
every countable flat module $A$. Thus defining $\varphi \left( C\right) $ to
be the least $\alpha $ such that $C$ is in $\mathcal{P}_{\alpha }$ and $%
\infty $ otherwise, defines a co-analytic rank on $\mathbf{Flat}\left(
R\right) $. As such a rank is unbounded by Theorem \ref{Theorem:dichotomy},
the class $\mathcal{P}_{\infty }$ is not Borel.

In order to see that $\mathcal{P}_{\infty }$ is co-analytic, it suffices to
observe that its complement $\mathcal{C}$ within the class of countable flat
modules is analytic. Indeed, a countable flat module $C$ is $\mathcal{C}$ if
and only if for every countable ordinal $\alpha $ there exist a countable
flat module $A$ and a nontrivial extension of $C$ by $A$ that is phantom of
order $\alpha $. As in the proof of Proposition \ref{Proposition:rank} one
can write this as an analytic condition on $C$.
\end{proof}

\bibliographystyle{amsalpha}
\bibliography{bibliography}

\providecommand{\bysame}{\leavevmode\hbox to3em{\hrulefill}\thinspace}
\providecommand{\MR}{\relax\ifhmode\unskip\space\fi MR }
\providecommand{\MRhref}[2]{%
  \href{http://www.ams.org/mathscinet-getitem?mr=#1}{#2}
}
\providecommand{\href}[2]{#2}
\begin{thebibliography}{BLP24b}

\bibitem[AM69]{atiyah_introduction_1969}
Michael~F. Atiyah and Ian~G. Macdonald, \emph{Introduction to commutative
  algebra}, Addison-Wesley Publishing Co., Reading, Mass.-London-Don Mills,
  Ont., 1969. \MR{0242802}

\bibitem[AM13]{austin_continuity_2013}
Tim Austin and Calvin~C. Moore, \emph{Continuity properties of measurable group
  cohomology}, Mathematische Annalen \textbf{356} (2013), no.~3, 885--937.
  \MR{3063901}

\bibitem[And75]{anderson_krull_1975}
D.~D. Anderson, \emph{The {Krull} intersection theorem}, Pacific Journal of
  Mathematics \textbf{57} (1975), no.~1, 11--14. \MR{376649}

\bibitem[AP24]{allison_class_2024}
Shaun Allison and Aristotelis Panagiotopoulos, \emph{The class and dynamics of
  {$\alpha $}-balanced {Polish} groups}, June 2024, arXiv:2406.06082.

\bibitem[Aus18]{austin_euclidean-valued_2018}
Tim Austin, \emph{Euclidean-valued group cohomology is always reduced}, Journal
  of Topology and Analysis \textbf{10} (2018), no.~3, 483--491. \MR{3850101}

\bibitem[Awo06]{awodey_category_2006}
Steve Awodey, \emph{Category theory}, Oxford {Logic} {Guides}, vol.~49, The
  Clarendon Press, Oxford University Press, New York, 2006. \MR{2229319}

\bibitem[BBD82]{beilinson_faisceaux_1982}
Aleksandr~A. Beilinson, Joseph~N. Bernstein, and Pierre Deligne,
  \emph{Faisceaux pervers}, Analysis and topology on singular spaces, {I}
  ({Luminy}, 1981), Astérisque, vol. 100, Soc. Math. France, Paris, 1982,
  pp.~5--171. \MR{751966}

\bibitem[BG99]{benson_phantom_1999}
David~J. Benson and David~Ph. Gnacadja, \emph{Phantom maps and purity in
  modular representation theory, {I}}, Fundamenta Mathematicae \textbf{161}
  (1999), 37--91.

\bibitem[BG06]{bazzoni_prufer_2006}
Silvana Bazzoni and Sarah Glaz, \emph{Prüfer rings}, Multiplicative ideal
  theory in commutative algebra, Springer, New York, 2006, pp.~55--72.
  \MR{2265801}

\bibitem[BLP24a]{bergfalk_definable_2024}
Jeffrey Bergfalk, Martino Lupini, and Aristotelis Panagiotopoulos, \emph{The
  definable content of homological invariants {I}: {$\mathrm{Ext}$} and
  {$\mathrm{lim}^1$}}, Proceedings of the London Mathematical Society. Third
  Series \textbf{129} (2024), no.~3, Paper No. e12631, 55. \MR{4793283}

\bibitem[BLP24b]{bergfalk_definable_2024-1}
\bysame, \emph{The definable content of homological invariants {II}: Čech
  cohomology and homotopy classification}, Forum of Mathematics. Pi \textbf{12}
  (2024), Paper No. e12. \MR{4794437}

\bibitem[Boa99]{boardman_conditionally_1999}
J.~Michael Boardman, \emph{Conditionally convergent spectral sequences},
  Homotopy invariant algebraic structures ({Baltimore}, {MD}, 1998), Contemp.
  {Math}., vol. 239, Amer. Math. Soc., Providence, RI, 1999, pp.~49--84.
  \MR{1718076}

\bibitem[Bor94]{borceux_handbook_1994-2}
Francis Borceux, \emph{Handbook of {Categorical} {Algebra}: {Volume} 3: {Sheaf}
  {Theory}}, Encyclopedia of {Mathematics} and its {Applications}, vol.~3,
  Cambridge University Press, Cambridge, 1994.

\bibitem[BvdB03]{bondal_generators_2003}
A.~Bondal and M.~van~den Bergh, \emph{Generators and representability of
  functors in commutative and noncommutative geometry}, Moscow Mathematical
  Journal \textbf{3} (2003), no.~1, 1--36, 258. \MR{1996800}

\bibitem[BW24]{bird_duality_2024}
Isaac Bird and Jordan Williamson, \emph{Duality pairs, phantom maps, and
  definability in triangulated categories}, Proceedings of the Royal Society of
  Edinburgh Section A: Mathematics (2024), 1--46.

\bibitem[Bü10]{buhler_exact_2010}
Theo Bühler, \emph{Exact categories}, Expositiones Mathematicae \textbf{28}
  (2010), no.~1, 1--69. \MR{2606234}

\bibitem[CCC20]{castellano_finiteness_2020}
I.~Castellano and G.~Corob~Cook, \emph{Finiteness properties of totally
  disconnected locally compact groups}, Journal of Algebra \textbf{543} (2020),
  54--97.

\bibitem[CCL25]{casarosa_homological_2024}
Matteo Casarosa, Alessandro Codenotti, and Martino Lupini, \emph{Homological
  algebra of pro-{Lie} {Polish} abelian groups}, January 2025,
  arXiv:2405.13444.

\bibitem[Cha68]{charles_sous-groupes_1968}
Bernard Charles, \emph{Sous-groupes fonctoriels et topologies}, Studies on
  {Abelian} {Groups} ({Symposium}, {Montpellier}, 1967), Springer, Berlin-New
  York, 1968, pp.~75--92. \MR{240195}

\bibitem[Cha15]{chakrabarti_some_2015}
Debraj Chakrabarti, \emph{Some non-pseudoconvex domains with explicitly
  computable non-{Hausdorff} {Dolbeault} cohomology}, Archiv der Mathematik
  \textbf{105} (2015), no.~6, 571--584, Company: Springer Distributor: Springer
  Institution: Springer Label: Springer Number: 6 Publisher: Springer Basel.

\bibitem[Chr98]{christensen_ideals_1998}
J.~Daniel Christensen, \emph{Ideals in {Triangulated} {Categories}: {Phantoms},
  {Ghosts} and {Skeleta}}, Advances in Mathematics \textbf{136} (1998), no.~2,
  284--339.

\bibitem[Col05]{coltoiu_separation_2005}
Mihnea Colţoiu, \emph{On the separation of cohomology groups of increasing
  unions of {$\left( 1,1\right) $} convex-concave manifolds}, Journal of
  Mathematics of Kyoto University \textbf{45} (2005), no.~2, 405--409.
  \MR{2161700}

\bibitem[Cor69]{corner_note_1969}
A.~L.~S. Corner, \emph{A note on rank and direct decompositions of torsion-free
  {Abelian} groups. {II}}, Mathematical Proceedings of the Cambridge
  Philosophical Society \textbf{66} (1969), no.~2, 239--240, Publisher:
  Cambridge University Press.

\bibitem[CSC23]{cabello_sanchez_homological_2023}
Félix Cabello~Sánchez and Jesús M.~F. Castillo, \emph{Homological methods in
  {Banach} space theory}, Camb. {Stud}. {Adv}. {Math}., vol. 203, Cambridge
  University Press, Cambridge, 2023 (English).

\bibitem[CW16]{castellano_rational_2016}
I.~Castellano and Th. Weigel, \emph{Rational discrete cohomology for totally
  disconnected locally compact groups}, Journal of Algebra \textbf{453} (2016),
  101--159. \MR{3465350}

\bibitem[Del]{dellambrogio_triangulated_2016}
Ivo Dell'Ambrogio, \emph{Triangulated categories and applications}, Ph.D.
  thesis, Habilitation à Diriger des Recherches Spécialité Mathématiques.

\bibitem[DG17]{ding_non-archimedean_2017}
Longyun Ding and Su~Gao, \emph{Non-archimedean abelian {Polish} groups and
  their actions}, Advances in Mathematics \textbf{307} (2017), 312--343.
  \MR{3590520}

\bibitem[DS23]{dubey_compactly_2023}
Umesh~V Dubey and Gopinath Sahoo, \emph{Compactly generated tensor t-structures
  on the derived categories of {Noetherian} schemes}, Mathematische Zeitschrift
  \textbf{303} (2023), no.~4, Paper No. 100, 22. \MR{4564572}

\bibitem[EH62]{eckmann_group-like_1962}
B.~Eckmann and P.~J. Hilton, \emph{Group-like structures in general categories
  {I} multiplications and comultiplications}, Mathematische Annalen
  \textbf{145} (1962), no.~3, 227--255.

\bibitem[EH76]{edwards_cech_1976}
David~A. Edwards and Harold~M. Hastings, \emph{Čech and {Steenrod} homotopy
  theories with applications to geometric topology}, Lecture {Notes} in
  {Mathematics}, {Vol}. 542, Springer-Verlag, Berlin-New York, 1976.
  \MR{0428322}

\bibitem[EM42]{eilenberg_group_1942}
Samuel Eilenberg and Saunders MacLane, \emph{Group extensions and homology},
  Annals of Mathematics. Second Series \textbf{43} (1942), 757--831.

\bibitem[Fio21]{fiorot_quasi-abelian_2021}
Luisa Fiorot, \emph{n-quasi-abelian categories vs n-tilting torsion pairs: with
  an application to flops of higher relative dimension}, Documenta Mathematica
  \textbf{26} (2021), 149--197. \MR{4246400}

\bibitem[FOW82]{fay_preradicals_1982}
Temple~H. Fay, Edwin~P. Oxford, and Gary~L. Walls, \emph{Preradicals in abelian
  groups}, Houston Journal of Mathematics \textbf{8} (1982), no.~1, 39--52.
  \MR{666144}

\bibitem[Fri00]{friedman_borel_2000}
Harvey~M. Friedman, \emph{Borel and {Baire} reducibility}, Fundamenta
  Mathematicae \textbf{164} (2000), no.~1, 61--69. \MR{1784655}

\bibitem[Fuc73]{fuchs_infinite_1973}
László Fuchs, \emph{Infinite abelian groups. {Vol}. {II}}, Pure and {Applied}
  {Mathematics}. {Vol}. 36-{II}, Academic Press, New York-London, 1973.
  \MR{0349869}

\bibitem[Gao09]{gao_invariant_2009}
Su~Gao, \emph{Invariant descriptive set theory}, Pure and {Applied}
  {Mathematics} ({Boca} {Raton}), vol. 293, CRC Press, Boca Raton, FL, 2009.
  \MR{2455198}

\bibitem[Gen96]{generalov_relative_1996}
A.~I. Generalov, \emph{Relative homological algebra. {Cohomology} of
  categories, posets and coalgebras}, Handbook of algebra, {Vol}. 1, Handb.
  {Algebr}., vol.~1, Elsevier/North-Holland, Amsterdam, 1996, pp.~611--638.
  \MR{1421813}

\bibitem[GM03]{gelfand_methods_2003}
Sergei~I. Gelfand and Yuri~I. Manin, \emph{Methods of homological algebra},
  second ed., Springer {Monographs} in {Mathematics}, Springer-Verlag, Berlin,
  2003. \MR{1950475}

\bibitem[GR09]{gunning_analytic_2009}
Robert~C. Gunning and Hugo Rossi, \emph{Analytic functions of several complex
  variables}, AMS Chelsea Publishing, Providence, RI, 2009. \MR{2568219}

\bibitem[HJ83]{herman_models_1983}
Richard~H. Herman and Vaughan F.~R. Jones, \emph{Models of finite group
  actions}, Mathematica Scandinavica \textbf{52} (1983), no.~2, 312--320.
  \MR{702960}

\bibitem[HKL98]{hjorth_borel_1998}
Greg Hjorth, Alexander~S. Kechris, and Alain Louveau, \emph{Borel equivalence
  relations induced by actions of the symmetric group}, Annals of Pure and
  Applied Logic \textbf{92} (1998), no.~1, 63--112. \MR{1624736}

\bibitem[HPS97]{hovey_axiomatic_1997}
Mark Hovey, John~H. Palmieri, and Neil~P. Strickland, \emph{Axiomatic stable
  homotopy theory}, Memoirs of the American Mathematical Society \textbf{128}
  (1997), no.~610, x+114. \MR{1388895}

\bibitem[HRS96]{happel_tilting_1996}
Dieter Happel, Idun Reiten, and Sverre~O. Smalø, \emph{Tilting in abelian
  categories and quasitilted algebras}, Memoirs of the American Mathematical
  Society \textbf{120} (1996), no.~575, viii+ 88. \MR{1327209}

\bibitem[HS01]{ha_gray_2001}
Lê~Minh Hà and Jeffrey Strom, \emph{The {Gray} filtration on phantom maps},
  Fundamenta Mathematicae \textbf{167} (2001), no.~3, 251--268. \MR{1815090}

\bibitem[HS03]{ha_higher_2003}
\bysame, \emph{Higher order phantom maps}, Forum Mathematicum \textbf{15}
  (2003), no.~2, 275--284. \MR{1956968}

\bibitem[Izu04a]{izumi_finite_2004-1}
Masaki Izumi, \emph{Finite group actions on {C}*-algebras with the {Rohlin}
  property, {I}}, Duke Mathematical Journal \textbf{122} (2004), no.~2,
  233--280.

\bibitem[Izu04b]{izumi_finite_2004}
\bysame, \emph{Finite group actions on {C}*-algebras with the {Rohlin}
  property—{II}}, Advances in Mathematics \textbf{184} (2004), no.~1,
  119--160.

\bibitem[Joh72]{johnson_cohomology_1972}
Barry~Edward Johnson, \emph{Cohomology in {Banach} algebras}, American
  Mathematical Society, Providence, R.I., 1972. \MR{0374934}

\bibitem[JSY25]{januszewski_hausdorffness_2025}
Fabian Januszewski, Binyong Sun, and Hao Ying, \emph{Hausdorffness of certain
  nilpotent cohomology spaces}, January 2025, arXiv:2501.02799.

\bibitem[Kam62]{kamowitz_cohomology_1962}
Herbert Kamowitz, \emph{Cohomology groups of commutative {Banach} algebras},
  Transactions of the American Mathematical Society \textbf{102} (1962),
  352--372. \MR{170219}

\bibitem[Kap52]{kaplansky_modules_1952}
Irving Kaplansky, \emph{Modules over {Dedekind} rings and valuation rings},
  Transactions of the American Mathematical Society \textbf{72} (1952),
  327--340. \MR{46349}

\bibitem[Kap58]{kaplansky_projective_1958}
\bysame, \emph{Projective modules}, Annals of Mathematics. Second Series
  \textbf{68} (1958), 372--377. \MR{100017}

\bibitem[Kap60]{kaplansky_characterization_1960}
\bysame, \emph{A characterization of {Prufer} rings}, The Journal of the Indian
  Mathematical Society. New Series \textbf{24} (1960), 279--281. \MR{125137}

\bibitem[Kaw91]{kawahigashi_cohomology_1991}
Yasuyuki Kawahigashi, \emph{Cohomology of actions of discrete groups on factors
  of type {$\mathrm{II}_1$}}, Pacific Journal of Mathematics \textbf{149}
  (1991), no.~2, 303--317. \MR{1105700}

\bibitem[Kec95]{kechris_classical_1995}
Alexander~S. Kechris, \emph{Classical descriptive set theory}, Graduate {Texts}
  in {Mathematics}, vol. 156, Springer-Verlag, New York, 1995. \MR{1321597}

\bibitem[Kec10]{kechris_global_2010}
\bysame, \emph{Global aspects of ergodic group actions}, Mathematical {Surveys}
  and {Monographs}, vol. 160, American Mathematical Society, Providence, RI,
  2010.

\bibitem[KK98]{kishimoto_ext_1998}
Akitaka Kishimoto and Alex Kumjian, \emph{The {$\mathrm{Ext}$} class of an
  approximately inner automorphism}, Transactions of the American Mathematical
  Society \textbf{350} (1998), no.~10, 4127--4148. \MR{1608321}

\bibitem[KM16]{kechris_borel_2016}
Alexander~S. Kechris and Henry~L. Macdonald, \emph{Borel equivalence relations
  and cardinal algebras}, Fundamenta Mathematicae \textbf{235} (2016), no.~2,
  183--198. \MR{3549382}

\bibitem[Kra22]{krause_homological_2022}
Henning Krause, \emph{Homological theory of representations}, Cambridge
  {Studies} in {Advanced} {Mathematics}, vol. 195, Cambridge University Press,
  Cambridge, 2022. \MR{4327095}

\bibitem[KS94]{kashiwara_sheaves_1994}
Masaki Kashiwara and Pierre Schapira, \emph{Sheaves on manifolds}, Grundlehren
  der {Mathematischen} {Wissenschaften}, vol. 292, Springer-Verlag, Berlin,
  1994. \MR{1299726}

\bibitem[KS06]{kashiwara_categories_2006}
\bysame, \emph{Categories and sheaves}, Grundlehren der mathematischen
  {Wissenschaften}, vol. 332, Springer-Verlag, Berlin, 2006.

\bibitem[KU90]{kazama_cohomology_1990}
Hideaki Kazama and Takashi Umeno, \emph{{$\overline{\partial }$}-cohomology of
  complex {Lie} groups}, Kyoto University. Research Institute for Mathematical
  Sciences. Publications \textbf{26} (1990), no.~3, 473--484. \MR{1068861}

\bibitem[Kye16]{kyed_topologizing_2016}
David Kyed, \emph{Topologizing {Lie} algebra cohomology}, Differential Geometry
  and its Applications \textbf{49} (2016), 208--226. \MR{3573832}

\bibitem[Lam99]{lam_lectures_1999}
T.~Y. Lam, \emph{Lectures on modules and rings}, Graduate {Texts} in
  {Mathematics}, vol. 189, Springer-Verlag, New York, 1999. \MR{1653294}

\bibitem[LQ15]{lombardi_commutative_2015}
Henri Lombardi and Claude Quitté, \emph{Commutative algebra: constructive
  methods}, revised ed., Algebra and {Applications}, vol.~20, Springer,
  Dordrecht, 2015. \MR{3408454}

\bibitem[Lup24]{lupini_looking_2024}
Martino Lupini, \emph{({Looking} for) the heart of abelian {Polish} groups},
  Advances in Mathematics \textbf{453} (2024), 109865.

\bibitem[Lup25a]{lupini_complexity_2024}
\bysame, \emph{Complexity classes of {Polishable} subgroups}, Journal of the
  European Mathematical Society (2025), in press.

\bibitem[Lup25b]{lupini_phantom_2025-1}
\bysame, \emph{Projective length, phantom extensions, and the structure of
  torsion modules}.

\bibitem[Mar00]{mardesic_strong_2000}
Sibe Mardešić, \emph{Strong shape and homology}, Springer {Monographs} in
  {Mathematics}, Springer-Verlag, Berlin, 2000. \MR{1740831}

\bibitem[Mat01]{matui_ext_2001}
Hiroki Matui, \emph{{$\mathrm{Ext}$} and {$\mathrm{OrderExt}$} classes of
  certain automorphisms of {C}*-algebras arising from {Cantor} minimal
  systems}, Canadian Journal of Mathematics \textbf{53} (2001), no.~2,
  325--354. \MR{1820912}

\bibitem[McG95]{mcgibbon_phantom_1995}
Charles~A. McGibbon, \emph{Phantom maps}, Handbook of algebraic topology,
  North-Holland, Amsterdam, 1995, pp.~1209--1257. \MR{1361910}

\bibitem[MD10]{macias-diaz_generalization_2010}
Jorge Macías-Díaz, \emph{A generalization of the {Pontryagin}–{Hill}
  theorems to projective modules over {Prüfer} domains}, Pacific Journal of
  Mathematics \textbf{246} (2010), no.~2, 391--405.

\bibitem[ML98]{mac_lane_categories_1998}
Saunders Mac~Lane, \emph{Categories for the working mathematician}, second ed.,
  Graduate {Texts} in {Mathematics}, vol.~5, Springer-Verlag, New York, 1998.
  \MR{1712872}

\bibitem[Moo76a]{moore_group_1976}
Calvin~C. Moore, \emph{Group extensions and cohomology for locally compact
  groups. {III}}, Transactions of the American Mathematical Society
  \textbf{221} (1976), no.~1, 1--33. \MR{414775}

\bibitem[Moo76b]{moore_group_1976-1}
\bysame, \emph{Group extensions and cohomology for locally compact groups.
  {IV}}, Transactions of the American Mathematical Society \textbf{221} (1976),
  no.~1, 35--58. \MR{414776}

\bibitem[MR12]{motto_ros_complexity_2012}
Luca Motto~Ros, \emph{On the complexity of the relations of isomorphism and
  bi-embeddability}, Proceedings of the American Mathematical Society
  \textbf{140} (2012), no.~1, 309--323. \MR{2833542}

\bibitem[MS82]{mardesic_shape_1982}
Sibe Mardešić and Jack Segal, \emph{Shape theory}, North-{Holland}
  {Mathematical} {Library}, vol.~26, North-Holland Publishing Co.,
  Amsterdam-New York, 1982. \MR{676973}

\bibitem[MV07]{martin_first_2007}
Florian Martin and Alain Valette, \emph{On the first {$L^p$}-cohomology of
  discrete groups}, Groups, Geometry, and Dynamics \textbf{1} (2007), no.~1,
  81--100. \MR{2294249}

\bibitem[MV10]{martin_free_2010}
\bysame, \emph{Free groups and reduced 1-cohomology of unitary
  representations}, Quanta of maths, Clay {Math}. {Proc}., vol.~11, Amer. Math.
  Soc., Providence, RI, 2010, pp.~459--463. \MR{2732061}

\bibitem[Nee01]{neeman_triangulated_2001}
Amnon Neeman, \emph{Triangulated categories}, Annals of {Mathematics}
  {Studies}, vol. 148, Princeton University Press, Princeton, NJ, 2001.
  \MR{1812507}

\bibitem[Pop06]{popa_some_2006}
Sorin Popa, \emph{Some computations of {$1$}-cohomology groups and construction
  of non-orbit-equivalent actions}, Journal of the Institute of Mathematics of
  Jussieu. JIMJ. Journal de l'Institut de Mathématiques de Jussieu \textbf{5}
  (2006), no.~2, 309--332. \MR{2225044}

\bibitem[Pre09]{prest_pure-injective_2009}
Mike Prest, \emph{Pure-injective modules}, Arabian Journal for Science and
  Engineering. AJSE. Mathematics \textbf{34} (2009), no.~1D, 175--191.
  \MR{2792229}

\bibitem[Pro99]{prosmans_derived_1999}
Fabienne Prosmans, \emph{Derived projective limits of topological abelian
  groups}, Journal of Functional Analysis \textbf{162} (1999), no.~1, 135--177.
  \MR{1674550}

\bibitem[Roe93]{roe_coarse_1993}
John Roe, \emph{Coarse cohomology and index theory on complete {Riemannian}
  manifolds}, Memoirs of the American Mathematical Society \textbf{104} (1993),
  no.~497, x+90. \MR{1147350}

\bibitem[Rot09]{rotman_introduction_2009}
Joseph~J. Rotman, \emph{An introduction to homological algebra}, second ed.,
  Universitext, Springer, New York, 2009. \MR{2455920}

\bibitem[Rot15]{rotman_advanced_2015}
\bysame, \emph{Advanced modern algebra. {Part} 1}, third ed., Graduate
  {Studies} in {Mathematics}, vol. 165, American Mathematical Society,
  Providence, RI, 2015. \MR{3443588}

\bibitem[Rum01]{rump_almost_2001}
Wolfgang Rump, \emph{Almost abelian categories}, Cahiers Topologie Géom.
  Différentielle Catég. \textbf{42} (2001), no.~3, 163--225.

\bibitem[Sch77]{schmidt_cocycles_1977}
Klaus Schmidt, \emph{Cocycles on ergodic transformation groups}, Macmillan
  {Lectures} in {Mathematics}, {Vol}. 1, Macmillan Co. of India, Ltd., Delhi,
  1977. \MR{578731}

\bibitem[Sch80]{schmidt_asymptotically_1980}
\bysame, \emph{Asymptotically invariant sequences and an action of
  {$\mathrm{SL}(2,\mathbb{Z})$} on the {$2$}-sphere}, Israel Journal of
  Mathematics \textbf{37} (1980), no.~3, 193--208, Company: Springer
  Distributor: Springer Institution: Springer Label: Springer Number: 3
  Publisher: Springer-Verlag.

\bibitem[Sch81]{schmidt_amenability_1981}
\bysame, \emph{Amenability, {Kazhdan}'s property {T}, strong ergodicity and
  invariant means for ergodic group-actions}, Ergodic Theory and Dynamical
  Systems \textbf{1} (1981), no.~2, 223--236.

\bibitem[Sch99]{schneiders_quasi-abelian_1999}
Jean-Pierre Schneiders, \emph{Quasi-abelian categories and sheaves}, Mémoires
  de la Société Mathématique de France. Nouvelle Série (1999), no.~76,
  vi+134. \MR{1779315}

\bibitem[Sch01]{schochet_fine_2001}
Claude~L. Schochet, \emph{The fine structure of the {Kasparov} groups. {I}.
  {Continuity} of the {$\mathrm{KK}$}-pairing}, Journal of Functional Analysis
  \textbf{186} (2001), no.~1, 25--61. \MR{1863291}

\bibitem[Sch02]{schochet_fine_2002}
\bysame, \emph{The fine structure of the {Kasparov} groups. {II}.
  {Topologizing} the {UCT}}, Journal of Functional Analysis \textbf{194}
  (2002), no.~2, 263--287. \MR{1934604}

\bibitem[Sch03]{schochet_pext_2003}
\bysame, \emph{A {Pext} primer: pure extensions and {$\lim^1$} for infinite
  abelian groups}, New {York} {Journal} of {Mathematics}. {NYJM} {Monographs},
  vol.~1, State University of New York, University at Albany, Albany, NY, 2003.
  \MR{1993677}

\bibitem[Sch05]{schochet_fine_2005}
\bysame, \emph{The fine structure of the {Kasparov} groups. {III}. {Relative}
  quasidiagonality}, Journal of Operator Theory \textbf{53} (2005), no.~1,
  91--117. \MR{2132689}

\bibitem[Sha04]{shalom_harmonic_2004}
Yehuda Shalom, \emph{Harmonic analysis, cohomology, and the large-scale
  geometry of amenable groups}, Acta Mathematica \textbf{192} (2004), no.~2,
  119--185. \MR{2096453}

\bibitem[Sil78]{silva_rungescher_1978}
Alessandro Silva, \emph{Rungescher {Satz} and a condition for {Steinness} for
  the limit of an increasing sequence of {Stein} spaces}, Université de
  Grenoble. Annales de l'Institut Fourier \textbf{28} (1978), no.~2, vi,
  187--200. \MR{508090}

\bibitem[Sol95]{solecki_equivalence_1995}
Sławomir Solecki, \emph{Equivalence relations induced by actions of {Polish}
  groups}, Transactions of the American Mathematical Society \textbf{347}
  (1995), no.~12, 4765--4777. \MR{1311918}

\bibitem[Sol99]{solecki_polish_1999}
\bysame, \emph{Polish group topologies}, Sets and proofs ({Leeds}, 1997),
  London {Math}. {Soc}. {Lecture} {Note} {Ser}., vol. 258, Cambridge Univ.
  Press, Cambridge, 1999, pp.~339--364. \MR{1720580}

\bibitem[Tat21]{tattar_torsion_2021}
Aran Tattar, \emph{Torsion pairs and quasi-{Abelian} categories}, Algebras and
  Representation Theory \textbf{24} (2021), no.~6, 1557--1581. \MR{4340852}

\bibitem[Tod07]{todorcevic_walks_2007}
Stevo Todorcevic, \emph{Walks on ordinals and their characteristics}, Progress
  in {Mathematics}, vol. 263, Birkhäuser Verlag, Basel, 2007. \MR{2355670}

\bibitem[Ver77]{verdier_categories_1977}
Jean-Louis Verdier, \emph{Catégories dérivées: quelques résultats (état
  0)}, Cohomologie étale, Lecture {Notes} in {Math}., vol. 569, Springer,
  Berlin, 1977, pp.~262--311. \MR{3727440}

\bibitem[Vog08]{vogan_unitary_2008}
David~A. Vogan, Jr., \emph{Unitary representations and complex analysis},
  Representation theory and complex analysis, Lecture {Notes} in {Math}., vol.
  1931, Springer, Berlin, 2008, pp.~259--344. \MR{2409701}

\bibitem[XZ25]{xu_hopf_2025}
Yuying Xu and Junhua Zheng, \emph{Hopf algebras are determined by their
  monoidal derived categories}, February 2025, arXiv:2502.16619.

\end{thebibliography}

\end{document}